\documentclass[12pt,letterpaper]{amsart}

%% Packges %%%%%%
\usepackage{amscd}
\usepackage{amsmath,amsfonts,amsthm,epsfig,latexsym,graphicx,amssymb}
\usepackage[english]{babel}
\usepackage{comment}
\usepackage{epsfig}
\usepackage{tikz, graphics} 
\usepackage{graphics,color}
\usepackage{psfrag}
\usepackage{hyperref}
\usepackage{float}
\hypersetup{colorlinks=true, citecolor=green, linkcolor=blue, hypertexnames=false, urlcolor=cyan}

%%%%%%%%%% medidas texto

\textwidth=16. true cm
\textheight=22. true cm
\voffset=-1. true cm
\hoffset = -2.5 true cm

%%%%Newtheorems

\newtheorem{coro}{Corollary}
\newtheorem{defi}{Definition}
\newtheorem{prop}{Proposition}

\newtheorem{theo}{Theorem}
\newtheorem{lemm}{Lemma}

\newtheorem{prob}{Problem}
\newtheorem{rema}{Remark}

\newtheorem{conv}{Convention}
\newtheorem{cons}{Construction}

\newtheorem*{coro*}{Corollary}
\newtheorem*{defi*}{Definition}
\newtheorem*{prop*}{Proposition}
\newtheorem*{conj*}{Conjecture}
\newtheorem*{theo*}{Theorem}
\newtheorem*{lemm*}{Lemma}
\newtheorem*{ques*}{Question}
\newtheorem*{exer*}{Exercise}
\newtheorem*{exem*}{Example}
\newtheorem*{prob*}{Problem}
\newtheorem*{rema*}{Remark}
\newtheorem*{conv*}{Convention}
\newtheorem*{clai*}{Claim}
\newtheorem*{affi*}{Affirmation}
\newtheorem*{claim*}{Claim}
\newtheorem*{exes*}{Exercise}
\newtheorem*{pbm*}{Problem}

%%%Alfabetos%%%%%

\def\II{{\mathbb I}}  

 \def\NN{{\mathbb N}} 

 \def\RR{{\mathbb R}} \def\SS{{\mathbb S}}
\def\TT{{\mathbb T}}

 \def\ZZ{{\mathbb Z}}

  \def\cG{{\mathcal G}} \def\cM{{\mathcal M}} \def\cS{{\mathcal S}} 
\def\cB{{\mathcal B}}  \def\cH{{\mathcal H}}  \def\cT{{\mathcal T}} 
\def\cC{{\mathcal C}}  \def\cI{{\mathcal I}} \def\cO{{\mathcal O}} \def\cU{{\mathcal U}}
  \def\cJ{{\mathcal J}}  \def\cV{{\mathcal V}}
    \def\cW{{\mathcal W}}
\def\cF{{\mathcal F}}   \def\cR{{\mathcal R}}

%%Informacion del proyecto:

\title[Markov partitions with prescribed combinatorics]{Geometric Markov partitions for pseudo-Anosov homeomorphisms with prescribed combinatorics}

\author{Inti Cruz Diaz}
\date{\today}
\email{incruzd@im.unam.mx}

\begin{document}

 \begin{abstract}
In this paper, we focus on the construction and refinement of geometric Markov partitions for pseudo-Anosov homeomorphisms that may contain spines. We introduce a systematic approach to constructing \emph{adapted Markov partitions} for these homeomorphisms. Throughout this process, we define key concepts such as geometric Markov partitions and their geometric types. Our primary result is an algorithmic construction of \emph{adapted Markov partitions} for every generalized pseudo-Anosov map. The algorithm constructs a Markov partition by starting from a single point. We then apply this algorithm to a distinguished class of points, the so-called \emph{first intersection points} of the homeomorphism, which produces \emph{primitive Markov partitions} that are well-behaved under iterations. Additionally, we manage to prove that the set of \emph{primitive geometric types}  of a given order is finite, providing a canonical and distinguished tool for the classification of pseudo-Anosov homeomorphisms, as will be further discussed in \cite{CruzDiaz2024}.\\
Then, we construct new geometric Markov partitions from existing ones while maintaining control over their combinatorial properties and without losing track of their geometric types. The first type of geometric Markov partition constructed is one with a binary incidence matrix, which permits the introduction of the sub-shift of finite type associated with the incidence matrix of any Markov partition—this is known as the \emph{binary refinement}. After that, we describe a process that allows us to cut any Markov partition  along a family of stable and unstable segments prescribed by a finite set of periodic codes, which we call the $s$ and $u$-boundary refinements. Finally, we present an algorithmic construction of a Markov partition where all periodic boundary points are located at the corners of the rectangles in the partition, referred to as the corner refinement. Each of these Markov partitions and their intrinsic combinatorial properties plays a crucial role in our algorithmic classification of pseudo-Anosov homeomorphisms up to topological conjugacy, as developed by the author in his thesis \cite{IntiThesis}, and can be found in an updated version in the article \cite{CruzDiaz2024}.
\end{abstract}

 \maketitle

\tableofcontents

\section{Introduction}\label{Section: Introduccion}
\subsection{\textbf{p-A} homeomorphisms and their classifications.}

For us, a \emph{surface} is a two-dimensional \emph{smooth manifold} (or at least $C^1$). It is closed if it is compact and has no boundary, and we shall restrict our study to the setting where $S$ is a closed and orientable surface. Let us not forget that $S$ is also a topological space, and therefore, apart from the set of diffeomorphisms of $S$, Diff$(S)$, we can consider its set of homeomorphisms, Hom$(S)$, and their analogues that preserve the orientation of $S$, Diff$_+(S)$ and Hom$_+(S)$, respectively. Clearly, Diff$(S) \subset \text{Hom}(S)$ and Diff$_+(S) \subset \text{Hom}_+(S)$. We are going to study a certain type of surface homeomorphisms, the \emph{generalized pseudo-Anosov homeomorphisms} (see \ref{Defi: pseudo-Anosov homeomorphism}), which, despite being only homeomorphisms, preserve a notion of \emph{uniform hyperbolicity} outside a finite family of points on the surface.

Pseudo-Anosov homeomorphisms are carefully introduced in Subsection \ref{Subsec: Foliaciones singulares y homeomorfismos pA}. Roughly speaking, $f \in \text{Hom}_+$ is a generalized pseudo-Anosov homeomorphism if it preserves a pair of transverse \emph{singular measured foliations} (see \ref{Defi: Foliacion medible}),  denoted by $(\cF^s, \mu^s)$ and $(\cF^u, \mu^u)$. Additionally, there exists a \emph{stretch factor} $\lambda > 1$ such that the $\mu^s$-measure of every compact interval $J$ contained in a leaf of the \emph{unstable foliation} $\cF^u$, but without singularities in its interior, is \emph{uniformly expanded} by $f$; i.e., $\mu^s(f(J)) = \lambda \mu^s(J)$. Simultaneously, any compact interval $I \subset F^s \in \cF^s$ contained in a leaf of the \emph{stable foliation} $\cF^s$, without singularities in its interior, is \emph{uniformly contracted} by a factor $\lambda^{-1}$; i.e., $\mu^u(f(I)) = \lambda^{-1} \mu^u(I)$. This uniform contraction and expansion is where the analogy with uniform hyperbolicity begins. 

In \cite{lanneau2017tell}, E. Lanneau provides a beautiful introduction through examples of (linear) pseudo-Anosov homeomorphisms, but let us compile some of the most well-known constructions of pseudo-Anosov homeomorphisms in the literature:
\begin{enumerate}\label{Prob: compute examples general}
	\item Anosov diffeomorphisms on $\mathbb{T}^2$ and their \emph{branched coverings}, examples constructed in \cite{fathi2021thurston}.
	\item The examples given by Arnoux and Yoccoz in \cite{ArYoccoz1981}, constructed for any surface of genus greater than or equal to $2$.
	\item In certain cases, it is possible to obtain a pseudo-Anosov homeomorphism by composing \emph{Dehn twists}. Some examples are presented in \cite{fathi2021thurston}. R. C. Penner has generalized such constructions in \cite{penner1988construction} to produce a broader family of homeomorphisms isotopic to pseudo-Anosov using Dehn twists. 
	\item All expansive homeomorphisms on compact surfaces are pseudo-Anosov homeomorphisms without spines (\cite{hiraide1987expansive} \cite{lewowicz1989expansive}).
\end{enumerate}

Each of these examples has a unique flavor; some have a topological nature, such as Penner's examples, while others are based on dynamical hypotheses, as seen in the works of Hiraide and Lewowicz. In each construction, the authors establish the existence of a pair of invariant transverse foliations using different techniques. Additional examples by Y. Verberne were constructed in \cite{verberne2023construction}, while T. Hall and A. Carvalho introduced some generalizations of the pseudo-Anosov homeomorphisms in \cite{Andre2005extensions} and \cite{Hall2004unimodal}. However, here we must restrict ourselves to the so-called \emph{canonical models} of generalized pseudo-Anosov homeomorphisms, though it is important to acknowledge the existence of such generalizations.

But where did pseudo-Anosov homeomorphisms first appear? They are central to the Nielsen-Thurston classification of orientation-preserving diffeomorphisms of surfaces up to isotopy. One of the basic concepts in algebraic topology is \emph{isotopy} between two homeomorphisms. It is well known that isotopy is an equivalence relation on $\text{Hom}+(S)$, and the set of isotopy classes of $\text{Hom}+(S)$, when equipped with the natural operation of multiplication on the left, forms the \emph{modular group} of the surface, $\cM\cC\cG(S)$. The Nielsen-Thurston classification theorem \cite{thurston1988geometry} states that any surface diffeomorphism is \emph{isotopic} to a unique homeomorphism: \emph{periodic}, pseudo-Anosov, or \emph{reducible}. In the reducible case, the surface can be decomposed into finite pieces along a family of closed (non-null-homotopic) curves, and in each piece, the homeomorphism is isotopic to one of the previously mentioned types. Thus, these homeomorphisms have played a central role in classification problems since the beginning of their study. Furthermore, M. Bestvina and M. Handel \cite{bestvina1995train} provided an algorithmic proof of the Nielsen-Thurston theorem by introducing the machinery of \emph{train tracks} and the\emph{ incidence matrix associated} with the action of a mapping class on such train tracks as the main combinatorial tool to to achieve such classification. Before continuing, let us clarify what we mean by a classification. The classification of homeomorphisms up to isotopy requires solving three main obstacles:

\begin{enumerate} 
	\item \textbf{The combinatorial presentation.} Associate to every homeomorphism a finite and combinatorial object in such a way that two elements are isotopic if and only if they are associated with the same combinatorial object, even if this association is not unique. For example, this is achieved through a certain incidence matrix for Bestvina and Handel. 
	\item \textbf{The realization problem:} Determine which of these combinatorial objects is realized by a homeomorphism: 
	\item \textbf{The conjugacy problem}: Determine when two different combinatorial objects are related to the same isotopy class. \end{enumerate}

The objective of Bestvina and Handel in the article \cite{bestvina1995train} is to provide an algorithmic proof of the Handel-Thurston classification theorem for homeomorphisms in $\text{Hom}_+(S, P)$. In other words, given a homeomorphism $f$, their goal is to determine whether $f$ belongs to the isotopy class of a reducible, periodic, or pseudo-Anosov homeomorphism. Almost simultaneously, Jerome Los exploits the intimate relation between the mapping class group of the $n$-punctured disk and the \emph{braid groups} in \cite{los1993pseudo} to provide a finite algorithm for determining whether an isotopy class (of the punctured disk) contains a pseudo-Anosov element. Their approach involves train tracks and a certain incidence matrix, whose eigenvalues they attempt to minimize after a series of operations on a subjacent train track. 

It is not necessarily clear from the work of Bestvina and Handel that they were able to solve the conjugacy problem. However, L. Mosher, in his 1983 thesis \cite{mosher1983pseudo} and the 1986 article \cite{mosher1986classification}, classifies pseudo-Anosov homeomorphisms on a given closed and oriented surface $S$ equipped with a finite set of marked points $P$ (where the spines are contained). To achieve this, Mosher considers the action of $\mathcal{M}(S, P)$ on a directed graph $\tilde{\mathcal{G}}(S, P)$, where each vertex represents a cellular decomposition of the surface with a single vertex and a distinguished sector around that vertex. The directed edges of $\tilde{\mathcal{G}}(S, P)$ correspond to elementary moves that transform one decomposition into another. This cellular decomposition, in some sense, serves as a simplified version of Thurston’s train tracks. Importantly, Mosher defines certain \emph{pseudo-Anosov invariants} that address the combinatorial presentation problem and provide partial answers to the realization and conjugacy problems. For example, Mosher’s classification requires considering certain powers of the homeomorphism and knowing the number and types of each singularity of the pseudo-Anosov homeomorphism.
Subsequently, M. Bell \cite{bell2015recognising} developed an algorithm and a computational program, the \emph{Flipper} \cite{flipperBell}, which, among other things, determines the isotopy class type of a given homeomorphism. In this case, in addition to \emph{train tracks}, another tool used by M. Bell is a particular type of ideal triangulations introduced by I. Agol in \cite{agol2011ideal} to study the $3$-manifold obtained by suspending a pseudo-Anosov homeomorphism, the so called \emph{mapping torus}.

In this article, we are interested in classifying \textbf{p-A} homeomorphisms not up to isotopy but up to an equivalence relation that is more natural in the setting of dynamical systems: topological conjugacy. Two homeomorphisms $f: S \to S'$ and $g: S \to S'$ are \emph{topologically conjugate} if there exists a homeomorphism $h: S \to S'$ such that $f \circ h = h \circ g$. In this case, any orbit of $g$ is mapped by $h$ to a homeomorphic orbit of $f$. Since the seminal paper of S. Smale \cite{smale1967differentiable}, we know this is the equivalence relation from which we can understand the qualitative dynamical behavior of a map or a flow. Thus, this paper is the first step toward solving the following general problem:

\begin{prob}\label{Prob: Conjugacion pseudo-Anosov}
	Classify the set of \emph{p-A} homeomorphisms on surfaces up to topological conjugacy.
\end{prob}

Since two pseudo-Anosov homeomorphisms that are isotopic are topologically conjugate (\cite{fathi2021thurston}), Mosher's  classification is a partial answer to our classification problem.  However, we will take a completely different approach from Bestvina, Handel, Los, and Mosher. Instead of working with train tracks, which are fairly stable objects under isotopy, and the incidence matrix of the isotopy class action on such a train track, we will use geometrized Markov partitions and, along with them, a novel combinatorial object—its geometric type—which serves as a generalization of the classical incidence matrix of a Markov partition. So let us discuss such objects before we start presenting our results.

\subsection{Markov partition for maps and flows}

One of the more useful tools to understand the dynamics of a map (or  flow) are the Markov partitions . Recall that a Markov partition (\ref{Defi: Geo Markov partition}) for a \textbf{p-A} homeomorphism $f: S \to S$ is a finite family of \emph{immersed rectangles} in $S$, $\cR=\{R_i\}_{i=1}^{n}$, with disjoint interiors, such that their union $\cup \mathcal{R} = \cup_{i=1}^n R_i$ is equal to $S$ and such that: if $\overset{o}{R_i}$ and $\overset{o}{R_j}$ denote the interiors of two rectangles in $\cR$, the closure of any connected component of $f^{-1}(\overset{o}{R_j}) \cap \overset{o}{R_i}$ is a \emph{horizontal sub-rectangle} of $R_i$, and similarly, the closure of any connected component of $f(\overset{o}{R_i}) \cap \overset{o}{R_j}$ is a \emph{vertical sub-rectangle} of $R_j$. It is well known (see \cite{fathi2021thurston}) that every pseudo-Anosov homeomorphism without spines has a Markov partition, and a synthetic construction for the generalized case can be found here. A \emph{geometric Markov partition} of $f$ is a Markov partition for which we have chosen a vertical orientation in each of the rectangles in the partition. To indicate that $\mathcal{R} = \{R_i\}_{i=1}^n$ is a geometric Markov partition for $f$, we will write $(f,\mathcal{R})$.

Markov partitions were defined and constructed for Anosov systems by Y. Sinai in \cite{sinai2020markov} and \cite{sinai1968construction}. Later, they were constructed for the basic sets of Axiom A homeomorphisms by R. Bowen in \cite{bowen1970markov}. Their ideas exploit the \emph{pseudo orbit tracing property} to obtain rectangles with sufficiently small diameters, making them the rectangles of a Markov partition for the system. The same approach was used by Hiraide in \cite{hiraide1985Markovhomeomorphisms} to construct Markov partitions for expansive surface homeomorphisms. 

Nowadays, there are beautiful results that can be proved using the methods of symbolic dynamics. In the words of R. Bowen in \cite{bowen1970markov}, once you have a Markov partition for an Axiom $A$ diffeomorphism, “(the map) $f$ is the quotient of a subshift of finite type. One then studies $f$ by studying the structure of this quotient; this is what we mean by symbolic dynamics."
 This approach, using Markov partitions and their associated subshifts of finite type, has been fruitful, allowing us to prove, for example, that every pseudo-Anosov homeomorphism has positive topological entropy, exhibits a dense set of periodic points, is topologically transitive, topologically mixing, and \emph{Bernoulli}. Starting with this manuscript and finishing with \cite{CruzDiaz2024}, following the ideas of C. Bonatti, R. Langevin, E. Jeandenans, and F. Béguin to classify invariant neighborhoods of $C^1$ structurally stable diffeomorphisms on surfaces, called \emph{Smale diffeomorphisms}, we add another beautiful application of Markov partitions: classifying, up to topological conjugacy, all pseudo-Anosov homeomorphisms. Let me add that recently, geometric Markov partitions were used in \cite{iakovoglou2022invariantcaracterizing} as a new invariant of conjugacy for transitive Anosov flows by I. Iakovoglou, in order to achieve a complete classification of such flows. Additionally, he has recently constructed Markov partitions for non-transitive expansive flows in \cite{iakovoglou2024markovpartitionsnontransitiveexpansive}, generalizing the work of Ratner for Anosov flows in \cite{ratner1973markov} and Brunella for expansive and transitive flows in \cite{brunella1992expansive}. Therefore, there is still a lot of research being conducted using geometric Markov partitions to classify other kinds of systems up to topological conjugacy.

In order to achieve an algorithmic classification of the conjugacy classes of surface homeomorphisms that contain pseudo-Anosov homeomorphisms in \cite{CruzDiaz2024}, we must rely on the existence of Markov partitions for pseudo-Anosov homeomorphisms that may contain a \emph{spine} and endow them with vertical orientation in each of their rectangles. This gives rise to the notion of \emph{geometric Markov partitions} and their corresponding \emph{geometric types} \ref{Defi: Geometric type of (f,cR)}, as introduced by C. Bonatti and R. Langevin in \cite{bonatti1998diffeomorphismes} for Smale diffeomorphism.  The geometric type of a geometric Markov partition is a combinatorial object associated with the pair $(f, \mathcal{R})$, which generalizes the \emph{incidence matrix} of a Markov partition (see \cite{BowenAnosovbook}), by considering not only the number of times the rectangles $f(\cR):=\{f(R_i)\}_{i=1}^n$ intersect transversally with the rectangles in $\mathcal{R} := \{R_j\}_{j=1}^n$, but also the order in which they intersect and the change in their relative vertical orientation.This information is contained in a 4-tuple:
$$
T(f,\mathcal{R}) := (n, \{h_i, v_i\}_{i=1}^n, \rho, \epsilon).
$$
Here, $n$ is the number of rectangles in the partition $\mathcal{R}$, $h_i$ and $v_i$ are the number of horizontal and vertical rectangles contained in $R_i$, respectively, and the map $\rho:\mathcal{H}(f,\mathcal{R}) \to \mathcal{V}(f,\mathcal{R})$ is a bijection between the finite sets of horizontal rectangles (see \cite{bonatti1998diffeomorphismes}). In the last chapters of \cite{bonatti1998diffeomorphismes}, C. Bonatti and E. Jeandenans show how to collapse the invariant manifolds of a basic piece of a Smale diffeomorphism to obtain a pseudo-Anosov homeomorphism, possibly with spines. This suggested that it was possible to adapt the ideas and constructions used by Bonatti, Langevin, and Béguin to classify pseudo-Anosov homeomorphisms under topological conjugacy. During my doctoral studies, I carried out such a classification, where, starting with a geometric Markov partition whose geometric type is known, it was necessary to construct new geometric Markov partitions for such \textbf{p-A} homeomorphisms with some prescribed combinatorial properties and without losing track of the geometric type obtained. This obeyed the necessity to define certain symbolic spaces or to simplify many technical arguments. Thus, we devote this article to such constructions. In the next subsection of this introduction, we will present the results found in this article and some comments about their application in \cite{CruzDiaz2024}, where the classification is carried out without concern for the intermediate constructions that we describe here, which mark the beginning of a fine algorithmic classification of pseudo-Anosov homeomorphisms.

\subsection{Results and organization of the article}

\subsubsection{Existence of geometric Markov partition}
After introducing our main definitions of \textbf{p-A} homeomorphisms, geometric types, and geometric Markov partitions, our first result is an algorithm to construct Markov partitions for pseudo-Anosov homeomorphisms that may contain spines (generalizing the arguments in \cite{fathi2021thurston}). For this purpose, we introduce the notions of adapted stable and unstable graphs and their compatible counterparts. To be adapted (\ref{Defi: Adapted graph}) means that the stable (or unstable) graph has edges corresponding to stable intervals exclusively contained in the stable separatrices of a singular point of $f$. To be compatible (\ref{Defi: Compatible graphs}) means that the endpoints of one graph are contained in the other and vice versa. Such criteria are usually easy to determine, leading to a decomposition of the surface into rectangles with the \emph{Markovian properties} between the intersections of the family and their images.

\begin{prop*}[\ref{Prop:Compatibles implies Markov partition}]
	If $\delta^u$ and $\delta^s$ are graphs adapted to $f$ and compatible, then the closure in $S$ of every connected component of
	$$
	\overset{o}{S} := S \setminus \left[ \cup \textbf{Ex}^s(\delta^u) \bigcup \cup \textbf{Ex}^u(\delta^s) \right]
	$$
	is a rectangle adapted to $f$ whose stable and unstable boundaries are contained in $\cup \delta^s$ and $\cup \delta^u$, respectively. Furthermore, the family $\mathcal{R}(\delta^s, \delta^u)$ given by such rectangles is an adapted Markov partition of $f$.
\end{prop*}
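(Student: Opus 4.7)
My strategy is to first show each connected component $C$ of $\overset{o}{S}$ has, after closure, the product structure of a rectangle, then deduce the Markov property from the fact that $f$ and $f^{-1}$ respect the graphs $\delta^s$ and $\delta^u$. For the first part, away from singularities of $f$ the transverse measured foliations $(\cF^s,\mu^s)$ and $(\cF^u,\mu^u)$ furnish product charts in which stable and unstable leaves become horizontal and vertical segments; because $\delta^u$ is adapted, the trace of $\cup\textbf{Ex}^s(\delta^u)$ in such a chart consists of stable segments, and symmetrically $\cup\textbf{Ex}^u(\delta^s)$ cuts the chart along unstable segments. Hence each component of the chart minus these traces is an open topological rectangle whose stable and unstable boundary pieces lie in $\cup\delta^s$ and $\cup\delta^u$ respectively. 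At a singular point of $f$ the same analysis is carried out inside each prong sector, using that adapted edges enter the singularity along its separatrices.

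To promote this local picture to a global rectangle, I fix $x\in C$ and let $I^s(x)$ and $I^u(x)$ be the connected components of $\cF^s(x)\cap C$ and $\cF^u(x)\cap C$ containing $x$. These are arcs whose endpoints sit on $\partial C$; the compatibility assumption — that endpoints of edges of $\delta^s$ lie on $\delta^u$ and vice versa — ensures that both endpoints of $I^s(x)$ lie on $\cup\textbf{Ex}^u(\delta^s)$ and both endpoints of $I^u(x)$ lie on $\cup\textbf{Ex}^s(\delta^u)$, with no ``dead ends'' occurring in the interior of $C$. Sliding stable arcs transversally along $I^u(x)$ via the local product charts yields a continuous bijection $\Phi:I^s(x)\times I^u(x)\to\overline{C}$ that respects the two foliations; by the local product structure $\Phi$ is a homeomorphism, and $\overline{C}$ is therefore an adapted rectangle with $\partial^s\overline{C}\subset\cup\delta^s$ and $\partial^u\overline{C}\subset\cup\delta^u$.

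For the Markov property of $\cR(\delta^s,\delta^u)$, the key observation is that $f$ permutes the stable separatrices of the singular points and contracts along them by the factor $\lambda^{-1}$, so it sends any stable sub-arc of a separatrix to a strictly shorter sub-arc of another separatrix. Since $\delta^s$ is built from such separatrices, and since compatibility keeps endpoints on $\cup\delta^u$, one obtains $f(\cup\delta^s)\subset\cup\delta^s$; the analogous argument with $f^{-1}$ gives $f^{-1}(\cup\delta^u)\subset\cup\delta^u$. Unwinding the definition of a Markov partition, these two inclusions imply that any connected component of $f^{-1}(\overset{o}{R_j})\cap\overset{o}{R_i}$ is bounded on its stable sides by pieces of $\partial^sR_i$ and on its unstable sides by $f^{-1}$-preimages of $\partial^uR_j\subset\cup\delta^u$, hence its closure is a horizontal sub-rectangle of $R_i$; the symmetric statement for $f(\overset{o}{R_i})\cap\overset{o}{R_j}$ follows identically.

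The main obstacle I expect is the treatment at the singular points of $f$: the product chart fails there, and one has to argue sector by sector in prong coordinates while carefully tracking how each adapted edge enters the singularity. A secondary subtlety is making the compatibility assumption rigorously yield the bijection $\Phi$ — one must rule out the possibility that a stable or unstable leaf inside $C$ accumulates on $\partial C$ from within rather than meeting it transversally at a well-defined endpoint — and this is exactly where the interplay between being \emph{adapted} and being \emph{compatible} enters, since the adapted condition identifies the walls of $C$ as unions of separatrix arcs, while compatibility closes these arcs off at genuine crossings.
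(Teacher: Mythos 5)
Your argument follows essentially the same route as the paper's: each connected component of $\overset{o}{S}$ is shown to be trivially bi-foliated with walls in the adapted graphs (so its closure is a rectangle by the parametrization result, Proposition \ref{Prop: Rectangulos parametrizados}), and the Markov property is then read off from the boundary-invariance criterion of Proposition \ref{Prop: Markov criterion boundary}. One remark: the inclusions $f(\cup \delta^s)\subset \cup\delta^s$ and $f^{-1}(\cup\delta^u)\subset\cup\delta^u$, which you present as the key observation to be established, are item $(i)$ of Definition \ref{Defi: Adapted graph} and hence part of the hypothesis that the graphs are adapted; this is fortunate, because the contraction argument you sketch only shows that $f(I_i)$ is a shorter stable arc lying in some separatrix, not that it lands inside the particular edge of $\delta^s$ contained in that separatrix (that containment is exactly what the paper proves separately, in Lemma \ref{Lemm: the generated graph is adapted}, for graphs generated from an invariant family).
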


We must apply such criteria to construct a family of distinguished Markov partitions, and this is done in \ref{Cons: Recipe for Markov partitions}, where we start with points in the intersections of the stable and unstable separatrices of a pair of singularities and produce an \emph{adapted Markov} partition of $f$ (Corollary \ref{Coro: Existence adapted Markov partitions}). An adapted Markov partition (\ref{Defi: Partion wellsuited/corner/adapted partition}) is a Markov partition where the only periodic points in the boundary of the rectangles of the partition are singularities, and all such periodic points are in the corners of every rectangle of the partition they are contained in. This is a very special kind of Markov partition, but when we apply our construction to the family of \emph{first intersection points} (\ref{Defi: first intersection points}) of $f$, we discover a family of Markov partitions whose geometric types are, in some sense, canonical.

A geometric Markov partition for the homeomorphism $f$ obtained by the procedure indicated in \ref{Cons: Recipe for Markov partitions} is denoted by $\cR(z,n)$ and is called a \emph{primitive geometric Markov partition} of order $n$, where $z$ is the first intersection point of $f$ that was used in the construction \ref{Cons: Recipe for Markov partitions}, and $n \geq n(f)\in \NN_+$ is the parameter of the "complexity" of the partition (see \ref{Coro: n(f) number}). The parameter $n$ is referred to as the \emph{order} of $\cR(z,n)$. Let $\cM(f,n)$ be the set of geometric Markov partitions of $f$ of order $n$ (see \ref{Coro: constant type in orbit}). The main result in this direction is the following theorem, which states that the set of primitive geometric types of order $n$ is a finite.

\begin{theo}[\ref{Theo: finite geometric types}]
	For every $n > n(f)$, the set $\cT(f,n) := \{T(\cR) : \cR \in \cM(f,n)\}$ is finite. These geometric types are referred to as the \emph{primitive geometric types} of $f$ of order $n$.
\end{theo}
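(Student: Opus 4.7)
The plan is to deduce the finiteness of $\cT(f,n)$ from two elementary observations: (a) the set of first intersection points of $f$ is itself finite, and (b) for each such point $z$ and each $n > n(f)$, the construction in \ref{Cons: Recipe for Markov partitions} produces an underlying Markov partition with only finitely many rectangles. Once both are in hand, the set $\cM(f,n)$ of primitive geometric Markov partitions of order $n$ consists of finitely many underlying Markov partitions, each equipped with one of finitely many vertical-orientation assignments on its rectangles, and is therefore finite; the map $T \colon \cM(f,n) \to \cT(f,n)$ sending a partition to its geometric type then has finite image, proving the theorem.

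For (a), by Definition \ref{Defi: first intersection points} a first intersection point is determined by a pair $(\sigma^s,\sigma^u)$ consisting of a stable separatrix and an unstable separatrix of singularities of $f$, taken as the unique intersection at minimal $\mu^s$- and $\mu^u$-distance from the singular endpoints. Since $f$ has only finitely many singularities and each carries only finitely many separatrices of each type, only finitely many such pairs arise, and consequently the set of first intersection points is finite. For (b), the construction \ref{Cons: Recipe for Markov partitions} iteratively builds a pair of graphs $\delta^s(z,n),\delta^u(z,n)$ using at most $n$ forward and backward iterates of $f$; each resulting graph has finitely many edges of $\mu^s$- or $\mu^u$-length controlled by $\lambda^n$, and applying Proposition \ref{Prop:Compatibles implies Markov partition} to the compatible adapted pair produces a decomposition of $S$ into finitely many rectangles.

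The main obstacle, to my mind, is verifying rigorously that for every $n > n(f)$ the pair $\delta^s(z,n),\delta^u(z,n)$ output by the construction is indeed both adapted and compatible, so that Proposition \ref{Prop:Compatibles implies Markov partition} applies and the partition is genuinely finite; this is precisely the role played by the numerical threshold $n(f)$ in Corollary \ref{Coro: n(f) number}, which must ensure that the graphs cover every singularity and mutually saturate. Once this is in place, the remaining counting is immediate: each $\cR(z,n)$ has $\#\cR(z,n)$ rectangles with two possible vertical orientations apiece, so $|\cM(f,n)| \le \sum_{z} 2^{\#\cR(z,n)}$, a finite sum over the finite set of first intersection points $z$, whence $|\cT(f,n)| \le |\cM(f,n)|$ is finite.
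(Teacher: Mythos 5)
There is a genuine gap, and it is at the very first step. You assert that the set of first intersection points of $f$ is finite, and you justify this by claiming that each pair of separatrices $(F^s(p),F^u(q))$ determines a unique first intersection point ``at minimal distance'' from the singularities. Neither claim is correct. The definition (\ref{Defi: first intersection points}) only requires $(p,x]^s\cap(q,x]^u=\{x\}$, and a given pair of separatrices carries infinitely many such points: by Lemma \ref{Lemm: Image of first intersection is first intersection} the image of a first intersection point is again a first intersection point, and such a point is never periodic (it lies on a stable separatrix of a singularity without being singular), so its full orbit consists of pairwise distinct first intersection points. What the paper actually proves (Proposition \ref{Prop: Finite number first intersection points} via Lemma \ref{lemm: finite intersect in fundamental domain}) is that there are finitely many \emph{orbits} of first intersection points. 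Consequently $\cM(f,n)=\{\cR(z,n)\}$ is an \emph{infinite} set of Markov partitions — the partitions $\cR(f^m(z),n)=f^m(\cR(z,n))$ for varying $m$ are genuinely different subsets of $S$ — and your bound $|\cM(f,n)|\le\sum_z 2^{\#\cR(z,n)}$ is a sum over an infinite index set. The counting collapses.

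The missing idea that repairs this is precisely the content of Corollary \ref{Coro: constant type in orbit}: the geometric type is \emph{constant along each orbit} of primitive partitions, $T(\cR(z,n))=T(\cR(f^m(z),n))$. This follows from the equivariance $f(\cR(z,n))=\cR(f(z),n)$ (Theorem \ref{Theo: Conjugates then primitive Markov partition} with $f=g=h$) together with the invariance of the geometric type under orientation-preserving conjugation (Theorem \ref{Theo: Conjugated partitions same types}), again applied with $h=f$. With that in hand, $\cT(f,n)$ injects into the finite set of orbits of first intersection points, which is the paper's argument. Your secondary worry about adaptedness and compatibility for $n>n(f)$ is not where the difficulty lies — that is handled once and for all by Construction \ref{Cons: Recipe for Markov partitions} and Corollary \ref{Coro: n(f) number} — and your observation that each individual $\cR(z,n)$ has finitely many rectangles (Lemma \ref{Lemm: Finite c.c.}) is correct but insufficient without the orbit-invariance of the type.
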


In \cite{IntiThesis}, it was shown that the geometric type is, in fact, a total invariant of conjugation. Therefore, the previous theorem provides us, for each $n$, with a canonical and finite invariant of conjugation. An ongoing project aims to determine new families of first intersection points that lead us to a canonical and distinguished primitive geometric Markov partition with the main property that any other primitive geometric type can be computed from this canonical geometric type. However, this is a topic to be explored in another paper.

\subsection{Geometric refinements.} Once the existence of a geometric Markov partition has been established, we move on to the main topic of this paper. Given a pair $(f,\cR)$—a homeomorphism and a geometric Markov partition—and the geometric type of the pair (see \ref{Defi: Geometric type of (f,cR)}), we use this information to construct another geometric Markov partition $\cR'$ of $f$ and compute the geometric type of $(f,\cR')$. Such Markov partitions are referred to as \emph{refinements} of $(f,\cR)$, and they are carefully introduced in Subsection \ref{Subsec: Generales refinamientos}. In this paper, we study three main refinements of a geometric type: the binary refinement, the boundary refinement along a family of periodic codes, and the corner refinement of a geometric Markov partition. We provide algorithms and formulas for each case, highlighting, case by case, the main properties that we try to induce in our refinements.

\subsubsection{The binary refinement}

The incidence matrix $A(f,\cR)$ of a Markov partition $\cR$ for the \textbf{p-A} homeomorphism $f:S \to S$ is one of the most common combinatorial objects associated with a Markov partition. It turns out that the geometric type of the pair $T(f,\cR)$ completely determines the incidence matrix. A matrix whose coefficients are exclusively $0$ and $1$ is called \emph{binary}, and it is a classic result that whenever $A(f,\cR)$ is binary, we can define a sub-shift of finite type $(\Sigma_{A(f,\cR)})$ associated with the matrix and a projection $\pi_{(f, \cR)}:\Sigma_{A(f,\cR)} \to S$ that semiconjugates $\sigma_{A(f,\cR)}$ with the \textbf{p-A} homeomorphism $f$ (see \ref{Prop:proyecion semiconjugacion}). We must use the sub-shift and the projection to establish in \cite{CruzDiaz2024} (as was done in \cite{IntiThesis}) that the geometric type is a total invariant of conjugation. However, to achieve this, we must first describe a method to: Given a geometric Markov partition $(f,\cR)$ and its geometric type, obtain a new geometric Markov partition of $f$ whose incidence matrix is binary. This is the content of the next theorem.

\begin{theo*}[\ref{Theo: Refinamiento binario}]
	Given a geometric partition $\mathcal{R}$ of $f$ and the geometric type $T$ of the pair $(f, \mathcal{R})$, there exists a finite algorithm to construct a geometric refinement of the pair $(f, \mathcal{R})$, known as the \emph{binary refinement} of $(f, \mathcal{R})$, denoted by $(f, \textbf{Bin}(\mathcal{R}))$ with the following properties:
	\begin{itemize}
		\item The  rectangles in $\textbf{Bin}(\cR)$ that are the horizontal sub-rectangles of the Markov partition $(f, \mathcal{R})$.
		\item The incidence matrix of $(f, \textbf{Bin}(\cR) )$ is binary.
		\item  there are explicit formulas in terms of the geometric type $T$ of $(f, \mathcal{R})$ to compute the geometric type  of  $(f, \textbf{Bin}(\mathcal{R}))$.
	\end{itemize} 
\end{theo*}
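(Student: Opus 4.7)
The plan is to take $\textbf{Bin}(\cR)$ to be the collection of all horizontal sub-rectangles of $(f,\cR)$, indexed as $H_{i,k}$ with $i\in\{1,\dots,n\}$ and $k\in\{1,\dots,h_i\}$ (so $n' := \sum_i h_i$), and equipped with the vertical orientation inherited from the ambient $R_i$. Every piece of data defining these rectangles is already encoded in $T = (n, \{h_i,v_i\}_{i=1}^n, \rho, \epsilon)$, so the algorithm consists simply of enumerating the pairs $(i,k)$ and executing the bookkeeping below, in a number of steps bounded by $\sum_i h_i$.

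First I would verify that the family $\{H_{i,k}\}$ is a geometric Markov partition of $f$. Each $H_{i,k}$ is already a rectangle; for fixed $i$, the union of the $H_{i,k}$ exhausts $R_i$ by the Markov property of $(f,\cR)$, and distinct horizontal sub-rectangles have disjoint interiors. The Markov property for the refinement is a direct computation: writing $(j,m) = \rho(i,k)$, one has $f(H_{i,k}) = V_{j,m}$, and hence $f(H_{i,k}) \cap H_{i',k'}$ is empty unless $i'=j$, in which case it equals the single rectangle $V_{j,m} \cap H_{j,k'}$, which fully crosses $V_{j,m}$ in the stable direction and $H_{j,k'}$ in the unstable direction. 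Pulling back by $f$ yields one horizontal sub-rectangle of $H_{i,k}$. The same computation shows that the incidence matrix of $(f, \textbf{Bin}(\cR))$ is binary: the row indexed by $(i,k)$ has entries $1$ precisely at positions $(j,k')$ with $k' \in \{1, \dots, h_j\}$, and $0$ elsewhere.

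Next, the geometric type $T' = (n', \{h'_{(i,k)}, v'_{(i,k)}\}, \rho', \epsilon')$ of $(f, \textbf{Bin}(\cR))$ can be read off from the same analysis. The horizontal sub-rectangles of $H_{i,k}$ are preimages under $f$ of the rectangles $V_{j,m} \cap H_{j,k''}$ for $k'' \in \{1,\dots,h_j\}$, so $h'_{(i,k)} = h_j$; the vertical sub-rectangles of $H_{i,k}$ are the intersections $H_{i,k} \cap V_{i,m'}$ for $m' \in \{1,\dots,v_i\}$, so $v'_{(i,k)} = v_i$. The bijection $\rho'$ sends the horizontal sub-rectangle of $H_{i,k}$ labeled by $k''$ to the vertical sub-rectangle of $H_{j,k''}$ labeled by $m$. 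Finally the orientation is inherited: $\epsilon'((i,k), k'') = \epsilon(i,k)$, since the vertical orientations chosen on $H_{i,k}$ and $H_{j,k''}$ are by construction those of $R_i$ and $R_j$.

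The main subtlety, and the only likely source of error in the formulas, is the ordering convention built into $\rho'$. The horizontal sub-rectangles of $H_{i,k}$ must be enumerated from bottom to top in its chosen vertical orientation; this agrees with the natural order $k'' = 1, \dots, h_j$ when $\epsilon(i,k) = +1$ and is reversed when $\epsilon(i,k) = -1$, and an analogous orientation-dependent rule governs the enumeration of the vertical sub-rectangles of $H_{j,k''}$. Once this sign is correctly inserted, the verification that $\rho'$ is a bijection reduces to the observation that it is built from $\rho$ by a reindexing, the identity $\sum_{(i,k)} h'_{(i,k)} = \sum_{(i,k)} v'_{(i,k)} = \sum_i h_i v_i$ is immediate, and the finiteness of the algorithm is transparent.
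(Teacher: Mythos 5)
Your proposal is correct and follows essentially the same route as the paper: take $\textbf{Bin}(\cR)$ to be the family of horizontal sub-rectangles of $(f,\cR)$ with the induced orientations and lexicographic labels, observe that $f(H_{i,k})\cap H_{j,k'}$ is the single connected component $V_{j,m}\cap H_{j,k'}$ (whence the binary incidence matrix), and read off the geometric type with $h'_{(i,k)}=h_j$, $v'_{(i,k)}=v_i$, $\epsilon'$ inherited, and the order reversal $k''\mapsto h_j-(k''-1)$ in $\rho'$ when $\epsilon(i,k)=-1$, exactly as in the paper's Equations (\ref{Equa: horizontal type for preseving orientation }) and (\ref{Equa: horizontal type for change orientation }). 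The one small imprecision is your closing remark that an ``analogous orientation-dependent rule'' governs the enumeration of the vertical sub-rectangles of $H_{j,k''}$: under the canonical geometrization the horizontal direction of $H_{j,k''}$ is that of $R_j$, so the vertical label of the image is always the unchanged $m$ and no sign needs to be inserted on that side.
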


A geometric type $T$ is called symbolically modelable, denoted as $\cG\cT(\textbf{p-A})^{SIM}$, if there exists a pair $(f,\cR)$, a homeomorphism and geometric Markov partition, such that the geometric type of $(f,\cR)$ is $T$ and the incidence matrix of $T$ is binary.

\subsubsection{Refinements along a family of periodic codes.}

By virtue of the binary refinement, given a pair $(f,\cR)$, we can assume it is geometrically modelable  and can consider its associated finite-type sub-shift as introduced in  \ref{Defi:Induced shift}, as well as its projection $\pi_{(f, \cR)}:\Sigma_{A(f,\cR)} \to S$. A periodic code $\underline{w}\in\Sigma_{A(f,\cR)}$ projects onto a periodic point of $f$ (Lemma \ref{Lemm: Periodic to peridic}), and moreover, any iteration $t\in \NN$ of them determines a subset of codes called \emph{stable interval codes} (see \ref{Def: Stable intervals of codes})):

\begin{equation*}[\ref{Equa: Projection stable intervals}]
	\underline{I}_{t, \underline{w}} := \{\underline{v} \in \underline{F}^s(\sigma^t(\underline{w})): v_{n} = w_{t+n} \text{ for all } n \in \NN\},
\end{equation*}

These have the property of being projected under the action of $\pi_{(f, \cR)}$ onto a horizontal interval $I_{t,\underline{w}}$ of the rectangle $R_{w_t}\in \cR$ (Lemma\ref{Lemm: who the intervals intersect } ). Our goal now is to obtain a geometric Markov partition of $f$, $(f,\cR[\cS(\cW)])$, such that the only periodic points on the boundary of this partition are the orbits of $\{\pi_{(f, \cR)}(\underline{w}): \underline{w}\in \cW\}$ and subsequently compute its geometric type. This construction occupies most of Section \ref{Subsec: Codigos periodicos ref}, and the following theorem is the result of our entire discussion.

\begin{theo*}[\ref{Theo: Refinamiento s frontera fam}]
	Given a geometric Markov  partition $\mathcal{R} = \{R_i\}_{i=1}^n$ of $f$, the geometric type $T$ of the pair $(f, \mathcal{R})$, and a finite family of periodic codes $\cW = \{\underline{w}^1, \cdots, \underline{w}^Q\} \subset \Sigma_{A(T)}$, there exists a finite algorithm to construct a geometric refinement of $(f, \mathcal{R})$ by horizontal sub-rectangles of $ \mathcal{R}$, known as the $s$-\emph{boundary} refinement of $(f, \mathcal{R})$ with respect to $\cW$, denoted by $(f, \textbf{S}_{\cW}(\mathcal{R}))$, with the following properties:
	\begin{itemize}
		\item Every rectangle in the Markov partition $\textbf{S}_{\cW}(\mathcal{R})$ has as stable boundary components either a stable boundary arc of a rectangle $R_i\in \mathcal{R}$ or a stable arc $I_{t, \underline{w}}$ determined by the iteration $t$ of a code $\underline{w} \in \cW$. Moreover, every arc of the form $I_{t, \underline{w}}$ for $\underline{w} \in \cW$ is the stable boundary component of some rectangle in the refinement $\textbf{S}_{\cW}(\mathcal{R})$.
		\item The periodic boundary points of $\textbf{S}_{\cW}(\mathcal{R})$ are the union of the periodic boundary points of $\mathcal{R}$ and those in the set $\{\pi_{(f,\cR)}(\sigma_A^t(\underline{w})) \mid \underline{w} \in \cW \text{ and } t \in \NN\}$.
		\item There are explicit formulas, in terms of the geometric type $T$ and the codes in $\cW$, to compute the geometric type of $\textbf{S}_{\cW}(\mathcal{R})$.
	\end{itemize} 
\end{theo*}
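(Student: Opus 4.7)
The plan is to build $\textbf{S}_{\cW}(\cR)$ by cutting each rectangle of $\cR$ along a finite family of horizontal stable arcs prescribed by the iterates of the codes in $\cW$, verify that the resulting decomposition is a Markov partition, and then read off its geometric type from the data $(T, \cW)$. Since each $\underline{w}^j \in \cW$ is periodic of some period $p_j$, the orbit $\{\sigma^t(\underline{w}^j) : t \in \NN\}$ has exactly $p_j$ distinct elements, so the family $\cA := \{I_{t, \underline{w}^j} : 1 \le j \le Q,\ 0 \le t < p_j\}$ is finite; the arc $I_{t, \underline{w}^j}$ is a horizontal stable arc in $R_{w^j_t}$ passing through the periodic point $\pi_{(f,\cR)}(\sigma^t(\underline{w}^j))$. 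For each $R_i \in \cR$, I collect the arcs of $\cA$ contained in $R_i$ and cut $R_i$ along them; the resulting vertically ordered horizontal sub-rectangles $R_i^{(1)}, \ldots, R_i^{(k_i)}$, each inheriting the vertical orientation of $R_i$, constitute $\textbf{S}_{\cW}(\cR)$. Finiteness of $\cA$ makes the whole construction algorithmic.

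To verify the Markov property, I use the shift-equivariance $\pi_{(f,\cR)} \circ \sigma = f \circ \pi_{(f,\cR)}$ together with the definition of stable interval codes to obtain $f(I_{t, \underline{w}^j}) \subseteq I_{t+1, \underline{w}^j}$; hence the new stable boundary $\partial^s \textbf{S}_{\cW}(\cR) = \partial^s \cR \cup \bigcup \cA$ is forward-invariant under $f$, the inclusion for old arcs being inherited from the Markov property of $\cR$. The unstable boundary is unchanged, so $f^{-1}(\partial^u \textbf{S}_{\cW}(\cR)) \subseteq \partial^u \textbf{S}_{\cW}(\cR)$ as well. It follows that the connected components of $f(\overset{o}{R_i^{(k)}}) \cap \overset{o}{R_j^{(l)}}$ are vertical sub-rectangles and those of $f^{-1}(\overset{o}{R_j^{(l)}}) \cap \overset{o}{R_i^{(k)}}$ are horizontal sub-rectangles, so $\textbf{S}_{\cW}(\cR)$ is a geometric Markov partition. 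The first two bulleted properties are then immediate from the construction: stable boundary components of each new sub-rectangle are either old stable boundary arcs of some $R_i$ or arcs $I_{t, \underline{w}}$ for $\underline{w} \in \cW$; and every new periodic boundary point lies on such a cut arc, namely in the orbit of some $\pi_{(f,\cR)}(\underline{w}^j)$.

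Finally, to compute the geometric type $(n', \{h'_i, v'_i\}, \rho', \epsilon')$ I would (i) count the number $k_i$ of new sub-rectangles inside each $R_i$ by enumerating the distinct pairs $(j, t)$ with $w^j_t = i$ modulo coincidence of arcs, (ii) locate each arc $I_{t, \underline{w}^j}$ at its precise vertical position in $R_{w^j_t}$ by tracing the code backward with $\rho$ and $\epsilon$ (the horizontal sub-rectangle of $R_{w^j_{t-1}}$ containing $\pi_{(f,\cR)}(\sigma^{t-1}(\underline{w}^j))$ is determined by $\rho^{-1}$ applied to the vertical level of $\pi_{(f,\cR)}(\sigma^t(\underline{w}^j))$ in $R_{w^j_t}$, with an orientation reversal flagged by $\epsilon$), and (iii) read off the new horizontal and vertical labels, the bijection, and the orientation map by restricting $T$ to each new sub-rectangle, since $f$ restricted to a new horizontal sub-rectangle behaves exactly as in $\cR$, modulo being sliced by the additional horizontal cuts in the target rectangle. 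The principal obstacle will be precisely this bookkeeping: producing explicit closed-form formulas requires a careful ordering of the arcs by height, detection of coincidences between arcs coming from distinct codes, and correct propagation of orientation reversals along the recursion; no conceptual ingredient beyond $T$, $\cW$, and the definition of $\pi_{(f,\cR)}$ is required, but the combinatorics is intricate enough that writing it as an algorithm is the natural presentation.
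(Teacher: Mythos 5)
Your construction and your verification of the Markov property follow the same route as the paper: you cut each $R_i$ along the finite family of arcs $I_{t,\underline{w}}$, use $f(I_{t,\underline{w}})\subseteq I_{t+1,\underline{w}}$ (shift-equivariance of $\pi_{(f,\cR)}$) to get $f$-invariance of the new stable boundary, note the unstable boundary is unchanged, and conclude via the boundary criterion for Markov partitions. The first two bullets of the theorem then follow exactly as you say. Up to this point your argument is correct and matches the paper.

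The gap is in the third bullet, and it is not merely "intricate bookkeeping": your proposed method for locating each arc $I_{t,\underline{w}^j}$ at its vertical position goes in the wrong direction. The vertical position of a stable arc inside $R_{w_t}$ is a function of the \emph{forward} itinerary $(w_t,w_{t+1},\dots)$: since $A(T)$ is binary, each successive symbol pins down a unique horizontal sub-rectangle $H^{w_t}_{j_{t,\underline{w}}}$ of $(f,\cR)$ containing the arc, and the nested intersection of these is the arc itself. Tracing backward with $\rho^{-1}$ and "the vertical level of $\pi_{(f,\cR)}(\sigma^t(\underline{w}^j))$" determines the unstable (left--right) position of the orbit point, not the height of its stable leaf; moreover that "vertical level" is a continuous datum, not something $\rho^{-1}$ can act on, so the step is circular. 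What is actually needed — and what the paper supplies — is a forward comparison rule: given two arcs $I_{t_1,\underline{w}^1}$, $I_{t_2,\underline{w}^2}$ in the same $R_i$, first compare the indices $j_{t_1,\underline{w}^1}$, $j_{t_2,\underline{w}^2}$ of the horizontal sub-rectangles of $(f,\cR)$ containing them; if these coincide, let $M$ be the first time the shifted codes differ, compare $j_{t_1+M-1,\underline{w}^1}$ with $j_{t_2+M-1,\underline{w}^2}$, and correct by the sign $\delta=\prod_{m=0}^{M-2}\epsilon_T(w^1_{t_1+m},j_{t_1+m,\underline{w}^1})$ recording the accumulated orientation reversals (Definition \ref{Defi: Intetchage order funtion,cW} and Lemma \ref{Lemm: Comparacion de ordenes, cW}). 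This order function is the ingredient from which the counts $H_r$, $V_r$ and the maps $\rho_{S(\cW)}$, $\epsilon_{S(\cW)}$ are then read off by a case analysis on how each $H^i_j$ meets each new rectangle $\tilde R_r$; without it, the explicit formulas claimed in the theorem cannot be produced.
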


The $u$-boundary refinement $\cR_{\cU(\cW)}$ of $(f,\cR)$ along a family of periodic codes is introduced in \ref{Defi: U boundary refinament}, and a symmetric statement of Theorem \ref{Theo: Refinamiento s frontera fam} holds for such refinement.  For the last refinement presented here, we must combine both procedures.

\subsubsection{The corner refinement}

A Markov partition has the corner property (\ref{Defi: Partion wellsuited/corner/adapted partition}) if every periodic point on its boundary is at the corner of any rectangle in the partition that contains it. This property is fundamental as it simplifies combinatorial arguments and combines them with topological arguments when carrying out our classification in \cite{CruzDiaz2024} and \cite{IntiThesis}. In Section \ref{Section: Corner ref}, we will prove the following theorem.

\begin{theo*}[\ref{Theo: The corner refinamiento}]
	Let $T \in \mathcal{T}(\textbf{p-A})^{SIM}$ and let $(f, \cR)$ be a pair that represents $T$. Then there exists a geometric Markov partition of $f$, denoted $\cR_{\cC}$, called the corner refinement of $(f, \cR)$ with the following properties:
	\begin{enumerate}
		\item The geometric Markov partition $\cR_{\cC}$ has the corner property.
		\item The periodic boundary codes of $(f, \cR_{\cC}$ are the boundary periodic points of $(f, \cR)$.
		\item There exists an algorithm and explicit formulas to compute the geometric type $T_{\cC}$ of $(f, \cR_{\cC}$.
	\end{enumerate}
\end{theo*}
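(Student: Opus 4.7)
My plan is to build the corner refinement $\cR_{\cC}$ by successively applying the $s$-boundary and $u$-boundary refinements of Theorem~\ref{Theo: Refinamiento s frontera fam} (and its $u$-analogue from Definition~\ref{Defi: U boundary refinament}) to a carefully chosen finite, shift-invariant family of periodic codes $\cW\subset\Sigma_{A(f,\cR)}$ whose projection under $\pi_{(f,\cR)}$ is exactly the set of periodic boundary points of $(f,\cR)$. Because $T$ is symbolically modelable, Proposition~\ref{Prop:proyecion semiconjugacion} provides the subshift and the semiconjugating projection; because $\partial^s\cR$ and $\partial^u\cR$ are finite unions of compact arcs that are respectively forward- and backward-invariant under $f$, the set of periodic boundary points is finite, and Lemma~\ref{Lemm: Periodic to peridic} allows me to lift it to a finite shift-closed family $\cW$ of periodic codes.

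I would first form $\cR_1 := \textbf{S}_{\cW}(\cR)$. By Theorem~\ref{Theo: Refinamiento s frontera fam}, $\cR_1$ is a horizontal refinement of $\cR$ whose new stable boundary arcs $I_{t,\underline w}$ pass through the periodic points $\pi_{(f,\cR)}(\sigma^t\underline w)$ for $\underline w\in\cW$; since $\cW$ already consists of boundary codes and is shift-invariant, no new periodic boundary points appear in $\cR_1$. Next, since the rectangles of $\cR_1$ are horizontal subrectangles of those of $\cR$, I canonically lift $\cW$ to a shift-invariant periodic family $\cW_1\subset\Sigma_{A(f,\cR_1)}$ projecting to the same set of points, and then form $\cR_{\cC} := (\cR_1)_{\cU(\cW_1)}$, the $u$-boundary refinement of $\cR_1$ along $\cW_1$, which adds unstable boundary arcs through the same periodic points.

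To conclude, I would verify the three stated properties. For every periodic boundary point $p$ of $\cR_{\cC}$, both a stable arc (from the first refinement) and an unstable arc (from the second) of $\partial\cR_{\cC}$ pass through $p$; hence $p\in\partial^s R\cap\partial^u R$ for every $R\in\cR_{\cC}$ with $p\in R$, giving the corner property~(1). The set of periodic boundary points is unchanged by construction, since at each stage the only new boundary points introduced are the projections of codes in $\cW$ (or $\cW_1$), which already belong to the original periodic boundary set; this yields~(2). Finally,~(3) follows by composing in sequence the explicit formulas provided by Theorem~\ref{Theo: Refinamiento s frontera fam} and its $u$-analogue, applied first to $T$ with data $\cW$ and then to the resulting type with data $\cW_1$.

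The main obstacle I expect is the canonical passage from $\cW$ to $\cW_1$: one must exhibit an explicit, finitely computable rule that, given a periodic code in $\Sigma_{A(f,\cR)}$, produces its periodic image in $\Sigma_{A(f,\cR_1)}$ projecting to the same point of $S$, and verify that this lift is shift-invariant. A related subtlety is confirming that the arcs introduced by each refinement actually cross \emph{every} rectangle of $\cR_{\cC}$ containing $p$, rather than halting at an intermediate boundary; this is necessary for $p$ to be a corner of all such rectangles, and should follow from the invariance of $\partial^s\cR$ and $\partial^u\cR$ together with the Markov property, but requires careful tracking of how the new arcs propagate under the two-stage refinement.
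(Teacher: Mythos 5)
Your overall architecture is the same as the paper's: cut first along stable arcs through the periodic boundary points ($s$-boundary refinement along the boundary periodic codes), then cut along unstable arcs through the same points ($u$-boundary refinement), and compose the two geometric-type formulas. The verification of properties (1) and (2) goes through essentially as you sketch it, modulo the case analysis on whether a given boundary point lay on $\partial^s R_0$, $\partial^u R_0$, or both before refinement.

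The genuine gap is exactly the one you flag as your ``main obstacle'': the passage from $\cW\subset\Sigma_{A(T)}$ to a family $\cW_1\subset\Sigma_{A(T_{\cS(\cW)})}$ is never constructed, and without it the second refinement is not defined at the level of geometric types, so item (3) does not follow. There is no canonical recoding map $\Sigma_{A(T)}\to\Sigma_{A(T_{\cS(\cW)})}$ available to you here: a periodic point may lie on the boundary of several rectangles of the refined partition and hence have several sector codes in the new subshift, and choosing which of these codes to put in $\cW_1$ is precisely the combinatorial content you are missing. The paper resolves this by not lifting at all: it computes the boundary periodic codes of the \emph{new} geometric type $T_{\cS(\underline{B(T)})}$ intrinsically, via the $s$- and $u$-generating functions $\Gamma(T)$ and $\Upsilon(T)$ of Definitions \ref{Defi: s-boundary generating funtion} and \ref{Defi: u-boundary generating funtion}, which iterate the data $(\rho_T,\epsilon_T)$ on the labels $(i,\pm 1)$ to produce the (at most $2n$) pre-periodic boundary codes of any symbolically modelable type (Corollary \ref{Coro: algoritmic per codes}). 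This is why Subsection \ref{Subsec: SU generating} exists: it is the step that turns your two-stage construction into an algorithm. A secondary point: your claim that both a stable and an unstable arc of $\partial\cR_{\cC}$ pass through $p$ does not by itself give the corner property for \emph{every} rectangle containing $p$; one must check, rectangle by rectangle as in the paper's proof of Proposition \ref{Prop: Corner ref periodic boundary points}, that after the $u$-refinement each such rectangle has $p$ on both its stable and unstable boundary, using that the cutting arcs $I_{t,\underline{w}}$ are full stable (resp.\ unstable) leaves of the rectangles they meet.
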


The construction of the corner refinement is given as follows. First, we perform the $s$-boundary refinement along the periodic boundary codes of $(f,\cR)$. In this way, all periodic points of $(f,\cR)$ are now stable periodic points of $(f,\cR_{\cS(Per(f,\cR))}$. The idea is now to perform a $u$-boundary refinement of $(f,\cR_{\cS(Per(f,\cR))}$ along the boundary codes of this partition. However, there is a complication that we must resolve. We need to be able to compute the boundary periodic codes of the new partition $(f,\cR_{\cS(Per(f,\cR))}$. More generally, given a geometric type $T$, we must provide an algorithm to determine the set of its periodic boundary stable, unstable, and corner codes.  This is why we have dedicated Subsection \ref{Subsec: SU generating}
 to making these calculations explicit.

We make use of the so-called $S$ and $U$ generating functions given in \ref{Defi: s-boundary generating funtion} and \ref{Defi: u-boundary generating funtion}, and we use their iterations to obtain a list of stable and unstable boundary codes in $\Sigma_{A(f,\cR)}$ (\ref{Defi: positive negative boundary codes }). The main result of this subsection is the following corollary:

\begin{coro*}[\ref{Coro: algoritmic per codes}]
	Given a specific geometric type $T$, we can compute the sets: $\underline{(S(T)})$, $\underline{U(T)}$, $\underline{B(T)}$, and $\underline{C(T)}$, in terms of the geometric type $T$.
\end{coro*}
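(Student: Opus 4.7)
The plan is to realize the four sets of periodic boundary codes as periodic orbits of finite-state dynamical systems extracted algorithmically from $T$, and then enumerate these orbits by standard cycle-finding in a finite directed graph.

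First, I would assemble the auxiliary data encoded in $T=(n,\{h_i,v_i\}_{i=1}^n,\rho,\epsilon)$: the incidence matrix $A(T)$, the pairing $\rho$ between horizontal and vertical sub-rectangles, and the sign function $\epsilon$. From this data alone one identifies the horizontal sub-rectangles indexed $1$ and $h_i$ as the ones touching the stable boundary of $R_i$, and the vertical sub-rectangles indexed $1$ and $v_i$ as those touching the unstable boundary. These boundary positions are the states on which the generating functions $\Phi_S$ and $\Phi_U$ of \ref{Defi: s-boundary generating funtion} and \ref{Defi: u-boundary generating funtion} act, and the point is that both state sets are finite.

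Second, I would build the finite directed graphs $\Gamma_S$ and $\Gamma_U$ whose vertex sets are these boundary states and whose edges record one step of $\Phi_S$, respectively $\Phi_U$, as dictated by $\rho$ and $\epsilon$. Because each graph has only finitely many vertices, every forward orbit is eventually periodic, and the strictly periodic orbits of $\Phi_S$ coincide with the closed directed cycles of $\Gamma_S$. Enumerating those cycles is purely algorithmic: one first computes strongly connected components (Tarjan), then lists the elementary circuits inside each component (Johnson). To each closed cycle of length $p$ in $\Gamma_S$ one associates the periodic bi-infinite sequence in $\Sigma_{A(T)}$ obtained by unfolding the cycle forward and backward, which produces $\underline{S(T)}$ as a finite explicit list of periodic codes together with their periods. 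The same procedure applied to $\Gamma_U$ yields $\underline{U(T)}$. Since by construction a boundary periodic code belongs to $\underline{S(T)}\cup\underline{U(T)}$ and a corner code is characterized by lying simultaneously on a stable and an unstable boundary, I would set
\begin{equation*}
\underline{B(T)} := \underline{S(T)}\cup\underline{U(T)}, \qquad \underline{C(T)} := \underline{S(T)}\cap\underline{U(T)}.
\end{equation*}
The union and intersection of two finite lists of periodic codes are effectively computable: two codes of periods $p$ and $q$ coincide iff they agree on a window of length $\mathrm{lcm}(p,q)$, reducing equality to a finite check.

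The main obstacle I anticipate is not the graph enumeration itself but establishing the dictionary between states of $\Phi_S$ (respectively $\Phi_U$) and genuine boundary periodic points of the partition $\cR$: one must show that the $\Phi_S$-orbit of a boundary state corresponds bijectively to the $f$-orbit of the associated stable boundary periodic point, and conversely that every such periodic point arises this way. This relies on the combinatorial description of how $f$ permutes the stable boundary arcs of the rectangles of $\cR$, which is precisely what $\rho$ and $\epsilon$ encode, together with the fact, already used earlier for adapted Markov partitions, that any periodic point on the stable boundary must either be a singularity at a corner or be carried by the iteration of $f$ to a corner of some rectangle. Once this correspondence is rigorously justified, the finiteness of the state space guarantees that the algorithm terminates, and its output depends only on $T$, yielding the corollary.
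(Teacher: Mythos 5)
Your proposal is correct and follows essentially the same route as the paper: the sets are obtained by iterating the finite-state generating functions $\Gamma(T)$ and $\Upsilon(T)$ on the $2n$ boundary labels, extracting the (eventually) periodic orbits, and then taking the union and intersection for $\underline{B(T)}$ and $\underline{C(T)}$; the dictionary between these orbits and genuine boundary periodic points that you flag as the main obstacle is exactly what Propositions \ref{Prop: positive codes are boundary} and \ref{Prop: boundary points have boundary codes} establish. The only cosmetic difference is that, since $\Gamma(T)$ is a single-valued map on a finite set, every vertex of its functional graph has out-degree one, so general cycle-enumeration machinery (Johnson's algorithm) is unnecessary—one simply follows each of the $2n$ labels until the orbit repeats, as Corollary \ref{Coro: preperiodic finite s,u boundary codes} does.
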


Finally, we will combine all these objects to obtain a corner refinement with respect to the family $\cW$ of periodic codes for $(f,\cR)$, and thus, we will obtain the following corollary with which we conclude our article.

\begin{coro}[\ref{Coro: Boundary refinament}]
	Let $\cR_{ \cC(\cW) }$  be the corner refinement of $(f,\cR_{S(\cW)})$ with geometric type $T_{\cC(\cW)})$. Then:
	
	\begin{itemize}
		\item[i)] The boundary periodic points of $(f, \cR_{\cC(\cW))})$ and the boundary periodic points of $(f, \cR_{S(\cW)})$ are exactly the same. So they are the union of the boundary periodic points of $(f,\cR)$ such that are the protection of the family $\cW$ by $\pi_{(f, \cR)}$.
		
		\item[ii)] The Markov partition $(f, \cR_{\cC(\cW))})$ exhibits the corner property.
		
		\item[iii)] There are formulas and an algorithm to compute the geometric type $T_{\cC(\cW)}$.
	\end{itemize}
\end{coro}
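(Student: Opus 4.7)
The plan is to chain together the three previously proved tools in sequence, treating this corollary as essentially a bookkeeping statement on top of Theorems \ref{Theo: Refinamiento s frontera fam}, \ref{Theo: The corner refinamiento}, and Corollary \ref{Coro: algoritmic per codes}. First I would apply Theorem \ref{Theo: Refinamiento s frontera fam} to the pair $(f,\cR)$ and the finite family of codes $\cW$, obtaining the $s$-boundary refinement $(f,\cR_{S(\cW)})$, whose geometric type $T_{S(\cW)}$ is explicitly computable from $T$ and $\cW$. The second bullet of that theorem immediately gives the description of the periodic boundary points of $\cR_{S(\cW)}$ as the union of the periodic boundary points of $\cR$ with the orbits $\{\pi_{(f,\cR)}(\sigma_A^t(\underline{w})):\underline{w}\in\cW,\ t\in\NN\}$. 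If $T$ is not already symbolically modelable, a preliminary application of Theorem \ref{Theo: Refinamiento binario} ensures that $T_{S(\cW)}\in\cG\cT(\textbf{p-A})^{SIM}$, which is the hypothesis needed to feed it into the corner refinement.

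Next I would run Corollary \ref{Coro: algoritmic per codes} on the computed geometric type $T_{S(\cW)}$ to produce the finite lists $\underline{S(T_{S(\cW)})}$, $\underline{U(T_{S(\cW)})}$, $\underline{B(T_{S(\cW)})}$, and $\underline{C(T_{S(\cW)})}$ of stable, unstable, boundary, and corner periodic codes of $(f,\cR_{S(\cW)})$. These are precisely the inputs required by the corner refinement procedure of Theorem \ref{Theo: The corner refinamiento}, which I apply to the pair $(f,\cR_{S(\cW)})$ to obtain $(f,\cR_{\cC(\cW)})$ with its geometric type $T_{\cC(\cW)}$. Assertion (ii) is then the first conclusion of that theorem, and (iii) follows by composing the two explicit algorithms: first $T\rightsquigarrow T_{S(\cW)}$ via Theorem \ref{Theo: Refinamiento s frontera fam}, then $T_{S(\cW)}\rightsquigarrow T_{\cC(\cW)}$ via Theorem \ref{Theo: The corner refinamiento}. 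For (i), the second conclusion of Theorem \ref{Theo: The corner refinamiento} states that the corner refinement preserves the periodic boundary points of the partition it is applied to; combining this with the description of the boundary periodic set of $\cR_{S(\cW)}$ obtained in the first step gives the claimed equality with the union of the periodic boundary points of $\cR$ and the $\pi_{(f,\cR)}$-projections of the orbits of $\cW$.

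The main conceptual difficulty has already been absorbed by the cited theorems: the $u$-boundary step hidden inside the corner refinement must be performed along unstable arcs whose periodic endpoints have to match exactly the boundary stable codes produced by the previous $s$-step, and one has to check that this does not introduce spurious periodic boundary points nor destroy the Markov property established after the $s$-refinement. Once Theorem \ref{Theo: The corner refinamiento} is in hand, no further topological argument is needed here; the remaining work is purely symbolic, consisting of substituting the output of the $s$-boundary algorithm into the input of the corner algorithm and verifying that the composite formula for $T_{\cC(\cW)}$ depends only on $T$ and $\cW$, which is clear from the fact that both intermediate algorithms are themselves functions of these data.
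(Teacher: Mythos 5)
Your proposal matches the paper's own argument: the corollary is obtained precisely by applying the corner-refinement result (Proposition \ref{Prop: Corner ref periodic boundary points}, which underlies Theorem \ref{Theo: The corner refinamiento}) to the pair $(f,\cR_{S(\cW)})$ with geometric type $T_{S(\cW)}$, and then feeding in the description of the boundary periodic set of $\cR_{S(\cW)}$ coming from Theorem \ref{Theo: Refinamiento s frontera fam}. Your chaining of the $s$-boundary algorithm, the boundary-code computation of Corollary \ref{Coro: algoritmic per codes}, and the corner-refinement algorithm is exactly the bookkeeping the paper intends.
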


\section*{Acknowledgments}

The results presented in this article are part of my PhD thesis at the \emph{Université de Bourgogne}, completed under the supervision of Christian Bonatti, to whom I would like to express my deepest gratitude for the ideas and advice that guided me throughout my research. My work was supported by a \emph{CONACyT} scholarship in partnership with the French government, and I am thankful for their trust in me and my research. The writing of this preprint was completed at the Institute of Mathematics, UNAM, in Oaxaca, Mexico, with the support of the \emph{DGAPA-PAPIIT} grant \emph{IA100724} awarded by Lara Bossinger.

\section{Geometric Markov partition and abstract geometric types}

\begin{comment}
0. Homeorfismo pseudo-Anosov en superficies, cerradura bajo conjugacion.
1. Particiones geométricas de Markov.
2. Tipos geométricos abstractos.
3. El tipo geométrico de una partición de Markov.
4. La clase pseudo-Anosov, realización y convención (f,R) partición geométrica.
\end{comment}

In this section, we shall introduce the main actors in this article: pseudo-Anosov homeomorphisms, their geometric Markov partitions, and the set of abstract geometric types. Once and for all, we assume that $S$ is a closed surface with a fixed orientation.

\subsection{Generalized pseudo-Anosov homeomorphisms}\label{Subsec: Foliaciones singulares y homeomorfismos pA}
The following definition of singular foliation is given by Farb and Margalit in \cite[Subsection 11.2.2]{farb2011primer} where, however, we note that apart from $k \geq 3$-prong type singularities, we also have spine-type singularities, that is, $1$-pronged singularities. In \cite{fathi2021thurston} they are called \emph{generalized pseudo-Anosov} homeomorphism. 

\begin{defi}\label{Defi: Foliacion singular}
A \emph{singular foliation} $\mathcal{F}$ on the smooth surface $S$ is a decomposition of $S$ into a disjoint union of path-connected subsets of $S$, called the \emph{leaves} of $\mathcal{F}$, and a finite set of points \textbf{Sing}$(\mathcal{F}) \subset S$, called \emph{singular points} of $\mathcal{F}$, such that the following two conditions hold.

\begin{enumerate}
	\item For each non-singular point $p \in S$, there is a \emph{smooth foliated chart} from a neighborhood of $p$ into $\mathbb{R}^2$ that takes leaves to horizontal line segments. The transition maps between any two of these charts are \emph{smooth maps} from  of the form $(x, y) \rightarrow (f(x, y), g(y))$. In other words, the transition maps take horizontal lines to horizontal lines.
	\item For each singular point $p \in \textbf{Sing}(S)$, there is a smooth chart from a neighborhood of $p$ to $\mathbb{R}^2$ that takes leaves to the level sets of a $k$-pronged saddle, $k \geq 3$, or to a $1$-pronged singularity that we call \emph{Spine}-type singularity: as they are described in figure 	\ref{Fig: Cartas foliadas}.
\end{enumerate}
\end{defi}

\begin{figure}[ht]
	\centering
	\includegraphics[width=0.3\textwidth]{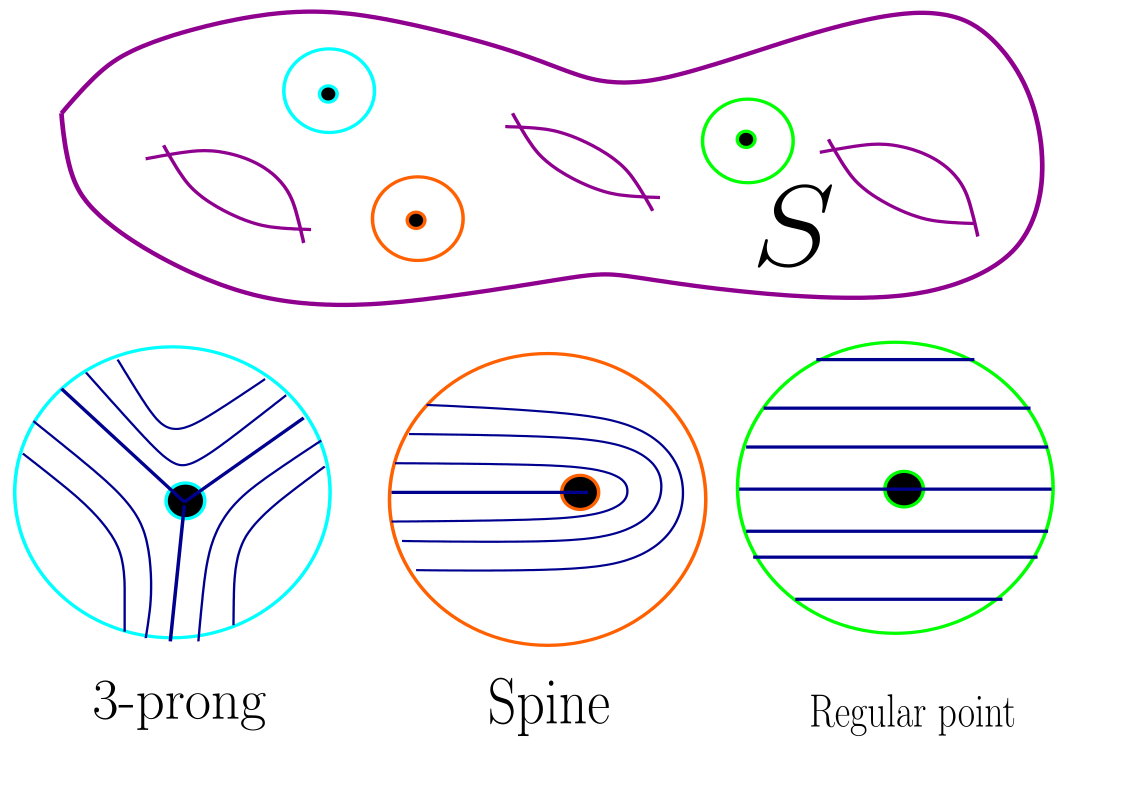}
	\caption{Different foliated charts}
	\label{Fig: Cartas foliadas}
\end{figure}

Let $g$ be the genus of the surface $S$ and assume $\mathcal{F}$ is a singular foliation on $S$. Let $s \in \textbf{Sing}(\mathcal{F})$ be a singular point with $P_s \geq 1$ prongs. The \emph{Euler-Poincaré Formula} \cite[Proposition 11.4]{farb2011primer}: $2\chi(S) = \sum_{p \in \textbf{Sing}(\mathcal{F})} (2 - P_s)$, imposes restrictions on the type of singularities $\mathcal{F}$ has. But since we permit $P_s$ to take the value $1$, we don't have any restriction on the genus of $S$, for example in the two dimensional sphere $\SS^2$ we have a singular foliation with four spines.

Let $\cF$ be a singular foliation of the surface $S$ and let $F$ be a leaf of $\cF$. A \emph{separatrix} of a point $x \in F$ is a connected component of $F \setminus \{x\}$. A regular point of $\cF$ has two different separatrices, while a $k$-pronged singularity has $k$ separatrices. In particular, a spine has only one separatrix.

A compact and connected subset of the surface $J \subset S$ is called an \emph{arc} if it is the image of an injective continuous map from the unit interval $I=[0,1]$ into the surface that is a homeomorphism onto its image, i.e., there exists a \emph{topological embedding} $\gamma: I \to S$ such that $\gamma(I) = J$. The \emph{interior} of the arc $J$ is given by $\overset{o}{J} = \gamma((0,1))$, and as a set, it doesn't depend on the specific parametrization of the arc.

\begin{defi}\label{Defi:Arcos transversales}
Let $\cF$ be a singular foliation of $S$. An arc $J \subset S$ is \emph{transversal} to the foliation $\cF$ if $\overset{o}{J} \cap \textbf{Sing}(\cF) = \emptyset$ and for every $x \in \overset{o}{J}$ there exists a foliated chart (\ref{Defi: Foliacion singular}) $\phi_x: U_x \to \RR^2$ with the following property: 

Let $\textbf{P}_i: \RR^2 \to \RR$ be the projection onto the real axis, $(x,y) \mapsto x$. Take $V_x \subset U_x \cap \overset{o}{J}$ to be the only connected component of $U_x \cap \overset{o}{J}$ that contains $x$, then $\textbf{P}_i \circ \phi_x: V_x \to \RR$ is injective. In other words, $\phi_x(\overset{o}{J})$ is \emph{topologically transversal} to the  leafs of $\cF$.
\end{defi}

Let $\cF$ be a singular foliation of $S$. A \emph{leaf-preserving isotopy} (Figure ) between two arcs $\alpha$ and $\beta$ that are transversal to $\cF$ is an isotopy $H: [0,1] \times [0,1] \to S$ such that $H([0,1] \times \{0\}) = \alpha$ and $H([0,1] \times \{1\}) = \beta$, and for all $t \in [0,1]$, the following conditions hold:
\begin{itemize}
	\item The arc $\gamma_t = H([0,1] \times \{t\})$ is transverse to $\cF$.
	\item Each one of the endpoints of $\gamma_t$ given by $H(\{0\} \times [0,1])$ and $H(\{1\} \times [0,1])$ is contained in a single leaf of $\cF$.
\end{itemize}

\begin{defi}\label{Defi: Medida transversal invariante}
Let $\cF$ be a singular foliation on $S$. A \emph{transverse invariant measure} of $\cF$, denoted by $\mu$, is a measure defined over any arc transversal to $\cF$, non-atomic and absolutely continuous with respect to the Lebesgue measure, that assigns a positive number to every arc that is not reduced to a point. Additionally, it is demanded that every pair of leaf-isotopic arcs have the same measure. That is, if $\alpha$ and $\beta$ are leaf-isotopic arcs, then $\mu(\alpha) = \mu(\beta)$.
\end{defi}

\begin{defi}\label{Defi: Foliacion medible}
	A \emph{measured foliation} on the surface $S$ is a pair $(\cF, \mu)$ consisting of a singular foliation $\cF$ of $S$ and a transverse invariant measure $\mu$ of $\cF$.
\end{defi}

Let $f: S \to S$ be a homeomorphism of $S$ and let $(\cF, \mu)$ be a measured foliation of $S$. The homeomorphism $f$ acts on the measured foliation to produce a new measured foliation on $S$, denoted by $f(\cF, \mu) = (f(\cF), f_*(\mu))$, where $f(\cF)$ is the singular foliation of $S$ given by the image of the leaves and singularities of $\cF$ under $f$, and for an arc $\gamma$ transverse to $f(\cF)$, the measure $f_*\mu$ is given by $f_*(\mu)(\gamma) = \mu(f^{-1}(\gamma))$. We must to note that,  $f_*(\mu)(\gamma)=\mu(f^{-1}(\gamma))$ only makes sense if $f^{-1}(\gamma)$ is transverse to  $\cF$, since the transversality has been defined at a topological level, this operation is well-defined. But isn't  true that $h(\cF)$ have smooth foliated charts, for the differential structure of $S$.  We are ready to formally introduce the maps that we must to study.

\begin{defi}\label{Defi: pseudo-Anosov homeomorphism}
	Let $S$ be a closed and oriented surface. An orientation-preserving homeomorphism $f: S \rightarrow S$ is a \emph{generalized pseudo-Anosov homeomorphism}, abbreviated as \textbf{p-A}-homeomorphism, if there exist a pair of measured foliations $(\cF^s, \mu^s)$ and $(\cF^u, \mu^u)$, called the \emph{stable} and \emph{unstable} foliations of $f$, that satisfy the following properties:
	\begin{itemize}
		\item A point $p \in S$ is a $k$-prong singularity of $\cF^s$ if and only if $p$ is a $k$-prong singularity of $\cF^u$. Therefore, the set of singularities of $f$ is defined as the set of singularities of the singular foliation, denoted by $\textbf{Sing}(f) := \textbf{Sing}(\cF^s) = \textbf{Sing}(\cF^u)$.
				
		\item The foliations are invariant under the action of $f$, which means that $f(\cF^s) = \cF^s$ and $f(\cF^u) = \cF^u$. We refer to $\cF^s$ and $\cF^u$ collectively as the \emph{invariant foliations} of $f$.
					
		\item Every leaf of the stable foliation $\cF^s$ is (at least) topologically transverse to every leaf of $\cF^u$ in the complement of their singularities.

		\item There exists a number $\lambda > 1$, called the \emph{stretch factor} of $f$, that:
		$$
		f_*(\mu^u) = \lambda \, \mu^u \quad \text{and} \quad f_*(\mu^s) = \lambda^{-1}\, \mu^s.
		$$
	\end{itemize}

\end{defi}

When the surface is the $2$-dimensional torus $\TT^2$ and the invariant foliations of the \textbf{p-A}-homeomorphism $f: \TT^2 \to \TT^2$ may have no singularities, since every pseudo-Anosov has an infinite number of periodic points, as proved in \cite[Proposition 9.20.]{fathi2021thurston}, we should adopt the following convention.

\begin{conv}\label{Conv: Singularidades caso del Toro}
	If the the invariant foliations of $f$ have no singularities, we agree that the set of singularities of $f$ consists of a finite number of periodic points of $f$.
\end{conv}

\begin{prop}\label{Prop: Propiedades foliaciones de p-A}
	Let $f: S \rightarrow S$ be a \textbf{p-A}  homeomorphism with invariant foliations $(\cF^s, \mu^s)$ and $(\cF^u, \mu^u)$. They have the following properties:
	\begin{enumerate}
		\item  None of the foliations contains a closed leaf, and none of the leaves contains two singularities (\cite[Lemma  14.11]{farb2011primer})	
		\item They are \emph{minimal}, i.e., any leaf of the foliations is dense in $S$  \cite[Corollary 14.15]{farb2011primer}).
	\end{enumerate}
\end{prop}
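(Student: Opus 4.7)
The plan is to exploit the leafwise contraction of $\mu^u$ (which measures arcs along leaves of $\cF^s$) by $f$ for item (1), and to reduce item (2) to the classical minimality theorem for measured foliations with no closed leaves and no saddle connections.

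For item (1), I would argue by contradiction. Suppose $\gamma$ is a closed leaf of $\cF^s$. Since $f(\cF^s) = \cF^s$, each $f^n(\gamma)$ is again a closed leaf of $\cF^s$. From Definition \ref{Defi: pseudo-Anosov homeomorphism} and the conventions spelled out in the abstract, $\mu^u(f(\gamma)) = \lambda^{-1}\mu^u(\gamma)$, and iterating yields $\mu^u(f^n(\gamma)) = \lambda^{-n}\mu^u(\gamma) \to 0$. The key geometric input is then that every closed leaf of a measured foliation lies in a maximal open annular cylinder foliated by parallel closed leaves of equal $\mu^u$-length, and the number of such cylinders is bounded by a Euler characteristic count on $S$, since pairwise disjoint essential annuli can only be packed finitely many times on a closed surface. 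Consequently the set of $\mu^u$-lengths of closed leaves of $\cF^s$ is finite, contradicting the accumulation of lengths at $0$. An entirely parallel argument handles saddle connections: if a leaf segment $\alpha$ of $\cF^s$ has both endpoints in $\textbf{Sing}(f)$, each iterate $f^n(\alpha)$ is again a saddle connection with $\mu^u(f^n(\alpha)) \to 0$, and only finitely many saddle connections of bounded $\mu^u$-length exist, since their endpoints lie in a finite set and within a fixed length they are determined by finite combinatorial data.

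For item (2), I would combine part (1) with the classical fact that a measured foliation on a closed surface that has no closed leaf and no saddle connection is minimal. The standard argument proceeds by first-return maps: choose a short arc $\tau$ transverse to $\cF^s$, and consider the first-return map $T_\tau$ of $\cF^s$ along $\tau$. This $T_\tau$ is an interval exchange transformation whose number of intervals is controlled by the combinatorics of how leaves emanating from singularities hit $\tau$; non-existence of saddle connections is precisely the Keane condition, so by Keane's theorem $T_\tau$ is minimal. Minimality of $T_\tau$ on $\tau$ translates, via standard saturation arguments using the absolute continuity of $\mu^u$, into density of every leaf of $\cF^s$ in $S$; the argument for $\cF^u$ is identical with the roles of the measures reversed. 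The main technical delicacy, and the step I expect to be the most obstacle-prone, is setting up $T_\tau$ rigorously in the presence of singularities and translating IET minimality back to minimality of the whole foliation; both steps are standard in Teichm\"uller theory but require care with the singular set.
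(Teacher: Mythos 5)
Your proposal is correct in outline, but note that the paper does not actually prove this proposition: it is stated as a quoted result with references to Farb--Margalit, so there is no internal proof to match. Your argument for item (1) is essentially the standard one behind the cited Lemma 14.11: the length computation $\mu^u(f^n(\gamma))=\lambda^{-n}\mu^u(\gamma)$ is consistent with the paper's convention $f_*(\mu^u)=\lambda\,\mu^u$, and the contradiction with a finite set of admissible lengths is the right mechanism. One caveat specific to this paper's \emph{generalized} setting: since $1$-prong singularities (spines) are allowed, a closed leaf need not be essential (it can bound a disk containing two spines, as on the sphere with four spines), so the finiteness of maximal cylinders should not be justified by packing disjoint essential annuli; instead, observe that each boundary component of a maximal cylinder must meet the finite singular set, which bounds the number of cylinders and hence of admissible lengths. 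The saddle-connection count is even simpler than you make it: each separatrix carries at most one saddle connection, so there are finitely many in total, with no length bound needed. For item (2), your reduction to ``no closed leaves and no saddle connections implies minimality'' via first-return maps is a legitimate standard route, but be aware that $\cF^s$ and $\cF^u$ are typically non-orientable near odd-pronged singularities, so the first-return map to a transversal is an interval exchange \emph{with flips}, for which Keane's theorem does not directly apply (and generic flipped IETs are not minimal); you should either pass to the orientation double cover, where saddle connections and closed leaves lift to the same objects and the honest IET/Keane argument applies, or follow the more self-contained first-return argument of the cited reference. With those two adjustments the proof is complete.
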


\begin{defi}\label{Defi: Top conjugacy y semiconjugacion}
	Let $S$ and $S'$ be two closed and orientable surfaces, and let $f : S \to S$ and $g : S' \to S'$ be two homeomorphisms. A map $h : S \to S'$ is a \emph{topological semi-conjugacy} from $f$ to $g$ provided $h$ is continuous, surjective, and satisfies $h \circ f = g \circ h$. We also say that $f$ is \emph{topologically semi-conjugated} to $g$ by $h$.
	
	If in addition $h$ is a homeomorphism, we say that $h$ is a \emph{topological conjugacy} between $f$ and $g$, and they are \emph{topologically conjugated} through $h$.
\end{defi}

\begin{prop}\label{Prop: conjugacion entre pseudo-Anosov}
	Let $f: S \rightarrow S$ and $g: S' \rightarrow S'$ be a pair of \textbf{p-A} homeomorphisms, and assume they are topologically conjugated through a homeomorphism $h: S \rightarrow S'$. For $\delta = u$ or $s$, let $(\cF^{\delta},\mu^{\delta})$ and $(\cG^{\delta},\nu^{\delta})$ denote the invariant foliations of $f$ and $g$ respectively. Finally, let $\delta_f$ and $\delta_g$ be the stretch factors of $f$ and $g$. In this manner:
	\begin{itemize}
		\item For $\delta = u$ or $s$, $h(\cF^{\delta}) = \cG^{\delta}$ and $\textbf{Sing}(g) = h(\textbf{Sing}(f))$.
		\item For $\delta = u$ or $s$, $h_*\mu^{\delta} = \nu^{\delta}$.
	\end{itemize}
\end{prop}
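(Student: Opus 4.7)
The plan is to recognise that the pair $(h(\cF^s),h_*\mu^s)$ together with $(h(\cF^u),h_*\mu^u)$ forms a transverse pair of measured foliations on $S'$ which is invariant under $g$ with exactly the same stretching behaviour that $(\cF^s,\mu^s),(\cF^u,\mu^u)$ have under $f$, and then to invoke uniqueness of such invariant measured foliations for a \textbf{p-A} homeomorphism to identify them with $(\cG^s,\nu^s),(\cG^u,\nu^u)$. First I would check that $h(\cF^s)$ is genuinely a singular foliation of $S'$ in the sense of Definition \ref{Defi: Foliacion singular}: because $h$ is a homeomorphism, composing the foliated charts of $\cF^s$ with $h^{-1}$ produces continuous foliated charts for $h(\cF^s)$ and preserves the prong number at every point. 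This immediately gives $\textbf{Sing}(h(\cF^s))=h(\textbf{Sing}(f))$; the same argument applies to $\cF^u$, and topological transversality between $h(\cF^s)$ and $h(\cF^u)$ is inherited from that of $\cF^s$ and $\cF^u$ because transversality is a local topological condition.

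Next I would verify the dynamical transformation law for the pushforward measures by a direct computation. Using $g=h\circ f\circ h^{-1}$ together with $f_*(\mu^s)=\lambda_f^{-1}\mu^s$,
\[
g_*(h_*\mu^s) \;=\; (h\circ f\circ h^{-1})_*(h_*\mu^s) \;=\; h_*(f_*\mu^s) \;=\; \lambda_f^{-1}\,(h_*\mu^s),
\]
and symmetrically $g_*(h_*\mu^u)=\lambda_f\,(h_*\mu^u)$. Combined with the previous paragraph, this shows that $(h(\cF^s),h_*\mu^s)$ and $(h(\cF^u),h_*\mu^u)$ satisfy Definition \ref{Defi: pseudo-Anosov homeomorphism} for $g$ with stretch factor $\lambda_f$ and singular set $h(\textbf{Sing}(f))$.

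At this point I would appeal to the uniqueness of the invariant measured foliations of a generalized pseudo-Anosov homeomorphism, as developed in \cite{fathi2021thurston}: up to rescaling of the transverse measures, $g$ admits a unique transverse pair of invariant measured foliations. This simultaneously yields $h(\cF^s)=\cG^s$, $h(\cF^u)=\cG^u$, the equality $\textbf{Sing}(g)=h(\textbf{Sing}(f))$, and $\lambda_f=\lambda_g$. The transverse measures on each foliation then coincide up to a positive scalar; the scalars are pinned to $1$ by the standard area normalisation, because $\mu^s\otimes\mu^u$ is the $f$-invariant area on $S$, $\nu^s\otimes\nu^u$ is the $g$-invariant area on $S'$, and $h$ conjugates $f$ to $g$ and must therefore carry one invariant area to the other once total masses are fixed dynamically.

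The step I expect to be the main obstacle is the uniqueness claim in the presence of spines, since the classical uniqueness arguments for invariant measured foliations are usually written with $k$-pronged singularities, $k\geq 3$, in mind. A convenient workaround is to pass to a branched double cover of $S$ ramified over the spines, on which every spine lifts to a regular point of the lifted foliations and $f$ lifts to an honest pseudo-Anosov homeomorphism for which the classical uniqueness statement applies; equivariance under the deck involution then descends the identification back to $S$. A more hands-on alternative is to give a purely topological characterisation of the stable and unstable prongs (for instance in terms of asymptotic closeness of forward or backward orbits, as in the expansive setting), which is manifestly preserved by the conjugacy $h$ and allows one to identify the foliations leaf by leaf, the finitely many spine orbits being handled by an explicit local check.
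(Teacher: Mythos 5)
Your proposal is correct in outline but takes a genuinely different route from the paper. You push the entire measured-foliation structure forward, check that $(h(\cF^s),h_*\mu^s)$ and $(h(\cF^u),h_*\mu^u)$ satisfy Definition \ref{Defi: pseudo-Anosov homeomorphism} for $g$ with stretch factor $\lambda_f$, and then invoke uniqueness of the invariant measured foliations to conclude. The paper instead works leaf by leaf: it characterises the stable leaf of $x$ dynamically, as the set of $y$ with $d(f^n(x),f^n(y))\to 0$, observes that this characterisation is preserved by any conjugating homeomorphism, and deduces $h(\cF^s)=\cG^s$ directly; the measure statement is then the same pushforward computation you perform. In other words, the paper's proof \emph{is} your ``more hands-on alternative,'' promoted from a fallback to the main argument. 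What your route buys is generality and brevity modulo the uniqueness theorem; what the paper's route buys is self-containedness, since it never needs uniqueness of invariant measured foliations in the spine setting and therefore avoids exactly the obstacle you identify.

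Two of your supporting steps deserve a warning. First, the branched-double-cover workaround is shakier than you suggest: a double cover of $S$ branched precisely over the set of spines exists only when a suitable $\ZZ/2$-homomorphism does (in particular the parity of the branch set matters, and the Euler--Poincar\'e formula only forces the number of \emph{odd}-pronged singularities to be even, not the number of spines), and even when the cover exists one must still check that $f$ lifts, i.e.\ that $f_*$ preserves the corresponding index-two subgroup. Your second workaround is the safe one. Second, the normalisation argument at the end does not pin both scalars: if $h_*\mu^s=a\,\nu^s$ and $h_*\mu^u=b\,\nu^u$, comparing the invariant areas only constrains the product $ab$ (and only after the total areas of $S$ and $S'$ have themselves been normalised), so it cannot by itself yield $a=b=1$. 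To be fair, the statement $h_*\mu^\delta=\nu^\delta$ is only meaningful up to the usual rescaling ambiguity of transverse measures, and the paper's own sketch establishes no more than the transformation law $g_*(h_*\mu^s)=\lambda^{-1}h_*\mu^s$; but you should not present the area normalisation as closing that gap.
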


\begin{proof}[Sketch of the proof]
We only give the argument fo the stable foliations, the unstables case follow the same ideas.

Take a point $x \in S$ and let $L$ be a  leaf of $\cF^s$ passing through $x$. For any metric  $d(\cdot, \cdot)$ compatible with the topology of $S$, a point $y \in S$ is on the stable leaf of $x$  if and only if $d(f^n(x), f^n(y)) \to 0$. Since $h$ is a homeomorphism, for any metric $d'(\cdot, \cdot)$ compatible with the topology of $S$, the limit is preserved $d'(h \circ f^n(x), h \circ f^n(y)) \to 0$. Since $h$ is a conjugation between $f$ and $g$, the previous limit is equivalent to $d'(g^n \circ h(x), g^n \circ h(y)) \to 0$. Therefore, $h(x)$ is in the stable manifold of $h(y)$. We can conclude that $h(L_x) = G_{h(x)} \in \cG^s$. This probe our first item.

The second one is a following  computation, where let $J$ an arc transverse to $\cG^s$:
\begin{eqnarray*}
	h_*(\mu^s(g(J))) = \mu^s(h^{-1}(h \circ f \circ h^{-1})(J)) = \\
	\mu^s(f(h^{-1}(J))) = \lambda^{-1} \mu^s(h^{-1}(J)) = h_*\mu^s(J).
\end{eqnarray*}

This implies that $g$ has the same dilation factor as $f$.

\end{proof}

\subsection{Geometric Markov partitions}\label{Subsec: Geometric Markov partitions}

Let $f: S \rightarrow S$ be a \textbf{p-A}-homeomorphism whose stable and unstable foliations are $(\cF^s, \mu^s)$ and $(\cF^u, \mu^u)$. A Markov partition for $f$ is a decomposition of the surface into pieces called \emph{rectangles}. Similar to a triangulation of the surface that carries topological information about $S$, a Markov partition carries dynamical information about $f$. So let's start by introducing these pieces.

\subsubsection{Parametrized rectangles.}

Let $D \subset S$ be a connected open set whose closure $\overline{D}$ is compact. For every $x \in D$, let $L_x^s$ be the stable leaf of $\cF$ passing through $x$. Define $\overset{o}{I_x}$ as the unique connected component of the intersection $L_x^s \cap D$ that contains $x$. Similarly, let $\overset{o}{J_x}$ be the unique connected component of the intersection $L_x^u \cap D$ (the unstable leaf passing through $x$ and the disc) that contains $x$.

\begin{defi}\label{Defi: Abierto bi-foleado}
Let $D \subset S$ be a connected and open set, we say $D$ is \emph{trivially bi-foliated} by $\cF^s$ and $\cF^u$ if for all $x \in D$, $\overset{o}{I_x}$ and $\overset{o}{J_x}$ are open arcs and for every $x, y \in D$, $\vert \overset{o}{I_x} \cap \overset{o}{J_y}\vert = 1$.
\end{defi}

\begin{defi}\label{Defi: Rectangulo}

A \emph{rectangle} for $f$ is the closure $\overline{D}$ of any connected and open set $D \subset S$, provided its closure is compact and $D$ is trivially bi-foliated.

\end{defi}

Henceforth, we should denote the unit square in $\RR^2$ with the standard orientation and its trivial foliations by vertical and horizontal lines as $\II^2 := [0,1] \times [0,1]$,  and its interior by $\overset{o}{\II^2} := (0,1) \times (0,1)$. If $\alpha$ is a compact arc, its boundary consists of two points which we call the \emph{endpoints} or extreme points of $\alpha$, and its interior is denoted by $\overset{o}{\alpha}$. With these definitions, we state the following proposition, which we must use to induce a vertical direction in our rectangles.

\begin{prop}\label{Prop: Rectangulos parametrizados}
Let $D$ be an open and connected set whose closure $R := \overline{D}$ is a rectangle for the \textbf{p-A} homeomorphism $r$. Let $R \subset S$ be a compact subset. Then there exists a continuous function $\rho : \II^2 \rightarrow S$ satisfying the following conditions:
\begin{enumerate}
	\item The image of $\rho$ is the rectangle $R$.
	
	\item The \emph{interior} of the rectangle $R$ is given by $\overset{o}{R} := \rho(\overset{o}{\II^2})$ map $\rho: \overset{o}{\II^2} \to \overset{o}{R}$ is an orientation-preserving homeomorphism.
			
	\item For every $t \in [0,1]$, $I_t := \rho([0,1] \times \{t\})$ is contained in a unique leaf of $\mathcal{F}^s$, and $\rho$ restricted to $[0,1] \times \{t\}$ is a homeomorphism onto its image. The arc $I_t$ is called \emph{horizontal leaf} of $R$.
	
	\item For every $s \in [0,1]$, $J_s := \rho(\{t\} \times [0,1])$ is contained in a unique leaf of $\mathcal{F}^u$, and $\rho$ restricted to $\{s\} \times [0,1]$ is a homeomorphism onto its image. The arc $J_s$ is called \emph{vertical leaf} of $R$.
\end{enumerate}
A map like $\rho$ is then called  \emph{parametrization} of $R$.
\end{prop}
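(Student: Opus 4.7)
The plan is to construct $\rho$ by straightening the bi--foliation of $D$ using the invariant transverse measures $\mu^u$ and $\mu^s$, and then extend by continuity to $\overline{D}=R$. First I would fix a base point $x_0 \in D$ and consider the open arcs $\overset{o}{I}_{x_0} \subset L^s_{x_0}$ and $\overset{o}{J}_{x_0} \subset L^u_{x_0}$. Since $\overline{D}$ is compact and the transverse measures are non--atomic, their total $\mu^u$-- and $\mu^s$--masses are finite. Parametrize the closure of $\overset{o}{I}_{x_0}$ inside its stable leaf as a homeomorphism $\phi \colon [0,1] \to \overline{I}_{x_0}$ via normalized $\mu^u$--length, and symmetrically $\psi \colon [0,1] \to \overline{J}_{x_0}$ via $\mu^s$; choose the direction of each parametrization to be compatible with the fixed orientation of $S$.

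For $(s,t) \in \overset{o}{\II^2}$, the trivial bi--foliation property of $D$ guarantees that the stable leaf arc through $\psi(t)$ meets the unstable leaf arc through $\phi(s)$ at exactly one point of $D$, which I would call $\rho(s,t)$. Continuity of $\rho$ on $\overset{o}{\II^2}$ follows from the local product structure of the foliated charts of Definition \ref{Defi: Foliacion singular}, valid because $D$ is disjoint from $\textbf{Sing}(f)$ (otherwise the bi--foliation would not be trivial near a singular point). Bijectivity onto $D$ is immediate from the condition $|\overset{o}{I}_x \cap \overset{o}{J}_y| = 1$ for $x,y \in D$, and invariance of domain then promotes $\rho$ to a homeomorphism $\overset{o}{\II^2} \to D = \overset{o}{R}$, orientation--preserving by the choice of $\phi$ and $\psi$. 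By construction, each horizontal line $[0,1] \times \{t\}$ with $t \in (0,1)$ is sent into a single leaf of $\cF^s$ and each vertical line into a single leaf of $\cF^u$, and the restrictions are homeomorphisms onto their images since leaves of $\cF^s$ and $\cF^u$ are injectively immersed path--connected subsets of $S$.

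The hard part is extending $\rho$ continuously to $\partial \II^2$ and checking properties (3) and (4) there. Given a sequence $(s_n,t_n) \to (s,t) \in \partial \II^2$, the images $\rho(s_n,t_n)$ lie in the compact set $R$. I would argue that the stable holonomy along $\overline{J}_{x_0}$ extends continuously to its endpoints: because $\mu^u$ is non--atomic and finite on $\overset{o}{I}_{x_0}$, the stable arcs through $\psi(t_n)$ converge in the compact--open topology on leaves to a compact stable arc of finite $\mu^u$--length, which I define to be $I_t$. This yields a homeomorphism of $[0,1] \times \{t\}$ onto $I_t$ for every $t \in [0,1]$, and a symmetric argument treats vertical lines. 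The main obstacle is that $\rho$ need not be injective on $\partial \II^2$---two corners may coincide at a singular point of $f$, or an entire side of $R$ could be pinched---so one must show that the independently--defined horizontal and vertical extensions agree on the four corners and patch to a globally continuous, surjective map $\II^2 \to R$. This is precisely where the hypothesis that $R$ is the closure of a trivially bi--foliated open set bites: it forces $\partial R$ to be exhausted by the four arcs $I_0, I_1, J_0, J_1$, so no additional identifications on $\partial \II^2$ are needed beyond those forced by how $R$ sits inside $S$.
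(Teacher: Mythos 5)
Your construction is essentially the paper's: both coordinatize $D$ by the invariant transverse measures along a stable/unstable cross through a base point, obtain a homeomorphism on the interior from the trivial bi-foliation, and then extend to $\partial \II^2$ using non-atomicity and finiteness of the measures. The boundary worry you flag at the end (pinched sides, coinciding corners) is resolved in the paper exactly along the lines you gesture at: each arc $J_x=\overline{\overset{o}{J_x}}$ has two distinct endpoints since it cannot close up, and leaf-isotopy invariance forces all vertical (resp.\ horizontal) arcs to have equal transverse measure, so the image in measure coordinates is a genuine non-degenerate rectangle and each side of $R$ lies in a single leaf.
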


\begin{proof}
First, we shall to construct a homeomorphism $\rho: (0,1) \times (0,1) \to S$ such that $\rho((0,1) \times (0,1)) = D$, and satisfies the last three items in the proposition. Subsequently, we must to extend $\rho$ continuously to $[0,1] \times [0,1] \to S$ in such a way that $\rho([0,1] \times [0,1]) = \overline{D} = R$, while satisfy all the other properties to be a parametrization of $R$.

Let $x \in D$ be a distinguished point, and let $y \in D$. Since $D$ is trivially bi-foliated by $\cF^s$ and $\cF^u$, there exists a unique point $y_1$ in the intersection $\overset{o}{I_x} \cup \overset{o}{J_y}$ and a unique $y_2$ in $\overset{o}{J_x} \cup \overset{o}{I_y}$. Following the notation in Fig. \ref{Fig: Arcos para la parametrizacion}  let $\epsilon(y) = 1$ if $\overset{o}{I_y}$ intersects the separatrix $J_1$ of $x$, and $-1$ otherwise. Similarly, $\delta(y) = 1$ if $\overset{o}{J_y}$ intersects the separatrix $I_1$ of $x$, and $-1$ otherwise. This allows us to define the map $\phi: D \to \RR^2$ which assigns to  $y\in D$ the point:
$$
(\epsilon(y)\cdot\mu^u([x,y_1]^s),\delta(y)\cdot \mu^s([x,y_2]^u)) \in \RR^2.
$$

\begin{figure}[ht]
	\centering
	\includegraphics[width=0.3\textwidth]{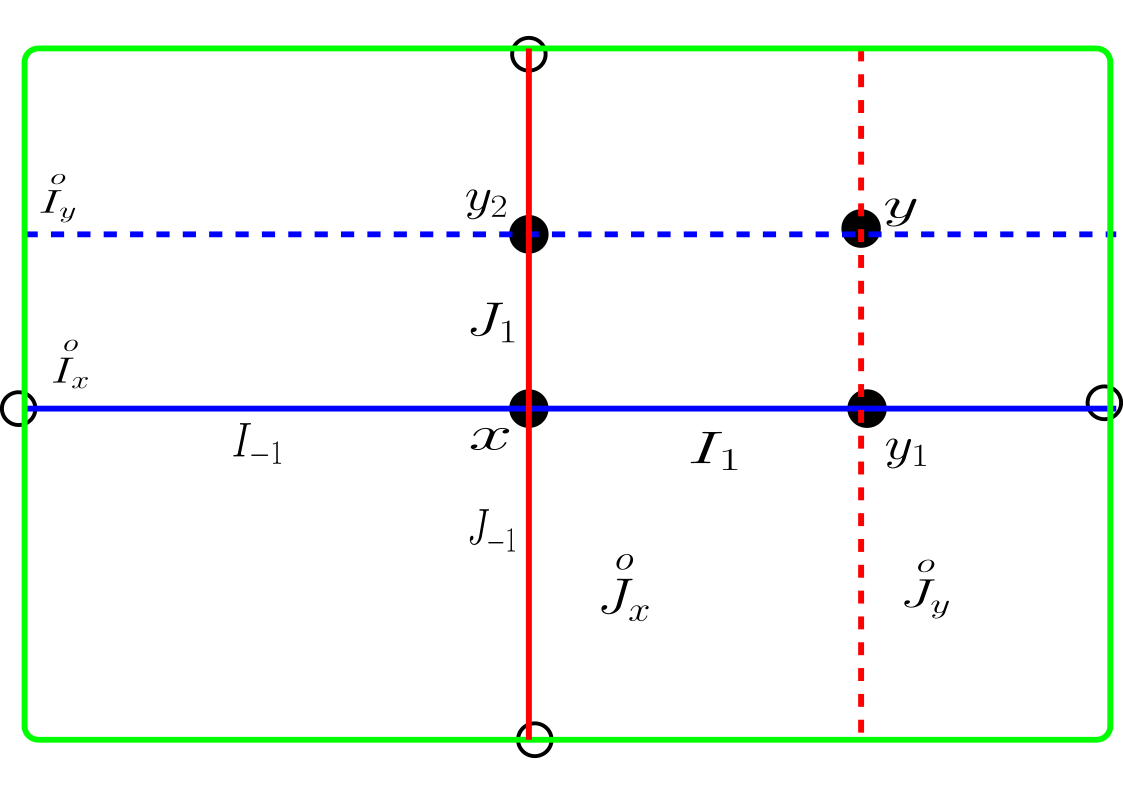}
	\caption{The arcs $[x,y_1]^s$ and $[x,y_1]^u$. }
	\label{Fig: Arcos para la parametrizacion}
\end{figure}

Since both measures are absolutely continuous with respect to Lebesgue and non-atomic, we can conclude that $\phi$ is a continuous homeomorphism, whose inverse is also continuous. Furthermore, since $D$ is trivially bi-foliated by $\cF^s$ and $\cF^u$ and the transverse measures $\mu^s$ and $\mu^u$ are invariant by isotopy on the leaves, the measures of any two arcs $I_x := \overline{\overset{o}{I_x}}$ and $I_y := \overline{\overset{o}{I_y}}$ have the same transverse measure, $\mu^u(I_x) = \mu^u(I_y)$. Similarly, the arcs $J_x := \overline{\overset{o}{J_x}}$ and $J_y := \overline{\overset{o}{J_y}}$ satisfy $\mu^s(J_x) = \mu^s(J_y)$. This means that the image of $D$ under $\phi$ is the interior of a rectangle $\overset{o}{H} := (A,B) \times (C,D)$ contained in $\RR^2$. Define $\rho := \phi^{-1}: \overset{o}{H} \to D$. This function clearly satisfies all the necessary items to be a parametrization of $R$. Now, we will extend it to $H := [A,B] \times [C,D] \to S$ in such a way that its image is the rectangle $R$ and satisfies the requirements to be a parametrization. Now we must to extend $\rho$ to $H$.

The closure of any interval $\overset{o}{J_x}\subset D$ is an unstable arc $J_x$ whose boundary consists of two different points $a(x)_1 \neq a(x)_{2}$, as $J_x$ cannot be a closed curve. If any of these points, say $a(x)_1$, were a singularity of $\cF^s$, since $D$ is trivially bi-foliated then no local stable separatrix of $a(x)_1$ could intersect $D$ (Fig. \ref{Fig: No stable separatrice enter D}). Similarly, this applies to the points on the boundary of a stable  $I_x=\overline{\overset{o}{I_x}}$, we must to preserve this notation.
 
 \begin{figure}[ht]
 	\centering
 	\includegraphics[width=0.3\textwidth]{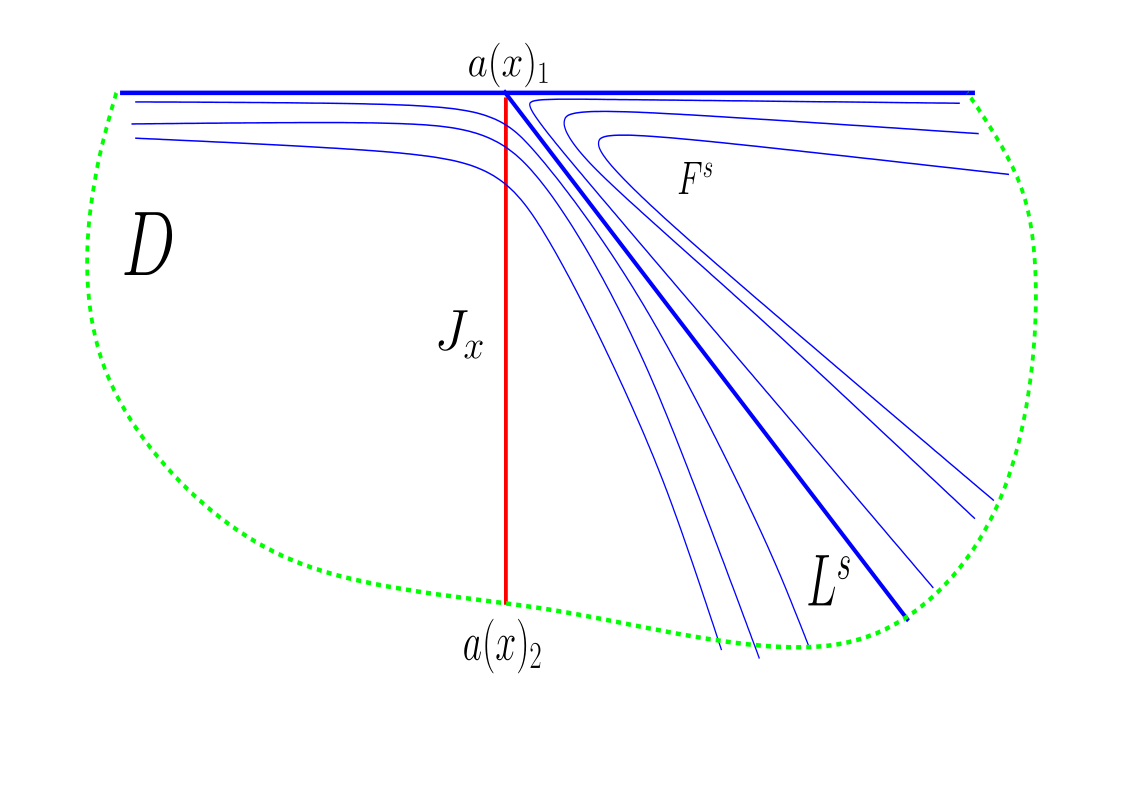}
 	\caption{ A stable separatrice of $a(x)_1$  entering $D$.}
 	\label{Fig: No stable separatrice enter D}
 \end{figure}

For every vertical interval $J \subset \overset{o}{H}$, $\rho(\overset{o}{J}) = \overset{o}{J_y}$ for certain $y \in D$ and there is a unique way to continuously extend $\rho$ to $J$ by coherently assigning the endpoints of $J$ to the extreme points of $J_x$. 
This can be done for every horizontal interval in $\overset{o}{H}$, thus extending $\rho$ to $H \setminus \{A, B\} \times [C, D]$.

We claim, sets $J_C:=\rho([A, B] \times \{C\})$ and $J_D:=\rho([A, B] \times \{D\})$ are each one, contained in a single leaf of $\cF^s$. In effect, since $D$ is trivially bi-foliated by $\cF^s$ and $\cF^u$, we can construct an isotopy along the stables leaves of $f$, between the unstable intervals $\overset{o}{J_x}$ and $\overset{o}{J_y}$. Given that the transverse measures of $f$ are non-atomic, $\mu^s(J_x) = \mu^s(J_y)$, and this isotopy extends to the endpoints of the interval, in particular, they are in the same leaves.

Let $z \in [A, B] \times \{C\} \subset H$. Open neighborhoods such as $H_1$ and $H_2$, around $z$, that are comprised between two vertical intervals $J_1$ and $J_2$ and the horizontal line $I_1$, as in the left-hand picture in Fig. \ref{Fig: Continuidad de rho}, are called \emph{rectangular neighborhoods} of $z$. Clearly, $\rho(H_1) = 1$ is as shown on the right side of Fig. \ref{Fig: Continuidad de rho}, where the opposite sides of the 'rectangle' $V_1$ have the same $\mu^s$ and $\mu^u$ measures, corresponding to leaf-isotopic arcs. Define its length $L(H_1)$ and width $W(H_1)$ as the measures of its unstable and stable sides, respectively, with respect to the transverse measures of $f$.

 \begin{figure}[ht]
	\centering
	\includegraphics[width=0.4\textwidth]{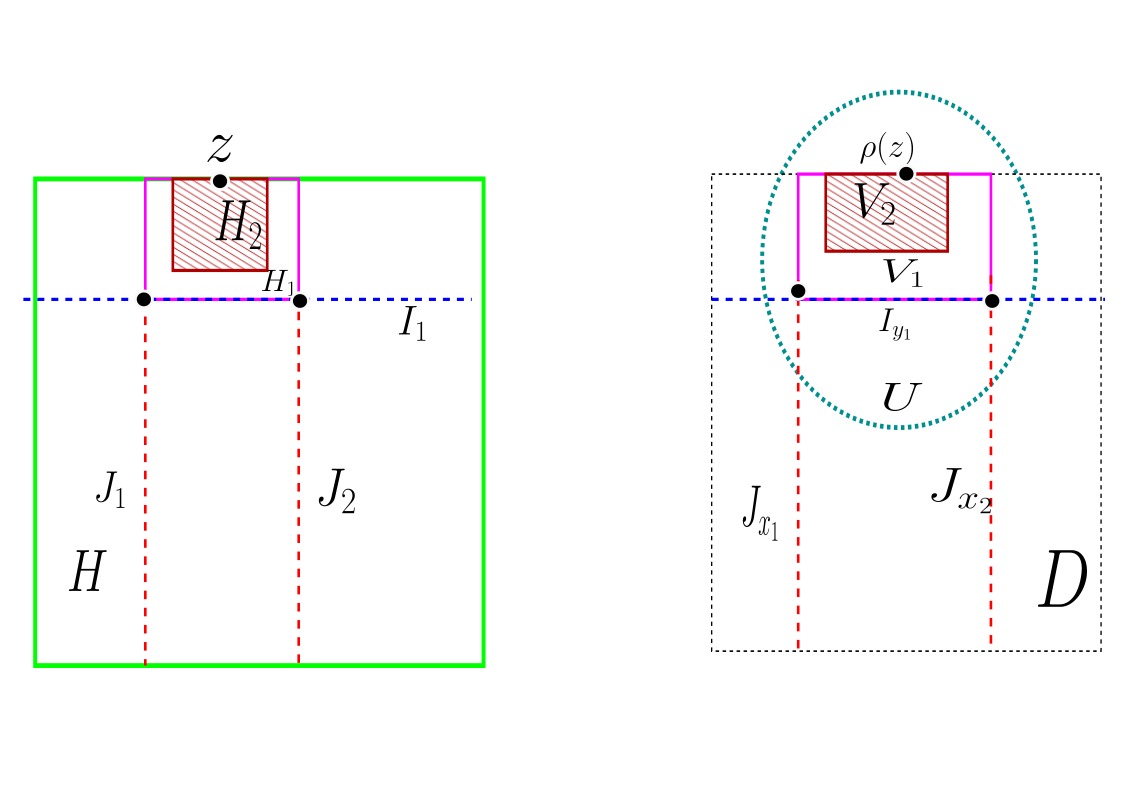}
	\caption{The image of a rectangular neighborhood of $z$}
	\label{Fig: Continuidad de rho}
\end{figure}

For any open set $U \subset S$ that contains $\rho(z)$, there is a neighborhood $V_1$, comprised between the arcs $J_{x_1}$ and $J_{x_2}$ and 'above' the stable arc $I_{y_1}$ as shown in the right-hand picture in Fig. \ref{Fig: Continuidad de rho}, with sufficiently small length and width such that it is contained in $U$. Note that $\rho^{-1}(V_1)$ is a rectangular neighborhood of $z$, and we can take a sufficiently small rectangular neighborhood $H_2 \subset \rho^{-1}(V_1)$ around $z$, as in Fig. \ref{Fig: Continuidad de rho}, whose image is contained in $U$. This implies the continuity of the extension of $\rho$.

The same procedure allows the continuous extension of $\rho$ to the endpoints of any stable arc $I_x$. Subsequently, we can extend $\rho$ to the corners of $H$. This map is a parametrization.

With respect to Item $(2)$, where it is required that $\rho: (0,1) \times (0,1) \to \overset{o}{R}$ be an orientation-preserving homeomorphism, we just need to compose $\rho$ with an affine transformation $A + c: \RR^s \to \RR^2$ that takes $H$ to the unit rectangle and such that the matrix $A$ preserves the orientation of $\RR^2$ if $\rho$ preserves the orientation in $(0,1) \times (0,1)$, or changes the orientation otherwise. This ends our proof.

\end{proof}

\subsubsection{Geometric rectangles}

A rectangle $R\subset S$ admits more than one parametrization, so we impose the following equivalence between them.

\begin{defi}\label{Defi: Parametrizaciones equiv}
	Let $\rho_1$ and $\rho_2$ be two parametrizations of the rectangle $R\subset S$. They are equivalent if the transition map between $\rho_1$ and $\rho_2$, restricted to the interior of the rectangle, $\overset{o}{R}$, is given by:
	\begin{equation}\label{Equa: cambio de coordenadas parametrizaciones}
	\rho_2^{-1} \circ \rho_1 := (\varphi_s, \varphi_u) : (0,1) \times (0,1) \rightarrow (0,1) \times (0,1),
	\end{equation}
	where $\varphi_s, \varphi_u: (0,1) \to (0,1)$ are increasing homeomorphisms.
\end{defi}

Since the parametrizations we are considering are orientation-preserving, it should be clear that there is only one equivalence class of parametrizations for a rectangle. Moreover, assume $\rho_1$ and $\rho_t$ are equivalent parametrizations, and the interiors of some horizontal leaves of $R$, $\overline{\overset{o}{I_1}} = \rho_1((0,1) \times \{t\})$ and $\overline{\overset{o}{I_2}} = \rho_1((0,1) \times \{s\})$, intersect. In that case, they must coincide, i.e., $I_1=I_2$. This property allows us to introduce the vertical and horizontal foliations of $R$ independently of its parametrization.

\begin{defi}\label{Defi: Foliacion vertical y horizontal de Rec}
Let $R$ be a rectangle, and let $\rho: \II^2 \to R \subset S$ be any parametrization of $R$. Then the \emph{horizontal} or\emph{ stable foliation} of $R$ is the family $\cF^s(R) = \{I_t\}_{t \in [0,1]}$ of all horizontal arcs as described in item $(3)$ of Proposition \ref{Prop: Rectangulos parametrizados}. Similarly, the \emph{vertical} or \emph{unstable foliation} of $R$ is the family $\cF^u(R) = \{J_s\}_{s \in [0,1]}$ of all vertical arcs induced by the parametrization $\rho$, as in item $(4)$ of Proposition \ref{Prop: Rectangulos parametrizados}.
\end{defi}

The vertical and horizontal lines in $\mathbb{R}^2$ admit two different pairs of \emph{orientations}, as presented in Fig. \ref{Fig: Orientaciones verticales en II2}, in such a manner that the product of these directions coincides with the standard orientation of $\mathbb{R}^2$. In other words, once we have chosen a direction in the vertical lines, the standard orientation of $\mathbb{R}^2$ determines a unique direction in the horizontal lines.

 \begin{figure}[h]
	\centering
	\includegraphics[width=0.3\textwidth]{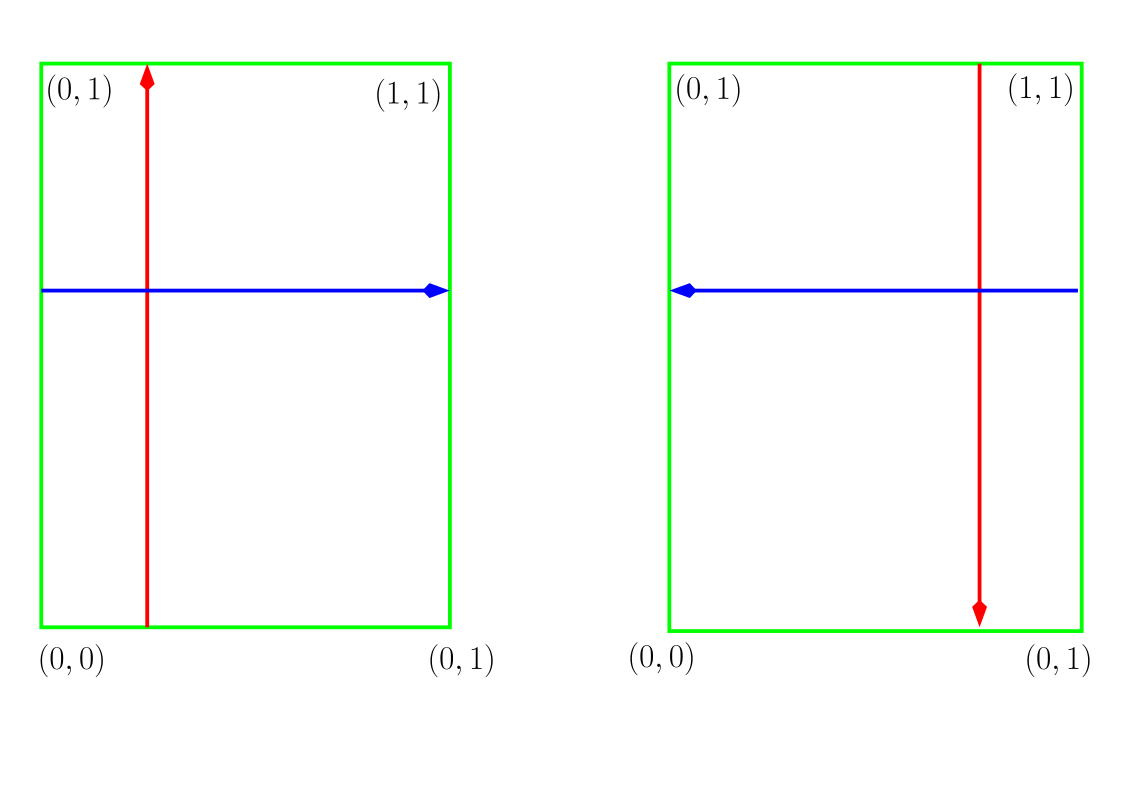}
	\caption{Same orientations but different vertical directions in $\II^2$.}
	\label{Fig: Orientaciones verticales en II2}
\end{figure}

We must to induce a vertical and horizontal direction a rectangle by using these two pair orientations in the unitary rectangle. 

\begin{defi}\label{Defi. Vertical direction}
Consider a parametrization $\rho: \II^2 \to S$ of the rectangle $R \subset S$. Choosing a vertical direction in $R$ involves selecting a direction for the vertical lines in $\II^2$, taking the corresponding direction for the horizontal lines in $\II^2$, as indicated in Fig. \ref{Fig: Orientaciones verticales en II2}, and finally assigning to the arcs in the vertical and horizontal foliations of $R$ the orientations induced by $\rho$.
\end{defi}
 A \emph{geometrized rectangle} is a rectangle for which we have fixed (or chosen) a vertical direction. 
\subsubsection{Geometric Markov partitions}

We must to introduce some important subset of a rectangle its horizontal and vertical sub-rectangles.
 
 \begin{defi}\label{Defi: subrec Vertical/horizontal }
Let $R \subset S$ be a rectangle. A rectangle $H \subset R$ is a \emph{horizontal sub-rectangle} of $R$ if, for all $x \in \overset{o}{H}$, the horizontal leaf of $\mathcal{I}(H)$ passing through $x$ and the horizontal leaf of $\mathcal{I}(R)$ passing through $x$ coincide. Similarly, a rectangle $V \subset R$ is a \emph{vertical sub-rectangle} of $R$ if, for all $x \in \overset{o}{V}$, the vertical leaf of $\mathcal{J}(V)$ passing through $x$ and the vertical leaf of $\mathcal{J}(R)$ passing through $x$ coincide.
 \end{defi}
 
 \begin{defi}\label{Defi: orientacion inducida sub-rec}
If $R$ is a geometrized rectangle and $H$ and $V$ are horizontal and vertical sub-rectangles of $R$, they inherit the same vertical and horizontal directions that $R$. This is the \emph{induced} or \emph{canonical geometrization} of the vertical sub-rectangles of $R$.
 \end{defi}

 \begin{defi}\label{Defi: Geo Markov partition}
 	Let $f: S \rightarrow S$ be a \textbf{p-A} homeomorphism. A \emph{Markov partition} for $f$ is a family of labeled rectangles $\mathcal{R} = \{ R_i \}_{i=1}^n$ with the following properties:
 	\begin{enumerate}
 		\item The surface $S$ is the union such rectangles: $S = \cup_{i=1}^n R_i$.
 		
 		\item They have disjoint interior, i.e. for all $i \neq j$,  $\overset{o}{R_i} \cap \overset{o}{R_j} = \emptyset$.
 		
 		\item For every $i, j \in \{1, \cdots, n\}$, the closure of each non-empty connected component of $\overset{o}{R_i} \cap f^{-1}(\overset{o}{R_j})$ is a horizontal sub-rectangle or $R_i$.
 		
 		\item For every $i, j \in \{1, \cdots, n\}$, the closure of each non-empty connected component of  $f(\overset{o}{R_i}) \cap \overset{o}{R_j}$ is a vertical sub-rectangle of $R_j$.
 	\end{enumerate}
If, in addition, every rectangle in the family $\cR$ is a geometrized rectangle, we say that $\cR$ is a geometric Markov partition of $f$. In this manner, the notation $(f, \cR)$ is reserved to indicate that $\cR$ is a geometric Markov partition of $f$.
 \end{defi}

  \begin{defi}\label{Defi: Horizontal sub-rec particion}
 	The family of horizontal and vertical sub-rectangles described in Items $iii)$ and $iv)$ of Definition \ref{Defi: Geo Markov partition}  are the \emph{horizontal} and \emph{vertical sub-rectangles} of the Markov partition $(f, \cR)$, respectively, and we denote them as $\cH(f, \cR)$ and $\cV(f, \cR)$, respectively.
 \end{defi}

 \subsubsection{Some distinguished subsets of a rectangle and a geometric Markov partition}

We must introduce some other interesting subsets inside a rectangle.

\begin{defi}\label{Defi: L-R sides}
	Let $R$ be a geometric rectangle for $f$, and let $\rho:\II^2\to R$ be a parametrization of $R$. We have the following distinguished sets:
	\begin{itemize}
		\item The \textbf{left} and the \textbf{right} sides of $R$ are given by $\partial^u_{-1}R := \rho(\{0\} \times [0,1])$ and $\partial^u_{1}R := \rho(\{1\} \times [0,1])$, respectively. Each of these arcs is an $s$-\emph{boundary component} of $R$.
		
		\item The \textbf{lower} and \textbf{upper} side of $R$ are defined as $\partial^s_{-1}R := \rho([0,1] \times \{0\})$ and $\partial^s_{+1}R := \rho([0,1] \times \{1\})$, respectively. Each of these arcs is a $u$-\emph{boundary component} of $R$.
		
		\item The \textbf{horizontal (or stable) boundary} of $R$ is defined as $\partial^s R := \partial^s_{-1}R \cup \partial^s_{+1}R$, and the \emph{vertical (or unstable) boundary} of $R$ as $\partial^u R := \partial^u_{-1}R \cup \partial^u_{+1}R$.
		
		\item The \textbf{boundary} of $R$ is $\partial R := \partial^s R \cup \partial^u R$.
		
		\item The \textbf{corners} of $R$ are points defined by the image of $\rho$ of points $(s,t) \in \II^2$, where the respective corner is labeled $C_{s,t} = \rho(s,t)$.
		
		\item For all $x \in \overset{o}{R}$, $I_x$ denotes the horizontal leaf of $\mathcal{I}(R)$ that passes through $x$, and $J_x$ denotes the vertical leaf of $\mathcal{J}(R)$ that passes through $x$.
		
	\end{itemize}
\end{defi}

In the same spirit, we must assign names to some distinguished points and subsets within a geometric Markov partition.

\begin{defi}\label{Defi: Boundary points Markov partition}
	Let $\mathcal{R} = \{R_i\}_{i=1}^n$ be a geometric Markov partition of $f$. The stable boundary of $(f, \mathcal{R})$ is given by $\partial^s \mathcal{R} = \cup_{i=1}^n \partial^s R_i$, the unstable boundary by $\partial^u \mathcal{R} = \cup_{i=1}^n \partial^u R_i$, and the boundary by $\partial \mathcal{R} = \cup_{i=1}^n \partial R_i$. Finally, the interior of the Markov partition $(f, \mathcal{R})$ is the set $\overset{o}{\mathcal{R}} = \cup_{i=1}^n \overset{o}{R_i}$. 
	
	Now, let $p$ be a periodic point of $f$. In this case:
	\begin{enumerate}
		\item We say that $p$ is an $s$-\emph{boundary} periodic point of $(f, \mathcal{R})$ if $p \in \partial^s \mathcal{R}$, a $u$-\emph{boundary} periodic point if $p \in \partial^u \mathcal{R}$, and in general, that $p$ is a \emph{boundary periodic point} if $p \in \partial \mathcal{R}$. These sets of $s$, $u$, and $b$ boundary periodic points are denoted as $\text{Per}^{s}(f, \mathcal{R})$, $\text{Per}^{u}(f, \mathcal{R})$, and $\text{Per}^{b}(f, \mathcal{R})$, respectively.
		
		\item We say that $p$ is an \emph{interior} periodic point if $p \in \overset{o}{\mathcal{R}}$. The set of interior periodic points is denoted as $\text{Per}^I(f, \mathcal{R})$.
		
		\item We say that $p$ is a \emph{corner} periodic point if there exists $i \in \{1, \ldots, n\}$ such that $p$ is a corner point of the rectangle $R_i$. The set of corner periodic points is denoted as $\text{Per}^C(f, \mathcal{R})$.
	\end{enumerate}
\end{defi}

The following lemma will be used many times in our future explanations.

\begin{lemm}\label{Lemm: Boundary of Markov partition is periodic}
The upper and lower boundaries of each rectangle in a Markov partition $\cR$ of $f$ lay on the stable leaf of some periodic point of $f$. Similarly, the left and right boundaries are contained in the unstable leaf of some periodic point.
\end{lemm}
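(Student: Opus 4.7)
The plan is to show first that $f$ sends the stable boundary $\partial^s\cR$ into itself, and then to exploit the finiteness of the set of stable leaves that carry these boundary arcs in order to place a periodic point of $f$ on each such leaf via a contraction argument. The contraction comes from the defining property $\mu^u(f(I))=\lambda^{-1}\mu^u(I)$ of a \textbf{p-A} homeomorphism, and at the end each boundary arc is exhibited as sitting inside the stable leaf of an explicit periodic point of $f$; the statement for the left and right sides of each rectangle then follows by the symmetric argument applied to $f^{-1}$.

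First I would prove $f(\partial^s\cR)\subset \partial^s\cR$. Fix an upper or lower horizontal side $\alpha=\partial^s_\epsilon R_i$. Since $f$ preserves $\cF^s$, the image $f(\alpha)$ is a connected stable arc inside $f(R_i)$. Invoking item (iv) of Definition~\ref{Defi: Geo Markov partition}, $f(R_i)$ decomposes along its intersections with the rectangles $R_j$ into vertical sub-rectangles of the $R_j$, and by the very definition of a vertical sub-rectangle each such piece has its two horizontal sides contained in $\partial^s R_j$. Since $\alpha$ is a horizontal side of $R_i$, chasing it through this decomposition forces $f(\alpha)$ to be the concatenation of the corresponding horizontal sides of those vertical sub-rectangles, whence $f(\alpha)\subset\partial^s\cR$.

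Next I would introduce the finite collection $\cL^s$ of leaves of $\cF^s$ that contain some horizontal side of some $R_i$. The first step gives $f(\cL^s)\subset \cL^s$, and since $f$ is a homeomorphism injecting leaves into leaves, $f$ permutes the finite set $\cL^s$. Each $L\in\cL^s$ therefore satisfies $f^n(L)=L$ for some $n\geq 1$. On such an $L$, the self-homeomorphism $f^n|_L$ scales the $\mu^u$-length of every stable sub-arc by $\lambda^{-n}<1$; parametrizing $L$ by $\mu^u$ identifies it with $\RR$ (or, in the spine case, with a half-line whose endpoint is the fixed spine) and turns $f^n|_L$ into an affine contraction, which has a unique fixed point $p$. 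Thus $p$ is a periodic point of $f$, $L$ is its stable leaf, and $\alpha$ lies on the stable leaf of a periodic point as required. The left and right vertical sides are handled by the symmetric argument applied to $f^{-1}$.

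The step I expect to be the main obstacle is the invariance claim $f(\partial^s\cR)\subset\partial^s\cR$. One must guarantee that every point of $f(\alpha)$ actually lies on a horizontal side of some vertical sub-rectangle of $f(R_i)$ and never slips into the interior $\overset{o}{\cR}$. Particular care is needed near the corners of $R_i$, which may be singularities of $f$ where vertical sub-rectangles of distinct $R_j$ abut, so one must verify that the pieces of $f(\alpha)$ match endpoint to endpoint along common horizontal boundaries rather than crossing transversely into any $\overset{o}{R_j}$.
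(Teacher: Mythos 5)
Your proof is correct and follows essentially the same route as the paper's: forward-invariance of $\partial^s\cR$ (via the vertical sub-rectangles of the partition), finiteness of the set of stable boundary leaves, and hence periodicity of each such leaf under $f$. The only difference is that you additionally justify the final step — that an $f^n$-invariant stable leaf actually carries a periodic point — by the $\lambda^{-n}$-contraction/fixed-point argument, a detail the paper asserts without proof; just note that a periodic leaf may also be singular with a $k\geq 3$-prong, in which case the singularity itself is the required periodic point.
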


\begin{proof}
	Let $x$ be a point on the stable boundary of $R_i$. For all $n\geq 0$, $f^{n}(x)$ remains on the stable boundary of some rectangle because the image of the stable boundary of $R_i$ coincides with the stable boundary of some vertical rectangle in the Markov partition, and such vertical rectangle have stable boundary in $\partial^s \cR$. However, there are only a finite number of stable boundaries in $\cR$. Hence, there exist $n_1, n_2\in \mathbb{N}$ such that $f^{n_1}(x)$ and $f^{n_2}(x)$ are on the same stable boundary component. This implies that this leaf is periodic and corresponds to the stable leaf of some periodic point. Therefore, $x$ is one of these leaves and is located on the stable leaf of a periodic point.
	A similar reasoning applies to the case of vertical boundaries.
\end{proof}

\subsubsection{The sectors of a point}  The following result corresponds to \cite[Lemme 8.1.4]{bonatti1998diffeomorphismes} and states that around any point on the surface, there exists a local stratification of $S$ given by the transverse foliations of a \textbf{p-A} homeomorphism $f$, which we will exploit.

\begin{theo}\label{Theo: Regular neighborhood}
Let $f: S \rightarrow S$ be a generalized pseudo-Anosov homeomorphism, and let $p \in S$ be a singularity with $k \geq 1$ separatrices. Then, there exists $\epsilon_0 > 0$ such that for all $0 < \epsilon < \epsilon_0$, there is a neighborhood $D(p,\epsilon)$ of $p$ with the following properties:
\begin{itemize}
	\item The boundary of $D(p,\epsilon)$ consists of $k$ segments of unstable leaves alternating with $k$ segments of stable leaves.
	\item For every (stable or unstable) separatrix $\delta$ of $p$, the connected component of $\delta \cap D(p,\epsilon)$ which contains $p$ has $\mu^u$ and $\mu^s$ measure (as corresponding) equal to $\epsilon$.
\end{itemize}
We say that $D(p,\epsilon)$ is a \emph{regular neighborhood} of $p$ with length equal to $\epsilon$.
\end{theo}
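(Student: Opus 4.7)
The plan is to construct $D(p,\epsilon)$ explicitly as a $2k$-gon whose sides alternate between stable and unstable leaf segments, working inside a single foliated chart at $p$ and locating the vertices using the transverse measures.

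First, by Definition \ref{Defi: Foliacion singular}, there is a foliated chart $\phi:U\to\RR^2$ around $p$ inside which the $k$ stable separatrices and the $k$ unstable separatrices of $p$ alternate cyclically; I enumerate them as $\delta_1,\delta_2,\ldots,\delta_{2k}$, so they bound $2k$ sectors each of which is trivially bifoliated in the sense of Definition \ref{Defi: Abierto bi-foleado}. Since $\mu^s$ and $\mu^u$ are non-atomic and absolutely continuous, I fix $\epsilon_0>0$ small enough that for every $0<\epsilon<\epsilon_0$ and every separatrix $\delta_r$, the unique point $q_r\in\delta_r$ at transverse-measure distance $\epsilon$ from $p$ (using $\mu^u$ if $\delta_r$ is stable and $\mu^s$ if $\delta_r$ is unstable) lies in $U$, and the local leaf of the opposite foliation through $q_r$ can be extended inside $U$ far enough to reach both neighboring separatrices.

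For each index $r$, let $\ell_r$ be the leaf of the opposite foliation through $q_r$, so consecutive $\ell_r,\ell_{r+1}$ lie in opposite foliations. In the sector bounded by $\delta_r$ and $\delta_{r+1}$, which is trivially bifoliated, the transverse arcs $\ell_r$ and $\ell_{r+1}$ meet in exactly one point $c_r$. Let $a_r\subset\ell_r$ denote the sub-arc of $\ell_r$ from $c_{r-1}$ to $c_r$ (indices mod $2k$); this arc passes through $q_r$ and is entirely a segment of a single leaf. I then define $D(p,\epsilon)$ to be the closed topological disk bounded by the Jordan curve $a_1\cup a_2\cup\cdots\cup a_{2k}$.

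The two required properties follow directly. The boundary $\partial D(p,\epsilon)$ is by construction the union of $2k$ leaf segments alternating stable and unstable, giving $k$ of each as in the first bullet. For the second, a separatrix $\delta_r$ crosses $\partial D$ transversely only at the single point $q_r\in a_r$, since the other $a_s$ either lie in the same foliation as $\delta_r$ (and so cannot intersect it) or sit in sectors not adjacent to $\delta_r$. Hence the connected component of $\delta_r\cap D(p,\epsilon)$ containing $p$ is exactly the sub-arc of $\delta_r$ from $p$ to $q_r$, of transverse measure $\epsilon$.

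The subtle step is guaranteeing the existence of all the corners $c_r$ for a single choice of $\epsilon_0$ valid across all sectors. This follows from the local product structure of the chart: inside $U$, each sector is a bifoliated quadrant, so a leaf of one foliation meets a transverse leaf of the opposite foliation in exactly one point provided both arcs are long enough to reach the bounding separatrices. Shrinking $\epsilon_0$ below the effective radius of the chart handles this uniformly, and once the $c_r$'s exist the Jordan curve theorem inside the chart confirms that $D(p,\epsilon)$ is a topological disk neighborhood of $p$, as required.
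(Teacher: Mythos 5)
Your construction is correct, but note that the paper does not prove this statement at all: it is quoted as \cite[Lemme 8.1.4]{bonatti1998diffeomorphismes} and used as a black box. So your argument is genuinely additional content rather than a variant of the paper's proof. What you do — place a point $q_r$ at transverse measure $\epsilon$ on each of the $2k$ separatrices, pass the leaf of the opposite foliation through each $q_r$, intersect consecutive such leaves inside each sector to get the corners $c_r$, and take the $2k$-gon they bound — is the standard construction, and the verification of both bullets (the alternation of the $k$ stable and $k$ unstable sides, and the fact that the component of $\delta_r\cap D(p,\epsilon)$ through $p$ is exactly $[p,q_r]$ because no other side can meet $\delta_r$ locally) is sound. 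The one load-bearing input you treat too lightly is the assertion that each of the $2k$ sectors is ``trivially bifoliated'' with a product structure guaranteeing the unique intersection $c_r=\ell_r\cap\ell_{r+1}$. In this paper, Definition \ref{Defi: Abierto bi-foleado} takes the product structure as a hypothesis, and Proposition \ref{Prop: Rectangulos parametrizados} only converts that hypothesis into a parametrization; neither establishes that small sectors at a singularity actually have this structure. That fact is true and standard, but it is essentially the nontrivial content of the cited lemma, so you should either derive it explicitly from the local $k$-pronged models of $\cF^s$ and $\cF^u$ at $p$ together with their topological transversality (showing that in a small sector the map $x\mapsto(\mu^u([p,x_1]^s),\mu^s([p,x_2]^u))$ is a homeomorphism onto a quadrant) or cite it; as written, that step is asserted rather than proved.
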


\begin{figure}[ht]
	\centering
	\includegraphics[width=0.4\textwidth]{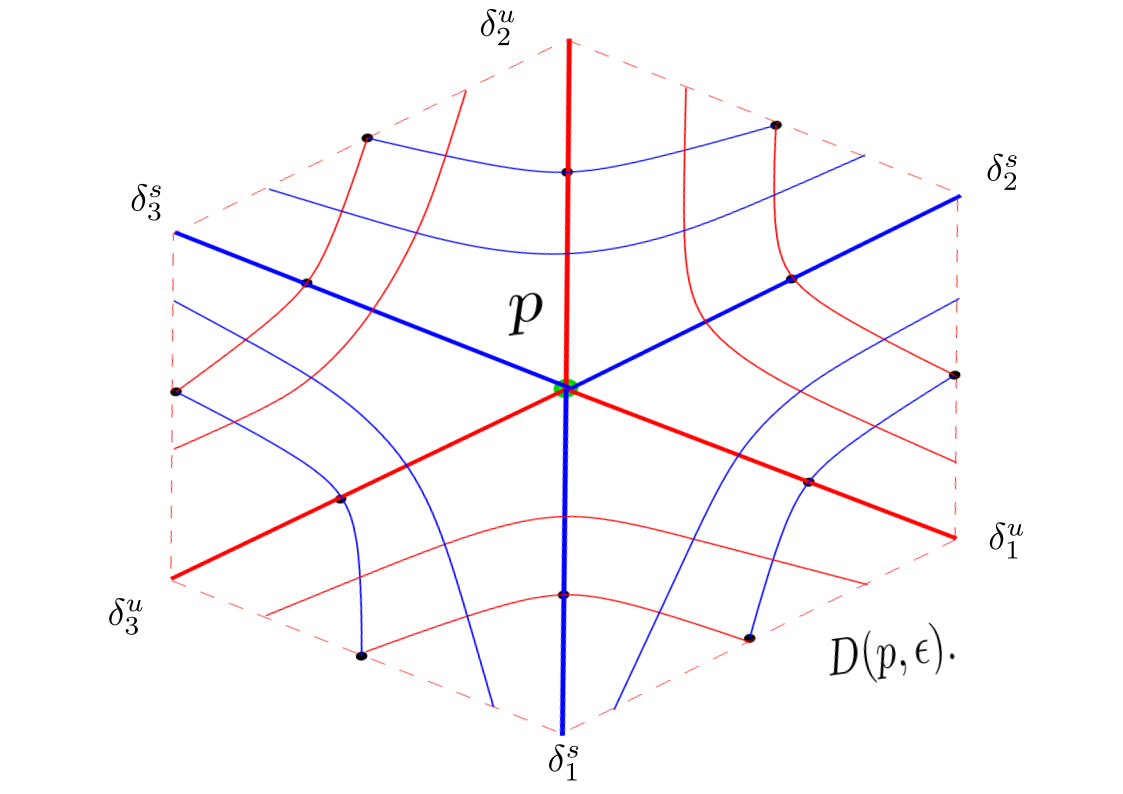}
	\caption{Regular neighborhood of a $3$-prong}
	\label{Fig: Regular neighborhood}
\end{figure}

Let $D(p,\epsilon)$ be a regular neighborhood of $p \in S$ with length $0 < \epsilon \leq \epsilon_0$ (see Fig. \ref{Fig: Regular neighborhood}), and let us assume that $p$ has $k \in \NN_+$ separatrices (so it could be a regular point). By using the orientation of $S$, we label the stable and unstable separatrices of $p$ cyclically counterclockwise as $\{\delta_i^s\}_{i=1}^k$ and $\{\delta_i^u\}_{i=1}^k$. Let $\delta_i^s(\epsilon)$ and $\delta_i^u(\epsilon)$ be the unique connected components of $\delta_i^s \cap D(p,\epsilon)$ and $\delta_i^u \cap D(p,\epsilon)$ that has $p$ as and end point. By giving a proper labeling to the local separatrices of $p$, we may assume that $\delta_i^u(\epsilon)$ is located between $\delta_i^s(\epsilon)$ and $\delta_{i+1}^s(\epsilon)$, with the index $i$ taken modulo $k$ (as in Fig. \ref{Fig: Regular neighborhood}).

 Finally, the connected components of $\text{Int}(D(p,\epsilon) \setminus (\cup_{i=1}^k \delta_{i=1}^s(\epsilon) \cup \delta_{i=1}^u(\epsilon)))$ are labeled in a counterclockwise cyclic order and denoted as $\{E(p,\epsilon)_j\}_{j=1}^{2k}$, where the boundary of $E(p,\epsilon)_1$ consists of $\delta^s_1(\epsilon)$ and $\delta^u_1(\epsilon)$. These conventions lead to the following definition.

\begin{defi}\label{Defi:converge in a sector }
	Let $\{x_n\}$ be a sequence convergent to $p$. We say $\{x_n\}$ \emph{converges to $p$ in the sector} $j$ if there exists $N \in \mathbb{N}$ such that for every $n > N$, $x_n \in E(p,\epsilon_0)$. The set of sequences that convergent to $p$ in the sector $j$ is denoted by $E(p)_j$.
\end{defi}

In this manner set of all the  sequences that converge to $p$ in a sector is then given by  $\cup_{j=1}^{2k} E(p)_j$. We are going to define an equivalence relation on these sets.

\begin{defi}\label{Defi: Sector equiv}
Let $\{x_n\}$ and $\{y_n\}$ be sequences that converge to $p$ in a sector. We say they are in the same sector of $p$, denoted by the relation $\{x_n\} \sim_q \{y_n\}$, if and only if there exist   $j \in \{1, \ldots, 2k\}$ and $N \in \mathbb{N}$ such that for all $n \geq N$, $x_n, y_n \in E(p,\epsilon_0)_j$.
\end{defi}

\begin{lemm}\label{Lemm: Sector Equiv relation}
In the set of sequences that converge to $p$ in a sector, the relation $\sim_q$ is an equivalence relation. Moreover, every class for this relation coincides with the set $E(p)_j$ for a certain $j \in \{1, \ldots, 2k\}$.
\end{lemm}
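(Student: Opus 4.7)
The plan is to verify the three equivalence-relation axioms and then identify the classes with the sets $E(p)_j$.

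First I would dispatch reflexivity and symmetry, both of which are essentially tautological: a sequence in $E(p)_j$ has, by definition, a threshold $N$ beyond which all of its terms lie in the sector $E(p,\epsilon_0)_j$, so it is trivially $\sim_q$-related to itself; and the defining condition for $\sim_q$ is manifestly symmetric in the two sequences.

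The main step is transitivity, and I expect it to be the one that requires an actual argument. The key observation is that the sectors $\{E(p,\epsilon_0)_j\}_{j=1}^{2k}$ are pairwise disjoint—this is built into their construction in Theorem \ref{Theo: Regular neighborhood} as the connected components of $\mathrm{Int}(D(p,\epsilon_0)) \setminus \bigcup_{i=1}^k(\delta_i^s(\epsilon_0) \cup \delta_i^u(\epsilon_0))$. Given $\{x_n\} \sim_q \{y_n\}$ with common sector index $j_1$ and threshold $N_1$, and $\{y_n\} \sim_q \{z_n\}$ with common sector index $j_2$ and threshold $N_2$, taking $n \geq \max\{N_1, N_2\}$ forces $y_n \in E(p,\epsilon_0)_{j_1} \cap E(p,\epsilon_0)_{j_2}$; pairwise disjointness then yields $j_1 = j_2$, and for the same range of $n$ both $x_n$ and $z_n$ lie in this common sector, proving $\{x_n\} \sim_q \{z_n\}$.

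Finally, to identify the classes with the sets $E(p)_j$, I would argue by a double inclusion. Any two sequences in $E(p)_j$ become $\sim_q$-equivalent by taking the maximum of their individual thresholds, so each $E(p)_j$ is contained in a single class. Conversely, if $\{x_n\} \sim_q \{y_n\}$, the definition directly produces a sector index $j$ and a threshold $N$ placing both tails in $E(p,\epsilon_0)_j$, so both sequences belong to $E(p)_j$. Hence the $\sim_q$-equivalence classes coincide exactly with the nonempty sets among $E(p)_1, \ldots, E(p)_{2k}$, completing the proof.
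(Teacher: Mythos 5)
Your proof is correct and follows essentially the same route as the paper: transitivity via the pairwise disjointness of the sectors $E(p,\epsilon_0)_j$, with reflexivity and symmetry immediate. You additionally spell out the identification of classes with the sets $E(p)_j$ by double inclusion, which the paper leaves implicit.
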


\begin{proof}
	The least evident property is transitivity. If $\{x_n\} \sim_q \{y_n\}$ and $\{y_n\} \sim_q \{z_n\}$, suppose $x_n, y_n \in E(p,\epsilon_0)_j$ for $n > N_1$ and $y_n, z_n \in E(p,\epsilon_0)_{j'}$ for $n > N_2$. For every $n > N := \max(N_1, N_2)$, $y_n \in E(p,\epsilon_0)_j \cap E(p,\epsilon_0)_{j'}$. This is only possible if and only if $j = j'$.
\end{proof}

Now we can introduce the formal definition of a sector of a point, and then we can establish in Lemma \ref{Prop: image secto is a sector} that the action of the \textbf{p-A} homeomorphism on the sectors of a point is well-defined.

\begin{defi}\label{Defi: Sector}
The equivalence class $E(p)_j$ of those sequences that converge to $p$ in sector $j$ is what we call the \emph{sector} $j$ of the point $p$ an is denoted by $e(x)_j$.
\end{defi}

\begin{prop}\label{Prop: image secto is a sector}
Let $f: S \to S$ be a generalized \textbf{p-A} homeomorphism, and let $p \in S$ be any point on the surface with $k$ separatrices. If $\{x_n\} \in E(p)_j$, there is a unique $i \in \{1, \dots, 2k\}$ such that $\{f(x_n)\} \in E(f(p))_i$. We summarize this by saying that the image of the sector $E(p)_j$ is the sector $E(f(p))_i$.
\end{prop}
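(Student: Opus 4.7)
The plan is to use continuity of $f$ together with the fact that $f$ preserves both the orientation of $S$ and the singular foliations $\cF^s, \cF^u$, so that it carries the local stratification near $p$ homeomorphically onto (part of) the local stratification near $f(p)$. In particular $f$ permutes the separatrices of $p$ and $f(p)$, and preserves their cyclic order, so it induces a well-defined map at the level of sectors.

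More concretely, I would first fix a regular neighborhood $D(f(p), \epsilon_0')$ of $f(p)$ in the sense of Theorem \ref{Theo: Regular neighborhood} (note that $f(p)$ has the same number $k$ of separatrices as $p$, since $f$ is a homeomorphism exchanging the two foliations with themselves and their singular sets with themselves). By uniform continuity of $f$ on a compact neighborhood of $p$, there exists $0 < \epsilon_1 < \epsilon_0$ such that $f(D(p, \epsilon_1)) \subset D(f(p), \epsilon_0')$. Next I would observe that because $f(\cF^s) = \cF^s$ and $f(\cF^u) = \cF^u$, the images under $f$ of the local separatrix segments $\delta_\ell^s(\epsilon_1), \delta_\ell^u(\epsilon_1)$ of $p$ are contained in the $2k$ local separatrices of $f(p)$ inside $D(f(p), \epsilon_0')$. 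Since $f$ is an orientation-preserving homeomorphism, it sends the cyclic counterclockwise labeling of separatrices at $p$ to the cyclic counterclockwise labeling at $f(p)$, up to a cyclic shift. Consequently, for each $j \in \{1, \dots, 2k\}$ there is a unique $i \in \{1, \dots, 2k\}$ such that $f$ maps $E(p, \epsilon_1)_j$ homeomorphically into $E(f(p), \epsilon_0')_i$, as connected components of the complement of the separatrices must be sent to connected components.

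To finish: given $\{x_n\} \in E(p)_j$, there exists $N_1$ with $x_n \in E(p, \epsilon_0)_j$ for all $n > N_1$, and by continuity of $f$ together with $x_n \to p$ there exists $N_2 \geq N_1$ such that $x_n \in D(p, \epsilon_1)$, hence $x_n \in E(p, \epsilon_1)_j$ for $n > N_2$. Then $f(x_n) \in f(E(p, \epsilon_1)_j) \subset E(f(p), \epsilon_0')_i$ for all $n > N_2$, so $\{f(x_n)\} \in E(f(p))_i$ by Definition \ref{Defi:converge in a sector }. Uniqueness of $i$ follows from the fact that the sectors $E(f(p), \epsilon_0')_i$, $i = 1, \dots, 2k$, are pairwise disjoint; if $\{f(x_n)\}$ lay eventually in two different sectors $i$ and $i'$, then by taking subsequences we could produce tail points of $\{f(x_n)\}$ in both $E(f(p), \epsilon_0')_i$ and $E(f(p), \epsilon_0')_{i'}$, contradicting that the sequence is eventually in exactly one component.

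The main (minor) obstacle is justifying that $f$ sends the local separatrix segments of $p$ into the local separatrix segments of $f(p)$ and thereby sends sectors into sectors: this requires combining the invariance $f(\cF^\delta) = \cF^\delta$ (which only says leaves go to leaves) with the observation that a stable separatrix of $p$ is a connected component of $F^s_p \setminus \{p\}$, so its $f$-image is a connected component of $F^s_{f(p)} \setminus \{f(p)\}$, i.e.\ a separatrix of $f(p)$; once this is in place, the rest of the argument is a direct application of continuity and of orientation preservation to rule out any ambiguity in the resulting sector $i$.
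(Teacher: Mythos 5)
Your argument is correct and follows essentially the same route as the paper's proof: use continuity to find $\epsilon_1$ with $f(D(p,\epsilon_1))\subset D(f(p),\epsilon_0')$, observe that the bounding local separatrices of $E(p,\epsilon_1)_j$ are sent into adjacent separatrices of $f(p)$, which pins down a unique sector $E(f(p),\epsilon_0')_i$ containing the image, and then conclude on the tail of the sequence. The extra remarks you add on cyclic order and on uniqueness via disjointness of sectors are fine but not different in substance from what the paper does.
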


\begin{proof}
 Since $f$ is continuous, there exists $0 < \epsilon < \epsilon_0$ such that $f(D(p,\epsilon)) \subset D(f(p),\epsilon_0)$. Let $\delta^s(p)_j$ and $\delta^u(p)_j$ be the separatrices of $p$ that bound the set $E(p,\epsilon)_j \subset S$. Then $f(\delta^s(p)_j)$ and $f(\delta^u(p)_j)$ are contained in a pair of adjacent separatrices of $f(p)$, which determine a unique set $E(f(p),\epsilon_0)_i$ for some $i \in \{1, \ldots, 2k\}$ (remember that $p$ and $f(p)$ have the same number of sectors).Finally, let $N \in \NN$ be such that, for all $n > N$, $x_n \in E(p,\epsilon)_j$. This implies that, for every $n > N$, $f(x_n) \in E(f(p),\epsilon_0)_i$, and then  the sequence $\{f(x_n)\}$ is in the sector $e(f(p))_i$ probing our statement.
 
\end{proof}

We have the following lemma, which we will need to use in the future when we study the sub-shift of finite type associated with a Markov partition.

\begin{lemm}\label{Lemm: sector contined unique rectangle}
	Let $f: S \rightarrow S$ be a \textbf{p-A} homeomorphism with a  Markov partition $\cR$. Let $x \in S$ and let $e(x)_j$ be a sector of $x$. Then, there exists a unique rectangle in the Markov partition that contains the sector $e(x)_j$.
\end{lemm}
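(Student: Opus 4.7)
The plan is to shrink the regular neighborhood $D(x,\epsilon)$ enough that, locally, the partition's boundary can only lie along the local separatrices of $x$, and then to recognize each sector as sitting inside one connected component of $D(x,\epsilon)\setminus\partial\mathcal{R}$.

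First I would reduce $\epsilon$ to make the local situation clean. By Lemma \ref{Lemm: Boundary of Markov partition is periodic}, $\partial\mathcal{R}$ is a finite union of compact arcs, each contained in a stable or unstable leaf of $\mathcal{F}^s$ or $\mathcal{F}^u$. The arcs of $\partial\mathcal{R}$ that do not pass through $x$ form a compact set at positive distance from $x$, so for $\epsilon$ sufficiently small (and also below the constant $\epsilon_0$ of Theorem \ref{Theo: Regular neighborhood}) none of them meets $\overline{D(x,\epsilon)}$. For the remaining boundary arcs — those passing through $x$ — the leaf of $\mathcal{F}^s$ or $\mathcal{F}^u$ containing such an arc necessarily coincides with the stable or unstable leaf through $x$, so the trace of the arc in $D(x,\epsilon)$ is contained in the union of the local separatrices $\delta^s_i(\epsilon)\cup\delta^u_i(\epsilon)$. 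Consequently $\partial\mathcal{R}\cap D(x,\epsilon)$ is contained in the union of the local separatrices of $x$.

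Next I would use the definition of sector. By construction, $E(x,\epsilon)_j$ is one of the connected components of $D(x,\epsilon)$ minus the union of all the local separatrices of $x$. Since $\partial\mathcal{R}\cap D(x,\epsilon)$ sits inside that union of separatrices, the open sector $E(x,\epsilon)_j$ is disjoint from $\partial\mathcal{R}$ and thus contained in $\overset{o}{\mathcal{R}}=\cup_i\overset{o}{R_i}$. Because $E(x,\epsilon)_j$ is connected and the interiors $\overset{o}{R_i}$ are pairwise disjoint, it lies entirely inside the interior of a single rectangle $R_i\in\mathcal{R}$. Any sequence $\{x_n\}\in e(x)_j$ eventually enters $E(x,\epsilon)_j$ and so eventually lies in $R_i$, which is the desired statement of containment. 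Uniqueness is immediate: if two rectangles both contained the sector, their interiors would meet on the nonempty open set $E(x,\epsilon)_j$, contradicting item $(2)$ of Definition \ref{Defi: Geo Markov partition}.

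The only delicate point I expect is the first step — justifying that, after shrinking $\epsilon$, every piece of $\partial\mathcal{R}$ that meets $D(x,\epsilon)$ is forced onto a local separatrix of $x$. This uses three ingredients in combination: finiteness of the boundary arcs, the fact (Lemma \ref{Lemm: Boundary of Markov partition is periodic}) that each boundary arc lies in a single leaf of $\mathcal{F}^s$ or $\mathcal{F}^u$, and the trivial bi-foliation of $D(x,\epsilon)$, which forbids two different leaves of the same foliation from both containing $x$. Once this is in place the rest of the argument is essentially set-theoretic.
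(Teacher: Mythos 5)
Your strategy---shrink $D(x,\epsilon)$ until $\partial\mathcal{R}$ meets it only along the local separatrices of $x$, so that the open sector $E(x,\epsilon)_j$ lies in $\overset{o}{\mathcal{R}}$ and hence, by connectedness and disjointness of interiors, in a single $\overset{o}{R_i}$---is genuinely different from the paper's proof, which instead manufactures a small rectangle $\overset{o}{Q}=\overset{o}{H}\cap\overset{o}{V}$ inside the sector from a horizontal sub-rectangle adjacent to the bounding local stable separatrix and a vertical sub-rectangle adjacent to the bounding local unstable separatrix, and concludes $R=R'$ from disjointness of interiors. Your route is arguably cleaner, but it has a real gap at its key step. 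From the fact that a boundary arc $\alpha\subset\partial\mathcal{R}$ passing through $x$ lies in the stable (or unstable) leaf $L$ through $x$, you infer that $\alpha\cap D(x,\epsilon)$ is contained in the local separatrices $\delta^s_i(\epsilon)\cup\delta^u_i(\epsilon)$. This does not follow: by Proposition \ref{Prop: Propiedades foliaciones de p-A} every leaf is dense, so $L$ re-enters $D(x,\epsilon)$ in infinitely many components besides the ones emanating from $x$, and the compact arc $\alpha$ may do the same---$\alpha\cap D(x,\epsilon)$ can have components that are far from $x$ along the leaf yet sit inside the sector. Your closing diagnosis attributes the delicate point to ``two different leaves of the same foliation both containing $x$,'' but that is never the issue (a point lies on exactly one leaf of each foliation); the issue is recurrence of a single leaf, and trivial bi-foliation of $D(x,\epsilon)$ is neither available (the neighborhood contains a possibly singular point) nor relevant to it.

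The gap is fixable by one more compactness argument of exactly the kind you already use for the arcs avoiding $x$. Let $\alpha_0$ be the connected component of $\alpha\cap \overline{D(x,\epsilon_0)}$ containing $x$; since $\alpha_0$ is a connected subset of $L\cap\overline{D(x,\epsilon_0)}$ containing $x$, it is contained in the union of the local separatrix stubs at scale $\epsilon_0$. The set $\overline{\alpha\setminus\alpha_0}$ is compact and omits $x$, hence lies at positive distance from $x$; since there are finitely many boundary arcs, a single further shrinking of $\epsilon$ (using that $\overline{D(x,\epsilon)}$ collapses to $\{x\}$ as $\epsilon\to 0$, which your first reduction already implicitly requires) pushes all these sets out of $D(x,\epsilon)$, and only then is $\partial\mathcal{R}\cap D(x,\epsilon)$ contained in the local separatrices. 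With this inserted, the remainder of your argument---disjointness of the open sector from $\partial\mathcal{R}$, connectedness, uniqueness via disjoint interiors, and passing to tails of sequences---goes through.
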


\begin{proof}
	Let $\{x_n\}$ be a sequence that converges to $x$ within the sector $e(x)_j$. Consider a canonical neighborhood $U$ of size $\epsilon > 0$ around $x$, and let $E$ be the unique connected component of $U$ minus the local stable and unstable manifolds of $x$ that contains the sequence $\{x_n\}$.
	
	By choosing $\epsilon$ small enough, we can assume that the local stable separatrix $I$ of $x$ that bound $E$ is contained in at most two rectangles of the Markov partition, and similarly, the local unstable separatrix $J$ of $x$ is contained in at most two rectangles. By take the right side of the local separatrices, we can take:  rectangles $R$ and $R'$ in the Markov partition, a horizontal sub-rectangle $H$ of $R$ that contains $I$ in its upper or lower boundary, and a vertical sub-rectangle $V$ of $R'$ whose upper or left boundary contains $J$. These rectangles can be chosen small enough such that the intersection of their interiors is a rectangle contained within $E$, denoted as $\overset{o}{Q} := \overset{o}{H} \cap \overset{o}{V} \subset E$. This implies that $R = R'$ since the intersection of the interiors of rectangles in the Markov partition is empty.
	
	Furthermore, by considering a sub-sequence of $\{x_n\}$, we don't change its equivalent sector class. Therefore, $\{x_n\} \subset \overset{o}{Q} \subset R$. This completes our proof.
\end{proof}

\subsection{Geometric types} 

We are about to introduce the set of \emph{abstract geometric types}. These are finite combinatorial objects, as they are defined by a pair of maps between finite sets. Immediately afterward, we will describe how to associate a geometric type with a geometric Markov partition. This last procedure should justify the names given to the parameters of the geometric type in the following definition.

\begin{defi}\label{Defi: Abstract geometric type}
An abstract geometric type is an ordered quintuple:
\begin{equation}
	T := (n, \{h_i, v_i\}_{i=1}^n, \rho_T, \epsilon_T)
\end{equation}
which contains the following information:
\begin{itemize}
	\item A positive integer $n \in \mathbb{N}_+$ that is the \emph{number of rectangles} in $T$.
	\item For all $i \in \{1, \cdots, n\}$, $h_i, v_i \in \mathbb{N}_+$ are non-negative integers, where $h_i$ is the number of \emph{horizontal} sub-rectangles and $v_i$ is the number of \emph{vertical} sub-rectangles for rectangle $i$ in $T$.
	\item These numbers satisfy $\sum_{i=1}^{n} h_i = \sum_{i=1}^n v_i = \alpha(T)$, and $\alpha(T)$ is referred to as the number of \emph{sub-rectangles} in $T$.
	
	\item The map $\rho_T: \mathcal{H}(T) \rightarrow \mathcal{V}(T)$ is a bijection between the set:
\begin{equation}
	\mathcal{H}(T) = \{(i,j) \mid 1 \leq i \leq n \text{ and } 1 \leq j \leq h_i\}
\end{equation}
	of \emph{horizontal labels} of $T$ and the set of \emph{vertical labels} of $T$
\begin{equation}
	\mathcal{V}(T) = \{(k,l) \mid 1 \leq k \leq n \text{ and } 1 \leq l \leq v_k\}.
\end{equation}
	\item Finally, $\epsilon_T: \mathcal{H}(T) \rightarrow \{-1, 1\}$ is any function from the set of horizontal labels of $T$ to $\{-1, 1\}$.
\end{itemize}
The set of abstract geometric types is denoted by $\mathcal{G}\mathcal{T}$.
\end{defi}

\subsubsection{The pseudo-Anosov class}
Let $f: S \to S$ be a \textbf{p-A} homeomorphism, and let $\mathcal{R}$ be a geometric Markov partition of $f$. We are going to associate a \emph{geometric type} with the pair $(f, \mathcal{R})$.  Let's start by labeling the horizontal sub-rectangles of the Markov partition $(f, \mathcal{R})$ (Definition \ref{Defi: Horizontal sub-rec particion}) that are contained in the rectangle $R_i \in \mathcal{R}$. This must be done in increasing way (from the bottom to the top) with respect to the vertical direction of $R_i$ as $\{H^i_j\}_{j=1}^{h_i}$, where $h_i \geq 1$. Similarly, the vertical sub-rectangles of $(f, \mathcal{R})$ contained in $R_k$ are labeled in increasing order (from left to right) with respect to the horizontal direction of $R_k$ as $\{V_l^k\}_{l=1}^{v_k}$, where $v_k \geq 1$.

Such indexations determine the set of \emph{horizontal labels} of $(f, \mathcal{R})$,
\begin{equation}\label{Equa: H(f,cR)}
	\mathcal{H}(f,\mathcal{R}) = \{(i,j) \mid i \in \{1, \cdots, n\} \text{ and } j \in \{1, \cdots, h_i\}\}.
\end{equation}
and the set of \emph{vertical labels} of $(f, \mathcal{R})$,
\begin{equation}\label{Equa: V(f,cR)}
	\mathcal{V}(f,\mathcal{R}) = \{(k,l) \mid k \in \{1, \cdots, n\} \text{ and } l \in \{1, \cdots, v_k\}\}.
\end{equation}

Clearly, the homeomorphism $f$ induces a bijection between the set of horizontal and vertical rectangles of $(f, \mathcal{R})$, and thus a bijection between the set of vertical and horizontal labels of $(f, \mathcal{R})$. Consequently, $\sum_{i=1}^{n} h_i = \sum_{i=1}^n v_i = \alpha(f, \mathcal{R})$, and we can define a map between the set of vertical and horizontal labels of the pair $(f, \mathcal{R})$, as indicated below.

\begin{defi}\label{Defi: Permutation (f,cR)}
	Let  $\rho:\cH(f,\cR) \rightarrow \cV(f,\cR)$ be the map  defined as $\rho(i,j)=(k,l)$ if and only if $f(H^i_j)=V^k_l$.
\end{defi}

The final step is to capture, in another map, the change in the vertical direction induced by $f$ when restricted to one horizontal sub-rectangle of the partition, as defined below.

\begin{defi}\label{Defi: Orientation (f,cR)}
Suppose $f(H_j^i) = V_l^k$. Let $\epsilon: \mathcal{H}(f, \mathcal{R}) \rightarrow \{1, -1\}$ be the map defined by $\epsilon(i,j) = 1$ if $f$ maps the vertical direction of $H_j^i$ to the vertical direction of $V_l^k$, and $-1$ otherwise.
\end{defi}

By integrating all this information, we are able to define the geometric type of a geometric Markov partition.

\begin{defi}\label{Defi: Geometric type of (f,cR)}
Let $f: S \to S$ be a \textbf{p-A} homeomorphism, and let $\mathcal{R}$ be a geometric Markov partition of $f$. The \emph{geometric type} of the pair $(f, \mathcal{R})$ is denoted by $T(f, \mathcal{R})$ and is given by:
\begin{equation}
	T(f, \mathcal{R}) = (n, \{(h_i, v_i)\}_{i=1}^n, \rho, \epsilon).
\end{equation}
where 

\begin{itemize}
	\item $n$ is the number of rectangles in the Markov partition $\mathcal{R}$.
	\item $h_i$ and $v_i$ are the numbers of horizontal and vertical sub-rectangles of the pair $(f, \mathcal{R})$ contained in the rectangle $R_i \in \mathcal{R}$.
	\item The map $\rho$ is as defined in \ref{Defi: Permutation (f,cR)}.
	\item The map $\epsilon$ is as defined in \ref{Defi: Orientation (f,cR)}.
\end{itemize}
\end{defi}

Not every abstract geometric type is the geometric type of the Markov partition of a \textbf{p-A} homeomorphism, but those that are determine a special subset of $\mathcal{G}\mathcal{T}$, and we need to give them a name.

\begin{defi}\label{Defi: pseudo-Anosov class}
A geometric type $T \in \mathcal{G}\mathcal{T}$ is in the \emph{pseudo-Anosov class} of geometric types, $\mathcal{G}\mathcal{T}(\textbf{p-A})$, if there exists a closed and orientable surface $S$ that supports a \textbf{p-A} homeomorphism $f: S \rightarrow S$ with a geometric Markov partition $\mathcal{R}$, such that the geometric type of $(f, \mathcal{R})$ is $T$, i.e., $T = T(f, \mathcal{R})$.

We often say that $T$ is \emph{realized} by $(f, \mathcal{R})$ or that $(f, \mathcal{R})$ is a \emph{realization} of $T$.
\end{defi}

\section{An Algorithmic Construction of Markov Partitions}\label{Sec: Contruccion Particion Markov}

It is a well-known result in the theory of surface dynamics that any pseudo-Anosov homeomorphism $f$ has a Markov partition (see \cite[Proposition 10.17]{fathi2021thurston}). However, in Proposition \ref{Prop:Compatibles implies Markov partition} in this section, we provide criteria for families of stable and unstable intervals in the invariant foliations of a \textbf{p-A} homeomorphism $f$ to be the boundary of a Markov partition of $f$. After this general construction, in \ref{Sub-sec: Primitive MP and first intersection}, we apply Proposition \ref{Prop:Compatibles implies Markov partition} to construct a Markov partition for each \emph{first intersection point} of $f$ (\ref{Defi: first intersection points}). These Markov partitions have favorable properties regarding their iterations and geometric types. Perhaps the main interest in our results lies in the versatility of the criterion and the various points to which it has been applied. A more careful selection of such points should yield a canonical representation of the conjugacy class of $f$.

Different kinds of boundary periodic points were introduced in \ref{Defi: Boundary points Markov partition}. It is important to note that a singularity of $f$ is always a boundary periodic point of $(f, \mathcal{R})$, but there could also exist non-singular periodic boundary points. Furthermore, such periodic boundary points may be located on the boundary of two rectangles: one where it is a corner and another where it is not. We formalize this behavior in the following definition.

\begin{defi}\label{Defi: Partion wellsuited/corner/adapted partition}
Let $\mathcal{R}$ be a Markov partition of the \textbf{p-A} homeomorphism $f$. Then the pair $(f, \mathcal{R})$ 
\begin{itemize}
	\item is \emph{well-suited} to $f$ if $\textbf{Per}^b(f, \mathcal{R}) = \textbf{Sing}(f)$, meaning that its only periodic boundary points are singularities.
	\item has the \emph{corner property} if every rectangle $R \in \mathcal{R}$ containing a periodic boundary point $p$ has $p$ as one of its corner points.
	\item is \emph{adapted} to $f$ if it is both well-suited and has the corner property. That is, its only periodic boundary points are singularities, and each of them is a corner of any rectangle that contains it.
\end{itemize}
\end{defi}

The Markov partitions that we will obtain at the end of this section will be adapted to our homeomorphism. We will now begin with this construction.

\subsection{Graphs adapted to the \text{p-A} Homeomorphism}\label{Subsec: Adapted graps to MP} 

While the definition \ref{Defi: Geo Markov partition} of a Markov partition is useful for visualizing and determining its geometric type using its sub-rectangles, the following characterization is more practical for constructing a Markov partition.

\begin{prop}\label{Prop: Markov criterion boundary}
	Let $f: S \rightarrow S$ be a \textbf{p-A} homeomorphism, and let $\mathcal{R} = \{R_i\}_{i=1}^n$ be a family of rectangles whose union is $S$ and whose interiors are disjoint. Then, $\mathcal{R}$ is a Markov partition for $f$ if and only if its stable boundary $\partial^s\mathcal{R} := \cup_{i=1}^n \partial^s R_i$ is $f$-invariant, and its unstable boundary $\partial^u\mathcal{R} := \cup_{i=1}^n \partial^u R_i$ is $f^{-1}$-invariant.
\end{prop}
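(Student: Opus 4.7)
The plan is to prove the two implications separately, preceded by one preliminary observation that reduces both to manipulations with the foliation structure. Because the interiors of the rectangles in $\mathcal{R}$ are pairwise disjoint and their union is $S$, we have $\partial R_k \cap \overset{o}{R_j} = \emptyset$ whenever $j \neq k$, and hence both $\partial^s\mathcal{R} \cap \overset{o}{R_j} = \emptyset$ and $\partial^u\mathcal{R} \cap \overset{o}{R_j} = \emptyset$ for every $j$. I would establish this at the start and then use it repeatedly.

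For the forward implication, I would assume $\mathcal{R}$ is a Markov partition and exploit the horizontal sub-rectangle structure. Each rectangle $R_i$ is tiled vertically by its horizontal sub-rectangles $\{H_j^i\}_{j=1}^{h_i}$, and $\partial^s R_i$ is the union of the top of the topmost strip with the bottom of the bottommost one. By the Markov condition, $f(H_j^i)$ is a vertical sub-rectangle of some $R_k$; its stable boundary $f(\partial^s H_j^i)$ therefore lies in $\partial^s R_k \subset \partial^s\mathcal{R}$. In particular $f(\partial^s R_i) \subset \partial^s\mathcal{R}$, so $f(\partial^s\mathcal{R}) \subset \partial^s\mathcal{R}$. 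A symmetric argument using the vertical sub-rectangles of $R_i$ and the action of $f^{-1}$ gives $f^{-1}(\partial^u R_i) \subset \partial^u\mathcal{R}$.

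For the converse, I would fix $i, j$ and a connected component $C$ of $f(\overset{o}{R_i}) \cap \overset{o}{R_j}$ and prove that $\overline{C}$ is saturated by vertical leaves of $R_j$, which together with the connectedness of $C$ forces $\overline{C}$ to be a vertical sub-rectangle of $R_j$. For any $x \in C$, let $J_x$ denote the vertical leaf of $R_j$ through $x$; it is an unstable arc with endpoints on $\partial^s R_j$. Travelling along $\overset{o}{J_x}$ inside $\overset{o}{R_j}$, one can only leave $f(R_i)$ by crossing $f(\partial^s R_i)$ or $f(\partial^u R_i)$. The hypothesis $f(\partial^s\mathcal{R}) \subset \partial^s\mathcal{R}$ combined with the preliminary observation gives $f(\partial^s R_i) \cap \overset{o}{R_j} = \emptyset$. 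Moreover, since $x \in f(\overset{o}{R_i})$ lies on an interior vertical leaf of $f(R_i)$, distinct from the two leaves of $\mathcal{F}^u$ supporting $f(\partial^u R_i)$, the disjointness of distinct leaves of $\mathcal{F}^u$ yields $\overset{o}{J_x} \cap f(\partial^u R_i) = \emptyset$. Therefore $\overset{o}{J_x} \subset f(\overset{o}{R_i}) \cap \overset{o}{R_j}$, and by connectedness $\overset{o}{J_x} \subset C$. The symmetric statement for components of $f^{-1}(\overset{o}{R_j}) \cap \overset{o}{R_i}$---that they close up to horizontal sub-rectangles of $R_i$---then follows from the $f^{-1}$-invariance of $\partial^u\mathcal{R}$ by reversing the roles of stable and unstable.

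The main subtlety lies in arguing that the vertical leaf of $R_j$ through a point of $C$ is genuinely disjoint from the unstable boundary arcs of $f(R_i)$. I would handle this by invoking the general fact that two leaves of a singular foliation are either equal or disjoint, together with the observation that an interior vertical leaf of a rectangle lies on a different leaf of $\mathcal{F}^u$ from the two leaves supporting its unstable boundary---otherwise the interior point would coincide with a boundary point of the rectangle, contradicting $x \in f(\overset{o}{R_i})$.
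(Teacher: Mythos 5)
Your overall strategy is the mirror image of the paper's: the forward implication via the stable/unstable boundaries of the sub-rectangles of the partition is identical, and in the converse the paper works with a component $C$ of $f^{-1}(\overset{o}{R_k})\cap\overset{o}{R_i}$ and shows it is saturated by horizontal leaves of $R_i$, exactly as you show your component of $f(\overset{o}{R_i})\cap\overset{o}{R_j}$ is saturated by vertical leaves of $R_j$. The forward direction and the exclusion of $f(\partial^s R_i)$ are correct.

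The gap is in how you rule out $\overset{o}{J_x}$ meeting $f(\partial^u R_i)$. You assert that the interior vertical leaf of $f(R_i)$ through $x$ lies on a \emph{different} leaf of $\mathcal{F}^u$ than the two leaves carrying $f(\partial^u R_i)$, "otherwise the interior point would coincide with a boundary point." That inference conflates "same global leaf" with "same point": by Proposition \ref{Prop: Propiedades foliaciones de p-A} every leaf of $\mathcal{F}^u$ is dense, so a single global leaf meets the (possibly only immersed) rectangle $f(R_i)$ in infinitely many disjoint vertical arcs, and nothing prevents one of those arcs from being a side in $\partial^u f(R_i)$ while another arc, through $x$, is interior. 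So the leaf-disjointness argument does not close this case. The correct way out — and the mechanism the paper uses in its mirrored setting — is the trivial bifoliation of $f(\overset{o}{R_i})$: the connected component of $L^u_x\cap f(\overset{o}{R_i})$ containing $x$ is precisely the interior of the vertical leaf of $f(R_i)$ through $x$, whose endpoints lie on $f(\partial^s R_i)$. Hence the first frontier point of $f(\overset{o}{R_i})$ that $\overset{o}{J_x}$ can reach lies in $f(\partial^s R_i)\subset\partial^s\mathcal{R}$, which your Case A already excludes; the exit through $f(\partial^u R_i)$ simply never occurs. (Compare the paper's converse, where the endpoint of the stable component is identified as $f^{-1}(z')$ with $z'\in\partial^u R_k$ and the $f^{-1}$-invariance of $\partial^u\mathcal{R}$ is invoked.) With that substitution your proof is complete; the remaining step, that an open connected subset of $\overset{o}{R_j}$ saturated by open vertical leaves closes up to a vertical sub-rectangle, is routine and is also left implicit in the paper.
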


\begin{proof}
	If $\mathcal{R}$ is a Markov partition of $f$ and $I$ is an $s$-boundary component of $R_i$, then $I$ is an $s$-boundary component of a horizontal sub-rectangle $H \subset R_i$ of $(f, \mathcal{R})$. Since $f(H) = V \subset R_k$ is a vertical sub-rectangle of $R_k \in \mathcal{R}$, $f(I)$ must be contained in the $s$-boundary of $R_k$. This shows that $\partial^s \mathcal{R}$ is $f$-invariant. The $f^{-1}$-invariance of $\partial^u \mathcal{R}$ is similarly proven.
	
	Now, assume that $\partial^s \mathcal{R}$ is $f$-invariant and $\partial^u \mathcal{R}$ is $f^{-1}$-invariant. Let $C$ be a nonempty connected component of $f^{-1}(\overset{o}{R_k}) \cap \overset{o}{R_i}$. We claim that $C$ is the interior of a horizontal sub-rectangle of $R_i$. Take $x \in C$ and let $\overset{o}{I_x}$ be the interior of the horizontal segment of $R_i$ passing through $x$, and let $\overset{o}{I_x'}$ be the connected component of $\mathcal{F}^s \cap C$ containing $x$. Clearly, $\overset{o}{I_x'} \subset \overset{o}{I_x}$, but if $I_x \neq I_x'$, at least one endpoint $z$ of $I_x'$ lies in $\overset{o}{I_x}$. Therefore, $z$ is in the interior of $R_i$. However, $z$ must be equal to $f^{-1}(z')$ for some $z' \in \partial^u R_k$ (or we could extend $I_x'$ a little bit more inside $\overset{o}{R_i}$), and since $\partial^u \mathcal{R}$ is $f^{-1}$-invariant, $z \in \partial^u \mathcal{R}$, which is a contradiction as $\overset{o}{R_i} \cap \partial^u \mathcal{R} = \emptyset$. Therefore, $z \in \partial^u \mathcal{R} \cap \overset{o}{R_i} = \emptyset$. Similarly, we can show that $f(C)$ is the interior of a vertical sub-rectangle of $R_k$.
\end{proof}

The boundary of a compact arc $I \subset S$ consists of two distinct points that we call the \emph{endpoints} of $I$. With this convention, we introduce a definition that captures the main properties of the stable and unstable boundaries of a Markov partition.

\begin{defi}\label{Defi: Adapted graph}
	Let $\delta^s=\{I_i\}_{i=1}^n$ be a family of compact stable intervals not reduced to a point. The family $\delta^s$ is an $s$-\emph{graph adapted} to $f$ if:
	\begin{itemize}
		\item[i)] Their union, $\cup \delta^s:=\cup_{i=1}^n I_i$, is $f$-invariant.
		\item[ii)] For all $i\in \{1,\cdots,n\}$, $\exists p\in\textbf{Sing}(f)$ such that $I_i \subset \mathcal{F}^s(p)$.
		\item[iii)] For all $p\in \textbf{Sing}(f)$ and every stable separatrix of $\mathcal{F}^s(p)$, there exists $I_i\in \delta^s$ contained in such separatrix and having $p$ as an endpoint.
		\item[iv)] The endpoints of every $I_i$ belong to $\mathcal{F}^u(\textbf{Sing}(f))$.
	\end{itemize}
\end{defi}

An $u$-\emph{graph adapted} to $f$, $\delta^u := \{J_i\}_{i=1}^n$, is similarly defined, but with the intervals contained in unstable leaves of the singularities, and $\cup \delta^u:= \cup_{i=1}^n \{J_i\}_{i=1}^n $ must be $f^{-1}$-invariant. We are now going to construct adapted $s$-graphs starting with a family of $f^{-1}$-invariant unstable segments.

\begin{lemm}\label{Lemm: the generated graph is adapted}
Let $\mathcal{J} = \{J_j\}_{j=1}^n$ be a family of nontrivial compact segments, each contained in $\mathcal{F}^u(\textbf{Sing}(f))$, and such that $\cup \mathcal{J} = \cup_{j=1}^n J_j$ is $f^{-1}$-invariant.

Let $\delta^s(\mathcal{J}) = \{I_i\}_{i=1}^m$ be the family of all stable segments that have one endpoint in \textbf{Sing}$(f)$, the other endpoint in $\cup \mathcal{J}$, and whose interior is disjoint from $\cup \mathcal{J}$. Then the family $\delta^s(\mathcal{J})$ is an $s$-adapted graph for $f$, and we call $\delta^s(\mathcal{J})$ the $s$-\emph{adapted graph generated by} $\mathcal{J}$.
\end{lemm}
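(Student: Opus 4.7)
The proof requires verifying that $\delta^s(\mathcal{J})$ is a finite family of nontrivial compact stable segments satisfying the four conditions (i)--(iv) of Definition \ref{Defi: Adapted graph}. I would begin with finiteness and the easy conditions. For any singularity $p$ and any stable separatrix $\sigma$ of $p$, parametrize $\sigma$ by $\mu^u$-length from $p$. Since $\cup \mathcal{J}$ is a finite union of compact segments it is closed, so $\sigma \cap \cup \mathcal{J}$ is closed in $\sigma$; the requirement that the interior of $I_i \subset \sigma$ be disjoint from $\cup \mathcal{J}$ forces $I_i$ to be the segment running from $p$ to the point of $\sigma \cap \cup \mathcal{J}$ closest to $p$ (when such a point exists), so at most one element of $\delta^s(\mathcal{J})$ sits on a given separatrix with $p$ as endpoint. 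Since there are finitely many singularities and finitely many separatrices per singularity, $\delta^s(\mathcal{J})$ is finite. Conditions (ii) and (iv) are immediate: a stable segment with endpoint $p \in \textbf{Sing}(f)$ is contained in $\mathcal{F}^s(p)$, and its endpoints lie in $\textbf{Sing}(f) \subset \mathcal{F}^u(\textbf{Sing}(f))$ and in $\cup \mathcal{J} \subset \mathcal{F}^u(\textbf{Sing}(f))$ respectively.

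The main obstacle is condition (iii), the existence of an $I_i$ on each separatrix. The plan is to show $\sigma \cap \cup \mathcal{J} \neq \emptyset$ for every stable separatrix $\sigma$. Since each leaf of $\mathcal{F}^s$ is dense in $S$ by minimality (Proposition \ref{Prop: Propiedades foliaciones de p-A}), the separatrix $\sigma$, viewed as a leaf of the singular foliation, is dense. Fix a regular interior point $x$ of any $J_j \in \mathcal{J}$ and a trivially bi-foliated neighborhood $V$ of $x$ as in Definition \ref{Defi: Abierto bi-foleado}: inside $V$ the unstable segment $J_j \cap V$ is a single unstable plaque and every stable plaque of $V$ meets it in exactly one point. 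By density $\sigma$ enters $V$; its trace in $V$ is a union of stable plaques, each of which crosses $J_j \cap V$. Hence $\sigma \cap J_j \neq \emptyset$, and the point $q$ of $\sigma \cap \cup \mathcal{J}$ closest to $p$ along $\sigma$ yields a nontrivial compact segment $I_i \subset \sigma$ from $p$ to $q$ with interior disjoint from $\cup \mathcal{J}$, as required.

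The last condition to check is (i), $f$-invariance of $\cup \delta^s(\mathcal{J})$. Fix $I_i \in \delta^s(\mathcal{J})$ going from $p \in \textbf{Sing}(f)$ to $q \in \cup \mathcal{J}$ along a separatrix $\sigma$. Then $f(I_i)$ is a stable segment running from $f(p) \in \textbf{Sing}(f)$ to $f(q)$ along the separatrix $f(\sigma)$ of $f(p)$. Its interior is disjoint from $\cup \mathcal{J}$: if some $y \in \overset{o}{f(I_i)} \cap \cup \mathcal{J}$ existed, the $f^{-1}$-invariance of $\cup \mathcal{J}$ would give $f^{-1}(y) \in \cup \mathcal{J}$, contradicting that $\overset{o}{I_i} \cap \cup \mathcal{J} = \emptyset$. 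However $f(q)$ need not belong to $\cup \mathcal{J}$, because $\cup \mathcal{J}$ is only $f^{-1}$-invariant and can properly enlarge under $f$. To handle this I would extend $f(I_i)$ along $f(\sigma)$ beyond $f(q)$ until hitting $\cup \mathcal{J}$ for the first time at a point $q'$, which is possible by the reasoning of the previous paragraph applied to the separatrix $f(\sigma)$; the resulting segment from $f(p)$ to $q'$ is precisely an element $I_k$ of $\delta^s(\mathcal{J})$, and by construction $f(I_i) \subseteq I_k \subseteq \cup \delta^s(\mathcal{J})$. Thus $f(\cup \delta^s(\mathcal{J})) \subseteq \cup \delta^s(\mathcal{J})$, completing the verification.
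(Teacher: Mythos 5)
Your proof is correct and follows essentially the same route as the paper: the heart of the matter, the $f$-invariance of $\cup\,\delta^s(\mathcal{J})$, is handled exactly as in the paper's proof, by comparing $f(I_i)$ with the (unique, nested) element of $\delta^s(\mathcal{J})$ on the separatrix of $f(p)$ and using the $f^{-1}$-invariance of $\cup\mathcal{J}$ to rule out a point of $\cup\mathcal{J}$ in the interior. You are in fact somewhat more careful than the paper, which asserts conditions (ii)--(iv) ``by construction,'' whereas you justify the existence clause (iii) via density of the stable separatrices and a trivially bi-foliated neighborhood of an interior point of some $J_j$.
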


\begin{proof}
	By construction, each interval in the graph $\delta^s(\cJ)$ is contained in $\mathcal{F}^s(\textbf{Sing}(f))$, has one endpoint in $\textbf{Sing}(f)$ and the other in $\mathcal{J} \subset \mathcal{F}^u(\textbf{Sing}(f))$, and for every stable separatrice of any point in $\textbf{Sing}(f)$, there exists a segment in $\delta^s(\mathcal{J})$ that belongs to that separatrice. It remains to show that $\cup \delta^s(\mathcal{J})$ is $f$-invariant.
		
	Let $I:=I_i$ be an interval in $\delta^s(\mathcal{J})$. $I$ is contained in a stable separatrice of a point $p \in \textbf{Sing}(f)$, therefore $f(I)$ is contained in a stable separatrice of $f(p)$, and within that separatrice, there exists an interval $I'$ of $\delta^s(\mathcal{J})$. If $f(I)$ is not a subset of $I'$, then $I' \subset f(I)$, which implies that $I'$ has an endpoint $x'$ in $\cup \mathcal{J}$ that belongs to the interior of $f(I)$. Since $\cup \mathcal{J}$ is $f^{-1}$-invariant, then $f^{-1}(x') \in \cup \mathcal{J}$ is in the interior of $I$, which contradicts our definition of $I$ because the interior of $I$ cannot intersect $\cup \mathcal{J}$. Consequently, $f(I) \subset I' \subset \cup \mathcal{J}$, and thus $\cup \delta^s(\mathcal{J})$ is $f$-invariant.
\end{proof}

If $\mathcal{I}$ is a family of stable segments contained in $\mathcal{F}^s(\textbf{Sing}(f))$ and its union is $f$-invariant, the $u$-\emph{adapted graph generated by} $\mathcal{I}$ is similarly defined and is denoted by $\delta^u(\mathcal{I})$.

\subsubsection{Compatibles graphs.} Even if $\delta^s$ and $\delta^u$ are graphs adapted to $f$, their union is not necessarily the boundary of a Markov partition of $f$. We must require a more subtle relation between the adapted graphs in order for them to bound bi-foliated open discs and then take the rectangles in our partition as the closures of such discs. The relation between them is given in \ref{Defi: Compatible graphs}. Before presenting it, we need to introduce a few new concepts.

\begin{defi}\label{Defi: Regular part delta-s}
	Let $\delta^s=\{I_i\}_{i=1}^n$ be an $s$-graph adapted to $f$. The \emph{regular part} of $\delta^s$ is  $\overset{o}{\delta^s}:=\cup \delta^s \setminus \cup_{i=1}^n \{ \partial I_i \}$. The regular part of an adapted $u$-graph is similarly obtained by removing the endpoints of each interval in $\delta^u$ and take their union.
\end{defi}

\begin{defi}\label{Defi: Rail regular/extremal}
	Let $\delta^s$ be an $s$-graph adapted to $f$.
	\begin{itemize}
		\item  A \emph{regular $u$-rail} of $\delta^s$ is a unstable interval $J$ whose interior is disjoint from $\cup\delta^s$ but has endpoints in $\overset{o}{\delta{s}}$.
		
		\item An \emph{extreme $u$-rail}  of $\delta^s$ is a unstable segment $J$ that have at lest one endpoint not contained in  $\overset{o}{\delta^s}$ and  for which there exists an \emph{embedded rectangle} \footnote{This mean the parametrization of $R$ is an homeomorphism in the unitary rectangle $\II^2$} $R$ such that: $\partial^s R\subset \cup\delta^s$,  $J$ is one  $u$-boundary component of $R$  and  any other vertical leaf of $R \setminus J$ is a regular $u$-rail of $\delta^s$.
	\end{itemize}
	
	The set of extreme $u$-rails of $\delta^s$ is denoted by \textbf{Ex}$\,^u(\delta^s)$.
\end{defi}

\begin{defi}\label{Defi: Compatible graphs}
	Let $\delta^s = \{I_i\}_{i=1}^n$ and $\delta^u = \{J_j\}_{j=1}^m$ be $s$-graphs and $u$-graphs adapted to $f$, respectively. They are \emph{compatible} if they satisfy the following properties:
	\begin{itemize}
		\item The endpoints of $\delta^s$ belong to $\cup \delta^u := \cup_{j=1}^m J_j$, and the endpoints of $\delta^u$ belong to $\cup \delta^s := \cup_{i=1}^n I_i$.
		\item The extreme $u$-rails of $\delta^s$ are contained in $\cup \delta^u$, and the extreme $s$-rails of $\delta^u$ are contained in $\cup \delta^s$.
	\end{itemize}
\end{defi}

\subsubsection{The existence of Markov partition}

\begin{prop}\label{Prop:Compatibles implies Markov partition}
If $\delta^u$ and $\delta^s$ are graphs adapted to $f$ and compatible, then the closure in $S$ of every connected component of
$$
\overset{o}{S} := S \setminus \left[ \cup \textbf{Ex}^s(\delta^u) \cup \cup \textbf{Ex}^u(\delta^s) \right]
$$
is a rectangle adapted to $f$ whose stable and unstable boundaries are contained in $\cup \delta^s$ and $\cup \delta^u$, respectively. Furthermore, the family $\mathcal{R}(\delta^s, \delta^u)$ given by such rectangles is an adapted Markov partition of $f$.
\end{prop}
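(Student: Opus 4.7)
The strategy is twofold: first, show that each connected component $C$ of $\overset{o}{S}$ has a closure that is a rectangle with stable (resp. unstable) boundary in $\cup\delta^s$ (resp. $\cup\delta^u$); second, invoke the Markov criterion of Proposition \ref{Prop: Markov criterion boundary} applied to the family $\mathcal{R}(\delta^s,\delta^u)$ and read off the adapted property from the structure of the graphs.

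For the first step, fix $x\in C$ and consider the stable leaf $L^s_x$ through $x$. Starting from $x$ and following $L^s_x$ in either direction, it first meets $\partial C\subset \cup\textbf{Ex}^s(\delta^u)\cup \cup\textbf{Ex}^u(\delta^s)$; since two distinct stable leaves cannot cross, $L^s_x$ cannot exit $C$ through the interior of a stable arc of $\cup\textbf{Ex}^s(\delta^u)$, so its exit point must be a transverse crossing with some extreme $u$-rail of $\delta^s$. Hence the connected component $\overset{o}{I_x}$ of $L^s_x\cap C$ containing $x$ is a non-trivial open stable arc with endpoints in $\cup\textbf{Ex}^u(\delta^s)\subset \cup\delta^u$, and the symmetric argument handles the unstable leaf. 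Applying the embedded-rectangle clause in Definition \ref{Defi: Rail regular/extremal} to one of the extreme $u$-rails bounding $\overset{o}{I_x}$, together with compatibility (which forces the two horizontal sides of this embedded rectangle to be extreme $s$-rails of $\delta^u$), I identify this embedded rectangle with $\overline{C}$ and conclude that $C$ is trivially bi-foliated in the sense of Definition \ref{Defi: Abierto bi-foleado}. Proposition \ref{Prop: Rectangulos parametrizados} then supplies a parametrization $\rho:\II^2\to \overline{C}$ whose stable (resp. unstable) boundary lies in $\cup\delta^s$ (resp. $\cup\delta^u$), as required.

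With the family $\mathcal{R}(\delta^s,\delta^u)$ assembled, its members cover $S$ and have pairwise disjoint interiors by construction. To invoke Proposition \ref{Prop: Markov criterion boundary}, I need $\partial^s\mathcal{R}$ to be $f$-invariant and $\partial^u\mathcal{R}$ to be $f^{-1}$-invariant. The inclusion $f(\partial^s\mathcal{R})\subset \cup\delta^s$ follows from the $f$-invariance of $\cup\delta^s$. To upgrade this to $f(\partial^s\mathcal{R})\subset \partial^s\mathcal{R}$, I argue by contradiction: if $f(x)$ lies in the open interior of some rectangle $R'$ for some $x\in \partial^s\mathcal{R}$, then a bi-foliated neighborhood of $f(x)$ inside $\overset{o}{R'}$ pulls back under $f^{-1}$ to a neighborhood of $x$ sitting in a single rectangle, contradicting the fact that $x$ lies in the interior of a stable arc bordering at least two distinct rectangles. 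The dual argument yields $f^{-1}$-invariance of $\partial^u\mathcal{R}$.

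Finally, the adapted property follows from Lemma \ref{Lemm: Boundary of Markov partition is periodic} combined with items $(ii)$--$(iv)$ of Definition \ref{Defi: Adapted graph}: every periodic boundary point lies on a stable or unstable separatrix of a singularity, and careful bookkeeping of the extreme rails forces such periodic points to coincide with the singularities, each appearing as a common endpoint of stable arcs of $\delta^s$ and unstable arcs of $\delta^u$, and hence as a corner of every rectangle containing it. The main obstacle I foresee is the identification of $\overline{C}$ with the embedded rectangle produced by Definition \ref{Defi: Rail regular/extremal}: the compatibility conditions are precisely engineered so that the extreme rails close up into rectangles, but rigorously tracing how the two types of extreme rails interleave at the singular corners requires a careful topological argument exploiting minimality of the invariant foliations and the local structure around $k$-prong singularities.
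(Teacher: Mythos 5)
Your proposal follows essentially the same route as the paper: show that the closure of each connected component of $\overset{o}{S}$ is a trivially bi-foliated rectangle with stable and unstable boundaries in $\cup\delta^s$ and $\cup\delta^u$, then conclude via the boundary-invariance criterion of Proposition \ref{Prop: Markov criterion boundary}, reading off adaptedness from the definitions of adapted and compatible graphs. You supply more detail than the paper does at the two points it treats as immediate (trivial bi-foliation of the components, and the identification $\partial^{s,u}\mathcal{R}=\cup\delta^{s,u}$ needed for invariance), and the difficulty you flag at the end is real but is left implicit in the paper's own argument as well.
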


The proof will follow from a series of lemmas.

\begin{lemm}\label{Lemm: Finite c.c.}	
	The set $\overset{o}{S}=S\setminus (\delta^s \cup \text{Ex}^u(\delta^s))$ has a finite number of connected components.
\end{lemm}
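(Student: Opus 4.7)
The plan is to first show that $\textbf{Ex}^u(\delta^s)$ is finite and then invoke a standard topological fact about complements of graphs on surfaces. The first observation I would make is that any extreme $u$-rail $J$ must have an endpoint that is an endpoint of some interval $I_i \in \delta^s$: by definition $J$ is a $u$-boundary component of an embedded rectangle $R$ with $\partial^s R \subset \cup \delta^s$, so both endpoints of $J$ lie in $\cup\delta^s$, and the requirement that at least one of them be outside the regular part $\overset{o}{\delta^s}$ forces it into the set $\cup\delta^s \setminus \overset{o}{\delta^s}$, which is exactly the (finite) set of endpoints of the intervals of the adapted $s$-graph $\delta^s$.

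Next, I would fix such a boundary point $p$ of $\delta^s$ and bound the number of extreme $u$-rails having $p$ as an endpoint. The key local observation is that $p$ has only finitely many sectors in the sense of Definition \ref{Defi: Sector} (two when $p$ is regular, $2k$ when $p$ is a $k$-prong singularity). An extreme $u$-rail through $p$ follows the unique unstable leaf through $p$, and its accompanying rectangle $R$ must lie in one of the local sectors at $p$. For a fixed local sector, the rail is then forced to be unique: its length is determined by the maximal extension of $J$ for which every other vertical leaf of the corresponding rectangle remains a regular $u$-rail, and the direction along the unstable leaf is determined by the sector. This reasoning, carried out inside a regular neighborhood of $p$ provided by Theorem \ref{Theo: Regular neighborhood}, bounds the number of extreme $u$-rails at $p$ by the number of sectors at $p$. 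Since there are finitely many candidate points $p$, I conclude that $\textbf{Ex}^u(\delta^s)$ is finite.

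To finish, I would observe that $\delta^s \cup \textbf{Ex}^u(\delta^s)$ is a finite union of compact arcs embedded in the compact surface $S$, hence forms a compact topological graph $\Gamma \subset S$. Then I would invoke the classical fact that the complement of a finite embedded graph in a compact surface has only finitely many connected components (e.g., by extending $\Gamma$ to a finite CW-decomposition of $S$, whose $2$-cells are precisely the connected components of $S \setminus \Gamma$). This yields the desired conclusion that $\overset{o}{S}$ has finitely many connected components.

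The main obstacle I anticipate is the finiteness step for $\textbf{Ex}^u(\delta^s)$, and more precisely ruling out a phenomenon where infinitely many distinct extreme rails accumulate at a single boundary point of $\delta^s$. This requires combining two ingredients: the local bifoliated structure around $p$ (to reduce the count to "one rail per sector") and the maximality built into the definition of an extreme rail (to pin down that single rail uniquely inside each sector). Everything else should then follow from standard compactness and graph-complement arguments on surfaces.
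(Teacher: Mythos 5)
Your proof is correct and follows essentially the same strategy as the paper's: first establish that the extreme $u$-rails form a finite set (the paper does this by noting there are finitely many endpoints of $\delta^s$ and finitely many separatrices through each, which is the same counting your sector argument makes precise), and then conclude that the complement of a finite union of compact arcs in $S$ has finitely many components. Your version is more detailed than the paper's two-sentence argument, particularly in pinning down why only finitely many rails can emanate from a single endpoint, but the underlying approach is identical.
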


\begin{proof}
	Since there are only a finite number of endpoints in $\delta^s$ and $\delta^u$ and a finite number of stable and unstable separatrixes passing through each endpoint, there is a finite number of (compact) extreme rails of $\delta^s$ and $\delta^s$. Finally, since $\delta^s \cup \text{Ex}^u(\delta^s))$ consists of a finite number of compact intervals, their complement must have a finite number of connected components.
\end{proof}

\begin{lemm}\label{Lemm: Equivalent class rectangle}
	Let $r$ be a connected component of $\overset{o}{S}$. Then $R:=\overline{r}$ is a rectangle adapted to $f$, whose stable and unstable boundaries are contained in $\cup\delta^s$ and $\cup\delta^u$, respectively.
\end{lemm}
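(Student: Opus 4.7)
The plan is to establish that $R = \overline{r}$ carries a global product structure by explicitly constructing a bi-foliation chart. My first step is to observe that $r$ contains no singularity of $f$: by condition (iii) of Definition \ref{Defi: Adapted graph}, every singularity is an endpoint of some interval in $\delta^s$, so $\textbf{Sing}(f) \subset \cup \delta^s \cap \cup \delta^u$; and these points are either endpoints of the graphs (and therefore on extreme rails emanating from them) or interior points of rails. In either case they lie in the complement of $\overset{o}{S}$, hence not in the open set $r$.

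Next, fix a base point $x_0 \in r$ and let $\tilde I(x_0)$ be the maximal open stable arc through $x_0$ whose interior is contained in $r$; since $x_0$ is regular, this is a genuine open arc with two distinct endpoints $a_-, a_+$. I claim $a_\pm \in \cup \delta^u$. Indeed, $\tilde I(x_0)$ cannot close on itself without meeting a singularity, and it must terminate as soon as it meets the complement of $\overset{o}{S}$. By the definition of extreme $u$-rail and the compatibility of $\delta^s$ and $\delta^u$ (Definition \ref{Defi: Compatible graphs}), the possible stopping loci are segments of $\cup \delta^u$. A symmetric argument produces a maximal open unstable arc $\tilde J(x_0)$ through $x_0$ in $r$ with endpoints $b_-, b_+ \in \cup \delta^s$.

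Now I would globalize the local product structure. Using that $(\mathcal{F}^s, \mathcal{F}^u)$ is locally a trivial bi-foliation away from singularities, the map
\begin{equation*}
    \rho:(a_-, a_+)\times(b_-, b_+)\longrightarrow r,\qquad (y,z)\longmapsto \mathcal{F}^s(z)\cap\mathcal{F}^u(y),
\end{equation*}
is a well-defined local homeomorphism onto an open subset of $r$. An openness–closedness argument, based on the maximality of $\tilde I$ and $\tilde J$ together with connectedness of $r$, shows that the image of $\rho$ is all of $r$ and that $\rho$ is a homeomorphism. Extending $\rho$ continuously to the closed square as in the proof of Proposition \ref{Prop: Rectangulos parametrizados} provides a parametrization of $R = \overline{r}$, so that $R$ is a rectangle in the sense of Definition \ref{Defi: Rectangulo}.

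The main obstacle will be to verify that the two horizontal boundary components of $R$ lie entirely in $\cup \delta^s$ (and, symmetrically, the vertical components in $\cup \delta^u$), and not merely in a concatenation of extreme rails and endpoints. This is precisely where the compatibility condition is essential: the extreme $u$-rail closing $R$ on the left or right is contained in $\cup \delta^u$ by compatibility, and its endpoints lie in $\cup \delta^s$ by condition (iv) of Definition \ref{Defi: Adapted graph} combined with the compatibility of endpoints. Running the symmetric argument from both sides and using the fact that periodic points on $\partial R$ necessarily lie at endpoints of intervals of the adapted graphs (hence at singularities or at corners), one concludes that $\partial^s R \subset \cup \delta^s$, $\partial^u R \subset \cup \delta^u$, and that the corners of $R$ are located in $\cup \delta^s \cap \cup \delta^u$, so that $R$ is adapted to $f$.
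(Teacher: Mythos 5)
Your proposal is correct and, at bottom, follows the same skeleton as the paper's proof (which is only three sentences long: $r$ is open, connected, relatively compact and singularity-free, hence trivially bi-foliated, hence its closure is a rectangle by Definition~\ref{Defi: Rectangulo}; the boundary lies in the graphs because it lies in $S\setminus\overset{o}{S}$). What you add is the content the paper omits: an explicit bi-foliation chart built from the maximal stable and unstable arcs through a base point, together with the observation that their endpoints land in $\cup\delta^u$ and $\cup\delta^s$ by compatibility and condition (iv) of Definition~\ref{Defi: Adapted graph}. One remark on where the real work sits: the implication ``singularity-free, open, connected, relatively compact $\Rightarrow$ trivially bi-foliated'' is false in general --- an L-shaped region bounded by stable and unstable arcs satisfies all the hypotheses except the product structure --- and it is precisely the removal of the \emph{extreme rails} in the definition of $\overset{o}{S}$, i.e.\ the compatibility hypothesis, that excludes such components. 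You correctly locate this in the openness--closedness step of your map $\rho$, but that step is still only sketched; to be fully rigorous one should check that if $\rho$ failed to be onto (or injective), some regular or extreme $u$-rail of $\delta^s$ would have to cross $r$, contradicting the definition of $\overset{o}{S}$. Since the paper asserts the corresponding step with no argument at all, your version is if anything more complete than the original.
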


\begin{proof}
It is clear that $r$ is open, connected, its closure is compact, and it does not contain any singularity in its interior, so it is trivially bi-foliated. By Definition \ref{Defi: Rectangulo}, $R := \overline{r}$ is a rectangle. Furthermore, its stable and unstable boundaries must lie in the adapted $s$ and $u$ graphs, as they must be contained in $S\setminus \overset{o}{S}$.
\end{proof}

\begin{proof}[Proposition  \ref{Prop:Compatibles implies Markov partition}]
In view of Lemma \ref{Lemm: Finite c.c.} and Lemma \ref{Lemm: Equivalent class rectangle}, the family $\cR$ is a finite collection of rectangles, as their interiors correspond to different connected components of $\overset{o}{S}$, and their interiors are disjoint. At the same time, since $\partial^{s,u} \cR = \delta^{s,u}$, the stable boundary of $\cR$ is $f$-invariant, and its unstable boundary is $f^{-1}$-invariant. By Proposition \ref{Prop: Markov criterion boundary}, $\cR$ is a Markov partition of $f$.
The fact that $\delta^s$ and $\delta^u$ are adapted and compatibles, implies directly that $\cR$ is an adapted Markov partition.
\end{proof}

\subsubsection{How to obtain compatible graphs?}

\begin{lemm}\label{Lemm: Iteration to be adapted}
	Let $\delta^s$ and $\delta^u$ be graphs adapted to $f$. Then there exists an $n := n(\delta^s, \delta^u) \in \mathbb{N}$ such that $\delta^s$ and $h^n(\delta^u)$ are compatible whenever $m \geq n$, and $n$ is the minimum natural number with this property.
\end{lemm}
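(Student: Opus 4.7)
The plan is to verify the four conditions in Definition \ref{Defi: Compatible graphs} between $\delta^s$ and $f^n(\delta^u)$ for $n$ sufficiently large, and then show the set of good $n$ is upward closed so that a minimum exists. I begin with two preliminary observations. First, $f^n(\delta^u)$ remains an adapted $u$-graph for every $n \geq 0$: its union is $f^{-1}$-invariant (since $f$ commutes with itself), each of its segments still lies on an unstable separatrix of a singularity, each separatrix still hosts a segment with its singularity as endpoint, and the extremities remain in $\mathcal{F}^s(\textbf{Sing}(f))$. Second, since $\cup\delta^u$ is $f^{-1}$-invariant we have $\cup\delta^u \subset f(\cup\delta^u)$, so $\{\cup f^n(\delta^u)\}_{n\geq 0}$ is an increasing chain.

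Next I choose $N\in\mathbb{N}_+$ so that $f^N$ fixes every singularity together with every one of its separatrices individually; such an $N$ exists by finiteness of $\textbf{Sing}(f)$ and of its separatrices. Then $f^N$ acts on each unstable separatrix by multiplying $\mu^s$-lengths by $\lambda^N$, and symmetrically on stable separatrices by $\lambda^{-N}$. To verify condition (i) for $(\delta^s, f^n(\delta^u))$, together with the half of (ii) saying extreme $u$-rails of $\delta^s$ lie in $\cup f^n(\delta^u)$: each endpoint of $\delta^s$ lies on an unstable separatrix by adaptedness, and each extreme $u$-rail is an unstable arc whose exterior endpoint is a corner of an extremal rectangle, hence an endpoint of some $I_i\in\delta^s$, so the entire rail lies in an unstable separatrix. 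Since these finitely many points and arcs are at bounded $\mu^s$-distance from the singularities along their separatrices, they all fall inside the initial segments of $\cup f^{kN}(\delta^u)$ once $k$ is large, and by monotonicity this persists for all $n \geq kN$.

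For the dual half, each endpoint $q$ of $\delta^u$ lies at some $\mu^u$-distance $D_q > 0$ from a singularity $\sigma_q$ along a stable separatrix, so $f^n(q)$ lies at distance $\lambda^{-n} D_q$ from $f^n(\sigma_q)$, tending to $0$. Because $\delta^s$ provides initial segments of length bounded below by some $L_{\min} > 0$ at every stable separatrix of every singularity, for $n$ large each $f^n(q)$ lies in $\cup \delta^s$. For the extreme $s$-rails of $f^n(\delta^u)$, I would argue that the same clustering confines them, together with the extremal rectangles they bound, to regular neighborhoods of singularities of arbitrarily small radius (Theorem \ref{Theo: Regular neighborhood}); inside such a neighborhood $\delta^s$ already fills every stable sector with an initial segment, so the rails lie in $\cup\delta^s$.

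Finally, once the four conditions hold at some $n_0$ they persist for every $n \geq n_0$: the first pair because $\cup f^n(\delta^u)$ only grows, and the second pair because further iteration only pushes endpoints of $f^n(\delta^u)$ closer to singularities. Hence the set of admissible $n$ is a nonempty upward-closed subset of $\mathbb{N}$, and its minimum $n(\delta^s,\delta^u)$ exists. The main obstacle will be the honest treatment of the extreme $s$-rails of $f^n(\delta^u)$: while it is clear that their endpoints cluster near singularities, confirming that the whole extremal rectangle fits inside a regular neighborhood of small enough radius so that its stable side is captured by the initial segments of $\delta^s$ requires a careful local analysis via the sector decomposition of Theorem \ref{Theo: Regular neighborhood}, and probably a uniform lower bound on the widths (in $\mu^s$) of the extremal rectangles as they shrink.
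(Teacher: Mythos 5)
Your overall strategy coincides with the paper's: three of the four compatibility conditions (endpoints of $\delta^s$ captured by $f^n(\delta^u)$, endpoints of $f^n(\delta^u)$ captured by $\delta^s$, and extreme $u$-rails of $\delta^s$ captured by $f^n(\delta^u)$) are handled exactly as in the paper, by comparing a fixed finite collection of $\mu^s$- or $\mu^u$-distances to singularities against the exponentially growing lengths $\lambda^n L^u$ of the iterated segments. Your preliminary observations (that $f^n(\delta^u)$ stays adapted and that $\cup f^n(\delta^u)$ is increasing) are correct and harmless, and the passage to a power $f^N$ fixing all separatrices is an unnecessary but valid simplification.

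The genuine gap is the one you flag yourself: the containment of the extreme $s$-rails of $f^n(\delta^u)$ in $\cup\delta^s$. Worse, the repair you sketch would not work as stated. The extreme $s$-rails of $f^n(\delta^u)$ are the $f^n$-images of the extreme $s$-rails of $\delta^u$, and the extremal rectangles bounding them are the $f^n$-images of the corresponding extremal rectangles for $\delta^u$; under $f^n$ these rectangles contract by $\lambda^{-n}$ in the stable direction but \emph{expand} by $\lambda^{n}$ in the unstable direction, so they become long and thin and do \emph{not} fit inside a regular neighborhood $D(p,\epsilon)$ of small radius. An argument that tries to confine the whole rectangle near a singularity is therefore doomed. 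Only the rail itself (a stable side of the rectangle) needs to be controlled, and the clean way to do this — the paper's way — is to work with the \emph{fixed} extreme $s$-rails of $\delta^u$: each one is a compact stable arc lying in a stable separatrix of a singularity $p$ (its non-regular endpoint is an endpoint of a segment of $\delta^u$, hence lies in $\cF^s(\textbf{Sing}(f))$), so for each singularity and each stable separatrix one takes the minimal compact interval $I(p,i)$ issued from $p$ containing all these intersections, with $\mu^u(I(p,i))$ bounded by some finite $M^s$. Since $\mu^u(f^{-n}(I_i))=\lambda^n\mu^u(I_i)$ grows without bound for the segment $I_i\in\delta^s$ sitting in that separatrix with endpoint $p$, one gets $I(p,i)\subset f^{-n}(\cup\delta^s)$ for $n$ large, which is equivalent to the desired containment of the extreme $s$-rails of $f^n(\delta^u)$ in $\cup\delta^s$. (Alternatively, your push-forward picture can be salvaged by observing that each extreme $s$-rail of $f^n(\delta^u)$ lies in a stable separatrix of a singularity $\sigma$ at $\mu^u$-distance at most $\lambda^{-n}C$ from $\sigma$, for a constant $C$ depending only on $\delta^u$ and its extremal rectangles, and is therefore eventually inside the initial segment of $\delta^s$ on that separatrix — but this must be said about the rail alone, not the rectangle.) With that fourth condition established by either route, your concluding monotonicity remark gives the existence of the minimum $n(\delta^s,\delta^u)$, matching the paper.
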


\begin{proof}
	
	First, we prove that there exists $N_1 \in \mathbb{N}$ such that for all $n > N_1$, $\delta^s$ contains the extreme points of $h^n(\delta^u)$ and $h^n(\delta^u)$ contains the extreme points of $\delta^s$. Next, we establish an $N_2 \in \mathbb{N}$ such that for all $n > N_2$, the extreme $u$-rails of $\delta^s$ are in $h^n(\delta^u)$ and the extreme $s$-rails of $h^n(\delta^u)$ are contained in $\delta^s$. These conditions imply that:
	
	$$
	\{n\in \NN \mid \delta^s \text{ is compatible with }f^n(\delta^u)\}\neq \emptyset,
	$$
	
	and then, we can define $n := n(\delta^s, \delta^u)$ as the minimum of this set.
	
	Let's start by assuming that  $\delta^u=\{J_j\}_{j=1}^h$ and $\delta^s=\{I_i\}_{i=1}^l$. We define:
	$$
	L^u:=\min\{\mu^s(J_j) \mid 1\leq j\leq h\}.
	$$
	It is clear that for every $n\in \NN$ and every interval $J_j$
	$$
	\mu^s(f^n(J_i))=\lambda^n\mu^s(J_i)\leq \lambda^n L^u.
	$$
	
	Let $z\in \delta^s$ be an extreme point of $\delta^s$. Since $\delta^s$ is an adapted graph, the point $z$ lies on the unstable leaf of a singularity of $f$. Let $[p_z,z]^u$ denote the unique compact interval of the unstable leaf passing through $z$ and connecting a singularity $p_z$ of $f$ with $z$. Now, let's define:
	$$
	F^u:=\max\{\mu^s([p_z,z]^u) \mid z \text{ is extreme point of  } \delta^s \}.
	$$

	By the uniform expansion in the unstable leaves of $\mathcal{F}^u$, there exists $n_1\in \mathbb{N}$ such that for every $n\geq n_1$, we have $\lambda^nL^u > F^u$. Moreover, if $n > n_1$ and $z$ is an extreme point of $\delta^s$, there exists a unique $J_j$ in $\delta^u$ such that $z$ lies on the same separatrice as $f^n(J_j)$. However,
	$$
	\mu^u(f^n(J_j))\geq \lambda^nL^u > F^u\geq \mu^s([p_z,z]^u),
	$$
	implying that $z$ belongs to $f^n(J_j)$, or equivalently $z$ belongs to $f^n(\delta^u)$ for all $n\geq n_1$.

	In the same way, but using $f^{-1}$ and the measure $\mu^u$, we can deduce the existence of $n_2\in \NN$ such that for all $n\geq n_2$ and every extreme point $z$ of $\delta^u$, $z$ is also an extreme point of $f^{-n}(\delta^s)$. In other words, for all $n\geq n_2$ and every extreme point $z$ of $f^n(\delta^u)$, $z$ is contained in $\delta^s$. Let's take $N_1=\max\{n_1,n_2\}$.

	Now, let's consider the set of $u$-extreme rails of $\delta^s$. This set is finite, and each $u$-extreme rail is a closed interval.

	For each $p\in \textbf{Sing}(f)$ and each unstable separatrice $F^u_i(p)$ of $p$ (if $p$ is a $k$-prong, we consider $i=1,\cdots,k$), we define $J(p, i)\subset F^u_i(p)$ as the minimal compact interval containing the intersection of all the $u$-extreme rails of $\delta^s$ with the separatrice $F^u_i(p)$, in case that some $u$-extremal rail $J$ of of $\delta^s$ contains a singularity in its interior we consider just the sub-interval of $J$ contained in the separatrice $F^u_i(p)$. Now, we consider the following quantity, which is finite:
	
	$$
	M^u:=\max\{ \mu^s(J(p,i)) \mid p\in \textbf{Sing}(f) \text{ is a $k$- prong and } i=1,\cdots k  \}.
	$$
	
	Like $\delta^u=\{J_j\}_{j=1}^h$ the next quantity is finite too,
	$$
	G^u=\min\{\mu^s(J_j) \mid j=1,\cdots,h\}.
	$$

	By the uniform expansion in unstable leaves of $\mathcal{F}^u$, there exists $n_1\in \mathbb{N}$ such that for all $n\geq n_1$, $\lambda^n G^u \geq M^u$. Furthermore, for any $k$-prong $p\in \textbf{Sing}(f)$ and every $i=1, \cdots, k$, there exists an interval $J_j$ in $\delta^u$ such that $f^n(J_j)$ is contained in the same unstable separatrice as $J(p,i)$, indeed $J_j$ is the interval of $\delta^u$ contained in $F^u_i(p)$ with and end point in $p$ that is given by item $iii)$ in Definition \ref{Defi: Adapted graph} (in the $u$ case). Moreover, we have the following computation:
	
	$$
	\mu^s(f^n(J_j))=\lambda^n\mu^s(J_i)\geq \lambda^n G^u \geq M^u.
	$$

	Since $\mu^s(J(p,i)) \leq M^u$, this calculation implies that $J(p,i)\subset f^n(J_j)$. By the construction of $J(p,i)$, we deduce that for each $n\geq n_1$, any $u$-extreme rail of $\delta^s$ is contained in $f^n(\delta^u)$.

	A similar proof using $f^{-1}$ and the measure $\mu^u$ gives another natural number $n_2\in \NN$ such that for all $n\geq n_2$, any extreme $s$-rail of $\delta^u$ is contained in $f^{-n}(\delta^s)$, or equivalently, all extreme $s$-rails of $f^{n}(\delta^u)$ are contained in $\delta^s$. Let $N_2=\max(n_1,n_2)$. The conclusion is that the following quantity exists:
	$$
	n(\delta^s,\delta^u):=\min \{N\in \NN \mid \forall n\geq N \, \delta^s \text{ is compatible with } f^n(\delta^u) \},
	$$ 
	
	Which is the number we were looking for
\end{proof}

We can now summarize all these steps to construct a Markov partition for a \textbf{p-A} homeomorphism $f:S \to S$:

\begin{cons}\label{Cons: Recipe for Markov partitions}
	Let $p$ and $q$ be singular points of $f$, with separatrices $F^s(p)$ and $F^u(q)$ respectively, and let $z$ be a point in their intersection $F^s(p)\cap F^u(q)$. Consider the following construction:
	\begin{enumerate}
		\item Define $J^u(z)$ as the unstable interval $[p,z]^u$ in $F^u(p)$, which is referred to as the \emph{primitive segment}.
		
		\item Define $\cJ^u(z)=\cup_{i\in \NN}f^{-i}(J^u(z))$. Due to contraction in the unstable foliation, $\mathcal{J}^u(z)$ is a finite union of closed intervals and is $f^{-1}$-invariant.
		
		\item The graph $\mathcal{J}^u(z)$ satisfies the conditions of Lemma \ref{Lemm: the generated graph is adapted} and thus its generated $s$-graph is adapted to $f$. We denote it as $\delta^s(z)$.
		
		\item The graph $\delta^s(z)$ is $f$-invariant, and by Lemma \ref{Lemm: the generated graph is adapted} (applied to $f^{-1}$), its generated $u$-graph, $\delta^u(z)$, is adapted to $f$.
		
		\item By Lemma \ref{Lemm: Iteration to be adapted}, there exists a number $n(z) = n(\delta^s(z), \delta^u(z))$, called the \emph{compatibility coefficient} of $z$, such that, for all $n > n(z)$, $\delta^s(z)$ and $f^n(\delta^u(z))$ are compatible.
		
		\item Finally, for all $n\geq n(z)$, Proposition \ref{Prop:Compatibles implies Markov partition} implies the existence of an adapted Markov partition $\mathcal{R}(z,n)$ with a stable boundary equal to $\cup \delta^s(z)$ and an unstable boundary equal to $\cup f^n(\delta^u(z))$.
	\end{enumerate}
\end{cons}

Therefore, we recover a classic result.

\begin{coro}\label{Coro: Existence adapted Markov partitions}
	Every generalized \textbf{p-A} homeomorphism has adapted Markov partitions.
\end{coro}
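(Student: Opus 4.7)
The plan is to verify that Construction \ref{Cons: Recipe for Markov partitions} can always be initiated for an arbitrary generalized pseudo-Anosov homeomorphism $f: S \to S$; once the starting data is in place, the construction itself, together with Proposition \ref{Prop:Compatibles implies Markov partition}, produces an adapted Markov partition with no further input. So the single nontrivial point is the existence of an initial intersection point $z \in F^s(p) \cap F^u(q)$ with $p, q \in \textbf{Sing}(f)$.

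To produce $z$, I would first observe that $\textbf{Sing}(f) \neq \emptyset$: on higher-genus surfaces this is forced by the Euler-Poincaré formula applied to the singular invariant foliations of Definition \ref{Defi: Foliacion singular}, while on the torus $\mathbb{T}^2$ it is supplied by Convention \ref{Conv: Singularidades caso del Toro}. Pick any $p, q \in \textbf{Sing}(f)$ (possibly equal) and fix a stable separatrix $F^s(p)$ together with an unstable separatrix $F^u(q)$. By Proposition \ref{Prop: Propiedades foliaciones de p-A}(2), every leaf of $\mathcal{F}^s$ is dense in $S$, and the same holds for its half-leaves since $\mathcal{F}^s$ contains no closed leaves. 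Density of $F^s(p)$ forces it to accumulate on every point of $F^u(q)$, and topological transversality of $\mathcal{F}^s$ and $\mathcal{F}^u$ at regular points (Definition \ref{Defi: pseudo-Anosov homeomorphism}) upgrades this accumulation to a genuine transverse intersection. Choose any such $z \in F^s(p) \cap F^u(q)$.

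With $z$ in hand, the construction proceeds on autopilot. The primitive segment $J^u(z) = [q,z]^u$ generates, under backward iteration, the family $\mathcal{J}^u(z) = \bigcup_{i \in \mathbb{N}} f^{-i}(J^u(z))$, which is $f^{-1}$-invariant by construction and consists of only finitely many distinct compact intervals because the $\mu^s$-length of $f^{-i}(J^u(z))$ decays geometrically like $\lambda^{-i}$. Lemma \ref{Lemm: the generated graph is adapted} then converts $\mathcal{J}^u(z)$ into an $s$-graph $\delta^s(z)$ adapted to $f$, and its dual application produces an adapted $u$-graph $\delta^u(z)$. Lemma \ref{Lemm: Iteration to be adapted} next provides a minimal compatibility coefficient $n(z) \in \mathbb{N}$ such that $\delta^s(z)$ and $f^{n(z)}(\delta^u(z))$ are compatible in the sense of Definition \ref{Defi: Compatible graphs}. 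Finally, Proposition \ref{Prop:Compatibles implies Markov partition} assembles these compatible graphs into an adapted Markov partition $\mathcal{R}(z, n(z))$ of $f$, which concludes the proof. The only conceptual obstacle is the minimality-based existence of $z$; every subsequent step is a direct invocation of a previously established result.
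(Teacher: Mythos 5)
Your proposal is correct and follows exactly the paper's route: the corollary is a direct consequence of Construction \ref{Cons: Recipe for Markov partitions}, whose steps rest on Lemma \ref{Lemm: the generated graph is adapted}, Lemma \ref{Lemm: Iteration to be adapted}, and Proposition \ref{Prop:Compatibles implies Markov partition}. The only addition is your explicit justification, via minimality of the invariant foliations, that the starting intersection point $z \in F^s(p)\cap F^u(q)$ exists — a detail the paper leaves implicit in the construction — and that justification is sound.
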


\subsection{Primitive Markov partitions}\label{Sub-sec: Primitive MP and first intersection} 

We must  to apply Construction \ref{Cons: Recipe for Markov partitions} to a family of distinguished points known as \emph{first intersection points}. 

\subsubsection{First intersection points of $f$.}
\begin{defi}\label{Defi: first intersection points}	
	Let $f$ be a generalized pseudo-Anosov homeomorphism, $p, q \in \textbf{Sing}(f)$, $F^s(p)$ be a stable separatrice of $p$, and $F^u(q)$ be an unstable separatrice of $q$.
	A point $x \in [\cF^s(p) \cap \cF^u(q)] \setminus \textbf{Sing}(f)$ is a \emph{first intersection point} of $f$ if the stable interval $[p,x]^s \subset F^s(p)$ and the unstable segment $[q,x]^u \subset F^u(q)$ have disjoint interiors. In other words:
	$$
	(p,x]^s\cap (q,x]^u=\{x\}.
	$$. 
\end{defi}

\begin{figure}[h]
	\centering
	\includegraphics[width=0.4\textwidth]{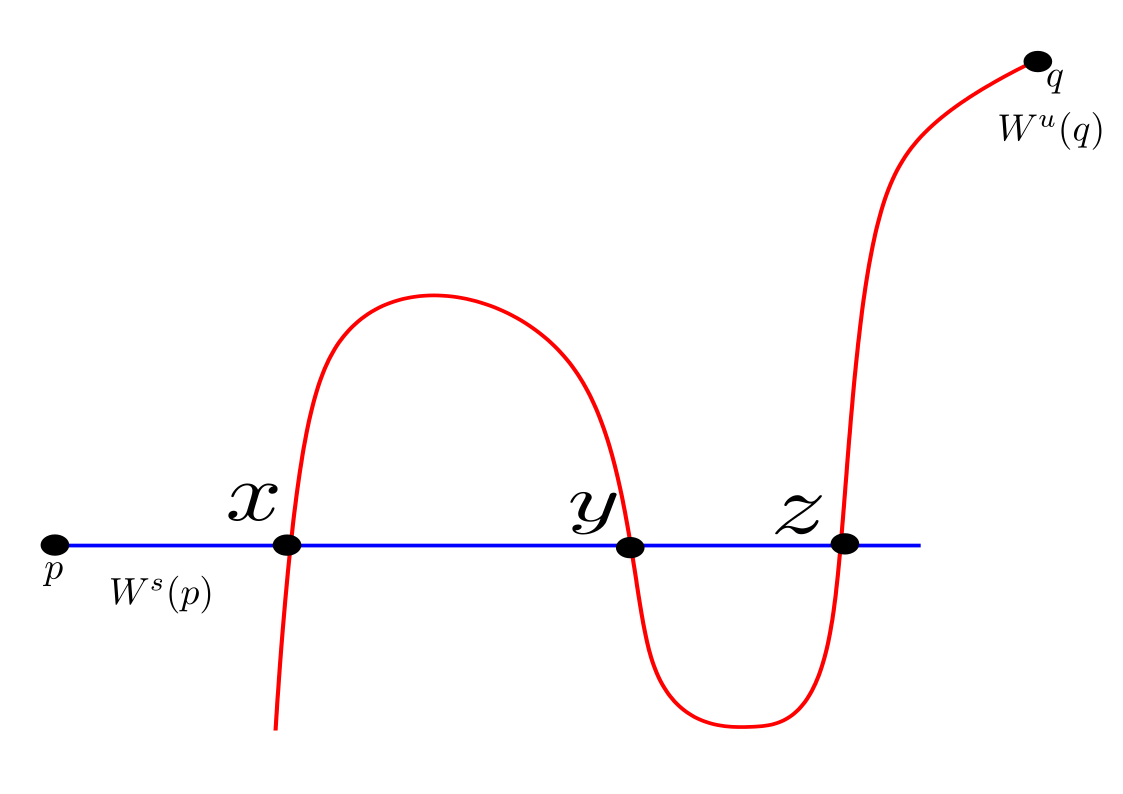}
	\caption{$z$ is a first intersection point, but $z'$ is not.}
	\label{Fig: First intersection point}
\end{figure}

\begin{lemm}\label{Lemm: Existencia puntos de primera intersecion}
	There exists at least one first intersection point for $f$
\end{lemm}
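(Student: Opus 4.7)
The plan is to first produce \emph{some} intersection of a stable and an unstable separatrix, and then extract from it a first intersection point by selecting, along these arcs, the meeting point closest to the singular endpoint.

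For the existence step, I would pick any $p,q \in \textbf{Sing}(f)$ (allowing $p=q$), fix a stable separatrix $F^s(p)$ and an unstable separatrix $F^u(q)$, and exploit the density statement in Proposition \ref{Prop: Propiedades foliaciones de p-A}. Choose any regular point $y_0 \in F^u(q)$ and a foliated chart $\phi\colon U \to \RR^2$ around $y_0$ in which stable plaques are horizontal, unstable plaques are vertical, and $\phi(y_0) = (0,0)$; the plaque of $F^u(q)$ through $y_0$ is then a vertical segment $\{0\}\times(-\varepsilon,\varepsilon)$. Since $F^s(p)$ is dense in $S$, it meets $U$ arbitrarily close to $y_0$, so there is a horizontal plaque of $F^s(p)$ sitting at some height $y=c$ inside $U$. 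This plaque intersects the vertical segment of $F^u(q)$ exactly at the regular point $x_0 := \phi^{-1}(0,c) \in F^s(p) \cap F^u(q)$.

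Next, I would cut down to a first intersection. Consider the compact arcs $[p,x_0]^s \subset F^s(p)\cup\{p\}$ and $[q,x_0]^u \subset F^u(q)\cup\{q\}$. Since the two foliations are topologically transverse off the singularities, $F^s(p)\cap F^u(q)$ is a discrete set consisting only of regular points (the singularities $p,q$ cannot lie on the separatrix of the other, since no leaf contains two singularities by Proposition \ref{Prop: Propiedades foliaciones de p-A}). Hence
\[
A \;:=\; \bigl([p,x_0]^s \cap [q,x_0]^u\bigr) \setminus \{p,q\}
\]
is a finite set of regular points and contains $x_0$. Let $y^* \in A$ minimize the $\mu^u$-length of $[p,y]^s$ among $y \in A$.

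To finish, I would verify that $y^*$ is a first intersection point, namely $(p,y^*]^s \cap (q,y^*]^u = \{y^*\}$. If some $z$ lay in $(p,y^*)^s \cap (q,y^*)^u$, then $z \in F^s(p)\cap F^u(q)$, $z \in [p,x_0]^s \cap [q,x_0]^u$, $z \notin \{p,q\}$, and $\mu^u([p,z]^s) < \mu^u([p,y^*]^s)$, contradicting the minimal choice of $y^*$. The only genuinely delicate part is the existence step: two dense leaves need not meet a priori, and the foliated-chart argument is what converts accumulation of $F^s(p)$ near $y_0$ into an actual intersection point sitting on the plaque of $F^u(q)$.
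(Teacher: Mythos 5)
Your proof is correct and follows essentially the same strategy as the paper's: use minimality/density of one separatrix to produce an intersection point $x_0$ of $F^s(p)$ and $F^u(q)$, then pass to the finite set $[p,x_0]^s\cap[q,x_0]^u$ and select the extremal point nearest a singularity. The only (inessential) differences are that you minimize along the stable arc from $p$ while the paper takes the first intersection point along the unstable arc from $q$, and your explicit removal of $\{p,q\}$ from the intersection set absorbs the case $p=q$ that the paper handles by a separate case split.
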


\begin{proof}
	Let $I$ be a compact interval contained in $F^s(p)$ with one endpoint equal to $p$. Take a singularity $q$ (that could be equal to $p$), like any unstable separatrice $F^u(q)$ of $q$ is minimal there exist a closed interval $J\subset F^u(q)$ such that $\overset{o}{J}\cap \overset{o}{I}=\emptyset$.
	Like $J$ and $I$ are compact sets, their intersection $J\cap I$ consist in a finite number of points, $\{z_0,\cdots,z_n\}$. We can assume that $n>1$ and we orient the interval $J$ pointing towards $q$ and whit this orientation $z_i<z_{i+1}$. In this manner:
	\begin{itemize}
		\item If $p=q$, then $z_0=p=q$ but $p\neq z_1$ and we take $z=z_1$.
		\item If $p\neq q$, then $p\neq z_0 \neq q$ and we take $z=z_0$.
	\end{itemize}
	
	Clearly, $(p,z]^s\cap (q,z]^u=\{z\}$, and $z$ is a first intersection point.
\end{proof}

\begin{lemm} \label{Lemm: Image of first intersection is first intersection}
	If $z$ is a first intersection point of $f$, then $f(z)$ is also a first intersection point of $f$.
\end{lemm}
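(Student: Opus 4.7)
The plan is to transport the defining identity $(p,z]^s \cap (q,z]^u = \{z\}$ across the homeomorphism $f$, using the fact that $f$ permutes singularities and maps separatrices to separatrices.

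First I would check that the candidate $f(z)$ lives in the required intersection of a stable and an unstable separatrix of singularities. By Definition~\ref{Defi: pseudo-Anosov homeomorphism}, the set $\textbf{Sing}(f)$ is $f$-invariant and the foliations $\cF^s$ and $\cF^u$ are $f$-invariant; moreover, $f$ sends each singularity $p$ (resp.\ $q$) to another singularity $p':=f(p)$ (resp.\ $q':=f(q)$) with the same number of prongs, and $f$ sends each stable (resp.\ unstable) separatrix of $p$ (resp.\ $q$) bijectively onto a stable (resp.\ unstable) separatrix of $p'$ (resp.\ $q'$). Since $z \notin \textbf{Sing}(f)$ and $f(\textbf{Sing}(f))=\textbf{Sing}(f)$, we also have $f(z) \notin \textbf{Sing}(f)$. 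So $f(z) \in [\cF^s(p') \cap \cF^u(q')] \setminus \textbf{Sing}(f)$.

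Next I would identify the stable and unstable arcs from the new singularities to $f(z)$. Because the restriction of $f$ to any leaf is a homeomorphism onto the image leaf, and because $f$ is a homeomorphism of $S$ in particular preserving the connectedness and endpoint structure of compact arcs contained in leaves, we have
\[
f\bigl([p,z]^s\bigr) = [f(p),f(z)]^s = [p',f(z)]^s \subset F^s(p'),
\]
\[
f\bigl([q,z]^u\bigr) = [f(q),f(z)]^u = [q',f(z)]^u \subset F^u(q').
\]
Moreover $f$ carries the interiors to the interiors: $f((p,z]^s)=(p',f(z)]^s$ and $f((q,z]^u)=(q',f(z)]^u$.

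Finally, I would apply $f$ to the first-intersection identity for $z$. Since $f$ is a bijection,
\[
f\bigl((p,z]^s \cap (q,z]^u\bigr) = f((p,z]^s) \cap f((q,z]^u),
\]
and combining with the previous step yields
\[
(p',f(z)]^s \cap (q',f(z)]^u = f(\{z\}) = \{f(z)\}.
\]
By Definition~\ref{Defi: first intersection points}, this is exactly the statement that $f(z)$ is a first intersection point of $f$. There is no real obstacle here; the only point requiring a moment of care is verifying that $f$ preserves the \emph{half-open} structure of the arcs $(p,z]^s$ and $(q,z]^u$, which follows at once from the fact that $f$ is a homeomorphism on the ambient surface and, in particular, sends endpoints to endpoints and interiors to interiors of compact leafwise arcs.
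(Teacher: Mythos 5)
Your proof is correct and follows exactly the same route as the paper's: apply $f$ to the identity $(p,z]^s \cap (q,z]^u = \{z\}$ and use that $f$ carries separatrices of singularities to separatrices of their image singularities. The paper states this in two lines; you have simply spelled out the intermediate verifications (non-singularity of $f(z)$, image of leafwise arcs, compatibility of $f$ with intersections), all of which are sound.
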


\begin{proof}
	If $z$ is a first intersection point, there exist singular points of $f$, $p$ and $q$, such that:
	$$
	\{z\}=(p,z]^s\cap (q,z]^u.
	$$
	Therefore,
	$$
	\{f(z)\}=(f(p),f(z)]^s\cap (f(q),f(z)]^u
	$$
	is a first intersection point of $f$.
\end{proof}

The following result establishes the finiteness of the orbits of these first intersection points:

\begin{prop}\label{Prop: Finite number first intersection points}
	Let $f$ be a generalized pseudo-Anosov homeomorphism. Then there exists a finite number of orbits of first intersection points under $f$.
\end{prop}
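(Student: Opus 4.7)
The plan is to reduce the statement to a compactness and transversality argument inside a single fundamental domain of $f$ acting on a single pair of separatrices.

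First I would observe that $f$ permutes the finite set $\Pi$ of ordered pairs $(F^s(p), F^u(q))$ of stable and unstable separatrices of singularities; let $N$ be the order of the induced permutation on $\Pi$, so that $f^N$ fixes every singularity pointwise and preserves every separatrix as a set. By Lemma \ref{Lemm: Image of first intersection is first intersection}, $f$ acts on the set of first intersection points, and every $f$-orbit decomposes into finitely many $f^N$-orbits; since each $f^N$-orbit lies on a single pair in $\Pi$, it suffices to fix an arbitrary pair $(F^s(p), F^u(q))$ and show that the set of first intersection points on $F^s(p) \cap F^u(q)$ has only finitely many $f^N$-orbits.

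Fix such a pair. On $F^s(p)\setminus\{p\}$, the map $f^N$ acts freely as a $\lambda^{-N}$-contraction toward $p$ measured by $\mu^u$, so for any $z_0 \in F^s(p)\setminus\{p\}$ the compact stable arc $K := [f^N(z_0), z_0]^s$ is a fundamental domain intersecting every $f^N$-orbit. The task therefore reduces to proving that $K \cap F^u(q)$ contains only finitely many first intersection points on our pair. Set $L := [p, z_0]^s$, a compact stable arc containing $K$ and reaching $p$. For any first intersection point $z \in K$, the defining identity $(p,z]^s \cap (q,z]^u = \{z\}$ forces the open unstable arc $(q,z)^u$ to be disjoint from the interior of $L$. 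On the other hand, by the minimality of $\cF^u$ on the compact surface $S$ (Proposition \ref{Prop: Propiedades foliaciones de p-A}(2)), a standard flow-box/compactness argument produces a constant $M_0 > 0$, depending only on $L$, such that every $\cF^u$-arc of $\mu^s$-length at least $M_0$ must meet the interior of $L$. Consequently $\mu^s([q,z]^u) < M_0$, so $z$ lies in the compact unstable sub-arc $J \subset F^u(q)$ of $\mu^s$-length $M_0$ starting at $q$.

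To finish, since $K$ is disjoint from the only singularity $p$ of $F^s(p)$ and the foliations $\cF^s$ and $\cF^u$ are topologically transverse away from singularities (Definition \ref{Defi: pseudo-Anosov homeomorphism}, item 3), the intersection $K \cap J$ is a discrete subset of the compact arc $K$, hence finite. The main obstacle I anticipate is extracting the uniform constant $M_0$: this is where the purely qualitative minimality of $\cF^u$ has to be upgraded to a quantitative recurrence estimate via compactness of $S$, with some additional care when $p = q$, in which case $J$ and $L$ share the singular endpoint $p=q$, but this does not affect the final transversality step since $K$ avoids $p$.
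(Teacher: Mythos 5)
Your overall architecture---reduce to a single pair of separatrices via the finite permutation of separatrices, pass to the stable fundamental domain $K=[f^N(z_0),z_0]^s$, trap the unstable coordinate of every first intersection point in a compact arc $J\subset F^u(q)$, and finish by transversality and compactness---is sound and parallels the paper's proof, which works with the fundamental domain $(g(z_0),z_0]^s$ for $g=f^n$. The one genuine error is in how you produce $J$. The identity $(p,z]^s\cap(q,z]^u=\{z\}$ does \emph{not} force $(q,z)^u$ to be disjoint from the interior of $L=[p,z_0]^s$: it only forbids $(q,z)^u$ from meeting $(p,z]^s$, and for $z$ in the interior of $K$ the remaining piece $(z,z_0)^s\subset\overset{o}{L}$ may perfectly well be crossed by $(q,z)^u$ many times; moreover $z$ itself lies in $\overset{o}{L}$, so the closed arc $[q,z]^u$ always meets $\overset{o}{L}$ and the deduction $\mu^s([q,z]^u)<M_0$ collapses as written. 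The repair is small: since every $z\in K$ satisfies $(p,z]^s\supseteq(p,f^N(z_0)]^s$, replace $L$ by the shorter arc $L':=[p,f^N(z_0)]^s$. Then $[q,z]^u\cap\overset{o}{L'}=\emptyset$ for every first intersection point $z\in K$ (note $z\notin\overset{o}{L'}$), and the length bound goes through with $M_0:=\mu^s([q,y]^u)$ for any $y\in F^u(q)\cap\overset{o}{L'}$. Observe also that you do not need any uniform recurrence constant over all unstable leaves: every arc in question emanates from the same point $q$ along the same separatrix $F^u(q)$, so the density of that single separatrix (Proposition~\ref{Prop: Propiedades foliaciones de p-A}) already supplies such a $y$, and no flow-box compactness argument is required.

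For comparison, the paper avoids minimality altogether at this step: it chooses $z_0$ in $F^s(p)\cap F^u(q)$ and observes that if the representative $g^k(z)$ lay beyond $g(z_0)$ on $F^u(q)$, then $g(z_0)$ itself would be a point of $(p,g^k(z)]^s\cap(q,g^k(z)]^u$ distinct from $g^k(z)$, contradicting the first-intersection property (which is preserved under iteration by Lemma~\ref{Lemm: Image of first intersection is first intersection}); this traps $g^k(z)$ in the compact arc $[q,g(z_0)]^u$ directly. Your route, once corrected, is valid but pays an extra appeal to minimality for the same conclusion; the paper's choice of $z_0$ on the intersection of the two separatrices is the cleaner device.
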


\begin{proof}

	There is only a finite number of singularities and separatrices of $f$, so Proposition \ref{Prop: Finite number first intersection points} is a direct consequence of the following lemma.
\end{proof}

	\begin{lemm}\label{lemm: finite intersect in fundamental domain}
		Let $p$ and $q$ be singularities of $f$, and let $F^s(p)$ and $F^u(q)$ be stable and unstable separatrices of these points. There exists a finite number of orbits of first intersection points contained in $F^s(p)\cap F^u(q)$
	\end{lemm}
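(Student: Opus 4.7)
The plan is to reduce the statement to a finite counting problem by passing to a compact fundamental domain for an iterate of $f$ acting on the separatrices. Since $f$ permutes the finite collection of singular separatrices of its invariant foliations, there exists $N \ge 1$ with $f^N(F^s(p)) = F^s(p)$ and $f^N(F^u(q)) = F^u(q)$. Parametrizing $F^s(p)\setminus\{p\}$ by $\mu^u$-distance from $p$, the restriction of $f^N$ is multiplication by $\lambda^{-N}$, with $p$ its unique fixed point. I fix any $x_0 \in F^s(p)\setminus\{p\}$ and set $D := [f^N(x_0),x_0]^s$, a compact fundamental domain for this contraction. By Lemma \ref{Lemm: Image of first intersection is first intersection} and the fact that $f^N$ also preserves $F^u(q)$, every first intersection point $z \in F^s(p)\cap F^u(q)$ admits a unique $f^N$-translate in $D\cap F^u(q)$, which is again a first intersection point.

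Within $F^s(p)\cap F^u(q)$, distinct points of $D$ even lie in distinct $f$-orbits: if $f^k(z)=z'$ with $z,z'\in F^s(p)\cap F^u(q)$, then $f^k$ preserves both separatrices, so $N\mid k$, and then $z=z'$ by the definition of $D$. Hence the number of $f$-orbits of first intersection points in $F^s(p)\cap F^u(q)$ equals the cardinality of first intersection points in $D\cap F^u(q)$. A Pareto-type property controls these: for distinct first intersection points $z_1,z_2\in F^s(p)\cap F^u(q)$ with $t_i:=\mu^u((p,z_i]^s)$ and $s_i:=\mu^s((q,z_i]^u)$, the inequality $t_1<t_2$ forces $s_1>s_2$, for otherwise $z_1\in(p,z_2]^s\cap(q,z_2]^u$ would contradict first intersection at $z_2$. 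Consequently the first intersection points inside $D$ are totally ordered with $t$ and $s$ varying in opposite directions.

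The main obstacle is to prove finiteness of this set. My approach is to attach to each first intersection point $z$ an embedded rectangle $R(z)\subset S$ parametrized by $[0,t]\times[0,s]$ via the two invariant foliations and obtained by spanning the sector of $p$ that contains $(p,z]^s$, so that its transverse area equals $t\cdot s$. The first intersection property at $z$ forces $R(z)$ to be genuinely embedded, since a self-intersection would exhibit an additional point of $F^s(p)\cap F^u(q)$ lying in the interior of one of the defining arcs $(p,z]^s$ or $(q,z]^u$, contradicting the first intersection condition. The product $t\cdot s$ is $f$-invariant (because $f$ acts as $(t,s)\mapsto(\lambda^{-1}t,\lambda s)$), so it is constant on $f$-orbits and distinguishes them. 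Bounding $t\cdot s$ above by the total transverse area of $S$ and combining with the discreteness of the admissible values of $t\cdot s$ — which follows from the interval-exchange combinatorics of the first return map of the unstable foliation on the compact arc $D$ — yields the required finiteness. The technical heart of the argument lies in carefully verifying the embedding of $R(z)$ and extracting the discreteness statement from the return-map structure.
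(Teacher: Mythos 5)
Your first two paragraphs reproduce the paper's reduction (minimal $N$ with $f^N$ fixing both separatrices, a compact stable fundamental domain, one orbit representative per first intersection point), and your ``Pareto'' anti-monotonicity observation is exactly the mechanism the paper's proof runs on. But at the point where finiteness must actually be established you abandon it for the rectangle--area argument, and that argument has genuine gaps. First, the embeddedness of $R(z)$ does not follow from the first-intersection condition: that condition only constrains $(p,z]^s\cap(q,z]^u$, whereas a self-overlap of $R(z)$ involves leaves other than the two distinguished separatrices and so produces no forbidden point of $F^s(p)\cap F^u(q)$; moreover nothing prevents a singularity from lying in the region you are sweeping out, which obstructs even the definition of the unstable fibers of length $s$. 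Without embeddedness the bound $t\cdot s\le$ (transverse area of $S$) is unavailable. Second, even granting that bound, finiteness does not follow: the ``discreteness of the admissible values of $t\cdot s$'' is asserted rather than proved (it is essentially the statement to be established), and distinct first intersection points in $D$ can share the same product --- anti-monotonicity gives $t_1<t_2\Rightarrow s_1>s_2$, which imposes no relation between $t_1s_1$ and $t_2s_2$ --- so finiteness of the value set would not even imply finiteness of the point set.

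The repair is short and stays inside your own setup. Choose the base point $x_0$ of the fundamental domain in $F^s(p)\cap F^u(q)$ rather than merely in $F^s(p)$, so that $y_0:=f^N(x_0)$ lies on both separatrices and is the endpoint of $D$ nearest $p$. If $z\in D$ is a first intersection point with $\mu^s((q,z]^u)>\mu^s((q,y_0]^u)$, then $y_0\in(q,z]^u$; since $z\in D=[y_0,x_0]^s$ we also have $y_0\in(p,z]^s$, so $y_0\in(p,z]^s\cap(q,z]^u$ with $y_0\neq z$, contradicting the first-intersection property. (This is your Pareto argument applied to the test point $y_0$, which need not itself be a first intersection point.) Hence every orbit representative lies in $D\cap[q,y_0]^u$, the intersection of a compact stable arc with a compact unstable arc, which is finite by transversality and compactness. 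This is precisely the argument given in the paper.
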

	
	\begin{proof}
		
		Let $n$ be the smallest natural number such that $g:=f^n$ fixes the separatrices $F^s(p)$ and $F^u(q)$. Take a point $z_0 \in F^s(p)\cap F^u(q)$, which may or may not be a first intersection point of $f$. The interval $(g(z_0),z_0]^s \subset F^s(p)$ serves as a fundamental domain for $g$ and contains a fundamental domain for $f$. To prove the lemma, we need to show that within this interval there exists a finite number of first intersection points.
		
		Consider a first intersection point $z$ in $F^s(p) \cap F^u(q)$. There exists an integer $k \in \mathbb{Z}$ such that $g^k(z)\in (g(z_0),z_0]^s \cap F^u(q)$. Therefore, there are two possible configurations:
		
		\begin{enumerate}
			\item Either $g^k(z)\in (g(z_0),z_0]^s \cap (q,g(z_0)]^u$, or
			\item $g^k(z)\in (g(z_0),z_0]^s \cap (g(z_0),\infty))^u$.
		\end{enumerate}

		We claim that option $(2)$ is not possible. If such a configuration were to exist, it would imply that $g(z_0) \in (p,g^k(z)]^s \cap [q,g^k(z)]^u$, and thus $g^k(z)$ would not be a first intersection point. However, Lemma \ref{Lemm: Image of first intersection is first intersection} implies that $f^{kn}(z)$ is a first intersection point, leading to a contradiction.
		
		Hence, we deduce that $g^k(z) \in (g(z_0),z_0]^s \cap [q,g(z_0)]^u \subset [g(z_0),z_0]^s \cap [q,g(z_0)]^u$, which is the intersection of two compact intervals. Therefore, this intersection is a finite set.
		
		This implies that every first intersection point $z\in F^s(p) \cap F^u(q)$ has some iteration lying in a finite set. Consequently, there are only a finite number of orbits of first intersection points in $F^s(p)\cap F^u(q)$.
		
	\end{proof}

\subsubsection{Primitive geometric Markov partitions.} If we applied the construction  \ref{Cons: Recipe for Markov partitions} to certain first intersection points of $f$, we obtain a distinguished family of Markov partitions.

\begin{defi}\label{Defi: Primitive Markov partitions}
	Let $z$ be a first intersection point of $f$. For all $n \geq n(z)$, the Markov partition $\mathcal{R}(z,n)$ constructed as indicated in \ref{Cons: Recipe for Markov partitions} is the \emph{primitive Markov partition} of $f$ generated by $z$ of order $n$.
\end{defi}

\begin{theo}\label{Theo: Conjugates then primitive Markov partition}
Let $f: S_f \rightarrow S_f$ and $g: S_g \rightarrow S_g$ be two \textbf{p-A} homeomorphisms that are topologically conjugated through a homeomorphism $h: S_f \rightarrow S_g$, i.e., $g = h \circ f \circ h^{-1}$. Let $z \in S_f$ be a first intersection point of $f$. The following statements are true:

\begin{itemize}
	\item[i)] $h(z)$ is a first intersection point of $g$.
	\item[ii)] $h(\cJ^u(z)) = \cJ^u(h(z))$.
	\item[iii)] $h(\delta^s(z)) = \delta^s(h(z))$ and $h(\delta^u(z)) = \delta^u(h(z))$.
	\item[iv)] The graph $\delta^s(z)$ is compatible with $f^n(\delta^u(z))$ if and only if the graph $\delta^s(h(z))$ is compatible with $g^n(\delta^u(h(z)))$.
	\item[v)] The compatibility coefficients of $z$ and $h(z)$ are equal, i.e., $n(z) = n(h(z))$.
	\item[vi)] $r$ is an equivalence class of $u$-rails for $\delta^s(z)$ if and only if $h(r)$ is an equivalence class of $u$-rails for $\delta^s(h(z))$. Similarly for equivalence classes of $s$-rails of $f^n(\delta^u(z))$ and $g^n(\delta^u(h(z)))$.
	\item[vii)] For all $n \geq n(z) = n(h(z))$, $h(\cR(p,z)) = \cR(p,h(z))$ is a primitive Markov partition of $g$ generated by $h(z)$ of order $n$.
\end{itemize}
\end{theo}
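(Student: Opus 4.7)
The plan is to use the Proposition on conjugacy between pseudo-Anosov homeomorphisms (already proved earlier in the excerpt), which tells us that $h$ maps stable (resp.\ unstable) leaves of $f$ to stable (resp.\ unstable) leaves of $g$, sends $\textbf{Sing}(f)$ to $\textbf{Sing}(g)$, and pushes forward the transverse measures of $f$ to those of $g$. Since every object appearing in Construction \ref{Cons: Recipe for Markov partitions} (first intersection points, the primitive segment, the backward saturation, the generated adapted graphs, the extreme rails, and the resulting Markov partition) is defined purely in terms of the invariant foliations, the singular set, and the dynamics of $f$, each of these objects should transport to the corresponding object for $g$ under $h$. The strategy is therefore to verify items (i)--(vii) in order, with each item feeding into the next.

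First I would prove (i): if $(p,z]^s \cap (q,z]^u = \{z\}$ with $p,q \in \textbf{Sing}(f)$, then applying the homeomorphism $h$ yields $(h(p),h(z)]^s \cap (h(q),h(z)]^u = \{h(z)\}$, and $h(p), h(q) \in \textbf{Sing}(g)$ by the conjugacy proposition. For (ii), I would write $h(\cJ^u(z)) = h\bigl(\bigcup_{i\in\NN} f^{-i}(J^u(z))\bigr) = \bigcup_i g^{-i}(h(J^u(z))) = \bigcup_i g^{-i}(J^u(h(z))) = \cJ^u(h(z))$, using the conjugacy equation $h\circ f^{-1} = g^{-1}\circ h$ and the fact that $h([q,z]^u) = [h(q), h(z)]^u$. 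Item (iii) then follows because the adapted $s$-graph generated by $\cJ^u(z)$ is, by Lemma \ref{Lemm: the generated graph is adapted}, the family of stable segments with one endpoint in $\textbf{Sing}(f)$, the other in $\cup \cJ^u(z)$, and interior disjoint from $\cup \cJ^u(z)$; all four of these defining properties are preserved by $h$, which maps stable segments to stable segments and the singular set to the singular set.

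For (iv), I would check each clause in Definition \ref{Defi: Compatible graphs} separately. The endpoint-containment conditions are obviously preserved by the homeomorphism $h$. For the extreme rails, note that $h$ sends embedded rectangles to embedded rectangles (since a rectangle is determined by being trivially bi-foliated with compact closure, and $h$ maps the invariant foliations of $f$ to those of $g$); hence a regular or extreme $u$-rail of $\delta^s(z)$ is mapped by $h$ to a regular or extreme $u$-rail of $\delta^s(h(z))$, and symmetrically for $s$-rails. This gives (iv), which combined with $h\circ f^n = g^n \circ h$ immediately yields (v): the set of integers $n$ for which $\delta^s(z)$ is compatible with $f^n(\delta^u(z))$ coincides with the corresponding set for $h(z)$, so their minima agree, $n(z)=n(h(z))$. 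Item (vi) is the same foliation-preservation argument applied to the defining properties of equivalence classes of rails.

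Finally, (vii) follows by applying Proposition \ref{Prop:Compatibles implies Markov partition} to both sides: the Markov partition $\cR(z,n)$ is defined as the closures of the connected components of the complement $S_f \setminus [\cup\textbf{Ex}^s(f^n\delta^u(z)) \cup \cup\textbf{Ex}^u(\delta^s(z))]$, and $h$ maps this decomposition of $S_f$ bijectively onto the analogous decomposition of $S_g$, thanks to items (iii), (iv), (vi), and the homeomorphism property of $h$. I expect the main technical nuisance to be item (iv), specifically checking that the notion of \emph{extreme} $u$-rail (which involves the auxiliary embedded rectangle in Definition \ref{Defi: Rail regular/extremal}) transports cleanly; once one observes that \emph{rectangle} is a purely foliated-topological notion, the entire argument reduces to bookkeeping.
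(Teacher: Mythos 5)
Your proposal is correct and follows essentially the same route as the paper: item by item, transporting each object of Construction \ref{Cons: Recipe for Markov partitions} through $h$ using Proposition \ref{Prop: conjugacion entre pseudo-Anosov} and the conjugacy relation, with the embedded-rectangle argument for extreme rails being exactly the paper's key step for item (iv). The only cosmetic difference is in (vii), where you describe the rectangles via the complement of the extreme rails (as in Proposition \ref{Prop:Compatibles implies Markov partition}) while the paper phrases them as connected components of intersections of rail equivalence classes; these are the same decomposition.
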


\begin{proof}
	First, $h$ send transverse foliations of $f$ into transverse foliations of $g$ and singularities of $f$ into singularities of $g$, that is how we use the hypothesis that $f$ and $g$ are pseudo-Anosov homeomorphisms. But even more, $h$ sends disjoint segments of stable and unstable leaves of $f$ to disjoint segments of stable and unstable leaves of $g$, therefore, if $z$ is first intersection point of $f$, $h(z)$ is first intersection point of $g$. This shows the accuracy of Item $i)$.
	
	A direct computation proof Item $ii)$:
	$$
	h(\cJ^u(z))=h(\cup f^{-n}(J^u(z)))=\cup g^{-n}(J^u(h(z)))= \cJ^u(h(z)).
	$$

	A compact interval $[p,x]^s \in \delta^s(z)$  have its interior disjoint from $\cJ^u(z)$ and  endpoint  $x\in \cJ^u(z)$. Then $h([p,x]^s)$ is a segment with disjoint interior of $h(\cJ^u(z))=\cJ^u(h(z))$ but endpoint $h(x)\in \cJ^u(h(z))$, this implies $h([p,x]^s)$ is contained in $\delta^s(h(z))$, hence $h(\delta^s(z))\subset \delta^s(h(z))$. The symmetric argument starting from the segment  $[p',x']^s\in \delta^s(h(z))$ and using $h^{-1}$ gives the equality. Analogously it is possible to show that $\delta^u(h(z))=h(\delta^u(z))$. The arguments in this paragraph proved the Item $iii)$.	
	
	Clearly the point $e$ is an extreme point of $\delta^s(z)$ if and only if $h(e)$ is an extreme point of $h(\delta^s(z))=\delta^s(h(z))$, moreover $e\in f^n(\delta^n(z))$ if and only if $h(e)\in g^{n}(\delta^u(h(z)))$. Therefore an extreme point $e$ of $\delta^s(z)$ is contained in $f^n(\delta^n(z))$ if and only if the extreme point $h(e)$ of $\delta^s(h(z))$ is contained in $g^{n}(\delta^u(h(z))$.	Another consequence is that the regular part of $\delta(z)$ is sent by $h$ to the regular part of $\delta(z)$ and an $u$-regular $J$-rail of $\delta(z)$ is such that $h(J)$ is a regular rail of $\delta^s(h(z))$.
	
	We claim that $J$ is an extremal rail of $\delta^s(z)$ if and only if $h(J)$ is an extremal lane of $\delta^u(h(z))$. Indeed if $R$ is the embedded rectangle of $S_f$ given by the definition of extreme rail having $J$ as a vertical boundary  component, therefore the embedded rectangle $h(R)$ satisfies the definition for $h(J)$ to be an extreme rail of $\delta^s(h(z))$, since  except for $h(J)$ all its other vertical segments are regular rails of $\delta^s(h(z))$ and it stable boundary is contained in $h(\delta^s(z))=\delta^s(h(z))$. Moreover, $J \subset f^n(\delta^u(z))$ if and only if $h(J)\subset g^n(\delta^u(h(z)))$.
	
	This proves that $\delta^s(z)$ is compatible with  $f^n(\delta^u(z))$ if and only if $\delta^s(h(z))$ is compatible with $g^n(\delta^u(h(z)))$. In this way the point $iv)$ is corroborated, but at the same time we deduce that $n(z)=n(h(z))$ because they are the minimum of the same set of natural numbers and thus the point $v)$ is obtained.
	
	Let $r$ be an equivalent class of $u$-rails for $\delta^s(z)$, as stated before $I$ is a regular rail for $\delta^s(z)$ if and only if $h(I)$ is a regular rail for $\delta^s(h(z))$. 	We denote $I\equiv_{\delta^s(z)}I'$ to indicate that any point in $I$ is $\delta^s(z)$ equivalent to any point in $I'$. In this manner $h(I)\equiv_{\delta^s(h(z))}h(I')$, because the image by $h$ of rectangles and regular rails realizing the equivalence between points in $I$ and points in $I'$ are rectangles and regular rails realizing the equivalence between points in $h(I)$ and points in ´$h(I')$. This implies that $h(r)$ is an equivalent class of $u$-rails for $\delta^s(h(z))$. Analogously the image by $h$ of an equivalent class $r'$ of $s$-rails for $f^n(\delta^u(z)$ is an equivalent class of $s$-rails for $g^n(\delta^u(h(z)))$.

	As the interior of a rectangle $R$ of $R(z,p)$ is a connected component of the intersection a equivalent class $r$ of $u$-rails of $\delta^s(z)$ with  a class $r'$ of $s$-rails for $f^n(\delta^u(z))$, then $h(R)$ is a connected component of the intersection of $h(r)\cap h(r')$ and $h(r)$ and $h(r')$ are $s,u$-rail classes for $\delta^s(h(z))$ and $g^n(\delta^u(h(z)))$, by definition $h(\overset{o}{R})$ correspond to the interior of a rectangle of $\cR(h(z),n)$ and then $h(\cR(z,n))=\cR(h(z),n)$. In this manner we obtain Item $vi)$

	Since the interior of a rectangle  $R$ of $R(z,p)$ is a connected component of the intersection an $r$ equivalent class of $u$-rails of $\delta^s(z)$ with an $r'$ class of $s$-rails for $f^n(\delta^u(z))$, then $h(R)$ is a connected component of the intersection of $h(r)\cap h(r')$, anyway $h(r)$ and $h(r')$ are $s,u$-rail classes for $\delta^s(h(z))$ and $g^n(\delta^u(h(z)))$, therefore $h(\overset{o}{R})$ correspond to the interior of a rectangle of $\cR(h(z),n)$ and then $h(\cR(z,p))=\cR(h(z),p)$. This probe the Item $vii)$ and finish our proof.
\end{proof}

In particular, if $f = g = h$, we have $f = f \circ f \circ f^{-1}$. We deduce that $n(z) = n(f(z))$ for every first intersection point $z$ of $f$. Therefore, the quantity $n(f^m(z))$ is constant over the entire orbit of $z$. In view of Proposition \ref{Prop: Finite number first intersection points}, there are only a finite number of orbits of first intersection points, and the following corollary follows immediately.

\begin{coro}\label{Coro: n(f) number}
	There exists $n(f) \in \NN$, called the \emph{compatibility order} of $f$, such that:
	$$
	n(f) = \max\{n(z) : z \text{ is a first intersection point of } f\}.
	$$
\end{coro}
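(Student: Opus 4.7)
The plan is to deduce the existence of the maximum from two facts already established in the excerpt: the invariance of the compatibility coefficient along orbits, and the finiteness of the set of orbits of first intersection points. Since the set $\{n(z) : z \text{ is a first intersection point of } f\}$ is a subset of $\NN$, to show its maximum exists it suffices to show that the set is finite.

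First, I would apply Theorem~\ref{Theo: Conjugates then primitive Markov partition} in the special case $f = g$ and $h = f$, noting that trivially $f = f \circ f \circ f^{-1}$, so $f$ is topologically conjugate to itself through $h = f$. By Lemma~\ref{Lemm: Image of first intersection is first intersection}, $f(z)$ is again a first intersection point whenever $z$ is, and item $v)$ of Theorem~\ref{Theo: Conjugates then primitive Markov partition} yields the identity
\[
n(z) = n(f(z)).
\]
Iterating this identity in both directions gives $n(f^m(z)) = n(z)$ for every $m \in \ZZ$, so the function $z \mapsto n(z)$ is constant on every $f$-orbit of first intersection points.

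Next, by Proposition~\ref{Prop: Finite number first intersection points}, there are only finitely many $f$-orbits of first intersection points; call them $\cO_1, \ldots, \cO_N$. Pick a representative $z_j \in \cO_j$ for each $j$. Then
\[
\{n(z) : z \text{ is a first intersection point of } f\} = \{n(z_1), \ldots, n(z_N)\},
\]
which is a finite subset of $\NN$. Consequently its maximum exists and is a well-defined natural number, which we set equal to $n(f) := \max_{1 \leq j \leq N} n(z_j)$. This is precisely the assertion of the corollary.

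The argument is essentially a two-line bookkeeping exercise, so I do not anticipate any real obstacle; the only subtle point is recognizing that Theorem~\ref{Theo: Conjugates then primitive Markov partition} can be applied with $f$ playing the role of the conjugating homeomorphism, which is where the invariance along orbits comes from and thereby reduces a potentially infinite supremum to a finite maximum over orbit representatives.
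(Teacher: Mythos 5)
Your proof is correct and follows exactly the paper's own route: the paper also applies Theorem~\ref{Theo: Conjugates then primitive Markov partition} with $f=g=h$ to get $n(z)=n(f(z))$, hence constancy of $n$ along orbits, and then invokes Proposition~\ref{Prop: Finite number first intersection points} to reduce to a finite set of orbit representatives. The only point worth adding is that Lemma~\ref{Lemm: Existencia puntos de primera intersecion} guarantees the set is non-empty, so the maximum is genuinely attained.
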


\begin{coro}\label{Coro: n(f) conjugacy invariant}
	Let $f: S_f \rightarrow S_f$ and $g: S_g \rightarrow S_g$ be two pseudo-Anosov homeomorphisms that are conjugated through a homeomorphism $h: S_f \rightarrow S_g$. Then the compatibility order of $f$ and $g$ coincides, i.e., $n(f) = n(g)$.
\end{coro}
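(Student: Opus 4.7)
The statement should follow essentially immediately from the preceding Theorem \ref{Theo: Conjugates then primitive Markov partition}, so the plan is to assemble its conclusions rather than prove something new.

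My strategy is to rewrite both $n(f)$ and $n(g)$ as maxima over a common indexing set via the conjugacy $h$. By Item i) of Theorem \ref{Theo: Conjugates then primitive Markov partition}, $h$ maps first intersection points of $f$ to first intersection points of $g$; applying the same item to the conjugacy $h^{-1}$ (which conjugates $g$ back to $f$) shows that $h$ restricts to a bijection between the set $\mathcal{I}(f)$ of first intersection points of $f$ and the analogous set $\mathcal{I}(g)$ for $g$.

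Next, I invoke Item v) of Theorem \ref{Theo: Conjugates then primitive Markov partition}: for every $z \in \mathcal{I}(f)$, the compatibility coefficients satisfy $n(z) = n(h(z))$. Combining this with the bijection above yields the equality of sets
\[
\{ n(z) : z \in \mathcal{I}(f) \} \;=\; \{ n(w) : w \in \mathcal{I}(g) \}.
\]
Taking the maximum of both sides and recalling the definition of the compatibility order from Corollary \ref{Coro: n(f) number}, namely $n(f) = \max\{n(z) : z \in \mathcal{I}(f)\}$ and similarly for $g$, gives $n(f) = n(g)$.

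There is no real obstacle here; the work has already been done in Theorem \ref{Theo: Conjugates then primitive Markov partition}. The only small point worth writing out carefully is the observation that $h^{-1}$ is also a topological conjugacy (from $g$ to $f$), which is what upgrades Item i) from an inclusion $h(\mathcal{I}(f)) \subseteq \mathcal{I}(g)$ to a bijection; without this symmetric use, one would only obtain $n(f) \leq n(g)$.
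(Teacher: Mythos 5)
Your proposal is correct and follows essentially the same route as the paper: both invoke Items i) and v) of Theorem \ref{Theo: Conjugates then primitive Markov partition} to identify the set $\{n(z)\}$ over first intersection points of $f$ with the corresponding set for $g$, then take maxima. Your explicit remark that $h^{-1}$ must also be used to upgrade $h(\mathcal{I}(f)) \subseteq \mathcal{I}(g)$ to a bijection is a welcome clarification of a step the paper leaves implicit in its chain of set equalities.
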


\begin{proof}
	Items $i)$ and $v)$ of Theorem \ref{Theo: Conjugates then primitive Markov partition} imply the following set equalities:
	\begin{align*}
		\{n(z): z \text{ is a first intersection point of } f \}= \\
		\{n(h(z)): z \text{ is a first intersection point of } f \}=\\
		\{n(z'): z' \text{ is a first intersection point of } g \}.
	\end{align*}
	Therefore, it follows that its maximum is the same and finally that $n(f) = n(g)$.
\end{proof}

\begin{defi}\label{Defi: primitive Markov parition}
	Let $n \geq n(f)$. The set of primitive Markov partitions of $f$ of order $n$ consists of all the Markov partitions of order $n$ generated by the first intersection points of $f$, and is denoted by:
	$$
	\mathcal{M}(f,n) := \{\mathcal{R}(z,n) : z \text{ is a first intersection point of } f\}.
	$$
\end{defi}

Another application of Theorem \ref{Theo: Conjugates then primitive Markov partition} in the case $f=g=h$ is that if $z$ is a first intersection point of $f$ and $n\geq n(f)$, then $f(\cR(z,n))=\cR(f(z),n)$. This yields the following corollary:

\begin{coro}\label{Coro: Finite orbits of primitive Markov partitions}		
	Let $n\geq n(f)$. Then, there exists a finite but nonempty set of orbits of primitive Markov partitions of order $n$.
\end{coro}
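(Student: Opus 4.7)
The plan is to reduce the statement about orbits of primitive Markov partitions to the already-established finiteness of orbits of first intersection points. By definition, $\cM(f,n) = \{\cR(z,n) : z \text{ is a first intersection point of } f\}$, and the action of $f$ we care about is $\cR(z,n) \mapsto f(\cR(z,n))$. The whole argument pivots on the identity $f(\cR(z,n)) = \cR(f(z),n)$, which is precisely the $f=g=h$ specialization of item $vii)$ of Theorem \ref{Theo: Conjugates then primitive Markov partition} noted in the paragraph preceding the corollary.

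First, I would define a map $\Phi$ from orbits of first intersection points of $f$ to orbits (under $f$) of primitive Markov partitions of order $n$, by the rule $\Phi([z]) := [\cR(z,n)]$. The key step is to verify that $\Phi$ is well defined. If $z' = f^k(z)$ for some $k \in \ZZ$, then by iterating the identity $f(\cR(w,n)) = \cR(f(w),n)$ we get $\cR(z',n) = \cR(f^k(z),n) = f^k(\cR(z,n))$, so $\cR(z',n)$ and $\cR(z,n)$ lie in the same $f$-orbit, and $\Phi$ indeed depends only on $[z]$. Surjectivity of $\Phi$ is immediate from the definition of $\cM(f,n)$: every primitive Markov partition of order $n$ is of the form $\cR(z,n)$ for some first intersection point $z$.

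Finiteness then follows directly: by Proposition \ref{Prop: Finite number first intersection points} there are only finitely many orbits of first intersection points of $f$, hence $\Phi$ has finite domain, and the codomain, being the image of $\Phi$, is also finite. Nonemptiness is equally immediate: Lemma \ref{Lemm: Existencia puntos de primera intersecion} gives at least one first intersection point $z$, and then $\cR(z,n) \in \cM(f,n)$ produces at least one orbit.

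There is no real obstacle here; the hypothesis $n \geq n(f)$ is used only to guarantee, via Corollary \ref{Coro: n(f) number} and Construction \ref{Cons: Recipe for Markov partitions}, that $\cR(z,n)$ is actually defined for every first intersection point $z$ simultaneously, so that the map $\Phi$ is genuinely defined on the whole orbit space of first intersection points.
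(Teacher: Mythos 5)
Your proof is correct and follows essentially the same route as the paper: both arguments rest on the identity $f^m(\cR(z,n))=\cR(f^m(z),n)$ (the $f=g=h$ case of Theorem \ref{Theo: Conjugates then primitive Markov partition}) to transport the finiteness of orbits of first intersection points (Proposition \ref{Prop: Finite number first intersection points}) to orbits of primitive Markov partitions, with nonemptiness coming from the existence of at least one first intersection point. Your phrasing in terms of a well-defined surjection $\Phi$ on orbit spaces is just a slightly more formal packaging of the paper's choice of orbit representatives $z_1,\dots,z_k$.
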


\begin{proof}
	As $n\geq n(f)$, there exists at least one Markov partition $\cR(z,n)$ in $\cM(f,n)$, and therefore the orbit of $\cR(z,n)$, given by $\{\cR(f^m(z),n)\}_{m\in \ZZ}$, is contained in $\cM(f,n)$.
	
	Let $\{z_1, \cdots, z_k\}$ be a set of first intersection points of $f$ such that any other first intersection point can be written as $f^m(z_i)$ for a unique $i \in \{1,\cdots, k\}$, and no two different points $z_i$ and $z_j$ are in the same orbit. . Let $\cR(z,n)$ be a primitive Markov partition of order $n$. If $z$ is any first intersection point, $z=f^m(z_i)$ and we have:
	$$
	f^m(\cR(z_i,n))=\cR(f^m(z_i),n)=\cR(z,n). 
	$$
	Therefore, in $\cM(f,n)$, there are at most $k$ different orbits of primitive Markov partitions of order $n$.
\end{proof}

\subsubsection{The geometric type of the orbit of a primitive Markov partition.}

We are interested in studying the set of geometric types produced by the orbit of a primitive Markov partition. Therefore, it is important to understand the behavior of a Markov partition under the action of a homeomorphism that preserve the orientation.

Let  $\cR=\{R_i\}_{i=1}^n$ be a geometric Markov partition of a generalized pseudo-Anosov homeomorphism $f:S\rightarrow S$, and let $g:S\rightarrow S$ be another generalized pseudo-Anosov homeomorphism that is conjugate to $f$ by a homeomorphism $h:S\rightarrow S$ preserving the orientation. The function $h$ maps the foliations and singularities of $f$ to the foliations and singularities of $g$ while preserving the orientation of the transversal foliations.

It is clear that if $r:[0,1]\times [0,1] \rightarrow R \subset S$ is an oriented rectangle for $f$, then $h\circ r: [0,1]\times [0,1] \rightarrow h(R) \subset S$ is an oriented rectangle for $g$. This is because $h$ determines a unique orientation for the transverse foliations of $h(R)$. Furthermore,  $h(\cR)=\{h(R_i)\}_{i=1}^n$ has a $g$-invariant horizontal boundary and a $g^{-1}$-invariant vertical boundary. These properties are deduced from the conjugation provided by $h$. Based on these observations, we can now give the following definition.

\begin{defi}\label{Defi: induced geometric Markov partition}
	Let $f:S\rightarrow S$ be a generalized pseudo-Anosov homeomorphism, and let $h:S\rightarrow S$ be an orientation-preserving homeomorphism, and $g:=h\circ f \circ h^{-1}$. If $\cR$ is a geometric Markov partition of $f$, the \emph{induced geometric Markov partition} of $g$ by $h$ is the Markov partition $h(\mathcal{R})=\{h(R_i)\}_{i=1}^n$. For each $h(R_i)$, we choose the unique orientation in the vertical and horizontal foliations such that $h$ preserves both orientations at the same time.
\end{defi}

The following lemma clarifies the correspondence between the horizontal and vertical rectangles of the partition $\mathcal{R}$ and those of $h(\mathcal{R})$.

\begin{lemm}\label{Lemm: Conjugated partitions same type.}
	Let $f$ and $g$ be two generalized pseudo-Anosov homeomorphisms conjugated through a homeomorphism $h$ that preserves the orientation. Let $\cR=\{R_i\}_{i=1}^n$  be a geometric Markov partition for $f$, and let $h(\cR)=\{h(R_i)\}_{i=1}^n$ be the geometric Markov partition of $g$ induced by $h$. In this situation, $H$ is a horizontal sub-rectangle of $(f,\mathcal{R})$ if and only if $h(H)$ is a horizontal sub-rectangle of $(g,h(\mathcal{R}))$. Similarly, $V$ is a vertical sub-rectangle of $(f,\mathcal{R})$ if and only if $h(V)$ is a vertical sub-rectangle of $(g,h(\mathcal{R}))$.
\end{lemm}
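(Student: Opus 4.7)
The plan is to reduce the claim to the characterization of horizontal and vertical sub-rectangles of $(f,\cR)$ as closures of connected components of intersections of the form $\overset{o}{R_i}\cap f^{-1}(\overset{o}{R_j})$ and $f(\overset{o}{R_i})\cap \overset{o}{R_j}$, and then transport these intersections through $h$ using the conjugacy relation.

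First I would set up the bookkeeping. By definition $H$ is a horizontal sub-rectangle of $(f,\cR)$ iff there exist $i,j\in\{1,\dots,n\}$ such that $H$ is the closure of a connected component of $\overset{o}{R_i}\cap f^{-1}(\overset{o}{R_j})$. Since $h$ is a homeomorphism, $h(\overset{o}{R_i})=\overset{o}{h(R_i)}$, and from the conjugacy $g=h\circ f\circ h^{-1}$ we get $h\circ f^{-1}=g^{-1}\circ h$. Consequently
\[
h\bigl(\overset{o}{R_i}\cap f^{-1}(\overset{o}{R_j})\bigr)=\overset{o}{h(R_i)}\cap g^{-1}(\overset{o}{h(R_j)}).
\]
Because homeomorphisms preserve connected components and commute with closure on compact sets, $h$ sends connected components of the left-hand side bijectively to connected components of the right-hand side, and closures to closures.

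The second step is to check that the images are actually horizontal sub-rectangles in the geometric sense of Definition \ref{Defi: subrec Vertical/horizontal }. Since $f$ and $g$ are topologically conjugate by $h$, Proposition \ref{Prop: conjugacion entre pseudo-Anosov} gives $h(\cF^s_f)=\cF^s_g$ and $h(\cF^u_f)=\cF^u_g$. Therefore $h$ maps horizontal (stable) leaves of $R_i$ to horizontal leaves of $h(R_i)$, and vertical (unstable) leaves to vertical leaves; together with the fact that $h(\cR)$ is geometrized so that $h$ preserves both the vertical and horizontal orientations of each rectangle (Definition \ref{Defi: induced geometric Markov partition}), this means that a rectangle $H\subset R_i$ satisfies the horizontal-leaf coincidence condition defining a horizontal sub-rectangle of $R_i$ if and only if $h(H)\subset h(R_i)$ satisfies the same condition with respect to the horizontal foliation of $h(R_i)$. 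Combining both steps shows that $H$ is a horizontal sub-rectangle of $(f,\cR)$ iff $h(H)$ is a horizontal sub-rectangle of $(g,h(\cR))$.

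For the vertical case the argument is entirely symmetric: one replaces $\overset{o}{R_i}\cap f^{-1}(\overset{o}{R_j})$ by $f(\overset{o}{R_i})\cap \overset{o}{R_j}$ and uses $h\circ f=g\circ h$ to obtain
\[
h\bigl(f(\overset{o}{R_i})\cap \overset{o}{R_j}\bigr)=g(\overset{o}{h(R_i)})\cap \overset{o}{h(R_j)},
\]
and the same leaf-preservation argument applied to unstable foliations gives the equivalence for vertical sub-rectangles. The only subtlety, and hence the one place where some care is required, is making sure that taking closures commutes with the action of $h$ and that the connected-component decomposition is preserved; both are immediate consequences of $h$ being a homeomorphism, so no technical obstruction appears.
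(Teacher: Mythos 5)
Your proposal is correct and follows essentially the same route as the paper: the paper's proof is the one-line observation that $h(\overset{o}{R_i})=\overset{o}{h(R_i)}$, so $C$ is a connected component of $\overset{o}{R_i}\cap f^{\pm 1}(\overset{o}{R_j})$ if and only if $h(C)$ is a connected component of $\overset{o}{h(R_i)}\cap g^{\pm 1}(\overset{o}{h(R_j)})$, which is exactly your first step. Your additional verification that $h$ carries the stable/unstable foliations of $R_i$ to those of $h(R_i)$ (so the images genuinely satisfy the leaf-coincidence condition of Definition \ref{Defi: subrec Vertical/horizontal }) is a reasonable extra precaution that the paper leaves implicit.
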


\begin{proof}
	
	Observe that $h(\overset{o}{R_i})=\overset{o}{h(R_i)}$. Therefore, $C$ is a connected component of $\overset{o}{R_i}\cap f^{\pm}(\overset{o}{R_j})$ if and only if $h(C)$ is a connected component of $\overset{o}{h(R_i)}\cap g^{\pm}(\overset{o}{h(R_j)})$.
	
\end{proof}

Let $T(f,\cR)=(n,\{h_i,v_i\},\Phi:=(\rho,\epsilon))$ be the geometric type of the geometric Markov partition $\cR$, and let $T(g,h(\cR))=(n',\{h'_i,v'_i\},\Phi'_T:=(\rho',\epsilon'))$be the geometric type of the induced Markov partition.  A direct consequence of Lemma  \ref{Lemm: Conjugated partitions same type.} is that: $n=n'$, $h_i=h'_i$ and $v_i=v'_i$.

Let $\{\overline{H^i_j}\}_{j=1}^{h_i}$ be the set of horizontal sub-rectangles of $h(R_i)$, labeled with respect to the induced vertical orientation in $h(R_i)$. Similarly, we define $\{\overline{V^k_l}\}_{l=1}^{v_k}$ as the set of vertical sub-rectangles of $h(R_k)$, labeled with respect to the horizontal orientation induced by $h$ in $h(R_k)$.

By the we choose the orientations in $h(R_i)$ and in $h(R_k)$ is clear that:
$$
h(H^i_j) =\overline{H^i_j} \text{ and } h(V^k_l) =\overline{V^k_l}.
$$
Even more, using the conjugacy we have that: if $f(H^i_j)=V^k_l$ then
$$
g(\overline{H^i_j})= g(h(H^i_j))=h(f(H^i_j))=h(V^k_l)=\overline{V^k_l}.
$$

This implies that in the geometric types, $\rho = \rho'$.

Suppose that $\overline{V^k_l} = g(\overline{H^i_j})$, so the latter set is equal to $h \circ f \circ h^{-1}(\overline{H^i_j})$. The homeomorphism $h$ preserves the vertical orientations between $R_i$ and $h(R_i)$, as well as between $R_k$ and $h(R_k)$. Therefore, $f$ sends the positive vertical orientation of $H^i_j$ with respect to $R_i$ to the positive vertical orientation of $V^k_l$ with respect to $R_k$ if and only if $g$ sends the positive vertical orientation of $\overline{H^i_j}$ with respect to $h(R_i)$ to the positive vertical orientation of $\overline{V^k_l}$ with respect to $h(R_k)$. It follows then that $\epsilon(i,j) = \epsilon'(i,j)$. We summarize this discussion in the following Theorem

\begin{theo}\label{Theo: Conjugated partitions same types}
	Let $f$ and $g$ be generalized pseudo-Anosov homeomorphisms conjugated through a homeomorphism $h$ that preserves the orientation, i.e., $g = h \circ f \circ h^{-1}$. Let $\cR=\{R_i\}_{i=1}^n$ be a geometric Markov partition for $f$, and let $h(\cR)=\{h(R_i)\}_{i=1}^n$ be the geometric Markov partition of $g$ induced by $h$. In this situation, the geometric types of $(g,h(\cR))$ and $(f,\cR)$ are the same.
\end{theo}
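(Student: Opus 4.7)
The plan is to verify that each of the four components of the geometric type $(n,\{h_i,v_i\}_{i=1}^n,\rho,\epsilon)$ coincides for $(f,\cR)$ and $(g,h(\cR))$. Since much of the bookkeeping has already been carried out in the discussion preceding the statement, the proof really amounts to organizing those observations into a single argument, keeping careful track of how the orientation-preserving homeomorphism $h$ transports the labelings.

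First I would handle the numerical invariants. The map $R_i \mapsto h(R_i)$ is a bijection between $\cR$ and $h(\cR)$, so $n = n'$. Lemma \ref{Lemm: Conjugated partitions same type.} shows that $h$ induces a bijection between the horizontal (respectively vertical) sub-rectangles of $(f,\cR)$ and those of $(g,h(\cR))$, and moreover preserves the containing rectangle: if $H \subset R_i$ is horizontal, then $h(H) \subset h(R_i)$ is horizontal. This immediately gives $h_i = h'_i$ and $v_i = v'_i$ for every $i$.

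Next I would verify that the bijection $\rho$ matches. The key point is that the vertical orientation on $h(R_i)$ was defined in Definition \ref{Defi: induced geometric Markov partition} precisely so that $h$ preserves vertical orientations rectangle by rectangle. Consequently the bottom-to-top labeling $\{H^i_j\}_{j=1}^{h_i}$ is sent by $h$ to the bottom-to-top labeling $\{\overline{H^i_j}\}_{j=1}^{h_i}$ of $h(R_i)$, and analogously for the left-to-right labeling of vertical sub-rectangles. Using the conjugacy $g = h\circ f\circ h^{-1}$, if $f(H^i_j) = V^k_l$, then
\[
g(\overline{H^i_j}) = h\circ f\circ h^{-1}(h(H^i_j)) = h(f(H^i_j)) = h(V^k_l) = \overline{V^k_l},
\]
so $\rho(i,j) = \rho'(i,j)$.

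Finally I would check $\epsilon$. By the definition of the induced geometrization, $h$ preserves the chosen vertical direction of $R_i$ when mapping to $h(R_i)$, and likewise for $R_k$ and $h(R_k)$. Therefore $f$ sends the positive vertical direction of $H^i_j$ to the positive vertical direction of $V^k_l$ if and only if $g$ sends the positive vertical direction of $\overline{H^i_j}$ to the positive vertical direction of $\overline{V^k_l}$, which is exactly the statement $\epsilon(i,j) = \epsilon'(i,j)$. The main (minor) obstacle is being careful that the \emph{horizontal} direction of $h(R_i)$ is also the pushforward of the horizontal direction of $R_i$; this follows because $h$ preserves the orientation of $S$, and by Definition \ref{Defi. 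Vertical direction} the horizontal direction of a rectangle is determined by its vertical direction together with the ambient orientation. Once this is observed, combining the preceding paragraphs yields $T(f,\cR) = T(g,h(\cR))$.
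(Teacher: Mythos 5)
Your proposal is correct and follows essentially the same route as the paper: the paper's argument is precisely the discussion preceding the theorem, which establishes $n=n'$, $h_i=h'_i$, $v_i=v'_i$ via Lemma \ref{Lemm: Conjugated partitions same type.}, then derives $\rho=\rho'$ from the conjugacy computation $g(\overline{H^i_j})=h(f(H^i_j))=\overline{V^k_l}$, and finally $\epsilon=\epsilon'$ from the fact that $h$ preserves the induced vertical orientations. Your additional remark that the horizontal direction of $h(R_i)$ is pinned down by the vertical direction together with the ambient orientation is a small but welcome clarification of a point the paper leaves implicit.
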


Since $f$ preserves the orientation, we can consider the case where $f = g = h$, and apply the previous theorem. If $n \geq n(f)$ and $z$ is a first intersection point of $f$, then for all $m \in \mathbb{Z}$, the geometric type of $f^m(\cR(z,n)) = \cR(f^m(z),n)$ is the same as the geometric type of $\cR(z,n)$. This leads to the following corollary.

\begin{coro}\label{Coro: constant type in orbit}
	For every primitive Markov partition $\cR(z,n)$ with $n \geq n(f)$ and $z$ as a first intersection point of $f$, we have the following property for all $m \in \mathbb{Z}$:
	$$
	T(\cR(z,n))= T(\cR(f^m(z),n)).
	$$
\end{coro}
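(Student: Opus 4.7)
The plan is to apply the two theorems proved immediately before the statement with the trivial conjugacy given by $h = f^m$. Since $f$ is an orientation-preserving pseudo-Anosov homeomorphism, so is any iterate $f^m$ for $m \in \ZZ$, and it conjugates $f$ to itself through the identity relation $f = f^m \circ f \circ f^{-m}$.

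First I would verify that the hypotheses of Theorem \ref{Theo: Conjugates then primitive Markov partition} are met for the pair $(f,f)$ with $h = f^m$. Item (i) gives that $f^m(z)$ is again a first intersection point of $f$, so the construction \ref{Cons: Recipe for Markov partitions} can be applied to it. By item (v) of the same theorem, $n(f^m(z)) = n(z)$; in particular, since $n \geq n(f) \geq n(z) = n(f^m(z))$, the primitive partition $\cR(f^m(z),n)$ is well defined for every $m \in \ZZ$. Then item (vii) yields the key identification
\[
f^m\bigl(\cR(z,n)\bigr) \;=\; \cR\bigl(f^m(z),n\bigr).
\]

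Next, I would invoke Theorem \ref{Theo: Conjugated partitions same types} with $g = f$ and with the orientation-preserving conjugacy $h = f^m$. This theorem guarantees that the geometric type of the induced geometric Markov partition $(f, f^m(\cR(z,n)))$ coincides with the geometric type of $(f, \cR(z,n))$. Substituting the identification above, we conclude
\[
T(\cR(z,n)) \;=\; T(f^m(\cR(z,n))) \;=\; T(\cR(f^m(z),n)),
\]
which is exactly the claim.

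There is essentially no obstacle beyond checking that the vertical orientations chosen on $\cR(f^m(z),n)$ agree with those induced by $f^m$ on $f^m(\cR(z,n))$. This is the only subtle point, since the corollary compares geometric types and not just underlying Markov partitions; however it is already built into Definition \ref{Defi: induced geometric Markov partition} and exploited in the proof of Theorem \ref{Theo: Conjugated partitions same types}, so nothing new is required. The argument is therefore a direct combination of items (i), (v), (vii) of Theorem \ref{Theo: Conjugates then primitive Markov partition} with Theorem \ref{Theo: Conjugated partitions same types}, applied to the self-conjugacy $h = f^m$.
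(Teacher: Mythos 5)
Your proposal is correct and is essentially the paper's own argument: the paper likewise takes $f=g=h$ (iterated to $h=f^m$), uses item (vii) of Theorem \ref{Theo: Conjugates then primitive Markov partition} to identify $f^m(\cR(z,n))$ with $\cR(f^m(z),n)$, and then applies Theorem \ref{Theo: Conjugated partitions same types} to conclude the geometric types agree. Your extra checks of items (i) and (v) to ensure $\cR(f^m(z),n)$ is well defined are a welcome, if minor, addition.
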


In view of Corollary \ref{Coro: Finite orbits of primitive Markov partitions}, for all $n \geq n(f)$ there exists a finite number of primitive Markov partition orbits of $f$ of order $n$. Moreover, the geometric type is constant within each orbit of such a Markov partition. Therefore, for all $n \geq n(f)$, there exists only a finite number of distinct geometric types corresponding to the geometric types of partitions in the set $\cM(f,n)$. We summarize this in the following theorem.

\begin{theo}\label{Theo: finite geometric types}
	For every $n > n(f)$, the set $\cT(f,n) := \{T(\cR) : \cR \in \cM(f,n)\}$ is finite. These geometric types are referred to as the \emph{primitive geometric types} of $f$ of order $n$.
\end{theo}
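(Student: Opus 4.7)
The plan is to combine two results established earlier in this subsection: the finiteness of orbits of primitive Markov partitions of a fixed order (Corollary \ref{Coro: Finite orbits of primitive Markov partitions}) and the invariance of the geometric type along each such orbit (Corollary \ref{Coro: constant type in orbit}). Once both are in hand, the theorem becomes a simple bookkeeping consequence, since the map $\cR\mapsto T(\cR)$ factors through the (finite) set of $f$-orbits in $\cM(f,n)$.

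More concretely, I would first invoke Proposition \ref{Prop: Finite number first intersection points} to fix a finite collection $\{z_1,\dots,z_k\}$ of representatives of the $f$-orbits of first intersection points of $f$. Every first intersection point $z$ then admits a decomposition $z=f^m(z_i)$ for a unique $i\in\{1,\dots,k\}$ and some $m\in\ZZ$; and any element of $\cM(f,n)$ has the form $\cR(z,n)$ for such a $z$ by Definition \ref{Defi: primitive Markov parition}. Applying item $(vii)$ of Theorem \ref{Theo: Conjugates then primitive Markov partition} in the case $f=g$ with $h=f^m$, we obtain $\cR(z,n)=\cR(f^m(z_i),n)=f^m(\cR(z_i,n))$, showing that the $f$-orbit of $\cR(z_i,n)$ exhausts all primitive partitions of order $n$ generated by points in the orbit of $z_i$.

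Next, I would apply Corollary \ref{Coro: constant type in orbit}, which yields $T(\cR(z,n))=T(\cR(z_i,n))$. Therefore the image of the type map $\cM(f,n)\to\cG\cT$ is contained in the finite set $\{T(\cR(z_1,n)),\dots,T(\cR(z_k,n))\}$, establishing that $\cT(f,n)$ is finite. One small verification along the way is that $n\geq n(f)$ ensures $n\geq n(z_i)$ for each representative, so every $\cR(z_i,n)$ is in fact a well-defined primitive Markov partition; this is exactly the content of the definition of $n(f)$ given in Corollary \ref{Coro: n(f) number}.

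Since the substantive work has already been carried out in the preceding corollaries, no genuine obstacle remains. The one delicate point — that the geometric type of $\cR(z,n)$ does not change as $z$ moves along its $f$-orbit, despite the fact that different generators could a priori produce different labelings — was already dissolved in Theorem \ref{Theo: Conjugated partitions same types}, which exploited the orientation-preserving nature of the conjugating homeomorphism (here $h=f^m$) to match horizontal and vertical sub-rectangle labelings, $\rho$, and $\epsilon$ simultaneously.
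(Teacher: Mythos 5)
Your argument is correct and is essentially the paper's own: the theorem is obtained by combining Corollary \ref{Coro: Finite orbits of primitive Markov partitions} (finitely many $f$-orbits of primitive partitions of order $n$) with Corollary \ref{Coro: constant type in orbit} (the geometric type is constant along each orbit), so the type map factors through a finite set. Your additional verifications — that $n\geq n(f)$ guarantees each $\cR(z_i,n)$ is defined, and that Theorem \ref{Theo: Conjugated partitions same types} handles the labeling/orientation matching — are exactly the ingredients the paper relies on in the preceding discussion.
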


Of course, within all the primitive geometric types, there is a distinguished family that, in some sense, has the least complexity with respect to its compatibility coefficient.

\begin{defi}\label{defi: Canonical types}
	
	The canonical types of $f$ are the set $\cT(f,n(f))$.
\end{defi}

In fact, in Corollary \ref{Coro: n(f) conjugacy invariant}, we have proved that if $f$ and $g$ are topologically conjugate, then $n(f)=n(g)$. By Theorem  \ref{Theo: Conjugated partitions same types}, we deduce that for every $n\geq n(f)=n(g)$, the set of primitive types of $f$ of order $n$ coincides with the set of primitive geometric types of $g$ of order $n$. In particular, we have the following corollary.

\begin{coro}\label{Coro: conjugated implies same cannonical types}
	If $f$ and $g$ are conjugate generalized pseudo-Anosov homeomorphisms,then,  for all $m\geq n(f)=n(g)$, $\cT(f,m)=\cT(g,m)$ and their canonical types are the same, i.e., $\cT(f,n(f))=\cT(g,n(g))$.
\end{coro}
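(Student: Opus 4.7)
The plan is to combine three results already in the paper: Corollary \ref{Coro: n(f) conjugacy invariant} (conjugacy preserves the compatibility order), Theorem \ref{Theo: Conjugates then primitive Markov partition} (the conjugating homeomorphism carries primitive Markov partitions of $f$ to primitive Markov partitions of $g$), and Theorem \ref{Theo: Conjugated partitions same types} (the geometric type is invariant under an orientation-preserving conjugacy). Let $h:S_f\to S_g$ denote the conjugacy, which we treat as orientation-preserving (as the paper does implicitly in Theorem \ref{Theo: Conjugated partitions same types}).

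First, Corollary \ref{Coro: n(f) conjugacy invariant} immediately gives $n(f)=n(g)$, so the quantifier $m\geq n(f)=n(g)$ makes sense for both sides. Fix such an $m$. The strategy for $\cT(f,m)=\cT(g,m)$ is to prove both inclusions by showing the map $\cR\mapsto h(\cR)$ induces a well-defined map $\cM(f,m)\to\cM(g,m)$ that preserves geometric types.

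For $\cT(f,m)\subseteq \cT(g,m)$: pick $T\in\cT(f,m)$, so $T=T(\cR(z,m))$ for some first intersection point $z$ of $f$. Apply item (i) of Theorem \ref{Theo: Conjugates then primitive Markov partition} to conclude that $h(z)$ is a first intersection point of $g$, and item (vii) to conclude that $h(\cR(z,m))=\cR(h(z),m)$ is a primitive Markov partition of $g$ of order $m$, hence an element of $\cM(g,m)$. Now apply Theorem \ref{Theo: Conjugated partitions same types} to the pairs $(f,\cR(z,m))$ and $(g,h(\cR(z,m)))$ to obtain
\[
T(\cR(z,m)) = T(h(\cR(z,m))) = T(\cR(h(z),m)) \in \cT(g,m).
\]
The reverse inclusion $\cT(g,m)\subseteq\cT(f,m)$ follows by the symmetric argument applied to the conjugacy $h^{-1}:S_g\to S_f$, which also preserves orientation. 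The final claim about canonical types is then immediate by specializing to $m=n(f)=n(g)$.

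The only real subtlety — and the step I would flag as the main obstacle — is the orientation hypothesis required by Theorem \ref{Theo: Conjugated partitions same types}. If $h$ happens to reverse orientation, one cannot directly apply that theorem, and in principle the induced vertical directions on the $h(R_i)$ could flip, potentially altering the maps $\rho$ and $\epsilon$ in the geometric type. The cleanest way to address this is to restrict to orientation-preserving conjugacies (matching the convention used throughout Section 2), or otherwise to observe that since $f$ and $g$ are both in $\mathrm{Hom}_+(S)$, one can always post-compose $h$ with an orientation-preserving self-homeomorphism so that the hypotheses of Theorem \ref{Theo: Conjugated partitions same types} apply; no new geometric types are introduced in $\cT(g,m)$ by this modification, since the set is defined intrinsically from $g$ and does not depend on the choice of conjugacy.
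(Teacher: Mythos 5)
Your argument is exactly the paper's: it combines Corollary \ref{Coro: n(f) conjugacy invariant} with Theorem \ref{Theo: Conjugates then primitive Markov partition} (items $i)$ and $vii)$) and Theorem \ref{Theo: Conjugated partitions same types}, running the symmetric argument with $h^{-1}$ for the reverse inclusion, and the paper likewise implicitly restricts to orientation-preserving conjugacies. The only quibble is your proposed repair by post-composing $h$ with an orientation-preserving self-homeomorphism, which cannot change the orientation behavior of $h$ and would in general destroy the conjugacy; the correct resolution is simply the convention you state first, namely that the conjugacy is taken orientation-preserving.
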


\section{Geometric Markov partitions with prescribed properties.}\label{Section: GM con propiedades prescritas}

In this section, we are interested in the following problem. Let $f:S\to S$ be a \textbf{p-A} homeomorphism. Assume that $f$ has a geometric Markov partition $\cR$ and that the geometric type $T$ of the pair $(f,\cR)$ is known. Can you construct another geometric Markov partition $\cR'$ for $f$ such that the geometric type $T'$ of the new pair $(f,\cR')$ can be computed, and furthermore, that the new geometric type $T'$ and the new partition $\cR'$ have some combinatorial or topological property of our interest? For example, can we obtain a Markov partition whose incidence matrix is binary? What about predetermining which periodic points of $f$ must belong to the boundary of the Markov partition? Can we obtain a Markov partition with the corner property or one that is well-suited? Or can we obtain a Markov partition whose rectangles are embedded? We provide some answers and constructions for these questions.

In subsection \ref{Subsec: Generales refinamientos}, definitions of refinements for Markov partitions, their canonical geometrization, and therefore their geometric type are presented. In subsequent sections, we will construct refinements of Markov partitions with useful properties for performing an algorithmic classification of the conjugacy classes of \textbf{p-A} homeomorphisms, as done by the author in \cite{IntiThesis}. The classification itself can be found in \cite{CruzDiaz2024}; meanwhile, here we will focus on the necessary algorithmic and constructive aspects. In subsection \ref{Subsec: Refinamiento Binario}, a geometric Markov partition with a binary \emph{incidence matrix} is constructed, and a procedure to determine its geometric type is developed. Associated with a pair $(f,\cR)$, there exists a \emph{sub-shift of finite type} $(\Sigma_A,\sigma_A)$ (where $A$ is the incidence matrix of the pair $(f,\cR)$) that is semiconjugate to $f$ through a map $\pi_{(f,\cR)}:\Sigma_A \to S$. In subsection \ref{Subsec: Codigos periodicos ref}, given a finite family of periodic codes in $\Sigma_A$, we construct another Markov partition for $f$ such that the projections by $\pi_{(f,\cR)}$ of these periodic codes lie on the stable boundary of the new Markov partition. We conclude the subsection by providing formulas to compute the geometric type of the new geometric Markov partition.

\subsection{Refinement of a geometric  Markov partition}\label{Subsec: Generales refinamientos}

Let's clarify what a refinement of a Markov partition is. The following definition is given for horizontal sub-rectangles; for vertical sub-rectangles, the definition is completely symmetrical. 

\begin{defi}\label{Defi: Refianmiento horizontal}
	Let $f:S \to S$ be a \textbf{p-A} homeomorphism and let $\cR=\{R_i\}_{i=1}^n$ be a Markov partition of $f$. A family of labeled rectangles $\cR'=\{H_{(i,h)}\mid i\in\{1,\cdots, n\}, \, h\in\{1,\cdots, H(i)\} \text{ for } H(i)\in \mathbb{N}_+\}$ is  a \emph{refinement by horizontal} (resp. vertical) sub-rectangles of the pair $(f,\cR)$ if the following properties are satisfied:
	
	\begin{itemize}
		\item The family $\cR'$ is a Markov partition of $f$.
		\item For all $i\in\{1,\cdots, n\}$ and $h\in\{1,\cdots, H(i)\}$, $H_{(i,h)}$ is a horizontal (resp. vertical) sub-rectangle of $R_i \in \cR$.
		\item The union of these rectangles in $\cR'$ that are contained within the same rectangle $R_i \in \cR$ is equal to $R_i$: $\bigcup_{h=1}^{H(i)} H_{(i,h)} = R_i$.
	\end{itemize}
		The set of \emph{horizontal labels of the refinement} $(f,\cR') $ is given by:
		\begin{equation}
\mathcal{H}((f,\mathcal{R}')) := \{(i,h) \mid i\in\{1,\cdots, n\}, \, h\in\{1,\cdots, H(i)\} \text{ for } H(i)\in \mathbb{N}_+\}.
		\end{equation}
\end{defi}

We say that $(f, \mathcal{R}')$ is a refinement of $(f, \mathcal{R})$ to indicate that $\mathcal{R}'$ is a refinement of $\mathcal{R}$, without specifying whether the refinement is by horizontal or vertical rectangles unless necessary to avoid ambiguity. The next step is to define the canonical geometrization of our refinement, which will be completely determined by the geometrization of the original partition. But first, let us introduce proper notation for the vertical and horizontal order in a geometrized rectangle.

\begin{defi}\label{Defi: Notacion orden en los rectangulos}
Let $(f, \mathcal{R})$ be a geometric Markov partition of the \textbf{p-A} homeomorphism $f: S \to S$. Let $H_1, H_2 \subset R_i \in \mathcal{R}$ be horizontal sub-rectangles of $R_i \in \mathcal{R}$. To denote that, with respect to the vertical direction of $R_i$, the sub-rectangle $H_{1}$ is below $H_{2}$, we use the notation $H_{(i,s)} <_{v(i)} H_{(i,s')}$. 

Similarly, if $V_1, V_2 \subset R_i$ are vertical sub-rectangles of $R_i \in \mathcal{R}$, the notation $V_{1} <_{h(i)} V_{2}$ indicates that the vertical sub-rectangle $V_{1}$ is to the left of $V_{2}$ with respect to the horizontal direction of $R_i$.
\end{defi}

We must introduce a way to order the rectangles in the refinement; we will use the lexicographic order defined over the set of labels of the refinement.

\begin{defi}\label{Defi: Orden Lexicodromico}
Let  $(f,\cR')$ be  a refinement by horizontal sub-rectangles of the pair $(f,\cR)$. The  \emph{lexicographic order} of the horizontal labels of the refinement $(f,\cR')$ is the bijective map $\underline{r}: \mathcal{H}((f,\mathcal{R}'))\rightarrow \{1,\cdots, \alpha(f,\cR'):=\sum_{i=1}^{n}H(i) \}$ given by:
	\begin{equation}\label{Equa: Orden lexicodromico}
		\underline{r}(i,h)=\sum_{i'<i} H(i') + h.
	\end{equation}
\end{defi}

Now we are ready to define a geometrization in our refinements. Once again, this definition and these conditions are formulated in terms of horizontal sub-rectangles, but the situation with vertical sub-rectangles is entirely symmetric to what is explicitly presented below.

\begin{defi}\label{Defi: Geometric refinement}
	Let $(f, \mathcal{R}')$ be a horizontal refinement of $(f, \mathcal{R})$ by horizontal sub-rectangles, and let $\textbf{H} = \{H_{(i, h)} \mid i \in \{1, \dots, n\} \text{ and } h \in \{1, \dots, H(i)\} \text{ for } H(i) \in \mathbb{N}\}\}$ be the set of horizontal labels of the refinement $(f, \mathcal{R}')$. We say $(f, \mathcal{R}')$ is a \emph{geometric refinement} of the geometric Markov partition $(f, \mathcal{R})$ if, for all $(i,h), (i,h') \in \mathcal{H}(f, \mathcal{R}')$, $H_{(i, h)} <_{v(i)} H_{(i, h')}$ if and only if $h < h'$. If this is the case, we endow the Markov partition $\mathcal{R}'$ with the \emph{canonical geometrization}, described as follows:
	\begin{itemize}
		\item The rectangles in $\mathcal{R}'$ are labeled with the \emph{lexicographic order} (\ref{Defi: Orden Lexicodromico}) on their labels, i.e., we set $H_{(i, s)} := H_{\underline{r}(i, s)}$. The set of labeled rectangles is denoted by $\mathcal{R}_{\textbf{H}}$.
		\item If $H_{(i, s)} \subset R_i$, then we assign to $H(i, s)$ the vertical and horizontal directions that coincide with the vertical and horizontal directions of $R_i$ when restricted to $H_{(i, s)}$. We call these the \emph{induced directions} by $(f, \mathcal{R})$.
	\end{itemize}
	The geometric Markov partition $(f, \mathcal{R}')$ with the canonical geometrization is called a \emph{geometric refinement} of $(f, \mathcal{R})$. Its geometric type is denoted by $T((f, \mathcal{R}'))$.
\end{defi}

\begin{figure}[ht]
	\centering
	\includegraphics[width=0.4\textwidth]{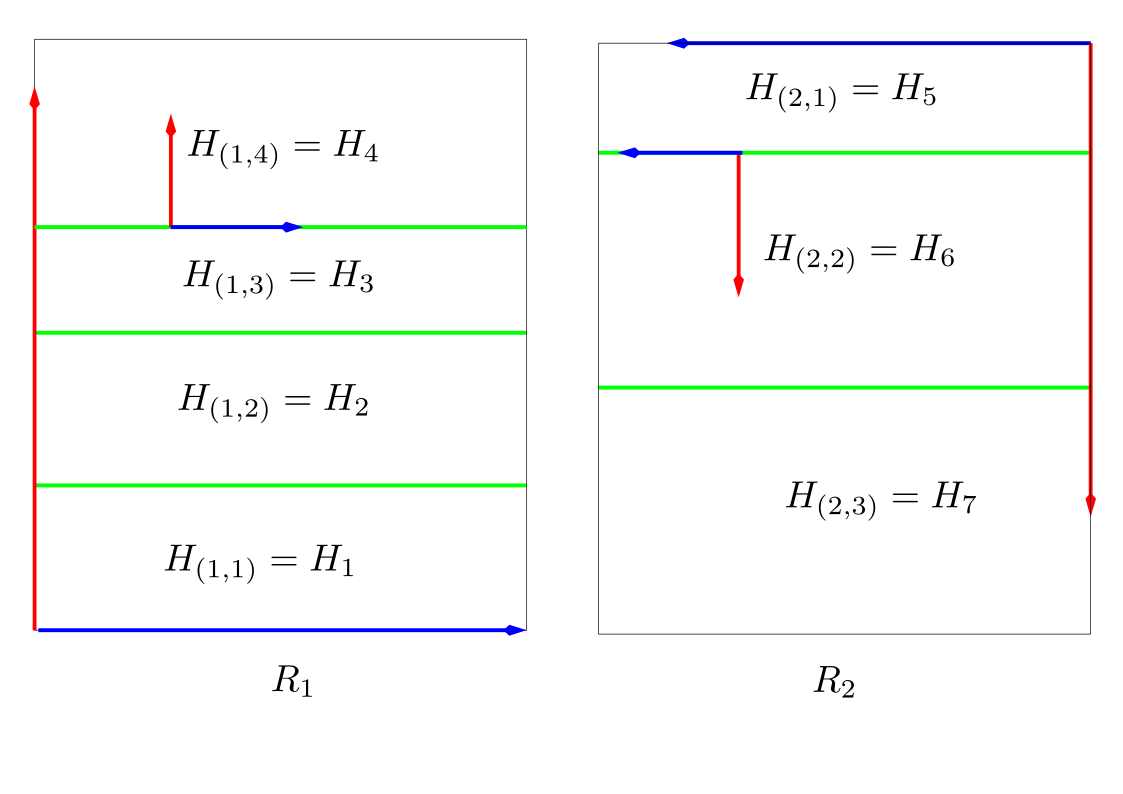}
	\caption{Labeling and Orientation in the Geometric Refinement of $\mathcal{R} = \{R_1, R_2\}$}
	\label{Fig: Geometric refinament}
\end{figure}

Fig. \ref{Fig: Geometric refinament} illustrates how the lexicographic order and orientations are fixed for a refinement by horizontal sub-rectangles of a certain Markov partition that has just two rectangles, $\mathcal{R} = \{R_1, R_2\}$.

\section{The Binary refinement}\label{Subsec: Refinamiento Binario}

Let $(f, \mathcal{R})$ be a pair consisting of a \textbf{p-A} homeomorphism and a geometric Markov partition. According to \cite[Exposition 10]{fathi2021thurston}, the incidence matrix of $(f, \mathcal{R})$, denoted by $A(f, \mathcal{R})$, has a coefficient $a_{ij}$ equal to $1$ if $f(\overset{o}{R_i}) \cap \overset{o}{R_k} \neq \emptyset$ and $0$ otherwise. Such definition  is in the spirit of R. Bowen's approach in \cite{BowenAnosovbook} for is an Axiom A homeomorphisms. However, this definition does not take into account the number of times the image of $R_i$ intersects $R_k$, which is crucial for our purposes. Therefore, we adopt the following definition.

\begin{defi}\label{Defi: Incidence matrix}
	Let $f$ be a \textbf{p-A} homeomorphism, and let $\mathcal{R}$ be a Markov partition of $f$. The incidence matrix of the pair $(f, \mathcal{R})$, denoted by $A(f, \mathcal{R}) = (a_{ik})$, has coefficients $a_{ik}$ equal to the number of horizontal sub-rectangles of $(f, \mathcal{R})$ contained in $R_i$ that are mapped by the action of $f$ into vertical sub-rectangles of $R_k$.
\end{defi}

\begin{defi}\label{Defi: Matriz binary, positive}
	A square matrix $A = (a_{ij})$ of size $n \times n$ is \emph{non-negative} if for all $1 \leq i, j \leq n$, $a_{i,j} \in \mathbb{N}_0$. It is \emph{positive definite} if for all $1 \leq i, j \leq n$, $a_{i,j} \in \mathbb{N}_+$. A matrix whose coefficients are $0$ or $1$ is called a \emph{binary matrix}. Finally, we recall that if $n \in \mathbb{N}$, the notation for the coefficients of the $n$-th power of $A$ is $A^n = (a_{i,j}^{(n)})$.
	Finally a non-negative matrix $A$ is mixing if there exist $N\in \NN_+$ such that $A^N$ is positive defined.
\end{defi}

The theorem to be proven in this subsection is as follows.

\begin{theo}\label{Theo: Refinamiento binario}
	Given a geometric partition $\mathcal{R}$ of $f$ and the geometric type $T$ of the pair $(f, \mathcal{R})$, there exists a finite algorithm to construct a geometric refinement of the pair $(f, \mathcal{R})$, known as the \emph{binary refinement} of $(f, \mathcal{R})$, denoted by $(f, \textbf{Bin}(\mathcal{R}))$ with the following properties:
	\begin{itemize}
		\item The  rectangles in $\textbf{Bin}(\cR)$ that are the horizontal sub-rectangles of the Markov partition $(f, \mathcal{R})$.
		\item The incidence matrix of $(f, \textbf{Bin}(\cR) )$ is binary.
		\item  there are explicit formulas in terms of the geometric type $T$ of $(f, \mathcal{R})$ to compute the geometric type of  $(f, \textbf{Bin}(\mathcal{R}))$.
	\end{itemize} 
\end{theo}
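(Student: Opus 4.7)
The plan is to take as rectangles of $\textbf{Bin}(\mathcal{R})$ the horizontal sub-rectangles of the original pair $(f,\mathcal{R})$, indexed by the set $\{(i,j):1\le i\le n,\ 1\le j\le h_i\}$, and to show that this family inherits a canonical geometric Markov partition structure whose geometric type can be read off $T$ by explicit formulas.

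First I would verify that $\textbf{Bin}(\mathcal{R}):=\{H^i_j\}_{(i,j)\in\mathcal{H}(f,\mathcal{R})}$ is a Markov partition for $f$. The interiors are pairwise disjoint and the union is $S$ because the $H^i_j$ subdivide each $R_i$ into its horizontal sub-rectangles. For the Markov property I would apply Proposition \ref{Prop: Markov criterion boundary}: the stable boundary of $\textbf{Bin}(\mathcal{R})$ is the union of $\partial^s\mathcal{R}$ together with the horizontal leaves $\partial^s H^i_j$ separating consecutive horizontal sub-rectangles. By construction of $(f,\mathcal{R})$ each such leaf is $f^{-1}(\partial^s R_k)$ for some $k$, hence its image lies again in $\partial^s\mathcal{R}\subset\partial^s\textbf{Bin}(\mathcal{R})$; the unstable boundary coincides with $\partial^u\mathcal{R}$ and is already $f^{-1}$-invariant. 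Thus $\textbf{Bin}(\mathcal{R})$ is a Markov partition refining $(f,\mathcal{R})$ by horizontal sub-rectangles in the sense of Definition \ref{Defi: Refianmiento horizontal}. The canonical geometrization of Definition \ref{Defi: Geometric refinement} then provides the lexicographic labelling $\underline{r}(i,j)=\sum_{i'<i}h_{i'}+j$ and induces on each $H^i_j$ the vertical and horizontal directions of $R_i$.

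Next I would check binarity of the incidence matrix. If $\rho(i,j)=(k,l)$, then $f(H^i_j)=V^k_l$ is a single vertical sub-rectangle of $R_k$; intersecting with the horizontal sub-rectangles $H^k_m$ of $R_k$ for $m=1,\dots,h_k$ produces exactly one vertical sub-rectangle $V^k_l\cap H^k_m$ inside each $H^k_m$. Hence the entry $a_{(i,j),(k,m)}$ of the new incidence matrix is $1$ when $\rho(i,j)=(k,l)$ for some $l$, and $0$ otherwise.

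Finally, I would extract explicit formulas for the geometric type $T(\textbf{Bin}(\mathcal{R}))=(N,\{(h'_{(i,j)},v'_{(i,j)})\},\rho',\epsilon')$. The count $N=\sum_i h_i=\alpha(T)$ is immediate. For the horizontal/vertical counts: if $\rho(i,j)=(k,l)$ then $f(H^i_j)=V^k_l$ crosses the $h_k$ horizontal sub-rectangles of $R_k$, so $h'_{(i,j)}=h_k$; dually, the vertical sub-rectangles of $H^i_j$ come from the $v_i$ vertical sub-rectangles of $R_i$, so $v'_{(i,j)}=v_i$. For $\rho'$ and $\epsilon'$ the only subtle point is the effect of the orientation sign $\epsilon(i,j)$: the $m$-th horizontal sub-rectangle of $H^i_j$ (counted with the induced vertical direction) is sent by $f$ to $V^k_l\cap H^k_{m'}$ with
\[
m'=\begin{cases} m & \text{if } \epsilon(i,j)=+1,\\ h_k+1-m & \text{if } \epsilon(i,j)=-1,\end{cases}
\]
and inside $H^k_{m'}$ this image is the $l$-th vertical sub-rectangle (since the horizontal direction of $H^k_{m'}$ agrees with that of $R_k$). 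Hence $\rho'(i,j,m)=(k,m',l)$. Since the vertical directions on $H^i_j$ and $H^k_{m'}$ are, respectively, those of $R_i$ and $R_k$, the sign is preserved: $\epsilon'(i,j,m)=\epsilon(i,j)$. These formulas depend only on $T$, and applying the lexicographic labelling $\underline{r}$ to both horizontal and vertical labels produces $T(\textbf{Bin}(\mathcal{R}))$ algorithmically from $T$. The main bookkeeping obstacle is precisely the case analysis on $\epsilon(i,j)$ above, which inverts the horizontal labels inside each rectangle whose image is orientation-reversed; once this is carefully isolated, the remaining verifications are routine.
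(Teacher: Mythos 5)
Your proposal is correct and follows essentially the same route as the paper: you take the horizontal sub-rectangles as the new partition, verify the Markov property via the boundary-invariance criterion of Proposition \ref{Prop: Markov criterion boundary}, obtain binarity from the fact that a vertical and a horizontal sub-rectangle of an embedded rectangle meet in exactly one component, and compute the new type by the same case analysis on $\epsilon(i,j)$ (your $m'=h_k+1-m$ is the paper's $h_k-(j_0-1)$). No gaps to report.
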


The proof is constructive and will be carried out through a series of lemmas and constructions.

\begin{lemm}\label{Lemm: Binary partition}
	Then the family $H(f,\cR) := \{H^i_j\}_{(i,j)\in \cH(T)}$ of horizontal sub-rectangles of $(f, \cR)$ is a Markov partition of $f$.
\end{lemm}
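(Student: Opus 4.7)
The plan is to invoke Proposition~\ref{Prop: Markov criterion boundary}, which reduces the question of whether the family $H(f,\cR)$ is a Markov partition to three checks: that the rectangles have disjoint interiors whose union is $S$, that the stable boundary of the family is $f$-invariant, and that its unstable boundary is $f^{-1}$-invariant. Since the combinatorial data of $(f,\cR)$ already encodes the horizontal sub-rectangles, both the reformulation of Definition~\ref{Defi: Horizontal sub-rec particion} and the invariance criterion of Proposition~\ref{Prop: Markov criterion boundary} should do essentially all of the work.

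First I would verify the covering and disjointness. For a fixed $i$, the rectangles $\{H^i_j\}_{j=1}^{h_i}$ are by definition the closures of the distinct connected components of $\overset{o}{R_i} \cap f^{-1}(\cup_{k} \overset{o}{R_k})$, so their interiors are pairwise disjoint. Their union exhausts $R_i$, because any point in $R_i$ not belonging to the interior of some $H^i_j$ must lie in $\partial^s R_i \cup f^{-1}(\partial^s \cR) \cup \partial^u R_i$, and each such point is already contained in the boundary of some $H^i_j$. Across different $i$, disjointness of interiors follows from $\overset{o}{R_i} \cap \overset{o}{R_{i'}} = \emptyset$ when $i \neq i'$, and the total union is $\cup_i R_i = S$ by the Markov property of $\cR$.

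Next I would identify the stable and unstable boundaries of $H(f,\cR)$. Each $H^i_j$ is horizontal in $R_i$, i.e.\ it spans the full unstable width of $R_i$, so its two vertical sides are contained in $\partial^u R_i$; therefore $\partial^u H(f,\cR) \subseteq \partial^u \cR$. Its two horizontal sides are stable arcs, each of which is either a sub-arc of $\partial^s R_i$ (when $H^i_j$ touches the top or bottom of $R_i$) or a sub-arc of $f^{-1}(\partial^s \cR)$ (where the component of $\overset{o}{R_i}\cap f^{-1}(\overset{o}{R_k})$ terminates at the preimage of some stable boundary); hence $\partial^s H(f,\cR) \subseteq \partial^s \cR \cup f^{-1}(\partial^s \cR)$.

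Finally I would establish the invariance. The set $\partial^u \cR$ is $f^{-1}$-invariant by the Markov property of $\cR$, so $\partial^u H(f,\cR)$ is too. For the stable boundary, $f(\partial^s \cR) \subseteq \partial^s \cR$ (again by the Markov property of $\cR$) and $f(f^{-1}(\partial^s \cR)) = \partial^s \cR \subseteq \partial^s H(f,\cR)$, so the union $\partial^s \cR \cup f^{-1}(\partial^s \cR)$ is $f$-invariant, and thus so is $\partial^s H(f,\cR)$. Proposition~\ref{Prop: Markov criterion boundary} then yields that $H(f,\cR)$ is a Markov partition of $f$. The only mildly delicate point is the first step: making sure the horizontal sub-rectangles genuinely tile $R_i$ (and not only a dense open subset of it), which is a direct consequence of the fact that preimages of stable boundaries by $f$ are precisely the arcs where consecutive $H^i_j$ meet inside $R_i$.
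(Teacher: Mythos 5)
Your proposal is correct and follows essentially the same route as the paper: verify disjointness and covering, observe that the unstable boundary of $H(f,\cR)$ coincides with $\partial^u\cR$ (hence is $f^{-1}$-invariant) and that the stable boundary maps into $\partial^s\cR$ under $f$, then conclude via Proposition~\ref{Prop: Markov criterion boundary}. The only cosmetic difference is that you describe $\partial^s H(f,\cR)$ as $\partial^s\cR \cup f^{-1}(\partial^s\cR)$, whereas the paper argues directly that $f(\partial^s H^i_j) \subset \partial^s R_k$ because $f(H^i_j)$ is a vertical sub-rectangle of $R_k$; the two observations are equivalent.
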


To gain some intuition about the proof see Figure \ref{Fig: Bin refinament}.

\begin{proof}
	Let us suppose that $f: S \rightarrow S$ and $\mathcal{R} = \{R_i\}_{i=1}^n$. We will then fix the notation of the geometric type $T$ as follows:
	
	$$
	T(f,\cR):=T=(n,\{(h_i,v_i)\}_{i=1}^n, \Phi_T=(\rho_T,\epsilon_T)).
	$$
	Clearly, $\bigcup \{ H_{j}^i \mid (i, j) \in \mathcal{H} \} = S$, as it is equal to the union of rectangles in $\mathcal{R}$. Two different horizontal sub-rectangles of the same $R_i \in \mathcal{R}$ have disjoint interiors, and this is also true for horizontal sub-rectangles from different rectangles in $\mathcal{R}$. Therefore, the elements in $H(f, \mathcal{R})$ have disjoint interiors, and $H(f, \mathcal{R})$ is a partition of $S$ into rectangles.
	
	The unstable boundary of $H(f,\cR)$, denoted by $\partial^u H(f,\cR)=\cup_{(i,j)\in \cH} \partial^u H^i_j$, coincides with the unstable boundary of the Markov partition $\cR$, so it is $f^{-1}$-invariant. The stable boundary of $H(f,\cR)$ is  the union of the stable boundaries of the horizontal sub-rectangles; $\partial^s H(f,\cR)=\cup_{(i,j)\in \cH(T)} \partial^s H^i_j$.  We can suppose that $f(H^i_j)=V^k_l$, since $V^k_l$ is a vertical sub-rectangle of $R_k\in \cR$,  $\partial^{s} V^k_l \subset \partial^s R_k$ and then $f(\partial^s H^i_j) \subset \partial^s R_k$. Hence the stable boundary of $H(f,\cR)$ is $f$-invariant.
	
	Since the family $H(f,\cR)$ satisfies the properties of Proposition \ref{Prop: Markov criterion boundary}, it is a Markov partition of $f$.
	
\end{proof}

\begin{figure}[h]
	\centering
	\includegraphics[width=0.6\textwidth]{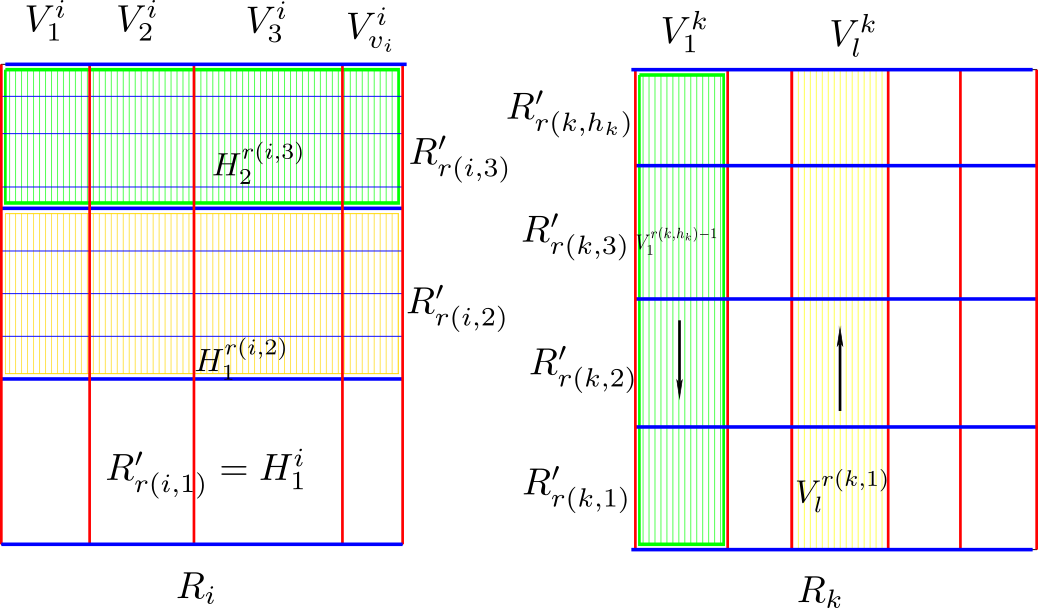}
	\caption{The binary refinement}
	\label{Fig: Bin refinament}
\end{figure}

It should not be difficult for the reader to observe that the set of horizontal labels of $(f, \mathcal{R})$ and the labeling of the binary refinement are the same. If we label them as $H^i_j = H_{(i, j)}$, the labels are ordered in such a manner that for all $(i, j), (i, j') \in \mathcal{H}(f, \textbf{Bin}(\mathcal{R}))$, $H_{(i, j)} = H^i_j <_{v(i)} H^i_{j'} = H_{(i, j')}$ whenever $j > j'$. The other properties necessary for a geometric refinement are easily verifiable; therefore, $(f, \textbf{Bin}(\mathcal{R}))$ is a geometric refinement of $\mathcal{R}$.

%%%%%%%%

\begin{prop}\label{Prop: Refinamiento binario propiedades}
	Let $(f, \mathcal{R}_f)$ and $(g, \mathcal{R}_g)$ be two pairs that realize the geometric type $T$. Then they satisfy the following properties:
	\begin{itemize}
		\item[i)] The incidence matrix of $(f, \textbf{Bin}(\mathcal{R}_f))$ is binary.
		\item[ii)] The geometric type of $(f, \textbf{Bin}(\mathcal{R}_f))$ is uniquely determined by $T$, as it is computed through an algorithm that only uses the information contained in $T$.
		\item[iii)] The binary refinements $\textbf{Bin}(f, \mathcal{R}_f)$ and $\textbf{Bin}(g, \mathcal{R}_g)$ have the same geometric type.
	\end{itemize}
	The geometric type of the binary refinement of the pair $(f, \mathcal{R})$ that realizes $T$ is denoted by $\textbf{Bin}(T) := T(f, \textbf{Bin}(\mathcal{R}))$.
\end{prop}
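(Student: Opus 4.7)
My plan is to prove the three items in order, with (iii) being an immediate corollary of (ii). The whole argument amounts to unwinding the definition of the binary refinement together with the combinatorial data $T = (n, \{(h_i,v_i)\}_{i=1}^n, \rho_T, \epsilon_T)$, exploiting the fact that a vertical sub-rectangle of $R_k$ spans $R_k$ from its lower to its upper $s$-boundary.

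For item (i), the key observation is that if $f(H^i_j) = V^k_l$ with $(k,l) = \rho_T(i,j)$, then $V^k_l$ is a vertical sub-rectangle of $R_k$. Therefore, for every horizontal sub-rectangle $H^{k'}_{j'} \in \textbf{Bin}(\mathcal{R})$, the set $f(\overset{o}{H^i_j}) \cap \overset{o}{H^{k'}_{j'}} = \overset{o}{V^k_l} \cap \overset{o}{H^{k'}_{j'}}$ is empty when $k' \neq k$ and equals a single open sub-rectangle when $k' = k$. Hence the incidence matrix of $(f, \textbf{Bin}(\mathcal{R}))$ has entries in $\{0,1\}$.

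For item (ii), I will give explicit formulas for each component of $\textbf{Bin}(T) = (N, \{(h'_{(i,j)}, v'_{(i,j)})\}_{(i,j)\in \cH(T)}, \rho', \epsilon')$ in terms of $T$. The number of rectangles is $N = \sum_{i=1}^n h_i = \alpha(T)$, with rectangles indexed by $\cH(T)$ through the lexicographic order from Definition \ref{Defi: Orden Lexicodromico}. The preceding paragraph shows that the horizontal sub-rectangles of $(f, \textbf{Bin}(\mathcal{R}))$ inside $H_{(i,j)}$ are in bijection with the indices $m \in \{1, \ldots, h_k\}$, where $(k,l) = \rho_T(i,j)$; hence $h'_{(i,j)} = h_k$ and, dually, $v'_{(k,j')} = v_k$ for every $j' \in \{1,\ldots, h_k\}$. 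For the bijection $\rho'$, I track the $m$-th (bottom-up) horizontal sub-rectangle of $H_{(i,j)}$: when $\epsilon_T(i,j) = +1$, the map $f$ preserves the vertical direction and the image lies in $V^k_l \cap H^k_m$, which is the $l$-th vertical sub-rectangle of $H_{(k,m)}$ counted from left to right, so $\rho'((i,j),m) = ((k,m), l)$; when $\epsilon_T(i,j) = -1$, the direction is reversed and the image lies in $V^k_l \cap H^k_{h_k - m + 1}$, giving $\rho'((i,j),m) = ((k, h_k - m + 1), l)$. Finally, since every horizontal sub-rectangle of the refinement inherits the vertical orientation of the rectangle of $\mathcal{R}$ containing it, a direct check yields $\epsilon'((i,j),m) = \epsilon_T(i,j)$.

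All of these expressions are computable from the quintuple $T$ alone, so they define a finite algorithm producing $\textbf{Bin}(T)$; this proves (ii). Item (iii) is then immediate, since applying the same algorithm to the common input $T$ for two realizations $(f, \mathcal{R}_f)$ and $(g, \mathcal{R}_g)$ yields identical output, whence $T(f, \textbf{Bin}(\mathcal{R}_f)) = \textbf{Bin}(T) = T(g, \textbf{Bin}(\mathcal{R}_g))$. The main difficulty I anticipate is the careful bookkeeping of orientations when $\epsilon_T(i,j) = -1$: one must correctly match the bottom-up enumeration of horizontal slices inside $H_{(i,j)}$ with the top-down reordering of the corresponding slices of $V^k_l$ induced by $f$, and analogously verify that the horizontal direction in $R_k$ is consistent with the left-to-right enumeration used in the formula for $\rho'$.
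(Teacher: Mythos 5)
Your proposal is correct and follows essentially the same route as the paper: the same computation $f(\overset{o}{H^i_j})\cap\overset{o}{H^{k'}_{j'}}=\overset{o}{V^k_l}\cap\overset{o}{H^{k'}_{j'}}$ for binarity, and the same formulas $N=\alpha(T)$, $h'_{(i,j)}=h_k$, $v'$ inherited from the ambient rectangle, $\rho'((i,j),m)=((k,m),l)$ or $((k,h_k-m+1),l)$ according to the sign of $\epsilon_T(i,j)$, and $\epsilon'=\epsilon_T(i,j)$, with (iii) following since the algorithm depends only on $T$. The orientation bookkeeping you flag as the main difficulty is handled exactly as in the paper's case split on $\epsilon_T(i,j)=\pm 1$.
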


\begin{proof}
	
	\textbf{Item} $i)$.	For every $(i,j), (k,j') \in \cH(T)$, the number of connected components in the intersection $f(H_{r(i,j)}) \cap H_{r(k,j')}$ is equal to the coefficient $a_{r(i,j)r(k,j')}$ of the incidence matrix $A:=A(\textbf{Bin}(f,R))$ of the binary refinement, an we applied the following computation:
	\begin{equation}\label{Equ: incidence matrice}
		f(H_{r(i,j)}) \cap H_{r(k,j')}= f(\overset{o}{ H^i_j})\cap \overset{o}{H^k_{j'}}  =
		\overset{o}{ V^{k}_{l} }\cap \overset{o}{H^k_{j'}}
	\end{equation}
	
	By definition and the left side of Equation  \ref{Equ: incidence matrice}, the coefficient $a_{r(i,j)r(k,j')}$ in $A$ is equal to the number of connected components that result from the intersection of the interiors of a horizontal sub-rectangle and a vertical sub-rectangle of $R_k$.
	
	With this interpretation in mind, if $k \neq k'$, the interiors of the rectangles $R_k$ and $R_{k'}$ are disjoint, and thus $a_{r(i,j),r(k,j')} = 0$. Since the interior of $R_k$ is an embedded (open) rectangle, if $k = k'$, the intersection of the interior of a horizontal sub-rectangle and a vertical sub-rectangle of $R_k$ has only one connected component. Therefore, $a_{r(i,j)r(k,j')} = 1$. This proves our item.
	
	\textbf{Item} $ii)$.		Let's deduce the geometric type of  $\textbf{Bin}(f, \cR_f)$ using only the information from $T$, the chosen orientation, and the lexicographic order. Let's denote the resulting geometric type as 
	$$
	T((f, \textbf{Bin}(\cR))=\{n',\{(h_i',v_i')\}_{i'=1}^{n'}, \Phi_{T'}=(\rho',\epsilon')\}.
	$$
	We need to determine all the parameters. The number of elements in $\cH(T)$ determines the number $n'$, which can be calculated using the formula:

	\begin{equation}\label{Equ: number rectangles in horizontal ref}
		n'=\sum_{i=1}^{n}h_i=\sum_{i=1}^n v_i=\alpha(T).
	\end{equation}
	
	Just remember that the number of elements in $\cH(T)$ corresponds to the number of horizontal sub-rectangles in $(f,\cR)$.
	
	Let $(i,j)\in \cH(T)$ and suppose $\Phi_T(i,j)=((k,l), \epsilon_T(i,j))$ for the rest of the proof. The pair $(f,H(f,\cR))$ denotes the horizontal refinement $H(f,\cR)$ viewed as a Markov partition of $f$.
	
	Based on our previous analysis, the vertical sub-rectangles of the rectangles $H^{r(i,j)}$ in $(f,H(f,\cR))$ correspond to the connected components of the intersection  of the interiors of all the vertical sub-rectangles $V^i_l$ of $R_i$ with $H^i_j$. The incidence matrix determines that all vertical sub-rectangles of $R_i$ intersect every horizontal sub-rectangle of $R_i$ exactly once. Therefore, the number of vertical sub-rectangles of $H_{r(i,j)}$ is constant with respect to $i$. We can infer that the number of vertical sub-rectangles of $H_{r(i,j)}$ is equal to $v_i$, i.e.
	
	\begin{equation}\label{Equ: number vertical sub in the horizontal ref}
		v'_{r(i,j)}=v_{i}
	\end{equation}
	
	Moreover, these vertical sub-rectangles are ordered from left to right in a coherent way with the horizontal orientation of $R_i$ by 
	
	\begin{equation}\label{Equ: Vertical order of horizontal ype}
		\{V^{r(i,j)}_{l}\}_{l=1}^{v_i}.
	\end{equation}

	The number of horizontal sub-rectangles of $H_{r(i,j)}$, denoted as $h'_{r(i,j)}$, is equal to $h_{k}$ because  $f(H_{r(i, j)})=f(H^{i}_{j})=V^{k}_{l}$  intersects exactly $h_{k}$ distinct horizontal sub-rectangles of $R_k$ and no other horizontal sub-rectangles of $(f,\cR)$. These horizontal sub-rectangles are ordered in increasing order according to the vertical orientation of $H_{r(i,j)}$, which is inherited from $R_i$, in the following manner:
	
	\begin{equation}\label{Equa: Horizontal rec of the horizontal ref}
		\{H^{r(i,j)}_{j'}\}_{j'=1}^{h_k}.
	\end{equation}

	We proceed to determine $\rho'$ and $\epsilon'$. Recall that we assumed $\Phi_T(i,j)=((k,l), \epsilon_T(i,j))$. To establish $\rho'$, we need to consider the change of orientation in $f(H^k_j)$ as given by $\epsilon_T(i,j)$. We are going to split the computations based on the two cases.

	First, assume $\epsilon_T(i,j)=1$ and $j_0\in \{1,\cdots, h_{k}\}$, representing the horizontal sub-rectangle $H^{r(i,j)}_{j_0}$ of $H_{r(i,j)}$. Since $f(H_{r(i,j)})=V^k_l$ preserves the vertical orientation, the horizontal sub-rectangle of $R_k$ that intersects $f(H_{r(i,j)})=V^k_l$ at position $j_0$ with respect to the vertical orientation of $R_k$ is labeled by $r(k,j_0)$. Also, $f(H^{r(i,j)}_{j_0})$ corresponds to the vertical sub-rectangle of $R_k$ that is at position $l$, so $f(H^{r(i,j)}_{j_0})$ is the vertical sub-rectangle $V^{r(k,j_0)}_l$. We can express this construction in terms of the geometric type using the formula:

	\begin{equation}\label{Equa: horizontal type for preseving orientation }
		\Phi_{T'}(r(i,j),j_0):=(\rho'(r(i,j),j_0),\epsilon'(r(i,j),j_0)=(r(k,j_0),l,\epsilon_T(i,j)).
	\end{equation}
	
	In the case of $\epsilon_T(i,j)=-1$, $f$ changes the vertical orientation of $H^i_j$ with respect to $V^k_l=f(H^i_j)$. This implies that the horizontal sub-rectangle of $R_k$ containing the image of $H^{r(i,j)}_{j_0}$ is located at position $j_0$ with the inverse vertical orientation of $R_k$, which corresponds to position $(h_k-(j_0-1))$ with the prescribed orientation in $R_k$. Therefore, the horizontal sub-rectangle of $R_k$ corresponding to $f(H^{r(i,j)}_{j_0})$ is given by:
	
	$$
	H^k_{h_k-(j_0-1)}=H_{r(k,h_k-(j_0-1))}\in H(f,\cR)
	$$
	
	The vertical sub-rectangle of $(f,H_{r(k,h_k-(j_0-1))})$ that contains $f(H^{r(i,j)}_{j_0})$ is a subset of $V^k_l$, so it is located at position $l$ with respect to the horizontal orientation of $R_k$. We can conclude that:
	
	$$
	f(H^{r(i,j)}_{j_0})=V^{r(k,h{}-(j_0-1))}_l.
	$$
	
	The change in the vertical direction of the sub-rectangle $H^{r(i,j)}{j_0}$ is quantified by $\epsilon'(r(i,j),j_0)$. However, the change in the vertical direction of $H^{r(i,j)}{j_0}$ is determined by the change in the orientation of $f$ restricted to $H^i_j$. This information is governed by $\epsilon_T(i,j)$ in the geometric type $T$, and we have $\epsilon'(r(i,j),j_0)=\epsilon_T(i,j)$. In terms of $T$, we have the formula

	\begin{equation}\label{Equa: horizontal type for change orientation }
		\Phi_{T'}(r(i,j),j_0)=(r(k,h_{k}) -(j_0-1),l,\epsilon_T(i^,j_0)).
	\end{equation}

	The equations \ref{Equa: horizontal type for preseving orientation } and  \ref{Equa: horizontal type for change orientation } are determined by $T$, and their computation is algorithmic. This proves  Item $ii)$.
	
	\textbf{Item} $iii)$	The binary refinements $\textbf{Bin}(f, \cR_f)$ and $\textbf{Bin}(g, \cR_g)$ yield the same equation and computation than in Equation:  (\ref{Equ: number rectangles in horizontal ref}), 	(\ref{Equ: number vertical sub in the horizontal ref}), (\ref{Equ: Vertical order of horizontal ype}), (\ref{Equa: Horizontal rec of the horizontal ref}), (\ref{Equa: horizontal type for preseving orientation }) and (\ref{Equa: horizontal type for change orientation }). Therefore, they result in the same geometric type, $\textbf{Bin}(T):=T(\textbf{Bin}(f,\cR_f))=T(\textbf{Bin}(g,\cR_g))$. This proves the last item.

\end{proof}

Theorem \ref{Theo: Refinamiento binario} is a direct consequence Proposition \ref{Prop: Refinamiento binario propiedades}  as it is the following corollary.

\begin{coro}\label{Coro: horizontal type is pseudo-Anosov}
	Let $T \in \mathcal{G}\mathcal{T}(\textbf{p-A})$ be a geometric type in the pseudo-Anosov class. Then, the binary refinement of every realization $(f, \mathcal{R})$ has geometric type $\textbf{Bin}(T)$ within the pseudo-Anosov class.
\end{coro}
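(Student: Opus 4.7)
The plan is to unwind the definitions and assemble the pieces already established in Lemma \ref{Lemm: Binary partition} and Proposition \ref{Prop: Refinamiento binario propiedades}. Since $T \in \mathcal{G}\mathcal{T}(\textbf{p-A})$, by Definition \ref{Defi: pseudo-Anosov class} there exist a closed orientable surface $S$, a generalized pseudo-Anosov homeomorphism $f : S \to S$, and a geometric Markov partition $\mathcal{R}$ of $f$ such that $T(f, \mathcal{R}) = T$. I would start from this realization and show that its binary refinement provides a realization of $\textbf{Bin}(T)$.

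First, I would invoke Lemma \ref{Lemm: Binary partition} to conclude that the family $H(f, \mathcal{R}) = \{H^i_j\}_{(i,j) \in \mathcal{H}(T)}$ of horizontal sub-rectangles is a Markov partition of $f$. Next, I would endow this Markov partition with its canonical geometrization as described in Definition \ref{Defi: Geometric refinement}: label each rectangle using the lexicographic order $\underline{r} : \mathcal{H}(T) \to \{1, \dots, \alpha(T)\}$, and transfer to each $H^i_j$ the vertical and horizontal directions induced by $R_i \in \mathcal{R}$. This is exactly the construction of $\textbf{Bin}(\mathcal{R})$, yielding a geometric Markov partition $(f, \textbf{Bin}(\mathcal{R}))$ of the pseudo-Anosov homeomorphism $f$.

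Now I would apply Proposition \ref{Prop: Refinamiento binario propiedades}, specifically items $ii)$ and $iii)$: the geometric type of $(f, \textbf{Bin}(\mathcal{R}))$ depends only on $T$ through the explicit formulas (\ref{Equ: number rectangles in horizontal ref})--(\ref{Equa: horizontal type for change orientation }), and it equals $\textbf{Bin}(T)$ by construction. Hence $(f, \textbf{Bin}(\mathcal{R}))$ is a pair consisting of a generalized pseudo-Anosov homeomorphism and a geometric Markov partition realizing $\textbf{Bin}(T)$.

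Finally, appealing again to Definition \ref{Defi: pseudo-Anosov class}, the very existence of such a realization implies $\textbf{Bin}(T) \in \mathcal{G}\mathcal{T}(\textbf{p-A})$, which is the statement of the corollary. There is no genuine obstacle to overcome here, since all the substantive work (verifying the Markov property, checking that the incidence matrix becomes binary, and deriving the formulas for the new geometric type) has already been done in Lemma \ref{Lemm: Binary partition} and Proposition \ref{Prop: Refinamiento binario propiedades}; the corollary is simply the observation that this construction preserves membership in the pseudo-Anosov class, independently of which realization of $T$ one starts from.
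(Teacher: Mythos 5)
Your proposal is correct and follows exactly the route the paper intends: the paper presents this corollary as a direct consequence of Lemma \ref{Lemm: Binary partition} and Proposition \ref{Prop: Refinamiento binario propiedades}, and you have simply spelled out the assembly — realization of $T$, canonical geometrization of the horizontal sub-rectangles, invariance of the resulting geometric type, and membership in $\mathcal{G}\mathcal{T}(\textbf{p-A})$ via Definition \ref{Defi: pseudo-Anosov class}. No gaps.
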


\section{The horizontal refinements along a family of periodic codes.}\label{Subsec: Codigos periodicos ref}

\subsection{Symbolic Dynamics of Geometric Markov Partitions.}\label{subsub: Simbolic dinamics}

The emphasis of the refinement that we are about to present is on its symbolic nature. For this reason, we recommend that the reader consult \cite[Chapter 1]{KitchensSymDym} and \cite{MarcusLindSynbolic} for a nice introduction to the theory of symbolic dynamical systems, as some of the results and definitions that we will use are carefully presented in these books. Lets star by properly pose the problem.

Let $\Sigma := \prod_{\mathbb{Z}} \{1, \cdots, n\}$ be the set of \emph{bi-infinite} sequences in $n$-digits. We call the elements of $\Sigma$ \emph{codes}, denoted $\underline{w} = (w_z)_{z \in \mathbb{Z}}$. The set $\Sigma$ is endowed with the topology induced by the metric $d_{\Sigma}(\underline{x}, \underline{y}) := \sum_{z \in \mathbb{Z}} \frac{\delta(x_z, y_z)}{2^{|z|}}$, where $\delta(x_z, y_z) = 0$ if $x_z = y_z$ and $\delta(x_z, y_z) = 1$ otherwise. 

The \emph{shift map} on $\Sigma$ is the homeomorphism $\sigma: \Sigma \rightarrow \Sigma$ given by $(\sigma(\underline{w}))_z = w_{z+1}$ for all $z \in \mathbb{Z}$ and any code $\underline{w} \in \Sigma$. The \emph{full shift} (in $n$ symbols) is the dynamical system $(\Sigma, \sigma)$. 

If $A \in \mathcal{M}_{n \times n}(\{0,1\})$ is a binary and mixing matrix, the subspace $\Sigma_A := \{ \underline{w} = (w_z)_{z \in \mathbb{Z}} \in \Sigma : \forall z \in \mathbb{Z}, (a_{w_z, w_{z+1}}) = 1 \}$ is a compact and $\sigma$-invariant subset of $\Sigma$. Assume that $A \in \mathcal{M}_{n \times n}(\{0,1\})$ is a binary and mixing matrix, then the \emph{sub-shift of finite type} associated with $A$ is the dynamical system $(\Sigma_A, \sigma_A)$, where $\sigma_A := \sigma|_{\Sigma_{A}}$. According to \cite[Lemma 10.21]{fathi2021thurston}, the incidence matrix of any \textbf{p-A} homeomorphism is mixing. Since we have constructed the binary refinement, we can assume the incidence matrix is binary. We pose the following definition, where $\mathcal{G}\mathcal{T}(\textbf{pA})^{SIM}$ is not empty by Corollary \ref{Coro: horizontal type is pseudo-Anosov}.

\begin{defi}\label{Defi:Induced shift}
The subset of geometric types in the pseudo-Anosov class whose incidence matrices are binary is denoted as $\mathcal{G}\mathcal{T}(\textbf{p-A})^{SIM} \subset \mathcal{G}\mathcal{T}(\textbf{p-A})$, and these are called \emph{symbolically modelable} geometric types. Let $T \in \mathcal{G}\mathcal{T}(\textbf{p-A})^{SIM}$. The \emph{sub-shift induced} by the geometric type $T$ is given by the sub-shift of finite type $(\Sigma_{A(T)}, \sigma_{A(T)})$, where $A(T)$ is the incidence matrix of $T$.
\end{defi}

The induced sub-shift $(\Sigma_{A(T)}, \sigma_{A(T)})$ and the homeomorphism  $f:S\to S$ are related through the \emph{projection} $\pi_{(f,\mathcal{R})}:\Sigma_{A(T)}\to S$ introduced below, such map takes a code $\underline{w} = (x_z)_{z \in \mathbb{Z}} \in \Sigma_{A}$ and sends it to a point $x \in S$ whose orbit follows the itinerary imposed by $\underline{w}$; for all $z \in \mathbb{Z}$, $f^z(x) \in R_{w_z}$. Clearly such projection strongly depends on Markov partition $(f, \mathcal{R})$. 

\begin{defi}\label{Defi: projection pi}
	Let $T$ be a symbolically modelable geometric type. The \emph{projection} with respect to $(f,\cR)$, denoted by $\pi_{(f,\cR)}: \Sigma_{A(T)} \rightarrow S$, is defined by the relation:
	\begin{equation}
		\pi_{(f,\cR)}(\underline{w}) = \bigcap_{n \in \NN} \overline{\bigcap_{z=-n}^{n} f^{-z}(\overset{o}{R_{w_z}})},
	\end{equation}
	for every code $\underline{w} = (w_z)_{z \in \ZZ} \in \Sigma_{A(T)}$.	
\end{defi}

Our definition \ref{Defi: projection pi} has a slight difference from the one given in \cite[Lemma 10.16]{fathi2021thurston}, where the authors define, $\phi_{(f,\cR)}(\underline{w}) := \bigcap_{z\in \mathbb{Z}} f^{-z}(R_{w_z})$, with the disadvantage of requiring the rectangles in the Markov partition to be embedded in order for the map to be well-defined. However, with our definition, the intersections of the closures $\bigcap_{z=-n}^{n} f^{-z}(\overset{o}{R_{w_z}})$ are rectangles whose diameters tend to $0$, so for sufficiently large $n$, such an intersection is an embedded rectangle, and we don't need this assumption. Proposition \ref{Prop:proyecion semiconjugacion} contains the main properties of the projection $\phi_{(f,\cR)}$ and it correspond to \cite[Lemma 10.16]{fathi2021thurston}, where the reader can follow the proof to confirm that $\pi_{(f,\mathcal{R})}$, as introduced by us, is well-defined, continuous, and a semi-conjugation; therefore, we do not elaborate further on this part. However, the \emph{finite-to-one property} is fundamental for our further study (especially in \cite{CruzDiaz2024}), and we must introduce a characterization of the codes in the fiber $\pi_{(f,\mathcal{R})}^{-1}(x)$, which we call \emph{sector-codes} of $x$, to then establish that $\pi_{(f,\mathcal{R})}^{-1}(x)$ is finite.

\begin{prop}\label{Prop:proyecion semiconjugacion}
	If $\cR$ is the Markov partition of a pseudo-Anosov homeomorphism $f: S \rightarrow S$, whose incidence matrix $A(\cR)$ is binary, then the projection $\pi_{(f,\cR)}: \Sigma_{A(T)} \rightarrow S$ is a continuous, surjective, and finite-to-one map. Furthermore, $\pi_{(f,\cR)}$ semi-conjugates $f$ with $\sigma_{A(T)}$, i.e., $f \circ \pi_{(f,\cR)} = \pi_{(f,\cR)} \circ \sigma_{A(T)}$.
\end{prop}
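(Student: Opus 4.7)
My plan is to verify each property in turn, building everything on the shrinking-diameter property of the nested rectangles that define $\pi_{(f,\cR)}$. First I would establish well-definedness. Fix $\underline{w} \in \Sigma_{A(T)}$ and set $R_n(\underline{w}) := \overline{\bigcap_{z=-n}^{n} f^{-z}(\overset{o}{R_{w_z}})}$. Using the Markov property together with the binary hypothesis on $A(T)$, I would show inductively that each $R_n(\underline{w})$ is a nonempty rectangle: it is the intersection of an iterated horizontal sub-rectangle of $R_{w_0}$ (built from the forward block $w_0, w_1, \dots, w_n$) with an iterated vertical sub-rectangle of $R_{w_0}$ (built from the backward block $w_{-n}, \dots, w_0$), and the binary condition ensures that each intersection is connected. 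Because $f$ contracts stable leaves by $\lambda^{-1}$ and $f^{-1}$ contracts unstable leaves by $\lambda^{-1}$, the stable height and unstable width of $R_n(\underline{w})$ are each bounded by $\lambda^{-n}$ times the corresponding measures of $R_{w_0}$, so $\operatorname{diam}(R_n(\underline{w})) \to 0$. Thus $\{R_n(\underline{w})\}_{n \in \NN}$ is a nested family of nonempty compact sets whose diameters vanish, and their intersection is a single point.

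Next I would handle continuity and the semi-conjugacy simultaneously. For continuity: if $\underline{w}$ and $\underline{w}'$ agree on the central block $[-n,n]$, then both $\pi_{(f,\cR)}(\underline{w})$ and $\pi_{(f,\cR)}(\underline{w}')$ lie in $R_n(\underline{w})=R_n(\underline{w}')$, so their mutual distance is bounded by $\operatorname{diam}(R_n(\underline{w})) \to 0$; since agreement on long central blocks corresponds to closeness in the metric $d_\Sigma$, this gives continuity. The semi-conjugacy $f \circ \pi_{(f,\cR)} = \pi_{(f,\cR)} \circ \sigma_{A(T)}$ is then an immediate computation: applying $f$ to the defining intersection shifts every index by one, producing precisely the intersection that defines $\pi_{(f,\cR)}(\sigma_{A(T)}(\underline{w}))$.

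For surjectivity, given $x \in S$ I would construct an admissible code projecting to $x$. For each $z \in \ZZ$, $f^z(x)$ lies in at least one rectangle of $\cR$; when $f^z(x)\in\overset{o}{R_{w_z}}$ there is no ambiguity, and when $f^z(x) \in \partial\cR$ I would fix once and for all a sector $e$ at $x$ and let $R_{w_z}$ be the unique rectangle containing the image sector $f^z(e)$, which is well defined by Proposition \ref{Prop: image secto is a sector} and Lemma \ref{Lemm: sector contined unique rectangle}. The Markov property forces $a_{w_z, w_{z+1}}=1$, so the sequence $\underline{w}$ is admissible, and by construction $x \in R_n(\underline{w})$ for every $n$, whence $\pi_{(f,\cR)}(\underline{w})=x$.

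The main obstacle is the finite-to-one property, and my plan is to exhibit a surjection from the (finite) set of sectors at $x$ onto the fiber $\pi_{(f,\cR)}^{-1}(x)$. Given $\underline{w} \in \pi_{(f,\cR)}^{-1}(x)$, the shrinking rectangles $R_n(\underline{w})$ accumulate on $x$; choose any sequence of interior points $x_n \in \overset{o}{R_n(\underline{w})}$, so that $x_n \to x$ and all $x_n$ lie in $\overset{o}{R_{w_0}}$. After passing to a subsequence I may assume $\{x_n\}$ converges to $x$ in a single sector $e(\underline{w})$ at $x$; by Lemma \ref{Lemm: sector contined unique rectangle}, the rectangle $R_{w_0}$ is uniquely determined by this sector, and the same reasoning applied to the shifted codes $\sigma^z(\underline{w})$ and the sectors $f^z(e(\underline{w}))$ at $f^z(x)$ pins down every symbol $w_z$. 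Hence the assignment $\underline{w} \mapsto e(\underline{w})$ is injective, and since $x$ has at most $2k$ sectors when it is a $k$-prong (and at most $4$ at any regular point, by Convention \ref{Conv: Singularidades caso del Toro} if needed), $\pi_{(f,\cR)}^{-1}(x)$ is finite. The delicate point here is checking that the sector $e(\underline{w})$ is independent of the subsequence chosen; this follows because two sectors would yield rectangles $R_n(\underline{w})$ accumulating on $x$ from two different sides simultaneously, contradicting that each $R_n(\underline{w})$ is a single connected rectangle containing $x$ in its closure from only one sector.
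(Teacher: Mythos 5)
Your proposal is correct and follows essentially the same route as the paper: well-definedness, continuity and the semi-conjugacy via the shrinking nested rectangles (which the paper delegates to Fathi--Laudenbach--Po\'enaru), and surjectivity plus the finite-to-one property by identifying each fiber with a subset of the sector codes of $x$, exactly as in Lemmas \ref{Lemm: sector code is admisible} and \ref{Lemm: every code is sector code } and Corollary \ref{Coro: Caracterisation fibers}. The only cosmetic point is that your final worry about subsequence-independence of $e(\underline{w})$ is unnecessary (you only need that \emph{some} sector's code recovers $\underline{w}$), and when choosing $x_n\in\overset{o}{R_n(\underline{w})}$ you should take them off the local invariant leaves of $x$ so that they actually converge in a sector.
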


Let us recall that the sector of a point was introduced in \ref{Defi: Sector}. We prove in \ref{Prop: image secto is a sector} that the image of a sector is a sector, and in \ref{Lemm: sector contined unique rectangle} that every sector is contained in a unique rectangle of a Markov partition. Therefore, the next definition is valid.

\begin{defi}\label{Defi: Sector codes}
	Let $\cR=\{R_i\}_{i=1}^n$ be a Markov partition whose incidence matrix is binary. Let $x \in S$ be a point with sectors $\{e_1(x),\cdots, e_{2k}(x)\}$ (where $k$ is the number of stable or unstable separatrices in $x$).
	The \emph{sector code} of $e_j(x)$ is the sequence $\underline{e_j(x)}=(e(x,j)_z)_{z\in \ZZ} \in \Sigma$, given by the rule: $e(x,j)_z:=i$, where $i\in \{1,\dots,n\}$ is the index of the unique rectangle in $\cR$ such that the sector $f^z(e_j(x))$ is contained in the rectangle $R_i$.
\end{defi}

Any code $\underline{w}\in\Sigma$ that is equal to the sector code of $e_j(x)$ (for some $j$ ) is called a sector code of $x$. The space $\Sigma$ of bi-infinite sequences is larger than $\Sigma_A$. So, we need to show that every sector code is, in fact, an \emph{admissible code}, i.e., that $\underline{e_j(x)}\in \Sigma_A$.

\begin{lemm}\label{Lemm: sector code is admisible}
	For every $x \in S$, every sector code $\underline{e}:=\underline{e_j(x)}$ is an element of $\Sigma_A$.
\end{lemm}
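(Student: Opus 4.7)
The plan is to translate the admissibility condition into a geometric intersection condition and then exhibit a point in that intersection using a sequence representing the sector. Write $i := e(x,j)_z$ and $k := e(x,j)_{z+1}$; admissibility of $\underline{e}$ at position $z$ means $a_{i,k} = 1$ in the binary incidence matrix $A = A(f,\cR)$. By Definition \ref{Defi: Incidence matrix}, in the binary setting this is equivalent to the assertion $f(\overset{o}{R_i}) \cap \overset{o}{R_k} \neq \emptyset$, so the task reduces to producing a single point in this open intersection, for every $z \in \ZZ$.

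To produce such a point, I would pick a sequence $\{y_n\}$ representing the sector $e_j(x)$, that is, with $y_n \to x$ and $y_n$ eventually inside the canonical region $E(x,\epsilon_0)_j$. Proposition \ref{Prop: image secto is a sector} guarantees that the shifted sequences $\{f^z(y_n)\}$ and $\{f^{z+1}(y_n)\}$ represent the sectors $f^z(e_j(x))$ and $f^{z+1}(e_j(x))$, which by the definition of the sector code are contained in $R_i$ and $R_k$ respectively. The argument already carried out inside the proof of Lemma \ref{Lemm: sector contined unique rectangle} shows that, after passing to a subsequence, the points $f^z(y_n)$ lie not merely in $R_i$ but inside the open sub-rectangle $\overset{o}{Q} := \overset{o}{H} \cap \overset{o}{V} \subset \overset{o}{R_i}$ constructed there; applying the same construction to the sector at time $z+1$ and extracting a further common subsequence, I arrange that $f^{z+1}(y_n) \in \overset{o}{R_k}$ simultaneously. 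For all sufficiently large $n$, the point $f^{z+1}(y_n) = f(f^z(y_n))$ then witnesses $f(\overset{o}{R_i}) \cap \overset{o}{R_k} \neq \emptyset$, hence $a_{i,k} = 1$. Since $z$ was arbitrary, this establishes $\underline{e} \in \Sigma_A$.

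The only genuinely delicate point is guaranteeing that the representative sequence can be chosen inside the interiors $\overset{o}{R_i}$ and $\overset{o}{R_k}$, not on the stable or unstable boundary of the partition; the sectors are defined by avoiding the local separatrices of $x$, but one must still exclude the possibility of points landing on boundary leaves coming from other rectangles. This is precisely what the bi-foliated neighborhood $\overset{o}{Q}$ in Lemma \ref{Lemm: sector contined unique rectangle} accomplishes, and invoking it at the two consecutive times $z$ and $z+1$ and intersecting the two tails of the subsequences yields a sequence working simultaneously at both times. Once this is in hand, no further computation is needed: the semi-conjugation identity $f \circ \pi_{(f,\cR)} = \pi_{(f,\cR)} \circ \sigma_A$ from Proposition \ref{Prop:proyecion semiconjugacion} is not even required here, since admissibility has been verified locally in $z$, position by position.
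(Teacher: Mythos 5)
Your proposal is correct and follows essentially the same route as the paper's proof: both reduce admissibility at each position $z$ to the nonemptiness of $f(\overset{o}{R_{e_z}})\cap\overset{o}{R_{e_{z+1}}}$ and then exhibit a witness by taking a sequence converging to $f^z(x)$ inside the sector and showing its terms eventually lie in the interiors of the two rectangles. The only cosmetic difference is that you justify the ``eventually interior'' step by citing the rectangle $\overset{o}{Q}$ built in Lemma \ref{Lemm: sector contined unique rectangle}, whereas the paper argues directly that a sequence trapped on the boundary would force a local separatrix of $f^z(x)$ strictly between the two separatrices bounding the sector, which is impossible; both justifications are sound.
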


\begin{proof}
	Let $A=(a_{ij})$ the incidence matrix of $T$.	The code $\underline{e}=(e_z)_{z\in \ZZ}$ is in $\Sigma_A$ if and only if for all $z\in \ZZ$, $a_{e_z e_{z+1}}=1$. By definition, this happens if and only if $f(\overset{o}{R_{e_z}})\cap \overset{o}{R_{e_{z+1}}}\neq \emptyset$.

	Let $\{x_n\}_{n\in \NN}$ be a sequence converging to $f^z(x)$ and contained in the sector $f^z(e)$. By Lemma \ref{Lemm: sector contined unique rectangle} the sector $f^z(e)$ is contained in a unique rectangle $R_{e_z}$, and we can assume $\{x_n\}\subset R_{e_z}$. Moreover, there exists $N\in \NN$ such that $x_n \in \overset{o}{R_{e_z}}$ for all $n\geq N$. Remember that the sector $f^z(e)$ is bounded by two consecutive local stable and unstable separatrices of $f^z(x)$: $F^s(f^z(x))$ and $F^u(f^z(x))$. If for every $n\in \NN$, $x_n$ is contained in the boundary of $R_{e_z}$, this boundary component is a local separatrice of $x$ between $F^s(f^z(x))$ and $F^u(f^z(x))$, which is not possible.

	Since the image of a sector is a sector, the sequence $\{f(x_n)\}$ converges to $f^{z+1}(x)$ and is contained in the sector $f^{z+1}(e)$. The argument in the last paragraphs also applies to this sequence, and $f(x_n)\in \overset{o}{R_{e_{z+1}}}$ for $n$ big enough. This proves that $f(\overset{o}{R_{e_z}})\cap \overset{o}{R_{e_{z+1}}}$ is not empty.
	
\end{proof}

The sector codes of a point $x$ are not only admissible they are, in fact, the only codes in $\Sigma_A$ that project to $x$.  

\begin{lemm}\label{Lemm: every code is sector code }
	If the code $\underline{w} = (w_z) \in \Sigma_A$ is mapped to the point $x \in S$ by the action of the projection $\pi_{(f,\cR)}$, then the code $\underline{w}$ is equal to a sector code of the point $x$.
\end{lemm}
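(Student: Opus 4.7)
The plan is to construct explicitly a sector $e_j(x)$ of the point $x = \pi_{(f,\cR)}(\underline{w})$ whose sector code equals $\underline{w}$. From the definition of the projection, for every $n \in \NN$ the open set $U_n := \bigcap_{z=-n}^{n} f^{-z}(\overset{o}{R_{w_z}})$ is non-empty (otherwise its closure would be empty, contradicting $x \in \overline{U_n}$), and $x \in \overline{U_n}$. I will choose a sequence $\{y_n\}$ with $y_n \in U_n$, $d(y_n,x) < 1/n$, and with $y_n$ avoiding the finitely many local stable and unstable separatrices through $x$; this is possible because the union of these separatrices is nowhere dense in any neighborhood of $x$ while $U_n$ is open and has $x$ in its closure. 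In particular, each $y_n$ lies in one of the finitely many sector regions around $x$, and by construction $f^z(y_n) \in \overset{o}{R_{w_z}}$ for every $|z|\leq n$.

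Next, since $x$ has only finitely many sectors, a pigeonhole argument lets me pass to a subsequence along which all $y_n$ lie in a single sector region; by Definition \ref{Defi:converge in a sector }, this subsequence converges to $x$ in a well-defined sector, which I denote $e_j(x)$. I keep the subsequence notation $\{y_n\}$. The key property I now have is that for every fixed $z \in \ZZ$ and every $n \geq |z|$, the iterate $f^z(y_n)$ lies in $\overset{o}{R_{w_z}}$, while at the same time $\{y_n\}$ witnesses the sector $e_j(x)$.

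The conclusion will then follow by propagating the sector data under iteration. Proposition \ref{Prop: image secto is a sector} implies that for each $z \in \ZZ$, the sequence $\{f^z(y_n)\}$ converges to $f^z(x)$ within the sector $f^z(e_j(x))$. Lemma \ref{Lemm: sector contined unique rectangle} then provides a unique rectangle $R_{i_z} \in \cR$ containing this sector, and the argument of that lemma's proof shows that $\{f^z(y_n)\}$ is eventually contained in $R_{i_z}$. Combining this with $f^z(y_n) \in \overset{o}{R_{w_z}}$ for $n \geq |z|$, and the fact that distinct rectangles in $\cR$ have disjoint interiors, I obtain $R_{w_z} = R_{i_z}$, hence $w_z = i_z$ for every $z \in \ZZ$. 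Therefore $\underline{w}$ is precisely the sector code of $e_j(x)$ in the sense of Definition \ref{Defi: Sector codes}.

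The main delicacy lies in the initial step: producing a sequence $\{y_n\}$ that simultaneously lies in the shrinking open sets $U_n$ and avoids the local separatrices through $x$, so that it unambiguously selects one sector of $x$. Once this selection is available, the rest of the argument is essentially bookkeeping: the equivariance of the sector structure under $f$ (Proposition \ref{Prop: image secto is a sector}) transports the selection to every iterate, and the rectangle assigned to each iterate by $\underline{w}$ must coincide with the unique rectangle containing the corresponding sector, yielding the stated equality of codes.
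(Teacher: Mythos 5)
Your proof is correct and follows essentially the same strategy as the paper's: both arguments use the finiteness of the set of sectors of $x$ together with the nestedness and non-emptiness of the truncated intersections $\bigcap_{|z|\le n} f^{-z}(\overset{o}{R_{w_z}})$ to extract one sector of $x$ compatible with every truncation, and then propagate that sector through the iterates of $f$ to match the code term by term. The only difference is packaging — you select a sector via a sequence of points and a pigeonhole argument, while the paper tracks directly which sectors (as subsets) lie in each $F_n$ — and this does not change the substance of the argument.
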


\begin{proof}
	
	For each $n \in \NN$, we take the rectangle $F_n=\cap_{j=-n}^n f^{-j}(\overset{o}{R_{w_j}})$, which is non-empty because $\underline{w}$ is in $\Sigma_A$. The following properties hold:

	\begin{itemize}
		\item[i)] $\pi_f(\underline{w})=x \in \overline{F_n}$ for every $n\in \ZZ$.
		\item[ii)] For all $n\in \NN$, $F_{n+1}\subset F_n$.
		\item[iii)]	For every $n\in \NN$, there exists at least one sector $e$ of $x$ contained in $F_n$. If this does not occur, there is $\epsilon>0$ such that the regular neighborhood of size $\epsilon$ around $x$, given by Theorem \ref{Theo: Regular neighborhood}, is disjoint from $F_n$. However, $x\in F_n$ by item i).
		
		\item[iv)] If the sector $e \subset H_n$, then for every $m\in  \ZZ$ such that $\vert m\vert \leq n$:
		$$
		f^m(e) \subset f^m(H_n)= \cap_{j=m-n}^{m+n}f^{-j}(\overset{o}{R_{w_{m-j}}}) \subset \overset{o}{R_m}.
		$$ 
		which implies that $e_m=w_m$ for all $m\in \{-n,\cdots,n\}$.
	\end{itemize}
	
	By item $ii)$, if a sector $e$ is not in $F_n$, then $e$ is not in $F_{n+1}$. Together with the fact that for all $n$ there is always a sector in $F_n$ (item $iii)$), we deduce that there is at least one sector $e$ of $x$ that is in $F_n$ for all $n$. Then, we apply point $iv)$ to deduce $e_z=w_z$ for all $z$.
\end{proof}
Let $x\in S$ be a point with $k$ stable separatrices. Then $x$ has at most $2k$ sector codes projecting to $x$, and we obtain the following corollary, which proves the surjectivity and finite-to-one property ensured by Proposition \ref{Prop:proyecion semiconjugacion}.

\begin{coro}\label{Coro: Caracterisation fibers}
	For all $x \in S$, if $x$ has $k$ separatrices, then $\pi_{(f,\cR)}^{-1}(x) = \{\underline{e_j(x)}\}_{j=1}^{2k}$. In particular, $\pi_{(f,\cR)}$ is finite-to-one and surjective.
\end{coro}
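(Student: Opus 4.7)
The plan is to combine the two preceding lemmas (Lemma \ref{Lemm: sector code is admisible} and Lemma \ref{Lemm: every code is sector code }) with the observation that every point of $S$ admits at least one sector. Concretely, I would establish the equality $\pi_{(f,\cR)}^{-1}(x) = \{\underline{e_j(x)}\}_{j=1}^{2k}$ by showing the two inclusions separately, and then derive surjectivity and the finite-to-one property as immediate consequences.

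For the inclusion $\{\underline{e_j(x)}\}_{j=1}^{2k}\subseteq \pi_{(f,\cR)}^{-1}(x)$, I would fix a sector $e_j(x)$ with sector code $\underline{e_j(x)}=(e(x,j)_z)_{z\in\ZZ}$, which already belongs to $\Sigma_{A(T)}$ by Lemma \ref{Lemm: sector code is admisible}. It remains to verify that $\pi_{(f,\cR)}(\underline{e_j(x)})=x$. The key step is to prove that $x\in \overline{F_n}$ for every $n\in\NN$, where $F_n=\bigcap_{z=-n}^{n} f^{-z}(\overset{o}{R_{e(x,j)_z}})$. To do this, I would pick a sequence $\{x_m\}\subset e_j(x)$ converging to $x$. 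Using Proposition \ref{Prop: image secto is a sector} together with the argument already employed in the proof of Lemma \ref{Lemm: sector code is admisible}, for each fixed $z$ the sequence $\{f^z(x_m)\}$ lies (eventually in $m$) in $\overset{o}{R_{e(x,j)_z}}$. By a standard diagonal extraction on the finitely many indices $|z|\leq n$, for $m$ large enough $x_m$ belongs to $F_n$, whence $x\in\overline{F_n}$. Since the diameters of the rectangles $\overline{F_n}$ shrink to zero (a standard consequence of uniform hyperbolicity of $f$ along its invariant foliations), the nested intersection equals $\{x\}$, giving $\pi_{(f,\cR)}(\underline{e_j(x)})=x$.

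The reverse inclusion $\pi_{(f,\cR)}^{-1}(x)\subseteq \{\underline{e_j(x)}\}_{j=1}^{2k}$ is exactly the content of Lemma \ref{Lemm: every code is sector code }: any admissible code projecting to $x$ must coincide with the sector code of some sector of $x$, and $x$ has at most $2k$ sectors.

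Finally, finite-to-one follows since the cardinality of $\pi_{(f,\cR)}^{-1}(x)$ is bounded by $2k$, and surjectivity follows because every $x\in S$ has at least one sector (regular points admit four, singularities admit $2k$ with $k\geq 1$), providing a preimage. The main subtlety I anticipate is a careful justification that the sector-approaching sequence $\{x_m\}$ eventually enters $\overset{o}{R_{e(x,j)_z}}$ for each $|z|\leq n$ simultaneously; once this is handled by noting that each of the finitely many conditions $f^z(x_m)\in \overset{o}{R_{e(x,j)_z}}$ holds for $m$ beyond some threshold $N_z$, taking $m\geq \max_{|z|\leq n} N_z$ suffices.
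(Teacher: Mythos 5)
Your proposal is correct and follows essentially the same route as the paper, which presents the corollary as an immediate consequence of Lemma \ref{Lemm: sector code is admisible} and Lemma \ref{Lemm: every code is sector code } together with the count of at most $2k$ sectors. The only difference is that you explicitly verify the inclusion $\{\underline{e_j(x)}\}_{j=1}^{2k}\subseteq \pi_{(f,\cR)}^{-1}(x)$ (via the nested rectangles $\overline{F_n}$ of shrinking diameter), a step the paper leaves implicit; your handling of it is consistent with the arguments already given in the proofs of those lemmas and in the discussion following Definition \ref{Defi: projection pi}.
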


\subsubsection{The relative position and behavior of code projections.}\label{subsub: posicion relativa proyecion}

For the rest of this section we make $A:=A(T)$ and we fix a finite family of periodic codes contained in $\Sigma_{A}$:
$$
\cW=\{\underline{w}^1,\cdots,\underline{w}^Q\}\in \Sigma_A.
$$

We will construct a geometric refinement of the pair $(f,\cR)$ by cutting the rectangles along the orbit of certain stable intervals determined by the projection of the codes in the family $\cW$. For this reason, we must introduce some new concepts and prove some lemmas that provide insight into the position and behavior of the codes under the projection $\pi_{(f,\mathcal{R})}$. The following lemma is well known in the theory of symbolic dynamical systems and is stated without proof (but it can be read in \cite{IntiThesis}).

\begin{lemm}\label{Lemm: Periodic to peridic}
Let $T \in \mathcal{G}\mathcal{T}(\textbf{p-A})^{SIM}$ and let $(f, \mathcal{R})$ be a realization of $T$. If $\underline{w} \in \text{Per}(\sigma)$ is a periodic code, then $\pi_f(\underline{w})$ is a periodic point of $f$. Moreover, if $p$ is a periodic point for $f$, then $\pi^{-1}_f(p) \subset \text{Per}(\sigma)$, meaning that all codes projecting onto $p$ are periodic.
\end{lemm}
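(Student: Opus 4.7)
The plan is to handle the two implications separately, exploiting the semi-conjugacy of Proposition \ref{Prop:proyecion semiconjugacion} for the first one, and the sector characterization of fibers from Corollary \ref{Coro: Caracterisation fibers} for the second.

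For the forward direction, suppose $\underline{w}\in\Sigma_{A}$ satisfies $\sigma^{n}(\underline{w})=\underline{w}$ for some $n\in\NN_{+}$. Applying the semi-conjugacy relation $\pi_{(f,\cR)}\circ\sigma_{A}=f\circ\pi_{(f,\cR)}$ iteratively $n$ times yields
$$
f^{n}(\pi_{(f,\cR)}(\underline{w}))=\pi_{(f,\cR)}(\sigma^{n}(\underline{w}))=\pi_{(f,\cR)}(\underline{w}),
$$
so $\pi_{(f,\cR)}(\underline{w})$ is a periodic point of $f$ whose period divides $n$. This is a routine direct computation.

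For the reverse direction, let $p\in S$ be a periodic point of $f$ of period $n$, and suppose $p$ has $k\in\NN_{+}$ separatrices, so it has exactly $2k$ sectors $e_{1}(p),\dots,e_{2k}(p)$. By Corollary \ref{Coro: Caracterisation fibers} the fiber $\pi_{(f,\cR)}^{-1}(p)$ consists precisely of the $2k$ sector codes $\{\underline{e_{j}(p)}\}_{j=1}^{2k}$, so it suffices to show that each sector code is periodic under $\sigma$. The key observation is that $\sigma$ acts on sector codes by shifting the sector one step forward along the orbit: by the definition of the sector code and by Proposition \ref{Prop: image secto is a sector}, one has $\sigma(\underline{e_{j}(p)})=\underline{f(e_{j}(p))}$, and therefore inductively $\sigma^{m}(\underline{e_{j}(p)})=\underline{f^{m}(e_{j}(p))}$ for every $m\in\ZZ$.

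To conclude, note that since $f^{n}(p)=p$, the homeomorphism $f^{n}$ permutes the finite set $\{e_{1}(p),\dots,e_{2k}(p)\}$ of sectors of $p$ (again by Proposition \ref{Prop: image secto is a sector}). Any permutation of $2k$ elements has order dividing $(2k)!$, so setting $N:=n\cdot(2k)!$ we obtain $f^{N}(e_{j}(p))=e_{j}(p)$ as sectors, for every $j\in\{1,\dots,2k\}$. Consequently
$$
\sigma^{N}(\underline{e_{j}(p)})=\underline{f^{N}(e_{j}(p))}=\underline{e_{j}(p)},
$$
showing that every code projecting to $p$ is periodic, with period dividing $N$. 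The only mildly delicate step is the identity $\sigma(\underline{e_{j}(p)})=\underline{f(e_{j}(p))}$; this is not hard but requires one to unwind the definition of the sector code and invoke the uniqueness from Lemma \ref{Lemm: sector contined unique rectangle} to ensure that the rectangle containing $f^{z+1}(e_{j}(p))$ is the one indexed by the $(z+1)$-th entry of $\underline{e_{j}(p)}$.
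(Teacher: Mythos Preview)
Your proof is correct. The paper itself does not prove this lemma, stating only that it ``is well known in the theory of symbolic dynamical systems and is stated without proof (but it can be read in \cite{IntiThesis}),'' so there is no in-paper argument to compare against; your use of the semi-conjugacy for the forward direction and the sector-code description of fibers (Corollary~\ref{Coro: Caracterisation fibers}) together with the finite permutation of sectors for the converse is a clean and fully rigorous way to fill the gap.
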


The following Lemma characterizes the periodic boundary points of $(f,\cR)$ in terms of their iterations under $f$. It is a technical result that we will use in future arguments.

\begin{lemm} \label{Lemm: no periodic boundary points}
	Let $\underline{w} \in \Sigma_A$ be a code such that for every $k \in \mathbb{Z}$, $f^k(\pi_f(\underline{w})) \in \partial^s\mathcal{R}$, then $\pi_f(\underline{w})$ is a periodic point of $f$. Similarly, if $f^k(\pi_f(\underline{w})) \in \partial^u\mathcal{R}$ for all integers $k$, then $\underline{w}$ is a periodic code.
\end{lemm}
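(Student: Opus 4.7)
The plan is to reduce both statements to showing that the orbit of $x := \pi_f(\underline{w})$ collapses to a periodic point of $f$, after which the periodicity of $\underline{w}$ will follow from Lemma \ref{Lemm: Periodic to peridic}. I treat the stable case; the unstable case is obtained by replacing $f$ with $f^{-1}$ and $(\mathcal{F}^s,\mu^u)$ with $(\mathcal{F}^u,\mu^s)$.

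\textbf{Step 1: finite reservoir of host separatrices.} By Lemma \ref{Lemm: Boundary of Markov partition is periodic}, every component of $\partial^s\mathcal{R}$ lies inside the stable separatrix of some periodic point of $f$. Collect these periodic points into a finite set $P_s=\{p_1,\dots,p_M\}\subset\textbf{Per}(f)$; then
\[
\partial^s\mathcal{R}\;\subset\;\bigcup_{p\in P_s}\mathcal{F}^s(p).
\]
Since $P_s$ is finite and each $p_i$ has finitely many stable separatrices, I can choose $N\in\NN_+$ large enough that $f^N$ fixes every point of $P_s$ \emph{and} fixes each individual stable separatrix of each $p\in P_s$ as a set.

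\textbf{Step 2: locating the orbit on a single separatrix.} By hypothesis $x\in\partial^s\mathcal{R}$, so $x$ lies in some stable separatrix $\delta$ of some $p\in P_s$. Because $f^N(\delta)=\delta$, the whole $f^N$-orbit of $x$ stays in $\delta$. In particular, the sequence $\{f^{-Nk}(x)\}_{k\ge 0}$ lies in
\[
\delta\cap\partial^s\mathcal{R},
\]
which is a closed subset of the compact set $\partial^s\mathcal{R}$ contained in the single leaf $\delta$; hence it has finite $\mu^u$-diameter $D<\infty$ along $\delta$.

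\textbf{Step 3: the expansion argument gives periodicity.} Along the separatrix $\delta$, the map $f^N$ contracts the transverse measure $\mu^u$ by the factor $\lambda^{-N}<1$, so $f^{-N}$ expands it by $\lambda^{N}$. Writing $d:=\mu^u([p,x]^s)$ for the $\mu^u$-length of the stable arc in $\delta$ joining the fixed point $p$ to $x$, one has
\[
\mu^u\!\left([p,f^{-Nk}(x)]^s\right)\;=\;\lambda^{Nk}\,d
\qquad\text{for every }k\ge 0.
\]
The left-hand side is bounded by $D$ for all $k$, while the right-hand side tends to $+\infty$ unless $d=0$. Therefore $d=0$, i.e.\ $x=p$, which is a periodic point of $f$. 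The same argument, run on the unstable side with $f^{-1}$ in place of $f$ and $\partial^u\mathcal{R}$ in place of $\partial^s\mathcal{R}$, shows that if $f^k(\pi_f(\underline{w}))\in\partial^u\mathcal{R}$ for every $k\in\ZZ$, then $\pi_f(\underline{w})$ is periodic. In both cases Lemma \ref{Lemm: Periodic to peridic} promotes the periodicity of $\pi_f(\underline{w})$ to the periodicity of the code $\underline{w}$.

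\textbf{Expected obstacle.} The only subtle point is justifying that $\delta\cap\partial^s\mathcal{R}$ has bounded $\mu^u$-diameter; this rests on the compactness of $\partial^s\mathcal{R}$ and on the fact that the intersection of a compact set of $S$ with a single (possibly non-closed, non-compact) leaf of $\mathcal{F}^s$ remains bounded in the leaf's intrinsic measure. Once this is in place, the contraction/expansion dichotomy of a pseudo-Anosov along its invariant foliations forces $x$ to sit at the singular endpoint $p$, and the rest of the argument is routine.
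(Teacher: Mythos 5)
Your argument is essentially the paper's: both proofs exploit that $f^{-m}$ expands the $\mu^u$-length of the stable arc $[p,x]^s$ joining $x$ to the periodic point on its leaf by $\lambda^m$, which is incompatible with the backward orbit of $x$ remaining in the stable boundary, whose trace on that leaf has bounded $\mu^u$-diameter. The paper runs this by contradiction (assuming $\underline{w}$ non-periodic forces $x\neq p$ via Lemma \ref{Lemm: Periodic to peridic}, then some $f^{-m}(x)$ escapes $\partial^s\mathcal{R}$), while you pass to a power $f^N$ fixing the separatrix and conclude directly that $d=0$, i.e.\ $x=p$; these are the same argument.

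One correction to your ``expected obstacle'' paragraph: the boundedness of $\delta\cap\partial^s\mathcal{R}$ does \emph{not} follow from the general claim that the intersection of a compact subset of $S$ with a single leaf is bounded in the leaf's intrinsic measure --- that claim is false here, since by Proposition \ref{Prop: Propiedades foliaciones de p-A} every leaf is dense, so a leaf meets any compact set with nonempty interior in a set of infinite intrinsic diameter. The correct (and easy) justification is that $\partial^s\mathcal{R}$ is a \emph{finite union of compact arcs, each contained in a single stable leaf}; hence its intersection with the leaf of $\delta$ is a finite union of compact sub-arcs of that leaf and therefore has finite $\mu^u$-diameter. With that repair the proof is complete.
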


\begin{proof}
	
	Suppose $\underline{w}$ is non-periodic. Lemma \ref{Lemm: Periodic to peridic}  implies that $x = \pi_f(\underline{w})$ is non-periodic and lies on the stable boundary of $\mathcal{R}$, which is a compact set with each component having finite $\mu^u$ length. Take $[x,p]^s$ to be the stable segment joining $x$ with the periodic point on its leaf (which exists by Lemma \ref{Lemm: Boundary of Markov partition is periodic}). For all $m \in \mathbb{N}$,
	$$
	\mu^u([f^{-m}(x), f^{-m}(p)]^s)=\mu^u(f^{-m}[x,p]^s)=\lambda^m\mu^u([x,p]^s),
	$$
	which is unbounded. Therefore, there exists $m \in \mathbb{N}$ such that $f^{-m}(x)$ is no longer in $\partial^s\mathcal{R}$, contradicting the hypothesis of our lemma. A similar reasoning applies to the second proposition.
\end{proof}

\begin{coro}\label{Coro: interior periodic points unique code}
	Let $\underline{w} \in \Sigma_A$ be a periodic code. If $\pi_{(f,\cR)}(\underline{w}) \in \overset{o}{\cR}$, then for all $z \in \ZZ$, $f^z(\pi_{(f,\cR)}(\underline{w})) \in \overset{o}{\cR}$. Moreover, $\pi_{(f,\cR)}^{-1}(\pi_{(f,\cR)}(\underline{w})) = \{\underline{w}\}$, so it has a unique point.
\end{coro}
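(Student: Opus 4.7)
The plan has two stages. First, to show every iterate of $p := \pi_{(f,\cR)}(\underline{w})$ stays in $\overset{o}{\cR}$, I would argue by contradiction, exploiting the periodicity of $p$ (guaranteed by Lemma \ref{Lemm: Periodic to peridic}) together with the one-sided invariance of the boundaries established in Proposition \ref{Prop: Markov criterion boundary}, namely $f(\partial^s\cR)\subset\partial^s\cR$ and $f^{-1}(\partial^u\cR)\subset\partial^u\cR$. Second, once every iterate of $p$ lies in the interior of a single rectangle, I would use the characterization of $\pi_{(f,\cR)}^{-1}(p)$ as the set of sector codes of $p$ (Corollary \ref{Coro: Caracterisation fibers}) to show that all those sector codes collapse into $\underline{w}$.

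For the first stage: assume for contradiction that $f^z(p)\in\partial\cR$ for some $z\in\ZZ$, and let $m$ denote the period of $p$. If $f^z(p)\in\partial^s\cR$, then forward invariance gives $f^{z+j}(p)\in\partial^s\cR$ for every $j\geq 0$. Choosing $j\geq 0$ with $z+j\equiv 0\pmod m$ (always possible, taking $j$ large if $z<0$), we get $p=f^{z+j}(p)\in\partial^s\cR$, contradicting $p\in\overset{o}{\cR}$. The case $f^z(p)\in\partial^u\cR$ is symmetric: backward propagation using $f^{-1}(\partial^u\cR)\subset\partial^u\cR$ and periodicity yield $p\in\partial^u\cR$, again a contradiction. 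Therefore $f^z(p)\in\overset{o}{\cR}$ for every $z\in\ZZ$.

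For the second stage: since each $f^z(p)$ lies in the interior of a rectangle, which is trivially bi-foliated by Definition \ref{Defi: Abierto bi-foleado}, no iterate of $p$ can be a singularity of $f$ (the $k$-prong or spine structure of a singularity is incompatible with a trivially bi-foliated neighborhood). Hence $p$ is a regular point with four sectors $e_1(p),\dots,e_4(p)$, and Corollary \ref{Coro: Caracterisation fibers} identifies $\pi_{(f,\cR)}^{-1}(p)=\{\underline{e_j(p)}\}_{j=1}^{4}$. For each fixed $z\in\ZZ$, the point $f^z(p)$ lies in the interior of a unique rectangle $R_{i_z}\in\cR$; Lemma \ref{Lemm: sector contined unique rectangle} then forces every sector of $f^z(p)$, and in particular each $f^z(e_j(p))$, to be contained in $R_{i_z}$. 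Consequently the $z$-th entry of every sector code of $p$ equals $i_z$, independently of $j$, so the four sector codes coincide. Since $\underline{w}$ belongs to $\pi_{(f,\cR)}^{-1}(p)$, this common code is $\underline{w}$, and the fiber is the singleton $\{\underline{w}\}$.

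The main subtlety is not computational but conceptual: one has to rule out that a singularity could sit in the interior of a rectangle. This is packaged into the definition of a rectangle (trivially bi-foliated interior) and already used implicitly in Lemma \ref{Lemm: Equivalent class rectangle}; once noted, the remainder of the argument is a direct synthesis of boundary invariance, periodicity, and the sector-code description of the fiber.
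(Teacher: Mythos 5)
Your proof is correct and follows essentially the same route as the paper: boundary invariance plus periodicity forces any boundary iterate to propagate back to $p$ itself, contradicting $p\in\overset{o}{\cR}$, and then the fact that all sectors of each $f^z(p)$ lie in the interior of a single rectangle makes all sector codes coincide. The only difference is cosmetic: your aside ruling out singularities (and counting four sectors) is unnecessary, since the argument only needs that all sectors of $f^z(p)$ share the same rectangle, regardless of how many there are.
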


\begin{proof}
	If, for some $z \in \ZZ$, $f^z(x)$ is in the stable (or unstable) boundary of the Markov partition, then, since $x$ is periodic and the stable boundary of a Markov partition is $f$-invariant, the entire orbit of $x$ would lie in the stable boundary of $\cR$. This leads to a contradiction because at least one point in the orbit of $x$ must be in the interior of $\cR$.
	
	Thus, all the sectors of $f^z(x)$ are contained within the interior of the same rectangle, and therefore all its sector codes are the same.
\end{proof}

\begin{defi}\label{Defi: totally interior points}
	Let $(f,\cR)$ be a realization of a symbolically modelable geometric type $T \in \cG\cT(\textbf{p-A})^{SIM}$. A point $x \in S$ is a \emph{totally interior point} of $(f,\cR)$ if, for all $z \in \ZZ$, $f^z(x) \in \overset{o}{\cR}$. The set of totally interior points is denoted by \textbf{Int}$(f,\cR) \subset S$.
\end{defi}

\begin{prop}\label{Prop: Carterization injectivity of pif}
	Let $x$ be any point in  $S$. Then $\vert \pi_{(f,\cR)}^{-1}(x)\vert =1$ if and only if $x$ is a totally interior point of $\cR$.
\end{prop}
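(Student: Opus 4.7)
The plan is to exploit Corollary \ref{Coro: Caracterisation fibers}, which identifies $\pi_{(f,\cR)}^{-1}(x)$ with the set of sector codes of $x$. Thus $|\pi_{(f,\cR)}^{-1}(x)| = 1$ is equivalent to all sector codes of $x$ coinciding, and the proposition reduces to translating this collapsing of sector codes to the condition that every iterate of $x$ lies in $\overset{o}{\cR}$.

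For the $(\Leftarrow)$ direction, I would suppose $x$ is totally interior. For each $z \in \ZZ$, the disjointness of the interiors of the rectangles in $\cR$ singles out a unique rectangle $R_{w_z}$ whose interior contains $f^z(x)$. Since $R_{w_z}$ then contains an open neighborhood of $f^z(x)$, every sector of $f^z(x)$ is contained in $R_{w_z}$ (Lemma \ref{Lemm: sector contined unique rectangle}). Given any sector $e$ at $x$, Proposition \ref{Prop: image secto is a sector} guarantees that $f^z(e)$ is a sector of $f^z(x)$, so $f^z(e) \subset R_{w_z}$. Hence the sector code of $e$ is the fixed sequence $\underline{w} := (w_z)_{z \in \ZZ}$, independent of the choice of sector $e$, and therefore $|\pi_{(f,\cR)}^{-1}(x)| = 1$.

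For the converse I would argue contrapositively. Suppose some iterate $y := f^{z_0}(x)$ lies in $\partial \cR$. Since the rectangles in $\cR$ are closed, cover $S$, and have pairwise disjoint interiors, any neighborhood of $y$ meets a rectangle other than the one whose boundary contains $y$; by finiteness this forces $y \in R_a \cap R_b$ for two distinct rectangles $R_a, R_b \in \cR$. I would then pick sequences $\{a_n\} \subset \overset{o}{R_a}$ and $\{b_n\} \subset \overset{o}{R_b}$ converging to $y$ (these exist because $R_a = \overline{\overset{o}{R_a}}$ and similarly for $R_b$). After passing to subsequences, legitimate because $y$ admits only finitely many sectors, each sequence converges within a single sector of $y$, yielding sectors $e_a \subset R_a$ and $e_b \subset R_b$. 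The uniqueness clause of Lemma \ref{Lemm: sector contined unique rectangle} together with $R_a \neq R_b$ forces $e_a \neq e_b$. Applying the homeomorphism $f^{-z_0}$, which sends sectors to sectors by Proposition \ref{Prop: image secto is a sector} applied to $f^{-1}$, yields two distinct sectors of $x$ whose sector codes differ at position $z_0$. Hence $|\pi_{(f,\cR)}^{-1}(x)| \geq 2$, completing the contrapositive.

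The main delicate step is the claim that $y \in \partial \cR$ forces $y$ to lie in two distinct rectangles of $\cR$ simultaneously; this rests on the closedness of the rectangles and the disjointness of their interiors, together with the fact that $S = \cup \cR$. Once this is in hand, the remainder of the argument is a straightforward sector-code bookkeeping made possible by Corollary \ref{Coro: Caracterisation fibers}.
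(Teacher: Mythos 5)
Your backward direction is the paper's: totally interior means every sector of $f^z(x)$ sits inside the open set $\overset{o}{R_{w_z}}$ of a single rectangle, so all sector codes coincide and Corollary \ref{Coro: Caracterisation fibers} finishes. For the forward direction you argue contrapositively through the geometric claim that a point of $\partial\cR$ lies in two \emph{distinct} rectangles, which immediately yields two sectors whose codes already differ at time $z_0$; the paper instead argues directly that if all sector codes of $f^z(x)$ agree, then all sectors lie in the interior of one rectangle, a whole neighborhood of $f^z(x)$ is contained in that rectangle, and hence $f^z(x)\in\overset{o}{\cR}$. The two routes are close in spirit, and yours has the merit of isolating the geometric input in one explicit claim.

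That claim is, however, the fragile point, and you were right to flag it. Nothing in Definition \ref{Defi: Geo Markov partition} forbids a rectangle from being adjacent to itself across a boundary arc, e.g.\ $\partial^s_{+1}R_a$ and $\partial^s_{-1}R_a$ sharing points. At such a point $y$ your finiteness argument only yields a neighborhood $U\subset R_a$; the two sectors on either side of the arc both lie in $R_a$, their codes \emph{agree} at time $z_0$, and your argument produces no second code. (The paper's direct proof silently makes the same assumption: the step "a neighborhood of $f^z(x)$ contained in a rectangle $R$ forces $f^z(x)\in\overset{o}{R}$" fails in exactly this self-adjacent configuration.) The repair is symbolic rather than geometric: the two sectors are still distinct, one lies in the bottom horizontal sub-rectangle $H^a_1$ and the other in the top one $H^a_{h_a}$, and since the incidence matrix is binary one pushes forward to the first time $M$ at which $h_{i_M}>1$, where the top and bottom horizontal sub-rectangles of $R_{i_M}$ are sent into different rectangles and the two codes separate. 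This is precisely the mechanism of Lemmas \ref{Lemm: positive h-i} and \ref{Lemm: diferen positive code}, which you should invoke (or else add the standing assumption that no rectangle of $\cR$ is adjacent to itself) to make the contrapositive airtight.
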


\begin{proof}
	If $\vert \pi_f^{-1}(x)\vert = 1$, for all $z\in \ZZ$ the sector codes of $f^z(x)$ are all equal. Therefore, for all $z\in \ZZ$, the sectors of $f^z(x)$ are all contained in the interior of the same rectangle. Furthermore, the union of all sectors determines an open neighborhood of $f^z(x)$, which must be contained in the interior of a rectangle. Therefore, $f^z(x)\in \overset{o}{\cR}$.
	
	On the other hand, if $f^z(x)\in \overset{o}{\cR}$ for all $z\in \ZZ$, then all sectors of $f^z(x)$ are contained in the same rectangle. This implies that the sector codes of $x$ are all equal. In view of Lemma  \ref{Lemm: every code is sector code }, this implies $\vert \pi_f^{-1}(x)\vert = 1$.
	
\end{proof}

\begin{lemm}\label{Lemm: Caraterization unique codes}
	A point $x\in S$ is a totally interior point of $\cR$ if and only if it is not in the stable or unstable leaf of a periodic boundary point of $\cR$.
\end{lemm}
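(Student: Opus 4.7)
The plan is to establish the biconditional by proving each direction separately, exploiting the $f$-invariance of $\partial^s\cR$, its symmetric analogue for $\partial^u\cR$, and Lemma \ref{Lemm: Boundary of Markov partition is periodic}. The key quantitative input is the uniform exponential contraction $\lambda^{-1}$ that $f$ exerts on $\mu^u$-lengths of stable arcs, together with the finiteness of orbits of periodic boundary points.

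For the implication ``not totally interior $\Rightarrow$ on the leaf of a periodic boundary point'' (the easier direction), suppose some iterate $f^z(x)$ lies in $\partial\cR$. Assume without loss of generality that $f^z(x)\in\partial^s\cR$, the $\partial^u$-case being symmetric under $f^{-1}$. Re-examining the proof of Lemma \ref{Lemm: Boundary of Markov partition is periodic}, the forward orbit of $f^z(x)$ stays inside the finite collection of stable boundary components, so there exist $n_1<n_2$ and a stable boundary component $I$ containing both $f^{n_1+z}(x)$ and $f^{n_2+z}(x)$; then $f^{n_2-n_1}$ restricts to a $\mu^u$-strict contraction of $I$ into itself, whose unique fixed point $p\in I\subset\partial^s\cR$ is periodic and satisfies $f^{n_1+z}(x)\in F^s(p)$. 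Pulling back, $x\in F^s(f^{-n_1-z}(p))$, where $f^{-n_1-z}(p)$ lies in the orbit of $p$, hence is periodic and in $\partial^s\cR$.

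For the reverse implication, suppose $x$ lies on the stable leaf of a periodic $p\in\partial^s\cR$ (the case $x\in F^u(q)$ for periodic $q\in\partial^u\cR$ is handled symmetrically by running the argument under $f^{-1}$). If $x=p$ we are done; otherwise $x$ sits on a stable separatrix $\gamma$ of $p$ at $\mu^u$-distance $\ell:=\mu^u([p,x]^s)>0$. The key auxiliary statement is that each stable separatrix of $p$ carries an initial arc from $p$, of positive $\mu^u$-length, lying in $\partial^s\cR$. When $p$ is interior to a stable boundary arc $I$, a neighbourhood of $p$ in $F^s(p)$ is contained in $I$ and both local separatrices inherit such an initial segment; when $p$ is an endpoint of $I$ (a corner), a covering argument shows that each of the $2k$ local sectors at $p$ lies in some rectangle of $\cR$ whose stable boundary at $p$ runs along one of $p$'s stable separatrices, so every stable separatrix borders an adjacent rectangle along $\partial^s\cR$. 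Granting the sub-claim, finiteness of the orbit $\{f^k(p)\}_{k=0}^{m-1}$ produces a uniform $\epsilon>0$ bounding below the length of these initial arcs at every orbit point. For $n$ large enough that $\lambda^{-n}\ell<\epsilon$, the point $f^n(x)$ lies on $f^n(\gamma)$ at $\mu^u$-distance $\lambda^{-n}\ell$ from $f^n(p)$, inside the initial $\partial^s\cR$-segment of that separatrix, whence $f^n(x)\in\partial^s\cR\subset\partial\cR$, contradicting total interiority.

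The main obstacle is the auxiliary sub-claim at corner configurations. One must verify that no stable separatrix of $p$ enters the interior of an adjacent rectangle without contributing to $\partial^s\cR$ there; this calls for a case analysis on whether each rectangle bordering $p$ contains $p$ as one of its corners or as an interior point of a side, using that $\partial^s\cR$ and $\partial^u\cR$ are finite graphs of arcs meeting transversally only at common endpoints, and that the rectangles in $\cR$ tile $S$. Once the local sub-claim is in hand, the uniform exponential contraction $\lambda^{-1}$ on stable leaves upgrades it to the required global forward-iteration statement about $x$.
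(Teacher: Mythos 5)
Your overall architecture matches the paper's: one direction via Lemma \ref{Lemm: Boundary of Markov partition is periodic} (you essentially re-derive it rather than cite it, which is fine), the other via the uniform contraction $\lambda^{-1}$ pushing $f^n(x)$ into an initial boundary segment of a separatrix of $p$. The first direction is sound modulo a small repair: $f^{n_2-n_1}(I)$ need not map $I$ \emph{into itself}, but since it meets $I$ the leaf is periodic, hence carries a unique periodic point $p$, and $p=\lim_k f^{k(n_2-n_1)}(I)\in\partial^s\mathcal{R}$ because $\partial^s\mathcal{R}$ is closed and the arcs shrink onto $p$.

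The genuine gap is exactly the sub-claim you flag and then defer. Your proposed justification --- ``each of the $2k$ local sectors at $p$ lies in some rectangle of $\mathcal{R}$ whose stable boundary at $p$ runs along one of $p$'s stable separatrices'' --- does not follow from Lemma \ref{Lemm: sector contined unique rectangle}. A sector of $p$ being contained in $R_j$ only forces $p\in\partial R_j$; it does \emph{not} force the bounding stable germ into $\partial^s R_j$. At a T-junction, where $p$ is a corner of $R_i$ but sits in the \emph{interior} of an unstable side of the adjacent rectangle $R_j$, the two sectors on the $R_j$-side of $F^u(p)$ lie in the \emph{same} rectangle $R_j$, and the stable germ separating them is the germ of an interior horizontal leaf of $R_j$: it enters $\overset{o}{R_j}$ and contributes nothing to $\partial^s\mathcal{R}$. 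For that separatrix your contraction step breaks down, since $f^n(x)$ then approaches the orbit of $p$ through rectangle interiors. Ruling this configuration out (or showing that some forward image $f^n(\gamma)$ of the offending germ does eventually lie in $\partial^s\mathcal{R}$, which is what the contraction argument really needs) is the whole content of the lemma and is left unproved; note the paper's own one-line appeal to ``contraction'' is silent on the same point, so you have correctly located the crux but not closed it. A second, related caveat: both you and the paper tacitly read the statement as pairing stable leaves with $s$-boundary periodic points and unstable leaves with $u$-boundary periodic points. The cross case genuinely matters: after an $s$-boundary refinement (Theorem \ref{Theo: Refinamiento s frontera fam}) along an interior periodic code, the new periodic boundary point $q$ lies in the interior of a stable side, its unstable separatrices enter rectangle interiors, and a generic point of $F^u(q)$ has every iterate in $\overset{o}{\mathcal{R}}$; so the unqualified statement cannot be proved and your symmetric reduction must be restricted to the paired reading.
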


\begin{proof}
	
	Suppose that for all $z\in \ZZ$, $f^z(x)\in \overset{o}{\cR}$ and also $x$ is in the stable (unstable) leaf of some periodic boundary point $p\in \partial^s R_i$ ($p\in \partial^u R_i$). The contraction (expansion) on the stable (unstable) leaves of $p$ implies the existence of a $z\in \ZZ$ such that $f^z(x)\in \partial^s R_i$ ($f^z(x)\in \partial^u R_i$), which leads to a contradiction.
	
	On the contrary, Lemma  \ref{Lemm: Boundary of Markov partition is periodic} implies that the only stable or unstable leaves intersecting the boundary $\partial^{s,u} R_i$ are the stable and unstable leaves of the $s$-boundary and $u$-boundary periodic points. If $x$ is not in these laminations, neither are its iterations $f^z(x)$, and we have $f^z(x)\in \overset{o}{\cR}$ for all $z\in \ZZ$.
\end{proof}

Please note that the following definition is entirely determined by the information contained in the geometric type $T$ and its sub-shift. 
\begin{defi}\label{Defi: s,u-leafs}
	Let $\underline{w} \in \Sigma_A$. The set of \emph{stable leaf codes} of $\underline{w}$ is defined as follows:
	$$
	\underline{F}^s(\underline{w}):=\{\underline{v}\in \Sigma_A: \exists Z\in \ZZ \text{ such that } \forall z\geq Z, \, v_z=w_z\}.
	$$
	Similarly, the \emph{unstable leaf codes} of $\underline{w}$ is the set:
	$$
	\underline{F}^u(\underline{w}):=\{\underline{v}\in \Sigma_A: \exists Z\in \ZZ \text{ such that } \forall z\leq Z, \, v_{z}=w_{z}\}.
	$$
\end{defi}

We will now prove that these sets project to the stable leaf of the point $\pi_{(f,\cR)}(\underline{w})$, that they are the only codes in $\Sigma_{A(T)}$ with such a property, and that they provide a combinatorial definition of the stable and unstable leaves of a point. To this end, let us introduce some notation. For $\underline{w} \in \Sigma_A$, its \emph{positive part} consists of $\underline{w}_+ := (w_n)_{n \in \NN}$ (with $0 \in \NN$), and its \emph{negative part} consists of $\underline{w}_- := (w_{-n})_{n \in \NN}$. We denote the set of positive codes in $\Sigma_A$ as $\Sigma_A^+$, which corresponds to the positive parts of all codes in $\Sigma_A$. Similarly, we define the set of negative codes in $\Sigma_A$ as $\Sigma_A^-$. Moreover, we use $F^s(x)$ to denote the stable leaf $\cF^s(f)$ passing through $x$, and $F^u(x)$ for the corresponding unstable leaf $\cF^u(f)$.

\begin{prop}\label{Prop: Projection foliations}
	Let $\underline{w} \in \Sigma_A$. Then we have $\pi_{(f,\cR)}(\underline{F}^s(\underline{w})) \subset F^s(\pi_{(f,\cR)}(\underline{w}))$. Furthermore, assume that $\pi_{(f,\cR)}(\underline{w}) = x$.
	\begin{itemize}
		\item If $x$ is not a $u$-boundary point, then for all $y \in F^u(x)$, there exists a code $\underline{v} \in \underline{F}^s(\underline{w})$ such that $\pi_{(f,\cR)}(\underline{v}) = y$.
		
		\item If $x$ is a $u$-boundary point and $\underline{w}_0 = w_0$, then for all $y$ in the stable separatrix of $x$ that intersects the rectangle $R_{w_0}$, denoted $F^s_0(x)$, there is a code $\underline{v} \in \underline{F}^s(\underline{w})$ such that $\pi_{(f,\cR)}(\underline{v}) = y$.
		
	\end{itemize}
	
	A similar statement applies to the unstable manifold of $\underline{w}$ and its respective projection into the unstable manifold of $\pi_{(f,\cR)}(\underline{w})$.
\end{prop}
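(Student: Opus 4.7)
The plan is to split the proof into the direct inclusion and the surjective part, using the sector description of the fibers of $\pi_{(f,\cR)}$. For the inclusion $\pi_{(f,\cR)}(\underline{F}^s(\underline{w})) \subseteq F^s(\pi_{(f,\cR)}(\underline{w}))$, I would proceed as follows. Let $\underline{v} \in \underline{F}^s(\underline{w})$ and fix $Z$ with $v_z = w_z$ for every $z \geq Z$; set $x = \pi_{(f,\cR)}(\underline{w})$ and $y = \pi_{(f,\cR)}(\underline{v})$. Applying the semiconjugation of Proposition \ref{Prop:proyecion semiconjugacion} and the definition of $\pi_{(f,\cR)}$, both $f^Z(x)$ and $f^Z(y)$ lie in
\begin{equation*}
\bigcap_{m \geq 0} f^{-m}\bigl(\overline{R_{w_{Z+m}}}\bigr).
\end{equation*}
The $\lambda$-expansion of $f$ on unstable leaves forces the unstable diameter of this nested intersection to shrink like $\lambda^{-m}$, so the intersection is contained in a single stable arc inside $\overline{R_{w_Z}}$. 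Therefore $f^Z(x)$ and $f^Z(y)$ share a stable leaf, and by $\cF^s$-invariance of $f$ so do $x$ and $y$.

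For the surjectivity, the strategy is to produce, given $y$ in the appropriate subset of $F^s(x)$, a sector of $y$ whose sector code (which belongs to $\pi_{(f,\cR)}^{-1}(y)$ by Corollary \ref{Coro: Caracterisation fibers}) agrees with $\underline{w}$ from some index onward. Let $e$ denote the sector of $x$ corresponding to $\underline{w}$; by construction $e \subset R_{w_0}$. I want to transport $e$ along the stable arc $[x,y]^s$ to a sector $e'$ of $y$ on the ``same side'' of the common stable leaf $F^s(x) = F^s(y)$. When $x$ is not a $u$-boundary point, the local product structure around $x$ ensures that both sides of $F^s(x)$ are accessible inside $\cR$, and the side chosen by $e$ propagates continuously along $[x,y]^s$, producing a uniquely determined sector $e'$ of $y$. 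When $x$ is a $u$-boundary point (so $x$ has several stable separatrices), only the separatrix entering $R_{w_0}$ is compatible with $e$, whence the restriction on $y$ in the second item.

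To verify that the sector code $\underline{v}$ of $e'$ lies in $\underline{F}^s(\underline{w})$, I would exploit the contraction of stable arcs: $\mu^u(f^n([x,y]^s)) = \lambda^{-n}\mu^u([x,y]^s) \to 0$. Since $x \notin \partial^u\cR$ implies $f^n(x) \notin \partial^u\cR$ for every $n \geq 0$ (the $f^{-1}$-invariance of $\partial^u\cR$ rules out $f^n(x) \in \partial^u\cR$ by contrapositive), and $\partial^u\cR$ is a finite union of compact unstable arcs, for all sufficiently large $n$ the stable arc $f^n([x,y]^s)$ is too short to cross any $u$-boundary component. Hence $f^n([x,y]^s)$ stays within the connected component of the stable leaf inside $R_{w_n}$, the transported sector $f^n(e')$ sits in $R_{w_n}$, and $v_n = w_n$. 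Invoking Corollary \ref{Coro: Caracterisation fibers} gives $\pi_{(f,\cR)}(\underline{v}) = y$, which concludes the stable case; the unstable statement is obtained by applying the same argument to $f^{-1}$.

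The main obstacle I anticipate is making the sector-transport step fully rigorous, especially for those intermediate iterates $n$ for which $f^n([x,y]^s)$ or $f^n(x)$ may meet singularities, stable-boundary arcs, or corners before the contraction estimate takes over. The dichotomy between $x$ being a $u$-boundary point or not is precisely what regulates this transfer: when $x$ has a single stable separatrix near $e$, both sides of $F^s(x)$ propagate consistently along $[x,y]^s$; when $x$ has multiple separatrices, only one of them is compatible with $e$, and this is exactly the one selected in the second item of the statement.
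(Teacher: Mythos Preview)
Your treatment of the inclusion $\pi_{(f,\cR)}(\underline{F}^s(\underline{w}))\subset F^s(\pi_{(f,\cR)}(\underline{w}))$ is correct and is essentially the paper's argument: both of you observe that $f^Z(x)$ and $f^Z(y)$ lie in the nested intersection $\bigcap_{m\ge 0}f^{-m}(R_{w_{Z+m}})$, whose unstable width tends to zero and which is therefore a single stable segment of $R_{w_Z}$.

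For the surjectivity, however, the step you single out as problematic really is a gap, and the specific justification you offer does not work. You write that ``$\partial^u\cR$ is a finite union of compact unstable arcs, so for all sufficiently large $n$ the stable arc $f^n([x,y]^s)$ is too short to cross any $u$-boundary component.'' But a stable arc meets an unstable arc transversally in a point; its $\mu^u$-length is irrelevant to whether such a crossing occurs. What you would actually need is a positive lower bound, uniform in $n$, on the $\mu^u$-distance from $f^n(x)$ to $\partial^u R_{w_n}$ along the stable segment~$I_n$, and no such bound follows from $f^n(x)\notin\partial^u\cR$ alone: if $x$ lies on the stable leaf of a $u$-boundary periodic point, the iterates $f^n(x)$ can accumulate on $\partial^u\cR$.

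The paper's route avoids any distance estimate of this kind. It first reduces (by iterating forward and re-indexing) to the situation where $y$ already lies in the stable segment $I$ of $R_{w_0}$ passing through $x$; this is where the hypothesis that $x$ is not a $u$-boundary point is used, to guarantee that $I$ extends on both sides of $x$. Once $y\in I$, the argument is a one-line induction on the Markov structure rather than a metric estimate: choosing the sector $e_y$ of $y$ on the same unstable side as $e_x$, both sectors lie in the same horizontal sub-rectangle $H^{w_0}_{j_0}$, hence $f(e_x),f(e_y)\subset V^{w_1}_{l_1}\subset R_{w_1}$, and repeating gives $v_n=w_n$ for every $n\ge 0$, not just for large $n$. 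If you want to keep your sector-transport picture, the clean way to close the gap is to perform this reduction first and then replace the ``too short to cross'' heuristic by this inductive use of the Markov property.
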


We have used the notation $\pi_f$ instead of $\pi_{(f,\cR)}$ to improve the readability of the proof.

\begin{proof}
	
	Let $\underline{v}\in \underline{F}^s(\underline{w})$ be a stable leaf code of $\underline{w}$. Based on the definition of stable leaf codes, we can deduce that $w_z=v_z$ for all $z\geq k$ for certain $k\in \NN$. Consequently, $\pi_f(\sigma^k(\underline{w})),\pi_f(\sigma^k(\underline{v}))\in R_{w_k}$. Since the positive codes of $\underline{w}$ and $\underline{v}$ are identical starting from $k$, they define the same horizontal sub-rectangles of $R_{w_k}$ in which the codes $\sigma^k(\underline{w})$ and $\sigma^k(\underline{v})$ are projected. For $n\in \mathbb{N}$, let $H_n$ be the rectangle determined by:
	
	$$
	H_n=\cap_{z=0}^n f^{-z}(R_{w_{z+k}})=\cap_{z=0}^n f^{-z}(R_{v_{z+k}}).
	$$
	The intersection of all the $H_n$ forms a stable segment of $R_{w_k}$. Furthermore, each $H_n$ contains the rectangles:
	$$
	\overset{o}{Q_n}=\cap_{z=-n}^{n} f^{-z}(\overset{o}{R_{w_{z+k}}})= \cap_{z=-n}^{n} f^{-z}(\overset{o}{R_{v_{z+k}}})
	$$
	Therefore, the projections $\pi_f(\underline{v})$ and $\pi_f(\underline{w})$ are in the same stable leaf. 
	
	\begin{figure}[ht]
		\centering
		\includegraphics[width=.65\textwidth]{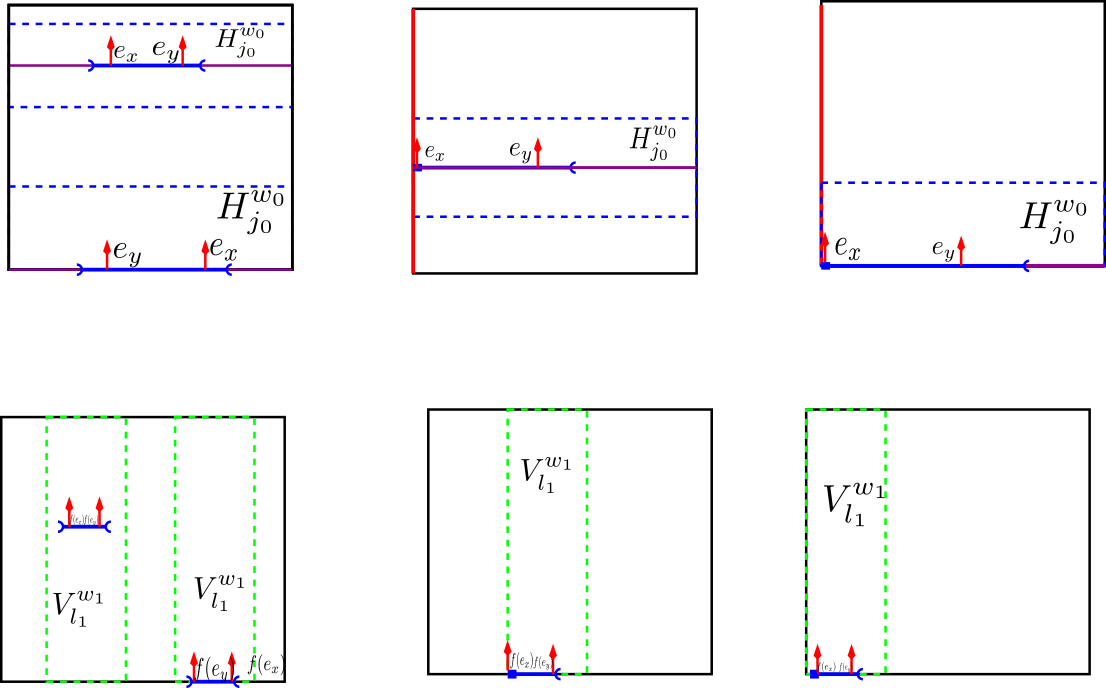}
		\caption{Projections of the stable leaf codes}
		\label{Fig: Proyection fol}
	\end{figure}

	For the other part of the argument, let's refer to the image in Figure \ref{Fig: Proyection fol} to gain some visual intuition. Let's address the situation when $x=\pi_f(\underline{w})$ is not a $u$-boundary point. We recall that is implicit that the incidence matrix $A$ is binary.

	Let $y\in F^s(\pi_f(\underline{w}))$ such that $y\neq x$. Suppose $y$ is not a periodic point (if it is, we can replace $y$ with $x$). Since  $x$ is not a $u$-boundary point, there is a small interval $I$ properly contained  in the stable segment of  $\cR_{w_0}$ that pass trough $x$. This allows us to apply  the next argument on both stable separatrices of $x$.  By definition of stable leaf, there exist $k\in \mathbb{N}$ such that $f^k(y)\in I$. If we prove there is  $\underline{v}\in \underline{F}^s(\underline{w})$ such that $\pi_f(\underline{v})=f^k(y)$, we obtain that $\sigma^{-k}(\underline{w})\in \underline{F}^s(\underline{w})$  is a code that projects to $y$, i.e $\pi_f(\sigma^{-k}(\underline{v}))=y$. Therefore we can assume  that $y\in I$. This situation is given by the left side pictures in Figure \ref{Fig: Proyection fol}.
	
	There are two situations for $f^z(y)$: either it lies in the stable boundary of $R_{w_0}$ or  not. In any case $x$, the code $\underline{w}$ is equal to  a sector code $\underline{e}_x$ of $x$. The sector $e_x$ is contained in  a unique horizontal sub-rectangle of $R_{w_0}$, $H^{w_0}_{j_0}$, therefore the sector $f(e_x)$ is contained in  $f(H^{w_0}_{j_0})=V^{w_1}_{l_1}$, implying $w_1=\underline{e_x}_1$.
	
	Take the sector   $e_y$ of $y$ in such that in the stable direction points toward $x$ and in the unstable direction point in the same direction than $e_y$. In this manner $e_y$ is contained in the rectangle $H^{w_0}_{j_0}$, therefore $f(e_y)$ is contained in $V^{w_1}_{l_1}$ and $\underline{e_y}_1=w_1$.
	
	In fact, it turns out that $f^{n}(e_y)$ and $f^n(e_x)$ are contained in the same $R_{w_n}$ for all $n\in \NN$. We can deduce that the positive part of $\underline{e_y}$ coincides with the positive part of $\underline{w}$. Therefore $\underline{e_y}\in \underline{F}^s(\underline{w})$.

	%%%%%%%%
	
	In the case where $x\in \partial^u\cR$, there is a slight variation. Let's assume that $\underline{w}=\underline{e_x}$, where $e_x$ is a sector of $x$. This sector code is contained within a unique horizontal sub-rectangle of $R_{w_0}$, $H^{w_0}_{j_0}$ such that $f(H^{w_0}_{j_0})=V^{w_1}_{l_1}$. This horizontal sub-rectangle contain a unique stable interval $I$ that contains $x$ in one of its end points. Consider $y\in I$.

	Similar to the previous cases, we can define a sector  $e_y$ of $x$ that is contained in horizontal sub-rectangle $H^{w_0}_{j_0}$, i.e. $\underline{e_y}=w_0$. This implies that $f(e_y)$ is contained in $V^{w_1}_{l_1}$. Then the first term of $\underline{e_y}$ is $w_1$. By applied this process inductively  for all $n\in \NN$, we get that $\underline{e_y}_n=w_n$ and therefore $\underline{e_y}\in \underline{F}^s(\underline{w})$ and projects to $y$.
\end{proof}

\subsection{Cutting rectangles along periodic codes}\label{Subsub: cutting in a fam of codes}

To describe the type of refinements we are seeking, we need to indicate how we will cut a rectangle in a Markov partition along intervals determined by a single periodic code. Once we have this, we will proceed to formulate as a theorem the existence of $s$-boundary partitions with respect to a family of periodic codes and, finally, conclude this section by carrying out the construction of such a refinement and computing its geometric type.

\begin{defi} \label{Def: Stable intervals of codes}
	Let $T$ be a symbolically modelable geometric type whose incidence matrix is denoted by $A := A(T)$. Let $\underline{w} \in \Sigma_A$ be a periodic code, and let $\underline{F}^s(\underline{w})$ denote the set of stable leaf codes of $\underline{w}$ (see Definition \ref{Defi: s,u-leafs}). For any $t \in \NN$, we define the \emph{stable interval codes} of $(t, \underline{w})$ as the set
	\begin{equation}\label{Equa: Projection stable intervals}
		\underline{I}_{t, \underline{w}} := \{\underline{v} \in \underline{F}^s(\sigma^t(\underline{w})): v_{n} = w_{t+n} \text{ for all } n \in \NN\},
	\end{equation}
	consisting of all the stable manifold codes of $\sigma^t(\underline{w})$ whose non-negative part is equal to the non-negative part of $\sigma^t(\underline{w})$.
	
\end{defi}

If $\pi_f^{-1}(\pi_f(\underline{w}))$ has more than one element, it's possible that there exist $t_1$ and $t_2$ (both not multiples of the period of $p$, to be denoted  $P$) such that $\sigma^{t_1}(\underline{w}) \neq \sigma^{t_2}(\underline{w})$ but their projections coincide. The following lemma clarifies this situation.

\begin{lemm}\label{Lemm: who the intervals intersect }
	Let  $I_{t,\underline{w}} := \pi_f(\underline{I}_{t,\underline{w}})$,  then: 
	
	\begin{itemize}
		\item[i)] $I_{t,\underline{w}}$  is a unique stable interval of $R_{w_t}$ that contains $f^t(p)$.
		
		\item[ii)] If $\underline{w}$ is a $s$-boundary code, then $I_{t,\underline{w}}$ is a stable boundary component of $R_{w_t}$.
		
		\item[iii)] If $\underline{w}$ is not a $s$-boundary code and there exist $t_1$ and $t_2$ in $\NN$ such that $\sigma^{t_1}(\underline{w}) \neq \sigma^{t_2}(\underline{w})$ but their projections coincide, i.e. $f^t(p):=\pi_f(\sigma^{t_1}(\underline{w}))=\pi_f(\sigma^{t_2}(\underline{w}))$, then the stable intervals $I_{t_1, \underline{w}}$ and $I_{t_2, \underline{w}}$ have disjoint interior and only intersect at the point $f^t(p)$.
		
	\end{itemize}
	\end{lemm}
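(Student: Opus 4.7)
The plan is to prove the three items in turn. The underlying tool is Proposition \ref{Prop: Projection foliations}, which identifies (up to boundary subtleties) the projection of the stable leaf codes of $\underline{w}$ with the stable leaf of $\pi_{(f,\cR)}(\underline{w})$, together with the fact that a binary incidence matrix makes each rectangle embedded and thus gives a one-to-one correspondence between horizontal leaves of $R_{w_t}$ and the forward itineraries under $f$.

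For (i), the codes comprising $\underline{I}_{t,\underline{w}}$ are those in $\underline{F}^s(\sigma^t(\underline{w}))$ whose positive part equals that of $\sigma^t(\underline{w})$. Hence their projections are exactly the points of $R_{w_t}$ whose forward $f$-orbit follows the itinerary $(w_t, w_{t+1}, \ldots)$, which by the Markov property amounts to the set $\bigcap_{n \geq 0} f^{-n}(R_{w_{t+n}})$ --- a single horizontal leaf of $R_{w_t}$. Since $\sigma^t(\underline{w}) \in \underline{I}_{t,\underline{w}}$, this leaf contains $f^t(p)$, and Proposition \ref{Prop: Projection foliations} guarantees surjectivity onto the leaf. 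Item (ii) then follows immediately: if $\underline{w}$ is an $s$-boundary code, then $p \in \partial^s \cR$, and by $f$-invariance of $\partial^s \cR$ we have $f^t(p) \in \partial^s R_{w_t}$; the horizontal leaf of $R_{w_t}$ through any point of $\partial^s R_{w_t}$ is forced to coincide with one of the stable boundary components.

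For (iii), I argue by contradiction that the positive parts of $\sigma^{t_1}(\underline{w})$ and $\sigma^{t_2}(\underline{w})$ must differ: if they agreed, the periodicity of $\underline{w}$ (say of period $P$) would let me propagate equality to all indices by shifting by multiples of $P$, yielding $\sigma^{t_1}(\underline{w}) = \sigma^{t_2}(\underline{w})$, contradicting the hypothesis. Consequently the two codes determine distinct horizontal leaves, and since within a single rectangle a horizontal leaf is uniquely determined by any of its points, we must have $R_{w_{t_1}} \neq R_{w_{t_2}}$. Both $I_{t_1,\underline{w}}$ and $I_{t_2,\underline{w}}$ then lie in the common stable leaf $F^s(q)$ and share the point $q$, but lie in rectangles with disjoint interiors; since $\underline{w}$ is not an $s$-boundary code, $q \notin \partial^s \cR$, so $q$ must be a $u$-boundary point where these two rectangles meet along an unstable arc, and each arc $I_{t_i,\underline{w}}$ has $q$ as an endpoint and extends into its own rectangle. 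Their interiors are therefore disjoint.

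The main obstacle will be the geometric conclusion in (iii): ensuring that the two horizontal arcs actually emanate from $q$ on opposite sides within $F^s(q)$, rather than overlapping. The delicate case is when $q$ is a corner shared by more than two rectangles, where one must argue using the disjointness of the rectangle interiors together with the fact that a horizontal leaf of a rectangle through a $u$-boundary point has that point as an endpoint, to conclude that each arc extends only on its rectangle's side of $q$.
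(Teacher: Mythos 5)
Items (i) and (ii) of your proposal are correct and follow essentially the same route as the paper: the common positive part forces all the projections into the nested intersection $\bigcap_{n\geq 0} f^{-n}(R_{w_{t+n}})$, which is a single horizontal leaf, and the $f$-invariance of $\partial^s\cR$ handles (ii). The gap is in item (iii), in two places.

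First, your deduction that $R_{w_{t_1}}\neq R_{w_{t_2}}$ rests on the claim that a horizontal leaf of a rectangle is uniquely determined by any of its points. That holds only for interior points: the parametrization of Proposition \ref{Prop: Rectangulos parametrizados} is a homeomorphism only on $\overset{o}{\II^2}$, and the rectangles of a Markov partition are immersed, not embedded. The point $q=f^t(p)$ is a $u$-boundary point, and it can lie on both the left and the right side of the \emph{same} rectangle $R_i$ (the one-rectangle Markov partition of a hyperbolic toral automorphism already does this); then two distinct horizontal leaves of $R_i$ have $q$ as an endpoint and $w_{t_1}=w_{t_2}$. So the conclusion is unjustified and can fail. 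Second --- and this is the actual content of (iii) --- even granting distinct rectangles with disjoint interiors, ``each arc extends into its own rectangle'' does not yet show that the two arcs occupy the two \emph{different stable separatrices} of $q$; you name this as the main obstacle but leave it open. The paper closes it symbolically: since $\underline{w}$ is not an $s$-boundary code, $q$ is a $u$-boundary but not $s$-boundary point, so by Corollary \ref{Coro: Caracterisation fibers} exactly the two codes $\sigma^{t_1}(\underline{w})$ and $\sigma^{t_2}(\underline{w})$ project to $q$, one for each side of the unstable boundary arc through $q$, and the second bullet of Proposition \ref{Prop: Projection foliations} forces $\underline{F}^s(\sigma^{t_i}(\underline{w}))$, hence $\underline{I}_{t_i,\underline{w}}$, to project into the separatrix of $q$ entering the corresponding side. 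That places the two intervals in different separatrices, and the absence of closed leaves (Proposition \ref{Prop: Propiedades foliaciones de p-A}) prevents them from meeting again at their far endpoints. Your opening observation --- that the positive parts of the two shifted codes must differ, by propagating equality backwards using the period --- is correct and worth keeping, but it does not substitute for this separatrix analysis.
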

	Let us point that in item $iii)$, $t_1-t_2$ cannot be a multiple of the period of $P$. Because if this were the case,  $t_1=t_2+mP$ and therefore:
$$
\sigma^{t_1}(\underline{w})=\sigma^{t_2}(\sigma^{mP}(\underline{w}))=\sigma^{t_2}.
$$

\begin{proof}
	\textbf{Item} $i)$. According to Proposition \ref{Prop: Projection foliations}, the set $\underline{I}_{t, \underline{w}}$ is contained in the stable manifold of $\pi_f(\underline{w})$. If $\underline{w}$ is a $u$-boundary code, then $\underline{I}_{t, \underline{w}}$ is in a unique stable separatrice. In this case, $\underline{I}_{t, \underline{w}}$ is contained in this separatrice and cover a stable interval contained in $R_{w_t}$. If $p$ is an interior point, $\underline{I}_{t, \underline{w}}$ is contained in both separatrices of $f^t(p)$ and covers a whole stable interval of $R_{w_t}$. Just remains to prove that $I_{t, \underline{w}}$ is indeed a unique  stable interval.
	
	If the positive code of $\sigma^t(\underline{v})$ coincides with $\sigma^t(\underline{v})$, then, for all $n\in \NN$, $v_{t+n}=w_{t+n}$ and $v_{t+n+1}=w_{t+n+1}$, and since the incidence matrix is binary, we can deduce that their projections are in the same horizontal sub-rectangle of $\cR$,  i.e.  $\pi_f(\sigma^{t+n}(\underline{w})),\pi_f(\sigma^{t+n}(\underline{v}))\in H^{w_{t+n}}_{j_{t+n}}$. Then:
	
	\begin{eqnarray}
		\pi_f(\sigma^t{\underline{w}}) \in \pi_f(\underline{I}_{t,\underline{w}}) \subset \bigcap_{n\in \NN} H^{w_{t+n}}_{j_{t+n}}=I \text{ and } \\
		\pi_f(\sigma^t{\underline{v}})  \in \pi_f(\underline{I}_{t,\underline{w}}) \subset \in \bigcap_{n\in \NN} H^{v_{t+n}}_{j_{t+n}}=I.
	\end{eqnarray}
	
	Therefore, $I_{t,\underline{w}}$ is contained in $I$, and $I$ is a unique horizontal interval of $R_{w_t}$, this probes the first item.
	
	\textbf{Item} $ii)$. The projection of a $s$-boundary code it is always a $s$-boundary point, therefore $I_{t,\underline{w}}$ is a stable interval of $R_{w_t}$ contained in the stable boundary of such rectangle, i.e. it is a stable boundary component of $R_{w_t}$.
	
	\textbf{Item} $iii)$. Considering that $\underline{w}$ is not an $s$-boundary code and multiple codes project to the same point, $f^t(p)$ becomes a $u$-boundary point but not an $s$-boundary point. Consequently, only two distinct codes project to $f^t(p),$ specifically $\sigma^{t_1}(\underline{w})$ and $\sigma^{t_2}(\underline{w})$. Consequently, $\underline{F}^s(\sigma^{t_1}(\underline{w}))$ and $\underline{F}^s(\sigma^{t_2}(\underline{w}))$ are mapped to different stable separatrices of $f^t(p)$. This implies that $I_{t_1, \underline{w}}$ and $I_{t_2, \underline{w}}$ belong to separate separatrices and have non-overlapping interiors. Since $f$ has no closed leaves, their combination does not form a closed curve, only intersecting at $f^t(p)$.

\end{proof}

\begin{defi}\label{Defi: s interval of R-i induced by  t }
	We call to $I_{t,\underline{w}} := \pi_f(\underline{I}_{t,\underline{w}})$ the stable interval of $R_i$ induced by the iteration $t$ of $\underline{w}$.
\end{defi}

\begin{rema}\label{Rema; finite intervals}
	If $\underline{w}$ has period $P$ then $I_{t,\underline{w}} =I_{t+P,\underline{w}}$ and the code $\underline{w}$ induces a finite number of stable intervals in the Markov partition.
\end{rema}

\begin{defi}\label{Defi: Stable intervals inside R-i}
For every rectangle $R_i \in \mathcal{R}$, the set of stable intervals of the rectangle $R_i$, determined by the periodic code $\underline{w} \in \mathcal{W}$, is given by:
\begin{equation}
	\mathcal{I}(i,\underline{w}) := \{I_{t,\underline{w}} : w_t = i \text{ and } t \in \mathbb{N} \}.
\end{equation}
And the set of all stable intervals contained in $R_i \in \mathcal{R}$ is determined by:
\begin{equation}\label{Equa: cW horizontal sub}
	\mathcal{I}(i,\mathcal{W}) := \cup_{q=1}^{Q} \mathcal{I}(i,\underline{w}^q).
\end{equation}

Finally, let $\mathcal{I}(\mathcal{W}) = \cup_{i=1}^n \mathcal{I}(i,\mathcal{W})$ be the set that contains all the stable intervals induced by the family $\mathcal{W}$.

\end{defi}

By Remark \ref{Defi: s interval of R-i induced by  t }, $\cI(i,\underline{w})$ and $\cI(\cW)$ are finite sets. The next lemma will be useful for establishing the $f$-invariance of the family $\cI(\underline{w})$.

\begin{lemm}\label{Lemm: image of stable intervals}
	For every interval $I_{i,\underline{w}}\in \cI_(i,\underline{w})$, its image under $f$ satisfies that: 
	$$
	f(I_{t,\underline{w}})\subset I_{t+1,\underline{w}}.
	$$
\end{lemm}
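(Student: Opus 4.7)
The plan is to transfer the claim to the symbolic side via the semi-conjugacy given by Proposition \ref{Prop:proyecion semiconjugacion}, and then verify a purely combinatorial inclusion between sets of codes. Since $I_{t,\underline{w}} = \pi_{(f,\mathcal{R})}(\underline{I}_{t,\underline{w}})$ by definition, and $f \circ \pi_{(f,\mathcal{R})} = \pi_{(f,\mathcal{R})} \circ \sigma_A$, one has
\[
f(I_{t,\underline{w}}) \;=\; f\bigl(\pi_{(f,\mathcal{R})}(\underline{I}_{t,\underline{w}})\bigr) \;=\; \pi_{(f,\mathcal{R})}\bigl(\sigma(\underline{I}_{t,\underline{w}})\bigr),
\]
so it suffices to show $\sigma(\underline{I}_{t,\underline{w}}) \subset \underline{I}_{t+1,\underline{w}}$ and then apply $\pi_{(f,\mathcal{R})}$ to both sides.

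To establish the symbolic inclusion, I would pick an arbitrary $\underline{v} \in \underline{I}_{t,\underline{w}}$ and check that $\sigma(\underline{v})$ satisfies both defining conditions of $\underline{I}_{t+1,\underline{w}}$ in Definition \ref{Def: Stable intervals of codes}. First, for the positive-tail matching condition: since $v_n = w_{t+n}$ for all $n \in \mathbb{N}$, we directly obtain $(\sigma(\underline{v}))_n = v_{n+1} = w_{t+n+1} = w_{(t+1)+n}$ for every $n \in \mathbb{N}$. Second, for the stable-leaf condition: because $\underline{v} \in \underline{F}^s(\sigma^t(\underline{w}))$, there exists $Z \in \mathbb{Z}$ with $v_z = w_{t+z}$ for every $z \geq Z$, and hence $(\sigma(\underline{v}))_z = v_{z+1} = w_{(t+1)+z}$ for every $z \geq Z-1$, which is exactly the statement $\sigma(\underline{v}) \in \underline{F}^s(\sigma^{t+1}(\underline{w}))$.

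The conclusion follows by applying $\pi_{(f,\mathcal{R})}$:
\[
f(I_{t,\underline{w}}) \;=\; \pi_{(f,\mathcal{R})}\bigl(\sigma(\underline{I}_{t,\underline{w}})\bigr) \;\subset\; \pi_{(f,\mathcal{R})}\bigl(\underline{I}_{t+1,\underline{w}}\bigr) \;=\; I_{t+1,\underline{w}}.
\]
There is no serious obstacle here: both steps are bookkeeping once the semi-conjugacy and the definition of $\underline{I}_{t,\underline{w}}$ are invoked. The only point worth flagging is that the statement is an inclusion and not an equality, since $\underline{I}_{t+1,\underline{w}}$ may contain codes whose pre-shift does not lie in $\underline{I}_{t,\underline{w}}$ (for instance, codes $\underline{v}$ with $v_0$ unconstrained); this asymmetry is what makes $\{I_{t,\underline{w}}\}_{t}$ a forward-invariant family of stable arcs rather than a set-wise fixed family, and is exactly what will be needed in the subsequent construction of the $s$-boundary refinement.
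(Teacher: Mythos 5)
Your proof is correct and follows essentially the same route as the paper's: the paper's entire argument is the one-line computation $f(\pi_f(\underline{I}_{t,\underline{w}}))\subset \pi_f(\sigma(\underline{I}_{t,\underline{w}}))= \pi_f(\underline{I}_{t+1,\underline{w}}) = I_{t+1,\underline{w}}$, and you have simply supplied the index-checking behind the symbolic inclusion $\sigma(\underline{I}_{t,\underline{w}})\subset \underline{I}_{t+1,\underline{w}}$ that the paper leaves implicit. Your closing remark that the inclusion is generally strict (the paper even writes an equality at that step, somewhat loosely) is accurate and harmless.
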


\begin{proof}
	The proof follows from the next computation:
	
	\begin{eqnarray*}
		f(\pi_f(\underline{I}_{t,\underline{w}}))\subset \pi_f(\sigma(\underline{I}_{t,\underline{w}}))=\\ 
		\pi_f(\underline{I}_{t+1,\underline{w}}) = I_{t+1,\underline{w}}.
	\end{eqnarray*}
	
\end{proof}

Now that we have the appropriate language we can state the theorem that reflects the desired properties of our refinement, the proof will take up the rest of this section.

\begin{theo}\label{Theo: Refinamiento s frontera fam}
	Given a geometric Markov  partition $\mathcal{R} = \{R_i\}_{i=1}^n$ of $f$, the geometric type $T$ of the pair $(f, \mathcal{R})$, and a finite family of periodic codes $\cW = \{\underline{w}^1, \cdots, \underline{w}^Q\} \subset \Sigma_{A(T)}$, there exists a finite algorithm to construct a geometric refinement of $(f, \mathcal{R})$ by horizontal sub-rectangles of $ \mathcal{R}$, known as the $s$-\emph{boundary} refinement of $(f, \mathcal{R})$ with respect to $\cW$, denoted by $(f, \textbf{S}_{\cW}(\mathcal{R}))$, with the following properties:
	\begin{itemize}
		\item Every rectangle in the Markov partition $\textbf{S}_{\cW}(\mathcal{R})$ has as stable boundary components either a stable boundary arc of a rectangle $R_i\in \mathcal{R}$ or a stable arc $I_{t, \underline{w}}$ determined by the iteration $t$ of a code $\underline{w} \in \cW$. Moreover, every arc of the form $I_{t, \underline{w}}$ for $\underline{w} \in \cW$ is the stable boundary component of some rectangle in the refinement $\textbf{S}_{\cW}(\mathcal{R})$.
		\item The periodic boundary points of $\textbf{S}_{\cW}(\mathcal{R})$ are the union of the periodic boundary points of $\mathcal{R}$ and those in the set $\{\pi_{(f,\cR)}(\sigma_A^t(\underline{w})) \mid \underline{w} \in \cW \text{ and } t \in \NN\}$.
		\item There are explicit formulas, in terms of the geometric type $T$ and the codes in $\cW$, to compute the geometric type of $\textbf{S}_{\cW}(\mathcal{R})$.
	\end{itemize} 
\end{theo}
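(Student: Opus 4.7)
The plan is to construct $\textbf{S}_{\mathcal{W}}(\mathcal{R})$ by cutting each rectangle $R_i \in \mathcal{R}$ along the finite family $\mathcal{I}(i, \mathcal{W})$ of stable intervals introduced in Definition \ref{Defi: Stable intervals inside R-i}, and then verify the Markov property via Proposition \ref{Prop: Markov criterion boundary}. Since $\mathcal{W}$ is finite and each $\underline{w}\in\mathcal{W}$ is periodic, Remark \ref{Rema; finite intervals} ensures that $\mathcal{I}(i,\mathcal{W})$ is finite for every $i$. The stable intervals lying in $\overset{o}{R_i}$ subdivide $R_i$ into finitely many horizontal sub-rectangles, which I label $(i,h)$ with $h$ increasing along the vertical direction of $R_i$. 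The family of these sub-rectangles, endowed with the canonical geometrization of Definition \ref{Defi: Geometric refinement}, defines $\textbf{S}_{\mathcal{W}}(\mathcal{R})$. The first bullet point of the theorem will follow by construction.

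To apply Proposition \ref{Prop: Markov criterion boundary}, note that $\partial^u \textbf{S}_{\mathcal{W}}(\mathcal{R})=\partial^u\mathcal{R}$, which is $f^{-1}$-invariant by hypothesis. The stable boundary is $\partial^s\mathcal{R}\cup\bigcup\mathcal{I}(\mathcal{W})$; the first summand is $f$-invariant, and Lemma \ref{Lemm: image of stable intervals} gives $f(I_{t,\underline{w}})\subset I_{t+1,\underline{w}}$, so the union $\bigcup\mathcal{I}(\mathcal{W})$ is also $f$-invariant (each code being periodic, the orbit $\{\sigma^t(\underline{w})\}$ is finite). For the second bullet, by Lemma \ref{Lemm: who the intervals intersect } each interval $I_{t,\underline{w}}$ contains the periodic point $\pi_{(f,\mathcal{R})}(\sigma^t(\underline{w}))$, and conversely any new periodic point in the boundary lies on some $I_{t,\underline{w}}$; since its stable leaf must be periodic and $\underline{w}$ is periodic, the point is a projection of some $\sigma^t(\underline{w})$ by Lemma \ref{Lemm: Periodic to peridic}.

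The core of the proof is the algorithm for computing $T(\textbf{S}_{\mathcal{W}}(\mathcal{R}))$ from $T$ and $\mathcal{W}$ alone. The first substep is, for each $\underline{w}\in\mathcal{W}$ of period $P$ and each $t\in\{0,\dots,P-1\}$, to locate the interval $I_{t,\underline{w}}$ relative to the other elements of $\mathcal{I}(w_t,\mathcal{W})$ along the vertical direction of $R_{w_t}$. This is a purely symbolic comparison: two distinct intervals $I_{t,\underline{w}}$ and $I_{t',\underline{w}'}$ lying in the same $R_i$ correspond to stable leaves whose positive codes $\underline{w}_{[t,\infty)}$ and $\underline{w}'_{[t',\infty)}$ must disagree at some minimal index $m\geq 0$; the relative vertical order of the two intervals is then read off from the horizontal indices $\rho^{-1}(w_{t+m+1},\cdot)$ and $\rho^{-1}(w'_{t'+m+1},\cdot)$ in $R_{w_{t+m}}=R_{w'_{t'+m}}$, with the cumulative signs $\prod_{j<m}\epsilon_T$ along each code accounting for possible orientation reversals. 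Iterating this pairwise comparison determines the total vertical order inside each $R_i$, hence the pairs $(h_i')$ of horizontal sub-rectangles of the refinement. The permutation $\rho'$ and sign $\epsilon'$ are then computed as in the binary refinement (Proposition \ref{Prop: Refinamiento binario propiedades}), by composing $\rho,\epsilon$ with the bookkeeping that tells which piece of $f(H_{(i,h)})$ lies inside each vertical sub-rectangle of the target.

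The main obstacle I expect is this symbolic comparison of vertical positions inside a single rectangle: one has to prove that the divergence index $m$ above is always finite (guaranteed because the intervals are distinct and the incidence matrix is binary, so two distinct leaves cannot share an infinite future code) and that the sign-corrected comparison at level $m$ correctly reflects the geometric vertical order in $R_i$. Once this combinatorial comparison is verified using the binary incidence and the definition of $\epsilon$ in Definition \ref{Defi: Orientation (f,cR)}, the remainder of the geometric-type formulas is a direct extension of the binary-refinement formulas of Proposition \ref{Prop: Refinamiento binario propiedades}, and the algorithm terminates in finite time because $\bigcup_i\mathcal{I}(i,\mathcal{W})$ is finite.
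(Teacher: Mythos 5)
Your proposal follows essentially the same route as the paper: cut each $R_i$ along the projected stable intervals $\mathcal{I}(i,\mathcal{W})$, verify the Markov property through the $f$-invariance of $\partial^s\mathcal{R}\cup\bigcup\mathcal{I}(\mathcal{W})$ (Lemma \ref{Lemm: image of stable intervals} plus Proposition \ref{Prop: Markov criterion boundary}), and order the new stable boundaries symbolically via the first divergence index of the positive codes together with the cumulative product of the signs $\epsilon_T$ — which is exactly the interchange-order criterion of Lemmas \ref{Lemm: diferentation moment of the order,cW} and \ref{Lemm: Comparacion de ordenes, cW}. The remaining bookkeeping for $\rho'$ and $\epsilon'$ that you defer to a binary-refinement-style computation is carried out in the paper by a three-case analysis of how each $H^i_j$ meets a refined rectangle, but this is a matter of detail rather than of approach.
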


\subsection{The geometric Markov partition \texorpdfstring{$\cR_{S(\cW)}$}{R\_S(W)}.}

\subsubsection{The rectangles in \texorpdfstring{$\cR_{S(\cW)}$}{R\_S(W)}.}

We need to establish that after cutting our rectangles along the stable intervals determined by the family $\mathcal{W}$, $\mathcal{I}(\mathcal{W})$, the resulting connected components are all rectangles. The next lemma is key to this property.

\begin{lemm}\label{Lemm: rectangles in R S(p)}
	Let $\tilde{R}$ be the closure of a connected component of $\overset{o}{R_i}\setminus \cI(\cW)$. Then $\tilde{R}$ is a horizontal sub-rectangle of $R_i$
\end{lemm}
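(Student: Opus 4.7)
The plan is to use a parametrization $\rho_i : \II^2 \to R_i$ to pull the picture back to the unit square, where the decomposition becomes manifest. The entire argument then reduces to two observations: every arc $I_{t,\underline{w}} \in \cI(i,\cW)$ meeting $\overset{o}{R_i}$ is a \emph{full} horizontal arc of $R_i$ (i.e.\ it runs from $\partial^u_{-1}R_i$ to $\partial^u_{+1}R_i$), and $\cI(i,\cW)$ is a finite collection.

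The first (and most delicate) observation follows from the computation already carried out inside the proof of Lemma~\ref{Lemm: who the intervals intersect }. There it is shown that $I_{t,\underline{w}} \subset \bigcap_{n \in \NN} H^{w_{t+n}}_{j_{t+n}}$, and, because the incidence matrix of $T$ is binary, each $H^{w_{t+n}}_{j_{t+n}}$ is a horizontal sub-rectangle of $R_{w_t}$ spanning its entire horizontal width. Hence this intersection is a single complete horizontal leaf of $R_{w_t} = R_i$, and $I_{t,\underline{w}}$ equals that full horizontal arc. The second observation is immediate from Remark~\ref{Rema; finite intervals}: each of the finitely many codes $\underline{w}^q \in \cW$ produces only finitely many distinct intervals (one per residue class modulo its period), so $\cI(i,\cW)$ is finite.

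I would then conclude as follows. Under $\rho_i$, every full horizontal arc of $R_i$ pulls back to a segment $[0,1] \times \{s\}$ for some $s \in [0,1]$. Discarding the arcs that lie on the stable boundary $\partial^s R_i$ (which do not meet $\overset{o}{R_i}$), let $0 < s_1 < \cdots < s_k < 1$ be the heights of the remaining arcs. Then $\rho_i^{-1}\bigl(\overset{o}{R_i} \setminus \cI(\cW)\bigr) = (0,1) \times \bigl((0,1) \setminus \{s_1,\dots,s_k\}\bigr)$, whose connected components are the open strips $(0,1) \times (s_j, s_{j+1})$ (with the convention $s_0 = 0$, $s_{k+1} = 1$). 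The closure of each such strip in $\II^2$ is the rectangle $[0,1] \times [s_j, s_{j+1}]$, and the restriction of $\rho_i$ to it parametrizes the closure of the corresponding component of $\overset{o}{R_i} \setminus \cI(\cW)$. Since the horizontal leaves of this closure coincide with horizontal leaves of $R_i$, the resulting set $\tilde R$ is a horizontal sub-rectangle of $R_i$ in the sense of Definition~\ref{Defi: subrec Vertical/horizontal }.

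The main obstacle is the first observation: the statement of Lemma~\ref{Lemm: who the intervals intersect } only guarantees that each $I_{t,\underline{w}}$ is \emph{some} stable interval of $R_{w_t}$, whereas to cut $R_i$ cleanly into horizontal sub-rectangles each such arc must be a \emph{maximal} horizontal leaf of $R_i$. This step uses the binary incidence matrix in an essential way—precisely the hypothesis that the binary refinement was built to secure—and the lemma would otherwise require an extra argument to rule out arcs that fail to span $R_i$ horizontally.
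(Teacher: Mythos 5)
Your argument is correct and follows essentially the same route as the paper's proof: both rest on the fact (from Lemma~\ref{Lemm: who the intervals intersect }) that each $I_{t,\underline{w}}$ is a full horizontal leaf of $R_{w_t}$, together with the finiteness of $\cI(i,\cW)$, so that the components of the complement are horizontal strips whose closures are horizontal sub-rectangles. The only step the paper makes explicit that you leave implicit is that arcs $I_{t,\underline{w}}$ with $w_t = j \neq i$ never meet $\overset{o}{R_i}$ (the paper argues this via sector codes; it also follows from $R_j = \overline{\overset{o}{R_j}}$ and disjointness of interiors), which is needed for your identity $\rho_i^{-1}\bigl(\overset{o}{R_i}\setminus\cI(\cW)\bigr) = (0,1)\times\bigl((0,1)\setminus\{s_1,\dots,s_k\}\bigr)$ to hold with the heights $s_1,\dots,s_k$ coming only from $\cI(i,\cW)$.
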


\begin{proof}
	If $i\neq j$, then $\overset{o}{R_i}\cap \cup \cI(j,\underline{w})=\emptyset$ for all $\underline{w}\in \cW$, because for any point $x\in R_i$ and every code $\underline{v}\in \pi^{-1}_{(f,\cR)}(x)$, $v_0=i$. In contrast, if $y\in \cup\cI(j,\underline{w})$, at least one code $\underline{w}'=\sigma^t(\underline{w})$ in $\pi^{-1}(x)$ has its zero term equal to $j$, i.e., $w'_0=j$. Therefore, $x\neq y$.

	We only need to consider the case when $I_{t,\underline{w}}\subset R_i$, for $\underline{w}\in\cW$. In this situation, $I_{t,\underline{w}}$ is a horizontal interval of $R_i$. Consequently, each connected component of $\overset{o}{R_i}\setminus \cup \cI(i,\underline{w})$ is a horizontal sub-rectangle $H$ of $R_i$ minus its stable boundaries, and its closure coincides with $H$. This argument proves our lemma.
\end{proof}

\begin{defi}\label{Lemm: cR-S(cW) is markov part}
	The family, which includes all the rectangles as described in Lemma \ref{Lemm: rectangles in R S(p)}, corresponding to all the rectangles $R_i$ in $\cR$, is denoted is denoted by $\cR_{S(\cW)}$.
\end{defi}

\begin{lemm}\label{Lemm: Markov partition cW }
The family of rectangles $\mathcal{R}_{S(\mathcal{W})}$ is a Markov partition for $f$, consisting of horizontal sub-rectangles of the rectangles in $\mathcal{R}$.
\end{lemm}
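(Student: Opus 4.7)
The plan is to verify the criterion in Proposition \ref{Prop: Markov criterion boundary}: that $\mathcal{R}_{S(\mathcal{W})}$ is a finite family of rectangles with pairwise disjoint interiors whose union is $S$, such that $\partial^s \mathcal{R}_{S(\mathcal{W})}$ is $f$-invariant and $\partial^u \mathcal{R}_{S(\mathcal{W})}$ is $f^{-1}$-invariant. Since the ``horizontal sub-rectangle of $R_i$'' conclusion is already built into the definition of $\mathcal{R}_{S(\mathcal{W})}$ via Lemma \ref{Lemm: rectangles in R S(p)}, the bulk of the work is the two boundary invariances.

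First I would assemble the structural facts. By Lemma \ref{Lemm: rectangles in R S(p)}, every element of $\mathcal{R}_{S(\mathcal{W})}$ is a horizontal sub-rectangle of some $R_i\in \mathcal{R}$. The family is finite because $\mathcal{W}$ is finite and Remark \ref{Rema; finite intervals} makes each $\mathcal{I}(i,\underline{w})$ finite, so $\overset{o}{R_i}\setminus \cup\mathcal{I}(\mathcal{W})$ has only finitely many components. The union covers $S$ since the closure of the complement, taken inside each $R_i$, is $R_i$, and $\cup\mathcal{R}=S$. Two components inside the same $R_i$ have disjoint interiors by construction, and across distinct $R_i,R_j$ this follows from the Markov property of $\mathcal{R}$.

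Next I would handle the unstable boundary, which is the easy side. Since each $H\in\mathcal{R}_{S(\mathcal{W})}$ is a horizontal sub-rectangle of some $R_i$, its left and right sides lie in $\partial^u R_i$, so $\partial^u \mathcal{R}_{S(\mathcal{W})} \subseteq \partial^u\mathcal{R}$. Conversely every $u$-boundary component of $R_i$ still appears as the $u$-boundary of exactly one subrectangle of $R_i$ in the refinement, because the cutting arcs in $\mathcal{I}(\mathcal{W})$ are horizontal and therefore do not remove points from $\partial^u R_i$. Hence $\partial^u \mathcal{R}_{S(\mathcal{W})}=\partial^u\mathcal{R}$, which is $f^{-1}$-invariant by Proposition \ref{Prop: Markov criterion boundary} applied to the original Markov partition $\mathcal{R}$.

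The main step, and the only one requiring the new data, is the stable boundary. I would show the set equality
\[
\partial^s \mathcal{R}_{S(\mathcal{W})} \;=\; \partial^s\mathcal{R}\;\cup\;\bigcup\mathcal{I}(\mathcal{W}).
\]
The inclusion ``$\subseteq$'' follows because each $s$-boundary arc of a subrectangle either lay in $\partial^s R_i$ or arose from a cut along some $I_{t,\underline{w}}\in \mathcal{I}(\mathcal{W})$; the reverse inclusion uses Lemma \ref{Lemm: who the intervals intersect }(i), which ensures each $I_{t,\underline{w}}$ is a full horizontal arc of $R_{w_t}$ from one $u$-boundary to the other, and therefore genuinely separates two components, appearing as their common $s$-boundary. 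Once this identity is in hand, $f$-invariance splits into two pieces: $f(\partial^s\mathcal{R})\subseteq \partial^s\mathcal{R}$ is the Markov property of $\mathcal{R}$, while $f(\cup\mathcal{I}(\mathcal{W}))\subseteq \cup\mathcal{I}(\mathcal{W})$ follows from Lemma \ref{Lemm: image of stable intervals} and the fact that, by periodicity of each $\underline{w}\in\mathcal{W}$, the family $\{I_{t,\underline{w}}\}_{t\in\mathbb{N}}$ is finite and closed under the shift $t\mapsto t+1$. The one place where I expect to have to be careful is in pinning down the set equality above, since a pathological cutting arc that coincides with a piece of $\partial^s R_i$, or that fails to span $R_i$, would force an \emph{ad hoc} argument; both issues are ruled out by Lemma \ref{Lemm: who the intervals intersect }(i)--(ii), so invoking it explicitly is the key bookkeeping step before concluding by Proposition \ref{Prop: Markov criterion boundary}.
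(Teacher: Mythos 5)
Your proposal is correct and follows essentially the same route as the paper: verify the hypotheses of Proposition \ref{Prop: Markov criterion boundary} by noting that the unstable boundary is unchanged (hence $f^{-1}$-invariant) and that the stable boundary decomposes into $\partial^s\mathcal{R}$ (which is $f$-invariant since $\mathcal{R}$ is Markov) together with the cutting arcs $I_{t,\underline{w}}$ (whose $f$-invariance is Lemma \ref{Lemm: image of stable intervals} plus periodicity). Your extra care in pinning down the set identity for $\partial^s\mathcal{R}_{S(\mathcal{W})}$ via Lemma \ref{Lemm: who the intervals intersect } is a reasonable bookkeeping refinement but not a different argument.
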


\begin{proof}
	The interiors of two distinct horizontal sub-rectangles within $R_i$ do not intersect, and a horizontal sub-rectangle from $R_i$ does not overlap with the interior of a horizontal sub-rectangle from $R_j$ if $j\neq i$. Consequently, the interiors of two different rectangles in $\cR_{\cW}$ are disjoint.

	The union $\cup \cR_{S(\cW)}$ coincides with the union of the rectangles in $\cR$, but $\cR$ is  Markov partition for $f$, therefore  $\cup \cR_{S(\cW)}=\cup \cR =S$.

	The stable boundary of $\cR_{S(\cW)}$ consists of two types of stable intervals: those that are part of the stable boundary of $\cR$ and whose image is contained in $\partial^s\cR \subset \partial^s \cR_{S(\cW)}$, and those of the form $I_{t,\underline{w}^q}$ for some $\underline{w}^q\in \cW$ that, as stated in Lemma \ref{Lemm: image of stable intervals}: $f(I_{t,\underline{w}^q}) \subset I_{t+1,\underline{w}^q}\subset \partial^s\cR_{S(\cW)}$. Therefore, $\partial^s\cR_{S(\cW)}$ is  $f$-invariant.

	Since the unstable boundary of $\cR_{S(\cW)}$ coincides with the unstable boundary of $\cR$, $\partial^u\cR_{S(\cW)}$ is $f^{-1}$-invariant.
	
	This proves that $\cR_{S(\cW)}$ is a Markov partition of $f$.
\end{proof}

\begin{rema}\label{Rema: Why not s boundary code?}
If the periodic code $\underline{w} \in \mathcal{W}$ is an $s$-boundary code, Item $ii)$ of Lemma \ref{Lemm: who the intervals intersect } states that every interval in the set $\mathcal{I}(i,\underline{w})$ is contained within the stable boundary of $R_i$. Consequently, the set of rectangles obtained by cutting the rectangle $R_i \in \mathcal{R}$ through the segments in $\mathcal{I}(i,\mathcal{W})$ coincides with $R_i$. Therefore, from now on, we are going to assume that all the codes in $\mathcal{W}$ are not $s$-boundary codes.
\end{rema}

\subsubsection{The relative position of the rectangles: One code.} We must assume for a while that $\mathcal{W} := \{\underline{w}\}$ contains only one code in order to prove some technical lemmas. The generalizations are straightforward but will be introduced later, as this simpler case is sufficient to gain intuition.

In order to have a geometric refinement, we must assign a labeling to the rectangles of $\mathcal{R}_{\mathcal{S}(\mathcal{W})}$ that is coherent with the vertical orientation of the rectangles in $\mathcal{R}$. We can do this by declaring that for a pair of rectangles $R_1, R_2 \in \mathcal{R}_{\mathcal{S}(\mathcal{W})}$ contained in $R_i$, we have the labels $R_{(i, s_1)} = R_1$ and $R_{(i, s_2)} = R_2$ with $s_1 < s_2$ if and only if, with respect to the vertical direction of $R_i$, we have $R_1 <_{v(i)} R_2$. However, this procedure is not algorithmic; therefore, we need to develop a method to determine when two different stable intervals in $\mathcal{I}(i, \mathcal{W})$ are one above the other. For this reason, the first thing we are going to determine is when two horizontal intervals induced by $\underline{w} \in \mathcal{W}$ and contained in the same rectangle $R_i \in \mathcal{R}$ are one above the other with respect to the vertical orientation of $R_i$.

\begin{defi}\label{Defi: cO(i) and O(i)}
Let $\underline{w} \in \mathcal{W}$ be a non-$s$-boundary periodic code with period $P$. Then, for every $i \in \{1, \ldots, P-1\}$, we define:
\begin{equation}\label{Equa: cO(i,w)}
	\mathcal{O}(i, \underline{w}) = \{(t, \underline{w}) : t \in \{0, \ldots, P-1\} \text{ and } w_t = i\}.
\end{equation}
The cardinality of this set is denoted by $O(i, \underline{w})$. Similarly, let
\begin{equation}\label{Equa: cO(W)}
	\mathcal{O}(i, \mathcal{W}) = \bigcup_{\underline{w} \in \mathcal{W}} \mathcal{O}(i, \underline{w}).
\end{equation}
Let $O(i, \mathcal{W})$ be its cardinality.
\end{defi}

Let us recall that $\mathcal{H}(T)$ and $\mathcal{V}(T)$ are the sets of horizontal and vertical labels for the geometric type $T$, and that we are always assuming $A(T)$ is binary.

\begin{lemm}\label{Lemm: unique hriwontal for every prjection} 
Let $\underline{w} \in \mathcal{W}$ with period $P$ and take $t \in \{1, \cdots, P\}$. There is a unique index $j_{t, \underline{w}} \in \{1, \cdots, h_{w_t}\}$ that satisfies the following condition: $(w_t, j_{t, \underline{w}}) \in \mathcal{H}(T)$ is the unique pair of indices for which $\rho_T(w_t, j_{t, \underline{w}}) = (w_{t+1}, l_{t+1}) \in \mathcal{V}(T)$ (for some $l_{t+1}\in \{1,\cdots, v_{w_{t+1}}\}$). Furthermore, $l_{t+1} \in \{1, \cdots, v_{w_{t+1}}\}$ is also uniquely determined.
\end{lemm}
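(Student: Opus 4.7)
The plan is to reduce the statement to a direct consequence of the binary admissibility condition on $\Sigma_{A(T)}$ and the fact that $\rho_T$ is a (single-valued) bijection between $\mathcal{H}(T)$ and $\mathcal{V}(T)$. I would first recall, from the definition of the incidence matrix (Definition \ref{Defi: Incidence matrix}) together with the construction of the geometric type, that the entry $a_{ik}$ of $A(T)$ equals the number of horizontal labels $(i,j) \in \mathcal{H}(T)$ whose image under $\rho_T$ has first coordinate $k$; that is,
\[
a_{ik} = \bigl|\{\, j \in \{1,\ldots,h_i\} : \exists\, l \in \{1,\ldots,v_k\},\ \rho_T(i,j) = (k,l) \,\}\bigr|.
\]
Because $T \in \mathcal{G}\mathcal{T}(\textbf{p-A})^{SIM}$, the matrix $A(T)$ is binary, so this cardinality is either $0$ or $1$.

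Next, since $\underline{w} \in \Sigma_{A(T)}$, admissibility gives $a_{w_t w_{t+1}} = 1$ for every $t$. Applying the identity above with $i = w_t$ and $k = w_{t+1}$, the set
\[
\{\, j \in \{1,\ldots,h_{w_t}\} : \exists\, l,\ \rho_T(w_t,j) = (w_{t+1},l) \,\}
\]
has exactly one element, which we define to be $j_{t,\underline{w}}$. This is precisely the unique horizontal label in $R_{w_t}$ whose $f$-image is a vertical subrectangle of $R_{w_{t+1}}$.

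Finally, since $\rho_T : \mathcal{H}(T) \to \mathcal{V}(T)$ is a (single-valued) map, the value $\rho_T(w_t, j_{t,\underline{w}})$ is a single element of $\mathcal{V}(T)$, so its second coordinate $l_{t+1} \in \{1,\ldots,v_{w_{t+1}}\}$ is uniquely determined by $(w_t, j_{t,\underline{w}})$. This concludes the lemma. There is no real obstacle here: the entire content of the statement is a book-keeping translation of the binary hypothesis on $A(T)$ into a uniqueness statement for the bijection $\rho_T$. The only subtlety worth flagging in the write-up is to make explicit that when one assumes the incidence matrix is binary, then an entry of $1$ corresponds to a single horizontal-to-vertical pairing (and \emph{not} merely to the nonemptiness of $f(\overset{o}{R_i}) \cap \overset{o}{R_k}$), which is exactly the refinement of the classical Bowen-style convention carried out in Definition \ref{Defi: Incidence matrix}.
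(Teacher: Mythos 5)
Your proof is correct and follows essentially the same argument as the paper: the binary hypothesis on $A(T)$ gives at most one horizontal label $(w_t,j)$ mapping into $R_{w_{t+1}}$, admissibility of $\underline{w}$ gives at least one, and single-valuedness of $\rho_T$ fixes $l_{t+1}$. The only cosmetic difference is that you phrase the counting purely in terms of the abstract type via $\rho_T$, while the paper phrases it in terms of the horizontal sub-rectangles $H^{i}_{j}$ of a realization; the content is identical.
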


\begin{proof}
	Let's suppose that $w_t=i$. As the incidence matrix of $T$ is binary, there exists at most one horizontal sub-rectangle $H^i_j$ of $R_i$ such that $f(H^i_j)\subset R_{w_{t+1}}$. However, since $\underline{w}$ is an admissible code, there exists at least one horizontal sub-rectangle $H^i_j$ of $R_i$ such that $f(H^i_j)\subset R_{w_{t+1}}$. Therefore, there exists only one rectangle of $R_i$ such that $f(H^i_j)\subset R_{w_{t+1}}$, and we declare $j_{t,\underline{w}}$ as this unique number. Clearly, if $\rho_T(i,j_{t,\underline{w}})=(w_{t+1},l_{t+1})$, then $l_{t+1}\in \{1,\cdots,h_{w_{t+1}}\}$ is uniquely determined.
\end{proof}

\begin{coro}\label{Coro: projetion stable segments}
	Assume that $w_t=i$, then the stable interval $I_{t,\underline{w}}$ is contained in the horizontal sub-rectangle $H^i_{j_{t,\underline{w}}}\setminus \partial^s H^i_{j_{t,\underline{w}}}$ of $\cR$.
\end{coro}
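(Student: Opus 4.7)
The plan is to establish the two halves of the containment separately: first that $I_{t,\underline{w}}$ lies inside $H^i_{j_{t,\underline{w}}}$, and then that it avoids the stable boundary $\partial^s H^i_{j_{t,\underline{w}}}$.

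For the inclusion, I would pick an arbitrary $\underline{v} \in \underline{I}_{t,\underline{w}}$ and chase the Markov structure one step ahead. By the definition of $\underline{I}_{t,\underline{w}}$ we have $v_0 = w_t = i$ and $v_1 = w_{t+1}$, so $\pi_f(\underline{v}) \in R_i$ and $f(\pi_f(\underline{v})) = \pi_f(\sigma(\underline{v})) \in R_{w_{t+1}}$. Hence $\pi_f(\underline{v})$ belongs to a horizontal sub-rectangle of $R_i$ whose image under $f$ sits in $R_{w_{t+1}}$. Binaryness of $A(T)$ together with Lemma \ref{Lemm: unique hriwontal for every prjection} force this sub-rectangle to be $H^i_{j_{t,\underline{w}}}$, giving $I_{t,\underline{w}} = \pi_f(\underline{I}_{t,\underline{w}}) \subset H^i_{j_{t,\underline{w}}}$.

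For the avoidance of $\partial^s H^i_{j_{t,\underline{w}}}$, I would argue by contradiction. By Lemma \ref{Lemm: who the intervals intersect } the set $I_{t,\underline{w}}$ is a full horizontal arc of $R_i$, and each of the two components of $\partial^s H^i_{j_{t,\underline{w}}}$ is likewise a full horizontal arc of $R_i$. Since two full horizontal arcs of a rectangle lying on distinct stable leaves are disjoint, any non-empty intersection would force $I_{t,\underline{w}}$ to coincide with one of these boundary arcs; in particular $\pi_f(\sigma^t(\underline{w})) \in \partial^s H^i_{j_{t,\underline{w}}}$. Because $f(H^i_{j_{t,\underline{w}}}) = V^{w_{t+1}}_{l_{t+1}}$ is a vertical sub-rectangle of $R_{w_{t+1}}$, its stable boundary is contained in $\partial^s R_{w_{t+1}} \subset \partial^s \cR$, and consequently $\pi_f(\sigma^{t+1}(\underline{w})) = f(\pi_f(\sigma^t(\underline{w}))) \in \partial^s \cR$.

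To close the contradiction, I would invoke the periodicity of $\underline{w}$ and the $f$-invariance of $\partial^s \cR$: choosing a multiple $kP$ of the period of $\underline{w}$ with $kP > t+1$, the forward iterate $f^{kP-(t+1)}$ sends $\pi_f(\sigma^{t+1}(\underline{w}))$ to $\pi_f(\sigma^{kP}(\underline{w})) = \pi_f(\underline{w})$ while remaining inside $\partial^s \cR$. This would make $\underline{w}$ an $s$-boundary code, contradicting the standing assumption recorded in Remark \ref{Rema: Why not s boundary code?}. The delicate point, and the only step that really uses both the binaryness of the incidence matrix and Lemma \ref{Lemm: who the intervals intersect }, is the initial observation that $I_{t,\underline{w}}$ is a \emph{full} horizontal arc of $R_i$: without it one could not rule out a transverse meeting of $I_{t,\underline{w}}$ with a boundary arc at a single interior point.
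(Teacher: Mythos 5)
Your argument is correct in outline, and its first half coincides with the paper's: both establish $I_{t,\underline{w}}\subset H^i_{j_{t,\underline{w}}}$ by observing that every $\underline{v}\in\underline{I}_{t,\underline{w}}$ satisfies $v_0=w_t$ and $v_1=w_{t+1}$, so that binaryness of $A(T)$ and Lemma \ref{Lemm: unique hriwontal for every prjection} pin down the same horizontal sub-rectangle for every such code. The second half is where you diverge. The paper disposes of $\partial^s H^i_{j_{t,\underline{w}}}$ with a lamination argument: the stable boundaries of the horizontal sub-rectangles of $(f,\cR)$ lie on stable leaves of $s$-boundary periodic points, and since $\underline{w}$ is not an $s$-boundary code the projections of its stable-leaf codes cannot meet that lamination (a stable leaf carries at most one periodic point). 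You instead argue dynamically: a touching would place $f^t(p)$ itself on $\partial^s H^i_{j_{t,\underline{w}}}$, hence $f^{t+1}(p)\in\partial^s\cR$, and $f$-invariance of $\partial^s\cR$ together with periodicity of $\underline{w}$ drags $\pi_f(\underline{w})$ into $\partial^s\cR$, contradicting Remark \ref{Rema: Why not s boundary code?}. Both routes rest on the same facts; yours is more self-contained, but the paper's is more robust at the one step where your argument is fragile, namely the claim that a non-empty intersection forces $I_{t,\underline{w}}$ to \emph{coincide} with a boundary arc. Because the rectangles are only immersed, two distinct full horizontal arcs of $R_i$ are guaranteed disjoint only on their interiors; they could in principle still touch at a point of $\partial^u R_i$, in which case $f^t(p)$ need not lie on $\partial^s H^i_{j_{t,\underline{w}}}$ and your contradiction does not launch. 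The cleanest repair is essentially the paper's observation: any intersection point $y$ lies simultaneously on the stable leaf of the periodic point $f^t(p)$ and, since $f(y)\in\partial^s\cR$, on the stable leaf of an $s$-boundary periodic point; uniqueness of the periodic point on a stable leaf then forces $f^t(p)$ to be $s$-boundary, a contradiction that requires no coincidence of arcs.
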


\begin{proof}
	
	It is evident that $\pi_f(\underline{w})\in H^i_{j_{t,\underline{w}}}$. Now, consider another code $\underline{v}\in \underline{F}^s(\sigma^t(\underline{w}))$ that projects to $I_{t,\underline{w}}$. As in Lemma \ref{Lemm: unique hriwontal for every prjection}, there exists only one index $j_{t, \underline{v}}\in \{1,\ldots, h_{v_t}\}$ such that $\rho_T(i, j_{t, \underline{v}})=(v_{t+1},l')$, and therefore $\pi_f(\underline{v})\in H^i_{j_{t, \underline{v}}}$. However, due to the matching of non-negative codes between $\underline{w}$ and $\underline{v}$, we have $w_{t}=v_{t}$ and $w_{t+1}=v_{t+1}$, leading to $j_{t, \underline{v}}=j_{t, \underline{w}}$. Consequently, $\pi_f(\underline{v})\in H^i_{j_{t,\underline{w}}}$.
	
	Since $\underline{w}$ is not an $s$-boundary point, the projection of its stable leaf codes cannot intersect the stable leaves of  $s$-boundary periodic points. In particular, it does not intersect the stable boundary of the horizontal sub-rectangles of $(f,\cR)$. Hence, we deduce that $I_{t,\underline{w}} \subset H^i_{j_{t,\underline{w}}}\setminus \partial^s H^i_{j_{t,\underline{w}}}$. This confirms the claim.
	
\end{proof}

\begin{lemm}\label{Lemm: first order in O(i)}
	Let $I_{t_1,\underline{w}}$ and $I_{t_2,\underline{w}}$ be two stable segments contained within $R_i$, i.e., $w_{t_1}=w_{t_2}=i$. Suppose that $j_{t_1,\underline{w}}<_{v(i)}j_{t_2,\underline{w}}$. Then, with respect to the vertical orientation of $R_i$:
	$$
	I_{t_1,\underline{w}}<_{v(i)}I_{t_2,\underline{w}}.
	$$
\end{lemm}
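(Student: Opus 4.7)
The plan is to deduce the vertical ordering of the intervals $I_{t_1,\underline{w}}$ and $I_{t_2,\underline{w}}$ inside $R_i$ from the already-established vertical ordering of the horizontal sub-rectangles of $R_i$. The whole argument is essentially a transfer of the order on the indices $\{1,\dots,h_i\}$ (which, by the convention used to label horizontal sub-rectangles, reflects the vertical direction of $R_i$) to the order on the intervals via a containment relation.

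First, I would invoke Corollary \ref{Coro: projetion stable segments}, which locates the intervals precisely. Since $w_{t_1}=w_{t_2}=i$, we obtain
\[
I_{t_1,\underline{w}}\subset H^i_{j_{t_1,\underline{w}}}\setminus \partial^s H^i_{j_{t_1,\underline{w}}},\qquad I_{t_2,\underline{w}}\subset H^i_{j_{t_2,\underline{w}}}\setminus \partial^s H^i_{j_{t_2,\underline{w}}}.
\]
By the labeling convention of the horizontal sub-rectangles of $(f,\mathcal{R})$ inside $R_i$, the indices $1,\dots,h_i$ are assigned in increasing order with respect to the vertical direction of $R_i$; equivalently, $H^i_{j}<_{v(i)} H^i_{j'}$ if and only if $j<j'$. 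Combined with the hypothesis $j_{t_1,\underline{w}}<_{v(i)} j_{t_2,\underline{w}}$ (which, in the natural identification used throughout the paper, means $j_{t_1,\underline{w}}<j_{t_2,\underline{w}}$ as natural numbers), this gives
\[
H^i_{j_{t_1,\underline{w}}}<_{v(i)} H^i_{j_{t_2,\underline{w}}}.
\]

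Next, I would argue that the vertical order of the containing horizontal sub-rectangles descends to the vertical order of the stable intervals they contain. Because distinct horizontal sub-rectangles of $R_i$ have disjoint interiors and are separated by the stable boundary arcs $\partial^s H^i_{j}$, the two rectangles $H^i_{j_{t_1,\underline{w}}}$ and $H^i_{j_{t_2,\underline{w}}}$ are stacked one above the other along the vertical direction of $R_i$, with no overlap. The intervals $I_{t_1,\underline{w}}$ and $I_{t_2,\underline{w}}$ lie strictly in the interiors of these rectangles (i.e.\ not in their stable boundaries), so every vertical leaf of $R_i$ meeting $I_{t_1,\underline{w}}$ meets it at a point strictly below the corresponding point of $I_{t_2,\underline{w}}$. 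This is precisely the assertion $I_{t_1,\underline{w}}<_{v(i)} I_{t_2,\underline{w}}$.

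I do not anticipate a genuine obstacle in this lemma; its only delicate point is to be sure that $I_{t_k,\underline{w}}$ is not allowed to sit on $\partial^s H^i_{j_{t_k,\underline{w}}}$, since otherwise the two intervals could in principle coincide with a shared stable boundary arc. This is guaranteed by the running assumption (cf.\ Remark \ref{Rema: Why not s boundary code?}) that the codes in $\mathcal{W}$ are not $s$-boundary codes together with the strict inclusion provided by Corollary \ref{Coro: projetion stable segments}, so no additional work is needed.
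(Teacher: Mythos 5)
Your proof is correct and follows essentially the same route as the paper: both locate each interval inside its horizontal sub-rectangle via Corollary \ref{Coro: projetion stable segments} and then transfer the vertical order of the stacked sub-rectangles $H^i_{j_{t_1,\underline{w}}}<_{v(i)}H^i_{j_{t_2,\underline{w}}}$ to the intervals they contain. Your extra remark that the intervals avoid $\partial^s H^i_{j_{t_k,\underline{w}}}$ (so they cannot coincide on a shared boundary arc) is a detail the paper leaves implicit but which is already packaged into the strict inclusion of that corollary, so nothing is missing on either side.
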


\begin{proof}

	As was observed in Corollary \ref{Coro: projetion stable segments}, $I_{t_1,\underline{w}}\subset H^i_{j_{t_1,\underline{w}}}$ and $I_{t_2,\underline{w}}\subset H^i_{j_{t_2,\underline{w}}}$. With respect to the vertical orientation of $R_i$ any stable interval contained in $H^i_{j_{t_1,\underline{w}}}$ lies below any stable interval contained in $H^i_{j_{t_2,\underline{w}}}$. In particular, we have $I_{t_1,\underline{w}}<I_{t_2,\underline{w}}$, as we claimed.
\end{proof}

\begin{lemm}\label{Lemm: diferentation moment of the order}
Let $\underline{w} \in \mathcal{W}$ with period $P$. If there exist two distinct numbers $t_1$ and $t_2$ in $\{0, \cdots, P-1\}$ such that $\sigma^{t_1}(\underline{w}) \neq \sigma^{t_2}(\underline{w})$ but both have $w_{t_1} = w_{t_2} = i$, then we consider the two elements in $\mathcal{O}(i, \underline{w})$ determined by these iterations: $(t_1, \underline{w})$ and $(t_2, \underline{w})$. In this case:
	\begin{itemize}
		\item[i)]   There exists an natural number $M$ between $1$ and $P-1$ such that $w_{t_1+M} \neq w_{t_2+M}$, but for all $m$ from $0$ to $M-1$, $w_{t_1+m}=w_{t_2+m}$.
		
		\item[ii)]  If $M=1$, then $j_{t_1,\underline{w}}\neq j_{t_2,\underline{w}}$. However, if $M\geq 2$, for all $m\in \{0,\cdots,M-2\}$, the indices $j_{t_1+m ,\underline{w}}$ and $j_{t_2+m,\underline{w}}$ (introduced in in \ref{Coro: projetion stable segments}) are equal. Thus, for all $m$ in the range $0$ to $M-2$:
		$$
		\epsilon_T(w_{t_1+m},j_{t_1 +m, \underline{w}})=\epsilon_T(w_{t_2+m},j_{t_2 +m, \underline{w}}).
		$$
		
		\item[iii)]  The indices $j_{t_1+M-1,\underline{w}}, j_{t_2+M-1,\underline{w}}\in \{1,\cdots, h_{w_{t_1 +M-1}}=h_{w_{t_2 +M-1}}\}$ are different.
	\end{itemize}
	
\end{lemm}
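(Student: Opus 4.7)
The plan is to prove each item essentially directly from the definitions, with Lemma \ref{Lemm: unique hriwontal for every prjection} doing most of the work for items $ii)$ and $iii)$.

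For item $i)$, I would first note that since $\underline{w}$ is $P$-periodic, so are $\sigma^{t_1}(\underline{w})$ and $\sigma^{t_2}(\underline{w})$. If these two periodic codes agreed at every position $m\in\{0,1,\ldots,P-1\}$, they would be equal on all of $\mathbb{Z}$, contradicting the hypothesis $\sigma^{t_1}(\underline{w})\neq \sigma^{t_2}(\underline{w})$. Since position $0$ gives the common value $w_{t_1}=w_{t_2}=i$, the smallest positive position $M$ where $\sigma^{t_1}(\underline{w})$ and $\sigma^{t_2}(\underline{w})$ disagree satisfies $1\leq M\leq P-1$. By minimality, $w_{t_1+m}=w_{t_2+m}$ for all $m\in\{0,\ldots,M-1\}$ while $w_{t_1+M}\neq w_{t_2+M}$, which is exactly the claim.

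For item $ii)$, the key observation is that, by Lemma \ref{Lemm: unique hriwontal for every prjection}, $j_{t,\underline{w}}$ is the unique element of $\{1,\ldots,h_{w_t}\}$ such that $\rho_T(w_t, j_{t,\underline{w}})=(w_{t+1},l_{t+1})$ for some $l_{t+1}$; in particular, $j_{t,\underline{w}}$ depends only on the consecutive pair $(w_t,w_{t+1})$. Now for each $m\in\{0,\ldots,M-2\}$, both $m$ and $m+1$ are strictly less than $M$, so item $i)$ yields $w_{t_1+m}=w_{t_2+m}$ and $w_{t_1+m+1}=w_{t_2+m+1}$. Applying the uniqueness statement to both $t_1+m$ and $t_2+m$ gives $j_{t_1+m,\underline{w}}=j_{t_2+m,\underline{w}}$. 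The equality
$$
\epsilon_T(w_{t_1+m},j_{t_1+m,\underline{w}})=\epsilon_T(w_{t_2+m},j_{t_2+m,\underline{w}})
$$
then follows because $\epsilon_T$ is a function on $\mathcal{H}(T)$ and both sides have the same horizontal label.

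For item $iii)$, consider the position $m=M-1$. By item $i)$ we have $w_{t_1+M-1}=w_{t_2+M-1}=:i_{M-1}$, but $w_{t_1+M}\neq w_{t_2+M}$. Suppose, toward contradiction, that $j_{t_1+M-1,\underline{w}}=j_{t_2+M-1,\underline{w}}=:j$. Then $\rho_T(i_{M-1},j)$ is a single element of $\mathcal{V}(T)$, whose first coordinate is forced by Lemma \ref{Lemm: unique hriwontal for every prjection} to equal both $w_{t_1+M}$ and $w_{t_2+M}$. This contradicts the defining property of $M$, and so $j_{t_1+M-1,\underline{w}}\neq j_{t_2+M-1,\underline{w}}$, as required. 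Note that when $M=1$ the statement in $ii)$ is vacuous and this last argument (applied at $m=0$) yields the asserted inequality $j_{t_1,\underline{w}}\neq j_{t_2,\underline{w}}$. No step presents a genuine obstacle; the whole argument is careful bookkeeping built on top of the uniqueness granted by the binary incidence matrix in Lemma \ref{Lemm: unique hriwontal for every prjection}.
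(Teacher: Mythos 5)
Your proof is correct and follows essentially the same route as the paper's: item $i)$ by minimality of the first disagreement position, item $ii)$ by the uniqueness in Lemma \ref{Lemm: unique hriwontal for every prjection} applied to consecutive equal pairs, and item $iii)$ by the fact that equal indices would force $\rho_T$ to assign the same target rectangle, contradicting $w_{t_1+M}\neq w_{t_2+M}$. Your remark that the $M=1$ case of item $ii)$ is just the item $iii)$ argument at $m=0$ is also exactly how the paper handles it.
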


\begin{proof}
	\textbf{Item} $i)$. Since $\sigma^{t_1}(\underline{w})\neq \sigma^{t_2}(\underline{w})$, and $\underline{w}$ has a period of $P$, there must exist an integer $M \in \{0, \cdots, P-1\}$ such that $w_{t_1+M} \neq w_{t_2+M}$, and $M$ is chosen as the smallest integer with this property. We claim that $M\neq 0$ because $w_{t_1+0}=w_{t_2+0}$. Since $M$ is the minimum integer where $w_{t_1+M} \neq w_{t_2+M}$, then for all $0\leq m<M$, it holds that $w_{t_1+m}= w_{t_2+m}$.
	
	\textbf{Item} $ii)$.If $M=1$, then $w_{t_1+1} \neq w_{t_2+1}$. Since the incidence matrix $A$ is binary, this implies that $j_{t_1,\underline{w}} \neq j_{t_2,\underline{w}}$.
	
	Now, let's consider the case when $M>1$. For each $m \in \{0, \cdots, M-2\}$, the indices $j_{t_1+m,\underline{w}}$ and $j_{t_2+m,\underline{w}}$ are the only ones that satisfy $\rho_T(w_{t_1+m},j_{t_1+m,\underline{w}})=(w_{t_1+m+1},l_m)$ and $\rho_T(w_{t_2+m},j_{t_2+m,\underline{w}})=(w_{t_2+m+1},l'_m)$. However, since $w_{t_1+m+1}=w_{t_2+m+1}$, it follows that $j_{t_1+m,\underline{w}}=j_{t_2+m,\underline{w}}$. Consequently, we have:
	$$
	\epsilon_T(w_{t_1+m},j_{t_1 +m, \underline{w}})=\epsilon_T(w_{t_2+m},j_{t_2 +m, \underline{w}}).
	$$
	
	\textbf{Item} $iii)$. Remember that $\rho_T(w_{t_1 +M-1},j_{t_1+M-1,\underline{w}})=(w_{t_1+M},l)$ and $\rho_T(w_{t_2 +M-1},j_{t_2+M-1,\underline{w}})=(w_{t_2+M},l')$. Since $w_{t_1+M}\neq w_{t_2+M}$ and the incidence matrix of $T$ is binary, it is necessarily the case that $j_{t_1+M-1,\underline{w}}\neq j_{t_2+M-1,\underline{w}}$.
\end{proof}

\begin{defi}\label{Defi: Intetchage order funtion}
Let $T \in \mathcal{G}\mathcal{T}(\textbf{p-A})^{SIM}$ and denote its incidence matrix by $A$. Suppose $\underline{w} \in \cW$ is a periodic code of period $P$ that is not an $s$-boundary code. Assume that $\sigma^{t_1}(\underline{w}) \neq \sigma^{t_2}(\underline{w})$ but $w_{t_1} = w_{t_2} = i$. Let $M$ be the number determined in Lemma \ref{Lemm: diferentation moment of the order}. We define the \emph{interchange order} between $(t_1, \underline{w})$ and $(t_2, \underline{w})$ as follows:

	\begin{itemize}
		\item If $M=1$ then $\delta((t_1,\underline{w}), (t_2,\underline{w})):=1$.
		\item If $M>1$ then
		$$
		\delta((t_1,\underline{w}), (t_2,\underline{w}))=\prod_{m=0}^{M-2}\epsilon_T(w_{t_1+m},j_{t_1 +m, \underline{w}})=\prod_{m=0}^{M-2}\epsilon_T(w_{t_2+m},j_{t_2 +m, \underline{w}}).
		$$
	\end{itemize}
\end{defi}

\begin{lemm}\label{Lemm: Comparacion de ordenes}
	Let $T$ be a geometric type in the pseudo-Anosov class with a binary incidence matrix $A$. Suppose $\underline{w}\in \Sigma_A$ is a periodic code of period $P\geq 1$ that is not an $s$-boundary code. Assume that $\sigma^{t_1}(\underline{w}) \neq \sigma^{t_2}(\underline{w})$ but $w_{t_1} = w_{t_2}=i$. Consider a realization $(f,\cR)$ of the geometric type $T$ by a pseudo-Anosov homeomorphism $f:S\to S$. Let $I_{t_1,\underline{w}}$ and $I_{t_2,\underline{w}}$ be the two stable segments determined by the numbers $t_1$ and $t_2$. Then, $I_{t_1,\underline{w}} < I_{t_2,\underline{w}}$ with respect to the vertical orientation of $R_i\in \cR$ if and only if one of the following situations occurs:
	\begin{itemize}
		\item $j_{t_1+M-1}<j_{t_2+M-1}$ and $\delta((t_1,\underline{w}), (t_2,\underline{w}))=1$, or
		\item $j_{t_1+M-1}>j_{t_2+M-1}$ and $\delta((t_1,\underline{w}), (t_2,\underline{w}))=-1$, 
	\end{itemize}
	
\end{lemm}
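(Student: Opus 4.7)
My plan is to reduce the comparison of $I_{t_1,\underline{w}}$ and $I_{t_2,\underline{w}}$ inside $R_i$ to a comparison of their $f^{M-1}$-images inside $R_{w_{t_1+M-1}}=R_{w_{t_2+M-1}}$, where the ordering is directly controlled by Lemma \ref{Lemm: first order in O(i)}. The whole content of the statement is then to keep track of how many orientation reversals occur along the iterations $f, f^2, \ldots, f^{M-1}$, which is exactly what $\delta((t_1,\underline{w}),(t_2,\underline{w}))$ records.

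First, I would use Lemma \ref{Lemm: image of stable intervals} iteratively to get $f^{M-1}(I_{t_1,\underline{w}})\subset I_{t_1+M-1,\underline{w}}$ and $f^{M-1}(I_{t_2,\underline{w}})\subset I_{t_2+M-1,\underline{w}}$, both of which lie in the common rectangle $R_{w_{t_1+M-1}}$. By Corollary \ref{Coro: projetion stable segments}, these intervals sit respectively in the horizontal sub-rectangles $H^{w_{t_1+M-1}}_{j_{t_1+M-1,\underline{w}}}$ and $H^{w_{t_2+M-1}}_{j_{t_2+M-1,\underline{w}}}$, and by item (iii) of Lemma \ref{Lemm: diferentation moment of the order} these horizontal indices are different. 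Applying Lemma \ref{Lemm: first order in O(i)} then yields the key comparison
\begin{equation*}
f^{M-1}(I_{t_1,\underline{w}})<_{v(w_{t_1+M-1})} f^{M-1}(I_{t_2,\underline{w}}) \;\Longleftrightarrow\; j_{t_1+M-1,\underline{w}}<j_{t_2+M-1,\underline{w}}.
\end{equation*}

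Next I would track the vertical orientation under $f^{M-1}$ step by step. For each $m\in\{0,\ldots,M-2\}$, item (ii) of Lemma \ref{Lemm: diferentation moment of the order} guarantees that $j_{t_1+m,\underline{w}}=j_{t_2+m,\underline{w}}$, so $f$ sends both $I_{t_1+m,\underline{w}}$ and $I_{t_2+m,\underline{w}}$ through the same horizontal sub-rectangle $H^{w_{t_1+m}}_{j_{t_1+m,\underline{w}}}$, whose vertical-orientation behavior under $f$ is given by the sign $\epsilon_T(w_{t_1+m},j_{t_1+m,\underline{w}})$. Since these two intervals share a common ambient sub-rectangle at each intermediate step, the relative vertical order of $I_{t_1,\underline{w}}$ and $I_{t_2,\underline{w}}$ in $R_i$ and that of their $f^{M-1}$-images in $R_{w_{t_1+M-1}}$ differ by precisely the product of these signs, which by Definition \ref{Defi: Intetchage order funtion} equals $\delta((t_1,\underline{w}),(t_2,\underline{w}))$. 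In formulas, $I_{t_1,\underline{w}}<_{v(i)} I_{t_2,\underline{w}}$ iff
\begin{equation*}
\delta((t_1,\underline{w}),(t_2,\underline{w}))\cdot\bigl(\text{sign of order of }f^{M-1}(I_{t_1,\underline{w}}),f^{M-1}(I_{t_2,\underline{w}})\bigr)=+1.
\end{equation*}

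Combining these two equivalences gives exactly the two cases of the statement. The $M=1$ case requires only the first step (the product in $\delta$ is empty and equals $1$ by convention), so it reduces directly to Lemma \ref{Lemm: first order in O(i)}. The only potentially delicate point, which I would verify carefully, is that the intermediate intervals $I_{t_1+m,\underline{w}}$ and $I_{t_2+m,\underline{w}}$ really lie in a common horizontal sub-rectangle of $R_{w_{t_1+m}}$ for every $m\leq M-1$, so that "the orientation change of $f$ applied to these two intervals is the same single sign $\epsilon_T(w_{t_1+m},j_{t_1+m,\underline{w}})$"; this is what makes the product telescope cleanly to $\delta$, and it is precisely guaranteed by items (i) and (ii) of Lemma \ref{Lemm: diferentation moment of the order}.
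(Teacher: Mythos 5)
Your proposal is correct and follows essentially the same route as the paper's proof: reduce to the comparison at time $M-1$ via Lemma \ref{Lemm: first order in O(i)} (using Corollary \ref{Coro: projetion stable segments} and item (iii) of Lemma \ref{Lemm: diferentation moment of the order}), and account for the accumulated orientation reversals through the telescoping product of $\epsilon_T$-signs that defines $\delta$. The paper phrases the orientation-tracking in terms of the horizontal sub-rectangle $H$ bounded by the two intervals and the images of its upper and lower boundaries, but this is only a cosmetic difference from your interval-by-interval bookkeeping.
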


\begin{proof}
	The case when $M=1$ reduces to Lemma \ref{Lemm: first order in O(i)}, so let's assume that $M>1$. Assume that $I_{t_1,\underline{w}} < I_{t_2,\underline{w}}$ and call $H$ the horizontal sub-rectangle determined by such stable intervals. For all $0\leq m \leq M-2$, $j_{t_1+m}=j_{t_2+m}$ and $f^{m}(H)$ is contained in a unique horizontal sub-rectangle of $R_{w_{t_1+m}}$. In this case, after applying $f^{M-1}$ to $H$, we have two possibilities:
	
	\begin{itemize}
		\item[i)]  $j_{t_1+M-1}<j_{t_2+M-1}$ and $f^{M-1}$ preserves the vertical orientation, which happens if and only if $\delta((t_1,\underline{w}), (t_2,\underline{w}))=1$, or
		\item[ii)]  $j_{t_1+M-1}>j_{t_2+M-1}$ and $f^{M-1}$ changes the vertical orientations, which happens if and only if $\delta((t_1,\underline{w}), (t_2,\underline{w}))=-1$.
	\end{itemize}
		
	Now, assume that $j_{t_1+M-1}<j_{t_2+M-1}$ and $\delta((t_1,\underline{w}), (t_2,\underline{w}))=1$. Let  $H$ be  the horizontal sub-rectangle of $R_{w_{t_1}}$ determined by the intervals $I_{t_1,\underline{w}}$ and $I_{t_2,\underline{w}}$, we need to determine what of such intervals is the upper boundary of $H$.
	
	The number $\delta((t_1,\underline{w}), (t_2,\underline{w}))$ measures the change in the relative positions of $f^{M-1}(I_{t_1,\underline{w}})$ and $f^{M-1}(I_{t_2,\underline{w}})$ inside the rectangle $R_{w_{t_1+M-1}}$. In this case, the relative position of $f^{M-1}(I_{t_1,\underline{w}})$ and $f^{M-1}(I_{t_2,\underline{w}})$ inside $R_{w_{t_1+M-1}}$ is the same as the relative position of $I_{t_1+M-1,\underline{w}}$ and $I_{t_2+M-1,\underline{w}}$ inside $R_i$. Since $\delta((t_1,\underline{w}), (t_2,\underline{w}))=1$, the upper boundary of $f^{M-1}(H)$ coincides with the image of the upper boundary of $H$ and similarly, the inferior boundary of $H$ corresponds to the inferior boundary of $f^{M-1}(H)$.
	
	Furthermore, since $j_{t_1+M-1}<j_{t_2+M-1}$ and according to Lemma \ref{Lemm: first order in O(i)}, $I_{t_1+M-1,\underline{w}} < I_{t_2+M-1,\underline{w}}$. So, we can conclude that the inferior boundary of $f^{M-1}(H)$ is contained within $I_{t_1+M-1,\underline{w}}$ and then the inferior boundary of $H$ is $I_{t_1,\underline{w}}$, and its upper boundary is $I_{t_2,\underline{w}}$. In this manner, we have $I_{t_1,\underline{w}}<I_{t_2,\underline{w}}$ with respect to the vertical order in $R_i$.

	Let's consider a scenario where $j_{t_1+M-1}>j_{t_2+M-1}$ and $\delta((t_1,\underline{w}), (t_2,\underline{w}))=-1$. In this case, we take $H$ as the horizontal sub-rectangle of $R_{w_{t_1}}$ determined by the intervals $I_{t_1,\underline{w}}$ and $I_{t_2,\underline{w}}$.
	The negative value of $\delta((t_1,\underline{w}), (t_2,\underline{w}))$ indicates that $f^{M-1}\vert_H$ alters the relative positions of the stable boundaries of $H$ with respect to the vertical order in $R_i$. Consequently, the upper boundary of $f^{M-1}(H)$ is the image of the inferior boundary of $H$, and simultaneously, the lower boundary of $f^{M-1}(H)$ is the image  of the upper boundary of $H$.
	
	Given that $j_{t_1+M-1}>j_{t_2+M-1}$, it follows that the lower boundary of $f^{M-1}(H)$ is enclosed within $H^{w_{t_1+M-1}}_{j_{t_1+M-1,\underline{w}}}$, and the upper boundary is encompassed within $H^{w_{t_2+M-1}}_{j_{t_2+M-1,\underline{w}}}$. Therefore by  Corollary \ref{Coro: projetion stable segments}  we have that $I_{t_1+M-1,\underline{w}} > I_{t_2+M-1,\underline{w}}$, and the upper boundary of $f^{M-1}(H)$ is confined within $I_{t_1+M-1,\underline{w}}$.
	Using  Lemma \ref{Lemm: image of stable intervals}, we can deduce that   $f^{M-1}(I_{t_1,\underline{w}})\subset I_{t_1+M-1,\underline{w}}$  and then that $I_{t_1,\underline{w}}$ is the inferior boundary of $H$. Finally we get that: $I_{t_1,\underline{w}}<I_{t_2,\underline{w}}$ with respect to the vertical order in $R_i$.
	
\end{proof}

We will now establish a formal ordering for the set of stable segments within $R_i$, which constitute the stable boundaries of the rectangles contained in $\cR_{S(\underline{w})}$ that also lie within $R_i$.

\begin{defi}\label{Lemm: Boundaries code}
	Let $R_i\in\cR$ be any rectangle, denote $I_{i,-1}:=\partial^s_{-1} R_i$ as the lower boundary of $R_i$, and $I_{i,+1}:=\partial^s_{+1} R_i$ as the upper boundary of $R_i$. Define:
	$$
	\underline{s}:\cO(i,\underline{w})\cup \{(i,+1), (i,-1) \} \rightarrow \{0,1,\cdots,O(i,\underline{w}),O(i,\underline{w})+1\},
	$$
	as the unique function such that:
	\begin{itemize}
		\item $\underline{s}(t_1,\underline{w})< \underline{s}(t_1,\underline{w})$ if and only if  $I_{t_1,\underline{w}}< I_{t_1,\underline{w}}$ respect the vertical  order in $R_i$.
		\item It assigns $0$ to   $(i,-1)$ and  $O(i,\underline{w})+1$ to  $(i,+1)$.
	\end{itemize}
\end{defi}

The boundaries of any rectangle $\tilde{R}_r$ of $\cR_{S(\underline{w})}$ contained in $R_i$ are defined by two consecutive horizontal intervals in $\cO(i,\underline{w})\cup \{I_{i,-1},I_{i,+1}\}$. The previous order enables us to describe them accurately based on the relative position of $\tilde{R}_r$ inside $R_i$.

\begin{lemm}\label{Lemm: Determine boundaris of R-r}

	Let $\tilde{R}_r$ be a rectangle in the geometric Markov partition $\cR_{S(\underline{w})}$ with $r=\tilde{r}(i,s)$. Then, the stable boundary of $\tilde{R}_r$ consists of two stable segments of $R_i$ determined as follows:
	
	\begin{itemize}
		\item[i)] If $s=1$, the lower boundary of $\tilde{R}_r$ is $I_{i,-1}$. If  $O(i,\underline{w})=0$, then its upper boundary is $I_{i,+1}$. However, if $O(i,\underline{w})>0$, there exists a unique $(t,\underline{w})\in \cO(i,\underline{w})$ such that $\underline{s}(t,\underline{w})=1$, and the upper boundary of $\tilde{R}_r$ is the stable segment $I_{t,\underline{w}}$.
		
		\item[ii)] If $O(i,\underline{w})>1$ and $s \neq 1, O(i,\underline{w})+1$, there are unique $(t_1,\underline{w}), (t_2,\underline{w})\in \cO(i,\underline{w})$ such that: $\underline{s}(t_1,\underline{w})=s-1$, $\underline{s}(t_2,\underline{w})=s$. Furthermore, the lower boundary of $\tilde{R}_r$ is the stable segment $I_{t_1,\underline{w}}$, and the upper boundary of $\tilde{R}_r$ is the stable segment $I_{t_2,\underline{w}}$.
		
		\item[iii)] If $O(i,\underline{w})>0$ and $s=O(i,\underline{w})+1$, then the upper boundary of $\tilde{R}_r$ is the stable segment $I_{i,+1}$. Additionally, there exists a unique $(t,\underline{w})\in \cO(i,\underline{w})$ such that $\underline{s}(t,\underline{w})=O(i,\underline{w})$, and the inferior boundary of $\tilde{R}_r$ is the stable segment $I_{t,\underline{w}}$.
	\end{itemize}
	
\end{lemm}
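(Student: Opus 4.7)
The plan is to reduce the statement to a direct consequence of Lemma \ref{Lemm: rectangles in R S(p)} together with the linear ordering $\underline{s}$ constructed from Lemma \ref{Lemm: Comparacion de ordenes}. First, by Lemma \ref{Lemm: rectangles in R S(p)}, every connected component of $\overset{o}{R_i}\setminus \cup\cI(i,\underline{w})$ has closure equal to a horizontal sub-rectangle of $R_i$, so its stable boundary must be the union of two horizontal arcs chosen from the finite family $\cI(i,\underline{w})\cup \{I_{i,-1},I_{i,+1}\}$. Moreover, since these are pairwise disjoint horizontal arcs of $R_i$ (Corollary \ref{Coro: projetion stable segments} places each $I_{t,\underline{w}}$ strictly inside a horizontal sub-rectangle of $(f,\cR)$, hence off $\partial^s R_i$), the closure of each component is bounded above and below by two consecutive arcs with respect to the vertical order $<_{v(i)}$.

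Next I would verify that the function $\underline{s}$ of Definition \ref{Lemm: Boundaries code} is well defined and agrees with $<_{v(i)}$. On the pairs $(t,\underline{w})\in\cO(i,\underline{w})$, this is exactly the content of Lemma \ref{Lemm: Comparacion de ordenes}: the vertical order of the horizontal segments $I_{t,\underline{w}}\subset R_i$ can be read off combinatorially from the indices $j_{t+M-1,\underline{w}}$ and the sign $\delta((t_1,\underline{w}),(t_2,\underline{w}))$. At the two boundary extremes, assigning $\underline{s}(i,-1)=0$ and $\underline{s}(i,+1)=O(i,\underline{w})+1$ is forced since $I_{i,-1}$ and $I_{i,+1}$ lie strictly below and strictly above every interior stable segment of $R_i$. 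Thus $\underline{s}$ provides a strict vertical enumeration of the $O(i,\underline{w})+2$ arcs that cut $R_i$.

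With this in hand, the labeling convention of the geometric refinement (Definition \ref{Defi: Geometric refinement}) dictates that the $s$-th rectangle of $\cR_{S(\underline{w})}$ inside $R_i$, namely $\tilde{R}_{\tilde{r}(i,s)}$, is exactly the one whose lower boundary sits at position $s-1$ and whose upper boundary sits at position $s$ in the enumeration given by $\underline{s}$. The three items of the lemma are then the three possible configurations: $s=1$ (lower boundary is $I_{i,-1}$), $1<s<O(i,\underline{w})+1$ (both boundaries are of the form $I_{t,\underline{w}}$), and $s=O(i,\underline{w})+1$ (upper boundary is $I_{i,+1}$). The degenerate sub-case $O(i,\underline{w})=0$, in which $R_i$ is not cut at all and $\tilde{R}_{\tilde{r}(i,1)}=R_i$, is handled separately inside case $(i)$.

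I do not expect serious obstacles: the only non-trivial ingredient is the comparison of relative vertical positions of two segments $I_{t_1,\underline{w}}$ and $I_{t_2,\underline{w}}$, and that comparison is precisely Lemma \ref{Lemm: Comparacion de ordenes}. The bulk of the argument is bookkeeping: checking that the lexicographic index $s$ assigned by the refinement matches the index supplied by $\underline{s}$, and verifying that no other horizontal segment of $\cI(i,\underline{w})$ lies between the two consecutive arcs $\underline{s}^{-1}(s-1)$ and $\underline{s}^{-1}(s)$, which follows from the strict monotonicity of $\underline{s}$.
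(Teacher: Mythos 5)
Your proposal is correct and follows essentially the same route as the paper's own proof: both arguments reduce the lemma to the observation that the arcs in $\cI(i,\underline{w})\cup\{I_{i,-1},I_{i,+1}\}$ are linearly ordered by $\underline{s}$ (via Lemma \ref{Lemm: Comparacion de ordenes}) and that the rectangle in vertical position $s$ is bounded by the consecutive arcs at positions $s-1$ and $s$, with the three items corresponding to the cases $s=1$, $1<s<O(i,\underline{w})+1$, and $s=O(i,\underline{w})+1$. Your write-up is slightly more explicit than the paper's about the prerequisites (that each component's closure is a horizontal sub-rectangle and that the interior arcs avoid $\partial^s R_i$), but the substance is identical.
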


\begin{proof}
	Recall that $s$ is the vertical position of $\tilde{R}_r$ inside $R_i$ but $\tilde{s}(t,\underline{w})$ is the position of the horizontal interval $I_{t,\underline{w}}$ inside $R_i$.
	
	\textbf{Item } $i)$. Since $s=1$, within the vertical ordering of $R_i$, $\tilde{R}_r$ represents the first rectangle from $\cR_{S(\underline{w})}$ contained within $R_i$. Therefore, its lower boundary coincides with the lower boundary of $R_i$, which is $I_{i,-1}$.
	
	If $O(i,\underline{w})=0$, then $\tilde{R}_r=R_i$, and it's the only rectangle from $\cR_{S(\underline{w})}$ within $R_i$. Consequently, the upper boundary of $\tilde{R}_r$ must be $I_{i,+1}$.
	
	However, if $O(i,\underline{w})>0$, the upper boundary of $\tilde{R})_r$ must correspond to a stable interval $I_{t,\underline{w}}\in \cI(i,\underline{w})$, which occupies the first position according to the order $\underline{s}$; hence, $\underline{s}(t,\underline{w})=1$.

	\textbf{Item } $ii)$. Since $\tilde{R}_r$ occupies the vertical position $1 < s < O(i,\underline{w})+1$ among all the rectangles in $\cR_{S(\underline{w})}$ within $R_i$, its stable boundaries cannot coincide with $I_{i,-1}$ or $I_{i,+1}$. Therefore, its stable boundary is defined by two consecutive stable segments in $\cI(i,\underline{w})$, denoted as $I_{t_1,\underline{w}} < I_{t_2,\underline{w}}$. Furthermore, the interval $I_{t_1,\underline{w}}$ holds the position $s-1$ relative to the elements in $\cI(i,\underline{w})\cup {I_{i,+1}, I_{i,-1}}$, and $I_{t_2,\underline{w}}$ is positioned at $s$. This leads to the conclusion that $\underline{s}(t_1,\underline{w})=s-1$ and $\underline{s}(t_2,\underline{w})=s$.

	\textbf{Item }  $iii)$.Since $s=O(i,\underline{w})+1$, $\tilde{R}_r$ is the upper rectangle of $\cR_{S(\underline{w})}$ contained in $R_i$. Consequently, its upper boundary is $I_{i,+1}$, and its lower boundary is the segment $I_{t,\underline{w}}$ located in position $O(i,\underline{w})$ among the elements of $\cI(i,\underline{w})\cup \{I_{i,+1}, I_{i,-1}\}$. Therefore, $\underline{s}(t,\underline{w})=O(i,\underline{w})$.
	
\end{proof}

Let's determine the image of the stable boundaries of a rectangle $R_r$. We are going to use Lemma \ref{Lemm: image of stable intervals} as our main tool.

\begin{lemm}\label{Lemm: imge of boundaries}
	
	Let $\tilde{R}_r$, where $r=\tilde{r}(i,s)$, be a rectangle in the geometric Markov partition $\cR_{S(\underline{w})}$. We can determine the image of the horizontal boundary of $\tilde{R}_r$ under $f$ in the following manner:
	
	\begin{itemize}
		\item[i)] If the lower boundary of $\tilde{R}_r$ is $I_{i,-1}$ and $\Phi_T(i,1)=(k,l,\epsilon)$, then $f(I_{i,-1})$ is contained within $I_{k,-1}$ if $\epsilon=1$, and it is contained within $I_{k,+1}$ otherwise.

		\item[ii)] If one boundary component of $\tilde{R}_r$ is the stable segment $I_{t,\underline{w}}$, then $f(I_{t,\underline{w}})\subset I_{t+1,\underline{w}}$.
		
		\item[iii)]  If $I_{i,+1}$ is the upper boundary of $\tilde{R}_r$ and $\Phi_T(i,h_i)=(k,l,\epsilon)$, then $f(I_{i,+1})$ is contained in $I_{k,+1}$ if $\epsilon=1$ and is contained in $I_{k,-1}$ otherwise.
		
	\end{itemize}
		
\end{lemm}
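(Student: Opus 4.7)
The plan is to verify each of the three items separately, using the already-established Lemma \ref{Lemm: image of stable intervals} for item (ii) and unpacking the geometric meaning of $\Phi_T(i,\cdot)$ for items (i) and (iii).

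For item (ii), I would simply invoke Lemma \ref{Lemm: image of stable intervals}, which asserts exactly that $f(I_{t,\underline{w}}) \subset I_{t+1,\underline{w}}$. There is nothing to add: the boundary component of $\tilde{R}_r$ of the form $I_{t,\underline{w}}$ is by construction the stable segment induced by the iteration $t$ of $\underline{w}$, and its image under $f$ falls into the stable segment induced by the next iteration.

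For item (i), the strategy is to use the fact that $I_{i,-1}$, the lower boundary of $R_i$, is contained in the lower boundary of the lowest horizontal sub-rectangle $H^i_1$ of $(f,\cR)$, thanks to the labeling convention of horizontal sub-rectangles by increasing vertical order. By the definition of the geometric type, $f(H^i_1) = V^k_l$ where $\Phi_T(i,1)=(k,l,\epsilon)$, and $\epsilon=\epsilon_T(i,1)$ records whether $f$ preserves or reverses the vertical orientation when restricted to $H^i_1$ (see Definition \ref{Defi: Orientation (f,cR)}). Hence, if $\epsilon=1$, then $f$ sends the lower stable boundary component of $H^i_1$ into the lower stable boundary of $V^k_l$; since $V^k_l \subset R_k$ is a vertical sub-rectangle, its lower boundary is contained in $\partial^s_{-1} R_k = I_{k,-1}$, yielding $f(I_{i,-1}) \subset I_{k,-1}$. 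If instead $\epsilon=-1$, the vertical orientation is reversed and $f(I_{i,-1})$ lands in the upper boundary of $V^k_l$, hence in $I_{k,+1}$.

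Item (iii) is proved by a completely symmetric argument: $I_{i,+1}$ lies in the upper boundary of the topmost horizontal sub-rectangle $H^i_{h_i}$ of $R_i$, and with $\Phi_T(i,h_i) = (k,l,\epsilon)$, the sign $\epsilon$ dictates whether the upper boundary of $H^i_{h_i}$ is mapped by $f$ to the upper boundary of $V^k_l$ (when $\epsilon=1$) or to its lower boundary (when $\epsilon=-1$), giving $f(I_{i,+1}) \subset I_{k,+1}$ or $f(I_{i,+1}) \subset I_{k,-1}$ respectively. I do not anticipate any substantive obstacle: the only subtlety is keeping track of the convention that $\epsilon_T(i,j) = 1$ means the vertical direction of $H^i_j$ is sent to the vertical direction of its image $V^k_l$, which is exactly how Definition \ref{Defi: Orientation (f,cR)} is set up.
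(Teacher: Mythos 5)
Your proof is correct and follows essentially the same route as the paper: item (ii) is exactly Lemma \ref{Lemm: image of stable intervals}, and items (i) and (iii) are handled by tracking the vertical orientation via $\epsilon_T(i,1)$ and $\epsilon_T(i,h_i)$, which is precisely the paper's (much terser) argument. Your version merely makes explicit the intermediate facts that $I_{i,-1}\subset\partial^s_{-1}H^i_1$ and $f(H^i_1)=V^k_l\subset R_k$, which is a welcome clarification rather than a deviation.
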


\begin{proof}
	\textbf{Item} $i)$. It's evident that the image of $I_{i,-1}$ under $f$ will be contained either in the upper or lower boundary of $R_k$, depending on how $f$ alters the vertical orientation. Specifically, if $\epsilon=1$, then $f(I_{i,-1})$ is within the lower boundary of $R_k$, whereas if $\epsilon=-1$, it will be situated in the upper boundary of $R_k$.
	
	\textbf{Item}  $ii)$. This is Lemma \ref{Lemm: image of stable intervals}.
	
	\textbf{Item} $iii)$. The argument is the same as in Item $i)$.	
\end{proof}

\subsubsection{\texorpdfstring{Generalizations for the family $\mathcal{W}$}{Generalizations for the family W}}\label{Subsec: Generalizations for the family}

Almost any result in this sub-section can be proved in the same manner as the respective result was proved for only one code. For example, the next lemma is proved with the same strategy developed in Lemma  \ref{Lemm: diferentation moment of the order}. The number $M$ is the minimum natural number such that $w^1_{t+1+M}\neq w^2_{t_2+M}$, and all the other properties follow from this number.

\begin{lemm}\label{Lemm: diferentation moment of the order,cW}

	Suppose that $\underline{w}^1$ and $\underline{w}^2$ are two codes in $\cW$ with periods $P_1$ and $P_2$, respectively. Let $t_1$ be a number in $\{0,\cdots, P_1-1\}$ and $t_2$ be a number in $\{0,\cdots, P_2-1\}$ such that $\sigma^{t_1}(\underline{w}^1) \neq \sigma^{t_2}(\underline{w}^2)$, but both $w^1_{t_1}$ and $w^2_{t_2}$ are equal to  $i\in\{1,\cdots,n\}$. Consider the two elements in $\cO(i,\cW)$ determined by these iterations: $(t_1,\underline{w}^1)$ and $(t_2,\underline{w}^2).$ In this way:

	\begin{itemize}
		\item[i)]  There exists an integer $M\in\{1,\cdots, P-1\}$ such that $w^1_{t_1+M} \neq w^2_{t_2+M}$, but for all $0 \leq m \leq M-1$, the numbers $w^1_{t_1+m}$ and $w^2_{t_2+m}$ are equal.
		
		\item[ii)] 	If $M=1$, the indexes $j_{t_1,\underline{w}^1}$ and $j_{t_2,\underline{w}^2}$ (Definition \ref{Lemm: rectangles in R S(p)}) are distinct. However, when $M\geq 2$, for every $m$ in the set $\{0,\cdots,M-2\}$, the indices $j_{t_1+m,\underline{w}^1}$ and $j_{t_2+m,\underline{w}^1}$ (see Definition \ref{Lemm: rectangles in R S(p)}) are equal. Thus, for all $m\in\{0,\cdots,M-2\}$:
		$$
		\epsilon_T(w^1_{t_1+m},j_{t_1 +m, \underline{w}^1})=\epsilon_T(w^2_{t_2+m},j_{t_2 +m, \underline{w}^2}).
		$$
		
		\item[iii)]  The  indices  $j_{t_1+M-1,\underline{w}^1}$ and $j_{t_2+M-1,\underline{w}^2}$ in $\{1,\cdots, h_{w^1_{t_1 +M-1}}=h_{w^2_{t_2 +M-1}}\}$  are different.
	\end{itemize}
	
\end{lemm}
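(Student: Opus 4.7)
My plan is to adapt, almost verbatim, the proof of Lemma~\ref{Lemm: diferentation moment of the order}, where the two iterates $\sigma^{t_1}(\underline{w})$ and $\sigma^{t_2}(\underline{w})$ of a single code are replaced by the iterates $\sigma^{t_1}(\underline{w}^1)$ and $\sigma^{t_2}(\underline{w}^2)$ of two possibly distinct codes. The engine of the argument is the binarity of $A(T)$ through Lemma~\ref{Lemm: unique hriwontal for every prjection}: the index $j_{t,\underline{w}}$ depends only on the ordered pair $(w_t,w_{t+1})$, because binarity forces $\rho_T$ to be the unique horizontal label sending $i$ to a vertical label in $R_{w_{t+1}}$.

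For item~$i)$, I first locate the smallest positive integer at which the two iterates disagree. Since by hypothesis $w^1_{t_1}=w^2_{t_2}=i$, any such integer is $\geq 1$; since by hypothesis $\sigma^{t_1}(\underline{w}^1)\neq \sigma^{t_2}(\underline{w}^2)$, at least one such integer exists and is finite. Defining $P:=\mathrm{lcm}(P_1,P_2)$, if the two positive half-tails agreed on $\{0,\dots,P-1\}$ then periodicity would force the two bi-infinite codes to coincide, contradicting the hypothesis; hence the first disagreement index $M$ satisfies $1\leq M\leq P-1$. This interpretation fixes the mildly ambiguous range $\{1,\dots,P-1\}$ appearing in the statement and yields item~$i)$.

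For item~$ii)$, I would fix any $m\in\{0,\dots,M-2\}$ and apply Lemma~\ref{Lemm: unique hriwontal for every prjection} to each code separately. Because $w^1_{t_1+m}=w^2_{t_2+m}$ and $w^1_{t_1+m+1}=w^2_{t_2+m+1}$, the uniqueness clause forces
\[
j_{t_1+m,\underline{w}^1}=j_{t_2+m,\underline{w}^2},
\]
from which the equality of the $\epsilon_T$ values is immediate by evaluating $\epsilon_T$ at the same horizontal label. When $M=1$ there are no such intermediate $m$'s, and the same uniqueness applied to the single transition yields $j_{t_1,\underline{w}^1}\neq j_{t_2,\underline{w}^2}$, since a common value would, via $\rho_T$, project to the same vertical label, contradicting $w^1_{t_1+1}\neq w^2_{t_2+1}$.

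For item~$iii)$, I would replay the $M=1$ argument at position $M-1$: the codes still agree at index $M-1$ (so both $j_{t_1+M-1,\underline{w}^1}$ and $j_{t_2+M-1,\underline{w}^2}$ are horizontal labels of the same rectangle $R_{w^1_{t_1+M-1}}$), but they disagree at index $M$, so their images under $\rho_T$ live in different rectangles, whence the two labels must be distinct. The only real obstacle in the plan is bookkeeping, and in particular pinning down exactly what ``$P$'' should mean in the two-code setting; everything else is a mechanical transcription of the single-code argument, with $\underline{w}^1$ and $\underline{w}^2$ playing independent roles on the two sides of each equality.
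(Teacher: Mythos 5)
Your proposal is correct and follows essentially the same route as the paper, which itself only remarks that the two-code case "is proved with the same strategy" as the single-code Lemma and that everything follows from the minimality of $M$; your transcription of that strategy (minimal disagreement index for item $i)$, uniqueness of $j_{t,\underline{w}}$ from the binary incidence matrix for items $ii)$ and $iii)$) is exactly what is intended. Your reading of the undefined bound $P$ as $\mathrm{lcm}(P_1,P_2)$ is a sensible repair of an ambiguity the paper leaves unaddressed.
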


This permits us to define the interchange function as  in Definition \ref{Defi: Intetchage order funtion} but this time respect two non necessarily equal  codes in $\cW$.

\begin{defi}\label{Defi: Intetchage order funtion,cW}
	
	Suppose that $\underline{w}^1$ and $\underline{w}^2$ are two codes in $\cW$ with periods $P_1$ and $P_2$, respectively. Let $t_1$ be a number in $\{0,\cdots, P_1-1\}$ and $t_2$ be a number in $\{0,\cdots, P_2-1\}$ such that $\sigma^{t_1}(\underline{w}^1) \neq \sigma^{t_2}(\underline{w}^2)$ but $w^1_{t_1} = w^2_{t_2}=i$. Let $M$ be the number determined in Lemma \ref{Lemm: diferentation moment of the order,cW}. We define the \emph{interchange order} between $(t_1, \underline{w}^1)$ and $(t_2, \underline{w}^2)$ as follows:
	
	\begin{itemize}
		\item If $M=1$ then $\delta((t_1,\underline{w}^1),(t_2,\underline{w}^2))=1$.
		\item If $M>1$ then
		$$
		\delta((t_1,\underline{w}^1),(t_2,\underline{w}^2))=\prod_{m=0}^{M-2}\epsilon_T(w^1_{t_1+m},j_{t_1 +m, \underline{w}^1})=\prod_{m=0}^{M-2}\epsilon_T(w^2_{t_2+m},j_{t_2 +m, \underline{w}^2}).
		$$
	\end{itemize}
\end{defi}

Lemma \ref{Lemm: Comparacion de ordenes} admits the following generalization whose proof is essentially the same.

\begin{lemm}\label{Lemm: Comparacion de ordenes, cW}
	Let $I_{t_1,\underline{w}^1}$ and $I_{t_2,\underline{w}^2}$ be two stable segments such that $w^1_{t_1}=w^2_{t_2}=i$. Then, $I_{t_1,\underline{w}^1} <_{v(i)} I_{t_2,\underline{w}^2}$ with respect to the vertical orientation of $R_i$ if and only if one of the following situations occurs:
	
	\begin{itemize}
		\item $j_{t_1+M-1,\underline{w}^1}<j_{t_2+M-1,\underline{w}^2}$ and $\delta((t_1,\underline{w}^1)(t_2,\underline{w}^2))=1$, or
		\item $j_{t_1+M-1,\underline{w}^1}>j_{t_2+M-1,\underline{w}^2}$ and $\delta((t_1,\underline{w}^1)(t_2,\underline{w}^2))=-1$
	\end{itemize}
\end{lemm}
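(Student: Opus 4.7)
The plan is to imitate the argument of Lemma \ref{Lemm: Comparacion de ordenes}, replacing at each step the ``single code'' bookkeeping by the two-code bookkeeping furnished by Lemma \ref{Lemm: diferentation moment of the order,cW} and Definition \ref{Defi: Intetchage order funtion,cW}. First I would record a two-code version of Lemma \ref{Lemm: first order in O(i)}: namely, if $I_{t_1,\underline{w}^1}$ and $I_{t_2,\underline{w}^2}$ both lie in $R_i$ and $j_{t_1,\underline{w}^1} < j_{t_2,\underline{w}^2}$, then $I_{t_1,\underline{w}^1} <_{v(i)} I_{t_2,\underline{w}^2}$. This follows immediately from Corollary \ref{Coro: projetion stable segments}, which locates each $I_{t,\underline{w}}$ strictly inside the horizontal sub-rectangle $H^i_{j_{t,\underline{w}}}$, independently of which periodic code $\underline{w}$ it came from. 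In particular, the case $M=1$ of the lemma reduces at once to this observation and to the fact that $\delta((t_1,\underline{w}^1),(t_2,\underline{w}^2))=1$ by definition.

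For $M>1$, I would consider the (unique) horizontal sub-rectangle $H \subset R_i$ whose stable boundary is $I_{t_1,\underline{w}^1} \cup I_{t_2,\underline{w}^2}$, and study the iterates $f^m(H)$ for $0 \le m \le M-1$. By Lemma \ref{Lemm: image of stable intervals}, $f^m(I_{t_1,\underline{w}^1}) \subset I_{t_1+m,\underline{w}^1}$ and $f^m(I_{t_2,\underline{w}^2}) \subset I_{t_2+m,\underline{w}^2}$. Lemma \ref{Lemm: diferentation moment of the order,cW} guarantees that for $0 \le m \le M-2$ one has $w^1_{t_1+m}=w^2_{t_2+m}$ and $j_{t_1+m,\underline{w}^1}=j_{t_2+m,\underline{w}^2}$, so both boundary components of $f^m(H)$ lie in the \emph{same} horizontal sub-rectangle of the common rectangle $R_{w^1_{t_1+m}}$. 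Consequently $f^m(H)$ is itself a horizontal sub-rectangle of that horizontal sub-rectangle, and the map $f^m|_H$ is well defined as an injective orientation-respecting or orientation-reversing homeomorphism between horizontal sub-rectangles.

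The next step is to identify $\delta((t_1,\underline{w}^1),(t_2,\underline{w}^2))$ as the cumulative change of vertical orientation of $f^{M-1}|_H$. Because along the common steps the two intervals share the horizontal label $(w^1_{t_1+m},j_{t_1+m,\underline{w}^1})$, the restriction $f|_{f^m(H)}$ changes vertical direction exactly by the factor $\epsilon_T(w^1_{t_1+m},j_{t_1+m,\underline{w}^1})$; multiplying over $m=0,\ldots,M-2$ yields $\delta$. Thus $f^{M-1}|_H$ preserves (resp.\ reverses) vertical orientation iff $\delta=1$ (resp.\ $\delta=-1$).

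Finally, at step $m=M-1$ the indices $j_{t_1+M-1,\underline{w}^1}$ and $j_{t_2+M-1,\underline{w}^2}$ are different (Lemma \ref{Lemm: diferentation moment of the order,cW}, item iii), and the preliminary two-code version of Lemma \ref{Lemm: first order in O(i)} orders $f^{M-1}(I_{t_1,\underline{w}^1})$ and $f^{M-1}(I_{t_2,\underline{w}^2})$ inside $R_{w^1_{t_1+M-1}}$ according to these $j$-indices. Translating back through $f^{-(M-1)}|_{f^{M-1}(H)}$ and using the sign $\delta$ to record whether this translation preserves or reverses vertical order gives exactly the two cases of the conclusion. The main (mild) obstacle is purely bookkeeping: one must check that the ``shared'' part of the two orbits really does move $H$ as a single horizontal strip, which is the content of Lemma \ref{Lemm: diferentation moment of the order,cW}; once that is in hand, the proof is mechanical.
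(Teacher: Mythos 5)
Your proposal is correct and follows essentially the same route as the paper, which explicitly states that the two-code lemma "admits the following generalization whose proof is essentially the same" as Lemma \ref{Lemm: Comparacion de ordenes}: reduce $M=1$ to the ordering by horizontal sub-rectangle indices, track the strip $H$ through the $M-1$ shared iterates, read off the cumulative orientation change as $\delta$, and compare positions at step $M-1$. The only cosmetic difference is that you phrase the final step as pulling the order back by $f^{-(M-1)}$ while the paper pushes forward and identifies which boundary of $f^{M-1}(H)$ is the image of which; these are the same argument.
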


Now, we can establish a formal order for the set of stable segments in $R_i$ that are part of the stable boundary of the rectangles in $\cR_{S(\cW)}$ contained within $R_i$. 

\begin{defi}\label{Lemm: Boundaries code,CW}
	Let $R_i\in\cR$ be any rectangle, denote $I_{i,-1}:=\partial^s_{-1} R_i$ as the lower boundary of $R_i$, and $I_{i,+1}:=\partial^s_{+1} R_i$ as the upper boundary. Define
	$$
	\underline{s}:\cO(i,\cW)\cup \{(i,+1), (i,-1)\} \rightarrow \{0,1,\cdots,O(i,\cW),O(i,\cW)+1\}.
	$$
	as the unique function such that:
	\begin{itemize}
		\item $\underline{s}(t_1,\underline{w}^1)< \underline{s}(t_2,\underline{w}^2)$ if and only if  $I_{t_1,\underline{w}^1}< I_{t_1,\underline{w}^2}$. 
		\item It assigns $0$ to $(i,-1)$ and  $O(i,\cW)+1$ to  $(i,+1)$.
	\end{itemize}
	
\end{defi}

As every rectangle in the partition $\mathcal{R}_{\mathcal{S}(\mathcal{W})}$ that is contained in $R_i$ is bounded by two consecutive stable segments in $\mathcal{I}(i, \mathcal{W})$, the order function uniquely determines a label for the rectangles in the Markov partition in the following manner:

\begin{enumerate}
	\item It is not difficult to see that the number of rectangles in $\mathcal{R}_{S(\mathcal{W})}$ is equal to $O(i, \mathcal{W}) + 1$.
	\item Consider all the stable segments contained in $R_i$, including their boundaries, ordered vertically in $R_i$ using the criterion developed in \ref{Lemm: Comparacion de ordenes, cW} as $\{I_{(i,s)}\}_{s=0}^{O(i, \mathcal{W}) + 1}$.
	\item We assign to the rectangle $R' \in \mathcal{R}_{S(\mathcal{W})}$ contained in $R_i$, and bounded by the consecutive segments $I_{(i,s)}$ and $I_{(i,s+1)}$, with $s \in \{0, \cdots, O(i, \mathcal{W})\}$, the label $R' := R_{(i,s+1)}$ for $s \in \{0, \cdots, O(i, \mathcal{W})\}$.
\end{enumerate}

\begin{coro}\label{Coro: Rs(w) es particion geometrica}
With the labeling of the rectangles in $\mathcal{R}_{S(\mathcal{W})}$ described above, the Markov partition $(f, \mathcal{R}_{S(\mathcal{W})})$ is a geometric refinement of $(f, \mathcal{R})$ by horizontal sub-rectangles.
\end{coro}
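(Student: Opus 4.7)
The statement is essentially a synthesis of the preceding lemmas, so my plan is to verify, one by one, the three conditions in Definition \ref{Defi: Refianmiento horizontal} together with the orientation/labeling condition in Definition \ref{Defi: Geometric refinement}, assembling the pieces already proved in this subsection rather than producing any new dynamical argument.

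First, I would recall that Lemma \ref{Lemm: Markov partition cW } already establishes that $\mathcal{R}_{S(\mathcal{W})}$ is a Markov partition of $f$, and Lemma \ref{Lemm: rectangles in R S(p)} identifies every element of $\mathcal{R}_{S(\mathcal{W})}$ as a horizontal sub-rectangle of a unique $R_i\in\mathcal{R}$. To close Definition \ref{Defi: Refianmiento horizontal}, the only point still requiring a remark is that the union of the rectangles of $\mathcal{R}_{S(\mathcal{W})}$ contained in a given $R_i$ is exactly $R_i$. This follows because the family $\mathcal{I}(i,\mathcal{W})$ is finite (by Remark \ref{Rema; finite intervals}) and each of its members is a stable segment strictly interior to $R_i$ (by Corollary \ref{Coro: projetion stable segments}, using that no code of $\mathcal{W}$ is $s$-boundary), so removing $\cup\mathcal{I}(i,\mathcal{W})$ from $\overset{o}{R_i}$ produces finitely many open horizontal strips, whose closures cover $R_i$.

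Next, I would check that the labeling $R_{(i,s+1)}$ introduced just before the corollary satisfies the coherence condition in Definition \ref{Defi: Geometric refinement}, namely $R_{(i,s_1+1)} <_{v(i)} R_{(i,s_2+1)}$ if and only if $s_1<s_2$. This is almost by fiat: the rectangles of $\mathcal{R}_{S(\mathcal{W})}$ contained in $R_i$ are in bijective correspondence with consecutive pairs $(I_{(i,s)},I_{(i,s+1)})$ of stable intervals ordered vertically in $R_i$ via $\underline{s}$, and the function $\underline{s}$ was defined in \ref{Lemm: Boundaries code,CW} precisely so that $\underline{s}(t_1,\underline{w}^1) < \underline{s}(t_2,\underline{w}^2)$ if and only if $I_{t_1,\underline{w}^1} <_{v(i)} I_{t_2,\underline{w}^2}$. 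Since the criterion of Lemma \ref{Lemm: Comparacion de ordenes, cW} is purely combinatorial and expressed only in terms of the geometric type $T$ and the family $\mathcal{W}$, the order $\underline{s}$ is algorithmically computable; no appeal to a specific geometric realization is needed for the labeling itself.

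Finally, I would apply Definition \ref{Defi: Geometric refinement} to endow $\mathcal{R}_{S(\mathcal{W})}$ with the canonical geometrization: each rectangle $R_{(i,s+1)}$ inherits from $R_i$ the vertical and horizontal orientations restricted to it. This is legitimate because $R_{(i,s+1)}$ is a horizontal sub-rectangle of $R_i$, so the horizontal/vertical foliations of $R_{(i,s+1)}$ are restrictions of those of $R_i$. With these orientations and the lexicographic ordering of the labels $(i,s+1)$ (Definition \ref{Defi: Orden Lexicodromico}), the pair $(f,\mathcal{R}_{S(\mathcal{W})})$ is a geometric refinement of $(f,\mathcal{R})$ by horizontal sub-rectangles. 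The only genuine technical step has already been carried out in Lemmas \ref{Lemm: diferentation moment of the order,cW}--\ref{Lemm: Comparacion de ordenes, cW}, where the comparison between two stable segments was reduced to the combinatorics of $T$ and $\mathcal{W}$; the corollary itself is then a formal verification rather than a new argument.
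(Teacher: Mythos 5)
Your proposal is correct and follows essentially the same route as the paper, which states this corollary without a separate proof precisely because it is the assembly of Lemma \ref{Lemm: Markov partition cW } (Markov property), Lemma \ref{Lemm: rectangles in R S(p)} (each piece is a horizontal sub-rectangle whose union recovers $R_i$), and the ordering criterion of Lemma \ref{Lemm: Comparacion de ordenes, cW} encoded in the function $\underline{s}$. Your explicit verification of the three conditions of Definition \ref{Defi: Refianmiento horizontal} and of the coherence of the labeling with $<_{v(i)}$ is exactly the intended argument.
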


\begin{defi}\label{Defi: Geometric s-boundary refinament cW}
	The Markov partition $\cR_{S(\cW)}:=\{\tilde{R}_r\}_{r=1}^N$ with the canonical  geometrization is called the $s$-\emph{boundary refinement} of $\cR$ with respect to the family $\cW$. Its geometric type is denoted by:
		\begin{equation}
		T_{S(\cW)} := \{N,\{H_r,V_r\}_{r=1}^N,\Phi_{S(\cW)}:=(\rho_{S(\cW)},\epsilon_{S(\cW)})\}.
	\end{equation}
	We label the rectangles in $\mathcal{R}_{S(\cW)}$ using the \emph{lexicographic order} \ref{Defi: Orden Lexicodromico} as
$$
\tilde{R}_{(i,s)}:=\tilde{R}_{\tilde{r}(i,s)}.
$$
	\end{defi}

In the following sub-sections, we are going to consider a rectangle $R_r$ and we are going to assume that we know the terms $(i, s)$ such that $r=\tilde{r}(i,s)$. The next lemma indicates how to determine such parameters and justifies our future hypotheses regarding them.

\begin{lemm}\label{lemm: determinating r=(i,s)}
	Let $\tilde{R}_r$ a rectangle in $\cR_{S(\cW)}$ then we can determine two unique numbers $(i_r,s_r)$ such that $r=\tilde{r}(i_r,s_r)$ in following manner:
	\begin{itemize}
		\item[i)] There exist unique $i_r\in \{1,\cdots,n\}$ such that:
		$$
		\sum_{i<i_r} O(i,\cW) +1 < r \leq \sum_{i\leq i_r} O(i,\cW) +1 
		$$
		\item[ii)] Take $s_r:= r - \sum_{i<i_r} O(i,\cW) +1$.
	\end{itemize}
\end{lemm}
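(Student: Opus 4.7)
The plan is to argue that the formulas in items (i) and (ii) are just the standard inversion of the lexicographic labeling introduced in Definition \ref{Defi: Orden Lexicodromico}, applied to the concrete refinement $\cR_{S(\cW)}$. First I will recall that, by the construction preceding Corollary \ref{Coro: Rs(w) es particion geometrica}, the number of rectangles of $\cR_{S(\cW)}$ contained in $R_i\in \cR$ equals $O(i,\cW)+1$, because these rectangles are in bijection with the consecutive pairs of stable arcs in $\cI(i,\cW)\cup \{I_{i,-1},I_{i,+1}\}$ ordered by the function $\underline{s}$ of Definition \ref{Lemm: Boundaries code,CW}. Hence in the notation of Definition \ref{Defi: Refianmiento horizontal} one has $H(i)=O(i,\cW)+1$.

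Next I will specialize the lexicographic-order formula (\ref{Equa: Orden lexicodromico}) to this refinement: for every admissible pair $(i,s)$ with $1\leq s\leq O(i,\cW)+1$,
\[
\tilde{r}(i,s)=\sum_{i'<i}\bigl(O(i',\cW)+1\bigr)+s.
\]
This is by definition a bijection between $\{(i,s):1\leq i\leq n,\ 1\leq s\leq O(i,\cW)+1\}$ and $\{1,\dots,N\}$, where $N=\sum_{i=1}^{n}(O(i,\cW)+1)$ is the total number of rectangles of $\cR_{S(\cW)}$.

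Now I will invert this bijection. Given $r\in\{1,\dots,N\}$, consider the partial sums $\Sigma_j:=\sum_{i\leq j}(O(i,\cW)+1)$, which form a strictly increasing sequence $0=\Sigma_0<\Sigma_1<\cdots<\Sigma_n=N$. Since $1\leq r\leq N$, there is exactly one index $i_r\in\{1,\dots,n\}$ with $\Sigma_{i_r-1}<r\leq\Sigma_{i_r}$, which is precisely the condition in item (i). Setting $s_r:=r-\Sigma_{i_r-1}$ gives an integer in $\{1,\dots,O(i_r,\cW)+1\}$, and by construction $\tilde{r}(i_r,s_r)=\Sigma_{i_r-1}+s_r=r$. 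Uniqueness of $(i_r,s_r)$ follows from the injectivity of $\tilde{r}$ together with the strict monotonicity of $\{\Sigma_j\}$.

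There is no substantive obstacle here; the only care required is bookkeeping to make sure the offset in item (ii) reads $s_r=r-\sum_{i<i_r}(O(i,\cW)+1)$ (rather than $r-\sum_{i<i_r}O(i,\cW)+1$, which would be a typographical slip), so that $s_r$ lands in the correct range $\{1,\dots,O(i_r,\cW)+1\}$ and the identity $\tilde{r}(i_r,s_r)=r$ holds on the nose.
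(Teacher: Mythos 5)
Your proof is correct and follows essentially the same route as the paper: both arguments locate $r$ between consecutive partial sums of the counts $O(i,\cW)+1$ of rectangles per $R_i$ and read off $s_r$ as the offset, with uniqueness coming from the strict monotonicity of those partial sums. Your remark about the parenthesization — that the offset must be $r-\sum_{i<i_r}\bigl(O(i,\cW)+1\bigr)$ for $s_r$ to land in $\{1,\dots,O(i_r,\cW)+1\}$ — correctly identifies a notational ambiguity that is present in the statement (and in Lemma \ref{Lemm: N en TcW}) as written.
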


\begin{figure}[h]
	\centering
	\includegraphics[width=0.4\textwidth]{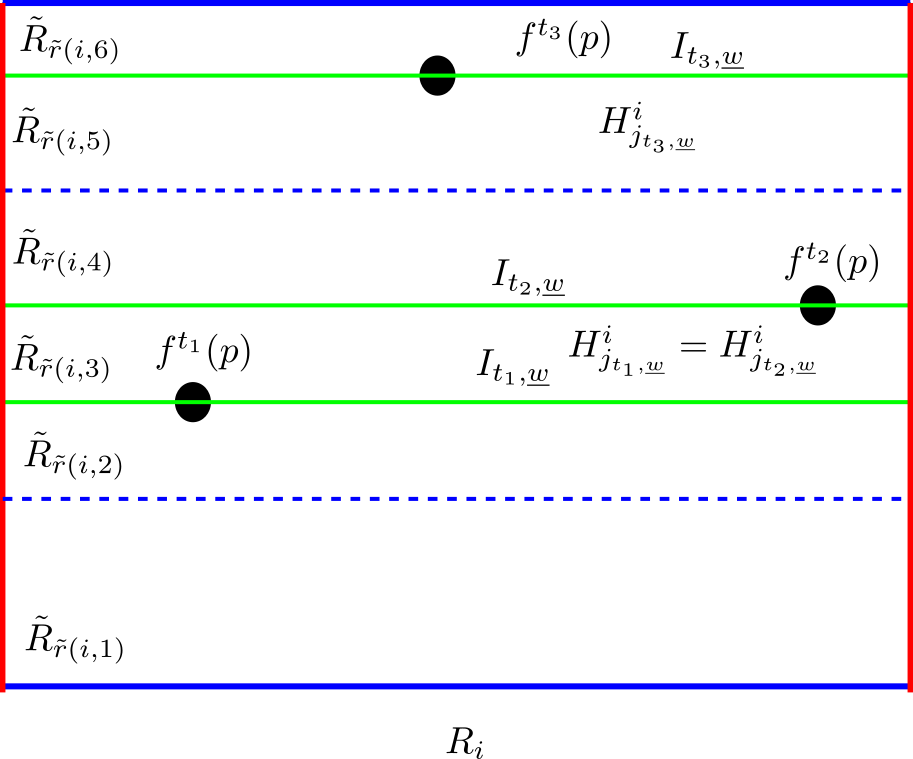}
	\caption{The set $\cI(i,\underline{w})$ and the rectangles $H^i_{ j_{t,\underline{w}}}$ }
	\label{Fig: Inva boundary}
\end{figure}

\begin{proof}
	The only point that we need to argue is the first item, as the second one is totally determined by such a number $i_r$. But since $r \in \{1, \cdots, \sum_{i=1}^n O(i, \cW) + 1\}$, in fact, there exists a maximum $i_0\in\{1,\cdots, n\}$ such that $\sum_{i<i_0} O(i, \cW) + 1 < r$. Clearly $i_r = i + 1$ satisfies the second equality because if this is not the case:  $\sum_{i<i_0+1} O(i, \cW) + 1 < r$ contradicting that $i_0$ is the maximum with this property.
\end{proof}

\subsubsection{The boundary of the refinement $\mathcal{R}_{S(\mathcal{W})}$}

Lemma \ref{Lemm: Determine boundaris of R-r, cW} is a generalization of \ref{Lemm: Determine boundaris of R-r} , and Lemma \ref{Lemm: imge of boundaries,cW} is a generalization of  \ref{Lemm: imge of boundaries}. Their proofs are exactly the same as the results they generalize.

\begin{lemm}\label{Lemm: Determine boundaris of R-r, cW}
	Let $\tilde{R}_r$ be a rectangle in the geometric Markov partition $\cR_{S(\cW)}$ with $r=\tilde{r}(i,s)$. Then, the stable boundary of $\tilde{R}_r$ consists of two stable segments of $R_i$ determined as follows:
	
	\begin{itemize}
		\item[i)] If $s=1$, the lower boundary of $\tilde{R}_r$ is $I_{i,-1}$ and if  $O(i,\cW)=0$,  its upper boundary is $I_{i,+1}$. However, if $O(i,\cW)>0$, there exists a unique $(t,\underline{w}^{q})\in \cO(i,\cW)$ such that $\underline{s}(t,\underline{w}^q)=1$, and the upper boundary of $\tilde{R}_r$ is the stable segment $I_{t,\underline{w}^q}$.
		
		\item[ii)] If $O(i,\cW)>1$, $s \neq 1$ and $s\neq O(i,\cW)+1$, there are unique $(t_1,\underline{w}^1), (t_2,\underline{w}^2)\in \cO(i,\cW)$ such that: $\underline{s}(t_1,\underline{w}^1)=s-1$, $\underline{s}(t_2,\underline{w}^2)=s$. Furthermore, the lower boundary of $\tilde{R}_r$ is the stable segment $I_{t_1,\underline{w}^1}$, and the upper boundary of $\tilde{R}_r$ is the stable segment $I_{t_2,\underline{w}^2}$.
		
		\item[iii)] If $O(i,\cW)>0$ and $s=O(i,\cW)+1$, then the upper boundary of $\tilde{R}_r$ is the stable segment $I_{i,+1}$. Additionally, there exists a unique $(t,\underline{w}^q)\in \cO(i,\cW)$ such that $\underline{s}(t,\underline{w}^q)= O(i,\cW)$, and the inferior boundary of $\tilde{R}_r$ is the stable segment $I_{t,\underline{w}^q}$.
	\end{itemize}
	
\end{lemm}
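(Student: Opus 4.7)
The plan is to lift the single-code argument of Lemma \ref{Lemm: Determine boundaris of R-r} to the family $\cW$, with the only new input being that the total order on stable segments inside $R_i$ is provided by Lemma \ref{Lemm: Comparacion de ordenes, cW} (applied to possibly different codes) rather than Lemma \ref{Lemm: Comparacion de ordenes} (a single code). The structural skeleton is identical, so the task reduces to a clean bookkeeping of indices and a case analysis on the vertical position $s$.

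First I would invoke Lemma \ref{Lemm: rectangles in R S(p)} to assert that each rectangle of $\cR_{S(\cW)}$ contained in $R_i$ is the closure of a connected component of $\overset{o}{R_i}\setminus \cup\cI(i,\cW)$ and is a horizontal sub-rectangle of $R_i$. Consequently each of its two stable boundary components lies either in $\partial^s R_i=I_{i,-1}\cup I_{i,+1}$ or in some segment $I_{t,\underline{w}^q}\in \cI(i,\cW)$. Next, observe that Lemma \ref{Lemm: who the intervals intersect } guarantees that any two distinct segments of $\cI(i,\cW)\cup\{I_{i,-1},I_{i,+1}\}$ have disjoint interiors (they can meet only at a $u$-boundary periodic point, which is an endpoint of the stable intervals). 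Hence these segments are linearly ordered by the vertical direction of $R_i$, and this order is exactly the one encoded by the bijection $\underline{s}$ of Definition \ref{Lemm: Boundaries code,CW} (whose well-definedness is precisely Lemma \ref{Lemm: Comparacion de ordenes, cW}). Two consecutive segments in this order bound exactly one horizontal sub-rectangle, and because the lexicographic labeling of Definition \ref{Defi: Geometric s-boundary refinament cW} numbers the sub-rectangles inside $R_i$ from bottom to top, the rectangle $\tilde{R}_{\tilde{r}(i,s)}$ is precisely the one whose lower stable boundary is the segment at position $s-1$ and whose upper stable boundary is the segment at position $s$.

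The lemma then follows by a straightforward case analysis on $s\in\{1,\ldots,O(i,\cW)+1\}$. If $s=1$, the segment at position $0$ is $I_{i,-1}$ by the defining normalization of $\underline{s}$; when $O(i,\cW)=0$ there are no intermediate segments and position $1$ must therefore be $I_{i,+1}$, while if $O(i,\cW)>0$ position $1$ is realized by the unique $(t,\underline{w}^q)\in\cO(i,\cW)$ with $\underline{s}(t,\underline{w}^q)=1$. If $1<s<O(i,\cW)+1$, both positions $s-1$ and $s$ lie strictly between $0$ and $O(i,\cW)+1$, so each is realized by a unique pair $(t_j,\underline{w}^j)\in\cO(i,\cW)$, giving the intermediate segments $I_{t_1,\underline{w}^1}$ and $I_{t_2,\underline{w}^2}$. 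Finally, if $s=O(i,\cW)+1$, position $O(i,\cW)+1$ is $I_{i,+1}$ by definition, and position $O(i,\cW)$ is realized by the unique pair $(t,\underline{w}^q)$ with $\underline{s}(t,\underline{w}^q)=O(i,\cW)$.

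The main (and essentially only) obstacle is not geometric — the cutting picture is identical to the single-code case — but rather the well-posedness of the comparison function $\underline{s}$ on pairs coming from different codes. This is the content of Lemma \ref{Lemm: Comparacion de ordenes, cW} and the interchange function of Definition \ref{Defi: Intetchage order funtion,cW}: when two distinct codes $\underline{w}^1,\underline{w}^2\in\cW$ land in the same rectangle $R_i$ at times $t_1,t_2$, the sign $\delta((t_1,\underline{w}^1),(t_2,\underline{w}^2))$ records the accumulated orientation twist over the $M-1$ steps during which the two orbits share the same itinerary, and combined with the binary incidence matrix this data uniquely decides which of $I_{t_1,\underline{w}^1}$, $I_{t_2,\underline{w}^2}$ sits above the other. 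Once this total order is in hand, the identification of the stable boundary of $\tilde{R}_r$ with two consecutive segments is forced by the definition of the geometric refinement, and the three cases above exhaust all possibilities.
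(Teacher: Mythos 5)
Your proof is correct and follows essentially the same route as the paper: the paper simply declares that the proof is identical to the single-code case (Lemma \ref{Lemm: Determine boundaris of R-r}), whose argument is exactly your case analysis on the position $s$ relative to the order function $\underline{s}$ on $\cO(i,\cW)\cup\{(i,-1),(i,+1)\}$. Your additional remarks on the well-posedness of $\underline{s}$ across distinct codes via Lemma \ref{Lemm: Comparacion de ordenes, cW} are consistent with, and slightly more explicit than, what the paper records.
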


\begin{lemm}\label{Lemm: imge of boundaries,cW}
	
	Let $\tilde{R}_r$ with $r=\tilde{r}(i,s)$ be a rectangle in the geometric Markov partition $\cR_{S(\cW)}$. Then, the stable boundary components of $\tilde{R}_r$ have  images under $f$ determined in the following manner:
	
	\begin{itemize}
		\item[i)] If the lower boundary of $\tilde{R}_r$ is $I_{i,-1}$ and $\Phi_T(i,1)=(k,l,\epsilon)$, then: if $\epsilon=1$,   $f(I_{i,-1}) \subset I_{k,-1}$, but if $\epsilon=-1$, $f(I_{i,-1}) \subset I_{k,+1}$.

		\item[ii)] If one boundary component of $\tilde{R}_r$ is the stable segment $I_{t,\underline{w}}$, then $f(I_{t,\underline{w}})\subset I_{t+1,\underline{w}}$.
		
		\item[iii)]  If $I_{i,+1}$ is the upper boundary of $\tilde{R}_r$ and $\Phi_T(i,h_i)=(k,l,\epsilon)$, then:  if $\epsilon=1$,   $f(I_{i,+1}) \subset I_{k,+1}$, but if $\epsilon=-1$, $f(I_{i,+1}) \subset I_{k,-1}$.
	\end{itemize}
\end{lemm}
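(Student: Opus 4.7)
My plan is to split the argument into the three cases listed, relying on Lemma \ref{Lemm: image of stable intervals} for item (ii) and on the definition of $\epsilon_T$ (see \ref{Defi: Orientation (f,cR)}) together with the fact that the outermost stable boundaries of $R_i$ coincide with the outermost stable boundaries of the extremal horizontal sub-rectangles for items (i) and (iii). The observation that the index $s$ does not enter the argument (only the information about whether $\tilde{R}_r$ inherits $I_{i,-1}$ or $I_{i,+1}$ as one of its stable boundaries) means each case is reduced to an assertion about a single horizontal sub-rectangle of $\mathcal{R}$, so this essentially reduces to the proof already sketched for Lemma \ref{Lemm: imge of boundaries}.

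For item (ii), I will observe that whenever $I_{t,\underline{w}}$ is one stable boundary component of a rectangle $\tilde{R}_r\in \cR_{S(\cW)}$, the interval $I_{t,\underline{w}}$ is, by construction, one of the stable intervals in $\cI(w_t,\cW)$ that cuts $R_{w_t}$. The statement $f(I_{t,\underline{w}})\subset I_{t+1,\underline{w}}$ is then exactly the content of Lemma \ref{Lemm: image of stable intervals} applied to this interval, with no new work required.

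For item (i), I will note that the lowest horizontal sub-rectangle of $R_i$ in the vertical order, namely $H^i_1$, has its lower stable boundary equal to $I_{i,-1}$. Since $\Phi_T(i,1)=(k,l,\epsilon)$ gives $f(H^i_1)=V^k_l\subset R_k$, the image $f(I_{i,-1})$ is one of the two horizontal boundary components of $V^k_l$, each of which is contained in a single component of $\partial^s R_k$, so $f(I_{i,-1})\subset I_{k,-1}\cup I_{k,+1}$. The definition of $\epsilon_T$ says precisely that $f$ restricted to $H^i_1$ preserves the vertical orientation with respect to $R_k$ when $\epsilon=1$ and reverses it when $\epsilon=-1$; in the first case the lower boundary of $H^i_1$ maps into the lower boundary of $R_k$ (hence $I_{k,-1}$), and in the second case it maps into the upper boundary of $R_k$ (hence $I_{k,+1}$). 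Item (iii) follows by the same reasoning applied to the topmost horizontal sub-rectangle $H^i_{h_i}$ of $R_i$, whose upper stable boundary is $I_{i,+1}$ and whose image is $V^k_l$ with $\Phi_T(i,h_i)=(k,l,\epsilon)$, interchanging ``lower'' and ``upper'' throughout.

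There is no serious obstacle: the proof is a direct bookkeeping argument, and the only subtle point is the very mild observation that the extremal stable boundaries of $R_i$ are inherited from the extremal horizontal sub-rectangles of $(f,\cR)$, which is immediate from \ref{Defi: Horizontal sub-rec particion} and the labeling convention fixed before \ref{Defi: Permutation (f,cR)}.
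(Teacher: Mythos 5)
Your proposal is correct and follows essentially the same route as the paper: item (ii) is exactly Lemma \ref{Lemm: image of stable intervals}, and items (i) and (iii) are handled by identifying $I_{i,\mp 1}$ with the extremal stable boundary of $H^i_1$ (resp.\ $H^i_{h_i}$) and reading off the target component of $\partial^s R_k$ from $\epsilon_T$. Your write-up is somewhat more explicit than the paper's, which simply declares this case identical to the one-code version, but the content is the same.
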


\subsection{The geometric type of \protect$\mathcal{R}_{\mathcal{S}(\mathcal{W})}\protect$}

\subsubsection{The number of rectangles in \texorpdfstring{$\mathcal{R}_{\mathcal{S}(\mathcal{W})}$}{R_S(W)}}

The next lemma is immediate.

\begin{lemm}\label{Lemm: N en TcW}
	For all $i \in \{1, \ldots, n\}$, the number of rectangles in the Markov partition $\cR_{S(\cW)}$ that are contained in $R_i$ is equal to $O(i,\cW)  + 1$ and the number $N$ in the geometric type $T_{S(\cW)}$, equal to the number of rectangles in the $s$-boundary refinement respect to $\cW$, is determined by the following formula:
	\begin{equation}\label{Equa: Number N,cW }
		N=\sum_{i=1}^{n} O(i,\cW)+1.
	\end{equation}
\end{lemm}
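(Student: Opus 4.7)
The plan is to handle each rectangle $R_i \in \cR$ separately, count the rectangles of $\cR_{S(\cW)}$ contained in it, and then sum these local counts to obtain $N$. By Definition~\ref{Defi: Stable intervals inside R-i} and Lemma~\ref{Lemm: rectangles in R S(p)}, the rectangles of $\cR_{S(\cW)}$ contained in $R_i$ are exactly the closures of the connected components of $\overset{o}{R_i} \setminus \bigcup \cI(i,\cW)$, and each such closure is a horizontal sub-rectangle of $R_i$. So for each $i$ it suffices to count the connected components of $R_i$ cut along the family $\cI(i,\cW)$.

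First I would establish that $\cI(i,\cW)$ consists of exactly $O(i,\cW)$ pairwise distinct horizontal stable chords of $R_i$ whose interiors lie in $\overset{o}{R_i}$. By Remark~\ref{Rema: Why not s boundary code?} no code in $\cW$ is an $s$-boundary code, so Lemma~\ref{Lemm: who the intervals intersect }(i) ensures every $I_{t,\underline{w}} \in \cI(i,\cW)$ is a full horizontal interval of $R_i$ meeting $\overset{o}{R_i}$ rather than sitting in $\partial^s R_i$ (which would be the situation ruled out by item (ii) of the same lemma). Next I would check that the map $(t,\underline{w}) \mapsto I_{t,\underline{w}}$ is injective on $\cO(i,\cW)$: if $(t_1,\underline{w}^1) \neq (t_2,\underline{w}^2)$ and their projections $\pi_{(f,\cR)}(\sigma^{t_1}(\underline{w}^1))$ and $\pi_{(f,\cR)}(\sigma^{t_2}(\underline{w}^2))$ disagree, the two intervals pass through different points and cannot coincide, while if the projections agree then item (iii) of Lemma~\ref{Lemm: who the intervals intersect } forces the intervals to have disjoint interiors and hence be distinct. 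Putting these together gives $|\cI(i,\cW)| = O(i,\cW)$ with pairwise disjoint interiors.

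A rectangle cut by $k$ pairwise-interior-disjoint horizontal chords whose interiors miss its stable boundary produces $k+1$ horizontal sub-rectangles (an elementary fact about bi-foliated rectangles, which can also be read off from the canonical parametrization given by Proposition~\ref{Prop: Rectangulos parametrizados} by transporting the chords to horizontal segments in $\II^2$). Applying this with $k = O(i,\cW)$ gives exactly $O(i,\cW)+1$ rectangles of $\cR_{S(\cW)}$ inside $R_i$, proving the first assertion.

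For the second assertion, since the rectangles of $\cR$ have pairwise disjoint interiors and cover $S$, and each rectangle of $\cR_{S(\cW)}$ is contained in a unique $R_i$ (being a horizontal sub-rectangle of it), summing the local counts yields $N = \sum_{i=1}^n (O(i,\cW)+1)$, giving the displayed formula~\eqref{Equa: Number N,cW }. The main delicate point is the injectivity argument in the second paragraph: one really needs to rule out the possibility that two distinct pairs in $\cO(i,\cW)$ — possibly coming from different codes in $\cW$, or from iterations within a single periodic code — produce the very same stable interval. This is precisely what Lemma~\ref{Lemm: who the intervals intersect } is designed to handle, and it is the only nontrivial ingredient beyond the definitions.
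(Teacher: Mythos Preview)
Your argument is correct and in fact more detailed than the paper's: the paper simply declares this lemma ``immediate'' and gives no proof, so your count-the-chords-then-sum approach is exactly the intended one, just spelled out. One small technicality: item~(iii) of Lemma~\ref{Lemm: who the intervals intersect } is stated only for two iterations of a \emph{single} code~$\underline{w}$, whereas you invoke it for pairs $(t_1,\underline{w}^1),(t_2,\underline{w}^2)$ with possibly $\underline{w}^1\neq\underline{w}^2$; the same proof goes through verbatim (the point being that a non-$s$-boundary periodic point with two distinct preimage codes is necessarily $u$-boundary, hence the two intervals lie in different stable separatrices), but strictly speaking you are using the obvious two-code extension rather than the lemma as written.
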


\subsubsection{\texorpdfstring{The vertical sub-rectangles in $T_{\mathcal{S}(\mathcal{W})}$}{The vertical sub-rectangles in $T_{S(W)}$}}\label{Subsubsec: Vertical sub-rectangles}

\begin{defi}\label{Defi: Vertical sub R-S(cW)}
	Let $\tilde{R}_r$ be a rectangle in the geometric Markov partition $\cR_{S(\cW)}$ and suppose that $r = \tilde{r}(i, s)$. Let $V^i_l$ be a vertical sub-rectangle of the Markov partition $(f, \cR)$ contained in $R_i$. Define:
	$$
	\tilde{V}^{r}_l:= \overline{\overset{o}{\tilde{R}_r} \cap \overset{o}{V^i_l}}.
	$$
	as the closure of the intersection between the interior of  $\tilde{R}_r$ and the interior of  $V^i_l$.
	
\end{defi}

\begin{lemm}\label{Lemm: Caracterization of vertica sub of R-S(w)}
	The set $\tilde{V}^{r}_l$ is a vertical sub-rectangle of $\tilde{R}_r$
\end{lemm}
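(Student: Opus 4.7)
My proof plan is to argue the statement via the parametrization of $R_i$, reducing the question to an elementary statement about horizontal and vertical strips in the unit square. The key geometric observation is that $\tilde{R}_r$ is a horizontal sub-rectangle of $R_i$ (this is established in Lemma~\ref{Lemm: rectangles in R S(p)}), while $V^i_l$ is a vertical sub-rectangle of $R_i$ by hypothesis. So, under any parametrization $\rho:\II^2\to R_i$, the preimage $\rho^{-1}(\tilde{R}_r)$ is a horizontal strip $[0,1]\times [a,b]$ bounded above and below by the stable arcs $I_{(i,s-1)}$ and $I_{(i,s)}$ of Lemma~\ref{Lemm: Determine boundaris of R-r, cW}, while $\rho^{-1}(V^i_l)$ is a vertical strip $[c,d]\times[0,1]$. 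Their intersection in the parametrization is the single closed rectangle $[c,d]\times[a,b]$.

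The first step is to show that $\overset{o}{\tilde{R}_r}\cap\overset{o}{V^i_l}$ is a non-empty open set which is trivially bi-foliated by $\cF^s,\cF^u$, so by Definition~\ref{Defi: Rectangulo} its closure $\tilde{V}^r_l$ is a rectangle. Connectedness and bi-foliation follow directly from the parametrization picture: the image $\rho((c,d)\times(a,b))$ is connected (continuous image of a connected set), and the restriction of $\rho$ to $\overset{o}{\II^2}$ is an orientation-preserving homeomorphism by item (2) of Proposition~\ref{Prop: Rectangulos parametrizados}, so the image is trivially bi-foliated by $\cF^s$ and $\cF^u$.

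The second step is to verify that $\tilde{V}^r_l$ satisfies the definition of a vertical sub-rectangle of $\tilde{R}_r$ (Definition~\ref{Defi: subrec Vertical/horizontal }). Take $x\in\overset{o}{\tilde{V}^r_l}$. Since $V^i_l$ is a vertical sub-rectangle of $R_i$, the vertical leaf $\mathcal{J}(V^i_l)_x$ through $x$ coincides with the vertical leaf $\mathcal{J}(R_i)_x$ through $x$; in the parametrization, both correspond to the full vertical segment $\rho(\{t_x\}\times[0,1])$. Restricting to the horizontal strip $[0,1]\times[a,b]$ carving out $\tilde{R}_r$ gives $\rho(\{t_x\}\times[a,b])$, which is simultaneously the vertical leaf of $\tilde{R}_r$ through $x$ (since $\tilde{R}_r$ is a horizontal sub-rectangle of $R_i$, its vertical leaves are precisely the restrictions of $R_i$'s vertical leaves to $\tilde{R}_r$) and the vertical leaf of $\tilde{V}^r_l$ through $x$ (as $(t_x,s)\in(c,d)\times[a,b]$ for all $s\in[a,b]$). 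Hence the two vertical leaves coincide, proving the required property.

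I do not anticipate any real obstacle. The entire argument is a routine unwinding of the three definitions (horizontal sub-rectangle, vertical sub-rectangle, rectangle) once one passes to a parametrization of $R_i$; the only mild subtlety is that $R_i$ itself may not be embedded in $S$, but this is handled because the parametrization is a homeomorphism on the interior, which is where the sub-rectangle conditions are imposed. The symmetric statement for horizontal sub-rectangles — needed to identify the horizontal sub-rectangles of $\tilde{R}_r$ in the next step of computing $T_{S(\cW)}$ — will follow by the same argument with the roles of the two foliations exchanged.
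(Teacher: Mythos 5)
Your proof is correct and follows essentially the same route as the paper's: the paper simply asserts that the intersection of the interiors of a horizontal and a vertical sub-rectangle of $R_i$ has a unique connected component and that $V^i_l$ crosses $\tilde{R}_r$ from bottom to top, while you unwind the same facts explicitly through the parametrization of $R_i$ (horizontal strip meets vertical strip in a single product rectangle whose vertical leaves are restrictions of those of $R_i$). Your added care about $R_i$ being only immersed, handled by working in the interior where $\rho$ is a homeomorphism, is a correct and welcome precision but not a different argument.
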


\begin{proof}
The intersection of the interiors of a horizontal and a vertical sub-rectangle of $R_i$ has a unique connected component. In particular, $\overset{o}{V^i_l} \cap \overset{o}{\tilde{R}_r}$ has a unique connected component. Clearly, $V^i_l$ crosses $\tilde{R}_r$ from the bottom to the top, and therefore, $\tilde{V}^r_l$ is a vertical sub-rectangle of $\tilde{R}_r$.
\end{proof}

The next is just a technical lemme to be used more late.

\begin{lemm}\label{Lemm: unique sub (i,j) for a r,cW}
Let $\tilde{R}_r \in \mathcal{R}_{S(\mathcal{W})}$ with $r = \tilde{r}(i, s)$. Let $\tilde{H} \subset \tilde{R}_r$ be a horizontal sub-rectangle of $(f, \mathcal{R}_{S(\mathcal{W})})$. Then there is a unique pair $(i, j) \in \mathcal{H}(T)$ such that $\overset{o}{\tilde{H}} \subset \overset{o}{H^i_j}$, where $H^i_j$ is a horizontal sub-rectangle of $(f, \mathcal{R})$.
\end{lemm}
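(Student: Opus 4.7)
The plan is to trace the definition of $\tilde H$ as a horizontal sub-rectangle of the refinement back to an intersection inside the original partition $(f,\cR)$, and then invoke the fact that the incidence matrix $A(T)$ is binary in order to ensure uniqueness of the containing horizontal sub-rectangle $H^i_j$.

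First, by the definition of a horizontal sub-rectangle of the Markov partition $(f,\cR_{S(\cW)})$, there exists an index $r'\in\{1,\dots,N\}$ such that $\overset{o}{\tilde H}$ is a connected component of $\overset{o}{\tilde R_r}\cap f^{-1}(\overset{o}{\tilde R_{r'}})$. Applying Lemma \ref{lemm: determinating r=(i,s)} to $r'$ I obtain a unique pair $(k,s')$ with $r'=\tilde r(k,s')$, so in particular $\overset{o}{\tilde R_{r'}}\subset\overset{o}{R_k}$. Since $\tilde R_r$ is by construction a horizontal sub-rectangle of $R_i$, we also have $\overset{o}{\tilde R_r}\subset\overset{o}{R_i}$. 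Combining these two inclusions yields
\begin{equation}
\overset{o}{\tilde H}\;\subset\;\overset{o}{\tilde R_r}\cap f^{-1}\bigl(\overset{o}{\tilde R_{r'}}\bigr)\;\subset\;\overset{o}{R_i}\cap f^{-1}(\overset{o}{R_k}).
\end{equation}

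Next, because the construction of $\cR_{S(\cW)}$ takes place inside $\cG\cT(\textbf{p-A})^{SIM}$ (this is the setting in which $\Sigma_{A(T)}$, and hence the family $\cW$, is defined), the incidence matrix $A(T)$ is binary. Therefore the coefficient $a_{ik}$ is either $0$ or $1$; since $\overset{o}{\tilde H}$ is nonempty, it must be $1$, and the set $\overset{o}{R_i}\cap f^{-1}(\overset{o}{R_k})$ consists of exactly one connected component, which is the interior $\overset{o}{H^i_j}$ of a single horizontal sub-rectangle of $(f,\cR)$ for a uniquely determined $j\in\{1,\dots,h_i\}$. This gives the existence $\overset{o}{\tilde H}\subset\overset{o}{H^i_j}$.

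Finally, uniqueness of $j$ follows from the fact that two distinct horizontal sub-rectangles $H^i_j, H^i_{j'}\subset R_i$ of $(f,\cR)$ have disjoint interiors: if $\overset{o}{\tilde H}$ were also contained in $\overset{o}{H^i_{j'}}$ with $j\neq j'$, the nonempty set $\overset{o}{\tilde H}$ would lie in $\overset{o}{H^i_j}\cap\overset{o}{H^i_{j'}}=\emptyset$, a contradiction. The only subtle point in the argument is checking that $\overset{o}{\tilde R_r}\subset\overset{o}{R_i}$ (i.e.\ that cutting $R_i$ along the stable segments in $\cI(i,\cW)$ does not introduce new unstable boundary), but this is immediate from Lemma \ref{Lemm: rectangles in R S(p)}, which tells us that the rectangles of $\cR_{S(\cW)}$ contained in $R_i$ are horizontal sub-rectangles of $R_i$ and hence share its unstable boundary.
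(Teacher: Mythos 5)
Your proof is correct, but it takes a genuinely different route from the paper's. The paper argues by contradiction: if $\overset{o}{\tilde{H}}$ met the interiors of two distinct horizontal sub-rectangles of $(f,\mathcal{R})$, it would contain in its interior the stable interval $I$ they share; then $f(\overset{o}{\tilde{H}})$, which is the interior of a vertical sub-rectangle of $\mathcal{R}_{S(\mathcal{W})}$, would meet $f(\overset{o}{I})\subset\partial^s\mathcal{R}\subset\partial^s\mathcal{R}_{S(\mathcal{W})}$ — impossible. You instead argue forward: you realize $\overset{o}{\tilde{H}}$ as a connected component of $\overset{o}{\tilde{R}_r}\cap f^{-1}(\overset{o}{\tilde{R}_{r'}})$, push the inclusion up to $\overset{o}{R_i}\cap f^{-1}(\overset{o}{R_k})$, and invoke binarity of $A(T)$ to conclude that this last set is the interior of a single $H^i_j$. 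Both arguments are valid here, since binarity of $A(T)$ is indeed a standing hypothesis throughout the section on refinements along periodic codes, so your reliance on it is legitimate; what the paper's argument buys is independence from that hypothesis — it only uses $\partial^s\mathcal{R}\subset\partial^s\mathcal{R}_{S(\mathcal{W})}$ and the $f$-invariance of stable boundaries, so it would survive verbatim for an arbitrary refinement by horizontal sub-rectangles — while yours is more direct and makes explicit which rectangle $R_k$ receives $f(\tilde{H})$, information that is in fact reused in the surrounding lemmas (e.g.\ Corollary \ref{Coro: vertical possition is delimitate,cW}). One small point worth stating if you polish this: the identification of the unique connected component of $\overset{o}{R_i}\cap f^{-1}(\overset{o}{R_k})$ with $\overset{o}{H^i_j}$ (rather than merely a set with closure $H^i_j$) deserves a sentence, though it follows at once from the disjointness of that component from $\partial^s H^i_j$ and $\partial^u H^i_j$.
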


\begin{proof}
We claim that $\tilde{H}$ intersects a unique horizontal sub-rectangle $H^i_j$ in its interior. If this is not the case, there are two adjacent sub-rectangles of $R_i$, $H^i_{j_1}$ and $H^i_{j_2}$, sharing a stable boundary $I$ that lies inside the interior of $\tilde{H}$, i.e., $\overset{o}{I} \subset \overset{o}{\tilde{H}}$. Then, $f(\overset{o}{\tilde{H}})$ will intersect the stable boundary of $\mathcal{R}$ in the stable interval $f(\overset{o}{I})$. However, since the stable boundary of $\mathcal{R}$ is contained in the stable boundary of $\mathcal{R}_{S(\mathcal{W})}$, the interval $f(\overset{o}{I})$ is contained in the stable boundary of $\mathcal{R}_{S(\mathcal{W})}$, leading to a contradiction. Therefore, $\overset{o}{\tilde{H}}$ is contained in the interior of a unique horizontal sub-rectangle $H^i_j$.
\end{proof}

\begin{lemm}\label{Lemma: Vertical sub R-ps, cW}
The rectangles $\{\tilde{V}^r_l\}_{l=1}^{v_i}$ introduced in Definition \ref{Defi: Vertical sub R-S(cW)} are the vertical sub-rectangles of the Markov partition $(f, \mathcal{R}_{S(\mathcal{W})})$ that are contained in $\tilde{R}_r$.
\end{lemm}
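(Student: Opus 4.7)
The plan is to show each $\tilde{V}^r_l$ is a vertical sub-rectangle of $(f, \mathcal{R}_{S(\mathcal{W})})$ contained in $\tilde{R}_r$, and conversely that every such vertical sub-rectangle equals some $\tilde{V}^r_l$.

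For the first direction, let $(k, j) \in \mathcal{H}(T)$ be the unique label with $\rho_T(k, j) = (i, l)$, giving $f(H^k_j) = V^i_l$. I would consider the preimage $\tilde{H} := \overline{\overset{o}{H^k_j} \cap f^{-1}(\overset{o}{\tilde{R}_r})}$, which is a horizontal sub-rectangle of $H^k_j$ whose $f$-image equals $\tilde{V}^r_l$. Arguing in parallel with Lemma \ref{Lemm: unique sub (i,j) for a r,cW}, I would show $\tilde{H}$ sits inside a single rectangle $\tilde{R}_q$ of the refinement: any stable cut of $\mathcal{R}_{S(\mathcal{W})}$ meeting the interior of $\tilde{H}$ would, by $f$-invariance of $\partial^s \mathcal{R}_{S(\mathcal{W})}$ (Lemma \ref{Lemm: Markov partition cW }), have its $f$-image both in $\partial^s \mathcal{R}_{S(\mathcal{W})}$ and inside $\overset{o}{\tilde{R}_r}$, a contradiction. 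Hence $\tilde{H}$ is a horizontal sub-rectangle of $(f, \mathcal{R}_{S(\mathcal{W})})$ sitting in $\tilde{R}_q$, and $\tilde{V}^r_l = f(\tilde{H})$ is a vertical sub-rectangle of the refinement inside $\tilde{R}_r$.

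For the reverse direction, let $\tilde{V}$ be any vertical sub-rectangle of $(f, \mathcal{R}_{S(\mathcal{W})})$ contained in $\tilde{R}_r$, written as the closure of a connected component of $f(\overset{o}{\tilde{R}_q}) \cap \overset{o}{\tilde{R}_r}$ for some $\tilde{R}_q \subset R_k$. Because $\tilde{R}_q$ is horizontal in $R_k$, each non-empty intersection $\tilde{R}_q \cap H^k_j$ is a horizontal sub-rectangle of $H^k_j$ of full horizontal width, so its $f$-image spans the full stable extent of $V^{i_j}_{l_j}$, where $(i_j, l_j) = \rho_T(k, j)$. The connected component producing $\tilde{V}$ corresponds to the unique $j$ with $i_j = i$; the horizontal boundaries of $f(\tilde{R}_q \cap H^k_j)$ are $f$-images of stable cuts of $\mathcal{R}_{S(\mathcal{W})}$ in $R_k$, hence — by Lemma \ref{Lemm: image of stable intervals} and $f$-invariance of $\partial^s \mathcal{R}_{S(\mathcal{W})}$ — they lie inside stable cuts of $\mathcal{R}_{S(\mathcal{W})}$ in $R_i$, aligning with the stable boundary of $\tilde{R}_r$. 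This forces $f(\tilde{R}_q \cap H^k_j) \cap \tilde{R}_r$ to span $\tilde{R}_r$ vertically in full and horizontally across the full extent of $V^i_{l_j}$, yielding $\tilde{V} = V^i_{l_j} \cap \tilde{R}_r = \tilde{V}^r_{l_j}$.

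The principal technical obstacle is justifying the alignment between the $f$-images of stable cuts in $R_k$ and the stable cuts bounding $\tilde{R}_r$ in $R_i$. This is exactly what prevents each $\tilde{V}^r_l$ from being subdivided into narrower vertical strips, and it hinges on combining $f$-invariance of $\partial^s \mathcal{R}_{S(\mathcal{W})}$ with the containment $f(I_{t,\underline{w}}) \subset I_{t+1,\underline{w}}$ from Lemma \ref{Lemm: image of stable intervals}.
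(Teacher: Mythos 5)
Your proposal is correct and follows essentially the same route as the paper: both directions rest on passing through $f^{\pm 1}$, the uniqueness of the horizontal sub-rectangle $H^k_j$ containing the relevant preimage (the binary incidence matrix), the $f$-invariance of $\partial^s \mathcal{R}_{S(\mathcal{W})}$, and the containment $f(I_{t,\underline{w}})\subset I_{t+1,\underline{w}}$. The only difference is presentational — you run the converse direction forward through $f$ starting from $\tilde{R}_q$, whereas the paper pulls $\tilde{V}$ back via $f^{-1}$ — but the supporting lemmas and the alignment argument you flag as the key obstacle are exactly the ones the paper uses.
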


\begin{proof}
	
Consider a single rectangle $\tilde{V}^r_l$. We can observe that $f^{-1}(\tilde{V}^r_l)$ is a subset of $f^{-1}(V^k_l)$, and by Lemma \ref{Lemm: unique sub (i,j) for a r,cW}, it is contained in some horizontal sub-rectangle of $\mathcal{R}$, $H^i_j$. The preimage $f^{-1}(\tilde{V}^r_l)$ is a horizontal sub-rectangle of $H^i_j$ whose interior does not intersect $\partial^s \mathcal{R}_{S(\mathcal{W})}$, and it is exclusively contained within a unique rectangle $\tilde{R}_r$. Moreover, the stable boundary of $f^{-1}(\tilde{V}^r_l)$ has an image contained within the stable boundary of the Markov partition $\mathcal{R}_{S(\mathcal{W})}$. Therefore, $f^{-1}(\tilde{V}^r_l)$ is a horizontal sub-rectangle of $\mathcal{R}_{S(\mathcal{W})}$, and hence $\tilde{V}^r_l$ is a vertical sub-rectangle of the Markov partition $\mathcal{R}_{S(\mathcal{W})}$.

	Conversely, if $\tilde{V}$ is a vertical sub-rectangle of $\cR_{S(\cW)}$, then $f^{-1}(\tilde{V})=\tilde{H}$ is a horizontal sub-rectangle of $\cR_{S(\cW)}$. Let $(i,j_{\tilde{H}})$ be the indexes give in Definition \ref{Lemm: unique sub (i,j) for a r,cW}. Thus, $\tilde{V}$ is a horizontal sub-rectangle of $V^k_l=f(H^i_{j_{\tilde{H}}})$, and its interior does not intersect the stable boundaries of $\cR_{S(\cW)}$. However, its stable boundaries are contained in $\partial^s \cR_{S(\cW)}$. This implies that $\tilde{V}\subset V^k_l\cap \tilde{R}_{r}$ for some $r\in \{1,\cdots, n\}$, and it turns out to be one of the rectangles $\tilde{V}^{r}_l$ given by $\tilde{V}^{r}_l:= \overline{\overset{o}{\tilde{R}_r} \cap \overset{o}{V^k_l}}$.
	
\end{proof}

This result  have the next immediate corollary.

\begin{coro}\label{Coro: number of vertical sub rec, cW}
	If $r=\tilde{r}(i,s)$  then $V_r=v_i$.
\end{coro}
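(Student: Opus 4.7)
The plan is to derive the corollary as a direct counting consequence of Lemma \ref{Lemma: Vertical sub R-ps, cW}. Recall that in the geometric type $T_{S(\cW)}$, the entry $V_r$ is by definition (see \ref{Defi: Geometric type of (f,cR)}) the number of vertical sub-rectangles of the Markov partition $(f,\cR_{S(\cW)})$ contained in $\tilde{R}_r$. Lemma \ref{Lemma: Vertical sub R-ps, cW} identifies this set of vertical sub-rectangles with the family $\{\tilde{V}^r_l\}_{l=1}^{v_i}$, indexed by the labels of the vertical sub-rectangles $\{V^i_l\}_{l=1}^{v_i}$ of the original rectangle $R_i \in \cR$ that contains $\tilde{R}_r$. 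So the corollary reduces to checking that this indexing is injective, i.e.\ that the family $\{\tilde{V}^r_l\}_{l=1}^{v_i}$ consists of $v_i$ \emph{distinct} rectangles.

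To verify injectivity, I would argue as follows. Since $\tilde{R}_r$ is a horizontal sub-rectangle of $R_i$ (Lemma \ref{Lemm: rectangles in R S(p)}), its interior meets every vertical leaf of $R_i$. Each $V^i_l$ is a vertical sub-rectangle of $R_i$ whose interior is a nonempty union of vertical leaves of $R_i$; thus $\overset{o}{\tilde{R}_r}\cap \overset{o}{V^i_l}$ is nonempty. This shows each $\tilde{V}^r_l$ is a genuine (non-degenerate) vertical sub-rectangle of $\tilde{R}_r$. Moreover, since two distinct vertical sub-rectangles $V^i_l$ and $V^i_{l'}$ of $R_i$ have disjoint interiors, the same holds for $\tilde{V}^r_l$ and $\tilde{V}^r_{l'}$, and in particular $\tilde{V}^r_l\neq \tilde{V}^r_{l'}$. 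Hence the assignment $l\mapsto \tilde{V}^r_l$ is injective, and the family $\{\tilde{V}^r_l\}_{l=1}^{v_i}$ has exactly $v_i$ elements.

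Combining this count with Lemma \ref{Lemma: Vertical sub R-ps, cW}, we obtain
\[
V_r \;=\; \#\{\text{vertical sub-rectangles of }(f,\cR_{S(\cW)})\text{ contained in }\tilde{R}_r\} \;=\; \#\{\tilde{V}^r_l\}_{l=1}^{v_i} \;=\; v_i,
\]
which is exactly the content of the corollary. There is no real obstacle here: the only small subtlety is confirming that the intersection $\overset{o}{\tilde{R}_r}\cap \overset{o}{V^i_l}$ is nonempty for each $l$, which follows at once from the fact that $\tilde{R}_r$ is a horizontal sub-rectangle of $R_i$ (so it is crossed from bottom to top by every vertical leaf of $R_i$, and in particular meets the interior of each $V^i_l$).
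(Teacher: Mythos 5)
Your proof is correct and follows the same route as the paper, which simply declares the corollary immediate from Lemma \ref{Lemma: Vertical sub R-ps, cW}; you merely spell out the counting details (non-degeneracy of each $\tilde{V}^r_l$, already guaranteed by Lemma \ref{Lemm: Caracterization of vertica sub of R-S(w)}, and distinctness from the disjointness of the interiors of the $V^i_l$). No gap.
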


\begin{coro}\label{Coro: vertical possition is delimitate,cW}
	Let $\tilde{H}\subset \tilde{R}_r$ any horizontal sub-rectangle of $\cR_{S(\cW}$,  assume that $\tilde{H}\subset H^i_j$ and $f(H^i_j)=V^k_l$. If $f(H)=\tilde{V}^{r'}_{l'}$ then $l=l'$.
\end{coro}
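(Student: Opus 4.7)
The plan is to chase the inclusions through the definitions of both sides. I would start by unpacking what it means for $\tilde{H}$ to be contained in $H^i_j$: since $\tilde{H} \subset H^i_j$ and $f(H^i_j) = V^k_l$, applying $f$ yields
\[
f(\tilde{H}) \subset f(H^i_j) = V^k_l,
\]
so the image of $\tilde{H}$ sits inside the vertical sub-rectangle $V^k_l$ of the original partition $(f,\mathcal{R})$, in particular inside $R_k$.

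Next, I would unpack the assumption $f(\tilde{H}) = \tilde{V}^{r'}_{l'}$ using Lemma~\ref{Lemma: Vertical sub R-ps, cW} and Definition~\ref{Defi: Vertical sub R-S(cW)}. Writing $r' = \tilde{r}(k',s')$ as in Lemma~\ref{lemm: determinating r=(i,s)}, the rectangle $\tilde{V}^{r'}_{l'}$ is by construction the closure of $\overset{o}{\tilde{R}_{r'}} \cap \overset{o}{V^{k'}_{l'}}$, and since $\tilde{R}_{r'} \subset R_{k'}$, it is in particular contained in the vertical sub-rectangle $V^{k'}_{l'}$ of $R_{k'}$.

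The key step is then to combine these two inclusions: the (nonempty-interior) rectangle $f(\tilde{H})$ lies simultaneously in $V^k_l$ and in $V^{k'}_{l'}$. Because the vertical sub-rectangles of $(f,\mathcal{R})$ either coincide or have disjoint interiors (they sit inside rectangles of $\mathcal{R}$ whose interiors are pairwise disjoint, and within a single $R_i$ the vertical sub-rectangles $V^i_1,\ldots,V^i_{v_i}$ also have pairwise disjoint interiors), the only way both $V^k_l$ and $V^{k'}_{l'}$ can contain $\overset{o}{f(\tilde H)} \neq \emptyset$ is $k=k'$ and $l=l'$.

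I do not anticipate any serious obstacle: the statement is essentially a bookkeeping consequence of Lemma~\ref{Lemm: unique sub (i,j) for a r,cW} applied after $f$, together with the definition of the labeling of the vertical sub-rectangles of $(f,\mathcal{R}_{S(\mathcal{W})})$. The only subtlety worth double-checking is that $\overset{o}{f(\tilde{H})}$ really is nonempty, which follows because $\tilde{H}$ is a genuine rectangle (a horizontal sub-rectangle of $\tilde{R}_r$) and $f$ is a homeomorphism, hence sends open sets to open sets; this rules out the degenerate case and guarantees the interiors of $V^k_l$ and $V^{k'}_{l'}$ must meet.
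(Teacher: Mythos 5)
Your proposal is correct and follows essentially the same route as the paper's proof: push $\tilde{H}$ forward to get $f(\tilde{H}) \subset V^k_l$, unpack $\tilde{V}^{r'}_{l'}$ as the closure of $\overset{o}{\tilde{R}_{r'}} \cap \overset{o}{V^{k'}_{l'}}$, and conclude from disjointness of interiors of distinct vertical sub-rectangles that $l=l'$. Your explicit check that $\overset{o}{f(\tilde{H})}$ is nonempty is a small but welcome refinement of the paper's argument.
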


\begin{proof}
	As $\tilde{H}\subset H^i_j$, then $f(\tilde{H})\subset f( H^i_j)=V^k_l$. Simultaneously, $f(H)=\tilde{V}^{r'}_{l'}=\overline{ \overset{o}{\tilde{R}_{r'}}\cap \overset{o}{V^k_{l}} }$. Given that the intersection $\overset{o}{\tilde{R}_{r'}}\cap \overset{o}{V^k_{l}}$ has only one connected component, it is necessary that $\overset{o}{V^k_{l}}=\overset{o}{V^k_{l'}}$ and thus $l=l'$.
\end{proof}

\subsubsection{The relative position of $H^i_j$ with respect to $\tilde{R}_r$} We are going to determine the set of horizontal sub-rectangles of $\mathcal{R}$ that intersect a certain rectangle $R_r$ from $\mathcal{R}_{S(\mathcal{W})}$. For this reason, let us introduce the set that indexes such sub-rectangles.

\begin{defi}\label{Defi cH(r) set, cW}
	Let $\tilde{R}_r\in \cR_{S(\cW)}$ with $r=\tilde{r}(i,s)$. We define:
	$$
	\cH(r):=\{(i,j)\in \cH(T): \overset{o}{H^i_{j}}\cap \overset{o}{\tilde{R}_r}\neq \emptyset\}.
	$$
\end{defi}

If we know the stable segments that bound $\tilde{R}_r$, we can determine the indices of the horizontal sub-rectangles within $\cR$ that contain the upper and lower boundaries of $\tilde{R}_r$. Let's assume  these horizontal sub-rectangles are in positions $j{-} < j{+}$. Thus, we can define $\cH(r)$ as the set of indices $(i, j)$ where $j$ ranges from $j_{-}$ to $j_{+}$. The following lemma provides a more detailed explanation of this observation.

\begin{figure}[h]
	\centering
	\includegraphics[width=0.6\textwidth]{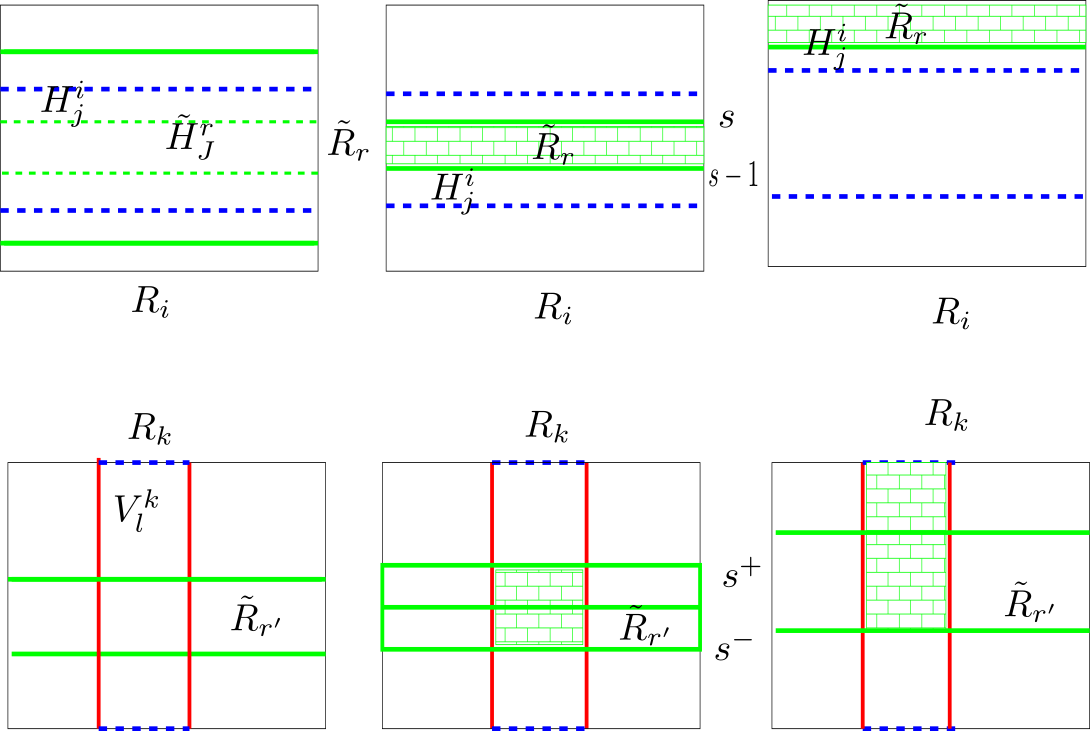}
	\caption{The relative position of the horizontal sub-rectangles inside $\tilde{R}_{r}$}
	\label{Fig: 3 cases}
\end{figure}

\begin{lemm}\label{Lemm: determination cH(r), cW}
	Let $\tilde{R}_r$ be a rectangle in the geometric Markov partition $\cR_{S(\underline{w})}$ with $r=\tilde{r}(i,s)$ and $s\in {0,\cdots, O(i,\cW)}$. Then the set $\cH(r)$ is determined by one of the three following formulas:
	
	\begin{itemize}
		\item[i)] If $O(i,\cW)=0$, then:  $\cH(r)=\{(i,j)\in \cH(T)\}$.

		\item[ii)]  If $O(i,\cW)>0$, $s \neq 0$ and $s \neq O(i,\cW)$, let $I_{t_1,\underline{w}^1}$ be the inferior boundary of $\tilde{R}_r$, and let $I_{t_2,\underline{w}^2}$ be the upper boundary of $\tilde{R}_r$. Let $j_{t_1,\underline{w}^1}$ and $j_{t_1,\underline{w}^2}$ be the indices determined in Lemma \ref{Lemm: unique hriwontal for every prjection}. In this situation:
		$$
		\cH(r)=\{(i,j)\in \cH(T): j_{t_1,\underline{w}^1} \leq j \leq j_{t_2,\underline{w}^2} \}.
		$$
		
		\item[iii)] If $O(i,\cW)>0$ and $s=O(i,\cW)$, let $I_{t_1,\underline{w}^1}$ be the inferior boundary of $\tilde{R}_r$ and  $I_{i,+1}$  the upper boundary of $\tilde{R}_r$. Let $j_{t_1,\underline{w}^1}$ be the index determined in \ref{Lemm: unique hriwontal for every prjection}. In this manner:
		$$
		\cH(r)=\{(i,j)\in \cH(T): j_{t_1,\underline{w}^1} \leq j \leq h_i \}.
		$$
		\item[iv)] If $O(i,\cW)>0$ and $s=0$, let $I_{t_1,\underline{w}^1}$ be the upper boundary of $\tilde{R}_r$, let $I_{i,-1}$ be its inferior boundary and let $j_{t,\underline{w}^1}$ be the index determined in \ref{Lemm: unique hriwontal for every prjection}. In this manner:
		$$
		\cH(r)=\{(i,j)\in \cH(T): 1 \leq j \leq j_{t_1,\underline{w}^1} \}.
		$$
		
	\end{itemize}
	
\end{lemm}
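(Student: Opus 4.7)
The plan is to handle the four cases uniformly by leveraging Corollary \ref{Coro: projetion stable segments}: every stable interval $I_{t,\underline{w}^q}$ induced by a non-$s$-boundary code $\underline{w}^q\in\cW$ lies in the interior of a unique horizontal sub-rectangle $H^i_{j_{t,\underline{w}^q}}$ of the original Markov partition $(f,\cR)$. Since the indexing $\{H^i_j\}_{j=1}^{h_i}$ is by increasing vertical position inside $R_i$, a horizontal sub-rectangle $H^i_j$ lies in the vertical strip between two stable intervals $I_{t_1,\underline{w}^1}<_{v(i)}I_{t_2,\underline{w}^2}$ precisely when $j_{t_1,\underline{w}^1}\leq j\leq j_{t_2,\underline{w}^2}$. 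This monotonicity is the conceptual engine of the proof: the function $(t,\underline{w}^q)\mapsto j_{t,\underline{w}^q}$ is order-preserving with respect to the order $\underline{s}$ of Definition \ref{Lemm: Boundaries code,CW} and the usual order on $\{1,\ldots,h_i\}$, because if one had $j_{t_1,\underline{w}^1}>j_{t_2,\underline{w}^2}$ with $I_{t_1,\underline{w}^1}<_{v(i)}I_{t_2,\underline{w}^2}$, then $I_{t_1,\underline{w}^1}$ would sit in a horizontal sub-rectangle strictly above that of $I_{t_2,\underline{w}^2}$, contradicting the vertical order.

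For Item (i), $O(i,\cW)=0$ means no stable interval of $\cI(i,\cW)$ cuts $R_i$, so by Lemma \ref{Lemm: rectangles in R S(p)} the only rectangle of $\cR_{S(\cW)}$ contained in $R_i$ is $R_i$ itself; hence every $H^i_j\subset R_i$ satisfies $\overset{o}{H^i_j}\cap \overset{o}{\tilde R_r}=\overset{o}{H^i_j}\neq\emptyset$, which yields the formula. For Item (ii), Lemma \ref{Lemm: Determine boundaris of R-r, cW} identifies the stable boundary of $\tilde R_r$ as $I_{t_1,\underline{w}^1}$ (below) and $I_{t_2,\underline{w}^2}$ (above). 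By the monotonicity observation, these lie in $H^i_{j_{t_1,\underline{w}^1}}$ and $H^i_{j_{t_2,\underline{w}^2}}$ respectively, with $j_{t_1,\underline{w}^1}\leq j_{t_2,\underline{w}^2}$. A horizontal sub-rectangle $H^i_j$ then intersects $\overset{o}{\tilde R_r}$ in its interior if and only if its vertical position $j$ lies between these two indices; the reverse containment—that every such $j$ really produces a nonempty intersection—follows from the fact that $\tilde R_r$ spans the entire horizontal extent of $R_i$ between its two stable boundary intervals.

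Items (iii) and (iv) are treated symmetrically by replacing the missing interior bounding interval by $I_{i,+1}$ or $I_{i,-1}$, which play the role of an ``$H^i_{h_i+1}$'' and an ``$H^i_0$'' at the top and bottom of $R_i$. Concretely, in Item (iii) the upper boundary of $\tilde R_r$ is $I_{i,+1}$, which forces $\tilde R_r$ to extend all the way up to the top of $R_i$; combined with the inferior boundary $I_{t_1,\underline{w}^1}\subset H^i_{j_{t_1,\underline{w}^1}}$, the set of horizontal sub-rectangles meeting $\overset{o}{\tilde R_r}$ is exactly those with $j_{t_1,\underline{w}^1}\leq j\leq h_i$. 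Item (iv) is the mirror statement.

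The main technical obstacle is the verification of the monotonicity statement $j_{t_1,\underline{w}^1}\leq j_{t_2,\underline{w}^2}$ when $I_{t_1,\underline{w}^1}<_{v(i)}I_{t_2,\underline{w}^2}$. This requires an appeal to the general principle, implicit in Corollary \ref{Coro: projetion stable segments} and Lemma \ref{Lemm: first order in O(i)}, that two stable intervals lying in distinct horizontal sub-rectangles inherit their vertical order from the ordering of those sub-rectangles; while if they share a sub-rectangle $H^i_j$ (so $j_{t_1,\underline{w}^1}=j_{t_2,\underline{w}^2}$) the inequality is trivial and the formula still holds, producing only the singleton $\{(i,j)\}$. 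Aside from this, the proof is mostly bookkeeping over the four cases enumerated by whether $\tilde R_r$ touches the top, bottom, both, or neither of $\partial^s R_i$.
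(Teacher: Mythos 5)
Your proof is correct and follows essentially the same route as the paper's: place the two bounding stable intervals inside their unique horizontal sub-rectangles via Corollary \ref{Coro: projetion stable segments}, and conclude that exactly the $H^i_j$ with index between the two resulting indices meet $\overset{o}{\tilde{R}_r}$. The only difference is that you explicitly justify the monotonicity $j_{t_1,\underline{w}^1}\leq j_{t_2,\underline{w}^2}$ (via Lemma \ref{Lemm: first order in O(i)} and its generalization), a step the paper simply declares evident, so your write-up is if anything slightly more complete.
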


\begin{proof}
	\textbf{Item} $i)$. If $O(i,\cW)=0$, then $\tilde{R}_r=R_i$, indicating that $\cH(r)$ must include the labels of all the horizontal sub-rectangles within the Markov partition $\cR$ that are contained in $R_i$. Thus, $\cH(r)=\{(i,j)\in \cH(T)\}$.

\textbf{Item} $ii)$. The lower boundary of $\tilde{R}_r$ lies within $H^i_{j_{t_1, \underline{w}^1}}$, while the upper boundary is contained in $H^i_{j_{t_2, \underline{w}^2}}$. It is evident that all the horizontal sub-rectangles of the form $H^i_j$ with $j_{t_1, \underline{w}} \leq j \leq j_{t_2, \underline{w}}$ are the only ones that satisfy $\overset{\circ}{H^i_j} \cap \overset{\circ}{\tilde{R}_r} \neq \emptyset$. This implies that $\mathcal{H}(r) = \{ (i, j) \in \mathcal{H}(T) \mid j_{t_1, \underline{w}} \leq j \leq j_{t_2, \underline{w}} \}$.

\textbf{Item} $iii)$. Every horizontal sub-rectangle $H^i_j$ with $j_{t_1,\underline{w}}\leq  j \leq \leq h_i$ satisfies that $\overset{o}{H^i_{j}}\cap \overset{o}{\tilde{R}_r}\neq \emptyset$, and these are the only horizontal sub-rectangles within $\cR$ with this property. Thus, $\cH(r)=\{(i,j)\in \cH(T): j_{t,\underline{w}} \leq j \leq h_i \}$.

\textbf{Item} $iv)$. The argument is the same as in Item $iii)$.
\end{proof}

\subsubsection{The number of horizontal sub-rectangles of $\cR_{S(\cW)}$}
We are going to count how many rectangles in the Markov partition $\cR_{S(\cW)}$ are contained in the intersection $\tilde{R}_r\cap H^i_j$ and describe the way that $f$ send such horizontal sub-rectangles into vertical sub-rectangles. 

\begin{center}
\textbf{Fist case:} $\tilde{R}_r \subset H^i_{j}\setminus\partial^s H^i_{j}$ 
\end{center}

\begin{figure}[h]
	\centering
	\includegraphics[width=0.5\textwidth]{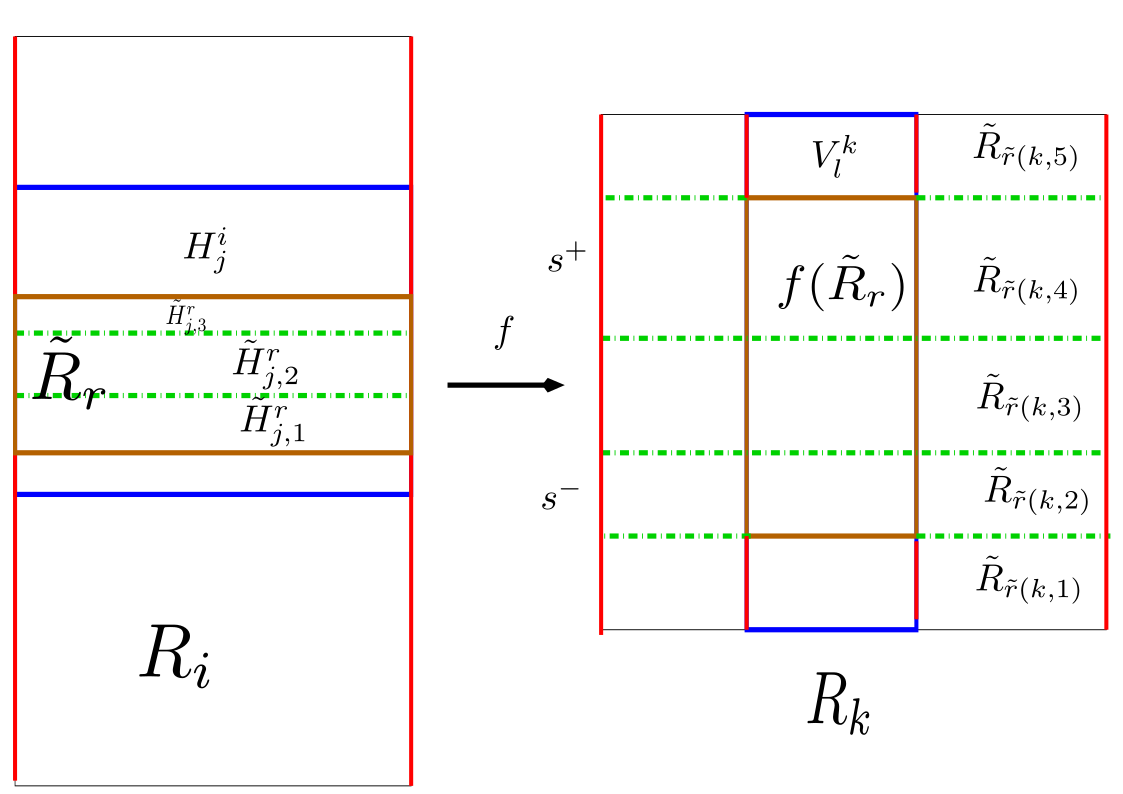}
	\caption{First case: $\tilde{R}_r \subset H^i_{j}\setminus\partial^s H^i_{j}$ }
	\label{Fig: Caso I}
\end{figure}

\begin{lemm}\label{Lemm: number horizontal sub case I, cW}
	Let $\tilde{R}_r\in \cR_{S(\cW)}$, with $r=\tilde{r}(i,s)$. Suppose we have a pair of indices $(i,j)\in \cH(r)$ such that $\tilde{R}_r \subset H^i_j\setminus\partial^s H^i_j$. In this case $s\neq $ and $s\neq O(i,\cW)$ and we take $I_{t_1,\underline{w}^1}$ as the lower boundary of $\tilde{R}_r$ and $I_{t_2,\underline{w}^2}$ as the upper boundary of $\tilde{R}_r$.
	
	Assume that $\rho_T(i,j)=(k,l)$. Let $s^-=\underline{s}(t_1+1,\underline{w}^1)$ and $s^+=\underline{s}(t_2+1,\underline{w}^2)$. Then,  $w^1_{t_1+1}=w^2_{t_2+1}=k$ and the number of horizontal sub-rectangles of $\cR_{S(\cW)}$ contained in the intersection $\tilde{R}_r\cap H^i_j$ is given by the formula:
	
	\begin{equation}\label{Equa: h (i,j,r) caso I, cW}
		\underline{h}(r,j)= \vert s^- - s^+\vert.
	\end{equation} 
	
\end{lemm}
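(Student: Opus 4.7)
The plan is to use the fact that $\tilde{R}_r$ sits strictly inside $H^i_j$ to first locate the horizontal sub-rectangles of $\cR$ that contain its stable boundaries, then transfer the picture forward by $f$ into the vertical strip $V^k_l \subset R_k$, and finally count by comparing two positions in the vertical ordering on $R_k$ introduced in Definition \ref{Lemm: Boundaries code,CW}.

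First I would establish that $w^1_{t_1+1} = w^2_{t_2+1} = k$. By Corollary \ref{Coro: projetion stable segments}, the interval $I_{t_\alpha,\underline{w}^\alpha}$ is contained in $H^i_{j_{t_\alpha,\underline{w}^\alpha}} \setminus \partial^s H^i_{j_{t_\alpha,\underline{w}^\alpha}}$ for $\alpha \in \{1,2\}$. By hypothesis both these stable arcs are boundary components of $\tilde{R}_r \subset H^i_j \setminus \partial^s H^i_j$, and because the interiors of distinct horizontal sub-rectangles of $R_i$ are disjoint, we must have $j_{t_1,\underline{w}^1} = j_{t_2,\underline{w}^2} = j$. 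Lemma \ref{Lemm: unique hriwontal for every prjection} then gives $\rho_T(i,j) = (w^\alpha_{t_\alpha+1}, l_\alpha)$; comparing with $\rho_T(i,j) = (k,l)$, uniqueness yields $w^1_{t_1+1} = w^2_{t_2+1} = k$ (and incidentally $l_1 = l_2 = l$).

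Second, I would push $\tilde{R}_r$ forward. Since $\tilde{R}_r \subset H^i_j$ and $f(H^i_j) = V^k_l$, the image $f(\tilde{R}_r)$ is a sub-rectangle of $V^k_l$ that crosses $V^k_l$ vertically from its lower to its upper stable side. By Lemma \ref{Lemm: image of stable intervals}, $f(I_{t_\alpha,\underline{w}^\alpha}) \subset I_{t_\alpha+1,\underline{w}^\alpha}$, and both target intervals are stable arcs of $R_k$ occupying the positions $s^- = \underline{s}(t_1+1,\underline{w}^1)$ and $s^+ = \underline{s}(t_2+1,\underline{w}^2)$ in the ordering of $\cI(k,\cW) \cup \{I_{k,-1}, I_{k,+1}\}$.

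Finally, the count. By Lemma \ref{Lemma: Vertical sub R-ps, cW}, every horizontal sub-rectangle of $(f, \cR_{S(\cW)})$ contained in $\tilde{R}_r$ is sent by $f$ onto a vertical sub-rectangle $\tilde{V}^{r'}_{l'}$, and by Corollary \ref{Coro: vertical possition is delimitate,cW} we must have $l' = l$. Therefore the horizontal sub-rectangles of $\cR_{S(\cW)}$ inside $\tilde{R}_r \cap H^i_j = \tilde{R}_r$ are in bijection with the rectangles $\tilde{R}_{r'} \subset R_k$ of $\cR_{S(\cW)}$ whose intersection with the vertical strip $f(\tilde{R}_r) \subset V^k_l$ is non-empty and gives a genuine vertical crossing. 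Since that strip is bounded by two stable arcs lying at positions $s^-$ and $s^+$ of the $R_k$-ordering, it crosses exactly the rectangles of $\cR_{S(\cW)} \cap R_k$ whose vertical position lies strictly between $\min(s^-, s^+)$ and $\max(s^-, s^+)$, and there are precisely $|s^- - s^+|$ such rectangles, giving the advertised formula $\underline{h}(r,j) = |s^- - s^+|$.

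The principal technical point is the first step: transferring the containment hypothesis $\tilde{R}_r \subset H^i_j \setminus \partial^s H^i_j$ into the equality $j_{t_1,\underline{w}^1} = j_{t_2,\underline{w}^2} = j$. Everything after that is bookkeeping: the sign of $s^- - s^+$ is controlled by $\epsilon = \epsilon_T(i,j)$ (the orientation coordinate of $\Phi_T(i,j) = (k,l,\epsilon)$), but since the statement only requires the \emph{number} of crossings, the absolute value absorbs this ambiguity and one never has to separate the two sign cases.
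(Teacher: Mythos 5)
Your proof is correct and follows essentially the same route as the paper's: use the strict containment $\tilde{R}_r \subset H^i_j\setminus\partial^s H^i_j$ to identify the two boundary arcs as code intervals lying in the single sub-rectangle $H^i_j$, push forward into $V^k_l\subset R_k$ to get $w^1_{t_1+1}=w^2_{t_2+1}=k$, and count the rectangles of $\cR_{S(\cW)}$ in $R_k$ trapped between the two image intervals at positions $s^-$ and $s^+$ (your step via Lemma \ref{Lemm: unique hriwontal for every prjection} is actually slightly more careful than the paper's one-line justification of $w^1_{t_1+1}=w^2_{t_2+1}=k$). One small wording slip: the crossed rectangles are those whose label lies in $\{\min(s^-,s^+)+1,\dots,\max(s^-,s^+)\}$ rather than \emph{strictly between} $s^-$ and $s^+$ — taken literally your phrasing undercounts by one — but the count $\vert s^--s^+\vert$ you state is the correct one.
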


\begin{proof}
Look at Figure \ref{Fig: Caso I}. Since $\tilde{R}_r \subset H^i_j \setminus \partial^s H^i_j$, $\tilde{R}_r$ doesn't share a stable boundary component with $R_i$. This means $s \neq 0$ and $s \neq O(i, \mathcal{W})$, and we can take the intervals $I_{t_1,\underline{w}^1}$ and $I_{t_2,\underline{w}^2}$ as described in the lemma.

Since $f(\tilde{R}_r) \subset f(H^i_j) \subset R_k$, then $w_{t_1+1}^1 = w_{t_2+1}^2 = k$, as we claimed. The number of horizontal sub-rectangles of $\tilde{R}_r$ is equal to the number of all the rectangles in $\mathcal{R}_{S(\mathcal{W})}$ contained in $R_k$ and located between the intervals $I_{t_1+1,\underline{w}^1}$ and $I_{t_2+2,\underline{w}^2}$. If their relative positions are given by the numbers $s^-$ and $s^+$, then the number of such rectangles is given by $\vert s^- - s^+ \vert$, and we have proved our lemma.
\end{proof}

\begin{defi}\label{Defi: order rectangles case I, cW}
	The horizontal sub-rectangles of $\tilde{R}_r$ as described in Lemma \ref{Lemm: number horizontal sub case I, cW} are labeled from the bottom to the top with respect to the orientation of $\tilde{R}_r$ as:
	$$
	\{\tilde{H}^r_{j,J}\}_{J=1}^{\underline{h}(r,j)}.
	$$
\end{defi}

\begin{lemm} \label{Lemm: Permutation rho caso I, cW}
	With the hypothesis and notations of Lemma \ref{Lemm: number horizontal sub case I, cW}, assume that $(\rho,\epsilon)(i,j)=(k,l,\epsilon(i,j))$ and $f(\tilde{H}^r_{j,J})=\tilde{V}^{r'}_{l'}$. Then:
	\begin{itemize}
		\item[i)] 	If $\epsilon=\epsilon(i,j) = 1$, then $s^- < s^+$, $r'=\tilde{r}(k, s^- + J)$ and $l=l'$.
		\item[ii)] If $\epsilon=\epsilon(i,j) = -1$, then $s^- > s^+$, $r'=\tilde{r}(k, s^- - J+1)$ and $l=l'$.
	\end{itemize}
where  $\underline{s}:\cO(i,\cW)\cup \{(i,+1), (i,-1)\} \rightarrow \{0,1,\cdots,O(i,\cW),O(i,\cW)+1\}.$ is the order function that was introduced in Lemma \ref{Lemm: Boundaries code,CW}.
\end{lemm}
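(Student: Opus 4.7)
The plan is to track how $f$ maps the rectangle $\tilde{R}_r$ into $V^k_l = f(H^i_j)$, identify which rectangles of $\cR_{S(\cW)}$ contained in $R_k$ meet the image $f(\tilde{R}_r)$, and then reconcile the bottom-to-top enumeration of the $\tilde{H}^r_{j,J}$ with the enumeration of these rectangles inside $V^k_l$. The sign $\epsilon = \epsilon(i,j)$ controls whether $f$ preserves or reverses vertical orientation, which is the only mechanism that flips the correspondence.

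First I would analyze the image of the stable boundary of $\tilde{R}_r$. By Lemma~\ref{Lemm: imge of boundaries,cW}, $f(I_{t_1,\underline{w}^1})\subset I_{t_1+1,\underline{w}^1}$ and $f(I_{t_2,\underline{w}^2})\subset I_{t_2+1,\underline{w}^2}$; these are stable intervals of $R_k$, since $w^1_{t_1+1}=w^2_{t_2+1}=k$ (which itself follows from $f(\tilde{R}_r)\subset f(H^i_j)=V^k_l\subset R_k$). By definition of $\underline{s}$, the interval $I_{t_1+1,\underline{w}^1}$ sits at vertical position $s^-$ in $R_k$ and $I_{t_2+1,\underline{w}^2}$ at position $s^+$. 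The set $f(\tilde{R}_r)$ is a vertical sub-rectangle of $V^k_l$ whose stable boundaries are precisely these two intervals, so the rectangles of $\cR_{S(\cW)}$ that contain its vertical sub-rectangles are exactly those $\tilde{R}_{\tilde{r}(k,\cdot)}$ whose vertical positions in $R_k$ lie strictly between $s^-$ and $s^+$ (inclusive of the upper index in our labelling convention, where the rectangle at position $s$ has $I_{(k,s-1)}$ and $I_{(k,s)}$ as stable boundaries).

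Next I would split into the two cases based on $\epsilon$. If $\epsilon = 1$, then $f\vert_{H^i_j}\to V^k_l$ preserves vertical orientation, so the lower boundary of $\tilde{R}_r$ is sent to the lower stable boundary of $f(\tilde{R}_r)$ and the upper to the upper. Hence $I_{(k,s^-)}$ lies below $I_{(k,s^+)}$ in the vertical order of $R_k$, which forces $s^-<s^+$. The rectangles $\tilde{R}_{\tilde{r}(k,s^-+1)},\dots,\tilde{R}_{\tilde{r}(k,s^+)}$ are exactly the $s^+-s^-=\underline{h}(r,j)$ rectangles of $\cR_{S(\cW)}$ in $R_k$ meeting $f(\tilde{R}_r)$; since $f$ preserves the vertical order, the bottom-most horizontal sub-rectangle $\tilde{H}^r_{j,1}$ of $\tilde{R}_r$ maps onto the vertical sub-rectangle of $V^k_l$ inside $\tilde{R}_{\tilde{r}(k,s^-+1)}$, and so on, giving $r'=\tilde{r}(k,s^-+J)$. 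For $\epsilon=-1$ the roles of lower and upper are exchanged: $I_{(k,s^-)}$ becomes the upper stable boundary of $f(\tilde{R}_r)$ and $I_{(k,s^+)}$ the lower, yielding $s^->s^+$. The enumeration of the $\tilde{H}^r_{j,J}$ is now reversed by $f$, so $\tilde{H}^r_{j,1}$ lands in the \emph{top} rectangle $\tilde{R}_{\tilde{r}(k,s^-)}$ and in general $\tilde{H}^r_{j,J}$ lands in $\tilde{R}_{\tilde{r}(k,s^--J+1)}$, which is the desired formula.

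Finally, the equality $l=l'$ is immediate: because $\tilde{H}^r_{j,J}\subset H^i_j$, we have $f(\tilde{H}^r_{j,J})\subset f(H^i_j)=V^k_l$, and Corollary~\ref{Coro: vertical possition is delimitate,cW} applied to the horizontal sub-rectangle $\tilde{H}=\tilde{H}^r_{j,J}$ gives exactly $l=l'$. The only genuinely delicate point in the whole argument is the order-reversal bookkeeping in the $\epsilon=-1$ case, where one must be careful to check both that $s^->s^+$ follows from the orientation reversal and that the index shift is $s^--J+1$ rather than $s^--J$ or $s^+-J$; the cleanest way is to verify the endpoints $J=1$ and $J=\underline{h}(r,j)=s^--s^+$ by hand against the identification of the top and bottom rectangles of $f(\tilde{R}_r)$, and invoke the fact that $f$ restricted to $\tilde{R}_r$ is a homeomorphism reversing vertical order.
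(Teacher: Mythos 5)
Your proposal is correct and follows essentially the same route as the paper's proof: you track the images of the two stable boundary intervals via Lemma~\ref{Lemm: imge of boundaries,cW}, use the sign $\epsilon(i,j)$ to decide whether the vertical order of $I_{(k,s^-)}$ and $I_{(k,s^+)}$ is preserved or reversed (giving $s^-<s^+$ or $s^->s^+$), read off the index $r'$ by positional bookkeeping against the labelling convention $\tilde R_{(k,s)}$ bounded by $I_{(k,s-1)}$ and $I_{(k,s)}$, and obtain $l=l'$ from Corollary~\ref{Coro: vertical possition is delimitate,cW}. The only nit is terminological: $f(\tilde R_r)$ is a \emph{horizontal} sub-rectangle of $V^k_l$ (a band between two stable leaves spanning its full width), not a vertical one, though your subsequent description makes the intended meaning clear.
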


\begin{proof}
	\textbf{Item} $i)$. If $\epsilon(i,j) = 1$ then $f$ preserves the vertical orientation of $\tilde{R}_r$, and then $I_{t_1+1,\underline{w}^1} < I_{t_2+1,\underline{w}^2}$, so  $s^- < s^+$.
	
The rectangle $\tilde{H}^r_{j,J}$ is in vertical position $J$ inside $\tilde{R}_r \cap H^i_j$ and is mapped by $f$ to the rectangle whose vertical position inside $R_k$ is $J$ positions above the rectangle in position $s^-$ within $R_k$. Therefore, $f(\tilde{H}^r_{j,J})$ is inside a rectangle $\tilde{R}_{r'}\subset R_k$ whose position  is equal to $s^- + J$. In other words, $r'=\tilde{r}(k,s^-+J)$.
	
	\textbf{Item} $ii)$. If $\epsilon(i,j) = -1$ then $f$ inverts the vertical orientation of $\tilde{R}_r$. Then, $I_{t_1+1,\underline{w}^1} > I_{t_2+1,\underline{w}^2}$ and $s^- > s^+$.
	
	The rectangle $\tilde{H}^r_{j,1}$ that is positioned at place $1$ inside $\tilde{R}_r\cap H^i_j$ is sent into to the rectangle $\tilde{R}_{r'}$, whose upper boundary is at position $s^-$, therefore $r'=s^-=s^- -(J-1)$. In general the upper boundary of the rectangles $\tilde{R}\subset R_k$ determine its position inside $R_k$. That means that  $f(\tilde{H}^r_{j,J})$ is inside a rectangle $\tilde{R}_{r'}$ withing  $R_k$ whose position  is given by $s^- - J+1$. In other words, $r'=\tilde{r}(k,s^- -J+1)$.
	
	Finally in bot cases,  Corollary \ref{Coro: vertical possition is delimitate,cW} implies that $l'=l$.
\end{proof}

This lets us to introduce the following definition.

\begin{defi}\label{Defi: permutation caso I, cW}
	With the hypothesis of Lemma \ref{Lemm: number horizontal sub case I, cW}, assume that $\Phi_T(i,j)=(k,l,\epsilon)$. Then: 
	
	If $\epsilon(i,j) = 1$ we define:
	
	\begin{equation}\label{Equ: rho caso 1, +,cW}
		\underline{\rho}_{(r,j)}(r,J)=(\tilde{r}(k, s^-+J), l  ).
	\end{equation}
	
	and if $\epsilon(i,j) = -1$:
	\begin{equation}\label{Equ: rho caso 1, -, cW}
		\underline{\rho}_{(r,j)}(r,J)=(\tilde{r}(k, s^+  -J+1) , l    ).
	\end{equation}
\end{defi}

\begin{center}
\textbf{{Second case:} $H^i_{j}\subset \tilde{R}_r$ }
\end{center}
		\begin{figure}[h]
	\centering
	\includegraphics[width=0.5\textwidth]{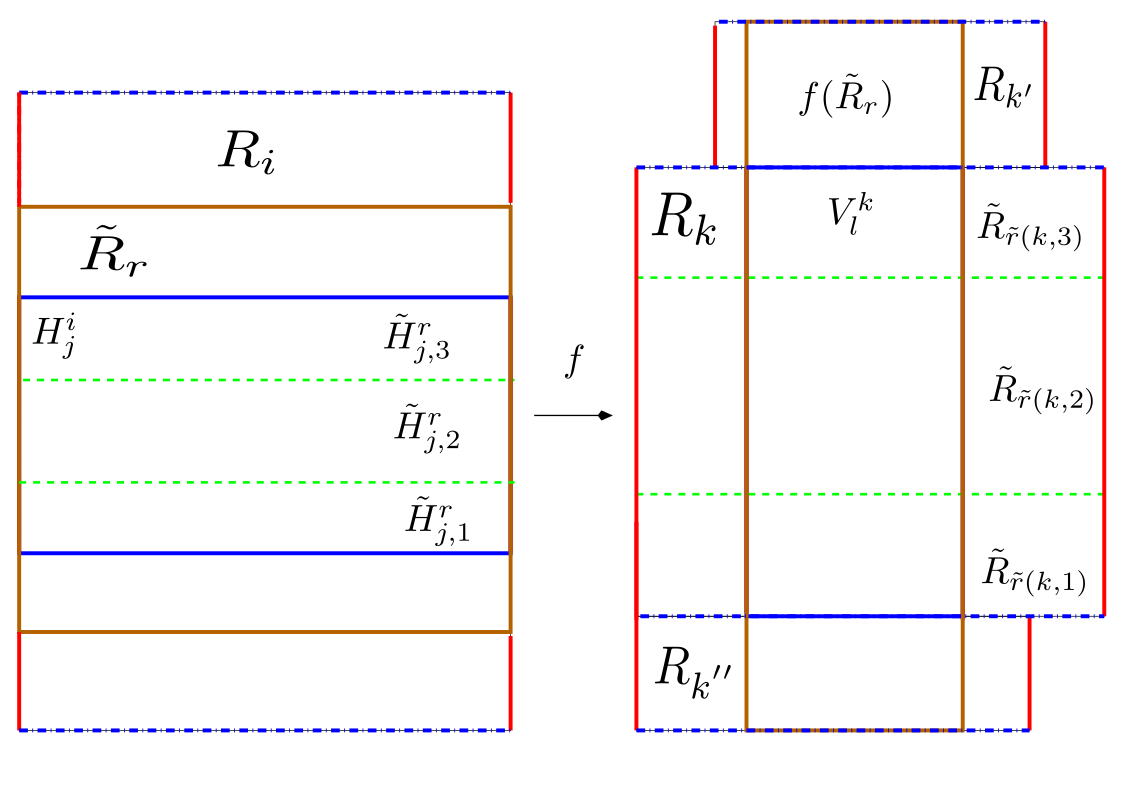}
	\caption{Second case: $H^i_{j}\subset \tilde{R}_r$  }
	\label{Fig: Caso II}
\end{figure}

\begin{lemm}\label{Lemm: number horizontal sub case II, cW}
	
	Let $\tilde{R}_r\in \cR_{S(\cW)}$, with $r=\tilde{r}(i,s)$. Let $(i,j)\in \cH(r)$ be a pair of indices such that $H^i_j\subset \tilde{R}_r$, assume that $\rho_T(i,j)=(k,l)$, then the number of horizontal sub-rectangles of $\cR_{S(\cW)}$ that are contained in the intersection $\tilde{R}_r\cap H^i_j$ is given by the formula:
	
	\begin{equation}\label{Equa: h (i,j,r) caso I,cW}
		\underline{h}(r,j)=O(k,\cW)+1,
	\end{equation} 
	
\end{lemm}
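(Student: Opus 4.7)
My plan is to reduce the count to two facts already established earlier in the text: the bijective correspondence (induced by $f$) between horizontal and vertical sub-rectangles of the refinement, and the enumeration of the rectangles of $\cR_{S(\cW)}$ contained in a fixed rectangle of $\cR$. Since $H^i_j \subset \tilde{R}_r$, the intersection $\tilde{R}_r \cap H^i_j$ is just $H^i_j$ itself, so I need to count horizontal sub-rectangles of $(f, \cR_{S(\cW)})$ contained in $H^i_j$.

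The first step is to transport the problem through $f$ and land inside $V^k_l \subset R_k$. Because $(f, \cR_{S(\cW)})$ is a Markov partition (Lemma \ref{Lemm: Markov partition cW }) refining $(f,\cR)$, the image under $f$ of any horizontal sub-rectangle $\tilde{H}$ of $\cR_{S(\cW)}$ contained in $H^i_j$ is a vertical sub-rectangle of $\cR_{S(\cW)}$; moreover, $f(\tilde{H}) \subset f(H^i_j) = V^k_l$. The same argument applied to $f^{-1}$ (together with Lemma \ref{Lemm: unique sub (i,j) for a r,cW}, which guarantees that the pre-image of a vertical sub-rectangle of $\cR_{S(\cW)}$ inside $V^k_l$ is confined to the single horizontal $(f,\cR)$-sub-rectangle $H^i_j$) shows that this assignment is a bijection. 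Consequently, $\underline{h}(r,j)$ equals the number of vertical sub-rectangles of $\cR_{S(\cW)}$ contained in $V^k_l$.

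The second step is to count these vertical sub-rectangles. By Lemma \ref{Lemma: Vertical sub R-ps, cW}, they are precisely the rectangles of the form $\tilde{V}^{r'}_l = \overline{\overset{o}{\tilde{R}_{r'}} \cap \overset{o}{V^k_l}}$, with $\tilde{R}_{r'}$ ranging over the rectangles of $\cR_{S(\cW)}$ contained in $R_k$; each such $\tilde{R}_{r'}$ contributes exactly one vertical sub-rectangle because $V^k_l$ crosses $R_k$ from bottom to top. Lemma \ref{Lemm: N en TcW} tells us that the number of rectangles of $\cR_{S(\cW)}$ contained in $R_k$ is $O(k,\cW)+1$, which yields the claimed formula $\underline{h}(r,j) = O(k,\cW)+1$. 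I do not anticipate any real obstacle here: the whole argument is bookkeeping, and the main thing to be careful about is the verification that the bijection given by $f$ really does match horizontal sub-rectangles of $\cR_{S(\cW)}$ inside $H^i_j$ with vertical sub-rectangles of $\cR_{S(\cW)}$ inside $V^k_l$, without any of the new stable cuts $I_{t,\underline{w}^q}$ being lost or duplicated under the identification.
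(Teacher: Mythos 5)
Your proposal is correct and follows essentially the same route as the paper: push $H^i_j=\tilde{R}_r\cap H^i_j$ forward by $f$ to $V^k_l$ and count the $O(k,\cW)+1$ rectangles of $\cR_{S(\cW)}$ inside $R_k$, each of which meets $V^k_l$ in exactly one vertical sub-rectangle. Your version merely makes explicit the bijection (via Lemmas \ref{Lemm: unique sub (i,j) for a r,cW} and \ref{Lemma: Vertical sub R-ps, cW}) that the paper leaves implicit.
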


\begin{proof}
	The image of $f^(H^i_j\cap \tilde{R}_r)=f(H^i_j)=V^k_l$ and $V^k_l$ intersect all the rectangles of $\cR_{S(\cW)}$ that are contained in $R_k$, the total amount of them is $O(k,\cW)+1,$ and this is the number of horizontal sub-rectangles of $\cR_{S(\cW)}$ that are contained in $\tilde{R}_r\cap H^i_j$.
\end{proof}

\begin{defi}\label{Defi: order rectangles case II,cW}
The horizontal sub-rectangles of $\tilde{R}_r$ described in Lemma \ref{Lemm: number horizontal sub case II, cW} are labeled from bottom to top with respect to the vertical orientation of $\tilde{R}_r$ as:
$$
\{\tilde{H}^r_{j,J}\}_{J=1}^{\underline{h}(r,j)}.
$$
\end{defi}

Now we can determine the image of any horizontal sub-rectangle.

\begin{lemm}\label{Lemm: Permutation rho caso I,cW}
	Let $\tilde{R}_r\in \cR_{S(\cW)}$, with $r=\tilde{r}(i,s)$. Let $(i,j)\in \cH(r)$ be a pair of indices such that $H^i_j\subset \tilde{R}_r$, and let $\tilde{H}^r_{(j,J)}$ be  an horizontal sub-rectangle of $\tilde{R}_r\cap H^i_j$. Assume that $\rho_T(i,j)=(k,l)$,  and that $f(\tilde{H}^r_{j,J})=\tilde{V}^{r'}_{l'}$, then:	
	\begin{itemize}
		\item[i)] If $\epsilon(i,j)=1$, then $r'=\tilde{r}(k,J)$ and $l=l'$.
		
		\item[ii)]  If  $\epsilon_T(i,j)=-1$, then $r'=\tilde{r}(k,O(k,\cW)+1 -J+1)$ and $l=l'$.
	\end{itemize}
	
\end{lemm}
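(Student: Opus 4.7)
The plan is to mimic the strategy of Lemma \ref{Lemm: Permutation rho caso I, cW}, adapting it to the simpler geometric situation where the entire horizontal sub-rectangle $H^i_j$ sits inside $\tilde{R}_r$. First I would record the structural fact driving the whole argument: since $H^i_j \subset \tilde{R}_r$, the image $f(H^i_j) = V^k_l$ is a vertical sub-rectangle of $R_k$ that crosses $R_k$ from bottom to top, and hence crosses \emph{every} rectangle of $\cR_{S(\cW)}$ that is contained in $R_k$. By Lemma \ref{Lemm: N en TcW} there are exactly $O(k,\cW)+1$ such rectangles, which matches $\underline{h}(r,j) = O(k,\cW)+1$ given by Lemma \ref{Lemm: number horizontal sub case II, cW}. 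So $f$ realizes a natural bijection between the set $\{\tilde{H}^r_{j,J}\}_{J=1}^{O(k,\cW)+1}$ and the set of vertical sub-rectangles $V^k_l \cap \tilde{R}_{\tilde{r}(k,s')}$ as $s'$ ranges over $\{1,\ldots,O(k,\cW)+1\}$.

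Next I would handle the two orientation cases separately. In the case $\epsilon(i,j) = 1$, the map $f$ restricted to $H^i_j$ preserves the vertical direction going from $R_i$ to $R_k$; since the vertical direction of $\tilde{R}_r$ is the one it inherits from $R_i$ (\ref{Defi: Geometric refinement}) and similarly for the rectangles in $R_k$, the bottom-to-top ordering $J = 1,\ldots,O(k,\cW)+1$ of $\{\tilde{H}^r_{j,J}\}$ is carried by $f$ to the bottom-to-top ordering of the rectangles of $\cR_{S(\cW)}$ inside $R_k$. Hence $f(\tilde{H}^r_{j,J})$ sits inside $\tilde{R}_{\tilde{r}(k,J)}$, giving $r' = \tilde{r}(k,J)$. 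In the case $\epsilon(i,j) = -1$, the orientation is reversed, so the $J$-th rectangle from the bottom in $\tilde{R}_r \cap H^i_j$ is sent to the $J$-th rectangle from the \emph{top} among the $O(k,\cW)+1$ rectangles of $\cR_{S(\cW)}$ inside $R_k$; counting from the bottom, this is position $O(k,\cW)+1 - J + 1 = O(k,\cW)+2 - J$, yielding the stated formula.

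For the vertical index, the argument is the same in both cases: the image $f(\tilde{H}^r_{j,J})$ is contained in $V^k_l$ and equals the vertical sub-rectangle $\tilde{V}^{r'}_{l'} = \overline{\overset{o}{\tilde{R}_{r'}} \cap \overset{o}{V^k_{l'}}}$. Since the connected components of $\overset{o}{\tilde{R}_{r'}} \cap \overset{o}{V^k_{l}}$ for different values of $l$ are disjoint, Corollary \ref{Coro: vertical possition is delimitate,cW} (or directly the uniqueness of the vertical sub-rectangle of $R_k$ containing a given point) forces $l' = l$.

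The main obstacle I expect is purely bookkeeping: getting the reversal formula $O(k,\cW)+2-J$ right in the $\epsilon(i,j) = -1$ case. One has to be careful that the vertical orientations of $\tilde{R}_r$ and of each $\tilde{R}_{\tilde{r}(k,s')}$ are the ones induced by $R_i$ and $R_k$ respectively (by the canonical geometrization of Definition \ref{Defi: Geometric refinement}), so that the sign $\epsilon(i,j)$ encoding the action of $f$ on the orientations of $R_i$ and $R_k$ coincides with the sign governing the action on $\tilde{R}_r$ and its image; this is the one place where the choice of induced geometrization must be invoked explicitly, but it does not introduce any new difficulty beyond what was handled in the first case.
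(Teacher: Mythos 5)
Your proposal is correct and follows essentially the same route as the paper's proof: a case analysis on $\epsilon_T(i,j)$, tracking how $f$ carries the bottom-to-top position $J$ of $\tilde{H}^r_{j,J}$ to a position among the $O(k,\cW)+1$ rectangles of $\cR_{S(\cW)}$ inside $R_k$ (preserved when $\epsilon=1$, reversed to $O(k,\cW)+1-J+1$ when $\epsilon=-1$), and then invoking Corollary \ref{Coro: vertical possition is delimitate,cW} for $l'=l$. Your explicit remark that the induced geometrization of Definition \ref{Defi: Geometric refinement} is what lets the sign $\epsilon_T(i,j)$ govern the orientations of the refined rectangles is a useful clarification that the paper leaves implicit, but it does not change the argument.
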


		\begin{proof}
	\textbf{Item} $i)$. If $\epsilon(i,j)=1$, the vertical orientation is preserved by $f$ restricted to $\tilde{R}_r$, and the horizontal sub-rectangle $\tilde{H}^r_{j,J}$ that is in position $J$ inside $\tilde{R}_r\cap H^i_j$ is sent to the rectangle $\tilde{R}_{r'}$ located in position $J$ inside $R_k$. This means that $r'=\tilde{r}(k,J)$ and in view of Corollary \ref{Coro: vertical possition is delimitate,cW},  $l'=l$.
	
	\textbf{Item} $ii)$. If $\epsilon(i,j)=-1$, the vertical orientation is invert by $f$ restricted to $\tilde{R}_r\cap H^i_j$, and the horizontal sub-rectangle $\tilde{H}^r_{j,J}$  that is in position $J$ inside $\tilde{R}_r\cap H^i_j$ is sent to the rectangle $\tilde{R}_{r'}$ located in position $J-1$ below the top rectangle of $\cR_{S(\underline{w})}$ (for $J=1$ the position is $O(k,\underline{w}))+1)$, and this position is equal to $O(k,\underline{w})+1-J+1$ inside $R_k$. This means that $r'=\tilde{r}(k,O(k,\underline{w})+1-J+1)$. Finally,  by  Corollary \ref{Coro: vertical possition is delimitate,cW},  $l'=l$.

\end{proof}

\begin{defi}\label{Defi: permutation caso II, cW}
	Let $\tilde{R}_r\in \cR_{S(\cW)}$, with $r=\tilde{r}(i,s)$. Let $(i,j)\in \cH(r)$ be a pair of indices such that $H^i_j\subset \tilde{R}_r$, and let $\tilde{H}^r_{j,J}$ be  an horizontal sub-rectangle of $\tilde{R}_r\cap H^i_j$. Assume that $\rho_T(i,j)=(k,l)$, then:  
	
	If  $\epsilon(i,j)=1$ we define:
	\begin{equation}\label{Equ: rho caso 2, +,cW}
		\underline{\rho}_{(r,j)}(r,J)=(\tilde{r}(k,J),l),
	\end{equation}
	
	and  if $\epsilon(i,j)=-1$ 
	
	\begin{equation} \label{Equ: rho caso 2, -, cW}
		\underline{\rho}_{(r,j)}(r,J)=(\tilde{r}(k,O(k,\cW)+1 -J+1),l).
	\end{equation}
\end{defi}

\begin{center}
\textbf{Third case:} $H^i_{j}$ contains only one horizontal boundary component of $\tilde{R}_r$.
\end{center}

\begin{figure}[h]
	\centering
	\includegraphics[width=0.8\textwidth]{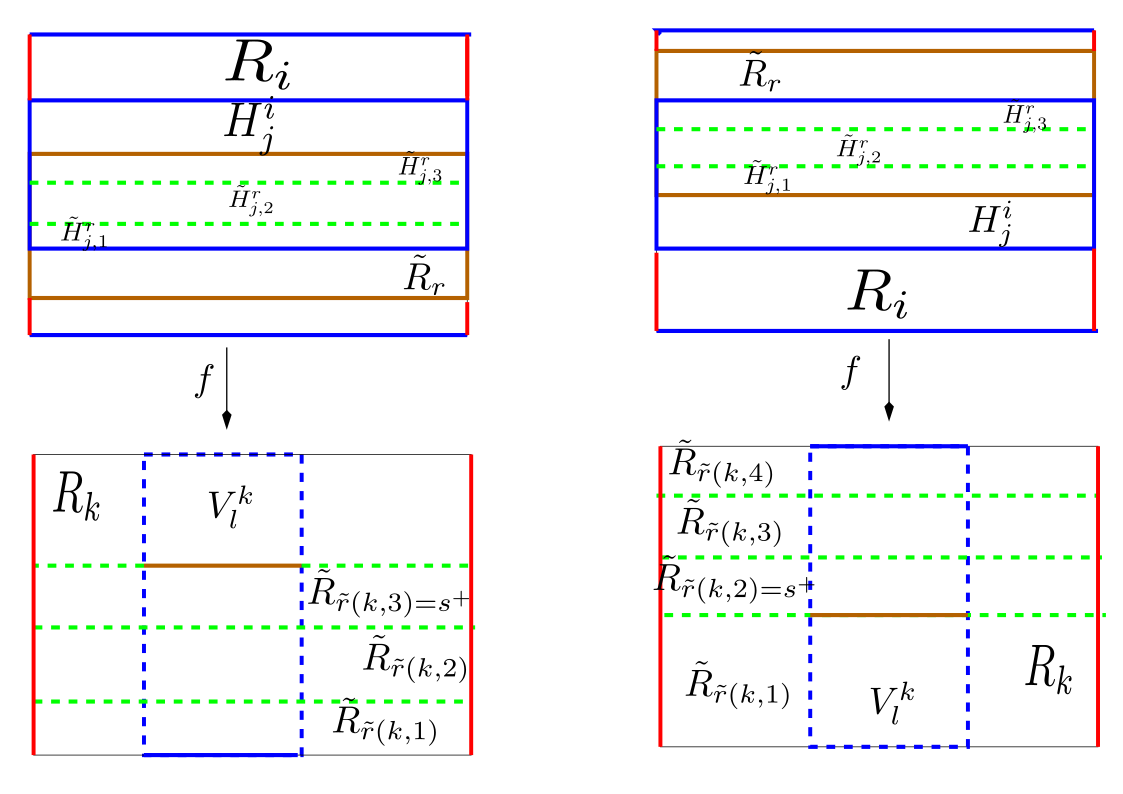}
	\caption{$H^i_{j}$ contains only one horizontal boundary component of $\tilde{R}_r$  }
	\label{Fig: Caso III}
\end{figure}

\begin{lemm}\label{Lemm: number horizontal sub case III, upper, cW}
	
	Consider $\tilde{R}_r\in \cR_{S(\cW)}$, where $r=\tilde{r}(i,s)$, and assume that $O(i,\cW)>0$. 	Take a pair of indexes $(i,j)\in \cH(r)$ such that $H^i_j$ exclusively contains the upper boundary of $\tilde{R}_r$. Let $I_{t,\underline{w}^1}$ (with $\underline{w}^1\in \cW$) be the upper boundary of $\tilde{R}_r$. Let $s^+=\underline{s}(t+1,\underline{w}^1)$ and assume that $\rho_T(i,j)=(k,l)$. Then, the number of horizontal sub-rectangles of  $\cR_{S(\cW)}$ that are contained in  $\tilde{R}_r\cap H^i_j$ can be computed using one of the following formulas:
	
	\begin{itemize}
		\item If $\epsilon_T(i,j)=1$:
		
		\begin{equation}\label{Equ: h(i,j,r), caso 3, +, up,cW}
			\underline{h}(r,j)=s^+
		\end{equation} 
		
		\item If $\epsilon_T(i,j)=-1$
		\begin{equation}\label{Equ: h(i,j,r), caso 3, -,up,cW}
			\underline{h}(r,j)=O(k,\cW)+1-s^+ .
		\end{equation} 
	\end{itemize}
	
\end{lemm}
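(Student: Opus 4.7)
The plan is to describe the intersection $\tilde{R}_r \cap H^i_j$ explicitly, push it forward to $R_k$ using $f(H^i_j) = V^k_l$, and count the refinement rectangles contained in $R_k$ whose interiors meet the image.

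First I would observe that, since $(i,j) \in \cH(r)$ and $H^i_j$ contains exclusively the upper boundary of $\tilde{R}_r$, the intersection $\tilde{R}_r \cap H^i_j$ is itself a horizontal sub-rectangle of $H^i_j$ whose two stable boundaries are: the upper one, namely $I_{t,\underline{w}^1}$ (the upper boundary of $\tilde{R}_r$), and the lower one, namely the lower stable boundary of $H^i_j$ (which lies in the interior of $\tilde{R}_r$, since the lower boundary of $\tilde{R}_r$ is not in $H^i_j$). The quantity $\underline{h}(r,j)$ equals the number of rectangles $\tilde{R}_{r'} \in \cR_{S(\cW)}$ contained in $R_k$ whose interior meets $f(\tilde{R}_r \cap H^i_j) \subset V^k_l$, so I only need to locate the image $f(\tilde{R}_r \cap H^i_j)$ inside $R_k$.

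Second, $f(\tilde{R}_r \cap H^i_j)$ is a vertical sub-rectangle of $V^k_l$, hence of $R_k$; one of its stable boundaries is $f(I_{t,\underline{w}^1}) \subset I_{t+1,\underline{w}^1}$ (by Lemma \ref{Lemm: image of stable intervals}), which occupies position $s^+ = \underline{s}(t+1,\underline{w}^1)$ in the vertical order of $R_k$. The other stable boundary is the image of the lower boundary of $H^i_j$, which by Lemma \ref{Lemm: imge of boundaries,cW} lies in $I_{k,-1}$ (position $0$) when $\epsilon(i,j)=1$, or in $I_{k,+1}$ (position $O(k,\cW)+1$) when $\epsilon(i,j)=-1$.

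Finally, using the ordering of refinement rectangles inside $R_k$ given by $\underline{s}$, I would read off the count. If $\epsilon(i,j)=1$, the vertical span of $f(\tilde{R}_r \cap H^i_j)$ runs from position $0$ to position $s^+$ of $R_k$, crossing exactly $s^+$ rectangles of $\cR_{S(\cW)}$, yielding formula \ref{Equ: h(i,j,r), caso 3, +, up,cW}. If $\epsilon(i,j)=-1$, the vertical span runs from position $s^+$ to position $O(k,\cW)+1$, crossing $O(k,\cW)+1 - s^+$ rectangles, giving formula \ref{Equ: h(i,j,r), caso 3, -,up,cW}. The only real obstacle is the orientation bookkeeping, namely keeping the ``upper/lower'' convention on $\tilde{R}_r$ consistent with the datum $\epsilon(i,j)$ and the position function $\underline{s}$ on $R_k$; once the identifications above are made, the rest is a direct reading off positions.
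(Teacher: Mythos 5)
Your proposal is correct and follows essentially the same route as the paper's proof: identify $\tilde{R}_r\cap H^i_j$ as the strip between the lower boundary of $H^i_j$ and $I_{t,\underline{w}^1}$, push it forward so that the upper boundary lands in $I_{t+1,\underline{w}^1}$ at position $s^+$ while the other boundary lands in $I_{k,-1}$ or $I_{k,+1}$ according to $\epsilon_T(i,j)$, and count the refinement rectangles of $R_k$ lying between those two positions. The orientation bookkeeping you flag is handled exactly as you describe, so no gap remains.
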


 	Look the  Figure  \ref{Fig: Caso III} to get some pictorial intuition. about our arguments
\begin{proof}
		
	\textbf{Case} $\epsilon_T(i,j)=1$. The upper boundary of $\tilde{R}_r$ is mapped to the stable segment $I_{t+1,\underline{w}^1}$, positioned at place $s^+$ inside $R_k$ and the inferior  boundary of $\tilde{R}_r\cap H^i_j$ coincides with the inferior  boundary of $H^i_j$ and since $\epsilon_T(i,j)=1$, $f$ preserves the vertical orientation of $\tilde{R}_r\cap H^i_j$. Consequently, the image of the inferior  boundary of $\tilde{R}_r\cap H^i_j$ is contained in the inferior boundary of $R_k$. The number of rectangles in $\cR_{S(\cW)}$ between the segment $I_{t+1,\underline{w}^1}$ and the inferior boundary of $R_k$ is given by $s^+$ and this number is equal to the number of horizontal sub-rectangles of $\tilde{R}_r \cap H^i_j$.
	
	\textbf{Case} $\epsilon_T(i,j)=-1$. The upper boundary of $\tilde{R}_r\cap H^i_j$ is equal to the upper boundary of $\tilde{R}_r$ and  is mapped by $f$ to the stable segment $I_{t+1,\underline{w}^1}$, positioned at place $s^+$ inside $R_k$. The inferior boundary of $\tilde{R}_r\cap H^i_j$ coincides with the inferior boundary of $H^i_j$ , but since $\epsilon_T(i,j)=-1$, $f$ inverts the vertical orientation of $\tilde{R}_r\cap H^i_j$. Consequently, the image of the inferior  boundary of $\tilde{R}_r\cap H^i_j$ is contained in the upper boundary of $R_k$. The number of rectangles in $\cR_{S(\cW)}$ between the segment $I_{t+1,\underline{w}}$ and the upper boundary of $R_k$ is given by $O(k,\cW)+1-s^+$ and this number is equal to the number of horizontal sub-rectangles of $\tilde{R}_r \cap H^i_j$.
	
\end{proof}

	 The next lemma  addresses the situation in which the rectangle $H^i_j$ just contains the inferior boundary of $\tilde{R}_r$. Its proof follows a similar approach to that of Lemma \ref{Lemm: number horizontal sub case III, upper, cW}, so we won't delve into further details.

\begin{lemm}\label{Lemm: number horizontal sub case III, lower, cW}
	
	Consider $\tilde{R}_r\in \cR_{S(\cW)}$, where $r=\tilde{r}(i,s)$, and assume that $O(i,\cW)>0$.	Take a pair of indexes $(i,j)\in \cH(r)$ such that  $H^i_j$ just contains the lower boundary of $\tilde{R}_r$. Let $I_{t,\underline{w}^1}$ (with $\underline{w}^1\in \cW$ ) be the lower boundary of $\tilde{R}_r$, and denote $s^+=\underline{s}(t+1,\underline{w}^1)$. In this scenario, the number of horizontal sub-rectangles of  $\cR_{S(\cW)}$ that are contained in  $\tilde{R}_r\cap H^i_j$ can be determined using one of the following formulas:
	
	\begin{itemize}
		\item If $\epsilon_T(i,j)=1$:
		
		\begin{equation}\label{Equ: h(i,j,r), caso 3, 1, low cW}
			\underline{h}(r,j)=O(k,\cW)+1-s^+
		\end{equation} 
		
		\item If $\epsilon_T(i,j)=-1$
		\begin{equation}\label{Equ: h(i,j,r), caso 3, -1, low cW}
			\underline{h}(r,j)= s^+ .
		\end{equation} 
	\end{itemize}
\end{lemm}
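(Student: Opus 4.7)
The plan is to mirror the argument given for Lemma \ref{Lemm: number horizontal sub case III, upper, cW}, with the roles of the upper and lower boundaries of $\tilde{R}_r \cap H^i_j$ interchanged. The key observation is that since $(i,j) \in \cH(r)$ and $H^i_j$ contains only the lower boundary of $\tilde{R}_r$, the rectangle $\tilde{R}_r \cap H^i_j$ has its lower boundary equal to $I_{t,\underline{w}^1}$ (the lower boundary of $\tilde{R}_r$) and its upper boundary equal to the upper stable boundary of $H^i_j$. Applying $f$ to this intersection, we obtain a vertical sub-rectangle of $R_k$ (since $\rho_T(i,j) = (k,l)$), and the number of horizontal sub-rectangles of $\cR_{S(\cW)}$ contained in $\tilde{R}_r \cap H^i_j$ equals the number of rectangles of $\cR_{S(\cW)}$ contained in $R_k$ that lie between the images of these two boundaries.

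First, I would use Lemma \ref{Lemm: imge of boundaries,cW} to compute the images of the two boundary components: the lower boundary $I_{t,\underline{w}^1}$ maps into $I_{t+1,\underline{w}^1}$, whose vertical position inside $R_k$ is precisely $s^+ = \underline{s}(t+1,\underline{w}^1)$, while the upper stable boundary of $H^i_j$ maps into a stable boundary component of $R_k$ whose identity (upper or lower) is governed by $\epsilon_T(i,j)$.

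Next, I would split into the two cases according to the sign of $\epsilon_T(i,j)$. If $\epsilon_T(i,j) = 1$, then $f$ preserves the vertical orientation on $H^i_j$, so the image of the upper stable boundary of $H^i_j$ is contained in the upper boundary of $R_k$, which occupies position $O(k,\cW)+1$ with respect to the order $\underline{s}$ on $R_k$. Counting the rectangles of $\cR_{S(\cW)}$ in $R_k$ between positions $s^+$ and $O(k,\cW)+1$ yields $O(k,\cW)+1 - s^+$. If $\epsilon_T(i,j) = -1$, then $f$ reverses vertical orientation, so the image of the upper stable boundary of $H^i_j$ lands in the lower boundary of $R_k$ (position $0$), and counting the rectangles in $R_k$ between positions $0$ and $s^+$ yields exactly $s^+$.

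I do not expect a substantive obstacle in this proof: the only subtlety is making sure the orientation bookkeeping is consistent with the definition of $\underline{s}$ from Definition \ref{Lemm: Boundaries code,CW} and with the convention that the vertical position of a rectangle in $\cR_{S(\cW)}$ inside $R_k$ is determined by its upper stable boundary (as exploited in the proof of Lemma \ref{Lemm: Permutation rho caso I, cW}). Once this is in place, both formulas follow by direct enumeration of the stable segments of $\mathcal{I}(k,\cW) \cup \{I_{k,-1}, I_{k,+1}\}$ lying between the two computed boundary positions.
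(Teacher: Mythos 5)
Your proof is correct and follows exactly the route the paper intends: the paper itself omits the details, stating only that the lower-boundary case "follows a similar approach" to Lemma \ref{Lemm: number horizontal sub case III, upper, cW}, and your argument is precisely that symmetric adaptation, with the orientation bookkeeping (positions $s^+$, $0$, and $O(k,\cW)+1$, and the convention that a rectangle's position is indexed by its upper stable boundary) handled correctly in both cases.
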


\begin{defi}\label{Defi: order rectangles case III,cW}
	The horizontal sub-rectangles of $\tilde{R}_r$ described in Lemma \ref{Lemm: number horizontal sub case III, upper, cW}  and Lemma \ref{Lemm: number horizontal sub case III, lower, cW}  are labeled from the bottom to the top with the vertical orientation of $\tilde{R}_r$ as:
	$$
	\{\tilde{H}^r_{j,J}\}_{J=1}^{\underline{h}(r,j)}.
	$$
\end{defi}

\begin{lemm} \label{Lemm: Permutation rho caso III, upper cW}
	With the hypothesis of Lemma \ref{Lemm: number horizontal sub case III, upper, cW}, suppose that  $H^i_j$  just contains the upper boundary of $\tilde{R}_r$ and assume that $\Phi_T(i,j)=(k,l,\epsilon_T(i,j))$ and $f(\tilde{H}^r_{j,J})=\tilde{V}^{r'}_{l'}$.	Then we have the following situations: 
	\begin{itemize}
		\item[i)] 	If $\epsilon_T(i,j)=1$, then $r'=\tilde{r}(k,  J)$ and $l=l'$

		\item[ii)] If $\epsilon_T(i,j)=-1$, then $r'=\tilde{r}(k, O(k,\cW)+1-J+1)$ and $l=l'$.
	\end{itemize}	
\end{lemm}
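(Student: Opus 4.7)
The plan is to track, for each choice of sign $\epsilon_T(i,j)\in\{-1,+1\}$, the image under $f$ of the two stable boundary components of the rectangle $\tilde R_r\cap H^i_j$ and then use the induced vertical order in $R_k$ to read off the position of $f(\tilde H^r_{j,J})$. The claim about $l=l'$ will follow at the very end from Corollary \ref{Coro: vertical possition is delimitate,cW}, exactly as in the analogous cases already treated in Lemmas \ref{Lemm: Permutation rho caso I, cW} and \ref{Lemm: Permutation rho caso I,cW}, so the only real work is to identify the position index $r'$ correctly.

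Observe first that by hypothesis the upper stable boundary of $\tilde R_r\cap H^i_j$ coincides with the upper boundary of $\tilde R_r$, which is $I_{t,\underline w^1}$, while its lower stable boundary coincides with the lower boundary of $H^i_j$, which is itself a horizontal boundary component of the horizontal sub-rectangle $H^i_j\in\mathcal H(f,\mathcal R)$. By Lemma \ref{Lemm: imge of boundaries,cW}(ii), $f(I_{t,\underline w^1})\subset I_{t+1,\underline w^1}$, and by construction $\underline s(t+1,\underline w^1)=s^+$, so the image of the upper boundary of $\tilde R_r\cap H^i_j$ lies in the stable segment of $R_k$ occupying position $s^+$ in the order $\underline s$ on $\mathcal O(k,\mathcal W)\cup\{(k,\pm 1)\}$. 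On the other hand, $f$ sends the lower boundary of $H^i_j$ into a boundary component of $R_k$: if $\epsilon_T(i,j)=1$ this image lies in the lower boundary $I_{k,-1}$ (position $0$), and if $\epsilon_T(i,j)=-1$ it lies in the upper boundary $I_{k,+1}$ (position $O(k,\mathcal W)+1$), reasoning exactly as in Lemma \ref{Lemm: imge of boundaries,cW}(i).

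Assume now $\epsilon_T(i,j)=1$. Then $f$ preserves the vertical direction of $\tilde R_r\cap H^i_j$, so $f(\tilde R_r\cap H^i_j)$ is contained in the portion of $V^k_l$ lying between positions $0$ and $s^+$ in $R_k$, which consists of the $s^+$ rectangles of $\mathcal R_{S(\mathcal W)}$ inside $R_k$ in positions $1,2,\dots,s^+$ (in agreement with $\underline h(r,j)=s^+$ given by equation \eqref{Equ: h(i,j,r), caso 3, +, up,cW}). Since vertical order is preserved, the horizontal sub-rectangle $\tilde H^r_{j,J}$ situated at vertical position $J$ inside $\tilde R_r\cap H^i_j$ is mapped to the rectangle of $\mathcal R_{S(\mathcal W)}$ in position $J$ inside $R_k$, namely $\tilde R_{\tilde r(k,J)}$; hence $r'=\tilde r(k,J)$.

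Assume finally $\epsilon_T(i,j)=-1$. Then $f$ inverts the vertical direction of $\tilde R_r\cap H^i_j$, so the image of its upper boundary (at position $s^+$) is now the \emph{lower} stable boundary of $f(\tilde R_r\cap H^i_j)\subset V^k_l$, while the image of its lower boundary lies in $I_{k,+1}$ (at position $O(k,\mathcal W)+1$). Consequently $f(\tilde R_r\cap H^i_j)$ is the portion of $V^k_l$ lying between positions $s^+$ and $O(k,\mathcal W)+1$, covering the $O(k,\mathcal W)+1-s^+$ rectangles of $\mathcal R_{S(\mathcal W)}$ inside $R_k$ at positions $s^++1,\dots,O(k,\mathcal W)+1$, which matches equation \eqref{Equ: h(i,j,r), caso 3, -,up,cW}. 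Because the order is reversed, $\tilde H^r_{j,1}$ (the lowest sub-rectangle) is mapped into the rectangle at the top position $O(k,\mathcal W)+1$, $\tilde H^r_{j,2}$ into the one at position $O(k,\mathcal W)$, and inductively $\tilde H^r_{j,J}$ into the rectangle at position $O(k,\mathcal W)+1-(J-1)=O(k,\mathcal W)+1-J+1$ inside $R_k$; hence $r'=\tilde r(k,O(k,\mathcal W)+1-J+1)$. In both cases, since $f(\tilde H^r_{j,J})\subset f(H^i_j)=V^k_l$ and $f(\tilde H^r_{j,J})=\tilde V^{r'}_{l'}=\overline{\overset{o}{\tilde R_{r'}}\cap \overset{o}{V^k_{l'}}}$, Corollary \ref{Coro: vertical possition is delimitate,cW} forces $l=l'$, completing the proof. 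The only conceptual subtlety is keeping the vertical ordering on $\mathcal O(k,\mathcal W)\cup\{(k,\pm 1)\}$ consistent when $\epsilon_T(i,j)$ flips sign; everything else is bookkeeping.
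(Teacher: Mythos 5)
Your proof is correct and follows essentially the same strategy as the paper: track the images of the two stable boundary components of $\tilde R_r\cap H^i_j$ (the upper one landing in $I_{t+1,\underline w^1}$ at position $s^+$, the lower one landing in $\partial^s R_k$), then read off the position of $f(\tilde H^r_{j,J})$ from the preserved or reversed vertical order, and finish with Corollary \ref{Coro: vertical possition is delimitate,cW} for $l=l'$. Note that your derivation of $r'=\tilde r(k,J)$ in case $i)$ is the one consistent with the lemma's statement and with the count $\underline h(r,j)=s^+$ from Lemma \ref{Lemm: number horizontal sub case III, upper, cW}, whereas the paper's own proof text for that item writes $r'=\tilde r(k,s^++J)$, which is the formula belonging to the lower-boundary case (Lemma \ref{Lemm: Permutation rho caso III, lower cW}).
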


 \begin{proof}
	\textbf{Item} $i)$. When $\epsilon_T(i,j) = 1$, $f$ preserves the vertical orientation  restricted to $\tilde{R}_r\cap H^i_j$. Consequently, the horizontal sub-rectangle $\tilde{H}^r_{j,J}$ that is located at position $J$ within $\tilde{R}_r\cap H^i_j$, is send to the rectangle $\tilde{R}_{r'}$ that is located at position $J$ over the rectangle that is in the position $s^+$ within $R_k$, therefore $r'=\tilde{r}(k,s^+ + J)$.
	
	\textbf{Item}  $ii)$. When $\epsilon_T(i,j) = -1$,  $f$ inverts the vertical orientation  of $\tilde{R}_r\cap H^i_j$. Consequently, the horizontal sub-rectangle $\tilde{H}^r_{j,J}$ that is located at position $J$ within $\tilde{R}_r\cap H^i_j$, is send to the rectangle located at position $J-1$ below the upper rectangle in $R_k$, this is given by  $O(k,\underline{w})+1-(J-1)$ withing $R_k$. Therefore, $r'=\tilde{r}(k,O(k,\underline{w})+1-J+1)$. 
	
	In each case Corollary \ref{Coro: vertical possition is delimitate,cW} implies that $l'=l$.
	
\end{proof}

A similar statement is true when the rectangle $H^i_j$ contains only the lower boundary of $\tilde{R}_r$.

\begin{lemm}\label{Lemm: Permutation rho caso III, lower cW}
	With the hypothesis of Lemma \ref{Lemm: number horizontal sub case III, lower, cW}, suppose that  $H^i_j$  just contains the lower boundary of $\tilde{R}_r$ and assume that $\Phi_T(i,j)=(k,l,\epsilon_T(i,j))$ and $f(\tilde{H}^r_{j,J})=\tilde{V}^{r'}_{l'}$.	Then we have the following situations: 
	
	\begin{itemize}
		\item[i)] 	If $\epsilon_T(i,j)=1$, then $r'=\tilde{r}(k, s^+ + J)$ and $l=l'$

		\item[ii)] If $\epsilon_T(i,j)=-1$, then $r'=\tilde{r}(k, s^+-J+1)$ and $l=l'$.
	\end{itemize} 	
\end{lemm}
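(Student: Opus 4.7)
My plan is to mirror the proof of Lemma~\ref{Lemm: Permutation rho caso III, upper cW}, exploiting the symmetry between ``$H^i_j$ contains only the upper boundary'' and ``$H^i_j$ contains only the lower boundary'' of $\tilde R_r$. The geometric picture to keep in mind is that $\tilde R_r \cap H^i_j$ is the horizontal strip bounded above by the upper boundary of $H^i_j$ (which is a component of $\partial^s R_i$ or a segment inherited from $\cR$) and bounded below by $I_{t,\underline{w}^1}$, the lower boundary of $\tilde R_r$. Under $f$ this strip maps into $V^k_l$, and the two stable boundaries are sent to prescribed stable intervals of $R_k$: the bottom goes to $I_{t+1,\underline{w}^1}$, which by Lemma~\ref{Lemm: image of stable intervals} lies at position $s^+ = \underline{s}(t+1,\underline{w}^1)$ inside $R_k$, while the top is sent to one of the two $u$--boundary components of $R_k$, determined by $\epsilon_T(i,j)$.

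Concretely, I would first invoke $f(\partial^s H^i_j) \subset \partial^s R_k$ (established in the proofs of Markov partition and refinement) together with the meaning of $\epsilon_T(i,j)$: if $\epsilon_T(i,j)=1$, then $f$ preserves the relative vertical orientation between $H^i_j$ and $V^k_l$, so the upper boundary of $H^i_j$ is mapped to $\partial^s_{+1} R_k$, i.e.\ to position $O(k,\cW)+1$ in $R_k$; if $\epsilon_T(i,j)=-1$, it is mapped to $\partial^s_{-1} R_k$, i.e.\ to position $0$. Combining this with the location $s^+$ of the image of the lower boundary recovers the counts from Lemma~\ref{Lemm: number horizontal sub case III, lower, cW}: the image strip contains exactly the $O(k,\cW)+1-s^+$ rectangles of $\cR_{S(\cW)}$ between positions $s^+$ and $O(k,\cW)+1$ in $R_k$ when $\epsilon_T(i,j)=1$, and exactly the $s^+$ rectangles between positions $0$ and $s^+$ when $\epsilon_T(i,j)=-1$.

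Next, I would track the position $J$ of $\tilde H^r_{j,J}$ inside $\tilde R_r \cap H^i_j$ through $f$. When $\epsilon_T(i,j)=1$, the vertical orientation is preserved, so the rectangle at position $J$ from the bottom of $\tilde R_r \cap H^i_j$ lands at position $J$ from the bottom of the image strip; since the image starts just above position $s^+$ in $R_k$, the target rectangle sits at position $s^+ + J$ in $R_k$, giving $r' = \tilde{r}(k, s^+ + J)$. When $\epsilon_T(i,j)=-1$, the vertical orientation is reversed, so position $J$ from the bottom is sent to position $J$ from the top of the image; the image now sits between positions $0$ and $s^+$, hence $J$ from the top corresponds to position $s^+ - J + 1$ in $R_k$, yielding $r' = \tilde{r}(k, s^+ - J + 1)$. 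In both cases $l' = l$ is immediate from Corollary~\ref{Coro: vertical possition is delimitate,cW}, since $f(\tilde H^r_{j,J}) \subset f(H^i_j) = V^k_l$.

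The argument is entirely parallel to Lemma~\ref{Lemm: Permutation rho caso III, upper cW}; the only genuine obstacle is notational, namely keeping the bookkeeping straight between (i) positions measured in $\tilde R_r \cap H^i_j$, (ii) positions measured in the image strip inside $V^k_l$, and (iii) absolute positions in $R_k$ indexed by $\underline{s}$. No new geometric ingredient is needed beyond the computation of $f(\partial^s H^i_j)$ via $\epsilon_T(i,j)$, Lemma~\ref{Lemm: image of stable intervals} for the image of $I_{t,\underline{w}^1}$, and the uniqueness of the connected component $\overset{o}{\tilde R_{r'}} \cap \overset{o}{V^k_l}$ that underpins Corollary~\ref{Coro: vertical possition is delimitate,cW}.
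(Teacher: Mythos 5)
Your proof is correct and is exactly the argument the paper intends: it omits the proof of this lemma, deferring to the symmetry with Lemma \ref{Lemm: Permutation rho caso III, upper cW}, and your bookkeeping (image of $I_{t,\underline{w}^1}$ at position $s^+$ via Lemma \ref{Lemm: image of stable intervals}, image of $\partial^s_{+1}H^i_j$ at position $O(k,\cW)+1$ or $0$ according to $\epsilon_T(i,j)$, and $l'=l$ from Corollary \ref{Coro: vertical possition is delimitate,cW}) reproduces both formulas correctly. One small slip: the image of the upper boundary of $H^i_j$ lands in one of the two \emph{$s$-boundary} (stable) components of $R_k$, not the $u$-boundary components — as you in fact write correctly in the next clause with $\partial^s_{\pm 1}R_k$.
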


\subsubsection{Collecting the information}

Now, we can collect the information from the three cases to compute the number of horizontal sub-rectangles in $\tilde{R}_r$.

\begin{coro}\label{Coro: Number of horizontal sub, cW}
	Let $\tilde{R}_r\in \cR_{S(\cW)}$ with $r=\tilde{r}(i,s)$, then the number $H_r$ of horizontal sub-rectangles of the Markov partition $(f,\cR_{S(\cW)})$ that are contained in $\tilde{R}_r$ is given by:
	\begin{equation}
		H_r=\sum_{\{j:(i,j)\in \cH(r)\}}\underline{h}(r,j).
	\end{equation}
\end{coro}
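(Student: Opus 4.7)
My plan is to establish the corollary by showing that the set of horizontal sub-rectangles of $(f,\cR_{S(\cW)})$ contained in $\tilde R_r$ partitions (up to stable boundaries) into disjoint families indexed by $(i,j)\in\cH(r)$, where the family indexed by $(i,j)$ has cardinality $\underline{h}(r,j)$. Thus the proof reduces to (a) each horizontal sub-rectangle of $\cR_{S(\cW)}$ contained in $\tilde R_r$ belongs to a unique such family, and (b) the three geometric configurations already analyzed (Case I with $\tilde R_r \subset H^i_j\setminus\partial^s H^i_j$, Case II with $H^i_j\subset\tilde R_r$, and Case III where $H^i_j$ contains exactly one of the two stable boundary components of $\tilde R_r$) exhaust every $(i,j)\in\cH(r)$.

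First, let $\tilde H$ be any horizontal sub-rectangle of $(f,\cR_{S(\cW)})$ contained in $\tilde R_r$. By Lemma~\ref{Lemm: unique sub (i,j) for a r,cW} there is a unique pair $(i,j)\in\cH(T)$ with $\overset{o}{\tilde H}\subset\overset{o}{H^i_j}$, and since $\tilde H\subset\tilde R_r$ the intersection $\overset{o}{H^i_j}\cap\overset{o}{\tilde R_r}$ is nonempty, hence $(i,j)\in\cH(r)$. This shows that $\tilde H$ is one of the horizontal sub-rectangles of $\tilde R_r\cap H^i_j$ counted by $\underline{h}(r,j)$ in Lemmas~\ref{Lemm: number horizontal sub case I, cW}, \ref{Lemm: number horizontal sub case II, cW}, \ref{Lemm: number horizontal sub case III, upper, cW} and~\ref{Lemm: number horizontal sub case III, lower, cW}, so summing over $j$ provides an upper bound $H_r\leq \sum_{(i,j)\in\cH(r)}\underline{h}(r,j)$.

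For the reverse inequality, I would verify that each of the $\underline{h}(r,j)$ rectangles $\tilde H^r_{j,J}$ produced by the preceding lemmas is indeed a horizontal sub-rectangle of $(f,\cR_{S(\cW)})$ and is contained in $\tilde R_r$: its stable boundaries are either stable boundary components of $\tilde R_r$ (which lie in $\partial^s\cR_{S(\cW)}$ by construction) or come from the preimages under $f$ of stable boundary components of some $\tilde R_{r'}\in\cR_{S(\cW)}$ lying in $R_k$ (again in $\partial^s\cR_{S(\cW)}$ by $f$-invariance of the stable boundary). The images under $f$ of distinct such $\tilde H^r_{j,J}$ lie in distinct vertical sub-rectangles $\tilde V^{r'}_l$ of $\cR_{S(\cW)}$ by the formulas in Definitions~\ref{Defi: permutation caso I, cW} and~\ref{Defi: permutation caso II, cW} and the analogous Case III formulas, so they have disjoint interiors; moreover, for $(i,j)\neq(i,j')$ in $\cH(r)$ the interiors of $H^i_j$ and $H^i_{j'}$ are disjoint, so the pieces contributed by different $j$ are disjoint as well.

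Finally, I must check that the three cases exhaust all $(i,j)\in\cH(r)$: since $\tilde R_r$ and $H^i_j$ are both horizontal sub-rectangles of the same $R_i$, and their interiors meet, they are ordered by inclusion of their projections onto the vertical direction of $R_i$, and only four mutually exclusive configurations can occur, namely $\tilde R_r\subsetneq H^i_j\setminus\partial^s H^i_j$, $H^i_j\subseteq\tilde R_r$, or $H^i_j$ strictly contains exactly one of the two stable boundary components of $\tilde R_r$. The step I expect to require the most care is this exhaustion argument, because it implicitly relies on the fact that no stable boundary of $H^i_j$ can lie in the interior of a stable boundary component of $\tilde R_r$ that was created by some $I_{t,\underline{w}^q}$; this is where I would invoke Corollary~\ref{Coro: projetion stable segments}, which guarantees $I_{t,\underline{w}^q}\subset H^i_{j_{t,\underline{w}^q}}\setminus\partial^s H^i_{j_{t,\underline{w}^q}}$, to rule out degenerate coincidences between boundary components of $\tilde R_r$ and of $H^i_j$.
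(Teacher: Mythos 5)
Your proposal is correct and follows the same route the paper takes implicitly: the paper presents this corollary as an immediate "collection" of the counts $\underline{h}(r,j)$ from the three case lemmas, and your argument simply makes explicit the decomposition over $\cH(r)$ (via Lemma~\ref{Lemm: unique sub (i,j) for a r,cW}), the disjointness of the contributions, and the exhaustiveness of the three configurations using Corollary~\ref{Coro: projetion stable segments}. No gap; you have just supplied the details the paper leaves to the reader.
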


\begin{center}
	\textbf{The function $\rho_{S(\cW)}$ in the geometric type $T_{S(\cW)}$}
\end{center}

\begin{defi}\label{Defi: horizontal sub in R-r, cW}
	The sub-rectangles of the geometric Markov partition $\cR_{S(\cW)}$ contained in $\tilde{R}_r$ are labeled as:
	$$
	\{H^r_{\underline{J}}\}_{\underline{J}=1}^{H_r}
	$$
	from the bottom to the top with respect to the vertical orientation of  $\tilde{R}_r$.
\end{defi}

The next lemma is immediate.

\begin{lemm}\label{Lemm: parameter J, cW}
	For each $\underline{J}\in\{1,\cdots, H_r\}$, there exists a unique index $j_{(\underline{J})}$ such that $(i,j)\in\cH(r)$  and  $\tilde{H}^r_{\underline{J}}\subset H^i_j\cap \tilde{R}_r$. The index $j_{(\underline{J})}$  is determined by satisfy the next inequalities:
	
	\begin{equation}\label{Equa: Cut the index, cW}
		\sum_{\{j': (i,j')\in \cH(r)  \text{ and } j'<j_{(\underline{J})}\} }\underline{h}(j',r)< \underline{J} \leq  \sum_{\{j': (i,j')\in \cH(r)  \text{ and } j'\leq j_{(\underline{J})}\} }\underline{h}(j',r).
	\end{equation} 
	
\end{lemm}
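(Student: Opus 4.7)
The strategy is to use the fact that the labeling of $\{\tilde{H}^r_{\underline{J}}\}_{\underline{J}=1}^{H_r}$ from Definition \ref{Defi: horizontal sub in R-r, cW} follows the vertical orientation of $\tilde{R}_r$, which is inherited from $R_i$, and that this is the same orientation used to order the collection $\{H^i_j\}_{j=1}^{h_i}$ in the geometric type $T$. Thus the two orderings are compatible, and $\underline{J}$ is determined by a simple cumulative counting argument.

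First I would establish existence and uniqueness of $j_{(\underline{J})}$. By Lemma \ref{Lemm: unique sub (i,j) for a r,cW}, the interior of every horizontal sub-rectangle of $(f,\cR_{S(\cW)})$ is contained in the interior of a unique horizontal sub-rectangle $H^i_j$ of $(f,\cR)$; when the former lies in $\tilde{R}_r$, the pair $(i,j)$ must belong to $\cH(r)$ by Definition \ref{Defi cH(r) set, cW}. Hence for each $\underline{J}$ there is a unique index $j_{(\underline{J})}$ with $\tilde{H}^r_{\underline{J}}\subset H^i_{j_{(\underline{J})}}\cap\tilde{R}_r$. Next I would verify that the family $\{\tilde{H}^r_{\underline{J}}\}$ decomposes as the disjoint union (up to stable boundary) of the sub-families indexed by $(i,j)\in\cH(r)$, each of which has exactly $\underline{h}(r,j)$ elements by Definitions \ref{Defi: order rectangles case I, cW}, \ref{Defi: order rectangles case II,cW}, and \ref{Defi: order rectangles case III,cW}.

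Then I would show the ordering is compatible with increasing $j$: if $(i,j),(i,j')\in\cH(r)$ with $j<j'$, then $H^i_j <_{v(i)} H^i_{j'}$ by the convention used to define $T$, so every rectangle of $\cR_{S(\cW)}$ lying in $\tilde{R}_r\cap H^i_j$ is vertically below every rectangle of $\cR_{S(\cW)}$ lying in $\tilde{R}_r\cap H^i_{j'}$, with respect to the vertical direction of $\tilde{R}_r$. Because each sub-family $\{\tilde{H}^r_{j,J}\}_{J=1}^{\underline{h}(r,j)}$ is itself labeled bottom to top, concatenating these sub-families in increasing order of $j\in\cH(r)$ produces precisely the global bottom-to-top labeling $\{\tilde{H}^r_{\underline{J}}\}_{\underline{J}=1}^{H_r}$. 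Consequently, writing the cumulative count
$$
N(j):=\sum_{\{j':(i,j')\in\cH(r),\, j'\leq j\}} \underline{h}(r,j'),
$$
the rectangle $\tilde{H}^r_{\underline{J}}$ lies in $\tilde{R}_r\cap H^i_j$ if and only if $N(j^{-})<\underline{J}\leq N(j)$, where $j^{-}$ denotes the predecessor of $j$ in $\cH(r)$. This is exactly the pair of inequalities in (\ref{Equa: Cut the index, cW}), and uniqueness of $j_{(\underline{J})}$ follows from the strict monotonicity of $N$ on $\cH(r)$.

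The only delicate point — hardly an obstacle — is the bookkeeping required to confirm that the bottom-to-top labelings produced separately in the three cases (Lemmas \ref{Lemm: number horizontal sub case I, cW}, \ref{Lemm: number horizontal sub case II, cW}, \ref{Lemm: number horizontal sub case III, upper, cW}, \ref{Lemm: number horizontal sub case III, lower, cW}) all use the same vertical direction of $\tilde{R}_r$, so that their concatenation really is the global labeling. Since each case explicitly invokes the vertical orientation of $\tilde{R}_r$ (inherited from $R_i$), this compatibility is automatic, and no additional argument is needed.
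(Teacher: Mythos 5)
Your proof is correct and is exactly the argument the paper has in mind: the paper states this lemma without proof (it is introduced with ``the next lemma is immediate''), and your write-up simply makes explicit the counting argument behind it — uniqueness of $j_{(\underline{J})}$ via Lemma \ref{Lemm: unique sub (i,j) for a r,cW}, plus the observation that the global bottom-to-top labeling of $\{\tilde{H}^r_{\underline{J}}\}$ is the concatenation, in increasing $j$, of the per-block labelings $\{\tilde{H}^r_{j,J}\}_{J=1}^{\underline{h}(r,j)}$, which yields the cumulative-sum inequalities. No gaps; your remark that all three case-labelings use the same vertical direction of $\tilde{R}_r$ is the right (and only) compatibility check needed.
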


\begin{defi}\label{Defi: Horizontal parameter, cW}
	Given $\underline{J}\in \{1,\cdots, H_r\}$ we call the index $j_{(\underline{J})}$ that was determined in Lemma \ref{Lemm: parameter J, cW} the \emph{horizontal parameter} of $\underline{J}$.
\end{defi}

\begin{defi}\label{Defi: relative index, cW }
	Let $r=\tilde{r}(i,s)$, $\underline{J}\in \{1,\cdots, H_r\}$, and let $j_{(\underline{J})}$ be the horizontal parameter of $\underline{J}$. The \emph{relative index} of $H^r_J$ inside of $\tilde{R}_r\cap H^i_{j_{(\underline{J}})}$ is given by:
	\begin{equation}\label{Equa: relative index, cW}
		J_{(\underline{J})}:=\underline{J}-\sum_{j'<j_{({\underline{J})}}}\underline{h}(r,j').
	\end{equation}
	
\end{defi}

\begin{defi}\label{Defi: relative position, cW }
	Let $\underline{J}\in \{1,\cdots, H_r\}$ the \emph{relative position} of $\underline{J}$ inside $\tilde{R}_r$ is the pair $(j_{\underline{J}},J_{(\underline{J})})$ of the horizontal parameter of $\underline{J}$ and the relative index of $H^r_J$ inside of $\tilde{R}_r\cap H^i_{j_{\underline{J}}}$
\end{defi}

\begin{lemm}\label{Lemm: relative position relation, cW}
	The rectangle $\tilde{H}^r_{j_{(\underline{J})},J_{(\underline{J})}}$ and the rectangle $\tilde{H}^r_{\underline{J}}$ are the same.
\end{lemm}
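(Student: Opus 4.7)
The plan is to prove the lemma by reconciling the two labelings of the horizontal sub-rectangles of $\tilde{R}_r$: the global labeling $\{\tilde{H}^r_{\underline{J}}\}_{\underline{J}=1}^{H_r}$ obtained by ordering all such sub-rectangles from bottom to top inside $\tilde{R}_r$, and the local labelings $\{\tilde{H}^r_{j,J}\}_{J=1}^{\underline{h}(r,j)}$ obtained by ordering, for each $(i,j) \in \cH(r)$, the horizontal sub-rectangles of $(f,\cR_{S(\cW)})$ contained in $\tilde{R}_r \cap H^i_j$ from bottom to top. The key structural observation is that the global ordering is nothing but the concatenation, in order of increasing $j$, of the local orderings.

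First, I would establish that the rectangles $\tilde{R}_r \cap H^i_j$, for $(i,j) \in \cH(r)$, stratify $\tilde{R}_r$ by stacked horizontal slabs. Indeed, by Lemma \ref{Lemm: determination cH(r), cW}, the set $\cH(r)$ consists of those $(i,j)$ with $j$ ranging over a consecutive block of indices, and since $\{H^i_j\}_{j=1}^{h_i}$ is ordered from bottom to top with respect to the vertical direction of $R_i$ (which coincides with that of $\tilde{R}_r$ by the induced geometrization), the nonempty intersections $\tilde{R}_r \cap H^i_j$ are themselves stacked from bottom to top as $j$ increases. Consequently, every horizontal sub-rectangle $\tilde{H}$ of $(f,\cR_{S(\cW)})$ contained in $\tilde{R}_r$ lies in a unique such slab, and within that slab it receives its local index $J$ via the bottom-to-top ordering.

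Now I would unpack the definitions. By Definition \ref{Defi: Horizontal parameter, cW}, the horizontal parameter $j_{(\underline{J})}$ is chosen so that
$$
\sum_{\{j' : (i,j') \in \cH(r),\, j' < j_{(\underline{J})}\}} \underline{h}(r,j') < \underline{J} \leq \sum_{\{j' : (i,j') \in \cH(r),\, j' \leq j_{(\underline{J})}\}} \underline{h}(r,j'),
$$
which, in view of the stratification above, is equivalent to saying that the rectangle $\tilde{H}^r_{\underline{J}}$ sits inside the slab $\tilde{R}_r \cap H^i_{j_{(\underline{J})}}$. Moreover, the relative index $J_{(\underline{J})} = \underline{J} - \sum_{j' < j_{(\underline{J})}} \underline{h}(r,j')$ records precisely how many horizontal sub-rectangles of this slab lie at or below $\tilde{H}^r_{\underline{J}}$, counted from the bottom. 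This is exactly the definition of the local index of $\tilde{H}^r_{\underline{J}}$ within $\tilde{R}_r \cap H^i_{j_{(\underline{J})}}$, and therefore $\tilde{H}^r_{\underline{J}} = \tilde{H}^r_{j_{(\underline{J})}, J_{(\underline{J})}}$.

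There is no real obstacle here; the lemma is essentially a bookkeeping statement, and the only thing to verify carefully is that the slab decomposition respects the vertical order, which is immediate from the induced geometrization \ref{Defi: Geometric refinement} and from the fact that each $\tilde{R}_r \cap H^i_j$ is itself a horizontal sub-rectangle of $\tilde{R}_r$ (using that its stable boundaries are either the boundaries of $H^i_j$ or segments of $\cI(\cW)$, hence lie in $\partial^s \cR_{S(\cW)}$). Once this is noted, the identification follows by matching the ranges in the two inequalities defining $j_{(\underline{J})}$ and $J_{(\underline{J})}$ with the concatenated enumeration.
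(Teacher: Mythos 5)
Your proof is correct: the paper states this lemma without any proof (treating it as immediate bookkeeping), and your argument — that the slabs $\tilde{R}_r \cap H^i_{j}$ stack bottom-to-top in increasing $j$ by the induced geometrization, so the global enumeration is the concatenation of the local ones, and the defining inequalities for $j_{(\underline{J})}$ and $J_{(\underline{J})}$ exactly locate $\tilde{H}^r_{\underline{J}}$ in that concatenation — is precisely the intended justification. Your appeal to the uniqueness of the containing slab is Lemma \ref{Lemm: unique sub (i,j) for a r,cW} of the paper, so nothing is missing.
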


%%%%%%%%%%%%%%%%%%%%%%%%%%%

\begin{coro}\label{Lemm: The permutation, cW}
	Let $r=\tilde{r}(i,s)$ and $\underline{J}\in \{1,\cdots, H_r\}$. Let $(j_{(\underline{J})},J_{(\underline{J})})$ the relative position of $\underline{J}$ as was given in Definition \ref{Defi: relative position, cW }. Then the function $\rho_{S(\cW)}$ is given by the formula:
	\begin{equation}\label{Equa: The permutation, cW}
		\rho_{S(\cW)}(r,\underline{J}):=\underline{\rho}_{(r,j_{(\underline{J})})}(r,J_{(\underline{J})}).
	\end{equation}

\end{coro}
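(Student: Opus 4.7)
The plan is to observe that this corollary is essentially a bookkeeping result: it packages the three case-by-case analyses (the first, second, and third cases studied earlier) into a single formula indexed by the lexicographic label $\underline{J}$. The main ingredients are already in place, so the proof reduces to unpacking the definitions and verifying compatibility.

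First I would fix $r = \tilde{r}(i,s)$, pick $\underline{J}\in\{1,\dots,H_r\}$, and recall that $\rho_{S(\cW)}(r,\underline{J})$ is by definition the pair $(r',l')$ such that $f(\tilde{H}^r_{\underline{J}}) = \tilde{V}^{r'}_{l'}$, where $\tilde{H}^r_{\underline{J}}$ is the horizontal sub-rectangle of $\tilde{R}_r$ located in vertical position $\underline{J}$ (Definition \ref{Defi: horizontal sub in R-r, cW}). The key identification is then Lemma \ref{Lemm: relative position relation, cW}, which states that $\tilde{H}^r_{\underline{J}} = \tilde{H}^r_{j_{(\underline{J})},J_{(\underline{J})}}$, where $j_{(\underline{J})}$ is the horizontal parameter and $J_{(\underline{J})}$ the relative index. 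This is the bridge between the global enumeration of horizontal sub-rectangles of $\tilde{R}_r$ and the local enumeration of horizontal sub-rectangles of each intersection $\tilde{R}_r \cap H^i_{j}$.

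Next I would argue that the pair $(i, j_{(\underline{J})})$ lies in $\cH(r)$ (by construction of $j_{(\underline{J})}$ via the inequalities in Equation \ref{Equa: Cut the index, cW}), and therefore $H^i_{j_{(\underline{J})}}$ relates to $\tilde{R}_r$ in exactly one of the three mutually exclusive ways studied: either $\tilde{R}_r \subset H^i_{j_{(\underline{J})}}\setminus\partial^s H^i_{j_{(\underline{J})}}$ (first case), $H^i_{j_{(\underline{J})}} \subset \tilde{R}_r$ (second case), or $H^i_{j_{(\underline{J})}}$ contains exactly one stable boundary component of $\tilde{R}_r$ (third case). In each of these three cases, the local permutation $\underline{\rho}_{(r,j_{(\underline{J})})}(r,J_{(\underline{J})})$ was defined in Equations (\ref{Equ: rho caso 1, +,cW})--(\ref{Equ: rho caso 1, -, cW}), (\ref{Equ: rho caso 2, +,cW})--(\ref{Equ: rho caso 2, -, cW}), and in the analogues provided by Lemmas \ref{Lemm: Permutation rho caso III, upper cW} and \ref{Lemm: Permutation rho caso III, lower cW}, so that precisely $f(\tilde{H}^r_{j_{(\underline{J})},J_{(\underline{J})}}) = \tilde{V}^{r'}_{l'}$ with $(r',l') = \underline{\rho}_{(r,j_{(\underline{J})})}(r,J_{(\underline{J})})$. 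Combining this with the identification from the previous paragraph yields the formula stated in Equation \ref{Equa: The permutation, cW}.

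The only mild obstacle is ensuring that the trichotomy indeed covers every case and that the three families of formulas agree on the overlaps where they would both apply (for instance, when $O(i,\cW) = 0$ the first and third cases degenerate); once it is checked that $j_{(\underline{J})}$ is uniquely determined by $\underline{J}$ and that the classification of $H^i_{j_{(\underline{J})}}$ relative to $\tilde{R}_r$ is unambiguous, the proof is immediate. Finally, I would also record that the orientation coordinate $\epsilon_{S(\cW)}(r,\underline{J})$ coincides with $\epsilon_T(i, j_{(\underline{J})})$, since $f$ restricted to $\tilde{H}^r_{\underline{J}} \subset H^i_{j_{(\underline{J})}}$ inherits the orientation behavior of $f|_{H^i_{j_{(\underline{J})}}}$; this observation, although not explicitly in the statement, is implicit in the three case-by-case definitions of $\underline{\rho}_{(r,j)}$ and closes the computation of the full map $\Phi_{S(\cW)} = (\rho_{S(\cW)}, \epsilon_{S(\cW)})$.
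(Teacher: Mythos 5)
Your proof is correct and follows exactly the route the paper intends: the paper states this corollary without proof, treating it as the immediate combination of the identification $\tilde{H}^r_{\underline{J}} = \tilde{H}^r_{j_{(\underline{J})},J_{(\underline{J})}}$ from the preceding lemma with the case-by-case definitions of the local permutations $\underline{\rho}_{(r,j)}$, which is precisely what you do. Your explicit flagging of the degenerate overlaps of the trichotomy and the remark on $\epsilon_{S(\cW)}$ are sensible additions but not departures from the paper's argument.
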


\textbf{The orientation $\epsilon_{S(\cW)}$}

\begin{lemm}\label{Lemm: The orientation, cW}
	Let $r=\tilde{r}(i,s)$ and $\underline{J}\in \{1,\cdots, H_r\}$. Let $j_{(\underline{J})}$ the horizontal parameter of $\underline{J}$ (Definition  \ref{Defi: Horizontal parameter, cW}). Then:
	\begin{equation}\label{Equa: permtation in the s refinament, cW}
		\epsilon_{S(\underline{w})}(r,\underline{J}):=\epsilon_T(i,j_{(\underline{J})}).
	\end{equation}
\end{lemm}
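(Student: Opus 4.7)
The plan is to reduce the computation of $\epsilon_{S(\mathcal{W})}(r, \underline{J})$ to the fact that both the domain rectangle $\tilde{H}^r_{\underline{J}}$ and its image under $f$ inherit their vertical orientations from the original Markov partition $(f,\mathcal{R})$. Since $r = \tilde{r}(i,s)$, by the canonical geometrization of the refinement (Definition \ref{Defi: Geometric refinement}), the vertical orientation of $\tilde{R}_r$ coincides with the vertical orientation of $R_i \in \mathcal{R}$ when restricted to $\tilde{R}_r$. In particular, the vertical orientation of the horizontal sub-rectangle $\tilde{H}^r_{\underline{J}}$ of $\tilde{R}_r$ agrees with the vertical orientation of $H^i_{j_{(\underline{J})}}$ restricted to $\tilde{H}^r_{\underline{J}}$, since by Lemma \ref{Lemm: relative position relation, cW} we have $\tilde{H}^r_{\underline{J}} = \tilde{H}^r_{j_{(\underline{J})}, J_{(\underline{J})}} \subset \tilde{R}_r \cap H^i_{j_{(\underline{J})}}$.

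Next I will track the image side. Suppose $\rho_T(i, j_{(\underline{J})}) = (k,l)$, so $f(H^i_{j_{(\underline{J})}}) = V^k_l$. By Corollary \ref{Coro: vertical possition is delimitate,cW} and the formulas established in the case analysis (Lemmas \ref{Lemm: Permutation rho caso I, cW}, \ref{Lemm: Permutation rho caso I,cW}, \ref{Lemm: Permutation rho caso III, upper cW}, \ref{Lemm: Permutation rho caso III, lower cW}), the image $f(\tilde{H}^r_{\underline{J}})$ is a vertical sub-rectangle $\tilde{V}^{r'}_{l}$ contained in $V^k_l$ for some $r' = \tilde{r}(k, s')$. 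Again by the canonical geometrization, the vertical orientation of $\tilde{R}_{r'}$ is inherited from $R_k$, hence the vertical orientation of $\tilde{V}^{r'}_{l}$ agrees with the vertical orientation of $V^k_l$ restricted to $\tilde{V}^{r'}_{l}$.

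Now the conclusion should follow by comparing the two restrictions. The map $f|_{\tilde{H}^r_{\underline{J}}}$ is the restriction of $f|_{H^i_{j_{(\underline{J})}}}$ to $\tilde{H}^r_{\underline{J}}$. Since the vertical orientations on both sides coincide (after restriction) with those coming from $(f,\mathcal{R})$, the behavior of $f$ on vertical orientations from $\tilde{H}^r_{\underline{J}}$ to $\tilde{V}^{r'}_{l}$ is exactly the same as its behavior from $H^i_{j_{(\underline{J})}}$ to $V^k_l$. By Definition \ref{Defi: Orientation (f,cR)}, the latter is encoded by $\epsilon_T(i, j_{(\underline{J})})$, which gives $\epsilon_{S(\mathcal{W})}(r, \underline{J}) = \epsilon_T(i, j_{(\underline{J})})$.

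The only subtle point, and the main obstacle, is to rigorously justify the claim that the induced vertical orientation on a horizontal (respectively vertical) sub-rectangle is well-defined independently of how we cut the ambient rectangle — i.e., that restricting the vertical orientation of $R_i$ to $\tilde{R}_r$ and then to $\tilde{H}^r_{\underline{J}}$ yields the same orientation as restricting the vertical orientation of $R_i$ directly to $H^i_{j_{(\underline{J})}}$ and then to $\tilde{H}^r_{\underline{J}}$. This is essentially a compatibility statement for parametrizations (Definition \ref{Defi: Parametrizaciones equiv}): two equivalent parametrizations of $R_i$ induce the same horizontal and vertical foliations and the same orientations on any sub-rectangle, so the canonical geometrization of nested sub-rectangles commutes with restriction. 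Once this is set down as a brief preliminary observation, the identity $\epsilon_{S(\mathcal{W})}(r, \underline{J}) = \epsilon_T(i, j_{(\underline{J})})$ follows immediately.
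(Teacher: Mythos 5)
Your argument is correct and is exactly the reasoning the paper intends: the lemma is stated without proof there, being treated as immediate from the canonical geometrization (Definitions \ref{Defi: orientacion inducida sub-rec} and \ref{Defi: Geometric refinement}), under which $\tilde{H}^r_{\underline{J}}$ and its image $\tilde{V}^{r'}_{l}$ carry the vertical directions restricted from $R_i$ and $R_k$, so the orientation behavior of $f$ on them coincides with that on $H^i_{j_{(\underline{J})}}$ and $V^k_l$, i.e.\ with $\epsilon_T(i,j_{(\underline{J})})$. The compatibility point you flag at the end is already guaranteed by the paper's definition of the induced geometrization on sub-rectangles, so no further work is needed.
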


We can summarize our discussion in the following corollary.

\begin{coro}\label{Coro: Algoritm computation TS(w), cW}
	Let $T$ be a geometric type in the pseudo-Anosov class with a binary incidence matrix denoted as $A:=A(T)$. Let $\cW$ a family of periodic codes contained in $\Sigma_A$, there is algorithm to compute the   geometric type:
	
	\begin{equation*}
		T_{S(\cW)}:=\{N,\{H_r,V_r\}_{r=1}^N,\Phi_{S(\cW)}:=(\rho_{S(\cW)},\epsilon_{S(\cW)})\}.
	\end{equation*}
	
	using  the given geometric type $T$ and the codes in  $\cW$. 
\end{coro}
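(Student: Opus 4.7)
The plan is to assemble the corollary as an algorithmic synthesis of the constructions carried out throughout Subsection~\ref{Subsec: Codigos periodicos ref}. Each of the four components of the tuple $T_{S(\cW)} = (N, \{H_r, V_r\}_{r=1}^N, \rho_{S(\cW)}, \epsilon_{S(\cW)})$ has already been characterized by an explicit formula depending only on $T$ and on the finite combinatorial data of the codes in $\cW$, so the proof will reduce to listing the steps in the right order, verifying their algorithmic nature, and checking that all necessary intermediate quantities are themselves computable from $T$ and $\cW$.

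First I would handle the easy pieces. The integer $N$ is read off directly from Lemma~\ref{Lemm: N en TcW} as $N = \sum_{i=1}^n O(i,\cW) + 1$, where the counters $O(i,\cW)$ of Definition~\ref{Defi: cO(i) and O(i)} are obtained by scanning one period of each $\underline{w}^q \in \cW$. Lemma~\ref{lemm: determinating r=(i,s)} then provides an algorithmic bijection $r \leftrightarrow (i_r, s_r)$, so every rectangle $\tilde{R}_r$ can be indexed. The vertical counts come for free from Corollary~\ref{Coro: number of vertical sub rec, cW}: $V_r = v_{i_r}$, a quantity directly present in $T$.

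Next I would tackle $H_r$. By Corollary~\ref{Coro: Number of horizontal sub, cW}, it equals $\sum_{(i_r,j)\in \cH(r)} \underline{h}(r,j)$. Computing the index set $\cH(r)$ requires first identifying which stable intervals of $\cI(i_r,\cW)$ bound $\tilde{R}_r$; this is where I would invoke the order function $\underline{s}$ of Definition~\ref{Lemm: Boundaries code,CW}, whose construction is algorithmic since the comparison criterion of Lemma~\ref{Lemm: Comparacion de ordenes, cW} depends only on the numbers $M$, $j_{t,\underline{w}}$, and $\delta((t_1,\underline{w}^1),(t_2,\underline{w}^2))$, all read off from $T$ and $\cW$. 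Once the bounding intervals are known, Lemma~\ref{Lemm: determination cH(r), cW} returns $\cH(r)$ as an explicit range of horizontal indices, and each $\underline{h}(r,j)$ is then obtained by selecting the correct case among Lemmas~\ref{Lemm: number horizontal sub case I, cW}, \ref{Lemm: number horizontal sub case II, cW}, \ref{Lemm: number horizontal sub case III, upper, cW} and~\ref{Lemm: number horizontal sub case III, lower, cW}.

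Finally, for $\rho_{S(\cW)}$ and $\epsilon_{S(\cW)}$, Corollary~\ref{Lemm: The permutation, cW} and Lemma~\ref{Lemm: The orientation, cW} reduce the task to computing the relative position $(j_{(\underline{J})}, J_{(\underline{J})})$ of each $\underline{J} \in \{1,\ldots,H_r\}$ via the inequalities of Lemma~\ref{Lemm: parameter J, cW} together with Equation~\eqref{Equa: relative index, cW}, and then applying the case-specific formula from Definitions~\ref{Defi: permutation caso I, cW} or~\ref{Defi: permutation caso II, cW}, or the analogous data from the third case. The hard part will be the bookkeeping: for each pair $(r,j)$ one must classify into exactly one of the three geometric configurations (``$\tilde{R}_r \subset H^{i_r}_j \setminus \partial^s H^{i_r}_j$'', ``$H^{i_r}_j \subset \tilde{R}_r$'', or ``$H^{i_r}_j$ contains exactly one horizontal boundary component of $\tilde{R}_r$''), and one must ensure this classification is itself algorithmic. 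I would achieve this by comparing the $\underline{s}$-positions of the stable boundaries of $\tilde{R}_r$ (supplied by Lemma~\ref{Lemm: Determine boundaris of R-r, cW}) with the indices $j_{t,\underline{w}}$ induced by $\rho_T$ (Lemma~\ref{Lemm: unique hriwontal for every prjection}); this comparison uses only quantities already computable from $T$ and $\cW$. Once the classification is in place, every formula invoked is finite, explicit, and depends only on $T$ and the codes $\cW$, completing the algorithm.
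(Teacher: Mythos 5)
Your proposal is correct and follows essentially the same route as the paper, which presents this corollary as a summary of the preceding discussion rather than giving a separate proof: the algorithm is precisely the assembly of Lemma \ref{Lemm: N en TcW}, Lemma \ref{lemm: determinating r=(i,s)}, Corollary \ref{Coro: number of vertical sub rec, cW}, Lemma \ref{Lemm: determination cH(r), cW} together with the three case lemmas for $\underline{h}(r,j)$, and Corollary \ref{Lemm: The permutation, cW} with Lemma \ref{Lemm: The orientation, cW}. Your additional observation that the case classification for each pair $(r,j)$ must itself be made algorithmic via the order function $\underline{s}$ and the indices $j_{t,\underline{w}}$ is a point the paper leaves implicit, and it is handled correctly.
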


We claim that Theorem \ref{Theo: Refinamiento s frontera fam} has been essentially proved, so let us recapitulate our results.

\begin{theo*}[\ref{Theo: Refinamiento s frontera fam}]
	Given a geometric partition $\mathcal{R} = \{R_i\}_{i=1}^n$ of $f$, the geometric type $T$ of the pair $(f, \mathcal{R})$, and a finite family of periodic codes $\cW = \{\underline{w}^1, \cdots, \underline{w}^Q\} \subset \Sigma_{A(T)}$, there exists a finite algorithm to construct a geometric refinement of $\cR$ by horizontal sub-rectangles of the pair $(f, \mathcal{R})$, known as the $s$-\emph{boundary} refinement of $(f, \mathcal{R})$ with respect to $\cW$, denoted by $(f, \textbf{S}_{\cW}(\mathcal{R}))$, with the following properties:
	\begin{itemize}
		\item Every rectangle in the Markov partition $\textbf{S}_{\cW}(\mathcal{R})$ has as stable boundary components either a stable boundary arc of a rectangle $R_i$ in $\mathcal{R}$ or a stable arc $I_{t, \underline{w}}$ determined by the iteration $t$ of a code $\underline{w} \in \cW$. Moreover, every arc of the form $I_{t, \underline{w}}$ for $\underline{w} \in \cW$ is the stable boundary component of some rectangle in the refinement $\textbf{S}_{\cW}(\mathcal{R})$.
		\item The periodic boundary points of $\textbf{S}_{\cW}(\mathcal{R})$ are the union of the periodic boundary points of $\mathcal{R}$ and those in the set $\{\pi_{(f,\cR)}(\sigma_A^t(\underline{w})) \mid \underline{w} \in \cW \text{ and } t \in \NN\}$.
		\item There are explicit formulas, in terms of the geometric type $T$ and the codes in $\cW$, to compute the geometric type of $\textbf{S}_{\cW}(\mathcal{R})$.
	\end{itemize} 
\end{theo*}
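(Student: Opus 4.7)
The plan is to build $\textbf{S}_{\cW}(\cR)$ in three stages: isolate the new stable boundary arcs coming from the codes, verify the cutting yields a Markov partition by checking the boundary criterion of Proposition \ref{Prop: Markov criterion boundary}, and finally transcribe the whole construction into the geometric type using only the data in $T$ and $\cW$. Throughout I may assume, by Remark \ref{Rema: Why not s boundary code?}, that no $\underline{w}^q \in \cW$ is an $s$-boundary code; this guarantees that the new arcs lie in the interior of horizontal sub-rectangles of $(f,\cR)$ and therefore actually cut them.

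For the first two stages, I would attach to every pair $(t,\underline{w}^q)$ with $t \in \NN$ the code set $\underline{I}_{t,\underline{w}^q}$ and its projection $I_{t,\underline{w}^q}$; Lemma \ref{Lemm: who the intervals intersect } ensures $I_{t,\underline{w}^q}$ is a genuine stable arc inside $R_{w^q_t}$. Periodicity of $\underline{w}^q$ gives finiteness, and Lemma \ref{Lemm: image of stable intervals} provides $f(I_{t,\underline{w}^q}) \subset I_{t+1,\underline{w}^q}$. I then take $\cR_{S(\cW)}$ to be the closures of the connected components of $\overset{o}{R_i} \setminus \cup \cI(i,\cW)$ as $i$ ranges; each component is a horizontal sub-rectangle of $R_i$ because the intervals in $\cI(i,\cW)$ are pairwise non-crossing stable arcs. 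The new stable boundary is $\partial^s\cR \cup \bigcup \cI(\cW)$, which is $f$-invariant by the previous inclusion plus the $f$-invariance of $\partial^s\cR$; the unstable boundary is unchanged, hence $f^{-1}$-invariant. Proposition \ref{Prop: Markov criterion boundary} then delivers the first bullet of the theorem, and the second bullet follows from Lemma \ref{Lemm: Periodic to peridic} together with Lemma \ref{Lemm: Boundary of Markov partition is periodic}: every new periodic boundary point must be the $\pi_{(f,\cR)}$-image of an iterate of some $\underline{w} \in \cW$.

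The decisive step is the third one, and the main obstacle sits here: ordering the stable intervals inside each $R_i$ using only $T$ and $\cW$. Given two distinct iterates $\sigma^{t_1}(\underline{w}^1)$ and $\sigma^{t_2}(\underline{w}^2)$ landing in the same $R_i$, I would run them forward until the first index $M$ where $w^1_{t_1+M} \neq w^2_{t_2+M}$; binarity of $A(T)$ forces $j_{t_1+m,\underline{w}^1} = j_{t_2+m,\underline{w}^2}$ for all $m \le M-2$ and $j_{t_1+M-1,\underline{w}^1} \neq j_{t_2+M-1,\underline{w}^2}$. The product $\delta = \prod_{m=0}^{M-2}\epsilon_T(w^1_{t_1+m}, j_{t_1+m,\underline{w}^1})$ records the net change of vertical orientation that $f^{M-1}$ inflicts on the horizontal sub-rectangle bounded by $I_{t_1,\underline{w}^1}$ and $I_{t_2,\underline{w}^2}$; pulling back the visible order at time $M-1$ (where $j_{t_1+M-1,\underline{w}^1}$ vs.\ $j_{t_2+M-1,\underline{w}^2}$ settles the question by Lemma \ref{Lemm: first order in O(i)}) via this sign gives the order in $R_i$. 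This is the content of Lemma \ref{Lemm: Comparacion de ordenes, cW}, and it is the only non-routine ingredient: every other quantity I need is a direct read-off of $T$. With the order $\underline{s}$ in hand, the lexicographic rule $\tilde{R}_r = \tilde{R}_{\tilde{r}(i,s)}$ produces the canonical geometrization of $\cR_{S(\cW)}$.

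Finally, to write down $T_{S(\cW)} = (N, \{H_r, V_r\}_{r=1}^N, \rho_{S(\cW)}, \epsilon_{S(\cW)})$ I would aggregate the data rectangle by rectangle. The count $N = \sum_{i=1}^n (O(i,\cW)+1)$ is immediate from the construction, and the vertical sub-rectangles of $\tilde{R}_r$ with $r = \tilde{r}(i,s)$ are the $\tilde{V}^r_l = \overline{\overset{o}{\tilde{R}_r} \cap \overset{o}{V^i_l}}$, giving $V_r = v_i$ by the unique-component property of intersections of horizontal and vertical sub-rectangles of an embedded rectangle. For the horizontal sub-rectangles I would split $\cH(r)$ (determined combinatorially from the boundary indices of $\tilde{R}_r$ via Lemma \ref{Lemm: determination cH(r), cW}) into three mutually exclusive cases according to whether $H^i_j$ is strictly interior to $\tilde{R}_r$, strictly contains $\tilde{R}_r$, or shares exactly one stable boundary with $\tilde{R}_r$. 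In each case the count $\underline{h}(r,j)$ and the local rule $\underline{\rho}_{(r,j)}$ are extracted by comparing positions at the image side using $\Phi_T(i,j) = (k,l,\epsilon_T(i,j))$ and the order $\underline{s}$ at time $t+1$, branching on the sign $\epsilon_T(i,j)$; the orientation $\epsilon_{S(\cW)}(r,\underline{J}) = \epsilon_T(i, j_{(\underline{J})})$ is inherited from the parent rectangle. Summing these local rules over $j \in \cH(r)$ and indexing by the relative position $(j_{(\underline{J})}, J_{(\underline{J})})$ gives the global formulas for $\rho_{S(\cW)}$ and $\epsilon_{S(\cW)}$, finishing the algorithm and thus the theorem.
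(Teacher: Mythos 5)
Your proposal is correct and follows essentially the same route as the paper: the same construction of the arcs $I_{t,\underline{w}}$ and verification via the boundary criterion of Proposition \ref{Prop: Markov criterion boundary}, the same interchange-order device $\delta$ with the first differing index $M$ to order the intervals combinatorially, and the same three-case analysis of $\cH(r)$ to extract $N$, $H_r$, $V_r$, $\rho_{S(\cW)}$ and $\epsilon_{S(\cW)}$ from $T$ and $\cW$. No gaps.
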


\begin{proof}
	We have constructed $\mathcal{R}_{\mathcal{S}(\mathcal{W})}$ in Lemma \ref{Lemm: cR-S(cW) is markov part}, and it was done in such a manner that the rectangles obtained satisfy the first item of our proposition. Since the projection of the codes is the boundary of the rectangles in $\mathcal{R}_{\mathcal{S}(\mathcal{W})}$, it is necessary that, apart from the boundary periodic codes of $\mathcal{R}$, the boundary refinement also includes the projection of all the codes in $\mathcal{W}$ and, consequently, the whole orbits of such points.
	
	Finally, Corollary \ref{Coro: Algoritm computation TS(w), cW} establishes the existence of such an algorithm. Thus, we have completed the proof of our theorem.
\end{proof}

\subsection{\texorpdfstring{The $u$-boundary refinement.}{The u-boundary refinement.}}\label{Subsec: U-boundary refinement}

In this part, we are going to define the $u$-boundary refinement of a Markov partition along a family of periodic codes. This procedure involves cutting $\cR$ along the unstable segments that correspond to the orbit of any of the codes in $\cW$. In fact, we can define it using our previous $s$-boundary refinement applied to $T^{-1}$ and $f^{-1}$.

\begin{defi}\label{Defi: U boundary refinament}
	
	Let $T$ be a geometric type in the pseudo-Anosov class with an incidence matrix $A := A(T)$ that is binary. Consider a generalized pseudo-Anosov homeomorphism $f: S \rightarrow S$ with a geometric Markov partition $\cR$ of geometric type $T$.
	Let 
	$$
	\cW = \{\underline{w}^1, \cdots, \underline{w}^Q\},
	$$
	be a family of periodic codes that are non-$u$-boundary. 
	\begin{itemize}
		\item Let $\cR_{U(\cW)}$  the $s$-boundary refinement of $\cR$ when $\cR$ is viewed as a Markov partition of $f^{-1}$. i.e $(\cR,f^{-1})$. Denote by $T^{-1}_{S(\cW)}$ the  geometric type of $(\cR_{U(\cW)},f^{-1})$.

		\item Let $T_{U(\cW)}$ the geometric type of $(\cR_{U(\cW)},f)$.
		
	\end{itemize}
	
	In this case the geometry Markov partition $\cR_{U(\cW)}$ for $f$ called the $u$-boundary refinement of $\cR$ respect the family $\cW$.
\end{defi}

\begin{rema}\label{Rema: some properties of U-boundary refinament}
	
	We can make the following direct observations:
	
	\begin{enumerate}
		\item If $\underline{w}$ is not a $u$-boundary point for $(\cR,f)$, the $\underline{w}$ it is not a $s$-boundary point for $(\cR,f^{-1})$, which justifies our assumption.
		
		\item The geometric Markov partition $(\cR_{U(\cW)},f)$ has a geometric type $T_{U(\cW)}$, and this geometric type is the inverse of the geometric type of $(\cR_{U(\cW)},f^{-1})$, therefore:
		$$
		T_{U(\cW)} := (T^{-1}_{S(\cW)})^{-1}.
		$$
		and we can determine $T_{U(\cW)}$ in an algorithmic manner.
		
		\item For every $\underline{w}\in \cW$, the point $\pi_f(\underline{w})$ (where $\pi_f$ is the projection with respect to $(\cR,f)$) is a $u$-boundary point of $\cR_{U(\cW)}$.
		
		\item If $\underline{w}$ is a $u$-boundary point, clearly $\cR_{U(\underline{w})}=\cR$ and $T_{U(\underline{w})}=T$. This justifies taking any code in $\cW$ as a non $u$-boundary code.
	\end{enumerate}
\end{rema}

\section{The corner refinement}\label{Section: Corner ref}

Given a couple of homomorphism/geometric Markov partition $(f, \cR)$ and a family of periodic codes $\cW$, we are interested in concatenating the $u$ and $s$ boundary refinements to produce a refinement of $(f, \cR)$, namely $(f, \cR_{\cC})$, with the corner property and such that the only periodic codes are those on the boundary of $\cR$. The following theorem describes this situation.

\begin{theo}\label{Theo: The corner refinamiento}
	Let $T \in \mathcal{T}(\textbf{p-A})^{SIM}$ and let $(f, \cR)$ be a pair that represents $T$. Then there exists a geometric Markov partition of $f$, denoted $\cR_{\cC}$, called the corner refinement of $(f, \cR)$ with the following properties:
	\begin{enumerate}
		\item The geometric Markov partition $\cR_{\cC}$ has the corner property.
		\item The periodic boundary codes of $(f, \cR_{\cC}$ are the boundary periodic points of $(f, \cR)$.
		\item There exists an algorithm and explicit formulas to compute the geometric type $T_{\cC}$ of $(f, \cR_{\cC}$.
	\end{enumerate}
\end{theo}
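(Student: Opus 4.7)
The strategy, explicitly outlined in the introduction, is to concatenate an $s$-boundary refinement followed by a $u$-boundary refinement, each applied along a family of periodic boundary codes supplied algorithmically by Corollary \ref{Coro: algoritmic per codes}. Informally, the first refinement inserts a stable arc through every $u$-boundary periodic point of $(f,\cR)$, promoting it to a corner of the intermediate partition; the second refinement does the symmetric operation for the remaining $s$-boundary periodic points. Throughout, Theorem \ref{Theo: Refinamiento s frontera fam} and its $u$-counterpart (Definition \ref{Defi: U boundary refinament} together with Remark \ref{Rema: some properties of U-boundary refinament}) provide the explicit formulas needed to update the geometric type, so the whole construction is algorithmic in $T$.

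Concretely, the first step is to compute $\underline{S(T)}$, $\underline{U(T)}$, $\underline{C(T)}$ from $T$ and set
\[
\cW_1 := \underline{U(T)} \setminus \underline{S(T)},
\]
the pure $u$-boundary periodic codes (the corner codes $\underline{U(T)}\cap \underline{S(T)}=\underline{C(T)}$ are already $s$-boundary, so cutting along them is trivial by Remark \ref{Rema: Why not s boundary code?}). Applying Theorem \ref{Theo: Refinamiento s frontera fam} to $(f,\cR)$ with respect to $\cW_1$ yields a geometric refinement $\cR_1:=\cR_{S(\cW_1)}$ together with an algorithmic expression for its geometric type $T_1$. By the theorem, $\textrm{Per}^b(f,\cR_1)$ equals the union of $\textrm{Per}^b(f,\cR)$ with the orbits of $\pi_{(f,\cR)}(\cW_1)$; since the latter are already contained in $\textrm{Per}^b(f,\cR)$, we obtain $\textrm{Per}^b(f,\cR_1)=\textrm{Per}^b(f,\cR)$. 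Furthermore $\partial^u\cR_1=\partial^u\cR$, and every point in $\pi_{(f,\cR)}(\cW_1)$ now lies on a new stable arc of $\partial^s\cR_1$ while remaining on the unstable boundary; hence every $u$-boundary periodic point of $(f,\cR)$ is a corner of $\cR_1$.

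Applying Corollary \ref{Coro: algoritmic per codes} to $T_1$, I next compute $\underline{S(T_1)}$, $\underline{U(T_1)}$, and set
\[
\cW_2 := \underline{S(T_1)} \setminus \underline{U(T_1)}.
\]
Since every $u$-boundary periodic point of $\cR_1$ was made a corner in the first step, one has $\underline{U(T_1)}\subseteq \underline{S(T_1)}$, so $\cW_2$ consists precisely of those $s$-boundary periodic codes of $\cR_1$ which are not yet corners; geometrically, these project to the pure $s$-boundary periodic points of $(f,\cR)$. Definition \ref{Defi: U boundary refinament} and Remark \ref{Rema: some properties of U-boundary refinament} then produce $\cR_\cC:=(\cR_1)_{U(\cW_2)}$ together with an algorithmic expression for its geometric type $T_\cC$ (obtained by inverting $T_1$, applying the $s$-boundary refinement algorithm, and inverting again). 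The same reasoning as before gives $\textrm{Per}^b(f,\cR_\cC)=\textrm{Per}^b(f,\cR_1)=\textrm{Per}^b(f,\cR)$, proving (2), and (3) follows by composition of the two algorithms.

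The delicate verification is (1), and this is where I expect the main work. I would argue by case analysis on a periodic boundary point $p$ of $\cR_\cC$: if $p$ is a $u$-boundary point of $(f,\cR)$, then step 1 placed $p\in\partial^s\cR_1\subseteq\partial^s\cR_\cC$ while preserving $p\in\partial^u\cR_\cC$, so $p$ is a corner; if $p$ is a pure $s$-boundary point of $(f,\cR)$, then $p\in\partial^s\cR_1$ and step 2 cuts an unstable arc through $p$, putting $p\in\partial^u\cR_\cC$; and if $p$ was already a corner of $\cR$, both refinements preserve this status. The subtle point is to check that the $u$-boundary refinement in step 2 does not destroy the corner property achieved in step 1: this holds because step 2 only enlarges $\partial^u$ and leaves $\partial^s\cR_1\subseteq\partial^s\cR_\cC$ intact, so every point which was simultaneously on $\partial^s\cR_1$ and $\partial^u\cR_1$ remains on both $\partial^s\cR_\cC$ and $\partial^u\cR_\cC$, i.e., at a corner of every rectangle of $\cR_\cC$ containing it.
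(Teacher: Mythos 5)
Your proposal is correct and follows essentially the same route as the paper: an $s$-boundary refinement along periodic boundary codes, followed by a $u$-boundary refinement along the $s$-boundary codes of the intermediate type, with the corner property verified by the same case analysis on where $p$ sat in $\partial^s\cR$ versus $\partial^u\cR$. The only (cosmetic) difference is that you prune the cutting families to the codes along which the cut is nontrivial ($\underline{U(T)}\setminus\underline{S(T)}$ and $\underline{S(T_1)}\setminus\underline{U(T_1)}$), whereas the paper cuts along $\underline{B(T)}$ and then $\underline{S(T_{\cS(\underline{B(T)})})}$; by Remark \ref{Rema: Why not s boundary code?} and Remark \ref{Rema: some properties of U-boundary refinament} the resulting partitions coincide.
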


 In the next sub-section, we will develop the theory to determine the boundary codes of every geometric type, and then proceed to construct and compute the geometric type of the corner refinement. This intermediate step is necessary in order to obtain an algoritmic description od the geometric $T_{\cC}$

\subsection{Boundary codes and \texorpdfstring{$s,u$}{s,u}-generating functions}\label{Subsec: SU generating}

Let's proceed with the construction of the codes that are projected onto the boundary of the Markov partition, $\partial^{s,u}\cR$. We assume that $\cR=\{R_i\}_{i=1}^n$ is a geometric Markov partition of $f$ with geometric type $T$. For each $i\in \{1,\cdots,n\}$, we label the boundary components of $R_i$ as follows:

\begin{itemize}
	\item $\partial^s_{+1}R_i$ denotes the upper stable boundary of the rectangle $R_i$.
	\item $\partial^s_{-1}R_i$ denotes the lower stable boundary of $R_i$.
	\item $\partial^u_{-1}R_i$ denotes the left unstable boundary of $R_i$.
	\item $\partial^u_{+1}R_i$ denotes the right unstable boundary of $R_i$.
\end{itemize}

By using these labeling conventions, we can uniquely identify the boundary components of $\cR$ based on the geometric type $T$.

\begin{defi}\label{Defi; s,u boundary labels of T}
	Let $T=\{n,\{(h_i,v_i)\}_{i=1}^n,\Phi_T\}$  be an abstract geometric type. The $s$-\emph{boundary labels} of $T$ are defined as the formal set:
	$$
	\cS(T):=\{(i,\epsilon): i\in \{1,\cdots,n\} \text{ and } \epsilon\in \{1,-1\} \}, 
	$$
	Similarly, the $u$-\emph{boundary labels} of $T$ are defined as the formal set:
	$$
	\cU(T):=\{(k,\epsilon): k\in \{1,\cdots,n\} \text{ and } \epsilon\in \{1,-1\} \}
	$$
\end{defi}

In a while, we will justify such names. It's important to note that this definition was made using only the value of $n$ given by the geometric type $T$, so it does not depend on the specific realization. With these labels, we can formulate in terms of the geometric type the codes that $\pi_f$ projects to $\partial^{u,s}_{\pm 1 }R_i$. The first step is to introduce a \emph{generating function}

We begin by relabeling the stable boundary component of $\mathcal{R}$ using the next  function:
$$
\theta_T:\{1,\cdots,n\}\times \{1,-1\}\rightarrow \{1,\cdots,n\} \times\cup_{i=1}^n \{1,h_i\}_{i=1}^n \subset \cH(T)
$$ 
that is defined as:

\begin{equation}\label{Equa: theta T relabel}
	\theta_T(i,-1)=1 \text{ and } \theta(i,1)=h_i.
\end{equation}

The effect of $\theta_T$ is to choose the sub-rectangle of $R_i$ that contains the stable boundary component in question. Specifically, the boundary $\partial^s_{-1}R_1$ is contained in $\partial^s_{-1}H^i_1$, and the boundary $\partial^s_{+1}R_1$ is contained in $\partial^s_{+1}H^i_{h_i}$. This allows us to track the image of a stable boundary component, as the image of $\partial^s_{+1}R_i$ under $f$ should be contained in the stable boundary of $f(H^i_{h_i})$. But remember, the image of $\partial^s_{-1}R_i$ is a boundary component of $f(H^i_1) = V^k_l$, and the pair $(k,l)$ is uniquely determined by $\Phi_T$. By considering the value of $\epsilon_T(i,h_i)$, we can trace the image of the upper boundary component of $H^i_{h_i}$ in a more precise manner. If $\epsilon_T(i,h_i) = 1$, it indicates that the map $f$ does not alter the vertical orientations. As a result, the image of the upper boundary component of $H^i_{h_i}$ will remain on the upper boundary component of $f(H^i_{h_i})$. i.e. in this example:
$$
f(\partial^s_{+1}R_i)\subset f(\partial^s_{+1}H^i_{h_i}) \subset \partial^s_{+1}R_k.
$$ 
On the contrary, if $\epsilon_T(i,h_i)=-1$, it means that the map $f$ changes the vertical orientation. This has the following implication:
$$
f(\partial^s_{+1}R_i)\subset f(\partial^s_{+1}H^i_{h_i})\subset \partial_{-1}R_k.
$$

We don't really care about the index $l$ in $(k,l) \in \mathcal{V}(T)$, so it's convenient to decompose $\rho_T$ into two parts: $\rho_T:=(\xi_T,\nu_T)$. In this decomposition
$$
\xi_T:\cH(T)\rightarrow \{1,\cdots, n\},
$$
is defined as $\xi_T(i,j)=k$ if and only if $\rho_T(i,j)=(k,l)$.

Let's continue with our example. In order to determine where $f$ sends the upper boundary component $\partial^s_{+1}R_i$, we need to identify the rectangle in $\mathcal{R}$ to which $f$ maps $H^i_j$. This can be determined using the following relation:
$$
\xi_T(i,\theta_T(1))=\xi_T(i,h_i)=k.
$$ 
We now incorporate the change of orientation to determine the boundary component of $R_k$ that contains $f(\partial^s_{+1}H^i_{h_i})$. If we assume that $f(\partial^s_{+1}R_i) \subset \partial^s_{\epsilon}R_k$, then the value of $\epsilon$ can be determined using the following formula:
$$
\epsilon=+1 \cdot\epsilon_T(i,h_i) = +1 \cdot \epsilon_T(i,\theta_T(i,1)).
$$   
This procedure is illustrated in Figure  \ref{Fig: theta T}.
\begin{figure}[h]
	\centering
	\includegraphics[width=0.6\textwidth]{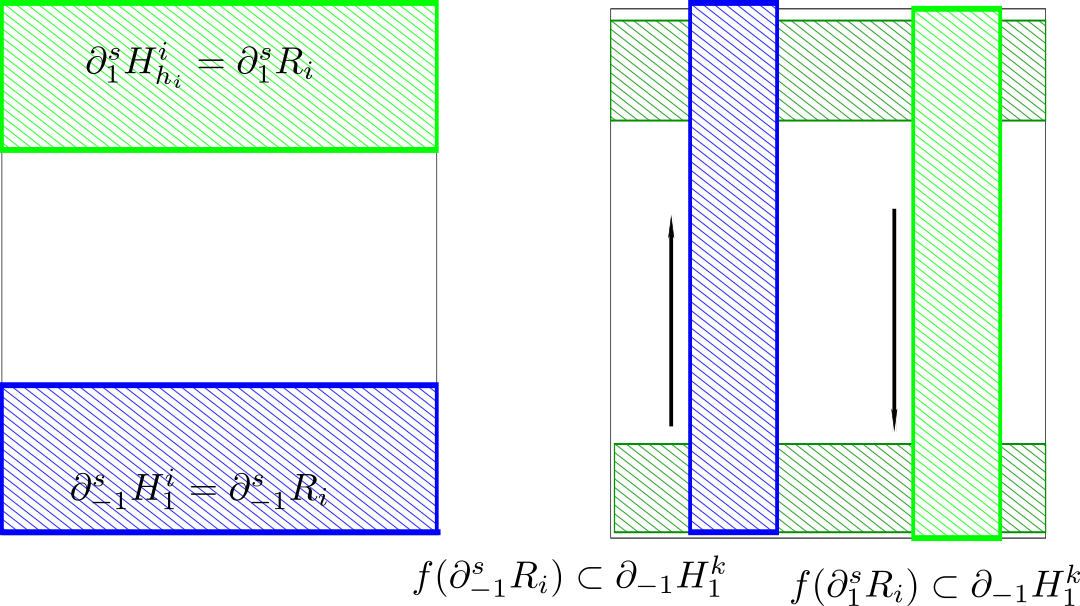}
	\caption{The effect of $\theta_T$}
	\label{Fig: theta T}
\end{figure}
The $S$-generating function utilizes $\theta_T$ and $\xi_T$ to generalize this idea.

\begin{defi}\label{Defi: s-boundary generating funtion}
	The $s$-generating function of $T$ is the function 
	$$
	\Gamma(T):\cS(T) \rightarrow \cS(T),
	$$
	defined for every $s$-boundary label $(i_0,\epsilon_0) \in \mathcal{S}(T)$ by the following formula:

	\begin{equation}\label{Equ: Gamma generation funtion}
		\Gamma(T)(i_0,\epsilon_0)=(\xi_T(i_0,\theta_T(i_0,\epsilon_0)),
		\epsilon_0 \cdot \epsilon_T(i_0,\theta_T(i_0,\epsilon_0)) ).
	\end{equation}
	
\end{defi}

The $u$-boundary generating function can be defined as either the $s$-boundary generating function of $T^{-1}$ (associated with $f^{-1}$), or it can be defined directly as follows.

\begin{defi}\label{Defi: u-boundary generating funtion}
	The $u$-\emph{generating function} of $T$ is
	$$
	\Upsilon(T):\cU(T)\rightarrow  \cU(T).
	$$
	defined as the $s$-\emph{generating function} of $T^{-1}$, $\Gamma(T^{-1})$.
\end{defi}

The orbit of a label $(i_0,\epsilon_0)\in \cS(T)$ under $\Gamma(T)$ is defined as:
$$
\{(i_m,\epsilon_m)\}_{m\in \NN}:=\{\Gamma(T)^m(i_0,\epsilon_0) : m\in\NN\}. 
$$
We define $\varsigma_T:\cS(T)\rightarrow {1,\dots,n}$ as the projection onto the first component of the funtion $\Gamma_T$, i.e $\varsigma_T(i,\epsilon)=k$ if and only if $\Gamma_T(i,j)=(k,\epsilon)$. We will focus on the $s$-generating function. The ideas for the unstable case naturally extend, but sometimes we are going to recall them too.

The generating functions allow us to create a positive (or negative) code in $\Sigma^+$ for every $s$-boundary label of $T$ by composing $\Gamma(T)$ ($\Upsilon(T)$) with $\varsigma_T$ to obtain:

\begin{eqnarray}\label{Equa: $s$-boundary code +}
	\underline{I}^{+}(i_0,\epsilon_0):=\{ \varsigma_T\circ\Gamma(T)^m(i_0,\epsilon_0)\}_{m\in\NN}=\{i_m\}_{m\in \NN}
\end{eqnarray}

Similarly, for $(k_0,\epsilon_0)\in \cU(T)$, we can construct the negative code in $\Sigma^{-}$ given by:

\begin{eqnarray}\label{Equa: $u$-boundary code -}
	\underline{J}^{-}(k_0,\epsilon_0):=\{ \varsigma_T\circ\Upsilon(T)^m(k_0,\epsilon_0)\}_{m\in\NN}=\{k_{-m}\}_{m\in \NN}
\end{eqnarray}

These positive and negative codes are of great importance and deserve a name.

\begin{defi}\label{Defi: positive negative boundary codes }
	
	Let $(i_0,\epsilon_0)\in \cS(T)$. The $s$-\emph{boundary positive code} of $(i_0,\epsilon_0)$ is denoted as $\underline{I}^+(i_0,\epsilon_0)\in \Sigma^+$, and it is determined by Equation \ref{Equa: $s$-boundary code +}. The set of $s$-boundary positive codes of $T$ is denoted as:
	$$
	\underline{\cS}^{+}(T)=\{\underline{I}^{+}(i,\epsilon): (i,\epsilon)\in \cS(T)\} \subset \Sigma^+.
	$$
	Let $(k_0,\epsilon_0)\in \cU(T)$. The $u$-\emph{boundary negative code }of $(k_0,\epsilon_0)$ is denoted as $\underline{J}^{-}(k_0,\epsilon_0)=\{k_{-m}\}_{m=0}^{\infty}$ and is determined by Equation \ref{Equa: $u$-boundary code -}. The set of $u$-boundary negative codes of $T$ is denoted as
	
	$$
	\underline{\cJ}^{-}(T)=\{\underline{J}^{-}(k,\epsilon): (k,\epsilon)\in \cU(T)\}\subset \Sigma^{-}
	$$
\end{defi}

Lemma \ref{Lemm: positive code admisible} below asserts that every $s$-boundary positive code of $T$ corresponds to the positive part of at least one element of $\Sigma_A$. The analogous result for $u$-boundary negative codes is true using a fully symmetric approach with the $u$-generating function. Therefore, we will only provide the proof for the stable case.

\begin{lemm}\label{Lemm: positive code admisible}
	Every $s$-boundary positive code $\underline{I}^{+}(i_0,\epsilon_0)$ belongs to $\Sigma_A^+$. Similarly, every $u$-boundary negative code $\underline{J}^{-}(k_0,\epsilon_0)$ belongs to $\Sigma_A^-$.
\end{lemm}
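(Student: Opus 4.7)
The plan is to unpack the definitions of $\Gamma(T)$, $\theta_T$, and $\xi_T$, and observe that each step of the iteration encodes a nonzero entry of the incidence matrix $A = A(T)$. The argument is essentially a direct unfolding, so the ``hard part'' is really just keeping the bookkeeping of indices straight.

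First, I would fix $(i_0,\epsilon_0) \in \mathcal{S}(T)$ and write the orbit under $\Gamma(T)$ as $\{(i_m,\epsilon_m)\}_{m \in \mathbb{N}}$, so that the positive code is $\underline{I}^+(i_0,\epsilon_0) = (i_m)_{m \in \mathbb{N}}$. By the definition \eqref{Equ: Gamma generation funtion}, we have $i_{m+1} = \xi_T\bigl(i_m, \theta_T(i_m,\epsilon_m)\bigr)$. Set $j_m := \theta_T(i_m,\epsilon_m) \in \{1,\dots,h_{i_m}\}$, so $(i_m, j_m) \in \mathcal{H}(T)$. Then $\rho_T(i_m,j_m) = (i_{m+1}, l_m)$ for some $l_m \in \{1,\dots,v_{i_{m+1}}\}$.

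Next, fix any realization $(f,\mathcal{R})$ of $T$. By the construction of the geometric type in Definition \ref{Defi: Geometric type of (f,cR)}, the horizontal sub-rectangle $H^{i_m}_{j_m} \subset R_{i_m}$ satisfies $f(H^{i_m}_{j_m}) = V^{i_{m+1}}_{l_m} \subset R_{i_{m+1}}$. In particular, $f(\overset{o}{R_{i_m}}) \cap \overset{o}{R_{i_{m+1}}}$ contains the (nonempty) interior of $V^{i_{m+1}}_{l_m}$, so by Definition \ref{Defi: Incidence matrix} the coefficient $a_{i_m,i_{m+1}} \geq 1$. Since $T \in \mathcal{G}\mathcal{T}(\textbf{p-A})^{SIM}$, the matrix $A$ is binary, and hence $a_{i_m,i_{m+1}} = 1$ for every $m \in \mathbb{N}$. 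This is exactly the admissibility condition for the positive code $(i_m)_{m \in \mathbb{N}}$ to lie in $\Sigma_A^+$.

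For the $u$-boundary negative code $\underline{J}^-(k_0,\epsilon_0)$, the argument is identical after replacing $(f,\mathcal{R}, T)$ with $(f^{-1},\mathcal{R}, T^{-1})$: Definition \ref{Defi: u-boundary generating funtion} states that $\Upsilon(T) = \Gamma(T^{-1})$, and the incidence matrix of $T^{-1}$ is precisely the transpose of $A$, which is still binary. The same unfolding then shows $a_{k_{-(m+1)},k_{-m}} = 1$ for all $m \in \mathbb{N}$, i.e.\ that $(k_{-m})_{m \in \mathbb{N}}$ is a valid negative admissible sequence, so $\underline{J}^-(k_0,\epsilon_0) \in \Sigma_A^-$. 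The only subtlety worth flagging is the index-reversal when passing from $T$ to $T^{-1}$, which is routine but should be spelled out to avoid confusion between the ``time direction'' of the two generating functions.
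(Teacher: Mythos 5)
Your proof is correct and follows essentially the same route as the paper's: fix a realization $(f,\mathcal{R})$ of $T$, observe that each step of $\Gamma(T)$ selects a horizontal label $(i_m,j_m)$ with $f(H^{i_m}_{j_m})=V^{i_{m+1}}_{l_m}$, so that $f(\overset{o}{R_{i_m}})\cap\overset{o}{R_{i_{m+1}}}\neq\emptyset$ and $a_{i_m,i_{m+1}}=1$, and then pass to $(f^{-1},\mathcal{R},T^{-1})$ for the unstable case. Your observation that the incidence matrix of $T^{-1}$ is the \emph{transpose} of $A$ is in fact a small correction of the paper's looser phrasing (``the inverse of the incidence matrix $A^{-1}$''), and your explicit statement of the reversed admissibility condition $a_{k_{-(m+1)},k_{-m}}=1$ spells out the index bookkeeping the paper leaves implicit.
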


\begin{proof}
	The incidence matrix $A$ of $T$ has a relation to any realization $(f,\cR)$ of $T$, which we can exploit to visualize the situation, anyway observe that our arguments only use the properties of $A$ or $T$. 
	In the construction of the positive code $\underline{I}^{+}(i_0,\epsilon_0)$, the term $i_1$ corresponds to the index of the unique rectangle in $\cR$ such that $f(H^{i_0}_{\theta_T(1,\epsilon_0)})=V^{i_1}_{l_1}$ (where $\theta_T(i_0,\epsilon)=1$ or $h_i$). In either case, we have:
	
	$$
	f^{-1}(\overset{o}{R{i_1}})\cap \overset{o}{R_{i_0}}=\overset{o}{H^{i_0}_{\theta_T(1,\epsilon_0)}}\neq \emptyset,
	$$
	
	which implies $a_{i_0,i_1}=1$. By induction, we can indeed extend this result to the entire positive code $\underline{I}^{+}(i_0,\epsilon_0)$.
	
	Similarly, for the case of $u$-boundary negative codes, we can use the symmetric approach by considering $T^{-1}$ and the inverse of the incidence matrix $A^{-1}$. Additionally, we can consider the realization $(f^{-1},\cR)$ to obtain a visualization. 
\end{proof}

Proposition \ref{Prop: s code Inyective} is the first step in proving that each $s$-boundary label uniquely determines a set of codes that projects to a single boundary component of the partition.

\begin{prop}\label{Prop: s code Inyective}	
	The map $I:\cS(T)\rightarrow \Sigma_A^+$ defined by $I(i,\epsilon):=\underline{I}^+(i,\epsilon)$ is  injective map. Similarly, the map $J:\cU(T)\rightarrow \Sigma_A^-$ defined by $J(i,\epsilon):=\underline{J}^-(i,\epsilon)$ is also injective.
\end{prop}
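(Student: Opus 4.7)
The plan is to derive a contradiction from the hypothesis that two distinct labels produce the same positive code. First I would record the geometric meaning of $\Gamma(T)$: for any realization $(f,\cR)$ of $T$ and any $s$-boundary label $(i,\epsilon)\in \cS(T)$, the stable boundary arc $\partial^s_{\epsilon} R_i$ is mapped by $f$ into the stable boundary component $\partial^s_{\epsilon'} R_k$, where $(k,\epsilon')=\Gamma(T)(i,\epsilon)$. This follows directly from unpacking the construction: $\partial^s_{\epsilon}R_i$ lies in the stable boundary of either $H^i_1$ (if $\epsilon=-1$) or $H^i_{h_i}$ (if $\epsilon=+1$)---this is the role of $\theta_T$---and its image under $f$ is a stable boundary of $V^k_l=f(H^i_{\theta_T(i,\epsilon)})$, whose vertical side (top vs.\ bottom of $R_k$) is governed by the sign $\epsilon\cdot \epsilon_T(i,\theta_T(i,\epsilon))$. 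Iterating this observation, the positive code $\underline{I}^+(i_0,\epsilon_0)=(i_0,i_1,i_2,\ldots)$ records the forward rectangle itinerary of every point of $\partial^s_{\epsilon_0} R_{i_0}$.

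With this lemma in hand, suppose $I(i_0,\epsilon_0)=I(i_0',\epsilon_0')$ with $(i_0,\epsilon_0)\neq (i_0',\epsilon_0')$. Comparing the $m=0$ entries of the two codes gives $i_0=i_0'$, whence $\epsilon_0\neq \epsilon_0'$; we may assume $\epsilon_0=-1$ and $\epsilon_0'=+1$. By the previous paragraph, the two disjoint stable arcs $\partial^s_{-1}R_{i_0}$ and $\partial^s_{+1}R_{i_0}$ now share the same forward rectangle itinerary $(i_0,i_1,i_2,\ldots)$.

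The decisive step exploits that $A:=A(T)$ is binary. Consider the nested intersections
\[
L_M:=R_{i_0}\cap \bigcap_{m=1}^{M} f^{-m}(R_{i_m}).
\]
Because $A$ is binary, at each step there is a unique horizontal sub-rectangle of $R_{i_0}$ realizing the admissible finite word $(i_0,i_1,\ldots,i_M)$, so each $L_M$ is a single horizontal sub-rectangle of $R_{i_0}$ whose unstable extent contracts by $\lambda^{-M}$. Consequently $L:=\bigcap_M L_M$ is a single horizontal (stable) arc in $R_{i_0}$, and every point whose forward itinerary equals $(i_0,i_1,\ldots)$ must lie in $L$. In particular $\partial^s_{-1}R_{i_0}\cup\partial^s_{+1}R_{i_0}\subset L$, which is impossible because the lower and upper stable boundaries of $R_{i_0}$ are disjoint horizontal arcs at opposite vertical ends of the rectangle. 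This contradiction proves that $I$ is injective. The injectivity of $J$ follows by applying the identical argument to $(f^{-1},\cR)$, whose geometric type is $T^{-1}$ and whose $s$-generating function is precisely $\Gamma(T^{-1})=\Upsilon(T)$.

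The main obstacle I anticipate is the first step: carefully verifying the correspondence between $\Gamma(T)$ and the $f$-action on stable boundary components, and in particular bookkeeping how the orientation sign $\epsilon_T$ controls whether the lower boundary of $H^i_{\theta_T(i,\epsilon)}$ is mapped to the lower or to the upper stable boundary of $R_k$. Once that correspondence is cleanly recorded as a lemma, the injectivity argument reduces to the short dynamical contradiction above, based on contraction on stable leaves together with binary admissibility.
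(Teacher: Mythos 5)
Your proposal is correct in substance, but it follows a genuinely different route from the paper's proof. After the common reduction to the case $(i_0,+1)$ versus $(i_0,-1)$, the paper argues purely combinatorially: it shows by induction that the two $\Gamma(T)$-orbits keep opposite $\epsilon$-components at every step, invokes a realization only to produce a first time $M$ with $h_{i_M}>1$ (Lemma \ref{Lemm: positive h-i}), and then uses binarity of $A(T)$ to conclude that $\rho_T(i_M,1)$ and $\rho_T(i_M,h_{i_M})$ land in different rectangles, so the codes split at step $M+1$. You instead interpret the codes geometrically as forward itineraries of the two stable boundary arcs and derive a contradiction from uniform contraction of the unstable extent of the nested horizontal sub-rectangles. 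Your argument is shorter and more conceptual once the correspondence between $\Gamma(T)$ and the $f$-action on $\partial^s_{\pm 1}R_i$ is in place (this correspondence is exactly the content of the paper's later Proposition \ref{Prop: positive codes are boundary}, so you are not assuming anything circular); the paper's argument has the advantage of being entirely internal to the geometric type and of isolating the precise combinatorial mechanism (opposite $\epsilon$-signs plus binarity) that forces divergence, which is reused elsewhere. One small caveat: with closed rectangles, $L_M:=R_{i_0}\cap\bigcap_{m=1}^M f^{-m}(R_{i_m})$ need not be exactly one horizontal sub-rectangle, since stable boundary arcs can have ambiguous itineraries; so $L=\bigcap_M L_M$ could a priori contain several stable arcs. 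The fix is already implicit in your first paragraph: the inductive statement $f^m(\partial^s_{\epsilon_0}R_{i_0})\subset\partial^s_{\epsilon_m}R_{i_m}$ shows both boundary arcs of $R_{i_0}$ are stable boundary components of the \emph{same} nested sequence of depth-$M$ horizontal sub-rectangles (the closures of $\bigcap_{m=0}^M f^{-m}(\overset{o}{R_{i_m}})$), which would force $L_M=R_{i_0}$ for all $M$ while the unstable extent of $L_M$ decays like $\lambda^{-M}$ — the same contradiction, stated without the claim that $L$ is a single arc.
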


\begin{proof}
	
	Clearly $I$ is a  well defined map as $\Gamma^n(T)$ is itself a well defined map with the same domain. 	If $i_0 \neq i'_0$, then the sequences $\underline{I}^+(i_0,\epsilon_0)$ and $\underline{I}^+(i'_0,\epsilon'_0)$ will differ in their first term, making them distinct. The remaining case involves considering $(i_0,1)$ and $(i_0,-1)$.
	
	Let's denote $\underline{I}^+(i_0,1) = {i_m}$ and $\underline{I}^+(i_0,-1) = {i'_m}$ as their positive codes. We will analyze the sequences ${\Gamma(T)^m(i_0,1)}$ and ${\Gamma(T)^m(i_0,-1)}$ and show that there exists an $m \in \NN$ such that $i_m \neq i'_m$. Our approach starts with the following technical lemma:
	
	\begin{lemm}\label{Lemm: positive h-i}
		If $T$ is in the pseudo-Anosov class and $A$ is a binary matrix, then there exists $M \in \mathbb{N}$ such that if  $\underline{I}^+(i_0,1) = \{i_m\}_{m\in \NN}$, then  $h_{i_M} > 1$.
	\end{lemm}
	
	\begin{proof}
		Since $T$ is in the pseudo-Anosov class, there exists a realization $(f,\mathcal{R})$. The infinite sequence ${i_m}$ takes a finite number of values (at most $n$), so there exist natural numbers $m_1 < m_2$ such that $R_{i_{m_1}} = R_{i_{m_2}}$. If such an $M$ does not exist, then for all $m_1 \leq m \leq m_2$, $h_{i_m} = 1$. This implies that $f^{m_2-m_1}(R_{i_{m_1}}) \subset R_{i_{m_2}} = R_{i_{m_1}}$, which is a vertical sub-rectangle of $R_{i_{m_1}}$.
		
		By uniform expansion in the vertical direction, the rectangle $R_{i_{m_1}}$ reduces to a stable interval, and this is not permitted in a Markov partition, leading us to a contradiction. 	
	\end{proof}
	
	In view of Lemma $M:=\min\{m\in \NN: h_{i_m}>1\}$ exists. If there is $0\leq m\leq M$ such that $i_m\neq i'_m$, our proof is complete. If not, the following Lemma addresses the remaining situation.
	
	\begin{lemm}\label{Lemm: diferen positive code}
		Suppose that for all $0\leq m\leq M$, $i_m=i_m'$, then $i_{M+1}\neq i'_{M+1}$.
	\end{lemm}
	
	\begin{proof}
		
		Observe that for all $0\leq m\leq M$, if $\Gamma(T)^m(i_0,+1)=(i_m,\epsilon_m)$ then $$\Gamma(T)^m(i_0,-1)=(i_m,-\epsilon_m)$$.
		
		They have the same index $i_m=i'_m$ but still have inverse $\epsilon$ part, i.e., $\epsilon_m=-\epsilon'_m$. In fact, the first term $\epsilon_0=-\epsilon'_0$, without lost of generality $\epsilon_0=1$  and $\epsilon'_0=-1$,   by hypothesis and considering that  $1=h_{i_0}=h_{i'_0}$ we infer that:
		$$
		\theta_T(i_0,\epsilon_0)=(i_0,h_{i_m})=(i'_0,1)=\theta_T(i'_0,\epsilon_0'),
		$$
		Therefore:	
		$$
		\epsilon_1=\epsilon_0\cdot \epsilon_T(i_0,h_{i_0})=-\epsilon'_0\epsilon_T(i'_0,1)=-\epsilon'_1.
		$$
		then continue the argument by induction. In particular: $\Gamma(T)^M(i_0,1)=(i_M,\epsilon_M)$ and  $\Gamma(T)^M(i'_0,-1)=(i_M,-\epsilon_M)$.
		
		The incidence matrix of $T$ have $\{0,1\}$ has coefficients ${0,1}$, and since that $1\neq h_{i_M}$,  if $\rho_T(i_M,1)=(k,l)$ then $\rho_T(i_M,h_{i_M})=(k',l')$ where $k\neq k'$.
		
		Consider the  case when, $\theta_T(i_M,\epsilon_M)=(i_M,1)$ and  $\theta_T(i_M,\epsilon'_M)=(i_M, h_{i_M})$. Lets apply the formula of $\Gamma(T)$:
		$$
		\Gamma(T)^{M+1}(i_0,1)=\Gamma(T)(i_M,\epsilon_M)=(\xi_T(i_M,1),\epsilon_M\cdot \epsilon_T(i_M,1))=(k,\epsilon_{M+1})
		$$
		and 				
		$$
		\Gamma(T)^{M+1}(i_0,-1)=\Gamma(T)(i_M,-\epsilon_M)=(\xi_T(i_M,h_{i_M}),-\epsilon_M\cdot \epsilon_T(i_M,h_{i_M}))=(k',\epsilon'_{M+1})
		$$	
		therefore $i_{M+1}=k\neq k'=i'_{M+1}$.
		
		The situation $\theta_T(i_M,\epsilon_M)=(i_M,h_{i_M})$ and  $\theta_T(i_M,\epsilon'_M)=(i_M, 1)$ is treated similarly. The has been lemma  proved.
	\end{proof}
	
	The proposition follows from the previous lemma. The result for negative codes associated with $u$-boundary labels is proven using a fully symmetric approach with the $u$-generating function.
\end{proof}

If $\Gamma(T)(i,\epsilon)=(i_1,\epsilon_1)$, applying the shift to this code gives another $s$-boundary positive code. That is clearly given by 
$$
\sigma(\underline{I}^+(i,\epsilon))=\underline{I}^+(i_1,\epsilon_1),
$$
where $\Gamma(T)(i_0,\epsilon_0)=(i_1,\epsilon_1)$. Since there are $2n$ different $s$-boundary positive codes, there exist natural numbers $k_1\neq k_2$ with $k_1,k_2\leq 2n$ such that $\sigma^{k_1}(\underline{I}^+(i,\epsilon))=\sigma^{k_2}(\underline{I}^+(i,\epsilon))$, and the code $\underline{I}^+(i,\epsilon)$ is pre-periodic. This implies the following corollary.

\begin{coro}\label{Coro: preperiodic finite s,u boundary codes}
	There are exactly $2n$ different $s$-boundary positive codes and $2n$ different $u$-boundary negative codes. Furthermore, every $s$-boundary positive code and every $u$-boundary negative code is pre-periodic under the action of the shift $\sigma$. 
	
	Moreover, for every $s$-boundary positive code $\underline{I}^+(i,\epsilon)$, there exists $k\leq 2n$ such that $\sigma^k(\underline{I}^+(i,\epsilon))$ is periodic. Similarly, for every $u$-boundary negative code $\underline{J}^-(i,\epsilon)$, there exists $k\leq 2n$ such that $\sigma^{-k}(\underline{J}^-(i,\epsilon))$ is periodic.
\end{coro}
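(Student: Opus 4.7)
The plan is to break the statement into its two assertions (cardinality and pre-periodicity) and dispatch each by combining Proposition \ref{Prop: s code Inyective} with a pigeonhole argument on the finite set $\cS(T)$. For the cardinality, I would simply observe that $\vert\cS(T)\vert = 2n$ by the very definition \ref{Defi; s,u boundary labels of T}, and then invoke the injectivity of $I:\cS(T)\to \Sigma_A^+$ established in Proposition \ref{Prop: s code Inyective} to conclude that $\underline{\cS}^+(T)$ consists of exactly $2n$ distinct positive codes. The analogous count for $\underline{\cJ}^-(T)$ follows from the injectivity of $J$.

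For the pre-periodicity, the key observation (already flagged in the paragraph preceding the corollary) is the intertwining relation
\begin{equation*}
\sigma(\underline{I}^+(i,\epsilon)) = \underline{I}^+(\Gamma(T)(i,\epsilon)),
\end{equation*}
which follows directly from the formula $\underline{I}^+(i,\epsilon) = \{\varsigma_T\circ \Gamma(T)^m(i,\epsilon)\}_{m\in\NN}$ in \eqref{Equa: $s$-boundary code +}. By induction, $\sigma^m(\underline{I}^+(i,\epsilon)) = \underline{I}^+(\Gamma(T)^m(i,\epsilon))$ for every $m\in\NN$. Hence the shift dynamics on $s$-boundary positive codes is conjugated, via the injection $I$, to the dynamics of $\Gamma(T)$ on the finite set $\cS(T)$.

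Once this conjugacy is in place, the pigeonhole principle does the work: among the $2n+1$ iterates $\Gamma(T)^0(i,\epsilon),\Gamma(T)^1(i,\epsilon),\dots,\Gamma(T)^{2n}(i,\epsilon)$ of any label, at least two must coincide, say $\Gamma(T)^{k_1}(i,\epsilon)=\Gamma(T)^{k_2}(i,\epsilon)$ with $0\le k_1 < k_2 \le 2n$. Applying $I$ and using the intertwining relation, this yields $\sigma^{k_1}(\underline{I}^+(i,\epsilon)) = \sigma^{k_2}(\underline{I}^+(i,\epsilon))$, so $\sigma^{k_1}(\underline{I}^+(i,\epsilon))$ is periodic with period dividing $k_2-k_1$, and $k_1\le 2n$ delivers the quantitative bound. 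The statement for $u$-boundary negative codes reduces to the same argument applied to $T^{-1}$, since $\Upsilon(T) = \Gamma(T^{-1})$ by definition \ref{Defi: u-boundary generating funtion}, while the shift $\sigma$ on $\Sigma_A^-$ corresponds to $\sigma^{-1}$ on the full shift; this is why the exponent $-k$ appears in the negative case.

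There is no real obstacle here; the proof is essentially a bookkeeping argument once the translation ``shift on codes $\leftrightarrow$ $\Gamma(T)$ on labels'' is made explicit. The only point that deserves a line of verification is the intertwining identity, but this is immediate from unwinding Definition \ref{Defi: positive negative boundary codes }. I would therefore present the proof as a single short paragraph: one sentence for the counting (citing Proposition \ref{Prop: s code Inyective}), one sentence establishing the intertwining with $\Gamma(T)$, and one sentence applying pigeonhole on $\cS(T)$ to extract the bound $k\le 2n$, followed by the symmetric remark for $\underline{\cJ}^-(T)$.
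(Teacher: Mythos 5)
Your proof is correct and is essentially the argument the paper itself gives in the paragraph immediately preceding the corollary: the intertwining $\sigma(\underline{I}^+(i,\epsilon))=\underline{I}^+(\Gamma(T)(i,\epsilon))$, the count $\vert\cS(T)\vert=2n$ combined with the injectivity of Proposition \ref{Prop: s code Inyective}, and a pigeonhole argument yielding $k_1<k_2\le 2n$ with $\sigma^{k_1}(\underline{I}^+(i,\epsilon))=\sigma^{k_2}(\underline{I}^+(i,\epsilon))$. Your version is, if anything, slightly more explicit about performing the pigeonhole on the labels via the conjugacy rather than directly on the codes, but the substance is identical.
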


Now we are ready to define a family of admissible codes that project onto the stable and unstable leaves of periodic boundary points of $\cR$.

\begin{defi}\label{Defi: s,u-boundary codes}
	The set of $s$-\emph{boundary codes} of $T$ is:
	\begin{equation}
		\underline{\cS}(T):=\{\underline{w}\in \Sigma_A: \underline{w}_+\in \underline{\cS}^{+}(T)\}.
	\end{equation}
	The set of  $u$-\emph{boundary codes} of $T$ is 
	\begin{equation}
		\underline{\cU}(T):=\{\underline{w}\in \Sigma_A: \underline{w}_-\in \underline{\cU(T)}^{-}(T)\}.
	\end{equation}
\end{defi}

Next, Proposition \ref{Prop: positive codes are boundary}  states that the projection of $s$-boundary codes of $T$ through $\pi^f$ is always contained in the stable boundary of the Markov partition $(f,\cR)$. Similarly, Proposition \ref{Prop: boundary points have boundary codes} ensures that these are the only codes in $\Sigma_A$ that project to the stable boundary. This provides us with a symbolic characterization of the boundary of the Markov partition. As we have done so far, we will provide a detailed proof for the stable case, noting that the unstable version follows by symmetry.

\begin{prop}\label{Prop: positive codes are boundary}
	Let $(f,\cR)$ be a pair consisting of a homeomorphism and a partition that realizes $T$. Suppose $(i,\epsilon)\in \cS(T)$ is an $s$-boundary label of $T$, and let $\underline{w}\in \underline{\cS}(T)$ be a code such that $\underline{I}^+(i,\epsilon)=\underline{w}_+$. Then, $\pi_f(\underline{w})\in \partial^s_{\epsilon} R_{i}$.
	
	Similarly, if $(i,\epsilon)\in \cU(T)$ is a $u$-boundary label of $T$, and $\underline{w}\in \underline{\cU}(T)$ is a code such that $\underline{J}^-(i,\epsilon)=\underline{w}_-$, then $\pi_f(\underline{w})\in \partial^u_{\epsilon}R_i$.
	
\end{prop}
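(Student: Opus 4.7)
The plan is to prove by induction that the forward orbit of $\pi_f(\underline{w})$ threads through a specific sequence of horizontal sub-rectangles, and then extract the boundary membership from a contraction argument.

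First I would unwind the structure of $\underline{I}^+(i,\epsilon)$. Writing $(i_m,\epsilon_m):=\Gamma(T)^m(i,\epsilon)$, note that by the very definition of $\theta_T$, the sub-rectangle $H^{i_m}_{\theta_T(i_m,\epsilon_m)}$ is the extremal horizontal sub-rectangle of $R_{i_m}$ whose $\epsilon_m$-boundary coincides with $\partial^s_{\epsilon_m} R_{i_m}$. Furthermore, since $A(T)$ is binary (we are in $\mathcal{T}(\textbf{p-A})^{SIM}$), the recipe $i_{m+1}=\xi_T(i_m,\theta_T(i_m,\epsilon_m))$ specifies that $H^{i_m}_{\theta_T(i_m,\epsilon_m)}$ is the \emph{only} horizontal sub-rectangle of $R_{i_m}$ whose image under $f$ lies in $R_{i_{m+1}}$. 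Because $\underline{w}\in\Sigma_A$ and $\underline{w}_+=\underline{I}^+(i,\epsilon)$, we have $f^m(\pi_f(\underline{w}))\in R_{i_m}$ and $f^{m+1}(\pi_f(\underline{w}))\in R_{i_{m+1}}$, so binary admissibility forces
\[
f^m(\pi_f(\underline{w}))\in H^{i_m}_{\theta_T(i_m,\epsilon_m)}\quad\text{for every }m\geq 0.
\]

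Next I would track the $\epsilon$-boundary. The factor $\epsilon_T(i_m,\theta_T(i_m,\epsilon_m))$ appearing in the formula for $\Gamma(T)$ records whether $f$ preserves or reverses the vertical direction on $H^{i_m}_{\theta_T(i_m,\epsilon_m)}$; thus the update $\epsilon_{m+1}=\epsilon_m\cdot \epsilon_T(i_m,\theta_T(i_m,\epsilon_m))$ is designed exactly so that
\[
f\bigl(\partial^s_{\epsilon_m}R_{i_m}\bigr)\subset \partial^s_{\epsilon_{m+1}}R_{i_{m+1}}.
\]
Set $K_N:=\bigcap_{m=0}^{N}f^{-m}\bigl(H^{i_m}_{\theta_T(i_m,\epsilon_m)}\bigr)$. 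An induction on $N$, using that each $H^{i_m}_{\theta_T(i_m,\epsilon_m)}$ contains $\partial^s_{\epsilon_m}R_{i_m}$ as a stable boundary component, shows that $K_N$ is a horizontal sub-rectangle of $R_{i_0}=R_i$ which still has $\partial^s_{\epsilon_0}R_{i_0}=\partial^s_{\epsilon}R_i$ as one of its two stable boundary components.

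Finally I would collapse $K_N$ to $\partial^s_{\epsilon}R_i$ using the uniform hyperbolicity of $f$. Since $f^N(K_N)\subset H^{i_N}_{\theta_T(i_N,\epsilon_N)}\subset R_{i_N}$, the $\mu^s$-height of $K_N$ is at most $\lambda^{-N}$ times the maximal $\mu^s$-height of the finitely many rectangles of $\cR$, hence it tends to zero. Therefore $\bigcap_{N\geq 0}K_N$ is a single stable arc of $R_i$, and since $\partial^s_{\epsilon}R_i\subset K_N$ for every $N$, this intersection must be exactly $\partial^s_{\epsilon}R_i$. Because $\pi_f(\underline{w})\in K_N$ for all $N$, we conclude $\pi_f(\underline{w})\in\partial^s_{\epsilon}R_i$. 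The statement for $u$-boundary codes follows by applying the same argument to $(f^{-1},\cR)$, whose $s$-generating function is $\Upsilon(T)$ by Definition \ref{Defi: u-boundary generating funtion}.

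The main obstacle I anticipate is the bookkeeping in the inductive step: one must verify carefully that the selected horizontal sub-rectangle $H^{i_m}_{\theta_T(i_m,\epsilon_m)}$ is genuinely extremal in $R_{i_m}$ in the correct orientation (which is where the binary hypothesis on $A$ is essential, since otherwise several horizontal sub-rectangles could map into $R_{i_{m+1}}$ and $\pi_f(\underline{w})$ might sit in a non-extremal one), and that the $\epsilon_m$-update in $\Gamma(T)$ really matches the geometric flipping produced by $f$. Once this compatibility between $\Gamma(T)$ and the extremal sub-rectangles is established, the contraction argument is routine.
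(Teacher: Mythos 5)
Your proposal is correct and follows essentially the same route as the paper: the paper's proof is exactly an induction along the $\Gamma(T)$-orbit showing $f^{s}(\partial^s_{\epsilon_0}R_{i_0})\subset\partial^s_{\epsilon_s}R_{i_s}$ (your step that each $K_N$ retains $\partial^s_{\epsilon}R_i$ as a stable boundary component is the same claim, phrased via the extremal sub-rectangles $H^{i_m}_{\theta_T(i_m,\epsilon_m)}$), followed by the same nested-intersection/contraction argument identifying the limiting stable segment with $\partial^s_{\epsilon}R_i$. The $u$-case is likewise handled in both by symmetry with $(f^{-1},\cR)$.
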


\begin{proof}
	Make $(i,\epsilon)=(i_0,\epsilon_0)$ to make coherent the following notation and $\underline{I}^+(i,\epsilon)=\{i_m\}_{m\in \NN}$. 	For every $s\in \NN$ define the rectangles $\overset{o}{H_s}=\cap_{m=0}^s f^{-1}(\overset{o}{R_{i_m}})$. The limit of the closures of such rectangles when $s$ converge to infinity is a unique stable segment of $R_{i_0}$. The proof is achieved  if $\partial^s_{\epsilon_0}R_{i_0}\subset H_s:=\overline{ \overset{o}{H_s} }$ for all $s\in \NN$. In this order of ideas is enough to argument that for all $s\in \NN$:
	$$
	f^s(\partial^s_{\epsilon_0}R_{i_0}) \subset R_{i_{s}}.
	$$
	We are going to probe  by induction over $s$ something more specific:
	$$
	f^s(\partial_{\epsilon_0} R_{i_0})\subset \partial_{\epsilon_s}R_{i_s}.
	$$  
	
	\emph{Base of induction}: For $s=0$. This is the case as $f^0(\partial^s_{\epsilon_0}R_{i_0})\subset \partial^s_{\epsilon_0}R_{i_0}$.
	
	\emph{Hypothesis of induction }: Assume  that $f^s(\partial_{\epsilon_0} R_{i_0})\subset \partial_{\epsilon_s}R_{i_s}$ and  $f^s(\partial^s_{\epsilon_0}R_{i_0})\subset R_{i_s}$.
	
	\emph{Induction step}: We are going to prove that
	$$
	f^{s+1}(\partial_{\epsilon_0} R_{i_0})\subset \partial_{\epsilon_{s+1}}R_{i_{s+1}}
	$$ 
	
	and then  that $f^{s+1}(\partial^s_{\epsilon_0}R_{i_0})\subset R_{i_{s+1}}$. For that reason consider the two following cases:
	
	\begin{itemize}
		\item $\epsilon_{s}=1$. In this situation $f^s(\partial^s_{\epsilon_0}R_{i_0})\subset H^{i_s}_{h_{i_s}}$.  Hence  $f^{s+1}(\partial^s_{i_0}R_{i_0})\subset f(H^{i_s}_{h_{i_s}}) \subset R_{i'_{s+1}}$. Where $R_{i'_{s+1}}$ is the only rectangle such that $\xi_T(i_s,h_{i_s})=(i'_{s+1})$.		
		Even more $f^{s+1}(\partial^s_{\epsilon_0} R_{i_0})\subset \partial^s_{\epsilon'_{s+1}} R_{i'_{s+1}}$, where $\epsilon'_{s+1}$ obey to the formula $\epsilon'_{s+1}= \epsilon_T(i_s,h_{i_s})=\epsilon_s \cdot \epsilon_T(i_s,h_{i_s})$.
		
		\item  $\epsilon_{s}=-1$. In this situation  $f^s(\partial^s_{\epsilon_0}R_{i_0})\subset H^{i_s}_1$. Hence $f^{s+1}(\partial^s_{i_0}R_{i_0})\subset f(H^{i_s}_{1}) \subset R_{i'_{s+1}}$, where $R_{i_{s+1}}$ is the only rectangle such that $\xi_T(i_s,1)=(i'_{s+1}$. 
		Even more, $f^{s+1}(\partial^s_{\epsilon_0} R_{i_0})\subset \partial^s_{\epsilon'_{s+1}} R_{i'_{s+1}}$, where  $\epsilon'_{s+1}$ obey to the formula $\epsilon'_{s+1}= -\epsilon_T(i_s,1)=\epsilon_s \cdot \epsilon_T(i_s,1)$.
	\end{itemize}
	In bot situations:
	$$
	f^{s+1}(\partial^s_{\epsilon_0})R_{i_0})\subset \partial^s_{\epsilon'_{s+1}}R_{i'_{s+1}}
	$$
	and they follows the rule:
	$$
	(i'_{s+1},\epsilon'_{s+1})=(\xi_T(i_s,\theta_T(i_s,\epsilon_s)),\epsilon_s\cdot \epsilon_T(i_s,\theta_T(\epsilon_s)))=\Gamma(T)^{s+1}(i_0,\epsilon_0)=(i_{s+1},\epsilon_{s+1}).
	$$
	Therefore $f^{s+1}(\partial_{\epsilon_0} R_{i_0})\subset \partial_{\epsilon_{s+1}}R_{i_{s+1}}$, as we claimed and the result is proved.
	The unstable case is totally symmetric.
	
\end{proof}

Proposition \ref{Prop: positive codes are boundary} provides justification for naming the $s$-boundary and $u$-boundary labels of $T$ as such, as they generate codes that are projected to the boundary of the Markov partition. The next proposition asserts that these codes are the only ones that have such a property.

\begin{prop}\label{Prop: boundary points have boundary codes}
	If $\underline{w}\in \Sigma_A$ projects to the stable boundary of the Markov partition $(f,\cR)$ under $\pi_f$, i.e., $\pi_f(\underline{w})\in \partial^s \cR$, then it follows that $\underline{w}\in \underline{\cS}(T)$. Similarly, if $\pi_f(\underline{w})\in \partial^u\cR$, then $\underline{w}\in \underline{\cU}(T)$.
	
\end{prop}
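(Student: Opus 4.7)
The plan is to prove the converse of Proposition \ref{Prop: positive codes are boundary} by tracking, iteration by iteration, which stable boundary component of which rectangle contains the image of $x := \pi_f(\underline{w})$ and showing that the resulting sequence of $s$-boundary labels is exactly the orbit under $\Gamma(T)$. Since $\partial^s\cR$ is $f$-invariant (Proposition \ref{Prop: Markov criterion boundary}) and $f^m(x) \in R_{w_m}$ for every $m \in \NN$, the point $f^m(x)$ belongs to $\partial^s R_{w_m}$. A Markov rectangle cannot collapse, so the upper and lower stable boundary components of $R_{w_m}$ are disjoint, which means there is a unique $\epsilon_m \in \{-1,+1\}$ with $f^m(x) \in \partial^s_{\epsilon_m} R_{w_m}$.

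Next, I would argue the recursion $(w_{m+1},\epsilon_{m+1}) = \Gamma(T)(w_m,\epsilon_m)$ by examining a single step. Since $f^m(x) \in \partial^s_{\epsilon_m}R_{w_m}$, it lies in the stable boundary of a unique horizontal sub-rectangle of $R_{w_m}$: namely $H^{w_m}_{h_{w_m}}$ when $\epsilon_m=+1$ and $H^{w_m}_{1}$ when $\epsilon_m=-1$; in both cases this is $H^{w_m}_{\theta_T(w_m,\epsilon_m)}$. Applying $f$, the image is contained in the stable boundary of the vertical sub-rectangle $V^{k}_{l} := f\bigl(H^{w_m}_{\theta_T(w_m,\epsilon_m)}\bigr)$, where $(k,l)=\rho_T(w_m,\theta_T(w_m,\epsilon_m))$. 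Because stable boundary components of $V^k_l$ are included in those of $R_k$, we conclude $f^{m+1}(x) \in \partial^s R_k$ with $k = \xi_T(w_m,\theta_T(w_m,\epsilon_m))$, so $w_{m+1}=k$. For the sign, note that $f$ restricted to $H^{w_m}_{\theta_T(w_m,\epsilon_m)}$ preserves or reverses the vertical orientation according to $\epsilon_T(w_m,\theta_T(w_m,\epsilon_m))$, giving $\epsilon_{m+1} = \epsilon_m \cdot \epsilon_T(w_m,\theta_T(w_m,\epsilon_m))$. Comparison with Equation \ref{Equ: Gamma generation funtion} shows $(w_{m+1},\epsilon_{m+1}) = \Gamma(T)(w_m,\epsilon_m)$.

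By induction on $m$, one obtains $(w_m,\epsilon_m) = \Gamma(T)^m(w_0,\epsilon_0)$ for every $m\in\NN$, so the positive part $\underline{w}_+$ agrees term by term with $\underline{I}^+(w_0,\epsilon_0)$ as defined in \eqref{Equa: $s$-boundary code +}. Hence $\underline{w}_+ \in \underline{\cS}^+(T)$ and $\underline{w} \in \underline{\cS}(T)$. The $u$-boundary case is handled symmetrically by replacing $f$ with $f^{-1}$, invoking the $f^{-1}$-invariance of $\partial^u\cR$, and using the $u$-generating function $\Upsilon(T) = \Gamma(T^{-1})$ to propagate the label $(w_0,\epsilon_0)\in \cU(T)$ into the negative direction, yielding $\underline{w}_- = \underline{J}^-(w_0,\epsilon_0) \in \underline{\cU}^-(T)$.

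The only delicate point, and the one I expect to require the most care in the writeup, is the uniqueness of $\epsilon_m$ at each step: one must observe that a corner point of $R_{w_m}$ lies on exactly one of $\partial^s_{+1}R_{w_m}$ or $\partial^s_{-1}R_{w_m}$ (it may simultaneously belong to an unstable boundary component, but the assignment of a stable sign is still well-defined once $w_m$ is fixed), and then check that the inductive step respects this assignment regardless of whether $f^m(x)$ happens also to be on $\partial^u\cR$. Everything else is a direct unpacking of the definitions of $\theta_T$, $\xi_T$, $\epsilon_T$, and $\Gamma(T)$.
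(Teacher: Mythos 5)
Your proposal is correct and follows the same skeleton as the paper's proof --- attach to each $m$ a label $(w_m,\epsilon_m)\in\cS(T)$ and verify $(w_{m+1},\epsilon_{m+1})=\Gamma(T)(w_m,\epsilon_m)$ --- but the two arguments pin down the sign sequence $\{\epsilon_m\}$ in genuinely different ways. You define $\epsilon_m$ geometrically, as the side of $R_{w_m}$ whose stable edge contains $f^m(x)$, and push the label forward one step at a time. The paper instead reads $\epsilon_m$ off the code: since $A(T)$ is binary, the transition $w_m\to w_{m+1}$ selects a unique $j_m$ with $\xi_T(w_m,j_m)=w_{m+1}$, the boundary hypothesis forces $j_m\in\{1,h_{w_m}\}$, and this determines $\epsilon_m$ whenever $h_{w_m}>1$ but is ambiguous when $h_{w_m}=1$. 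That ambiguity is exactly why the paper needs machinery you avoid: it invokes Lemma \ref{Lemm: positive h-i} to produce a first index $M$ with $h_{w_M}>1$, fixes $\epsilon_M$ there, and propagates the sign \emph{backwards} to recover $\epsilon_0$. Your forward, geometric definition sidesteps the $h_{w_m}=1$ issue entirely, which is a real simplification; what it costs you is precisely the ``delicate point'' you flag, and I would sharpen your diagnosis of it. The issue is not only corners: (i) the hypothesis gives $f^m(x)\in\partial^s\cR=\cup_i\partial^s R_i$, whereas you need membership in $\partial^s R_{w_m}$, the rectangle named by the code --- this is where you should invoke that $\underline{w}$ is a sector code of $x$ (Lemma \ref{Lemm: every code is sector code }), so the relevant sector of $f^m(x)$ lies in $R_{w_m}$ and hence in $H^{w_m}_1$ or $H^{w_m}_{h_{w_m}}$; and (ii) since rectangles are only immersed, $\partial^s_{+1}R_{w_m}$ and $\partial^s_{-1}R_{w_m}$ are not a priori disjoint, so the sign $\epsilon_m$ is better attached to the sector determined by $\underline{w}$ than to the bare point $f^m(x)$. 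The paper's proof makes the same implicit appeal in (i), so neither point is a gap relative to the paper, but routing both your definition of $\epsilon_m$ and the identity $w_{m+1}=\xi_T(w_m,\theta_T(w_m,\epsilon_m))$ through the sector code would make the write-up airtight.
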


\begin{proof}
	
	Like $A(T)$ has coefficients ${0,1}$, the sequence $\underline{w}_+$ determines, for all $m\in \mathbb{N}$, a pair $(w_m,j_m)\in \mathcal{H}(T)$ such that $\xi_T(w_m, j_m)=w_{m+1}$ and a number $\epsilon_T(w_m,j_m)=\epsilon_{m+1} \in {1,-1}$. It is important to note that, like  $f^m(x)\in \partial^s\cR$, in fact,  there are only two  cases (unless $h_{w_m}=1$ where they are the same):
	$$
	w_{m+1}=\xi_T(w_m,1) \text{ or well } w_{m+1}=\xi_T(w_m,h_{w_m}).
	$$
	
	This permit to define $\epsilon_m\in \{-1,+1\}$ as the only number such that:
	\begin{equation}\label{Equa: determine epsilon m}
		w_{m+1}=\xi_T(w_m,\theta_T(w_m,\epsilon_m)).
	\end{equation}
	Even more $\epsilon_m$ determine $\epsilon_{m+1}$ by the formula:
	$$
	\epsilon_{m+1}=\epsilon_m\cdot \epsilon_T(w_m,\theta_T(w_m,\epsilon_m)).
	$$
	In resume:
	\begin{equation}\label{Equa: w determine by gamma}
		\Gamma(T)(w_m,\epsilon_m)=(w_{m+1},\epsilon_{m+1})
	\end{equation}
	follow the rule dictated by the $s$-generating function. So if we know $\epsilon_M$ for certain $M\in \NN$ we can determine  $\sigma^{M}(\underline{w})_+=\underline{I}^+(w_M,\epsilon_M)$, so it rest to determine at least one $\epsilon_M$. 
	
	Lemma \ref{Lemm: positive h-i} can be adapted to this context to prove the existence of a minimal $M\in \mathbb{N}$ such that $h_{w_M}>1$. Then, equation \ref{Equa: determine epsilon m} determines $\epsilon_M$. Now, we need to recover $\epsilon_0$, but we proceed backwards. Since $h_{w_m}=1$ for all $m<1$, we have:

	$$
	\epsilon_M=\epsilon_{M-1}\cdot \epsilon_T(w_{M-1},1)
	$$
	and then $\epsilon_{M-1}=\epsilon_M \cdot \epsilon_T(w_{M-1},1)$. By applied this procedure we can determine $\epsilon_0$ and then using \ref{Equa: w determine by gamma} to get that
	$$
	\Gamma(T)^m(w_0,\epsilon_0)=(w_m,\epsilon_m).
	$$
	The conclusion is that $\underline{w}_+=\underline{I}^{+}(w_0,\epsilon_0)$. 
	
	The unstable boundary situation is analogous.
\end{proof}

Therefore, $\underline{\cS}(T)$ and $\underline{\cU}(T)$ are the only admissible codes that project to the boundary of a Markov partition. With this in mind, we can distinguish the periodic boundary codes from the non-periodic ones. It is important to note that the cardinality of each of these sets is less than or equal to $2n$.

\begin{defi}\label{Defi: s,u-boundary periodic codes}
	The set of $s$-\emph{boundary periodic codes} of $T$ is:
	\begin{equation}
		\text{ Per }(\underline{\cS(T)}):=\{\underline{w}\in \underline{\cS}(T): \underline{w} \text{ is periodic }\}.
	\end{equation}
	The set of  $u$-\emph{boundary periodic codes} of $T$ is 
	\begin{equation}
		\text{ Per }(\underline{\cU(T)}):=\{\underline{w}\in \underline{\cU}(T): \underline{w} \text{ is periodic }\}.
	\end{equation}
\end{defi}

\subsubsection{\texorpdfstring{Decomposition of $\Sigma_{A}$}{Decomposition of Sigma_{A}}}

Now we can describe the stable and unstable leaves of $f$ corresponding to the periodic points of the boundary.

\begin{defi}\label{Defi: stratification Sigma A}
	We define the $s$-\emph{boundary leaves codes} of $T$:
	\begin{equation}
		\Sigma_{\cS(T)}=\{\underline{w}\in \Sigma_A:   \exists k\in\NN \text{ such that } \sigma^k(\underline{w})\in \underline{\cS(T)}\}.
	\end{equation}
	We define the $u$-\emph{boundary leaves codes} of $T$
	\begin{equation}
		\Sigma_{\cU(T)}=\{\underline{w}\in \Sigma_A:  \exists k\in\NN \text{ such that } \sigma^{-k}(\underline{w})\in \underline{\cU(T)}\}.
	\end{equation}
	Finally, the \emph{totally interior codes} of $T$ are
	$$
	\Sigma_{\cI nt(T)}=\Sigma_A\setminus(\Sigma_{\cS(T)} \cup \Sigma_{\cU(T)}).
	$$
\end{defi}

The importance of such a division of $\Sigma_A$ is that its projections are well determined, as indicated by the following lemma.

\begin{lemm}\label{Lemm: Projection Sigma S,U,I}
	
	A code $\underline{w}\in \Sigma_A$ belongs to $\Sigma_{\cS(T)}$ if and only if its projection $\pi_f(\underline{w})$ is within the stable leaf of a boundary periodic point of $\cR$.
	
	A code $\underline{w}\in \Sigma_A$ belongs to $\Sigma_{U(T)}$ if and only if its projection $\pi_f(\underline{w})$ lies within the unstable leaf of a boundary periodic point of $\cR$.

	A code $\underline{w}\in \Sigma_A$ belongs to $\Sigma_{\cI nt(T)}$ if and only if its projection $\pi_f(\underline{w}$ is contained within Int$(f,\cR)$>
\end{lemm}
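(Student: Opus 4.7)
The plan is to prove the three equivalences in turn, obtaining the third as a direct consequence of the first two combined with Lemma \ref{Lemm: Caraterization unique codes}. For parts (i) and (ii), I will use Proposition \ref{Prop: positive codes are boundary} and Proposition \ref{Prop: boundary points have boundary codes} as the main ingredients, together with Lemma \ref{Lemm: Boundary of Markov partition is periodic} which connects boundary components of the Markov partition to stable/unstable leaves of boundary periodic points.

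For part (i), suppose first that $\underline{w}\in \Sigma_{\cS(T)}$, so there exists $k\in\NN$ with $\sigma^k(\underline{w})\in \underline{\cS}(T)$. Then the positive part of $\sigma^k(\underline{w})$ is an $s$-boundary positive code, and by Proposition \ref{Prop: positive codes are boundary} we have $f^k(\pi_f(\underline{w}))=\pi_f(\sigma^k(\underline{w}))\in \partial^s \cR$. By Lemma \ref{Lemm: Boundary of Markov partition is periodic}, this point lies in the stable leaf of some periodic boundary point $q$, hence $\pi_f(\underline{w})\in F^s(f^{-k}(q))$, and $f^{-k}(q)$ is again a periodic boundary point because the orbit of any periodic boundary point consists entirely of periodic boundary points. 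Conversely, suppose $\pi_f(\underline{w})\in F^s(p)$ where $p$ is a periodic boundary point. Since $p$ lies in some stable boundary component $\alpha\subset \partial^s R_i$, there is an open arc $U\subset \alpha\subset F^s(p)$ containing $p$. Uniform contraction along stable leaves ensures that for $k$ large enough, $f^{kP}(\pi_f(\underline{w}))\in U\subset \partial^s\cR$, where $P$ is the period of $p$. Hence $\pi_f(\sigma^{kP}(\underline{w}))\in \partial^s\cR$, and by Proposition \ref{Prop: boundary points have boundary codes} we conclude $\sigma^{kP}(\underline{w})\in \underline{\cS}(T)$, i.e.\ $\underline{w}\in \Sigma_{\cS(T)}$.

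Part (ii) is proved by the same argument applied to $f^{-1}$, using the $u$-analogues of Propositions \ref{Prop: positive codes are boundary} and \ref{Prop: boundary points have boundary codes} (which are stated in both results) and the fact that $\partial^u\cR$ is $f^{-1}$-invariant with its boundary components lying on unstable leaves of periodic boundary points.

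For part (iii), a code $\underline{w}$ belongs to $\Sigma_{\cI nt(T)}$ precisely when it belongs to neither $\Sigma_{\cS(T)}$ nor $\Sigma_{\cU(T)}$. By (i) and (ii), this is equivalent to $\pi_f(\underline{w})$ not lying on the stable or unstable leaf of any periodic boundary point of $\cR$. By Lemma \ref{Lemm: Caraterization unique codes}, this last condition is equivalent to $\pi_f(\underline{w})\in \textbf{Int}(f,\cR)$, which finishes the proof.

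The main technical point, and hence the main obstacle, is the converse direction of (i): one must pass from the global statement ``$\pi_f(\underline{w})$ lies on the stable leaf of some periodic boundary point'' to the symbolic statement ``some forward shift of $\underline{w}$ is in $\underline{\cS}(T)$.'' The key observation is that near a periodic boundary point $p$, a small arc of $F^s(p)$ is forced to lie in $\partial^s\cR$ (because this arc is part of the stable boundary component of $\cR$ that contains $p$); combined with the uniform contraction along stable leaves from Definition \ref{Defi: pseudo-Anosov homeomorphism}, any point on $F^s(p)$ is eventually mapped into such an arc, which puts us in position to invoke Proposition \ref{Prop: boundary points have boundary codes}.
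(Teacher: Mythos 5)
Your proposal is correct and follows essentially the same route as the paper: the forward direction of each equivalence comes from the fact that $s$-boundary (resp.\ $u$-boundary) codes project into $\partial^s\cR$ (resp.\ $\partial^u\cR$), whose components lie on leaves of boundary periodic points; the converse uses contraction/expansion to push the point into the boundary and then invokes Proposition \ref{Prop: boundary points have boundary codes}; and the third item is deduced from the first two via Lemma \ref{Lemm: Caraterization unique codes}. The only difference is cosmetic — you cite Proposition \ref{Prop: positive codes are boundary} together with Lemma \ref{Lemm: Boundary of Markov partition is periodic} where the paper cites Proposition \ref{Prop: Projection foliations} and Corollary \ref{Coro: preperiodic finite s,u boundary codes} — which does not change the argument.
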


\begin{proof}
	The $s$-boundary leaf codes satisfy Definition  \ref{Defi: s,u-leafs}, and according to Proposition \ref{Prop: Projection foliations}, if $\underline{w}\in \Sigma_{\cS(T)}$, their projection lies on the same stable manifold as an $s$-boundary component of $\cR$. Stable boundary components of a Markov partition represent the stable manifold of an $s$-boundary periodic point, and for each $\underline{w}\in \Sigma_{\cS(T)}$, there exists $k=k(\underline{w})\in \mathbb{N}$ such that $\sigma^k(\underline{w})_+$ is a periodic positive code (Corollary \ref{Coro: preperiodic finite s,u boundary codes}). This positive code corresponds to a periodic point on the boundary of $\cR$ within whose stable manifold $\pi_f(\underline{w})$ is contained. This proves  one direction in the first assertion  of the lemma.

	If $\underline{v}$ is a periodic boundary code ,$\pi_f(\underline{v})\in \partial^s\cR$, by definition $\underline{v}\in \underline{\cS(T)}$.  Suppose that $\pi_f(\underline{w})$ is on the stable leaf of $\underline{v}$, then there exist $k \in \NN$ such that $\pi_f(\sigma^k(\underline{w}))$ is on the same stable boundary of $\cR$ as $\pi_f(\underline{v})$.  The proposition\ref{Prop: boundary points have boundary codes} implies that $\sigma^k(\underline{w})\in \underline{\cS(T)}$, hence  $\underline{w}\in \Sigma_{\cS(T)}$ by definition. This completes the proof of the first assertion of the lemma. A similar argument proves the unstable case.	
	
	The conclusion of these items is that  $\underline{w}\in \Sigma_{\cS(T)}\cap \Sigma_{\cU(T)}$ if and only if $\pi_f(\underline{w})\in \cF^{s,u}(\text{ Per }^{s,u}(\cR))$. 
	
	As proved in the Lemma \ref{Lemm: Caraterization unique codes} totally interior points are disjoint from the stable and unstable lamination generated by boundary periodic points. If $\underline{w}\in  \Sigma_{\cI nt(T)}$ and $\pi_f(\underline{w})$ is on the stable or unstable leaf of a $s,u$-boundary periodic point, we have seen that $\underline{w}\in \Sigma_{\cS(T),\cU(T)}$ which is not possible, therefore $\pi_f(\underline{w})\in $Int$(f,\cR)$. In the conversely direction, if  $\pi_f(\underline{w})$ is not in the stable or unstable lamination of $s,u$-boundary points, $\underline{w}\notin (\Sigma_{\cS(T)})\cup \Sigma_{\cU(T)}$, so $\underline{w}\in\Sigma_{\cI nt(T)}$. This ends the proof.
\end{proof}
We have obtained the decomposition 
$$
\Sigma_{A(T)}=\Sigma_{\cI nt(T)} \cup \Sigma_{\cS(T)} \cup \Sigma_{\cU(T)}
$$ 
and have characterized the image under $\pi_f$ of each of these sets. In the next subsection we use this decomposition to define relations on each of these parts and then extend them to an equivalence relation in $\Sigma_A$.

\subsection{The corner refinement}

Let $T$ be a geometric type within the pseudo-Anosov class with binary incidence matrix $A(T)$. Consider a generalized pseudo-Anosov homeomorphism $f: S \to S$ with a geometric Markov partition $\cR$ of geometric type $T$. The projection $\pi_{(f,\cR)}: \Sigma_{A(T)} \to S$, as defined in \ref{Defi: projection pi}, depends on the specific geometric type $T$ of the Markov partition. We will relabel it as $\pi_T: \Sigma_{A(T)} \to S$ to enable easier comparison with another projection $\pi_{T'}$ associated with a different geometric type $T'$

For a given geometric type $T$, we have defined its $s$ and $u$ generating functions in \ref{Defi: s-boundary generating funtion}  and \ref{Defi: u-boundary generating funtion}, respectively. Iterating these functions generates codes in $\Sigma_{A(T)}$, which project to the stable and unstable boundary components of $\cR$. Furthermore, as described in sub-section \ref{Subsec: SU generating}, using $T$,  it is feasible to determine the sets of periodic $s$-boundary codes of $T$:
$$
\textit{Per}(\underline{S(T)}):=\{\underline{w}\in \Sigma_{A(T)} : \underline{w} \text{ is periodic for $\sigma$ and } \pi_T(\underline{w})\in \partial^s \cR \}.
$$
and the sets of periodic $u$-boundary codes:
$$
\textit{Per}(\underline{(T)}):=\{\underline{w}\in \Sigma_{A(T)} : \underline{w} \text{ is periodic for $\sigma$ and } \pi_T(\underline{w})\in \partial^u \cR \}.
$$
Their union is the set of boundary periodic codes:
$$
\textit{Per}(\underline{B(T)}):=  \textit{Per}(S(T)) \cup  \textit{Per}(U(T)).
$$

In order to avoid to heavy notations we  relabel these sets:
$$
\underline{S(T)}:=\textit{Per}(\underline{S(T)}), \, \, \underline{U(T)}:=\textit{Per}(\underline{U(T)}) \text{ and } \underline{B(T)}:=\textit{Per}(\underline{B(T)})  
$$

Finally a corner periodic code  is any code in the  intersection:
$$
\underline{C(T)}:=\underline{S(T)} \cap \underline{U(T)}.
$$

Definition \ref{Defi: s,u-boundary periodic codes} reveals that $ \underline{B(T)}$ can be  described in a combinatorial manner using $T$, avoiding  the use of the Markov partition or the homeomorphism $f$. Based on the theory developed in Section \ref{Subsec: SU generating}, we can draw the following conclusion, which we present in the form of a corollary:

\begin{coro}\label{Coro: algoritmic per codes}
	Given a specific geometric type $T$, we can compute the sets: $\underline{(S(T)})$, $\underline{U(T)}$, $\underline{B(T)}$, and $\underline{C(T)}$, in terms of the geometric type $T$.
\end{coro}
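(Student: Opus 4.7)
The plan is to exploit the finiteness of the sets $\cS(T)$ and $\cU(T)$ (each of cardinality $2n$) together with the fact that $\Gamma(T)$ and $\Upsilon(T)$ are self-maps on these finite sets, as established in Definitions \ref{Defi: s-boundary generating funtion} and \ref{Defi: u-boundary generating funtion}. Since any self-map on a finite set of cardinality $2n$ is eventually periodic at every point, for each label $(i,\epsilon)\in \cS(T)$ I can compute in at most $2n$ iterations both the pre-period and the period of its orbit under $\Gamma(T)$. By Corollary \ref{Coro: preperiodic finite s,u boundary codes}, this detects all periodic $s$-boundary positive codes in $\underline{\cS}^+(T)$, and it uses only the combinatorial data encoded in $T$ (namely the maps $\xi_T$, $\epsilon_T$ and $\theta_T$).

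First, to compute $\underline{S(T)}$, I iterate $\Gamma(T)$ starting from each $(i,\epsilon)\in\cS(T)$ and look for the first repetition $\Gamma(T)^{m_1}(i,\epsilon)=\Gamma(T)^{m_2}(i,\epsilon)$ with $m_1<m_2\leq 2n$. The orbit beginning at $\Gamma(T)^{m_1}(i,\epsilon)$ is $\sigma$-periodic of period $P=m_2-m_1$, and its associated positive code $\{i_{m_1},i_{m_1+1},\ldots,i_{m_1+P-1},i_{m_1},\ldots\}$ extends uniquely to a $\sigma$-periodic bi-infinite code by repeating the block $(i_{m_1},\ldots,i_{m_1+P-1})$ to the left; admissibility in $\Sigma_{A(T)}$ follows from Lemma \ref{Lemm: positive code admisible} together with the periodicity, since the wrap-around transition $a_{i_{m_1+P-1},i_{m_1}}=1$ is automatic. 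Collecting these bi-infinite codes over all $2n$ starting labels produces $\underline{S(T)}$. The computation of $\underline{U(T)}$ is entirely symmetric, applying the same procedure to $\Upsilon(T)=\Gamma(T^{-1})$ and extending periodic negative codes to the right.

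The set $\underline{B(T)}=\underline{S(T)}\cup\underline{U(T)}$ is then immediate. To obtain $\underline{C(T)}=\underline{S(T)}\cap\underline{U(T)}$, I iterate over the finitely many periodic codes $\underline{w}\in\underline{S(T)}$ already computed and test, for each, whether $\underline{w}$ also appears in the finite list $\underline{U(T)}$; equivalently, whether $\underline{w}_-$ coincides, up to a shift, with some $\underline{J}^-(k,\epsilon')\in\underline{\cU}^-(T)$. Since both lists have at most $2n$ elements and each comparison reduces to matching two eventually periodic sequences of bounded pre-period and period, the test terminates in bounded time.

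The main subtlety I anticipate is the bookkeeping around pre-periodic orbits: the generating functions produce one-sided codes, but the objects we ultimately want are bi-infinite $\sigma$-periodic codes in $\Sigma_{A(T)}$. One must consistently strip off the pre-periodic transient and identify which starting labels produce the \emph{same} periodic orbit, so that the final list $\underline{S(T)}$ has no redundancies and is indexed canonically; the orientation component $\epsilon$ in $\Gamma(T)$ makes two distinct starting labels potentially generate the same underlying periodic code in $\Sigma_{A(T)}$. Aside from this, every step is a finite enumeration on sets of size at most $2n$, so the whole procedure is an explicit algorithm taking $T$ as input and returning the four required sets.
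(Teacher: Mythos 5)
Your proposal is correct and follows essentially the same route as the paper: the corollary is stated there as a direct consequence of the machinery of Subsection \ref{Subsec: SU generating} (the generating functions $\Gamma(T)$, $\Upsilon(T)$ built from $\theta_T$, $\xi_T$, $\epsilon_T$, the $2n$ pre-periodic positive/negative codes of Corollary \ref{Coro: preperiodic finite s,u boundary codes}, and Definition \ref{Defi: s,u-boundary periodic codes}). Your added detail on cycle detection, the automatic admissibility of the wrap-around transition, and deduplication of labels landing on the same $\Gamma(T)$-cycle merely makes the implicit algorithm explicit.
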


\begin{defi}\label{Defi: Corner type}
	A geometric type $T$ have the \emph{corner property} if every periodic boundary code is a  corner peridic code, i.e $\underline{B(T)}=\underline{S(T)}\cap \underline{U(T)}$.
\end{defi}

We fix the following notation:
\begin{itemize}
	\item $P_s(f)$ is the set of $s$-boundary periodic points of $(f,\cR)$.
	\item $P_u(f)$ is the set of $u$-boundary periodic points of $(f,\cR)$
	\item  $P_u(f)=P_s(f)\cup P_u(f)$  is the set of periodic boundary  codes of $(f,\cR)$, and
	\item $P_c(f)=P_s(f)\cap P_u(f)$ is the set of corner points of $\cR$.
\end{itemize}

\begin{defi}\label{Defi: Corner type partition }
	A geometric Markov partition $\cR$ of $f$ have the \emph{corner property} if every rectangle $R\in \cR$ that contains a boundary point $p\in P_b(f,\cR)$ have $p$ as a corner point.
\end{defi}

A corner point $p$ of a rectangle $R \in \cR$ is on the boundary of another rectangle $R' \in \cR$ but it is not necessarily a corner point of $R'$; it can be on the interior of the unstable boundary of $R'$, for example. In this situation, $\pi_T^{-1}(p) \subset \Sigma_{A(T)}$ contains a corner code and a $u$-boundary code that is not an $s$-boundary code. Therefore, $\underline{C(T)} \neq \underline{B(T)}$ and $T$  don't have the corner property. This discrepancy makes it necessary to introduce the following lemma

\begin{lemm}\label{Lemm: Corner prop equivalence}
	The geometric Markov partition $\cR$ has the corner property if and only if its geometric type $T$ has the corner property.
\end{lemm}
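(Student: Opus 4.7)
The plan is to exploit the correspondence between the periodic sector codes of a point $p$ and the position of $p$ within a specific rectangle of $\cR$, as furnished by Propositions \ref{Prop: positive codes are boundary} and \ref{Prop: boundary points have boundary codes}. For a periodic point $p \in S$ and any sector $e$ of $p$ contained in a rectangle $R_i \in \cR$, let $\underline{w}$ denote the associated sector code; then $w_0 = i$, and $\underline{w}$ is periodic because, by Lemma \ref{Lemm: Periodic to peridic}, the fiber $\pi_T^{-1}(p)$ consists entirely of periodic codes. The key observation, which I would record as a preliminary equivalence, is that
\[
  \underline{w} \in \underline{S(T)} \iff p \in \partial^s R_i, \qquad
  \underline{w} \in \underline{U(T)} \iff p \in \partial^u R_i,
\]
and consequently $\underline{w} \in \underline{C(T)}$ if and only if $p$ is a corner of $R_i$. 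The forward implications follow directly from Proposition \ref{Prop: positive codes are boundary} applied with the label $(w_0,\epsilon) = (i,\epsilon)$, while the converses are obtained from Proposition \ref{Prop: boundary points have boundary codes}, where the one point to verify is that the label $(j,\epsilon)$ producing $\underline{w}_+ = \underline{I}^+(j,\epsilon)$ necessarily satisfies $j = w_0 = i$.

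For the implication ``$\cR$ has the corner property $\Rightarrow T$ has the corner property'', I would take an arbitrary $\underline{w} \in \underline{B(T)}$, say $\underline{w} \in \underline{S(T)}$ (the other case is symmetric). Setting $p := \pi_T(\underline{w})$, the point $p$ is a periodic boundary point of $\cR$ lying in $R_{w_0}$, so the corner property of $\cR$ forces $p$ to be a corner of $R_{w_0}$, and in particular $p \in \partial^u R_{w_0}$. The preliminary equivalence above (or, equivalently, a direct invocation of Proposition \ref{Prop: boundary points have boundary codes}) then places $\underline{w}$ in $\underline{U(T)}$, hence in $\underline{C(T)}$.

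For the converse implication, I would start with $p \in P_b(f,\cR)$ and a rectangle $R_i \in \cR$ containing $p$. Since the interior of $R_i$ is disjoint from $\partial\cR$, we must have $p \in \partial R_i$, and therefore at least one sector $e$ of $p$ lies inside $R_i$; by Lemma \ref{Lemm: sector contined unique rectangle} this sector determines a well-defined sector code $\underline{w}$. This code is periodic, satisfies $w_0 = i$, and projects onto $p \in \partial \cR$, so Proposition \ref{Prop: boundary points have boundary codes} gives $\underline{w} \in \underline{S(T)} \cup \underline{U(T)} = \underline{B(T)}$. The assumed corner property of $T$ upgrades this to $\underline{w} \in \underline{C(T)}$, and the preliminary equivalence yields that $p$ is a corner of $R_i$, as desired.

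The main obstacle is essentially a book-keeping issue: one must guarantee that the index $i$ indexing the specific rectangle under scrutiny is literally equal to $w_0$, so that the boundary-membership conclusions extracted from Propositions \ref{Prop: positive codes are boundary} and \ref{Prop: boundary points have boundary codes} truly apply to the rectangle $R_i$ and not merely to \emph{some} rectangle of $\cR$ containing $p$. Once this identification is handled carefully, both directions reduce to a clean application of the two boundary-code propositions together with the characterization of sectors of a point.
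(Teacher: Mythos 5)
Your proof is correct and follows essentially the same route as the paper's: both directions reduce to the symbolic characterization of boundary points via Propositions \ref{Prop: positive codes are boundary} and \ref{Prop: boundary points have boundary codes} (the paper invokes these through Lemma \ref{Lemm: Projection Sigma S,U,I}), together with the observation that the relevant boundary label has first coordinate equal to $w_0$, so the conclusion applies to the specific rectangle $R_{w_0}$ and not merely to some rectangle containing $p$. Your explicit attention to that indexing point is exactly the care the argument requires.
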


\begin{proof}

	If $\cR$ satisfies the corner property, and we have an $s$-boundary code $\underline{w} \in \underline{S(T)}$, then by  Lemma  \ref{Lemm: Projection Sigma S,U,I}, $\pi_T(\underline{w})$ is a stable boundary periodic point contained in the rectangle $R_{w_0}$, where $w_0=\underline{w}_0$, is $0$ therm of the code $\underline{w}$. However, by hypothesis, if $p$ is  a $s$-boundary point  of $R_{w_0}$, it is a corner point of $R_{w_0}$,   then, $p$  is a $u$-boundary point. We can use Lemma  \ref{Lemm: Projection Sigma S,U,I} to deduce that the code $\underline{w}$ is also a $u$-boundary code. So, $\underline{w} \in \underline{C(T)}$. The case when the code $\underline{w}$ is a $u$-boundary code, i.e., $\underline{w} \in \underline{U(T)}$, is similarly proven.
	
	On the other hand, assume  $T$ satisfies the corner property. Let $p\in R_{w_0}\in \cR$ a $s$-boundary point of $R$. There exist a $s$-boundary code $\underline{w}\in \pi^{-1}_T(p)$ such that $\underline{w}_0=w_0$. Like $T$ have the corner property, any $s$-boundary periodic code is $u$-boundary, which in turn means $\underline{w}$ is a $u$-boundary code. As a consequence of Lemma  \ref{Lemm: Projection Sigma S,U,I}, $p=\pi_T(\underline{w})\in R_{w_0}$ is a $u$-boundary point contained in $R_{w_0}$, therefore $p$ is a corner point of such rectangle, $R_{w_0}$. The situation when $p$ is $s$-boundary point of $R_{w_0}$ is proved in the same manner.
	
	On the other hand, assume that $T$ satisfies the corner property. Let $p \in  \in \partial^s R_{w_0}$ be an $s$-boundary point of $R_{w_0}$. There exists an $s$-boundary code $\underline{w} \in \pi^{-1}_T(p)$ such that $\underline{w}_0=w_0$. Since $T$ has the corner property, any $s$-boundary periodic code is also a $u$-boundary code, which, in turn, means that $\underline{w}$ is a $u$-boundary code. As a consequence of \ref{Lemm: Projection Sigma S,U,I}, $p=\pi_T(\underline{w})\in \partial^s R{w_0}$ is a $u$-boundary point contained in $R{w_0}$; therefore, $p$ is a corner point of such a rectangle, $R_{w_0}$. The situation when $p$ is an $s$-boundary point of $R_{w_0}$ is proved in the same manner.
	
\end{proof}

\subsubsection{The construction of the corner refinement}

The first goal of this section is to provide a method for creating a geometric Markov partition $\cR_{\cC}$ for the homeomorphism $f$, along with its corresponding geometric type $T_{\cC}$, in such a way that both $\cR_{\cC}$ and $T_{\cC}$ satisfy the corner property.

Let $T$ be a geometric type in the pseudo-Anosov class with a binary incidence matrix $A(T)$. Consider a generalized pseudo-Anosov homeomorphism $f: S \to S$ with a geometric Markov partition $\cR$ of geometric type $T$. The procedure is as describe in the following paragraph:

\begin{enumerate}
	\item Get the $s$-boundary refinement of $\cR$ along the family of boundary periodic  of codes $\underline{B(T)}$ to obtain:
	$$
	\cR_{\cS(\underline{B(T)})}.
	$$
	
	and its geometric type: $T_{\cS(\underline{B(T)})}$.
	
	\item Compute the set of boundary periodic codes of the geometric type $T_{\cS(\underline{B(T)})}$ (Corollary \ref{Coro: algoritmic per codes}):
	$$
	\underline{S(T_{\cS(\underline{\cB(T)})})} \subset \Sigma_{A(T_{\cS(\underline{B(T)})})}.
	$$
	
	\item Obtain the $u$-boundary refinement of $\cR_{\cS(\underline{B(T)})}$ along the family of boundary periodic codes $\underline{S(T_{\cS(\underline{B(T)})})}$ to obtain the corner refinement of $\cR$:
	
	\begin{equation}\label{Equa: Corner refinament}
		\cR_{\cC}:=[\cR_{\cS(\underline{B(T)})}]_{\cU(\underline{S(T_{S(\underline{B(T)})})})}.
	\end{equation}
	
	whose geometric type is $T_{\cC}$.
\end{enumerate}

\begin{defi}\label{Defi: Corner refi}
	The geometric Markov partition for $f$, $\cR_{\cC}$, described by equation (\ref{Equa: Corner refinament}), is the \emph{Corner refinement} of the geometric Markov partition $(f,\cR)$, and its geometric type $T_{\cC}$ is the \emph{Corner refinement} of $T$.
\end{defi}

Proposition \ref{Prop: Corner ref periodic boundary points} implies Theorem \ref{Theo: The corner refinamiento}:

\begin{prop}\label{Prop: Corner ref periodic boundary points}
Let $T\in \cG\cT(\textbf{p-A})^{SIM}$. Consider a \textbf{p-A} homeomorphism $f: S \to S$ with a geometric Markov partition $\cR$ of geometric type $T$. Let $(f, \cR_{\cC})$ be the corner refinement of $(f, \cR)$, and let $T_{\cC}$ be the geometric type of $(f, \cR_{\cC})$. Then:
\begin{itemize}
	\item[i)] The boundary periodic points of $(f, \cR_{\cC})$ and the boundary periodic points of $(f, \cR)$ are exactly the same. In simpler terms: $P_b(f, \cR_{\cC}=P_b(f, \cR)$.
	
	\item[ii)] The Markov partition $(f, \cR_{\cC})$ exhibits the corner property.
	
	\item[iii)] There exists an algorithm to compute the geometric type $T_{\cC}$, which consists of computing the geometric type $T_{\cS(B(T))}$ of the $s$-boundary refinement of $(f, \cR)$ along the family of periodic boundary codes of $\cR$, and then computing the geometric type of the $u$-boundary refinement of $(f, \cR_{\cS(\underline{B(T)})})$ along the family of periodic boundary codes of $\cR_{\cS(\underline{B(T)})}$.
	\end{itemize}

\end{prop}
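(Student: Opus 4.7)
The plan is to obtain $\cR_{\cC}$ as the composition of the two refinements already studied, and then verify the three claims by invoking the properties of the $s$- and $u$-boundary refinements established in Theorem \ref{Theo: Refinamiento s frontera fam} (and its $u$-symmetric counterpart from Definition \ref{Defi: U boundary refinament}). Concretely, I first build $\cR_{\cS(\underline{B(T)})}$ via the $s$-boundary refinement along all boundary periodic codes, then compute the set of boundary periodic codes of the resulting type using Corollary \ref{Coro: algoritmic per codes}, and finally produce $\cR_{\cC}$ by the $u$-boundary refinement along that new family. The main subtlety will be to argue that the second refinement does not enlarge the set of periodic boundary points while simultaneously upgrading every one of them into a corner.

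For item $i)$ the key observation is that Theorem \ref{Theo: Refinamiento s frontera fam} describes the periodic boundary points of an $s$-boundary refinement as the union of the original ones with the $\pi_{(f,\cR)}$-projections of $\sigma$-iterates of the chosen family of codes. Since the family in step 1 consists exactly of boundary periodic codes of $(f,\cR)$, each such projection is already a boundary periodic point of $\cR$; hence $P_b(f,\cR_{\cS(\underline{B(T)})}) = P_b(f,\cR)$. Repeating the reasoning for the $u$-boundary refinement in step 3, and using that the chosen family $\underline{S(T_{\cS(\underline{B(T)})})}$ projects to $s$-boundary periodic points of $\cR_{\cS(\underline{B(T)})}$, which by the previous equality are exactly the original boundary periodic points of $\cR$, one concludes $P_b(f,\cR_{\cC}) = P_b(f,\cR)$.

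For item $ii)$ I would proceed by a direct geometric argument and appeal to Lemma \ref{Lemm: Corner prop equivalence} to transfer the conclusion to the geometric type. Let $p \in P_b(f,\cR)$. By construction of $\cR_{\cS(\underline{B(T)})}$, the stable interval through $p$ inside every rectangle of $\cR$ containing $p$ has been added to the stable boundary, so $p$ becomes an $s$-boundary periodic point of $\cR_{\cS(\underline{B(T)})}$. Consequently some periodic code in $\underline{S(T_{\cS(\underline{B(T)})})}$ projects to $p$, and the $u$-boundary refinement in step 3 therefore cuts every rectangle of $\cR_{\cS(\underline{B(T)})}$ containing $p$ along the unstable interval through $p$. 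Thus in $\cR_{\cC}$ the point $p$ lies at the intersection of a stable boundary arc and an unstable boundary arc of every rectangle containing it, which is precisely the corner property.

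For item $iii)$ the algorithm is the explicit concatenation of three subroutines already provided: Corollary \ref{Coro: algoritmic per codes} to compute $\underline{B(T)}$ from $T$, the algorithm of Theorem \ref{Theo: Refinamiento s frontera fam} to produce $T_{\cS(\underline{B(T)})}$, a second invocation of Corollary \ref{Coro: algoritmic per codes} on this new type to compute $\underline{S(T_{\cS(\underline{B(T)})})}$, and finally the $u$-symmetric algorithm of Remark \ref{Rema: some properties of U-boundary refinament} to output $T_{\cC}$. The main obstacle I anticipate is bookkeeping: after step 1 both the rectangle labels and the associated sub-shift change, and I must identify the boundary periodic codes of the new partition before feeding them to step 3. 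However, this is exactly what Corollary \ref{Coro: algoritmic per codes} does in purely combinatorial terms from $T_{\cS(\underline{B(T)})}$, so no extra geometric information is required and the whole procedure remains finite and constructive.
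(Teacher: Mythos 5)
Your proposal is correct and follows essentially the same route as the paper: item $i)$ via the description of boundary periodic points of the $s$- and $u$-boundary refinements applied to families of codes that already project into $P_b(f,\cR)$, item $ii)$ by noting that the first refinement turns every boundary periodic point into an $s$-boundary point of each rectangle containing it and the second then turns it into a $u$-boundary point as well, and item $iii)$ by concatenating Corollary \ref{Coro: algoritmic per codes} with the two refinement algorithms. The only presentational difference is that the paper replaces your streamlined "$s$-boundary after step 1, $u$-boundary after step 3, hence corner" argument with an explicit case analysis on the position of $p$ relative to $\partial^s R_0$ and $\partial^u R_0$, but the content is the same.
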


\begin{proof}
	
	\textbf{Item} $i)$. For the $s$-boundary refinement, the following equality holds:
	$$
	P_b(f,\cR_{S(\underline{B(T)}))}) : = P_b(f,\cR)\cup \pi_T(\underline{B(T)})=P_b(f,\cR).
	$$
	As a consequence, the boundary points of $(f, \cR_{\cC})$ coincide with the boundary points of $(f, \cR).$

	\textbf{Item} $ii)$. Let $p$ be a boundary point of $\cR_{S(\underline{B(T)})}$, and let $R \in \cR_{S(\underline{B(T)})}$ be a rectangle that contains $p$. Let $R_{0} \in \cR$ be a rectangle of $\cR$ that also contains $R$. After the $s$-boundary refinement:
	
	\begin{enumerate}
		\item If $p$ belonged to both $\partial^s R_{0}$ and $\partial^u R_0$, then $p$ becomes a corner point of the rectangle $R$.
		\item If $p$ were  in $\partial^s R_{0}$ but not in $\partial^u R_0$, it remains in $\partial^s R$ but not in $\partial^u R$. There is no change.
		\item If $p$ was in $\partial^u R_{0}$ but not in $\partial^s R_0$, then it becomes a corner point of $R$. 
	\end{enumerate}

	Let $p$ be a periodic boundary point in $\cR_{\cC}$. Consider a rectangle $R$ in $$
	\cR_{\cC}=[\cR_{S(\underline{B(T)}}]_{U(\underline{B(T_{S(\underline{B(T)})})})},
	$$
	that contains $p$, along with a rectangle $R_0$ in $R_{0} \in \cR_{S(\underline{B(T)})}$  that contains $R$. After the $u$-boundary refinement, we can have two possible outcomes
	
	\begin{enumerate}
		\item 	If $p\in \partial^s R_0\cap \partial^u R_0$  was a corner point of $R_{w_0}$, then the rectangle $R$ has $p$ as one of its corner points.
		
		\item 	If $p\in \partial^u R_0 \setminus \partial^s R_0$, after the $u$-boundary refinement, $p$ is in $\partial^s R \cap \partial^u R$ and is a corner point of $R$.
	\end{enumerate}

	In conclusion, the geometric Markov partition $(f, \cR_{C(T)})$ exhibits the corner property, as asserted in Item $ii$.
	
	\textbf{Item} $iii)$. This is a consequence of the algorithmic computation of the geometric types in the $s$ and $u$ boundary refinements, and the algorithmic computation of the family of boundary periodic codes for any geometric type, as explained in Subsection \ref{Subsec: SU generating}.

\end{proof}

\subsection{The corner refinement along a family of periodic codes.}
 
Let $\cR$ be a geometric Markov partition of $f: S \to S$ with geometric type $T$ that has the corner property. Take a family of periodic codes $\cW\subset \Sigma_{A(T)}$ and the family $P_{\cW} = \pi_T(\cW)$ of periodic points for $f$. The $s$-boundary refinement of $\cR$ concerning this family yields $\cR_{\cS(\cW)}$ and its geometric type $T_{\cS(\cW)}$. The boundary points of $(f,\cR_{\cS(\cW}$)  are given by the union of the boundary points of $\cR$ and the periodic points determined by the projections of the codes in $\cW$:
$$
P_b(f,\cR_{S(\cW)})=P_b(f,\cR)\cup P_{\cW}
$$ 

Let  $\cR_{\cC_{\cW})}$ be the corner refinement of  $\cR_{S(\cW)}$ with geometric type $T_{\cC(\cW)}$. A corollary of Proposition \ref{Prop: Corner ref periodic boundary points}, when applied to the geometric type $T_{S(\cW)}$, is as follows,and in particular it implies Theorem \ref{Theo: The corner refinamiento}.

\begin{coro}\label{Coro: Boundary refinament}
	Let $\cR_{ \cC(\cW) }$  be the corner refinement of $(f,\cR_{S(\cW)})$ with geometric type $T_{\cC(\cW)})$. Then:
	
	\begin{itemize}
		\item[i)] The boundary periodic points of $(f, \cR_{\cC(\cW))})$ and the boundary periodic points of $(f, \cR_{S(\cW)})$ are exactly the same. So they are the union of the boundary periodic points of $(f,\cR)$ such that are the protection of the family $\cW$ by $\pi_{(f, \cR)}$.
		
		\item[ii)] The Markov partition $(f, \cR_{\cC(\cW))})$ exhibits the corner property.
		
		\item[iii)] There are formulas and an algorithm to compute the geometric type $T_{\cC(\cW)}$.
	\end{itemize}
\end{coro}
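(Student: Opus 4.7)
The plan is to treat this corollary as an immediate application of Proposition \ref{Prop: Corner ref periodic boundary points}, but with the pair $(f, \cR_{S(\cW)})$ taking the role that $(f, \cR)$ played there. That is, instead of applying the corner-refinement construction to the original partition, I would first perform the $s$-boundary refinement along $\cW$, and only then apply the two-step corner construction to the resulting partition.

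First, I would invoke Theorem \ref{Theo: Refinamiento s frontera fam} to guarantee that $(f, \cR_{S(\cW)})$ is a well-defined geometric Markov partition whose geometric type $T_{S(\cW)}$ is algorithmically computable from $T$ and $\cW$, and whose set of periodic boundary points equals
\[
P_b(f, \cR_{S(\cW)}) = P_b(f, \cR) \cup \{f^k(\pi_{(f,\cR)}(\underline{w})) : \underline{w} \in \cW,\, k \in \ZZ\}.
\]
If the resulting incidence matrix $A(T_{S(\cW)})$ is not already binary, I would pass through the binary refinement of Theorem \ref{Theo: Refinamiento binario} so that $T_{S(\cW)} \in \cG\cT(\textbf{p-A})^{SIM}$; this is algorithmic and preserves the boundary structure, since binary refinement only subdivides rectangles by horizontal sub-rectangles of the existing partition.

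Now I would apply Proposition \ref{Prop: Corner ref periodic boundary points} to $(f, \cR_{S(\cW)})$ to obtain the corner refinement $\cR_{\cC(\cW)} := (\cR_{S(\cW)})_{\cC}$ with geometric type $T_{\cC(\cW)}$. The three conclusions of the corollary then follow one by one: item (i) of the proposition identifies the periodic boundary points of the corner refinement with those of its input, which here is $(f, \cR_{S(\cW)})$, and combining with the previous paragraph yields the stated description in terms of $P_b(f, \cR)$ and the projections of $\cW$; item (ii) of the proposition gives the corner property verbatim; item (iii) supplies the algorithm, which in our combined form consists of (a) computing $T_{S(\cW)}$ from $T$ and $\cW$ via Theorem \ref{Theo: Refinamiento s frontera fam}, (b) extracting $\underline{B(T_{S(\cW)})}$ via Corollary \ref{Coro: algoritmic per codes} applied to $T_{S(\cW)}$, (c) performing the $s$-boundary refinement of $T_{S(\cW)}$ along $\underline{B(T_{S(\cW)})}$, and (d) performing the $u$-boundary refinement along the stable boundary periodic codes of the resulting type.

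The main obstacle I anticipate is purely bookkeeping rather than conceptual: I need to ensure that the modeling hypothesis required for the $s$ and $u$-boundary refinements (a binary incidence matrix, so that the sub-shift $\Sigma_{A(T)}$ and its projection $\pi_T$ are available) is preserved at each stage. This is handled by inserting a binary refinement whenever necessary, which by Theorem \ref{Theo: Refinamiento binario} is an algorithmic operation that leaves the set of boundary periodic points intact, since its rectangles are exactly the horizontal sub-rectangles of the previous Markov partition. With that safeguard, the corollary is a direct transcription of Proposition \ref{Prop: Corner ref periodic boundary points} onto the intermediate pair $(f, \cR_{S(\cW)})$.
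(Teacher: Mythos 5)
Your proposal is correct and follows essentially the same route as the paper: the corollary is obtained by applying Proposition \ref{Prop: Corner ref periodic boundary points} to the intermediate pair $(f,\cR_{S(\cW)})$, whose boundary periodic points are $P_b(f,\cR)\cup\pi_{(f,\cR)}(\cW)$ by Theorem \ref{Theo: Refinamiento s frontera fam}. Your extra safeguard about keeping the incidence matrix binary is a harmless addition that the paper leaves implicit.
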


\subsubsection{The corner refinement in  codes of bounded period} 

Finally, let us make a construction that will help us during the discussion of the Béguin algorithm (\cite{beguin2002classification}) in the article 

\begin{defi}\label{Defi: boudary of period P }
	
	Let $T$ be a geometric type in the pseudo-Anosov class with a binary incidence matrix $A(T)$, and let $f: S \to S$ be a generalized pseudo-Anosov homeomorphism with a geometric Markov partition $\cR$ of geometric type $T$. Assume that $(f, \cR)$ (and $T$) has the corner property. Now, consider the following definitions:
	
	\begin{itemize}
		\item Let  $P_{\underline{B(T)}}$ be  the maximum period among the boundary periodic codes of $T$:
		$$
		P_{\underline{B(T)}}:=\max\{\textbf{Per}(\underline{w},\sigma_{T}): \underline{w}\in \underline{B(T)}\}
		$$
		\item For any integer $P \geq P_{\underline{B(T)}}$, define $\cW_P$ as the set of admissible codes with periods less than or equal to $P$:
		$$
		\cW_{P}:=\{\underline{w}\in \Sigma_{A(T)}: \textbf{Per}(\underline{w},\sigma_{T})\leq P\}.
		$$
		\item The corner refinement of $\cR_{S(\cW_P)}$ along the family $\cW_P$  is  given by  $\cR_{\cC(\cW_P)}$ and its  geometric type is denoted as $T_{\cC(P)}$.
	\end{itemize}
	
\end{defi}

\begin{lemm}\label{Lemm: relation periods projection refinament C}
	Let $P\geq P_{\underline{B(T)}}$. Suppose $p\in S$ is a periodic point of $f$ with a period less than or equal to $P$; i.e., $\textbf{Per}(p,f)\leq P$. Consider $\underline{w}\in \Sigma_{A(T)}$ as a code that projects to $p$: $\pi_T(\underline{w})=p$. Then, the period of $\underline{w}$ is less than or equal to $P$, and $\underline{w}\in \cW_{P}$.
	
	Conversely, if $\underline{w}\in \cW_{P}$, then the point $p:=\pi_T(\underline{w})\in S$ is a periodic point of $f$ with a period $\textbf{Per}(p,f)$ less than or equal to $P$. In this manner:
	
	\begin{equation}\label{Equa: Projection codes an point period P}
		\pi_T(\cW_{P})=\{p\in S : \textit{Per}(p,f)\leq P\} \text{ and } \pi_T^{-1}(\{p\in S : \textit{Per}(p,f)\leq P\})=\cW_P
	\end{equation}

\end{lemm}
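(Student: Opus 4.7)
The plan is to prove the two directions separately, using the semi-conjugation property $f \circ \pi_T = \pi_T \circ \sigma_T$ as the main engine, and then to deduce the two set equalities as formal consequences.

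For the converse direction, suppose $\underline{w} \in \cW_P$ with $\textbf{Per}(\underline{w}, \sigma_T) = Q \leq P$. Then
\[
f^Q(\pi_T(\underline{w})) = \pi_T(\sigma_T^Q(\underline{w})) = \pi_T(\underline{w}),
\]
so $p := \pi_T(\underline{w})$ is periodic with $\textbf{Per}(p,f)$ dividing $Q$, hence at most $P$. This part is routine.

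The forward direction requires a case split on whether $p$ lies in the interior of $\cR$ or on its boundary, and this is the conceptual core of the lemma. First I would invoke Lemma \ref{Lemm: Periodic to peridic} to note that any code projecting to a periodic point is itself $\sigma_T$-periodic, so every $\underline{w} \in \pi_T^{-1}(p)$ has a well-defined period. Then I split:
\begin{itemize}
\item If $p$ is a totally interior point of $(f, \cR)$ in the sense of Definition \ref{Defi: totally interior points}, then by Proposition \ref{Prop: Carterization injectivity of pif} the fiber $\pi_T^{-1}(p)$ consists of the single code $\underline{w}$. Since $\sigma_T^N(\underline{w})$ also projects to $p$ when $N = \textbf{Per}(p,f)$, we get $\sigma_T^N(\underline{w}) = \underline{w}$, so $\textbf{Per}(\underline{w}, \sigma_T) \leq N \leq P$.
\item If $p \in \partial \cR$, then $p$ is an $s$- or $u$-boundary periodic point, and by Lemma \ref{Lemm: Projection Sigma S,U,I} (together with the characterization of boundary codes in Proposition \ref{Prop: boundary points have boundary codes}) every code in $\pi_T^{-1}(p)$ belongs to $\underline{\cS(T)} \cup \underline{\cU(T)}$. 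Since such a code is also periodic, it lies in $\underline{B(T)}$, and therefore $\textbf{Per}(\underline{w}, \sigma_T) \leq P_{\underline{B(T)}} \leq P$ by the definition of $P_{\underline{B(T)}}$.
\end{itemize}

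The main obstacle is this second bullet: one must argue that if a periodic point $p$ of $f$ happens to lie on $\partial \cR$, then it is in fact a boundary periodic point in the sense of Definition \ref{Defi: Boundary points Markov partition}, and hence every code above it is a boundary periodic code. This is where the hypothesis $P \geq P_{\underline{B(T)}}$ is used essentially, since a priori codes over a boundary point could have arbitrarily large period relative to the period of $p$ (the sector-permutation by $f^N$ may be non-trivial). The bound $P_{\underline{B(T)}}$ controls precisely this phenomenon by taking the maximum over all $2n$ boundary codes.

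Once both directions are established, the displayed set equalities in Equation (\ref{Equa: Projection codes an point period P}) follow formally: the forward direction gives $\pi_T(\cW_P) \supset \{p : \textbf{Per}(p,f) \leq P\}$ (pick any preimage) and $\pi_T^{-1}(\{p : \textbf{Per}(p,f) \leq P\}) \subset \cW_P$, while the converse direction gives the two reverse inclusions. No additional work is needed beyond combining the two directions with the surjectivity of $\pi_T$ from Proposition \ref{Prop:proyecion semiconjugacion}.
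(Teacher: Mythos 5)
Your proposal is correct and follows essentially the same route as the paper's proof: the converse direction via the semi-conjugacy computation $f^Q(\pi_T(\underline{w}))=\pi_T(\sigma^Q(\underline{w}))=\pi_T(\underline{w})$, and the forward direction via the same case split (interior periodic point, where the fiber is a single code of the same period; boundary periodic point, where every code in the fiber lies in $\underline{B(T)}$ and the hypothesis $P\geq P_{\underline{B(T)}}$ supplies the bound). The set equalities are then assembled from the two inclusions exactly as in the paper.
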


\begin{proof}
	Let $p\in S$ be a periodic point of $f$ with period less or equal than $P$. We have two possible situations: 
	\begin{itemize}
		\item The periodic point $p$ is a boundary point. In this case, $\underline{w}\in \underline{B(T)}\subset \cW_{P}$ just by definition of $\cW_P$. Let's recall that any code in $\underline{B(T)}$ has a period less than or equal to $P_{\underline{B(T)}}\leq P$.
		
		\item The periodic point $p$ is interior. In this case, $\pi^{-1}(p)$ contains a unique code $\underline{w}$ whose period coincides with the period of $p$, i.e., $\textbf{Per}(\underline{w},\sigma_{T})=\textbf{Per}(p,f)\leq P$. Therefore, $\underline{w}\in \cW_{P}$.
		
	\end{itemize}
	
	This implies that:
	\begin{equation}\label{Equa: period codes and points, I}
		\pi_T^{-1}(\{p\in S : \textit{Per}(p,f)\leq P\}) \subset \cW_P
	\end{equation}
	and 
	
	\begin{equation}\label{Equa: period codes and points, II}
		\{p\in S : \textit{Per}(p,f)\leq P\}= \pi_T(\pi_T^{-1}(\{p\in S : \textit{Per}(p,f)\leq P\}))  \subset \pi_T(\cW_P).
	\end{equation}
	
	For any code $\underline{w}\in \cW_{P}$ with a period $Q$ no greater than $P$, we can observe the following relationship:
	
	$$
	p=\pi_T(\underline{w})=\pi_T(\sigma^Q(\underline{w}))=f^Q(\pi_T(\underline{w}))=f^Q(p)
	$$
	
	This implies that $P\geq Q \geq\textbf{Per}(p,f)$, i.e., the period of $p$ with respect to $f$ is less than or equal to $P$. Then:
	
	\begin{equation}\label{Equa: period codes and points, II, b}
		\pi_{T}(\cW_P)  \subset \{p\in S : \textit{Per}(p,f)\leq P\}.
	\end{equation}
	
	This inclusion, along with the information provided in \ref{Equa: period codes and points, II}, implies that:
	$$
	\pi_{T}(\cW_P)  = \{p\in S : \textit{Per}(p,f)\leq P\}.
	$$

	Furthermore, if $\underline{w}\in \cW_P$, trivially $\underline{w}\in \pi_T^{-1}(\pi_T(\underline{w}))$ and $p:=\pi_T(\underline{w})$ has a period less than or equal to $P$. In this manner:
	
	\begin{equation}\label{Equa: period codes and points, I, b}
		\cW_P \subset \pi_T^{-1}(\{p\in S : \textit{Per}(p,f)\leq P\}).
	\end{equation}
	Using this information and the contention provided in \ref{Equa: period codes and points, I}, we establish the second equality of our lemma:
	$$
	\pi_T^{-1}(\{p\in S : \textit{Per}(p,f)\leq P\})=\cW_P.
	$$
	
	This ends our proof.
\end{proof}

\begin{coro}\label{Coro: P boundary refinament}
	When $P\geq P_{B(T)}$, the periodic boundary points of the corner refinement $(f, \cR_{C(T_{S(\cW_P)})})$ coincides with the set of all periodic points of $f$ having periods less than or equal to $P$ i.e.,
	$$
	P_b(f,\cR_{C(T_{S(\cW_P)})} )=\{p\in S : \textit{Per}(p,f)\leq P\}. 
	$$
	Furthermore, $(f,\cR_{C(T_{S(\cW_P)})} )$ also possesses the corner property.
\end{coro}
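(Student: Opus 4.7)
The plan is to derive this corollary as a direct application of Corollary \ref{Coro: Boundary refinament} to the specific family $\cW_P$, combined with the identification of $\pi_T(\cW_P)$ obtained in Lemma \ref{Lemm: relation periods projection refinament C}. First, I would invoke Corollary \ref{Coro: Boundary refinament} with $\cW := \cW_P$: this immediately yields item (ii) (the corner property of $\cR_{C(T_{S(\cW_P)})}$) and reduces item (i) to showing the set equality
\[
P_b(f,\cR) \cup \pi_{(f,\cR)}(\cW_P) \;=\; \{p \in S : \textbf{Per}(p,f) \leq P\}.
\]

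For the $\supseteq$ inclusion, I would apply Lemma \ref{Lemm: relation periods projection refinament C}, which gives $\pi_T(\cW_P) = \{p \in S : \textbf{Per}(p,f) \leq P\}$; since the right-hand side of the displayed equation already contains $\pi_{(f,\cR)}(\cW_P)$ as a subset, the $\supseteq$ inclusion is automatic. For the $\subseteq$ inclusion, the lemma handles the $\pi_{(f,\cR)}(\cW_P)$ summand, so it remains only to argue that every periodic boundary point of $(f,\cR)$ itself has period at most $P$. Here I would use the running hypothesis $P \geq P_{\underline{B(T)}}$ together with the corner property of $T$: every $p \in P_b(f,\cR)$ admits a boundary periodic code $\underline{w} \in \underline{B(T)}$ with $\pi_T(\underline{w}) = p$, and by definition of $P_{\underline{B(T)}}$ we have $\textbf{Per}(\underline{w},\sigma_T) \leq P_{\underline{B(T)}} \leq P$; since the period of $p$ under $f$ divides (and in fact equals, after possibly quotienting by the fiber size which is finite) the period of any of its codes, we obtain $\textbf{Per}(p,f) \leq P$.

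The two assertions of the corollary then follow: item (i) from the established set equality, and item (ii) directly from item (ii) of Corollary \ref{Coro: Boundary refinament}. I do not anticipate any real obstacle here; the only mildly delicate point is checking that boundary periodic points of $(f,\cR)$ have $f$-period bounded by $P$, which is exactly the role played by the hypothesis $P \geq P_{\underline{B(T)}}$ and the corner property of $T$ (ensuring that every boundary point has a genuinely periodic code representative, not merely a pre-periodic one). Once that observation is in place, the corollary is a clean concatenation of two earlier results.
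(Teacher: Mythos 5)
Your proposal is correct and follows essentially the same route as the paper: apply Corollary \ref{Coro: Boundary refinament} to the family $\cW_P$, identify $\pi_T(\cW_P)$ via Lemma \ref{Lemm: relation periods projection refinament C}, and absorb $P_b(f,\cR)$ into $\{p\in S : \textit{Per}(p,f)\leq P\}$. The only difference is that the paper dismisses the last inclusion as ``clear'' while you spell it out via boundary periodic codes and the bound $P_{\underline{B(T)}}\leq P$, which is a harmless (and arguably welcome) elaboration.
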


\begin{proof}
	From Corollary \ref{Coro: Boundary refinament}: 
	$$
	P_b(f, \cR_{C(T_{S(\cW_P)})}) =P_b(f, \cR_{S(\cW_P)}).
	$$
	But, we recall that:
	$$
	P_b(f, \cR_{S(\cW_P)})= P_b(f,\cR) \cup \pi_T(\cW_{P}).
	$$
	In view of Lemma \ref{Lemm: relation periods projection refinament C} 
	$$
	\pi_T(\cW_P) = \{p\in S : \textit{Per}(p,f)\leq P\}
	$$ 
	and clearly $P_b(f,\cR)\subset \{p\in S : \textit{Per}(p,f)\leq P\}$. Therefore: 
	$$
	P_b(f, \cR_{C(T_{S(\cW_P)})}) =P_b(f, \cR_{S(\cW_P)})=P_b(f,\cR) \cup \pi_T(\cW_{P})= \{p\in S : \textit{Per}(p,f)\leq P\}.
	$$
\end{proof}

\bibliographystyle{alpha}
\bibliography{BibPseudoA}

\end{document}